\numberwithin{equation}{section}
\newtheorem{theorem}{\bf{Theorem}}[section]
\newtheorem{proposition}[theorem]{\bf{Proposition}}
\newtheorem{definition}[theorem]{\bf{Definition}}
\newtheorem{corollary}[theorem]{\bf{Corollary}}
\newtheorem{lemma}[theorem]{\bf{Lemma}}
\newtheorem{remark}[theorem]{\bf{Remark}}
\newtheorem{example}[theorem]{\bf{Example}}
\newtheorem{conjecture}[theorem]{\bf{Conjecture}}
\newtheorem{assumption}[theorem]{\bf{Assumption}}
\newcommand{\abs}[1]{\left|#1\right|}
\newcommand{\car}[1]{\left|#1\right|}
\newcommand{\pairangone}[1]{\langle#1\rangle}
\newcommand{\pairang}[2]{\langle#1,#2\rangle}
\newcommand{\rest}{\lvert}
\newcommand{\mrgl}{\operatorname{GL}}
\newcommand{\mro}{\operatorname{O}}
\newcommand{\mru}{\operatorname{U}}
\newcommand{\mrsu}{\operatorname{SU}}
\newcommand{\mrso}{\operatorname{SO}}
\newcommand{\mrsl}{\operatorname{SL}}
\newcommand{\mrsp}{\operatorname{Sp}}
\newcommand{\mrm}{\operatorname{M}}
\newcommand{\mrdet}{\operatorname{det}}
\newcommand{\mrdim}{\operatorname{dim}}
\newcommand{\mrhom}{\operatorname{Hom}}
\newcommand{\mrdiag}{\operatorname{diag}}
\newcommand{\mrgcd}{\operatorname{gcd}}
\newcommand{\mrrep}{\operatorname{Rep}}
\newcommand{\mraut}{\operatorname{Aut}}
\newcommand{\mrtemp}{\operatorname{Temp}}
\newcommand{\mrInd}{\operatorname{Ind}}
\newcommand{\mrres}{\operatorname{Res}}
\newcommand{\mrstab}{\operatorname{Stab}}
\newcommand{\mrim}{\operatorname{Im}}
\newcommand{\mrder}{\operatorname{der}}
\newcommand{\mrst}{\mathrm{St}}
\newcommand{\mrAff}{\mathrm{Aff}}
\newcommand{\mraff}{\mathrm{aff}}
\newcommand{\mrid}{\mathrm{Id}}
\newcommand{\mrext}{\mathrm{ext}}
\newcommand{\mrred}{\mathrm{red}}
\newcommand{\mrgal}{\mathrm{Gal}}
\newcommand{\mrch}{\mathrm{Ch}}
\newcommand{\mrfix}{\mathrm{Fix}}
\newcommand{\mrur}{\mathrm{ur}}
\newcommand{\mrsc}{\mathrm{sc}}
\newcommand{\mbz}{\mathbb{Z}}
\newcommand{\mbc}{\mathbb{C}}
\newcommand{\mbr}{\mathbb{R}}
\newcommand{\mbg}{\mathbb{G}}
\newcommand{\mbp}{\mathbb{P}}
\newcommand{\mfp}{\mathfrak{p}}
\newcommand{\mfo}{\mathfrak{o}}
\newcommand{\mfa}{\mathfrak{a}}
\newcommand{\mfh}{\mathfrak{h}}
\newcommand{\mfg}{\mathfrak{g}}
\newcommand{\mfH}{\mathfrak{H}}
\newcommand{\mfS}{\mathfrak{S}}
\newcommand{\mcs}{\mathcal{S}}
\newcommand{\mco}{\mathcal{O}}
\newcommand{\mcc}{\mathcal{C}}
\newcommand{\mcb}{\mathcal{B}}
\newcommand{\mca}{\mathcal{A}}
\newcommand{\mcf}{\mathcal{F}}
\newcommand{\mcg}{\mathcal{G}}
\newcommand{\mch}{\mathcal{H}}
\newcommand{\mcv}{\mathcal{V}}
\newcommand{\ch}[2]{\mathrm{Ch}_{#1}^{#2}}   
\newcommand{\Ft}{\mathcal{F}^{\theta}}
\newcommand{\Ftmax}{\mathcal{F}_{\mathrm{max}}^{\theta}}
\newcommand{\Fteff}{\mathcal{F}_{\mathrm{eff}}^{\theta}}
\newcommand{\trank}[1]{r_{\theta}(#1)}
\newcommand{\tdist}[1]{d_{\theta}(#1)}
\newcommand{\pa}[2]{\mathrm{Panel}_{#1}^{#2}}
\newcommand{\parah}[1]{P_{#1}}
\newcommand{\para}[2]{P_{#1,#2}}
\newcommand{\bs}[1]{\boldsymbol{#1}}
\newcommand{\ol}[1]{\overline{#1}}
\newcommand{\Hstar}{H_{\star}^{\mathrm{sc}}}
\newcommand{\Hbarstar}{\ul{H}_{\star}^{\mathrm{sc}}}
\newcommand{\ul}[1]{\underline{#1}}
\newcommand{\Jiandi}[1]{{\color{red}{#1}}}
\begin{document}

\title[Distinction of the Steinberg representation]{Distinction of the Steinberg representation with respect to a symmetric pair}
	
\author{Chuijia Wang}
\address{The Institute of Mathematical Sciences, The Chinese University of Hong Kong, Shatin, Hong Kong, China}
\email{wangchuijia@u.nus.edu}
	
\author{Jiandi Zou}
\address{Department of Mathematics, University of Vienna, Vienna 1090, Austria}	
\email{idealzjd@gmail.com}
	
\keywords{Bruhat--Tits buildings, symmetric spaces, representation theory of $p$-adic groups, orthogonal distinction}
\subjclass[2020]{Primary 22E50, 20E42; Secondary 11F70}	
	\begin{abstract}
		
		Let $K$ be a non-archimedean local field of residual characteristic $p\neq 2$. Let $G$ be a connected reductive group over $K$,  let $\theta$ be an involution of $G$ over $K$, and let $H$ be the connected component of $\theta$-fixed subgroup of $G$ over $K$. By realizing the Steinberg representation of $G$ as the $G$-space of complex smooth harmonic cochains following the idea of Broussous--Court\`es, we study its space of distinction by $H$ as a finite dimensional complex vector space. We give an upper bound of the dimension, and under certain conditions, we show that the upper bound is sharp by explicitly constructing a basis using the technique of Poincar\'e series. Finally, we apply our general theory to the case where $G$ is a general linear group and $H$ a special orthogonal subgroup, which leads to a complete classification result.
		
	\end{abstract}

	\maketitle
	\tableofcontents
	
	\section{Introduction}
	
	\subsection{Background}
	
	Let $K$ be a non-archimedean local field of residual characteristic $p$, let $\mfo_K$ be the ring of integers of $K$ and let $\bs{k}$ be the residue field of $K$. Let $G$ be a connected reductive group over $K$, let $\theta$ be an involution of $G$ over $K$, and let $H$ be the connected component of the $\theta$-fixed subgroup $G^{\theta}$ of $G$ over $K$. In this article, by abuse of notation, for an algebraic group $G$ over $K$, we also use $G$ to represent its group of $K$ rational points if there is no ambiguity.
	
	Let $\mrst_G$ be the complex Steinberg representation of $G$, and let $\chi$ be a character of $H$ with some possible technical conditions. The main goal of this article is to study the space of distinction of the Steinberg representation
	$$\mrhom_{H}(\mrst_G,\chi)$$
	and its dimension as a $\mbc$-vector space. When the above space is non-zero, $\mrst_G$ is called $(H,\chi)$-distinguished. We remark that the above dimension is known to be finite under even more general settings \cite{delorme2010constant}*{Theorem 4.5}. If $\chi$ is unitary, the dimension is exactly the multiplicity of $\mrst_G$ in the decomposition of the unitary $G$-representation of $\chi$-twisted $L^2$-space $L^2(H\backslash G,\chi)$. Here, by abuse of notation our $H\backslash G$ means $H(K)\backslash G(K)$.
	
	One main motivation of considering this particular problem is that, it serves as an excellent test example of the so-called ``relative (local) Langlands program'' in the sense of Sakellaridis--Venkatesh \cite{sakellaridis2017periods}, where the authors considered (split) spherical varieties over local fields. Under our settings and assuming $\chi=1$, one would like to understand the decomposition of the unitary representations $L^2(H\backslash G)$ as a direct integral of irreducible unitary representations\footnote{Indeed in \cite{sakellaridis2017periods} Sakellaridis--Venkatesh considered the set $(H\backslash G)(K)$ of $K$-rational points of the spherical variety $H\backslash G$, and the corresponding $L^2$-space $L^2((H\backslash G)(K))$ instead, which somehow fits their general framework better.}
	\begin{equation}\label{eqL2decomp}
		L^2(H\backslash G)=\int_{\pi\in\mathrm{Unit}(G)}\pi^{\oplus m(\pi)}d\pi,
	\end{equation}
	where $d\pi$ is the Plancherel measure on the unitary dual $\mathrm{Unit}(G)$ of $G$ (\emph{cf.} \cite{sakellaridis2017periods}*{\S 6.1}).
	The decomposition should be understood as a relative version of the so-called ``Plancherel decomposition" due to Harish-Chandra \cite{waldspurger2003formule}. When $H\backslash G$ is strongly tempered, the above measure $d\pi$ is supported on tempered representations. In this case, it is curious to understand which tempered representations of $G$ occur in the decomposition \eqref{eqL2decomp}, and what is the corresponding multiplicity $m(\pi)$.
	
    Remarkably, a conjectural solution of describing the decomposition \eqref{eqL2decomp} was  proposed in \cite{sakellaridis2017periods}. Motivated by the classical local Langlands correspondence, they introduced, in the spirit of Langlands, the complex dual group $\widehat{H\backslash G}$ of the spherical variety $H\backslash G$ together with a natural morphism $\widehat{\iota}:\widehat{H\backslash G}\times \mrsl_2(\mbc)\rightarrow \widehat{G}$ based on the early work of Gaitsgory--Nadler \cite{GN10}. We remark that in certain cases $\widehat{H\backslash G}$ is not even well formulated (for instance $H\backslash G=\mrso_n\backslash \mrgl_n$). Imposing necessary assumptions, which in particular includes that $H\backslash G$ is strongly tempered, the conjecture could be roughly interpreted as follows: 
    \footnote{We warn the readers that our description here is rather coarse. For a rigorous statement we leave \emph{loc. cit.} for more details.}
	\begin{itemize}
		\item (Galois side) The Arthur parameter $\psi$ of $\pi$ factors through $\widehat{\iota}$ if and only if $m(\pi)\neq 0$.
		\item ($p$-adic group side) There exist a $p$-adic group $G'$ (not even algebraic in certain cases) whose dual group is identified with the dual group of $H\backslash G$, and a functorial lift $\iota_{G'}^G:\mrtemp(G')\rightarrow\mrtemp(G)$, such that $\pi\in \mathrm{Im}(\iota_{G'}^G)$ if and only if $m(\pi)\neq 0$, where $\mrtemp(\cdot)$ denotes the set of equivalence classes of irreducible tempered representations.
	\end{itemize}
	Moreover, one expects to extract the exact value of $m(\pi)$ from the two possible descriptions of $\pi$, which is definitely a more difficult question. Indeed, in \emph{loc. cit.} their conjecture mostly focuses on the cases where $L^2(H\backslash G)$ is multiplicity  free. In certain cases, the above conjectural description is fulfilled using local harmonic analysis, for instance \cite{beuzart2021plancherel}, \cite{beuzart2020multiplicities}, \cite{duhamel2019formule}, \cite{lapid2022explicit}, etc. Notably in the $\mru_n\backslash\mrgl_n$-case (\emph{cf.} \cite{beuzart2020multiplicities}), the multiplicity-free result fails. Also, to get enough local inputs, a delicate analysis of global trace formulae is required (\emph{cf.} \cite{feigon2012representations}).
	
	Another general framework is proposed by D. Prasad \cite{prasad2015arelative}. He considered the case where $H\backslash G$ is a quadratic Galois symmetric pair and $\chi$ is a quadratic character of $H$ associated to the quadratic field extension. In this case, the condition of an irreducible representation $\pi$ of $G$ being $(H,\chi)$-distinguished as well as the distinguished dimension are conjecturally described in terms of the L-parameter and L-packet of $\pi$ (\emph{cf.} \cite{prasad2015arelative}*{Conjecture 2}). 
	
	Instead of mentioning recent progress of Prasad's conjecture, we focus on the development of this conjecture for the Steinberg representation. In this case, the Steinberg representation is expected to be always $(H,\chi)$-distinguished with the corresponding multiplicity being one. This conjecture was proved by Broussous--Court\`es and Court\`es \cite{broussous2014distinction}, \cite{courtes2015distinction}, \cite{courtes2017distinction} when $H$ is split over $K$ and $p\neq 2$ by studying the geometry of Bruhat--Tits buildings, and by Beuzart-Plessis \cite{beuzart2018distinguished} in general using harmonic analysis. 
	
	\subsection{Steinberg representation of $\mrgl_n(K)$ distinguished by an orthogonal subgroup}
	
	The main motivation (or even ``la raison d'\^etre") of this article is to resolve the problem of distinction of the Steinberg representation of $G=\mrgl_n(K)$ by an orthogonal subgroup. Here in \S 1.2 we state the main results in this special case, and in \S 1.3 we will state more general results that cover this special case and sketch the proofs. 
	
	Before introducing our main results, let us try to persuade the readers why this special case is  interesting. We consider the set of invertible symmetric matrices $X$ in $G$, endowed with a right $G$-action given by $x\cdot g=\,^tgxg$, $g\in G$, $x\in X$. Then we are interested in the Plancherel decomposition of the unitary $G$-representation $L^2(X)$. Given a finite set of representatives  $\varepsilon$ of $X/G$, we have\footnote{Here, still by abuse of notation we write $\mro_{n}(\varepsilon)\backslash G=\mro_{n}(\varepsilon)(K)\backslash G(K)$. A more uniform version is that for any orthogonal subgroup $\mro_{n}(\varepsilon)$ of $G$, if we consider the $K$ rational points of the symmetric space $\mro_{n}(\varepsilon)\backslash G$, then we have $X\cong(\mro_{n}(\varepsilon)\backslash G)(K)$ as $G(K)$-spaces. This has already been explained in the introduction of \cite{hakim2013tame}.}
	$$X\cong\sqcup_{\varepsilon}\mro_{n}(\varepsilon)\backslash G\quad\text{and}\quad L^2(X)\cong\oplus_{\varepsilon}L^{2}(\mro_{n}(\varepsilon)\backslash G)$$
	as $G$-spaces and $G$-representations,
	where $\mro_{n}(\varepsilon)$ (resp. $\mrso_{n}(\varepsilon)$) denotes the orthogonal (resp. special orthogonal) subgroup of $G$ with respect to the orthogonal involution $\tau_{\varepsilon}(g)=\varepsilon^{-1}\,^tg^{-1}\varepsilon$, $g\in G$. Since each $\mro_{n}(\varepsilon)\backslash G$ is strongly tempered \cite{gurevich2016criterion}*{\S 5.3}, we have the Plancherel decomposition
	$$L^2(X)=\int_{\mrtemp(G)}\pi^{\oplus m(\pi)}d\pi.$$
	This case exceeds the framework of Sakellaridis--Venkatesh we mentioned before, since neither $L^{2}(X)$ nor $L^{2}(\mro_{n}(\varepsilon)\backslash G)$ is multiplicity free, and until now there is no good definition of the dual group of $X$.
	
	We briefly recall the so-called metaplectic correspondence (\emph{cf.} \cite{flicker1986metaplectic}, \cite{zou2022metaplectic}). Let $G'$ be a two-fold Kazhdan--Patterson covering group of $G$. Given a genuine irreducible tempered representation $\pi'$ of $G'$ and an irreducible tempered representation $\pi$ of $G$, we say that $\pi$ is the \emph{metaplectic lift} of $\pi'$ if their Harish-Chandra characters satisfy a related trace formula (\emph{cf.} \cite{zou2022metaplectic}*{(3.2)}). Then, for a fixed $\pi$, we let $(\iota_{G'}^{G})^{-1}(\pi)$ be the set of genuine irreducible tempered representations $\pi'$ of $G'$ that lift to $\pi$. It is a finite set.
	
	We give the following  conjecture describing the above decomposition.
	
	\begin{conjecture}\label{conjmetaplecticdistinction}
		
		Under the above settings,
		
		\begin{enumerate}
			\item $\pi\in\mrtemp(G)$ occurs in the Plancherel decomposition if and only if $\pi$ is the metaplectic lift for some $\pi'$.
			\item $m(\pi)=\car{(\iota_{G'}^{G})^{-1}(\pi)}\cdot d_{\text{Wh}}(\pi')^2$, where $\pi'$ is any genuine irreducible tempered representation of $G'$ lifting to $\pi$ and $d_{\text{Wh}}(\pi')$ denotes the Whittaker dimension of $\pi'$.
		\end{enumerate}
		
	\end{conjecture}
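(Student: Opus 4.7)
The plan is to attack Conjecture~\ref{conjmetaplecticdistinction} in two stages: first confirm the Steinberg case as a sanity check using the tools already developed in the body of this paper, and then bootstrap to general tempered $\pi$ via a spherical character identity transported through the metaplectic correspondence. For the Steinberg case itself, $\mrst_G$ has Whittaker dimension $1$, and every genuine irreducible tempered $\pi'$ of the two-fold Kazhdan--Patterson cover $G'$ lifting to $\mrst_G$ is again of ``Steinberg type'' on $G'$ with $d_{\text{Wh}}(\pi')=1$; thus the conjectural multiplicity collapses to $\car{(\iota_{G'}^{G})^{-1}(\mrst_G)}$, a quantity which can be read off combinatorially from the covering cocycle. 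The orthogonal-distinction results of the present paper, applied to each involution $\tau_\varepsilon$ and summed over the finite set of orbits $\varepsilon\in X/G$, compute $\sum_\varepsilon\mrdim\,\mrhom_{\mro_{n}(\varepsilon)}(\mrst_G,\mbc)$, and I would match this total against the above metaplectic count to pin down the Steinberg case.

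For a general tempered $\pi$ the goal is a spherical character identity. Writing $J^X_\pi$ for the spherical character attached to the symmetric pair $(\mro_n(\varepsilon)\backslash G,\pi)$ and $J^{G'}_{\pi'}$ for the Kuznetsov/Whittaker character of $\pi'$ on $G'$, I would aim to establish
$$\sum_{\varepsilon} J^X_\pi(f_\varepsilon)=\sum_{\pi'\in(\iota_{G'}^{G})^{-1}(\pi)} d_{\text{Wh}}(\pi')^2\cdot J^{G'}_{\pi'}(f')$$
whenever $(f_\varepsilon)_\varepsilon$ and $f'$ are matching test functions. Coupled with a relative Plancherel formula for $L^2(X)$ and the usual Harish-Chandra Plancherel formula applied to the Whittaker model on $G'$, spectral comparison of the two sides would yield both the support statement~(1) and the exact multiplicity~(2) of the conjecture simultaneously.

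The main obstacle is the construction of the transfer of test functions between $X$ and the Kazhdan--Patterson cover $G'$ together with the proof of the attendant fundamental lemma. Unlike the settings encompassed by the Sakellaridis--Venkatesh framework, the pair $\mro_n\backslash\mrgl_n$ has no well-defined dual group in the sense of Gaitsgory--Nadler, so one cannot import transfer factors from a general dual-group recipe; they must be built by hand, most naturally through a global relative trace formula comparison with the metaplectic Kuznetsov formula in the spirit of Mao--Rallis. A secondary difficulty is that $L^2(X)$ is genuinely not multiplicity-free, so one must simultaneously track the fibre size $\car{(\iota_{G'}^{G})^{-1}(\pi)}$ and the Whittaker dimension $d_{\text{Wh}}(\pi')$ on the cover; this coupled refinement is precisely what distinguishes the conjecture from a naive relative Langlands prediction and what makes the harmonic-analytic extraction of $m(\pi)$ delicate.
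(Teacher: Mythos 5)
The statement you were asked to prove is a \emph{conjecture}, and the paper does not prove it; the authors explicitly write that ``to fully resolve the Conjecture \ref{conjmetaplecticdistinction}, we expect a local or even global trace formulae comparison to be launched, which is currently beyond our scope.'' What the paper actually establishes is Theorem \ref{thmmainfour}, whose corollary gives $\mrdim_{\mbc}\mrhom_{G}(\mrst_G,\mcc^{\infty}(X))=(n+1)^2$, and the authors then observe numerically that this agrees with the conjectural formula $\car{(\iota_{G'}^{G})^{-1}(\pi)}\cdot d_{\text{Wh}}(\pi')^2$ using the Whittaker dimensions of the metaplectic Steinberg recorded in the introduction. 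So the ``paper's own proof'' of the conjecture is, at best, a consistency check in three special families of representations, not a proof.

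Your first stage contains a concrete error that would make even the consistency check fail. You assert that a genuine irreducible tempered $\pi'$ on the two-fold Kazhdan--Patterson cover lifting to $\mrst_G$ is ``of Steinberg type'' with $d_{\text{Wh}}(\pi')=1$. This is false: genuine representations of covering groups do not in general have unique Whittaker models, and the paper's own table gives $d_{\text{Wh}}(\pi') = (n+1)/2$ when $n$ is odd and $d_{\text{Wh}}(\pi') = n+1$ when $n$ is even (see also \cite{zou2022metaplectic}). Plugging your value $d_{\text{Wh}}(\pi')=1$ into the conjectured formula gives $m(\pi)=\car{(\iota_{G'}^{G})^{-1}(\pi)}$, i.e.\ $4$ or $1$, which does not match the total distinguished dimension $(n+1)^2$ computed in this paper. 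The check would therefore appear to refute the conjecture rather than confirm it. With the correct Whittaker dimensions, the product is $4\cdot\big((n+1)/2\big)^2=(n+1)^2$ for odd $n$ and $1\cdot(n+1)^2=(n+1)^2$ for even $n$, and the numbers do agree.

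Your second stage (a spherical character identity between $L^2(X)$ and the Whittaker spectrum of $G'$, matched via a relative trace formula with a hand-built transfer and fundamental lemma) is indeed the strategy the authors expect to eventually resolve the conjecture; you correctly identify that the absence of a Gaitsgory--Nadler dual group for $\mro_n\backslash\mrgl_n$ means no off-the-shelf transfer factors are available, and that the failure of multiplicity one makes the extraction of $m(\pi)$ delicate. But this is a research program, not a proof, and nothing in the present paper (nor in the references it cites) supplies the required transfer, fundamental lemma, or local spectral comparison. As written, the proposal does not constitute a proof of Conjecture \ref{conjmetaplecticdistinction}.
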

	
	Using \cite{zou2022metaplectic}*{Theorem 4.7} (based on Conjecture 4.6 in \emph{ibid.}), the Whittaker dimension of a genuine irreducible representation $\pi'$ of $G'$ could be calculated. We denote by $\mcc^{\infty}(X)$ the space of complex smooth function on $X$, which is endowed with a $G$-action. Then, numerically, we may verify the above conjecture in the following cases:
	
	For $\pi$ supercuspidal, we have (\emph{cf.} \cite{zou2022supercuspidal}*{Theorem 1.4})
	
	\begin{center}
		
		\begin{tabular}{|c|c|c|c|}
			
			\hline	
			& $d_{\text{Wh}}(\pi')$ & $|(\iota_{G'}^{G})^{-1}(\pi)|$ & $\mathrm{dim}_{\mathbb{C}}\mrhom_{G}(\pi,\mcc^{\infty}(X))$ \\ \hline
			$n$ odd & 1 & 4 & 4\\   \hline
			$n$ even & 2 & 1 & 4 \\ \hline
			
		\end{tabular}
		
	\end{center}
	
	For $\pi$ ``good" tempered principal series (\emph{cf.} \cite{feigon2012representations}*{arguments in Section 6}), 
	
	\begin{center}
		
		\begin{tabular}{|c|c|c|c|}
			
			\hline	
			& $d_{\text{Wh}}(\pi')$ & $|(\iota_{G'}^{G})^{-1}(\pi)|$ & $\mathrm{dim}_{\mathbb{C}}\mrhom_{G}(\pi,\mcc^{\infty}(X))$ \\ \hline
			$n$ odd & $2^{n-1}$ & 4 & $4^{n}$\\   \hline
			$n$ even & $2^{n}$ & 1 & $4^{n}$ \\ \hline
			
		\end{tabular}
		
	\end{center}
	
	Our main goal here is to verify the conjecture for $\pi$ being the Steinberg representation of $G$. More precisely, we would like to verify the red color part of the following chart:
	
	\begin{center}
		
		\begin{tabular}{|c|c|c|c|}
			
			\hline	
			&  $d_{\text{Wh}}(\pi')$ & $|(\iota_{G'}^{G})^{-1}(\pi)|$ & $\mathrm{dim}_{\mathbb{C}}\mrhom_{G}(\pi,\mcc^{\infty}(X))$\\ \hline
			$n$ odd & $(n+1)/2$ & 4 & $\textcolor{red}{(n+1)^{2}}$\\   \hline
			$n$ even & $n+1$ & 1 & $\textcolor{red}{(n+1)^{2}}$ \\ \hline
			
		\end{tabular}
		
	\end{center}
	
	More generally, we have the following theorem. 
	
	\begin{theorem}\label{thmmainfour}
		
	Assume $p\neq 2$.	Let $G=\mrgl_{n}(F)$, $H'=\mro_{n}(\varepsilon)$ and $H=\mrso_{n}(\varepsilon)$. Then
		the dimension of $$\mrhom_{H'}(\mrst_{G},\mbc)\cong \mch(G)^{H'}$$
		is
		$$\begin{cases} (k+1)(k+2)/2 &\text{if}\ n=2k\ \text{and}\ H\ \text{is split};\\k(k+1)/2	& \text{if}\ n=2k\ \text{and}\ H\ \text{is quasi-split but not split};\\(k-1)k/2  & \text{if}\ n=2k\ \text{and}\ H\ \text{is not quasi-split};\\(k+1)(k+2)/2 & \text{if}\ n=2k+1\ \text{and}\ H\ \text{is split};\\k(k+1)/2 & \text{if}\ n=2k+1\ \text{and}\ H\ \text{is not quasi-split}.\end{cases}$$
		The dimension of
		$$\mrhom_{H}(\mrst_{G},\mbc)\cong \mch(G)^{H}$$ is the same as above, except that when $n=2$ and $H$ is split it is $4$. 
		
	\end{theorem}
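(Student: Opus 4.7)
The plan is to specialize the general framework of the paper to the symmetric pair $(G,H)=(\mathrm{GL}_n(K),\mathrm{SO}_n(\varepsilon))$. By the Broussous--Court\`es realization recalled in the introduction, I identify $\mrhom_{H'}(\mrst_G,\mbc)$ with the space $\mch(G)^{H'}$ of $H'$-invariant complex harmonic cochains on the Bruhat--Tits building $\mcb_G$ of $G$. The general upper bound theorem of the paper then reduces $\dim_\mbc \mch(G)^{H'}$ to counting $H'$-orbits on a specific set of $\theta$-compatible (``maximal/effective'') chambers or facets of $\mcb_G$, encoded by the combinatorial data $\Ftmax^\theta$ and $\Fteff^\theta$. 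The first task is thus to make this orbit count explicit.

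To carry out the count, I would fix a $\theta$-stable maximal $K$-split torus of $G$ adapted to $\varepsilon$ and the corresponding $\theta$-stable apartment $\mca\subset\mcb_G$. The involution $\theta$ induces an affine involution on $\mca$ whose fixed-point set is (essentially) an apartment of the Bruhat--Tits building of $H$; the finite Weyl group combinatorics of this fixed apartment is controlled by the relative root system of $\mrso_n(\varepsilon)$, which depends on whether $H$ is split, quasi-split non-split, or non-quasi-split, and on the parity of $n$. For each of the five cases of the theorem, I would enumerate the $H$-orbits (equivalently $H'$-orbits, modulo the $H'/H$-issue below) of the relevant $\theta$-stable facets in the fundamental alcove and check that the total count is the triangular number predicted: $(k+1)(k+2)/2$, $k(k+1)/2$, or $(k-1)k/2$. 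Concretely this amounts to a combinatorial parametrization by pairs of integers with a linear constraint (a ``staircase'' inside a triangle), which I expect to write down in each case by an apartment diagram.

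For the matching lower bound, I would verify that the hypotheses of the Poincar\'e-series construction of the paper (the effectivity/convergence conditions built into $\Fteff^\theta$) are satisfied in every case above, and then invoke the explicit basis of $\mch(G)^{H'}$ produced there. This produces, for each orbit enumerated in the upper-bound count, a nonzero $H'$-invariant harmonic cochain supported on its $G$-translates, and the Poincar\'e-series construction guarantees these are linearly independent, thereby realizing the upper bound as a dimension.

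Finally, I would compare $\mch(G)^{H}$ with $\mch(G)^{H'}$. Since $H'/H\cong\mbz/2\mbz$ acts on $\mch(G)^{H}$, one has $\dim\mch(G)^{H'}=\dim\mch(G)^{H}$ exactly when every element of $H'\setminus H$ fixes the orbit-basis (and otherwise $\mch(G)^{H}$ strictly contains $\mch(G)^{H'}$). A direct check using the apartment model shows that conjugation by an element of $H'\setminus H$ permutes the Poincar\'e-series basis trivially in all cases except $n=2$ split, where two small $H$-orbits merge into one $H'$-orbit so $\dim\mch(G)^{H}=4$ while $\dim\mch(G)^{H'}=3$. The main obstacle I anticipate is the case-by-case orbit bookkeeping for $\Ftmax^\theta$ versus $\Fteff^\theta$ in the non-quasi-split regime, and the delicate low-rank anomaly at $n=2$, which requires a direct hands-on calculation rather than an appeal to the general framework.
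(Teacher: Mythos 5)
Your proposal follows essentially the same route as the paper's Section \ref{sectionorthogonalgroup}: specialize the general upper-bound and Poincar\'e-series theorems, count $\theta$-equivalence classes of $\theta$-stable apartments (equivalently $H$-orbits of maximal $\theta$-stable facets) via the discriminant/Hasse-invariant classification of symmetric matrices to obtain the triangular numbers, verify effectiveness and convergence, check $H'$-invariance of the constructed linear forms, and treat the $n=2$ split case by a direct hand computation since the general framework (Assumption \ref{assumpZHZG}) fails there. The only slight imprecision is that the fixed set $\mca^\theta$ carries an affine Coxeter structure of type $CB_r$ which refines the apartment of $\mcb(H)$, rather than being governed directly by the relative root system of $\mrso_n(\varepsilon)$, but this does not affect your plan.
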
 

    We will explain its proof in \S 1.4 after introducing the main general results in \S 1.3.
	
	Finally, summing over $H'=\mro_n(\varepsilon)$ with $\varepsilon$ ranging over $X/G$, we get 
	
	\begin{corollary}
		
		Assume $p\neq 2$. Then $\mathrm{dim}_{\mathbb{C}}\mrhom_{G}(\mrst_G,\mcc^{\infty}(X))=(n+1)^2$.
		
	\end{corollary}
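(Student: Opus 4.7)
The plan is to derive this by summing the dimensions from Theorem~\ref{thmmainfour} over the finitely many orbits $X/G$. From the orbit decomposition $X \cong \bigsqcup_{\varepsilon \in X/G}\mro_{n}(\varepsilon)\backslash G$ recorded in the text, one obtains an isomorphism of smooth $G$-representations
\[
\mcc^{\infty}(X)\;\cong\;\bigoplus_{\varepsilon\in X/G}\mrInd_{\mro_{n}(\varepsilon)}^{G}\mbc,
\]
a finite direct sum since $X/G$ is finite, and smooth Frobenius reciprocity (right adjointness of induction without compact support) yields
\[
\mrhom_{G}(\mrst_{G},\mcc^{\infty}(X))\;\cong\;\bigoplus_{\varepsilon\in X/G}\mrhom_{\mro_{n}(\varepsilon)}(\mrst_{G},\mbc).
\]
Each summand on the right is computed by Theorem~\ref{thmmainfour}, so the corollary is reduced to a counting problem.

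Next I would enumerate $X/G$ via the classification of non-degenerate quadratic forms over $K$: since $p\neq 2$, such forms are classified by dimension, discriminant in $K^{\times}/K^{\times 2}$, and Hasse invariant, and the anisotropic kernel has dimension at most $4$. Stratifying by the anisotropic kernel gives, for $n=2k+1\geq 3$, four orbits with kernel of dimension $1$ (for which $\mrso_{n}(\varepsilon)$ is split) and four with kernel of dimension $3$ (not quasi-split); for $n=2k\geq 4$, one split orbit (kernel of dimension $0$), six quasi-split non-split orbits (kernel of dimension $2$), and one not-quasi-split orbit (kernel of dimension $4$); for $n=2$, one split orbit and six anisotropic orbits, which align with the ``split'' and ``quasi-split non-split'' rows of Theorem~\ref{thmmainfour} respectively; and for $n=1$, four orbits with $\mro_{1}(\varepsilon)=\{\pm 1\}$.

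Finally I would substitute and simplify. For odd $n=2k+1\geq 3$,
\[
4\cdot\frac{(k+1)(k+2)}{2}+4\cdot\frac{k(k+1)}{2}\;=\;4(k+1)^{2}\;=\;(n+1)^{2};
\]
for even $n=2k\geq 2$,
\[
\frac{(k+1)(k+2)}{2}+6\cdot\frac{k(k+1)}{2}+\frac{(k-1)k}{2}\;=\;4k^{2}+4k+1\;=\;(n+1)^{2};
\]
and for $n=1$ the trivial sum is $4=(n+1)^{2}$. The only step requiring care, rather than effort, is the alignment of the Witt-index stratification with the case distinction in Theorem~\ref{thmmainfour} in the edge case $n=2$, where the one-dimensional torus $\mrso_{2}(\varepsilon)$ attached to an anisotropic form makes the abstract ``quasi-split non-split'' versus ``not quasi-split'' dichotomy degenerate; outside this subtlety the argument is routine bookkeeping.
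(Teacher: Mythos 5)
Your proposal is correct and follows essentially the same route as the paper: decompose $X$ into its $G$-orbits (classified by discriminant and Hasse invariant as in \S\ref{subsectionsymmetricmatrices}), identify each $\mrhom_{G}(\mrst_G,\mcc^{\infty}(\mro_n(\varepsilon)\backslash G))$ with $\mrhom_{\mro_n(\varepsilon)}(\mrst_G,\mbc)$, and sum the dimensions from Theorem \ref{thmmainfour}, which the paper does via the same arithmetic identities. Your explicit Frobenius-reciprocity step and anisotropic-kernel stratification only make explicit what the paper leaves implicit, and your handling of $n=2$ is fine since the absent non-quasi-split orbit corresponds to the vanishing term $(k-1)k/2=0$.
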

	
	\begin{proof}
		
		When $n=1$, we have $4=1+1+1+1$; When $n=2$, we have $9=3+1+1+1+1+1+1$; When $n=2k+1\geq 3$, we have $(2k+2)^2=4(k+1)(k+2)/2+4k(k+1)/2$; When $n=2k\geq 4$, we have $(2k+1)^2=(k+1)(k+2)/2+6k(k+1)/2+(k-1)k/2$. Using the classification of $X/G$ in \S \ref{subsectionsymmetricmatrices} and Theorem \ref{thmmainfour}, the proof is finished.
		
	\end{proof}
	
	However, to fully resolve the Conjecture \ref{conjmetaplecticdistinction}, we expect a local or even global trace formulae comparison to be launched, which is currently beyond our scope.
	
	\subsection{Main general results}
	
	With general motivations and the above special case in mind, we may introduce our main results in general. We keep our notations as above and further assume that $p\neq 2$. In the main part of the article, this assumption is valid from Section \ref{sectioninvolution} until the end.

    Before moving on, we briefly explain the importantce of the $p\neq 2$ assumption, which will be essentially used on:

    \begin{itemize}

    \item showing that the first group cohomology of an involution $\theta$ on a pro-$p$-group is trivial;

    \item the existance of a $\theta$-stable apartment containing a fixed chamber (\emph{cf.} Proposition \ref{propAcontainC}). 

    \item the classification result in Section \ref{sectionrankone}.

    \end{itemize}
	
	Our starting point is the series of works  \cite{broussous2014distinction}, \cite{courtes2015distinction}, \cite{courtes2017distinction} mentioned above, where the authors describe the Steinberg representation of $G$ in terms of the Bruhat--Tits theory. More precisely, let $\mcb(G)$ be the reduced Bruhat--Tits building of $G$ as a poly-simplicial complex, and  $\ch{}{}(G)$ the set of chambers of $\mcb(G)$. Consider the $\mbc$-vector space of harmonic cochains 
	$$\mch(G):=\{f:\ch{}{}(G)\rightarrow\mbc\mid f\ \text{is harmonic}\},$$
	where by ``harmonic" we mean that for any panel (i.e. a codimension 1 facet) $D$ in $\mcb(G)$, the sum of the value of $f$ on those chambers adjacent to $D$ is zero. The $G$-action on $\mcb(G)$ induces a $G$-action on $\mch(G)$ (with a twist by a quadratic character $\epsilon_G$ associated to the ``orientation" of $\mcb(G)$). Then, the smooth part of $\mch(G)$ is exactly the Steinberg representation $\mrst_G$, which is a known result due to Borel--Serre. See Section \ref{sectionsteinberg} for more details.
	
	Then, we would like to study the space
	$$\mrhom_{H}(\mrst_{G},\chi)\cong \mch(G)^{(H,\chi)}$$
	and its dimension. Notice that the involution $\theta$ induces an involution on $\mcb(G)$. Let $\Ft(G)$ be the set of $\theta$-stable facets in $\mcb(G)$. In particular, a $\theta$-stable facet $F$ is called \emph{maximal} if its $\theta$-invariant part $F^{\theta}$ is a chamber of the $\theta$-invariant affine subspace $\mca^{\theta}$ of a certain $\theta$-stable apartment $\mca$ of $\mcb(G)$. We denote by $\Ftmax(G)$ the subset of $\Ft(G)$ of maximal $\theta$-stable facets. By definition, both $\Ft(G)$ and $\Ftmax(G)$ are invariant under $H$-action. Moreover, we may show that both of them have finitely many $H$-orbits.
	
	Our first result is summarized as follows:
	
	\begin{theorem}[\emph{cf.} Theorem \ref{thmupperbound}]\label{thmmainone}
		
		Let $\chi$ be a character of $H$ satisfying Assumption \ref{assumpchi}. Then the dimension of
		$$\mrhom_{H}(\mrst_{G},\chi)\cong \mch(G)^{(H,\chi)}$$ is bounded by $$\car{\Ftmax(G)/H}$$
		
	\end{theorem}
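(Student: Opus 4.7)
The plan is to produce an injective $\mbc$-linear map
$$\Phi\colon\mch(G)^{(H,\chi)}\longrightarrow\bigoplus_{[F]\in\Ftmax(G)/H}\mbc,$$
from which the dimension bound follows at once, since the target has dimension $\car{\Ftmax(G)/H}$. I would fix once and for all a set $\{F_1,\ldots,F_r\}$ of representatives of $\Ftmax(G)/H$, and for each $F_i$ a chamber $C_i\in\ch{}{}(G)$ whose closure contains $F_i$; one exists because facets of $\mcb(G)$ are by definition faces of chambers. The candidate map is $\Phi(f)=(f(C_i))_{i=1}^{r}$, so the task reduces to showing that if $f(C_i)=0$ for every $i$, then $f\equiv 0$ on $\ch{}{}(G)$.

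The argument proceeds by a two-step dévissage, local then global. For the local step, fix $F\in\Ftmax(G)$. The involution $\theta$ preserves the finite set of chambers whose closure contains $F$, and a chamber in this star is $\theta$-fixed precisely when the corresponding chamber of the ambient $\theta$-stable apartment $\mca$ has $F^{\theta}$ as a face in $\mca^{\theta}$. Using the harmonicity relations at the panels lying inside $F^{\theta}$, combined with the $(H,\chi)$-equivariance and Assumption \ref{assumpchi}, one expects to show that the restriction of any $f\in\mch(G)^{(H,\chi)}$ to this star is determined by a single scalar. By $H$-equivariance, the values of $f$ on every chamber in the $H$-saturation of this star are then determined by that single scalar; matching with the chosen orbit representatives, the vanishing of $f(C_i)$ will propagate to the vanishing of $f$ on the entire $H$-saturation of the star of $F_i$.

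For the global step, it remains to show that every chamber $C\in\ch{}{}(G)$ eventually lies in the star of some $F\in\Ftmax(G)$, so that the local step closes the argument. Here the crucial input (from Proposition \ref{propAcontainC}, for which $p\neq 2$ is essential) is that $C$ is contained in some $\theta$-stable apartment $\mca$; inside $\mca$ one runs a minimal-gallery descent from $C$ toward $\mca^{\theta}$, applying the harmonic relation at each crossed panel to express $f(C)$ as a $\mbc$-linear combination of $f$-values on chambers strictly closer to $\mca^{\theta}$. The terminal chambers of the descent have a face that is a maximal $\theta$-stable facet, and the local step applies to them. The main obstacle I anticipate is ensuring compatibility between the harmonic relations and the $\chi$-twist at each step of the descent: the quadratic character $\epsilon_G$ entering the $G$-action on $\mch(G)$ interacts non-trivially with $\chi$ on $\theta$-swapped pairs of adjacent chambers, and controlling the resulting signs requires the triviality of the first group cohomology of $\theta$ on pro-$p$ stabilizers---another place where $p\neq 2$ is indispensable. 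Once these cocycle-type compatibilities are settled, induction on gallery distance finishes the proof and yields the claimed upper bound.
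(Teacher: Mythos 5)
Your global architecture---evaluate $f\in\mch(G)^{(H,\chi)}$ at one chamber per class in $\Ftmax(G)/H$ and prove injectivity by propagating vanishing, first inside the star of a maximal $\theta$-stable facet and then along minimal galleries toward $\mca^{\theta}$---is the same as the paper's, but two essential ingredients are missing, and as stated your local step is not correct. You assert that harmonicity plus $(H,\chi)$-equivariance determines the restriction of $f$ to the star of $F$ by a single scalar. Even restricted to $\ch{F}{0}(G)$ this is false as stated: the harmonic relation at a panel $D$ having $F$ as a facet involves \emph{all} chambers adjacent to $D$, and for non-skew panels these include chambers of $\theta$-rank strictly larger than $\mrdim F^{\theta}$ (the upper panel pairs of Definition \ref{defclasspanelpair}), whose values are not controlled by the star of $F$. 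So the restriction to $\ch{F}{0}(G)$ is pinned down by one scalar only \emph{modulo} the values of $f$ on higher-rank chambers; this forces one to order the representatives with $\mrdim F_{1}^{\theta}\geq\dots\geq\mrdim F_{k}^{\theta}$ and induct on that ordering, an induction entirely absent from your proposal (relatedly, your $C_i$ should be chosen in $\ch{F_i}{0}(G)$, not as an arbitrary chamber of the star). Moreover, even granting the higher-rank values, propagation within $\ch{F}{0}(G)$ needs the statement that any two chambers of $\ch{F}{0}(G)$ are joined, up to $\Hstar$-conjugacy, by a gallery staying inside $\ch{F}{0}(G)$ (Proposition \ref{propJWBgallerylocal}), i.e.\ the connectedness of the graph $\gamma(\ul{G}_{F},\theta)$ of $\ul{H}_{F}$-orbits of $\theta$-split Borel subgroups of $\ul{G}_{F}$ (Proposition \ref{propBGHconnnected}); this is the real content of the local step, proved in the paper by passing to $\ul{G}_{F}^{\mathrm{sc}}$, building a chain of $\theta$-split apartments and solving $\theta(g)g^{-1}=t$ explicitly in $\mrsl_{2}$ and $\mrsu_{3}$ (Section \ref{sectionrankone}). ``One expects to show'' does not supply it.

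The descent step has a parallel gap. At a skew panel $D$ crossed by your minimal gallery the harmonic relation reads $\sum_{C''\in\ch{D}{}(G)}f(C'')=0$, and all but one of these chambers are \emph{farther} from $\mca^{\theta}$, so by itself it does not express $f(C)$ through chambers strictly closer to $\mca^{\theta}$. What makes the descent work is Proposition \ref{propskewpair} (via Proposition \ref{propdistchamber}): the far chambers form a single $\Hstar\cap\parah{C\cup\theta(C)}$-orbit, so $\Hstar$-invariance of $f$ (Assumption \ref{assumpchi}) makes $f$ constant on them and the relation collapses to $f(C_{i-1})+Q_{D}\,f(C_{i})=0$. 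Your appeal to the vanishing of $H^{1}(\langle\theta\rangle,\cdot)$ on pro-$p$ groups points at the right place---it is used precisely in proving that one-orbit statement---but you frame the difficulty as a sign compatibility between $\epsilon_{G}$ and $\chi$, which is not the issue; without the transitivity statement the gallery descent does not close, and the claimed bound does not follow.
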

	Assumption \ref{assumpchi} is a mild condition on the character $\chi$, which is satisfied in most natural cases (including the trivial character and Prasad's character in the Galois case). Also, in Example \ref{examplechi} we see that if this assumption is not satisfied, the original problem of distinction could be ``bad" enough.
	
	We sketch the proof of the above theorem. Given a chamber $C\in\ch{}{}(G)$, we may always find a $\theta$-stable apartment $\mca$ that contains $C$ (\emph{cf.} Proposition \ref{propAcontainC}). We say that
	\begin{itemize}
		\item $C$ is of \emph{$\theta$-rank} $r$ if the dimension of the $\theta$-invariant affine subspace $\mca^{\theta}$ of $\mca$ is $r$.
		\item $C$ is of \emph{$\theta$-distance} $d$ if the combinatorial distance between $C$ and $\mca^{\theta}$ is $d$. In particular, $C$ is of \emph{$\theta$-distance} $0$ if the $\theta$-invariant part of its unique maximal $\theta$-stable facet $F$, denoted by $F^\theta$, is a chamber of $\mca^{\theta}$.
		
	\end{itemize}
	
	Moreover, the above $\mca$ containing $C$ is unique up to $G^{\theta}$-conjugation. Thus the above definition is independent of the choice of $\mca$.
	
	We denote by $\ch{}{0}(G)$ the set of chambers of $\theta$-distance 0. Given a maximal $\theta$-stable facet $F$ in $\mcb(G)$, we denote by $\ch{F}{0}(G)$ the set of chambers of $\theta$-distance 0 that admit $F$ as a facet. Then by definition we necessarily have
	$$\ch{}{0}(G)=\bigsqcup_{F\in\Ftmax(G)}\ch{F}{0}(G).$$   
	
	Given $f\in \mch(G)^{(H,\chi)}$. Using the $(H,\chi)$-equivariance as well as the harmonicity, we may show that
	\begin{itemize}
		\item (\emph{cf.} Proposition \ref{propreddist0}) the value of $f$ on $\ch{}{}(G)$ is determined by its value on $\ch{}{0}(G)$.
		\item (\emph{cf.} Proposition \ref{propJWBgallerylocal}) Given a maximal $\theta$-stable facet $F$, the value of $f$ on the whole $\ch{F}{0}(G)$ is determined by its value on any fixed chamber in $\ch{F}{0}(G)$, as well as its value on those chambers of $\theta$-rank greater than the dimension of $F^{\theta}$.
		
	\end{itemize} 
	
	The first claim reduces to studying the $H$-orbits of the set of chambers in $\mcb(G)$ adjacent to any given skew panel $D$ (\emph{cf.} Proposition \ref{propskewpair} and Definition \ref{defclasspanel}). 
	
	For the second claim, we take the integral model $\mcg_F$ of $G$ associated to $F$, and the maximal reductive quotient $\ul{G}_F$ of the special fiber of $\mcg_F$ as a reductive group over $\bs{k}$. Since $F$ is $\theta$-stable, both $\mcg_F$ and $\ul{G}_F$ are endowed with a $\theta$-action as well, and we have a canonical bijection
	$$\ch{F}{0}(G)\leftrightarrow\ch{-}{\theta}(\ul{G}_F),$$
	where $\ch{-}{\theta}(\ul{G}_F)$ denotes the set of Borel subgroups $\ul{B}$ of $\ul{G}_F$ that are $\theta$-split (i.e. $\theta(\ul{B})\cap \ul{B}$ is a maximal torus of $\ul{G}_F$). Let $\ul{H}_{F}$ be the connected component of the $\theta$-fixed subgroup $\ul{G}_F^\theta$ of $\ul{G}_F$. We first assume $\chi=1$ to simplify our discussion. Then, $\ch{-}{\theta}(\ul{G}_F)/\ul{H}_{F}$ has a graph structure denoted by $\gamma(\ul{G}_F,\theta)$, where its vertices are $\ul{H}_{F}$-orbits of $\ch{-}{\theta}(\ul{G}_F)$ and its edges are $\ul{H}_{F}$-orbits of panels between chambers in $\ch{-}{\theta}(\ul{G}_F)$. Indeed, from our rank one calculation in Section \ref{sectionrankone}, there are at most two $\ul{H}_{F}$-orbits of chambers in $\ch{-}{\theta}(\ul{G}_F)$ adjacent to a fixed panel. Then each ``edge" links at most two ``vertices", which justifies the definition of a graph. Moreover, the second claim is somehow reduced to showing that $\gamma(\ul{G}_F,\theta)$ is connected (\emph{cf.} Proposition \ref{propBGHconnnected}), whose proof finally boils down to the rank one case and concrete calculations in Section \ref{sectionrankone}. In general, the above argument still works if we assume Assumption \ref{assumpchi} on $\chi$.
	
	By induction on $\mrdim(F^\theta)$, the above two claims are enough to show Theorem \ref{thmmainone}.
	
	We also have a refined version of Theorem \ref{thmmainone}. We say that two $\theta$-stable apartments are \emph{$\theta$-equivalent} if their associated $\theta$-invariant affine subspaces are $H$-conjugate. Given a $\theta$-stable apartment $\mca$, we denote by $[\mca]_{H,\sim}$ the $\theta$-equivalence class of $\mca$. Indeed, there are only finitely many $H$-conjugacy classes of $\theta$-stable apartments, as well as finitely many $\theta$-equivalence classes.
	
	Given an $H$-conjugacy class of a maximal $\theta$-stable facet $F$, let $\mca$ be a $\theta$-stable apartment such that $F^\theta$ is a chamber of $\mca^\theta$. Assuming Assumption \ref{assumpZHZG}, such $\mca$ is unique up to $\theta$-equivalence. Then, we have the following bijection
	\begin{equation}\label{eqFthetamaxHbij}
		\Ftmax(G)/H\rightarrow \bigsqcup_{[\mca]_{H,\sim}}\ch{}{}(\mca^{\theta})/H,\quad [F]_{H}\mapsto [F^{\theta}]_{H},
	\end{equation}
	where $\ch{}{}(\mca^{\theta})$ denotes the set of chambers of $\mca^{\theta}$, and $[F]_{H}$ and  $[F^{\theta}]_{H}$ denote the $H$-conjugacy class of $F$ and $F^{\theta}$ respectively.
	Now we define a graph $\Gamma(G,\theta)$ as a disjoint union of subgraphs $\Gamma(G,\theta,[\mca]_{H,\sim})$ with $[\mca]_{H,\sim}$ ranging over $\theta$-equivalence classes of $\theta$-stable apartments of $\mcb(G)$. The set of vertices of $\Gamma(G,\theta,[\mca]_{H,\sim})$ is defined to be $\ch{}{}(\mca^{\theta})/H$, and the set of (single or double) edges of $\Gamma(G,\theta,[\mca]_{H,\sim})$ is defined as in \S \ref{subsectiongraphGammaGAtheta}. 
	
	Moreover, a vertex $[F^\theta]_H$ is called \emph{effective} if there exists a non-zero $(H,\chi)$-equivariant harmonic cochain on  the set of chambers having $F$ as a facet denoted by $\ch{F}{}(G)$, that is supported on $\ch{F}{0}(G)$. From the proof of Theorem \ref{thmmainone}, such a function is unique up to a scalar. To provide another perspective,  when $\chi=1$ the effectiveness of $F$ implies the weak  effectiveness of $F$, which somehow means that the graph $\gamma(\ul{G}_F,\theta)$ we discussed above is bipartite. Finally,
	we call a connected component of $\Gamma(G,\theta)$ \emph{effective} if every vertex within is effective. 
	
	Then, we have the following refined version of Theorem \ref{thmmainone}, whose proof is also similar.
	
	\begin{theorem}[\emph{cf.} Theorem \ref{thmupperboundrefine}]\label{thmmaintwo}
		
		Under the Assumption \ref{assumpchi} and Assumption \ref{assumpZHZG}, the dimension of
		$$\mrhom_{H}(\mrst_{G},\chi)\cong \mch(G)^{(H,\chi)}$$ is bounded by the number of effective connected components of $\Gamma(G,\theta)$ that do not have any double edge.
		
	\end{theorem}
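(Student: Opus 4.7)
The plan is to sharpen the proof of Theorem \ref{thmmainone} by identifying the image of the injective evaluation map
\[
\mch(G)^{(H,\chi)}\hookrightarrow \mbc^{\Ftmax(G)/H},\qquad f\mapsto \bigl(f(C_{[F]_H})\bigr)_{[F]_H\in\Ftmax(G)/H},
\]
constructed in Theorem \ref{thmmainone} (with $C_{[F]_H}\in\ch{F}{0}(G)$ a chosen representative chamber for each orbit), with a subspace cut out by two sources of linear relations: a local vanishing condition at non-effective vertices, and gluing relations encoded by the single and double edges of $\Gamma(G,\theta)$.

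For the local constraints, I would argue by the same descending induction on $\theta$-rank used in Theorem \ref{thmmainone}. Suppose $[F^\theta]_H$ is non-effective; by definition every $(H,\chi)$-equivariant harmonic cochain on $\ch{F}{}(G)$ supported on $\ch{F}{0}(G)$ vanishes. Proposition \ref{propJWBgallerylocal} expresses $f|_{\ch{F}{0}(G)}$ as a linear combination of $f(C_{[F]_H})$ and values of $f$ at chambers of $\theta$-rank strictly greater than $\dim F^\theta$; by the inductive hypothesis these higher-rank contributions have already been written as linear combinations of values at previously processed vertices, and after substitution the residue must lie in the (zero) non-effective local space, forcing $f(C_{[F]_H})=0$. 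For the edge relations, given an edge of $\Gamma(G,\theta,[\mca]_{H,\sim})$ joining $[F^\theta]_H$ and $[F'^\theta]_H$ with $F,F'$ whose $\theta$-fixed chambers share a panel in $\mca^\theta$, I would analyze harmonicity at panels of $G$ of $\theta$-distance zero lying above this common panel. Following the definitions in \S \ref{subsectiongraphGammaGAtheta}, a single edge corresponds to one $H$-orbit of such panels and produces one nontrivial scalar identity $f(C_{[F]_H})=\lambda\, f(C_{[F']_H})$ with $\lambda\ne 0$; a double edge corresponds to two distinct $H$-orbits and thus imposes two linearly independent relations on the pair $\bigl(f(C_{[F]_H}),f(C_{[F']_H})\bigr)$, forcing both to vanish.

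Combining the two mechanisms, each connected component of $\Gamma(G,\theta)$ without double edges and with every vertex effective contributes at most $1$ to the dimension, since the single-edge relations propagate the value at one vertex to the values at all others within the component; any other component either contains a double edge (forcing both endpoints, and by propagation the whole component, to vanish) or contains a non-effective vertex (forcing that vertex, and by propagation the whole component, to vanish), and hence contributes $0$. Summing over components yields the bound in the statement. The main technical obstacle is the edge analysis: extracting the precise dichotomy between single and double edges from the harmonicity relations requires a careful passage from the common parent facet to its residue reductive group, and in particular one must verify that two distinct $H$-orbits of panels above the common facet genuinely produce two linearly independent constraints rather than two copies of the same constraint. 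This step ultimately reduces, via the structure theory of Section \ref{sectioninvolution}, to the rank-one classification carried out in Section \ref{sectionrankone}.
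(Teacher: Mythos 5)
Your global strategy is the same as the paper's: induct downwards on $\mrdim F^{\theta}$, reduce $f$ to its values at one chamber of $\ch{F}{0}(G)$ per vertex of $\Gamma(G,\theta)$, kill the value at non-effective vertices, propagate values along edges inside a connected component, and kill components carrying a double edge. Your treatment of non-effective vertices is essentially correct: after subtracting off the higher-rank part, the restriction of $f$ to $\ch{F}{}(G)$ is an $(H,\chi)$-equivariant harmonic cochain supported on $\ch{F}{0}(G)$, hence zero by non-effectiveness. The gap lies in the edge analysis, which is the actual content of the refinement over Theorem \ref{thmmainone}.

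First, you have inverted the meaning of a double edge. In \S \ref{subsectiongraphGammaGAtheta} an edge of type (2c) is marked double precisely when the chambers of $\theta$-rank $\mrdim(\mca^{\theta})$ adjacent to the relevant panel $D$ form a \emph{single} $H$-orbit; it has nothing to do with ``two $H$-orbits of panels''. The vanishing mechanism at a double edge is therefore the harmonic sum over that one orbit (the higher-rank terms vanishing after the reduction), forcing the common value to be zero, not ``two linearly independent relations on the pair''; conversely, when there are two orbits one gets exactly one proportionality relation, which is the ordinary-edge case. In particular your mechanism cannot handle double loops, which do occur (Example \ref{exGL3galois}). Second, edges come in three flavours (2a), (2b), (2c), and your panel-based argument only addresses (2c): for a type (2b) edge the perpendicular hyperplane $\mfh$ is not a wall of $\mca$, so there is no panel of $\mcb(G)$ sitting in it, and the link between the two vertices comes instead from Corollary \ref{corthetaadapt} (a chamber equidistant from $F^{\theta}$ and $F'^{\theta}$, with two minimal galleries) combined with Proposition \ref{propreddist0}; type (2a) is handled by $(H,\chi)$-equivariance. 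Third, even in case (2c) the harmonic relation at $D$ involves chambers $C_{3},C_{4}$ adjacent to $D$, which in general have positive $\theta$-distance; they are not ``panels of $\theta$-distance zero'', and to convert the relation into $f(C_{[F]_H})=\lambda\,f(C_{[F']_H})$ you must transport the values back to $\ch{F}{0}(G)$ and $\ch{F'}{0}(G)$ along two minimal galleries of equal length symmetric under the reflection in $\mfh$ (Proposition \ref{propdistCAtheta}, Corollary \ref{corthetaadapt'}), so that the factors $\prod(-Q_i)^{-1}$ match; you flag this as a technical obstacle, but as set up your relations would be imposed at the wrong chambers, and the dichotomy you propose (one orbit of panels versus two) does not reduce to the rank-one classification in the way the actual dichotomy (one versus two $H$-orbits of chambers of $\theta$-rank $r$ adjacent to $D$, Proposition \ref{proppanelpair}) does.
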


    We remark that only place we need the Assumption \ref{assumpZHZG} is that it guarantees the bijection \eqref{eqFthetamaxHbij}. However, this assumption could possibly be removed in concrete cases, see Remark \ref{remassumZHZG}. Also, Assumption \ref{assumpZHZG} itself is mild, which is true in many standard cases.
	
	We expect that the upper bound given in Theorem \ref{thmmaintwo} is sharp, since to our knowledge it is in accordance with all the known examples. Thus for concrete examples it somehow gives an effective strategy of determining a reasonable upper bound of the distinguished dimension, since we may classify all the $\theta$-equivalence classes $[\mca]_{H,\sim}$, as well as depict the related graph $\Gamma(G,\theta)=\sqcup_{[\mca]_{H,\sim}}\Gamma(G,\theta,[\mca]_{H,\sim})$. 
	
	Finally, under certain conditions one would like to show the above upper bound to be sharp. We need to construct a finite set of non-zero $(H,\chi)$-equivariant linear forms on the smooth subspace $\mch^\infty(G)$ of $\mch(G)$ as a basis of $\mrhom_{H}(\mrst_{G},\chi)$. We use the technique of Poincar\'e series as in \cite{courtes2017distinction}. 
	
	In general, we need to consider a character $\chi$ satisfying Assumption \ref{assumpchi} and Assumption \ref{assumpchi'} that are mild conditions  being satisfied in most standard cases. But to simplify our discussion, we assume both $\chi$ and $\epsilon_G\rest_H$ to be trivial characters in the following paragraph. 
	
	Given an effective maximal $\theta$-stable facet $F$, by definition we have a non-zero $(H,\chi)$-equivariant harmonic cochain $\phi_F$ on $\ch{F}{}(G)$ supported on $\ch{F}{0}(G)$. We define a linear form $\lambda_{F}\in\mrhom_{H}(\mrst_{G},\mbc)$ by
	$$\lambda_{F}(f)=\sum_{C\in\ch{F}{0}(G)}\phi_{F}(C)\sum_{C'\in\mco_{C}}f(C'),\quad f\in\mch^\infty(G),$$
	where $\mco_{C}$ denotes the $H$-orbit of $C$. A priori, it is not clear if the above infinite sum is absolutely convergent. So we need Assumption \ref{assumpabsconverg} to guarantee it. Then from our construction, $\lambda_F$ is a $H$-invariant linear form. 
	
    In many standard cases, Assumption \ref{assumpabsconverg} is satisfied and could be verified directly, see \cite{gurevich2016criterion}. On the other hand, since we have good reasons to focus on strongly tempered spherical varieties as in \S 1.1, this assumption is not outrageous at all.
	
	However, it is not clear that $\lambda_F$ is non-zero. To show this, our strategy is to find a harmonic cochain $f_F\in\mch^\infty(G)$ (not necessary $H$-invariant) that extends $\phi_F$. Then we would like to show that $\lambda_F(f_F)$ is non-zero by showing that it is indeed a Poincar\'e series with variables of absolute value smaller that $1$. To make the above idea work, one key assumption is that any poly-simplicial complex $\mca^\theta$ as above having $F^\theta$ as a chamber must be an affine Coxeter complex realized by the $H$-action (\emph{cf.} Assumption \ref{assumpaffinecoxeter}). 
	
	Then, we have the following theorem.
	
	\begin{theorem}[\emph{cf.} Theorem \ref{thmdistdimcal}]\label{thmmainthree}
		Assume Assumption \ref{assumpZHZG}, \ref{assumpabsconverg}, \ref{assumpchi}, \ref{assumpchi'}. Assume Assumption  \ref{assumpaffinecoxeter} for each $\theta$-stable apartment $\mca$ with the corresponding subgraph $\Gamma(G,\theta,[\mca]_{H,\sim})$ having an effective connected component, which guarantees that $\Gamma(G,\theta,[\mca]_{H,\sim})$ has a single effective vertex and no double edge. Then the dimension of
		$$\mrhom_{H}(\mrst_{G},\chi)\cong \mch(G)^{(H,\chi)}$$
		equals the number of $\theta$-equivalence classes $[\mca]_{H,\sim}$ that are effective.
		
	\end{theorem}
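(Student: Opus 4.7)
The plan combines the upper bound from Theorem~\ref{thmupperboundrefine} with an explicit construction of enough linearly independent distinguishing forms, following the Poincar\'e series strategy of \cite{courtes2017distinction}. For the upper bound, Assumption~\ref{assumpaffinecoxeter} forces the stabilizer of $\mca^\theta$ in $H$ to act on $\mca^\theta$ via an affine Weyl group, in particular transitively on chambers. Consequently $\ch{}{}(\mca^{\theta})/H$ is a singleton, each subgraph $\Gamma(G,\theta,[\mca]_{H,\sim})$ consists of a single vertex with no edges, and the count of effective components of $\Gamma(G,\theta)$ without double edges collapses to the number of effective $\theta$-equivalence classes. It thus remains to exhibit that many linearly independent elements of $\mrhom_H(\mrst_G,\chi)$.

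For each effective class $[\mca]_{H,\sim}$, I would fix a maximal $\theta$-stable facet $F$ representing its unique effective vertex and take the harmonic cochain $\phi_F$ on $\ch{F}{}(G)$ supported on $\ch{F}{0}(G)$ provided by effectiveness. I would then define the candidate linear form
\[
\lambda_F(f) = \sum_{C \in \ch{F}{0}(G)} \phi_F(C) \sum_{C' \in \mco_C} \chi(h_{C',C})^{-1} f(C'),\qquad f \in \mch^\infty(G),
\]
where $h_{C',C} \in H$ is chosen so that $h_{C',C} \cdot C = C'$. Independence from this choice and well-definedness of the character twists would be verified using Assumptions~\ref{assumpchi} and~\ref{assumpchi'}, while absolute convergence is precisely Assumption~\ref{assumpabsconverg}. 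By construction each $\lambda_F$ is $(H,\chi)$-equivariant, so it lies in $\mrhom_H(\mrst_G,\chi)$.

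The main obstacle is proving $\lambda_F \neq 0$. The plan is to extend $\phi_F$ to a (not necessarily $H$-invariant) harmonic cochain $f_F \in \mch^\infty(G)$ whose values on chambers of positive $\theta$-distance are prescribed by the inductive reduction behind Proposition~\ref{propreddist0}, and then to compute $\lambda_F(f_F)$ by organizing the orbits $\mco_C$ according to the combinatorial distance between $C'$ and $\mca^\theta$. Assumption~\ref{assumpaffinecoxeter} is the decisive input here: it identifies $\mca^\theta$ with an affine Coxeter complex whose affine Weyl group is realized inside $H$, so the total sum reorganizes into a Poincar\'e-type series $\sum_{w} c_w q^{-\ell(w)}$ indexed by this affine Weyl group, with positive coefficients $c_w$ coming from chamber counts on the reductive quotients $\ul{G}_{F'}$ (powers of $\car{\bs{k}}>1$). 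Such a series converges to a strictly positive real number, hence $\lambda_F(f_F) \neq 0$. This is where essentially all the work lies and where the Coxeter hypothesis is used in an irreducible way.

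Finally, linear independence of the family $\{\lambda_F\}$ follows from the structure of the correspondence $\lambda_F \leftrightarrow \Phi_F\in\mch(G)^{(H,\chi)}$: by construction $\Phi_F$ vanishes on every chamber outside the $H$-orbit of $\ch{F}{0}(G)$, and by the bijection \eqref{eqFthetamaxHbij} the orbits $H\cdot\ch{F}{0}(G)$ associated to distinct effective classes are pairwise disjoint. The transition matrix between the $\{\lambda_F\}$ and the one-dimensional restriction spaces indexing the upper bound is therefore diagonal with non-zero diagonal entries, which yields the required linear independence and completes the proof.
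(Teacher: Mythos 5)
Your reduction of the upper bound, the construction of $\lambda_F$, and the Poincar\'e-series computation of $\lambda_F(f_F)$ all track the paper's argument; the non-vanishing $\lambda_F(f_F)>0$ via positivity of the affine Poincar\'e series is exactly Proposition~\ref{proplambdaFfF>0} together with Lemma~\ref{lemmaAthetaFtheta}.(3) giving $\abs{m_{\mfh_\theta}}<1$.

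The genuine gap is in the final linear-independence step. You assert that $\lambda_F$ corresponds to an $(H,\chi)$-equivariant harmonic cochain $\Phi_F\in\mch(G)^{(H,\chi)}$ that \emph{vanishes outside} $H\cdot\ch{F}{0}(G)$, and deduce a diagonal transition matrix. This cannot be right: the harmonic condition forces any nonzero $(H,\chi)$-equivariant harmonic cochain to be nonzero on chambers of arbitrarily large $\theta$-distance (Proposition~\ref{propreddist0} shows that the value at a distance-$d$ chamber along a skew gallery is a nonzero scalar $\prod_i(-Q_i)^{-1}$ times the value at a distance-$0$ chamber). Thus no nonzero element of $\mch(G)^{(H,\chi)}$ is supported on $H\cdot\ch{F}{0}(G)$; the disjointness of these orbits does not yield a diagonal pairing. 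The test functions $f_F$ are not supported there either: $f_F(C')$ is nonzero for every $C'$ whose nearest chamber in $\ch{F}{}(G)$ lies in $\ch{F}{0}(G)$, a much larger set. What the paper actually shows is only a \emph{lower-triangular} structure: ordering the effective classes by $\mrdim\mca_1^\theta\geq\dots\geq\mrdim\mca_k^\theta$, Corollary~\ref{corlambdaFfF'} forces $\lambda_{F_i}(f_{F_j})=0$ for $i<j$ — because $\lambda_{F_i}(f_{F_j})\neq 0$ would place $F_i^\theta$ and $F_j^\theta$ as chambers of a common $\mca^\theta$, contradicting the uniqueness of Lemma~\ref{lemmauniquethetaequiv} and the single-vertex structure of $\Gamma(G,\theta,[\mca]_{H,\sim})$ — while $\lambda_{F_i}(f_{F_i})>0$ by Proposition~\ref{proplambdaFfF>0}. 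For $i>j$ nothing is claimed, so the matrix is lower triangular with positive diagonal, hence invertible, and this combined with Theorem~\ref{thmupperboundrefine} is what closes the argument. Your proof needs this off-diagonal vanishing via Proposition~\ref{propfFC'neq0}/Corollary~\ref{corlambdaFfF'}; the support-disjointness picture you invoke is not available.
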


    Unlike the former assumptions as we have already explained above, a priori  Assumption \ref{assumpaffinecoxeter} is not expected to be satisfied in general. By in practice, many interesting cases happen to be so.
	
	We briefly explain our  strategy. For each effective $\theta$-equivalence class $[\mca_i]_{H,\sim}$, $i=1,\dots,k$, we let $F_i$ be an effective maximal $\theta$-stable facet, such that $F_i^\theta$ is a chamber of $\mca_i^\theta$. Then, combining with Theorem \ref{thmmaintwo} we only need to show that $\lambda_{F_1},\dots,\lambda_{F_k}$ are linearly independent. 
	
	We may without loss of generality assume that $\mrdim\mca_1^\theta\geq\mrdim\mca_2^\theta\geq\dots\geq\mrdim\mca_k^\theta$, and we may construct $\phi_{F_{i}}$, $\lambda_{F_{i}}$ and $f_{F_{i}}$ as before. It requires to show that the matrix $(\lambda_{F_{i}}(f_{F_{j}}))_{1\leq i,j\leq k}$ is lower triangular and invertible. To show that it is lower triangular, we need to prove $\lambda_{F_{i}}(f_{F_{j}})=0$ for $i<j$, which follows from Corollary \ref{corlambdaFfF'}.
	To show that $\lambda_{F_{i}}(f_{F_{i}})\neq 0$ for each $i$, we use the technique of Poincar\'e series explained as above.
	
	Since we are trying to consider the most general case, the argument in Section \ref{sectionPoincare} becomes quite technical.
	
	Finally, let us remark that to show that Theorem \ref{thmmaintwo} gives the sharp bound in general, one needs to construct non-zero $(H,\chi)$-equivariant  linear forms without the Assumption \ref{assumpaffinecoxeter}. This is not impossible, since it has been done by Court\`es in a rather ad-hoc way in the type $A_{2n}$ split Galois case (\emph{cf.} \cite{courtes2017distinction}*{\S 7.2}). So we expect that our strategy above serves as an effective model of solving concrete problems of distinction of the Steinberg representation with respect to a symmetric pair, even if in some cases the key Assumption \ref{assumpaffinecoxeter} is not satisfied.
	
	\subsection{Proof of Theorem \ref{thmmainfour}}
	
	Following the above notations, we briefly explain the proof of Theorem \ref{thmmainfour}. Our strategy here is to verify the assumptions in Theorem \ref{thmmainthree}, and to calculate the cardinality of the set of $\theta$-equivalence classes as our desired distinguished dimension. More precisely, in this case we classify
	\begin{itemize}
		
		\item all the $G$-orbits of invertible symmetric matrices and the corresponding orthogonal subgroups.
		
		\item all the $H$-orbits of $\theta$-stable apartments as well as $\theta$-equivalence classes. In particular,  $\mca^{\theta}$ is an affine Coxeter complex of type $CB_r$ for a $\theta$-stable apartment $\mca$, where $r=\mrdim(\mca^{\theta})$  and $n\geq 3$.
		
		\item all the $H$-orbits of $\Ft(G)$ and $\Ftmax(G)$. In particular, every facet in $\Ftmax(G)$ is effective.
		
	\end{itemize}
	
	Our calculation is quite concrete, which is interesting in its own right. We remark that the $n=2$ case is a bit different, since Assumption \ref{assumpZHZG} is not satisfied. Still, we may do it by hand using our previous strategy in proving Theorem \ref{thmmainthree}.
	
	Finally, the above strategy works for other symmetric pair $G/H$ and a related character $\chi$ of $H$. The readers are welcome to take their favorite examples and do a similar calculation as above.
	
	\subsection{Structure}
	
	We sketch the structure of this article. 
	
	Section \ref{apartment}-\ref{sectionsteinberg} are preliminaries. In Section \ref{apartment}, we recall the theory of Euclidean reflection complexes, with an emphasis on the vectorial and affine apartments. In Section \ref{building}, we recall the theory of vectorial and Bruhat--Tits buildings. In Section \ref{sectioninvolution}, we discuss basic results for a Bruhat--Tits building $\mcb(G)$ when $G$ is endowed with an involution $\theta$. In Section \ref{sectionsteinberg}, we recall the definition and properties of the Steinberg representation $\mrst_G$ using harmonic cochains.
	
	Section \ref{sectionrankone} is independent and technical, where we classify all the involutions $\theta$ of a rank one group $\ul{G}$ over a finite field $\bs{k}$, as well as the $\ul{H}$-orbits of the Borel subgroups of $\ul{G}$, where $\ul{H}=(\ul{G}^\theta)^\circ$. The readers are encouraged to skip the proofs and admit the main results during the first read.
	
	In Section \ref{sectiongeometry}, we prove Theorem \ref{thmmainone} following our previous sketch. This part includes our key ideas of studying the combinatorial geometry of $\ch{}{}(G)/H$.
	
	In Section \ref{sectionHorbitofFmax}, we continue our discussion in Section \ref{sectiongeometry} and prove our refined Theorem \ref{thmmaintwo}.
	
	In Section \ref{sectionPoincare}, we prove Theorem \ref{thmmainthree}. As mentioned before, the core is to calculate the $(H,\chi)$-equivariant linear form $\lambda_F$ evaluated on the test function $f_{F}$ for an effective maximal $\theta$-stable facet $F$, which turns out to be a Poincar\'e series with variables of absolute value smaller than 1.  Thus $\lambda_F(f_F)$ is non-zero. We pave the way for that by developing various technical results.
	
	In Section \ref{sectionorthogonalgroup}, we consider the distinction of the Steinberg representation of
	$G=\mrgl_{n}(K)$ by an orthogonal (resp. special orthogonal) subgroup $H'$ (resp. $H$). In the end, we prove Theorem \ref{thmmainfour}.

    We have  created a list of symbols for convenience of the readers.
	
	\subsection{Acknowledgement}
	
    Both authors were partially supported by the Israel Science Foundation (grant No. 737/20). Also, the first author was partially supported by Hong Kong RGC grant GRF 14307521 and the Lee Hysan Foundation, and the second author was partially supported  by the project PAT4832423 of the Austrian Science Fund (FWF).

    This work was started when both authors were postdocs at Technion hosted by Maxim Gurevich. We would like thank him for his kind support as well as many enlightening discussions. 
    The first author would like to thank Jiu-Kang Yu for helpful discussions. He also wants to express his gratitude to Tasho Kaletha for sharing the preliminary draft of his wonderful book on the Bruhat--Tits theory. The second authur would like to thank Angelot Behajaina, Paul Broussous, Alberto Minguez and Vincent S\'echerre for useful discussions and comments. 

    Part of the work was done when the first author was visiting Kyoto University in the spring of 2024 and the second author was visiting Zhejiang University  and BIMSA  in the winter of 2023. The first author would like to thank Masao Oi and Atsushi Ichino and the second author would like to thank Fan Gao and Taiwang Deng for their hospitality.

    Finally, the authors would like to dedicate this work to Fran\c cois Court\`es to his memory. Needless to say, our work was deeply influenced  by his work \cite{courtes2017distinction}. It is a pity that we could not have the chance to discuss with him.
	
	\section{The vectorial and affine apartment attached to reductive groups}\label{apartment}
	
	In this section, we will first introduce the general concept of Euclidean reflection groups and their associated Coxeter complex. We will then provide explicit examples and models for the abstract theory, which origins from the theory of reductive groups. In particular, the finite case is available for all fields but the affine case concerns only non-archimedean local fields. 
	
	For an algebraic group $G$ define over a field $K_0$, we will abuse the notation $G$ such that it both stands for the algebraic group and its group of $K_0$ rational points if there is no ambiguity (if there is ambiguity, we use $G(K_0)$ to stand for its group of $K_0$ rational points).

	\subsection{Euclidean reflection groups}\label{euclideanref}
	
	In this part, we consider Euclidean reflection groups and the related Coxeter complexes. We refer to \cite{bourbaki1968lie}, \cite{Mac72} and \cite[Section 10]{abramenko2008buildings} for the basic settings. 
	
	Let $V$ be an Euclidean space of dimension $n$, i.e., an $n$-dimensional $\mbr$-vector space with an inner product, and $\mca$ an affine space whose associated vector space is $V$, i.e, $\mca$ is a $V$-torsor. For any $x\in \mca,v\in V$, the $v$ action on $x$ is denote by $x+v$. For any $x,y\in \mca$, we write $y-x$ for the unique $v\in V$ such that $y=x+v$.\index{$\mca$}
	
	A map $f:\mca\rightarrow \mca'$ is called \emph{affine} if there is an $\mbr$-linear map $Df\in \mrhom_\mbr(V,V')$ \index{$Df$} such that 
	$$f(x+v)=Df(v)+f(x),\quad \forall x\in \mca.$$
	We call $Df$ the \emph{linear part} of $f$. In particular, an affine map $\mca\rightarrow \mbr$ is called an \emph{affine functional}. Let $\mca^*$ denote the set of affine functionals on an affine space $\mca$ (not to confuse with the notation of an affine space with the associated vector space $V^*$). Observe that $\mca^*$ has a structure of an $\mbr$-vector space with the pointwise addition and scalar multiplication. For any $f\in \mca^*$, we have $Df\in V^*$. Hence we have an exact sequence of $\mbr$-vector spaces (without a canonical splitting)
	\begin{equation}
		0\rightarrow \mbr\rightarrow \mca^*\rightarrow V^*\rightarrow 0,
	\end{equation}
	where $\mbr$ stands for the space of constant affine functionals.

	Let $\mrAff(\mca)=\mrgl(V)\ltimes V$ \index{$\mrAff(\mca)$} denote the group of \emph{affine transformations} of $\mca$ with a natural embedding $V\hookrightarrow \mrAff(\mca)$ given by translations and a natural projection $\mrAff(\mca)\rightarrow \mrgl(V)$ sending $f$ to $Df$. For any subgroup $W$ of $\mrAff(V)$, the \emph{linear part} $W_v$ of $W$ is the image of $W$ in $\mrgl(V)$ under the projection. A \emph{hyperplane} inside $\mca$ is an affine subspace of $\mca$ of codimension one. A \emph{reflection} of $V$ or $\mca$ is an order two isometry such that the fixed point set is a hyperplane.
	
	\begin{definition}
		A group $W$ of affine isometries of $\mca$ is called an \emph{Euclidean reflection group}, if there exists a locally finite set $\mfH$ of hyperplanes in the affine space $\mca$, such that \begin{itemize}
			\item $\mfH$ is stable under the reflection $s_{\mfh}$ along $\mfh$ for any $\mfh\in\mfH$;
			\item $W$ is generated by $s_{\mfh}, \mfh\in\mfH$.
		\end{itemize} 
		Here, $\mfH$ is called the set of \emph{walls} of $\mca$. \index{$\mfH$}\index{$\mfh$}
	\end{definition}
	
	We remark that Euclidean reflection groups are special cases of Coxeter groups. In particular, we will only be interested in the case where the related Euclidean reflection group is either a finite or an affine Weyl group attached to a reductive group over a particular field.
	
	Depending on $W$ is finite or not, those hyperplanes $\mfh$ in $\mfH$ divide $\mca$ into partitions of either open poly-cones or open bounded poly-simplices, that are called \emph{facets} of $\mca$. For each hyperplane $\mfh\in \mfH$, it divides the apartment to a union of $\mfh$ with two \emph{open half-spaces} $\mfh^{>0},\mfh^{<0}$. The corresponding \emph{closed half-spaces} are defined by $\mfh^{\geq0}=\mfh^{>0}\cup \mfh,\mfh^{\leq0}=\mfh^{<0}\cup \mfh$. For a bounded set $\Omega$ of $\mca$, the \emph{closure} $\overline{\Omega}$ of $\Omega$ is defined as the intersection of closed half-spaces that contains $\Omega$. Those facets of dimension $n$ (resp. $n-1$) in $\mca$ are called \emph{chambers} (resp. \emph{panels}) of $\mca$. For a fixed chamber $C$, a hyperplane $\mfh\in\mfH$ is called a \emph{wall} of $C$ if $\mfh\cap \overline{C}$ is of codimension $1$ in $\overline{C}$.
	
	\begin{remark}
		Fix a chamber $C$. Let $S_C$ denote the set of reflections determined by 
		walls of $C$. The locally finite assumption implies the finiteness of $S_C$ and the finiteness of 
		$W_v$.
	\end{remark}

	\begin{definition}\cite[Proposition 1.4.5]{Rou23}
		A \emph{gallery} of length $d$ in $\mca$ is a finite sequence of chambers $C_{0},C_{1},\dots,C_{d}$, such that $C_{i-1}$ and $C_{i}$ share a common panel $D_{i}$ for $i=1,2,\dots,d$.
		
		For two facets $F,F'$ in $\mca$, we  define their \emph{(combinatorial) distance} to be 
		\begin{itemize}
			\item either the length of a mininal gallery connecting $F$ and $F'$;
			\item or the number of walls that strictly separate $F$ and $F'$.  
		\end{itemize}
	\end{definition}
	In general, for two bounded sets $F,F'$ of $\mca$, we denote by $\mfH_{\mca}(F,F')$ \index{$\mfH_{\mca}(F,F')$} the set of hyperplanes in $\mca$ that strictly separate $F$ and $F'$. 
	
	We have the following geometric method to find a minimal gallery connecting a facet $F$ with a chamber $C$.
	
	\begin{lemma}\label{lemmaFCdist}
		
		Let $L$ be a line segment starting from a point in $F$ and ending with a point in $C$ that does not meet any facets in $\mca$ of codimension greater than one other than $F$. Then the gallery that intersects $L$ is a minimal gallery connecting $F$ and $C$.
		
	\end{lemma}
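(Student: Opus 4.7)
The plan is to parameterize $L$ as a continuous map $L:[0,1]\to\mca$ with $L(0)\in F$ and $L(1)\in C$, and extract from it the claimed gallery. By local finiteness of $\mfH$, $L$ meets only finitely many walls, and the hypothesis that $L$ avoids codimension-$\geq 2$ facets other than $F$ forces every point $L(t)$ with $t\in(0,1]$ to lie on at most one wall. Enumerating the wall-crossing parameters as $0<t_1<\dots<t_d<1$ with corresponding walls $\mfh_1,\dots,\mfh_d$, and setting $t_0=0$, $t_{d+1}=1$, I would verify that on each open subinterval $(t_{i-1},t_i)$ the image of $L$ lies in a single open chamber $C_{i-1}$. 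Then $(C_0,C_1,\dots,C_d=C)$ is a gallery in which $C_{i-1}$ and $C_i$ share the panel cut out by $\mfh_i$; moreover, $L(0)\in F\cap\overline{C_0}$, and since the closure of any facet is a union of facets, $F\subseteq\overline{C_0}$, so this is indeed a gallery from $F$ to $C$ of length $d$.

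Next I would identify $\{\mfh_1,\dots,\mfh_d\}$ with the set $\mfH_\mca(F,C)$ of walls strictly separating $F$ from $C$. No $\mfh_i$ can contain $F$: otherwise $L(0)\in\mfh_i$, and since $L$ is straight and not entirely contained in any wall (as $L(1)\in C$), it would meet $\mfh_i$ only at $t=0$, contradicting $t_i>0$. Hence $F$ lies in one open half-space of $\mfh_i$, and because $L$ transits $\mfh_i$ the chamber $C$ lies in the opposite one. Conversely, any wall $\mfh\in\mfH_\mca(F,C)$ puts $L(0)$ and $L(1)$ in opposite open half-spaces, so by continuity $L$ meets $\mfh$ at some $t\in(0,1)$, which must be one of the $t_i$. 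Therefore $d=|\mfH_\mca(F,C)|$.

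Finally, for minimality I would compare with any gallery $(C'_0,\dots,C'_{d'}=C)$ with $F\subseteq\overline{C'_0}$: each $\mfh\in\mfH_\mca(F,C)$ does not contain $F$, so $F$ lies in one open half-space of $\mfh$ and, by $F\subseteq\overline{C'_0}$, the chamber $C'_0$ lies in the same one, opposite to $C$; hence $\mfh$ strictly separates $C'_0$ from $C$. Since consecutive chambers of a gallery are separated by exactly one wall, $d'\geq|\mfH_\mca(C'_0,C)|\geq|\mfH_\mca(F,C)|=d$, which proves minimality. The step I expect to require the most care is the degenerate case where $F$ has codimension $\geq 2$ and $L$ stays tangentially inside $F$ on some initial subinterval $[0,t^*]$; the hypothesis prevents $L|_{[0,t^*]}$ from entering any proper face of $F$, forcing $L|_{[0,t^*]}\subseteq F$, and the argument above then applies verbatim to the subsegment $L|_{[t^*,1]}$, whose starting point still lies in $F$.
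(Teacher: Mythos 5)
Your proof is correct and takes essentially the same route as the paper's: walk along $L$, record the sequence of chambers and panels it crosses, and argue that each crossed wall strictly separates $F$ from $C$ (and that the walls are pairwise distinct, since a straight segment not contained in a wall meets it at most once). The only real difference is economy: the paper has already recorded, as part of its definition of combinatorial distance, that minimal gallery length equals the number of strictly separating walls, so it only needs the inclusion $\{\mfh_{D_1},\dots,\mfh_{D_d}\}\subseteq\mfH_\mca(F,C)$ and can immediately conclude; you instead re-derive that equivalence from scratch by proving the reverse inclusion and then the general lower bound $d'\geq|\mfH_\mca(F,C)|$ for any competing gallery. That makes your argument more self-contained but longer, and both are valid.

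One small remark on your final paragraph: the ``tangential'' degenerate case you flag does not actually arise. If $L|_{[0,t^*)}\subseteq F$ with $t^*>0$, then $L(t^*)$ lies in $\overline{F}\setminus F$, i.e.\ on a boundary facet of $F$, which has codimension at least $\mathrm{codim}(F)+1\geq 2$ and is different from $F$; this violates the hypothesis. So the hypothesis already forces $t^*=0$, i.e.\ $L$ leaves $F$ immediately, and no restart of the argument is needed. Your handling is not wrong, just unnecessary.
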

	
	\begin{proof}
		
		Let $C_{0},C_{1},\dots,C_{d}$ be the gallery intersecting $L$, such that $F\subset\overline{C_{0}}$ and $C_{d}=C$, and let $D_{1},\dots,D_{d}$ be the corresponding panels. Let $\mfh_{D_{i}}$ be the wall in $\mca$ that contains $D_{i}$. From our assumption on $L$, it is the disjoint union of $F\cap L$, $C_{i}\cap L$ and $D_{i}\cap L$.
		
		We claim that each $\mfh_{D_{i}}$ strictly separates $F$ and $C$. Indeed, first $F$ cannot be contained in $\mfh_{D_{i}}$, otherwise by convexity the full line segment $L$ must be in $\mfh_{D_{i}}$, contradicting to the fact that the ending point of $L$ is in $C$. Also $F$ and $C$ cannot be on the same side of $\mfh_{D_{i}}$, otherwise by convexity the full line segment $L$ must be on the same side of $\mfh_{D_{i}}$, contradicting to the fact that $L$ intersects $D_{i}$. On the other hands, those walls $\mfh_{D_i}$ must be pairwise different, otherwise by convexity $L$ must lie in one of the wall and cannot intersect $C$.
		
		Thus the gallery above must be a minimal gallery from our second definition of the distance.
		
	\end{proof}
	
	The most common Euclidean reflection groups and their Coxeter complex appears in the theory of root system. 
	
	\vspace{2mm}
	\noindent$\bullet$\ \textbf{The finite case}
	
	Let $V$ be a finite dimensional $\mbr$-vector space such that its dual $V^*$ is equipped with a positive definite bilinear form $(,)$ identifying $V^*$ with $V$. We use the notation $\langle,\rangle$ to denote the natural pairing between $V^*$ and $V$ induced by $(,)$. For any $\alpha \in V^*$, let $\alpha^\vee$ denote  $\frac{2(\cdot,\alpha)}{(\alpha,\alpha)}\in V$, namely, the unique element in $V$ orthogonal to $\ker \alpha$ with $\langle \alpha,\alpha^\vee\rangle=2$.

	An \emph{abstract finite root system} a finite set of non-zero vectors $\Phi\subset V^*$ \index{$\Phi$} satisfying the usual axioms for root systems.
	\begin{enumerate}
		\item $\Phi$ spans $V^*$ over $\mbr$.
		\item $\Phi$ is closed under reflection $\{r_\alpha\}_{\alpha\in \Phi}$, where $r_\alpha$ denotes the reflection defined by $r_\alpha(x)=x-\langle x,\alpha^\vee\rangle\alpha$ for any $x\in V^*,\alpha\in \Phi$.
		\item For any $\alpha,\beta\in \Phi$, $\langle\beta,\alpha^\vee\rangle\in \mbz$.
	\end{enumerate}
	
	If $\Phi$ further satisfies the condition that the only multiple of $\alpha$ in $\Phi$ is $\pm\alpha$ for any $\alpha\in \Phi$, then $\Phi$ is called a reduced root system. For any $\alpha$ inside a root system $\Phi$, the only possible integral multiples of $\alpha$ in $\Phi$ are $\pm \alpha$ and $\pm2\alpha$ by the third condition. For $\alpha\in \Phi$, we call $\alpha$ \emph{divisible} (resp. \emph{multipliable}) if $\frac{\alpha}{2}\in \Phi$ (resp. $2\alpha\in \Phi$) and \emph{non-divisible} (resp. \emph{non-multipliable}) otherwise. Let $\Phi^\vee$ denote the set of \emph{coroots} $\{\alpha^\vee\}_{\alpha \in \Phi}$. Then $\Phi^\vee\subset V$ defines the \emph{dual root system}.

	For any $\alpha\in \Phi$, we define the corresponding reflection $s_\alpha$ on $V$ by
	$$s_{\alpha}(x)=x-\langle \alpha,x\rangle \alpha^\vee,\quad\forall x\in V.$$\index{$s_{\alpha}$}
	The \emph{finite Weyl group} $W:=W(\Phi)$ is a Coxeter group generated by reflections $\{s_\alpha\}_{\alpha\in \Phi}$. For each $\alpha\in \Phi$, one can define the hyperplane 
	$$\mfh_\alpha:=V^{s_\alpha}=\ker \alpha=\{v\in V\mid\,\langle\alpha,v\rangle=0\}\subset V$$\index{$\mfh_\alpha$}
	Let $\mfH=\{\mfh_\alpha\}_{\alpha\in \Phi}$ \index{$\mfH$} denote the corresponding set of walls, i.e. the set of hyperplanes $\mfh_\alpha$ in $V$.
	
	The \emph{Coxeter complex} associated to $\Phi\subset V^*$ is the poly-conical complex structure on $V$ with the set of facets determined by the division of $\mfH$ on $V$. The Coxeter complex on $V$ enjoys the following nice properties
	
	\begin{itemize}
		\item The chambers are connected components of $V-\cup_{\mfh_\alpha\in\mfH}\mfh_\alpha$.
		\item The boundary $\partial F:=\overline{F}-F$ of a facet $F$ is a union of facets of smaller dimension. 
		\item The vector space $V$ is endowed with an action of the finite Weyl group $W$ which preserves the poly-conical complex structure of $V$. Moreover, the corresponding action on the set of chambers is simply transitive.
	\end{itemize}
	Let $\Delta$\index{$\Delta$} denote a set of simple roots in $\Phi$, i.e., a subset of $\Phi$ consisting of a basis of $V$.
	The related \emph{fundamental chamber} is the poly-cone 
	$$C_0=\cap_{\alpha\in \Delta}\mfh_\alpha^{>0}=\{v\in V\mid\,\langle\alpha,v\rangle>0,\forall \alpha\in \Delta\},$$
	whose closure $\overline{C_0}$ is a fundamental domain for the $W$-action on $V$. There is a bijection between the set of facets contained in $\overline{C_0}$ and the set of non-empty subsets of $\Delta$ by sending $F$ to $\Delta_F:=\{\alpha\in \Delta\mid \langle \alpha,x\rangle=0,x\in F\}$.
	
	Finally, let $\mca$ be an affine space with underlying space $V$. If we fix a point $0\in\mca$ as the origin, then we have a bijection $V\rightarrow \mca,\ v\mapsto v+0$. Using this, the Coxeter complex structure on $V$ are transferred to that on $\mca$, endowing $\mca$ with a Coxeter complex structure with respect to the finite Euclidean reflection group $W$. In particular, the corresponding set of walls $\mfH$ in $\mca$ is finite with each $\mfh\in \mfH$ passing the origin $0$.

	\vspace{2mm}
	\noindent$\bullet$\ \textbf{The affine case}
	
	Let $\mca$ be an affine space whose associated vector space is $V$. An \emph{abstract affine root system} \cite[Definition 1.3.1]{KP23} is a subset $\Psi\subset \mca^*$ \index{$\Psi$} satisfying the following axioms
	
	\begin{enumerate}
		\item $\Psi$ spans the $\mbr$-vector space $\mca^*$ and the set $D\Psi$ of linear parts of $\Psi$ is a finite root system $\Phi\subset V^*$.
		\item The reflection $r_{\psi}$ on $\mca^*$ defined by $r_{\psi}(x):=x-\langle Dx,(D\psi)^\vee\rangle\psi,\ \forall x\in \mca^*$ preserves $\Psi$, where $(D\psi)^\vee \in V$ denotes the coroot of $D\psi$.
		\item For any $\psi_1,\psi_2\in \Psi$, $\langle D\psi_1,(D\psi_2)^\vee\rangle\in \mbz$.
		
		\item For any $\alpha\in \Phi$, the set $\{\psi\in \Psi\mid\,D\psi=\alpha\}$ does not have an accummulation point.
	\end{enumerate}
	
	For any $\psi\in \Psi$, we also define the corresponding reflection $s_\psi$ on $\mca$ by
	\begin{equation}\label{reflectionona}
		s_\psi(x):=x-\psi(x)(D\psi)^\vee,\quad\forall x\in \mca.\index{$s_\psi$}
	\end{equation}
	The \emph{affine Weyl group} $W_\mraff:=W(\Psi)$ \index{$W_\mraff$} is a Coxeter group generated by reflections $\{s_\psi\}_{\psi\in \Psi}$. 
	There is a canonical projection  \begin{equation}
		\begin{aligned}
			W(\Psi)&\rightarrow W(\Phi)\\
			s_\psi&\mapsto s_{D\psi}
		\end{aligned}
	\end{equation}
	whose kernel consists of translations in $W(\Psi)$. 
	
	In contrast with the finite root system, there is a notion of extended Weyl group for an affine root system. More precisely, define the $\mrAff(\mca)$-action on $\mca^{*}$ by $w\cdot\psi:=\psi\circ w^{-1}, \psi\in \mca^*,w\in \mrAff(\mca)$. Then the \emph{extended affine Weyl group} $\widetilde{W_\mraff}:=\widetilde{W(\Psi)}$ \index{$\widetilde{W_\mraff}$} of $\Psi$ is the subgroup of $\mrAff(\mca)$ whose induced action on $\mca^*$ preserves $\Psi$. In particular, the linear part of $\widetilde{W_\mraff}$ lies in $W(\Phi)$. By definition, $W_\mraff$ embeds into $\widetilde{W_\mraff}$ as a normal subgroup and we have 
	\begin{equation}\label{embedaffine}
		1\rightarrow W_\mraff\hookrightarrow \widetilde{W_\mraff}\rightarrow \widetilde{W_\mraff}/W_\mraff\rightarrow 1.
	\end{equation}
	
	For each $\psi\in \Psi$, we define the hyperplane $$\mfh_\psi:=\mca^{s_\psi}=\{x\in \mca\mid\,\psi(x)=0\}\subset \mca.$$\index{$\mfh_\psi$}
	Let $\mfH_\mraff=\{\mfh_\psi\}_{\psi\in \Psi}$\index{$\mfH_\mraff$} denote the corresponding set of walls, i.e. the set of hyperplanes $\mfh_\psi$ in $\mca$. The Coxeter complex associate to $\Psi\subset\mca^*$ is the poly-simplical complex structure on $\mca$ with the set of facets determined by the division of $\mfH_\mraff$. The Coxeter complex on $\mca$ enjoys the following nice properties
	
	\begin{itemize}
		
		\item The chambers are connected components of $\mca-\cup_{\mfh_\psi\in\mfH_\mraff}\mfh_\psi$.
		\item The boundary $\partial F:=\overline{F}-F$ of a facet $F$ is a union of facets of smaller dimension. 
		\item The affine space $\mca$ is endowed with an action of the affine Weyl group $W_{\mraff}$ which preserves the poly-simplicial complex structure of $\mca$. Moreover, the corresponding action on the set of chambers is simply transitive.
	\end{itemize}

	Let $\Delta^\mraff$ \index{$\Delta^\mraff$} denote a set of simple affine roots in $\Psi$, i.e., a subset of non-divisible elements in $\Psi$ consisting of a basis of $\mca^*$. Similarly, in the affine case, the related fundamental chamber (usually called the fundamental alcove in this case) is the bounded open domain 
	$$C_0=\cap_{\psi\in \Delta^\mraff}\mfh_\psi^{>0}=\{x\in \mca\mid\,\psi(x)>0,\forall \psi\in \Delta^\mraff\},$$
	whose closure is a fundamental domain for the $W_\mraff$-action on $\mca$. There is a bijection between the set of facets contained in $\overline{C_0}$ and the set of non-empty subsets of $\Delta^\mraff$ by sending $F$ to $\Delta^\mraff_F:=\{\psi\in \Delta^\mraff\mid \psi(x)=0,x\in F\}$.

	Since the induced action of $\widetilde{W_\mraff}$ on $\mca^*$ preserves $\Psi$, it also preserves $\mfH_\mraff$ and permutes the set of chambers. The corresponding action on the set of chambers is transitive but not free. For any choice of chamber $C$ in $\mca$, the stabilizer $\mrstab_{\widetilde{W_\mraff}}(C)$ is a finite group that provides a splitting of \eqref{embedaffine}, namely, we have $\widetilde{W_\mraff}=\mrstab_{\widetilde{W_\mraff}(C)}\ltimes W_\mraff $.

	\subsection{The vectorial apartment attached to a reductive group over a field}\label{vectorialapart}
	Let $K_0$ be an arbitrary field and $\overline{K_0}$ the algebraic closure of $K$. Let $G$ be a reductive group over $K_0$ and $G^{\mrder}$ the derived subgroup of $G$. Let $S$ be a maximal $K_0$-split torus of $G$ and $S^{\mrder}:=S\cap G^{\mrder}$ the maximal $K_0$-split torus of $G^{\mrder}$. Let $Z(G)$ be the center of $G$ with $A=A(G)$ being the maximal $K_0$-split torus in $Z(G)$. 
	
	Let $X_*(S):=\mrhom_{K_0}(\mbg_m,S)$ denote the co-character lattice associated to $S$. Since $Z(G)(\overline{K_0})\cdot G^{\mrder}(\overline{K_0})=G(\overline{K_0})$, there exists an isogeny $Z(G)(\overline{K_0})\times G^{\mrder}(\overline{K_0})\rightarrow G(\overline{K_0})$ inducing an isogeny $A\times S^{\mrder}\rightarrow S$. Hence we have an isomorphism of $\mbr$-vector spaces $(X_*(S^{\mrder})\otimes _{\mbz}\mbr )\times (X_*(A)\otimes _{\mbz}\mbr) \cong X_*(S)\otimes _{\mbz}\mbr$, i.e., $X_{*}(S^{\mrder})\otimes_{\mbz}\mbr\cong (X_*(S)/X_*(A))\otimes _{\mbz}\mbr\cong X_*(S/A)\otimes _{\mbz}\mbr$.
	
	Let $V$ denote the finite dimensional $\mbr$ vector space $X_*(S/A)\otimes _\mbz\mbr$. We may also realize $X_*(S/A)\otimes _\mbz\mbr$ as an affine space as well with $0$ being the origin, which we denote by $\mca=\mca_v(G,S)$\index{$\mca_v(G,S)$} with the subscript $v$ representing ``vectorial''.
	
	Let $\Phi=\Phi(G,S)$\index{$\Phi(G,S)$} denote the non-zero weight appearing in the adjoint action of $S$ on the Lie algebra $\mfg$ of $G$. Then $\Phi\subset V^{*}$ defines the standard finite relative root system associated to $(G,S)$. As we explained in \S \ref{euclideanref}, there exists a Coxeter complex structure on $V$ and $\mca$ defined by the root system $\Phi\subset V^{*}$. 
	
	Let $N$ denote the normalizer $N_{G}(S)$ and $Z$ the centralizer $Z_{G}(S)$, which is known as the reductive anisotropic kernel of $G$, namely, $Z^{\mrder}$ is an anisotropic semisimple group over $K_0$ due to the maximal $K_0$-splitness of $S$. Then, the natural action of $N$ on $S$ induces an action of $N$ on $V$ (resp. $\mca$), which further induces a canonical isomorphism between $W(\Phi)$ and $N/Z$ (\emph{cf.} \cite{BT65}). As a result, the $N$-action on $V$ and $\mca$ is simplicial and transitive on the set of chambers.
	
	For any $\alpha\in \Phi$, we define the \emph{root group} $U_\alpha$ to be the closed subgroup of $G$ such that it is normalized by $S$ and its Lie algebra is a direct sum of weight spaces whose weights are positive integral multiples of $\alpha$. We  write $U^*_\alpha$ for $U_\alpha-\{1\}$. 
	
	For any facet $F$ in $V$ (resp. $\mca$), one attach the following subgroups
	\begin{equation}\label{sphericalparabolic}
		\begin{aligned}
			P_{F}:=\langle S ,\{U_\alpha\}_{\alpha\in \Phi,\langle \alpha,F\rangle\geq 0}\rangle,\quad
			U_{F}:=\langle \{U_\alpha\}_{\alpha\in \Phi,\langle \alpha,F\rangle> 0}\rangle,\quad
			M_{F}:=\langle S, \{U_\alpha\}_{\alpha\in \Phi,\langle \alpha,F\rangle=0 }\rangle.\index{$P_F$}\index{$M_F$}\index{$U_F$}
		\end{aligned}
	\end{equation}
	Indeed, these $P_F$ are parabolic subgroups of $G$ with Levi decomposition $P_F=M_F\ltimes U_F$ such that $M_F$ are Levi subgroups. Indeed, $M_F=Z_G(\mrim(x))$ for any $x\in F\cap X_*(S/A)$.

	\subsection{The affine apartment attached to a reductive group over a non-archimedean local field.}\label{affineapartment}
	
	We adopt the notations in \S \ref{vectorialapart} with $K_0=K$ being a non-archimedean local field. Let $v_K:K\rightarrow \mbr\cup \{\infty\}$ be the non-archimedean valuation on $K$. 
	
	Let $\mca=\mca(G,S)$\index{$\mca(G,S)$} denote the affine apartment with underlying real vector space $V=X_*(S/A)\otimes_\mbz\mbr$ constructed by Bruhat and Tits \cite{bruhat1972groupes}, which we explain as follows.
	
	
	\begin{definition}\cite[Definition 6.1.2]{KP23}\label{valuation}
		A \emph{$v_K$-compatible valuation} of the root datum of $(G,S)$ is a family $\phi=\{\phi_\alpha\}_{\alpha\in \Phi}$ of morphisms  $\phi_\alpha:U_\alpha(K)\rightarrow \mbr\cup \{\infty\}$ such that 
		\begin{enumerate}
			\setcounter{enumi}{-1}	
			\item For each $\alpha\in \Phi$, $\phi_\alpha(U_\alpha(K))$ has at least 3 elements.
			\item For each $\alpha$ and $r\in \mbr$, $U_{\alpha,\phi,r}:=\phi_{\alpha}^{-1}([r,\infty))$ is a subgroup of $U_\alpha(K)$ with $U_{\alpha,\phi,\infty}=\{1\}$.
			\item For every $n\in N$ whose image in $W$ is $s_\alpha$, the function $\phi_\alpha(u)-\phi_{-\alpha}(nun^{-1})$ is constant on $U^*_\alpha$.
			\item For any $\alpha,\beta\in \Phi$ with $\beta$ not a negative multiple of $\alpha$ and $i,j\in \mbr$, the commutator subgroup of $U_{\alpha,\phi,i}$ and $U_{\beta,\phi,j}$ is contained in the group generated by $U_{m\alpha+n\beta,\phi,mi+nj}$ for $m,n\in \mbz_{>0}$ with $m\alpha+n\beta\in \Phi$.
			\item If both $\alpha,2\alpha\in \Phi$, then $\phi_{2\alpha}=2\phi_\alpha\rest_{U_{2\alpha}}$. In particular, $U_{\alpha,\phi,r}\cap U_{2\alpha}(K)=U_{2\alpha,\phi,2r}$.
			
			\item For any $\alpha\in\Phi$, $u\in U_\alpha(K)$ and $u',u''\in U_{-\alpha}(K)$ such that $u'uu''$ normalizes $S$, we have $\phi_{-\alpha}(u')=\phi_{\alpha}(u)$.
			
			\item For any $\alpha\in \Phi$ and $z\in Z$, we have$\phi_\alpha(z^{-1}uz)=\phi_\alpha(u)+\langle \alpha,v_Z(z)\rangle$ (\emph{cf.} \eqref{vector}). In particular, for any $s\in S$, we have $\phi_\alpha(s^{-1}us)=\phi_\alpha(u)-v_K(\alpha(s))$.
			
			\item For any $\alpha\in\Phi$, the subsets $U_{\alpha,\phi,r}$, for $r\in\mbr$, are bounded open subsets of $U_{\alpha}(K)$ and form a neighborhood basis of $\{1\}$ in $U_{\alpha}(K)$.
		\end{enumerate}
	\end{definition}
	When $G$ is quasi-split, for any $v\in V$, there exists a family of valuations $\{\phi_{\alpha,v}\}_{\alpha\in \Phi(G,S)}$ such that $\phi_{\alpha,0}:U_{\alpha}(K)\rightarrow \mathbb{R}\cup \{\infty\}$ corresponds to $v_K$ for a fixed choice of Chevalley pinning. Let $K^\mrur$ denote the maximal unramified extension of $K$ inside $\overline{K}$. Constructions of these valuations for general cases follow from the constructions for the quasi-split group $G_{K^\mrur}$ over $K^\mrur$ together with an unramified Galois descent argument. We will not go deep into the general constructions of these valuations and an explicit construction can be found in \cite[\S 6.1; Chapter 9]{KP23}. Instead, we will highlight the following identification
	\[\mca\leftrightarrow \{\text{equipollent } v_K \mbox{-}\text{compatible valuations of the root datum of }(G,S)\},\]
	where two valuations $\phi_x$ and $\phi_y$ of the root datum of $(G, S)$ are called \emph{equipollent} if there exists $v\in V(S)$ such that $\phi_{y,\alpha}(u)=\phi_{x,\alpha}(u)+\langle\alpha,v\rangle$. Here we write $\phi_x=\{\phi_{x,\alpha}\}_{\alpha\in\Phi}$ for the valuation corresponding to $x\in \mca$. Also, we write $U_{\alpha,x,r}=U_{\alpha,\phi_x,r}$ for any $\alpha\in\Phi$.
	
	
	

	For any $\psi\in \mca^*$ with $D\psi=\alpha\in \Phi$, we define the affine root group 
	$U_\psi:=U_{\alpha,x,\psi(x)}$,
	which does not depend on the choice of $x$ due to the equipollence property. We also define $U_{\psi+}:=\cup _{\psi'>\psi}U_{\psi'}$. Then the set $\Psi=\Psi(G,S)\subset \mca^*$ \index{$\Psi(G,S)$} of affine root system of $(G,S)$ is defined by (\emph{cf.} \cite[Definition 6.3.4]{KP23})
	$$\{\psi\in \mca^*\mid\,D(\psi)\in \Phi, \; U_\psi\nsubseteq U_{\psi+}\cdot U_{2\alpha}\}.$$
	By \cite[Proposition 6.3.13]{KP23}, the above construction provides an affine root system in the sense of \S\ref{euclideanref}. As in \S\ref{euclideanref}, let $\mfh_\psi=\{x\in \mca\mid\,\psi(x)=0\}$ denote the hyperplane corresponding to $\psi$. The hyperplane arrangement $\{\mfh_\psi\}_{\psi\in \Psi}$ endows $\mca$ with the structure of a Coxeter complex.
	
	A natural question is whether we have a similar isomorphism to $W(\Phi)\cong N/Z$ in the finite case. It is known that the natural restriction map $X^*(Z)\rightarrow X^*(S)$ is injective \cite[\S1.6]{Pra20} and its image is of finite index. Hence there exists a unique homomorphism 
	\begin{equation}\label{extvector}
		\nu_{Z,\mrext}:Z\rightarrow X_{*}(S)\otimes_{\mbz}\mathbb{R}
	\end{equation}
	such that for any $\chi\in X^{*}(Z)$ and $z\in Z$, we have 
	$$\langle \chi\rest_S, \nu_{Z,\mrext}(z)\rangle=-v_K(\chi (z)).$$
	Composing the projection $X_{*}(S)\otimes_{\mathbb{Z}}\mathbb{R}\rightarrow  X_*(S)/X_*(A)\otimes _{\mbz}\mbr$, we also have a homomorphism
	\begin{equation}\label{vector}
		\nu_{Z}:Z\rightarrow X_{*}(S)/X_*(A)\otimes_{\mathbb{Z}}\mathbb{R}
	\end{equation} 
	The homomorphism \eqref{vector} defines an action of $Z$ on the apartment by translations
	\begin{equation*}
		\begin{aligned}
			\nu:Z&\rightarrow \mrAff(\mca),\\
			z&\mapsto (a\mapsto a+\nu_{Z}(z)).
		\end{aligned}
	\end{equation*}
	By Tits \cite[\S1.2]{tits1979reductive}, this translation action can be extended to an action of $N:=N_{G}(S)$ on $\mca$, which we still denote by $\nu$
	$$\nu:N\rightarrow \mrAff(\mca),\index{$\nu$}$$
	and the action depends on a choice of base point in $\mca$. Let $W:=N/Z$ denote the finite Weyl group of $S$ and let $Z^b$ denote $\ker v_{Z,\mrext}$. Indeed, one has an alternative description of $Z^b$ \cite[Proposition 1.5]{Pra20}, namely, the unique maximal bounded subgroup of $Z$
	\begin{equation}\label{maximalbounded}
		Z^b:=\{z\in Z\mid\,\chi(z)\in \mfo_K^\times,\forall \chi\in X_*(Z)\}.
	\end{equation}
	Hence the map $v_{Z,\mrext}$ induces an isomorphism $Z/Z^b\cong \mbz^r$ for $r=\mrdim S$. We also have an exact sequence 
	$$1\rightarrow Z/Z^b\rightarrow N/Z^b\rightarrow W\rightarrow 1.$$
	When $G$ is split, one has $Z= S$ and $Z/Z^b=S(K)/S(\mfo_K)\cong X_*(S)$. In this case, there is an isomorphism from $W\ltimes X_{*}(S)$ to $N/S(\mfo_K)$ sending $(w,x)$ to $\dot{w}x(\varpi_K)$, where $\varpi_K$ is a chosen uniformizer in $K$ and $\dot{w}$ is any lift of $w$ in $N$.
	
	Let $\widetilde{W^{1}}$ denote the image of $N/Z^b$ in $\mrAff(\mca(G,S))$. \index{$\widetilde{W^{1}}$} By \cite[Proposition 6.3.13; \S 6.6]{KP23}, we have inclusions 
	$$W_\mraff\hookrightarrow \widetilde{W^{1}}\hookrightarrow \widetilde{W_\mraff}.$$
	In the case when $G$ is split, the intermediate group $\widetilde{W^1}$ is simply the image of $W\ltimes X_{*}(S)$ in $\mrAff(\mca(G,S))$.

	In general, all these groups $W_\mraff, \widetilde{W^1}$ and $\widetilde{W_{\mraff}}$ act on $\mca^*$ preserving $\Psi$ \cite[Lemma 6.3.12]{KP23}, which means that the corresponding actions on $\mca(G,S)$ are poly-simplicial. All these actions on the set of chambers in $\mca(G,S)$ are transitive, but only the $W_\mraff$-action is simply transitive.
	
	We will also work with the extended apartment $\mca^\mrext(G,S)$\index{$\mca^\mrext(G,S)$}, i.e., the affine space associated to $X_{*}(S)\otimes_{\mbz}\mathbb{R}$. There is an isomorphism of affine spaces $\mca^\mrext(G,S)\cong \mca(G,S)\times (X_*(A)\otimes _{\mbz}\mbr)$, where the second factor is regarded as an affine space associated to the vector space $X_*(A)\otimes _{\mbz}\mbr$ without a preferred base point.

	\subsection{Open bounded subgroups attached to a non-empty subset in an apartment $\mca$}

	We follow the notations in \S\ref{affineapartment}. Let $G$ be a reductive group over a non-archimedean local field $K$ with the residue field $\bs{k}$. Let
	$G^{\mrder}$ denote the derived $K$-subgroup of $G$ and $G^{\mrsc}$ the simply connected cover of $G^{\mrder}$. We have the natural map 
	$$p:G^{\mrsc}\rightarrow G^{\mrder}\rightarrow G.$$
	Let $G^\sharp$\index{$G^\sharp$} denote $p(G^\mrsc(K))$ inside $G(K)$. By \cite[Definition 2.6.23]{KP23} there exists a group $G^0$\index{$G^0$} characterized by the following properties
	\begin{enumerate}
		\item If $G$ is quasi-split, $G^0$ is define to be $G^\sharp \cdot T^0$, where $T=Z_G(S)$ is a maximal torus of $G$ and $T^0$ denotes the Iwahori subgroup of $T$. More precisely, let $L/K^\mrur$ be any finite Galois extension splitting $T$. Then $T^0=T(K)\cap T(K^\mrur)^0$, where $T(K^\mrur)^0$ denotes the image of $T(L)^b$ (\emph{cf.} \eqref{maximalbounded}) under the norm map $T(L)\rightarrow T(K^\mrur)$.
		\item If $G/Z(G)$ is anisotropic, $G^0$ is defined to be $G_{K^\mrur}^0\cap G(K)$, where $G_{K^\mrur}$ is quasi-split over $K^\mrur$.
		\item For $G$ in general, $G^0$ is defined to be $G^\sharp \cdot Z_G(S)^0$, where $Z_G(S)$ is a minimal Levi subgroup of $G$. Notice that $Z_G(S)/Z(Z_G(S))$ is anisotropic since $Z_G(S)^{\mrder}$ is.   
	\end{enumerate}

	Let $\Omega$ be a non-empty subset of an apartment $\mca=\mca(G,S)$ associated to a maximal $K$-split torus $S$. One can attach to $\Omega$ an open subgroup $\parah{\Omega}$ \cite[\S 7.7.0.1]{KP23} of $G$. More precisely, this subgroup has the following explicit descriptions.
	
	For any $\alpha\in \Phi,x \in \mca,r\in \mbr$, we have already defined subgroups of $U_\alpha$ in \Cref{valuation}
	$$U_{\alpha,x,r}=\{u\in U_{\alpha}^*\mid\,\phi_{x,\alpha}(u)\geq r\}\cup\{1\},\quad U_{\alpha,x,r+}=\{u\in U_{\alpha}^*\mid\,\phi_{x,\alpha}(u)>r\}\cup\{1\}.$$
	Alternatively, one has $U_{\alpha,x,r+}=\cup_{s>r} U_{\alpha,x,s}$. For any non-empty subset $\Omega$ of $\mca$, we further define 
	$$U_{\alpha,\Omega,0}:=\bigcap_{x\in \Omega}U_{\alpha,x,0},\quad U_{\alpha,\Omega,0+}:=\bigcap_{x\in \Omega}U_{\alpha,x,0+}.$$ 
	Let $G_{\Omega}^\sharp $ be the subgroup of $G$ generated by $U_{\alpha,\Omega,0}$ for every $\alpha\in \Phi$ and let $G_{\Omega,+}^\sharp $ be the subgroup of $G$ generated by $U_{\alpha,\Omega,0+}$ for every $\alpha\in \Phi$. It is clear that $G_\Omega^\sharp\subset G^\sharp$.

	We define $\parah{\Omega}$ (resp. $\parah{\Omega,+}$)\index{$P_\Omega,P_{\Omega,G}$}\index{$P_{\Omega,+},P_{\Omega,+,G}$} to be the product $G_\Omega^\sharp \cdot Z_G(S)^0$ (resp. $G_{\Omega,+}^\sharp \cdot Z_G(S)^0$, where $Z_G(S)^0$ denotes the subgroup as in (2) in the definition of $G^0$ so that we have $G^0=G^\sharp \cdot Z_G(S)^0$. In particular, $\parah{\Omega,+}$ is the pro-unipotent radical of $\parah{\Omega}$. 
	In particular, if $\Omega$ is a facet $F$ inside $\mca$, then the group $\parah{F}$\index{$P_F$} is a parahoric subgroup associated to $F$. We will also use the notation $\para{\Omega}{G}$ (resp. $\para{\Omega,+}{G}$) to highlight the group $G$ when there are more than one groups involved.
	
	In particular, if $\Omega$ consists of a single point $\bs{p}$, we write $\parah{\bs{p}}:=\parah{\{\bs{p}\}}$.
	
	Let $\Omega',\Omega$ be subsets of $\mca$ such that $\Omega\subset \overline{\Omega'}$. Then one has a chain of root subgroups 
	\[U_{\alpha,\Omega,0+}\subset U_{\alpha,\Omega',0+}\subset U_{\alpha,\Omega',0}\subset U_{\alpha,\Omega,0},\quad \forall\alpha\in \Phi(G,S)\]
	This implies that one has a chain of subgroups
	\[G_{\Omega,0+}^\sharp\subset G_{\Omega',0+}^\sharp\subset G_{\Omega'}^\sharp\subset G_{\Omega}^\sharp.\]
	Thus, one has
	\begin{equation}\label{parahoricinclu}
		\parah{\Omega,+}\subset \parah{\Omega',+}\subset \parah{\Omega'}\subset \parah{\Omega}.
	\end{equation}
	In \Cref{integralmodel}, we will see that for any facet $F$, $\parah{F}/\parah{F,+}$ is isomorphic to $\underline{G}_F(\bs{k})$, where $\underline{G}_F$ is a reductive group over the residual field $\bs{k}$. Then \eqref{parahoricinclu} suggests that we have an embedding $\underline{G}_{F'}\hookrightarrow \underline{G}_{F}$ for $F,F'$ facets such that $F\subset \overline{F'}$.
	
	\section{Vectorial building and Bruhat--Tits building}\label{building}

	\subsection{Vectorial building}
	As we have seen that there is a close relation between the Coxeter complex associated to a finite Weyl group $W$ and the Tits cone $X_*(S/A)\otimes_\mbz\mbr$ parameterizing all the parabolic subgroups of $G$ containing a given maximal split torus $S$, it is natural to cook up a poly-conical complex, whose facets represent all the parabolic subgroups of $G$. Such a geometric object is known as the vectorial building and denoted by $\mcv(G)$. 

	Let $G$ be a reductive algebraic group over a field $K_0$ and $S$ a maximal $K_0$-split torus of $G$. Here $K_0$ can be an arbitrary field, and we will mainly focus on the case when $K_0=\bs{k}$ is a finite field for our later application. Let $V$ denote $X_*(S/A)\otimes \mbr$.  As we have already seen in \eqref{sphericalparabolic}, for a fixed maximal $K_0$-split torus $S$ and each point $x$ in $V$, one can define a corresponding parabolic subgroup $P_x$ generated by $S$ and $\{U_{\alpha}\}_{\alpha\in\Phi(G,S),\langle\alpha ,x\rangle \geq 0}$.

	One defines the vectorial building to be $\mcv(G):=G\times \mca_v(G,S)/\sim$\index{$\mcv(G)$}, where $(g_1,x_1)\sim (g_2,x_2)$ if and only if there exists $n\in N$ such that $x_2=n x_1$ and $g_1^{-1}g_2n\in P_{x}$. Notice that there is a $G$-action on $\mcv(G)$ given an $h(g,x):=(hg,x),\ h,g\in G,\ x\in V$, and an embedding $V\rightarrow \mcv(G)$ given by $x\rightarrow (1,x)$. The definition of a $\mcv(G)$ is independent of the choice of $S$. Indeed, the vectorial apartment $\mca_v(G,S^g)$ is identified with the subset $\mca_v(G,S)^g=\{(g^{-1},x)\mid x\in\mca_v(G,S)\}$ of $\mcv(G)$.
	
	The poly-conical complex structure on $V$ defines a poly-conical complex structure on $\mcv(G)$ by $G$-translation so that the $G$-action preserves the poly-conical complex structure on $\mcv(G)$. Similar to the case of the vectorial apartment, we have the notions of chambers, panels, galleries, etc.
	
	\begin{lemma}
		Let $F$ be a facet of the Tits cone $V\subset \mcv(G)$. The stabilizer $\mrstab_G(F)$ and the pointwise fixator $\mrfix_G(F)$ of $F$ in $G$ coincide with the parabolic subgroup $P_F$ in \eqref{sphericalparabolic}. 
	\end{lemma}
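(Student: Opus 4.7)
The plan is to prove the chain of inclusions $P_F \subseteq \mrfix_G(F) \subseteq \mrstab_G(F) \subseteq P_F$, from which both equalities asserted in the lemma will follow at once. The middle inclusion is tautological, so only the outer two need genuine work.

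For $P_F \subseteq \mrfix_G(F)$, I would first observe that a facet of the Coxeter complex on $V$ is cut out by sign conditions on the roots, so $\{\alpha \in \Phi : \langle \alpha, F\rangle \geq 0\}$ coincides with $\{\alpha \in \Phi : \langle \alpha, x\rangle \geq 0\}$ for every $x \in F$, and hence $P_F = P_x$ as subgroups of $G$ for every such $x$. Applying the equivalence relation defining $\mcv(G)$ with $n = 1$ then shows that any $g \in P_x$ satisfies $(g, x) \sim (1, x)$, i.e.\ fixes $x$ in $\mcv(G)$. Thus $P_F \subseteq \mrfix_G(x)$ for every $x \in F$, which gives $P_F \subseteq \mrfix_G(F)$.

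For $\mrstab_G(F) \subseteq P_F$, let $g \in \mrstab_G(F)$ and fix some $x \in F$. Then $gx$ is again a point of $F \subseteq V$, so the equivalence $(g, x) \sim (1, gx)$ supplies $n \in N$ with $nx = gx$ and $g^{-1}n \in P_x = P_F$; equivalently $g \in nP_F$. The relation $nx = gx \in F$ means that $n \cdot F$ and $F$ share a point, so by the disjointness of distinct facets we have $n \cdot F = F$, i.e.\ $n$ stabilizes $F$. The image $\bar n$ of $n$ in $W \cong N/Z$ therefore stabilizes $F$ in the Coxeter complex on $V$. By the classical fact that for a finite reflection group the stabilizer of a facet coincides with its pointwise fixator and equals the parabolic subgroup generated by the reflections $s_\alpha$ with $\alpha \in \Phi$ and $\langle \alpha, F\rangle = 0$, we deduce that $\bar n$ lies in the Weyl group $N_{M_F}(S)/Z$ of $M_F$ relative to $S$. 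Since $Z \subseteq M_F$, any such lift of $\bar n$ can be arranged inside $N_{M_F}(S) \subseteq M_F \subseteq P_F$; hence $n \in P_F$ and consequently $g \in nP_F = P_F$.

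The same analysis applied to a single point $x \in F$ already gives $\mrfix_G(x) = P_x = P_F$, so $\mrfix_G(F) = P_F$ follows at no extra cost. The only nontrivial ingredient is the classical identification, for a finite reflection group, of the stabilizer of a facet with its pointwise fixator; everything else is a careful unwinding of the equivalence relation defining $\mcv(G)$. The one point to watch is that the kernel $Z$ of $N \to W$ is contained in $M_F$ (and hence in $P_F$), so that lifting $\bar n \in W$ back to an element $n \in N$ never leaves $P_F$.
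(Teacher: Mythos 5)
Your argument is correct, and for the inclusion $\mrstab_G(F)\subseteq P_F$ it takes a genuinely different route from the paper. The paper first establishes $\mrfix_G(F)=P_F$ by unwinding the equivalence relation (noting that the $n$ arising in $(1,x)\sim(g,x)$ must lie in $N_x$, hence in $Z_G(\mrim(x))=M_x\subset P_x$), and then handles the setwise stabilizer by a one-line formal trick: $g\cdot F=F$ forces $gP_Fg^{-1}=\mrfix_G(gF)=\mrfix_G(F)=P_F$, and parabolic subgroups are self-normalizing, so $g\in N_G(P_F)=P_F$. You instead attack $\mrstab_G(F)\subseteq P_F$ directly: the $n\in N$ produced by the equivalence $(g,x)\sim(1,y)$ sends a point of $F$ into $F$, hence $\bar n$ stabilizes the facet $F$ in the Coxeter complex, and the classical identification of facet stabilizers in a finite reflection group with the corresponding standard parabolic $W_{M_F}=N_{M_F}(S)/Z$ pins $n$ inside $M_F\subseteq P_F$. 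The paper's argument is shorter but leans on the self-normalizing property of parabolics; yours is a bit longer but stays entirely within the combinatorics of the Coxeter complex and gives $\mrstab_G(F)\subseteq P_F$ without first proving $\mrfix_G(F)=P_F$. Both are standard and correct. One small wording point: where you write that ``any such lift of $\bar n$ can be arranged inside $N_{M_F}(S)$,'' there is nothing to arrange --- since $Z\subseteq M_F$, the full preimage of $N_{M_F}(S)/Z$ in $N$ is already $N_{M_F}(S)$, so the given $n$ automatically lies there.
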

	\begin{proof}
		It's clear from definition that $P_F=P_x\subset \mrfix_G(F)$ for any $x\in F$. By definition, we have $\mrfix_G(F)=\{g\in G|\,(1,x)\sim (g,x),\forall x\in F\subset V\}$, which means that there exists $n\in N_x$ such that $gn\in P_x$ for any $x\in X_*(S/A)\cap F$. The condition $n\in N_x$ is equivalent to such that $nx(t)n^{-1}=x(t)$ for any $t\in \mbg_m$, i.e., $n\in Z_G(\mrim(x))=M_x\subset P_x$. This further implies $g\in P_x=P_F$. Hence we know $\mrfix_G(F)= P_F\subset \mrstab_G(F)$. The only remaining thing to prove is $\mrstab_G(F)\subset P_F$. For any $g\in \mrstab_G(F)$, $g\cdot F=F$ implies that $gP_Fg^{-1}=P_{gF}=P_F$, which further means $g\in N_G(P_F)=P_F$.
		
	\end{proof}

	Hence $P_F$ does not depend on the choice of the vectorial apartment $V$ which $F$ lies in. Let $\mcf(G)$\index{$\mcf(G)$} denote the set of facets in $\mcv(G)$. We have the following $G$-equivariant bijection 
	\begin{equation}\label{parafacet}
		\begin{aligned}
			\mcf(G)&\leftrightarrow \{\text{parabolic subgroups of }G\},\\
			F&\mapsto P_F,
		\end{aligned}
	\end{equation}
	where the $G$-action on the latter set is given by conjugation. In particular, we let $\ch{}{}(G)$\index{$\ch{}{}(G)$} denote the set of chambers of $\mcv(G)$. The $G$-action on $\ch{}{}(G)$ is transitive. The bijection \eqref{parafacet} is refined to
	\begin{equation}
		\ch{}{}(G)\leftrightarrow \{\text{minimal parabolic subgroups of }G\}.
	\end{equation}
	In particular, when $K_0=\bs{k}$ is a finite field, every reductive group $G$ is quasi-split over $\bs{k}$ by Lang's theorem. Hence $\ch{}{}(G) $ parameterizes the finite set of Borel subgroups of $G$.

	\begin{remark}
		
		A closely related object is the notion of a spherical building. In order to construct a spherical building $\mcs(G)$, we simply replace $V$ by the sphere $\mcs\cong (X_*(S/A)\otimes_\mbz \mbr-\{0\})/\mbr^{>0}$ parameterizing rays in $X_*(S/A)\otimes \mbr$ and adopt the same gluing construction. Indeed the spherical building is defined in \cite{CLT80} in a slightly different way but of the same nature.
		
		Moreover, the spherical building has an alternative combinatorial description if $G$ is semisimple. Namely, it is a geometric realization of an abstract simplicial complex $\Delta(G)$ with simplices representing proper parabolic subgroups of $G$. Moreover, the vertices correspond to maximal proper parabolic subgroups and the vertices $\bs{p}_1,\cdots, \bs{p}_i,\ 1\leq i\leq l$ span an $(i-1)$-dimensional simplex $\{\bs{p}_1,\cdots, \bs{p}_i\}$ if $\bs{p}_1\cap\cdots\cap \bs{p}_i$ is also a parabolic subgroup.
		
		It is more convenient for us to work with the Tits cone $X_*(S/A)\otimes _\mbz\mathbb{R}$ and the vectorial building $\mcv(G)$ instead of the associated sphere $\mcs$ and the spherical building $\mcs(G)$. The advantage of considering the cone is that it is an Euclidean building, and has similar combinatorial structure as the affine Bruhat--Tits building so that one can describe the set of chambers containing a fixed type of facet in an easier way. However, unlike the case of spheres when each point in the spherical building corresponds to a proper parabolic subgroup, the vectorial building contains $\{0\}$ as a facet whose stabilizer is the group $G$ itself.

	\end{remark}

	\subsection{Bruhat--Tits building}
	
	Similarly, the Bruhat--Tits building also has a realization by gluing the apartments and also has certain moduli interpretation. Roughly speaking, the Bruhat--Tits building is a geometric realization of the simplicial complex of all parahoric subgroups of a reductive group $G$ over a non-archimedean local field $K_0=K$. The Bruhat--Tits building \cite[Proposition 4.4.4; Remark 7.6.5]{KP23} is obtained by gluing the apartments $\mca(G,S)$ defined in \Cref{affineapartment}, namely, 
	$$\mcb(G):=G\times\mca(G,S))/\sim,\index{$\mcb(G)$}$$
	such that $(g_1,x_1)\sim (g_2,x_2)$ if and only if there exists $n\in N$ such that $x_2=\nu(n)x_1$ and $g_1^{-1}g_2n\in G_{x_1}$, where, for $x\in \mca(G,S)$, $G_x$ denotes the subgroup of $G$ generated by 
	$$N_x:=\mathrm{Stab}_{N}(x)=\{n\in N\mid\,\nu(n)x=x\}$$
	$$U_{\alpha,x,0}=\{u\in U_{\alpha}^*\mid\,\phi_{x,\alpha}(u)\geq 0\}\cup\{1\},\quad \forall \alpha\in \Phi.$$ 
	
	Notice that $\mcb(G)$ is equipped with a $G$-action given by $h\cdot (g,x):=(hg,x)$ for any $g,h\in G,x\in \mca(G,S)$, and there exists an embedding $\mca(G,S)\rightarrow \mcb(G)$ given by $x\mapsto (1,x)$, which is $N$-equivariant. Indeed the group $G_x$ equals the stabilizer of $x\in \mcb(G)$ under this $G$ action by \cite{SS97}. Still, the definition of a $\mcb(G)$ is independent of the choice of $S$. Moreover, the affine apartment $\mca(G,S^g)$ is identified with the subset $\mca(G,S)^g:=\{(g^{-1},x)\mid x\in\mca(G,S)\}$ of $\mcb(G)$, where $S^g$ denotes the torus $g^{-1}Sg$.

	Similar to the case of the vectorial building, $\mcb(G)$ has a structure of a poly-simplicial complex given by the $G$-translation of the poly-simplicial structure on the apartment $\mca$ so that the group action of $G$ preserves the poly-simplicial structure. We also use the notation $\ch{}{}(G)$\index{$\ch{}{}(G)$} for the set of chambers in $\mcb(G)$. In particular, both $G$ and $G^0$ act transitively on $\ch{}{}(G)$.
	
	The above gluing construction also works for the extended Bruhat--Tits building if we replace $\mca(G,S)$ by $\mca^\mrext(G,S)$. Moreover, we have
	\begin{equation}\label{extord}
		\mcb^{\mrext}(G)\cong\mcb(G)\times (X_*(A)\otimes_{\mbz}\mbr),\index{$\mcb^{\mrext}(G)$}
	\end{equation}
	where the second factor is regarded as an affine space associated to the vector space $X_*(A)\otimes _{\mbz}\mbr$ without a preferred base point.

	\subsection{Integral model associated to a bounded subset $\Omega\subset \mca\subset \mcb(G)$}\label{integralmodel}
	In this section, we review the concept of a connected parahoric integral model attached to a bounded subset $\Omega\subset\mca$ studied by Bruhat--Tits in \cite{bruhat1984groupes}. The integral model is constructed over the field $K^{\mrur}$ first, where $G_{K^\mrur}$ is always quasi-split. Constructions of general integral models will follow from the unramified descent.

	Let $\Omega$ be a non-empty subset inside an apartment $\mca$. We have already mentioned in last section that one can attach an open bounded subgroup \cite[7.7.0.1]{KP23} $\parah{\Omega} $ of $G$ to $\Omega$. 
	Actually, the subgroup $\parah{\Omega}$ does not depend on the choice of apartment $\mca$ which $\Omega$ lies in. 
	
	It is direct to check that $\parah{\Omega}$ is a subgroup of the pointwise fixator of $\Omega$ in $G$. On the other hand, $\parah{\Omega}$ equals the the pointwise fixator as well as the stabilizer of $\Omega$ in $G^0$ \cite[Proposition 7.7.5]{KP23}. Thus when regarding $\mcb(G)$ as the building associated to a Tits system of $G^0$, various result in \cite{bruhat1972groupes}  could still be used for our group $\parah{\Omega}$ here.
	
	If $\bs{p}$ is a point of general position in a facet $F$ of $\mca$,  meaning that $\bs{p}$ does not lie in any affine subspace of $\mca$ whose corresponding reflection stabilizes $F$, then an element $g$ stabilizing $F$ and fixing $\bs{p}$ must fix every point in $F$ and we have $\parah{F}=\parah{\bs{p}}$.
	
	Now we assume that $\Omega$ is a bounded subset in $\mca$. In this case, Bruhat and Tits  construct a smooth connected integral model $\mcg_\Omega$ of $\parah{\Omega}$, called the \emph{connected parahoric integral model} attached to $\Omega$. By an \emph{integral model}, we mean a group scheme $\mcg_{\Omega}$\index{$\mcg_{\Omega}$} over $\mfo_K$ such that the generic fiber of $\mcg_{\Omega}$ is $G$, and $\mcg_{\Omega}(\mfo_K)=\parah{\Omega}$. Still, $\mcg_{\Omega}$ does not depend on the choice of $\mca$. 
	\begin{remark}
		We warn the reader that there are some differences between our notations and those in \cite{bruhat1984groupes} and \cite{KP23}. In \cite{KP23}*{\S 8.3} for instance, $\mcg_\Omega$ means a different integral model, i.e., a smooth integral model for the non-connected parahoric subgroup with respect to $\Omega$, which is usually not affine. Our $\mcg_\Omega$ indeed means the integral model $\mcg_\Omega^0$ in \emph{loc. cit.}.
		
	\end{remark}

	We will not go deep into the explicit construction of the integral model $\mcg_\Omega$. Instead, we may list some useful properties of this integral model, which will be enough for our applications.

	The special fiber $\underline{\mathtt{G}}_{\Omega}$ of the Bruhat--Tits connected integral model $\mcg_{\Omega}$ is a connected smooth algebraic group over $\bs{k}$
	not necessarily reductive. Let $\underline{\mathtt{G}}_\Omega^+$ be the unipotent radical of $\underline{\mathtt{G}}_{\Omega}$. Let $\underline{G}_\Omega$\index{$\underline{G}_\Omega$} denote the maximal reductive quotient of $\underline{\mathtt{G}}_{\Omega}$ over $\bs{k}$, i.e. $\underline{G}_\Omega:=\underline{\mathtt{G}}_\Omega/\underline{\mathtt{G}}_\Omega^+$.
	
	By \cite[Proposition 2.1]{Pra20}, for a maximal $\bs{k}$-split torus $\underline{S}$ of $\underline{\mathtt{G}}_\Omega$, we can find a $\mfo_K$-split integral model $\mcs\subset \mcg_\Omega$ of $\underline{S}$. The generic fiber is also a maximal $K$-split torus $S$ of $G$. 
	
	Consider the surjective reduction map $$\parah{\Omega}=\mcg_{\Omega}(\mfo_K)\rightarrow \underline{\mathtt{G}}_\Omega(\bs{k}).$$
	It is known that $\parah{\Omega,+}$ equals the pre-image of $\underline{\mathtt{G}}_\Omega^+(\bs{k})$ under the reduction map. This means that we have identifications
	$$\parah{\Omega}/\parah{\Omega,+}\cong \underline{\mathtt{G}}_\Omega(\bs{k})/\underline{\mathtt{G}}_\Omega^+(\bs{k})=\underline{G}_\Omega(\bs{k}),$$
	where the second equality follows from the fact that the first Galois cohomology $H^1(\bs{k},\underline{\mathtt{G}}_\Omega^+)=1$.

	Indeed, the reductive quotient of the special fiber of an integral model only depends on the the affine subspace in $\mca$ spanned by $\Omega$. Consider the natural partial order on the set of bounded subsets of $\mcb(G)$ defined by $\Omega'\leq\Omega$ if and only if $\Omega'\subset \overline{\Omega}$. For $\Omega'\leq\Omega$, one has the related $\mfo_K$-morphism $\rho_{\Omega',\Omega}:\mcg_\Omega\rightarrow \mcg_{\Omega'}$ which is an identity on the generic fiber. The induced morphism on the special fiber $\underline{\mathtt{G}}_{\Omega}\rightarrow \underline{\mathtt{G}}_{\Omega'}$ composed with the quotient map $\underline{\mathtt{G}}_{\Omega'}\rightarrow \underline{G}_{\Omega'}$ will further descend to an morphism $\underline{G}_{\Omega}\rightarrow \underline{G}_{\Omega'}$ by \cite[Proposition 8.4.16]{KP23}. Moreover, we have the following lemma.
	\begin{lemma}\cite[Proposition 8.4.16, \S 8.4.17]{KP23}\label{reductivespecial}
		Let $\Omega_1,\Omega_2$ be two non-empty bounded subsets of $\mca$ such that
		the affine subspace  of $\mca$ spanned by $\Omega_1$ is the same as the
		affine subspace of $\mca$ spanned by $\Omega_2$. Then the morphisms $\rho_{\Omega_i,\Omega_1\cup \Omega_2}:\mcg_{\Omega_1\cup \Omega_2}\rightarrow\mcg_{\Omega_i}$ induces isomorphisms
		$f_{i,12}:\underline{G}_{\Omega_1\cup \Omega_2}\rightarrow \underline{G}_{\Omega_i}$, and the composition $f_{1,2}:=f_{2,12}\circ f_{1,12}^{-1}:\underline{G}_{\Omega_1}\rightarrow \underline{G}_{\Omega_2}$ is an isomorphism.
	\end{lemma}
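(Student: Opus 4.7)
The plan is to show that the maximal reductive quotient $\underline{G}_\Omega$ depends, up to canonical isomorphism, only on the affine subspace of $\mca$ spanned by $\Omega$. Once this is established, the rest is formal: since $\Omega_i \subset \overline{\Omega_1 \cup \Omega_2}$, the transition morphism $\rho_{\Omega_i, \Omega_1 \cup \Omega_2} : \mcg_{\Omega_1 \cup \Omega_2} \to \mcg_{\Omega_i}$ exists, is the identity on the generic fiber $G$, and descends to a $\bs{k}$-homomorphism $f_{i,12} : \underline{G}_{\Omega_1 \cup \Omega_2} \to \underline{G}_{\Omega_i}$ between reductive quotients. It then suffices to verify each $f_{i,12}$ is an isomorphism, after which $f_{1,2} := f_{2,12} \circ f_{1,12}^{-1}$ is automatically one.

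First I would exhibit a common split torus. Fix a maximal $K$-split torus $S$ of $G$ whose apartment $\mca(G, S)$ contains $\Omega_1 \cup \Omega_2$, and take an $\mfo_K$-split integral model $\mcs \subset \mcg_{\Omega_1 \cup \Omega_2}$ of $S$ as provided in the paragraph preceding the lemma. Composing with each $\rho_{\Omega_i, \Omega_1 \cup \Omega_2}$ realizes $\mcs$ inside $\mcg_{\Omega_i}$, and its reduction $\underline{S}$ sits as a common maximal $\bs{k}$-split torus in $\underline{G}_{\Omega_1 \cup \Omega_2}$ and in each $\underline{G}_{\Omega_i}$, with every $f_{i,12}$ restricting to the identity on $\underline{S}$.

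Next I would analyze the root subgroups of the reductive quotients with respect to $\underline{S}$. The key claim is: for a bounded $\Omega \subset \mca$, a root $\alpha \in \Phi(G, S)$ contributes a non-trivial root subgroup to $\underline{G}_\Omega$ exactly when there exists an affine root $\psi \in \Psi(G, S)$ with $D\psi = \alpha$, $\psi \geq 0$ on $\Omega$, and $\psi(x_0) = 0$ for some $x_0 \in \Omega$; moreover the associated root subgroup of $\underline{G}_\Omega$ is canonically isomorphic to the $\bs{k}$-group $U_\psi / U_{\psi+}$, which depends only on $\psi$. This would follow from the Bruhat--Tits filtration of $U_\alpha(K)$ by the affine root subgroups $U_{\alpha, x, r}$ combined with the schematic construction of $\mcg_\Omega$ by its big cell. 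Since $\psi$ is an affine functional, the condition \emph{$\psi \geq 0$ on $\Omega$ with $\psi(x_0) = 0$ for some $x_0 \in \Omega$} is equivalent to $\psi \equiv 0$ on the affine subspace spanned by $\Omega$. Hence the set of contributing roots and the root subgroup schemes depend only on this affine span.

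Applying the key claim to $\Omega_1$, $\Omega_2$, and $\Omega_1 \cup \Omega_2$ (all sharing the same affine span by hypothesis), the morphisms $f_{i,12}$ are the identity on $\underline{S}$ and on every root subgroup. A Bruhat big cell argument then forces $f_{i,12}$ to be an isomorphism of reductive $\bs{k}$-groups, which also yields the isomorphism $f_{1,2}$. The main technical obstacle is the key claim itself, which requires unpacking the Bruhat--Tits schematic root datum underlying $\mcg_\Omega$; this is a standard but technical part of the theory developed in \cite[Chapter 8]{KP23}.
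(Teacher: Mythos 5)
This lemma is not proved in the paper at all: it is imported verbatim from \cite[Proposition 8.4.16, \S 8.4.17]{KP23}, so any comparison is with the Bruhat--Tits/Kaletha--Prasad argument your sketch is implicitly trying to reconstruct. Your overall plan (common $\mfo_K$-split torus $\mcs$, show each $f_{i,12}$ is the identity on $\underline{S}$ and on the root groups, conclude by a big-cell argument) is the right shape, but the ``key claim'' on which everything rests is stated incorrectly, and the asserted equivalence that makes the span-dependence ``immediate'' is false. You claim that $\alpha$ contributes a root subgroup to $\underline{G}_\Omega$ exactly when there is an affine root $\psi$ with $D\psi=\alpha$, $\psi\geq 0$ on $\Omega$ and $\psi(x_0)=0$ for some $x_0\in\Omega$, and that this condition is equivalent to $\psi\equiv 0$ on the affine span of $\Omega$. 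Neither is true: take $G=\mrsl_2$ and $\Omega=\overline{C}$ a closed chamber of the tree. The affine root vanishing at one vertex of $C$ is $\geq 0$ on $\overline{C}$ and vanishes at a point of $\Omega$, yet it is certainly not identically zero on the affine span (the whole apartment), and the reductive quotient of $\mcg_{\overline{C}}$ (an Iwahori) is a torus, with no root subgroups at all --- the corresponding elements of $U_{\alpha,\Omega,0}\setminus U_{\alpha,\Omega,0+}$ land in the \emph{unipotent radical} of the special fiber, not in a root group of $\underline{G}_\Omega$. The correct criterion, which is exactly \cite[Proposition 9.4.23]{KP23} as quoted in Proposition \ref{proppanelDroot} of this paper, is that $\psi$ vanish \emph{identically} on $\Omega$; with that condition the dependence only on the affine span is trivially true, but your weaker condition does not reduce to it, so your argument that $\underline{G}_\Omega$ only sees the span collapses at this point.

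Even after correcting the criterion, be aware of what remains to be done: proving that the $\bs{k}$-group generated by $\underline{S}$ and the images of the $U_\psi$ with $\psi\rest_\Omega=0$ is the whole reductive quotient, that the maps $f_{i,12}$ restrict to isomorphisms on each root group (including the multipliable-root case, where $U_{2\alpha}$ interferes), and that surjectivity onto $\underline{G}_{\Omega_i}$ holds, is precisely the schematic content of \cite[\S 8.4]{KP23} that you defer as ``standard but technical.'' So as written the proposal is not a proof but a sketch whose one original reduction step is incorrect; if you fix the criterion to $\psi\rest_\Omega\equiv 0$ you are essentially re-quoting the cited results rather than giving an independent argument, which is also what the paper itself does.
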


	\begin{corollary}
		Let $F$ be a facet in $\mca$ and $\mca'$ the affine subspace of $\mca$ spanned by $F$, then $\parah{F}=\parah{\mca'}\parah{F,+}$.
	\end{corollary}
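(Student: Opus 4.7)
The inclusion $\parah{\mca'}\parah{F,+} \subset \parah{F}$ is immediate from \eqref{parahoricinclu}: since $F \subset \overline{\mca'}=\mca'$ we have $\parah{\mca'}\subset \parah{F}$, and $\parah{F,+}\subset \parah{F}$ by construction. For the reverse inclusion, I will pass to the reductive quotient. Using the identification $\parah{F}/\parah{F,+} \cong \ul{G}_F(\bs{k})$ coming from the Bruhat--Tits connected integral model, the problem reduces to showing that the image of $\parah{\mca'}$ in $\ul{G}_F(\bs{k})$ is all of $\ul{G}_F(\bs{k})$.

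The key geometric dichotomy is the following. Fix $\alpha\in\Phi(G,S)$ and $u\in U_\alpha^*$; then $x\mapsto \phi_{x,\alpha}(u)$ is an affine function on $\mca$ with linear part $-\alpha$. If $\alpha$ vanishes on the direction of $\mca'$ (i.e.\ $\mca'-x_0\subset\ker\alpha$ for $x_0\in\mca'$), then this function is constant on $\mca'$, so $U_{\alpha,\mca',0}=U_{\alpha,F,0}$. If instead $\alpha$ is not constant on $\mca'$ and $\mrdim F>0$, then $\mca'$ is unbounded in a direction where $\alpha$ is nonzero, so $\phi_{x,\alpha}(u)$ takes arbitrarily negative values as $x$ ranges over $\mca'$, forcing $U_{\alpha,\mca',0}=\{1\}$. (The case $\mrdim F=0$ is trivial since then $\mca'=F$.)

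I will then combine this with the description of $\ul{G}_F$: the roots of $\ul{G}_F$ with respect to $\underline{S}$ correspond exactly to those $\alpha\in\Phi(G,S)$ constant on $\mca'$ for which some $\psi\in\Psi(G,S)$ with $D\psi=\alpha$ vanishes on $F$ (equivalently on $\mca'$). For such $\alpha$, the subgroup $U_{\alpha,\mca',0}=U_{\alpha,F,0}\subset\parah{\mca'}$ surjects onto the root subgroup $\ul{U}_\alpha(\bs{k})\subset \ul{G}_F(\bs{k})$; for all other $\alpha$, the subgroup $U_{\alpha,F,0}$ is already contained in $\parah{F,+}$ and hence contributes nothing in the quotient. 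Moreover, $Z_G(S)^0\subset \parah{\mca'}$ surjects onto a maximal $\bs{k}$-torus of $\ul{G}_F$. Consequently the images of $\parah{\mca'}$ and $\parah{F}$ in $\ul{G}_F(\bs{k})$ coincide, which yields the desired surjectivity.

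The step I expect to be the main obstacle is the clean verification of the last sentence, namely that a connected reductive group over a finite field is generated by a maximal $\bs{k}$-torus together with all of its relative root subgroups at the level of $\bs{k}$-points. This is standard (combining Bruhat decomposition with quasi-splitness of $\ul{G}_F$ via Lang's theorem), but some bookkeeping is needed to match the Bruhat--Tits theoretic data on the $K$-side (the $U_{\alpha,\mca',0}$, $Z_G(S)^0$) with their avatars on the $\bs{k}$-side (the $\ul{U}_\alpha(\bs{k})$ and the anisotropic kernel), and to handle the torus contribution correctly.
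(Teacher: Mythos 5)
Your argument is correct, but it takes a genuinely different route from the paper. The paper's proof is a two-line reduction: it writes $\parah{\mca'}$ as the inverse limit (intersection) of the groups $\parah{\Omega}$ over bounded $\Omega\subset\mca'$ spanning $\mca'$, invokes Lemma \ref{reductivespecial} to get compatible isomorphisms $\parah{\Omega}/\parah{\Omega,+}\cong\parah{F}/\parah{F,+}$, deduces $\parah{\Omega}\parah{F,+}=\parah{F}$ for each such $\Omega$, and passes to the limit. You instead prove the needed surjectivity onto $\parah{F}/\parah{F,+}\cong\ul{G}_F(\bs{k})$ by hand, via the valuation dichotomy (the affine function $x\mapsto\phi_{x,\alpha}(u)$ is constant on $\mca'$ exactly when $\alpha$ kills the direction of $\mca'$, so $U_{\alpha,\mca',0}=U_{\alpha,F,0}$ for precisely the roots relevant to $\ul{G}_F$ by Proposition \ref{proppanelDroot}) together with the standard identification of the root groups and maximal torus of $\ul{G}_F$ as images of the $U_{\alpha,F,0}$ and of $Z_G(S)^0$. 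What each approach buys: the paper's is shorter and uniform, outsourcing all root-group bookkeeping to the cited lemma; yours is more self-contained and makes the mechanism transparent, essentially reproving the relevant special case of Lemma \ref{reductivespecial}, at the cost of the \cite{KP23}-style verifications you flag. Two remarks that streamline and slightly correct your write-up: (i) you can skip the final generation step for $\ul{G}_F(\bs{k})$ entirely, since $\parah{F}=G^\sharp_F\cdot Z_G(S)^0$ is \emph{by definition} generated by $Z_G(S)^0$ and the $U_{\alpha,F,0}$, so surjectivity of $\parah{\mca'}\to\parah{F}/\parah{F,+}$ follows once each generator maps into the image of $\parah{\mca'}$, which is exactly your dichotomy; (ii) in the non-reduced case your aside ``for all other $\alpha$, $U_{\alpha,F,0}\subset\parah{F,+}$'' should be indexed by ``$\alpha$ not constant on $\mca'$'' rather than ``$\alpha\notin\Phi_F$'': if $\alpha\notin\Phi_F$ but $2\alpha\in\Phi_F$, then $U_{\alpha,F,0}\supset U_{2\alpha,F,0}$ has nontrivial image in $\ul{U}_{2\alpha}(\bs{k})$; this does not harm your proof, because such $\alpha$ are constant on $\mca'$ and hence $U_{\alpha,F,0}=U_{\alpha,\mca',0}\subset\parah{\mca'}$ anyway.
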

	\begin{proof}
		Notice that we have  $$\parah{\mca'}\parah{F,+}=(\lim_{\substack{\longleftarrow\\ \Omega\subset\mca'\text{,bounded, }\\\mathrm{span}(\Omega)=\mca'}}\parah{\Omega})\parah{F,+}=\lim_{\substack{\longleftarrow\\ \Omega\in\mca'\text{,bounded, }\\\mathrm{span}(\Omega)=\mca'}}(\parah{\Omega}\parah{F,+}),$$
		where $\mathrm{span}(\Omega)$ denote the affine subspace spanned by $\Omega$. By \Cref{reductivespecial}, there is an isomorphism $\parah{\Omega}/\parah{\Omega,+}\cong \parah{F}/\parah{F,+}$. Composing with the quotient morphism $\parah{\Omega}\rightarrow \parah{\Omega}/\parah{\Omega,+}$, we see that $\parah{\Omega} \parah{F,+}=\parah{F}$. Thus $\parah{\mca'}\parah{F,+}=\parah{F}$.
		
	\end{proof}
	In particular, if $\Omega=D$ is a panel lying in a given hyperplane $\mfh\subset \mca$, the reductive quotient $\underline{G}_D$ only depends on the hyperplane $\mfh$.

	Moreover, one has the following description of the root system of the reductive quotient of the special fiber $\underline{G}_\Omega$ for any bounded $\Omega\subset \mca$.
	
	\begin{proposition}\cite[Proposition 9.4.23]{KP23}\label{proppanelDroot}
		The relative root datum of $\underline{G}_\Omega$ with respect to $\underline{S}$ is given by $(X^*(\underline{S}),\Phi_\Omega,X_*(\underline{S}),\Phi_\Omega^\vee)$, where $\Phi_\Omega$ denotes the set of roots $\{D\psi\mid\, \psi\rest_\Omega=0\}\subset \Phi$ and $\Phi_\Omega^\vee$ denotes the set of coroots $\{\alpha^\vee\in \Phi^\vee\mid \alpha\in \Phi_\Omega\}$.
		
	\end{proposition}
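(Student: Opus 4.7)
The plan is to identify the maximal $\bs{k}$-split torus $\underline{S}$ first, then determine which roots of $\Phi$ survive in the reductive quotient by analyzing the filtration of affine root subgroups coming from the $v_K$-compatible valuation. The key conceptual input is that the reduction map $\parah{\Omega}\rightarrow \underline{G}_\Omega(\bs{k})$ has kernel $\parah{\Omega,+}$, so the $\bs{k}$-structure of $\underline{G}_\Omega$ is detected by the quotients $U_{\alpha,\Omega,0}/U_{\alpha,\Omega,0+}$ together with the reduction of $\mcs$.

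First, I would set up the torus. Since $\mcs\subset \mcg_\Omega$ is an $\mfo_K$-split integral model of $S$, its special fiber $\underline{S}$ is a $\bs{k}$-split torus, and the canonical isomorphisms $X^*(\mcs)\cong X^*(S)\cong X^*(\underline{S})$ and $X_*(\mcs)\cong X_*(S)\cong X_*(\underline{S})$ account for the first and third entries of the root datum. One should check that $\underline{S}$ is actually maximal $\bs{k}$-split in $\underline{G}_\Omega$; this is because any larger $\bs{k}$-split torus would lift by smoothness of $\mcg_\Omega$ to an $\mfo_K$-split torus of $\mcg_\Omega$ properly containing $\mcs$, contradicting the maximality of $S$ in $G$.

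Second, I would compute the roots by decomposing $\mathrm{Lie}(\underline{G}_\Omega)$ under $\underline{S}$. For $\alpha\in \Phi$, the image of $U_{\alpha,\Omega,0}$ in $\underline{G}_\Omega(\bs{k})$ is $U_{\alpha,\Omega,0}/(U_{\alpha,\Omega,0}\cap \parah{\Omega,+})=U_{\alpha,\Omega,0}/U_{\alpha,\Omega,0+}$. By the very definition of $\Psi$ and the discrete filtration $\{U_\psi\}_{D\psi=\alpha}$ of $U_\alpha(K)$, this quotient is nontrivial if and only if there exists $\psi\in \Psi$ with $D\psi=\alpha$ and $\psi|_\Omega=0$, i.e.\ iff $\alpha\in \Phi_\Omega$; moreover in that case each graded piece $U_\psi/U_{\psi+}$ is a vector group over $\bs{k}$ carrying the weight $\alpha$ under $\underline{S}$-conjugation (using axiom (6) of \Cref{valuation}). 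Assembling these root subgroups, I would conclude that the root system of $(\underline{G}_\Omega,\underline{S})$ is exactly $\Phi_\Omega$. The coroot description $\Phi_\Omega^\vee=\{\alpha^\vee\mid \alpha\in \Phi_\Omega\}$ then follows because each coroot $\alpha^\vee:\mbg_m\to S$ extends to $\mbg_m\to \mcs$ and reduces to the coroot for $\alpha$ in $(\underline{G}_\Omega,\underline{S})$.

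The hard part is handling non-reduced roots, where $\alpha$ and $2\alpha$ are both in $\Phi$, so that $U_\alpha$ is non-abelian and the filtration does not cleanly split; one must use axiom (4) of \Cref{valuation} to match $U_{\alpha,\Omega,0}/U_{\alpha,\Omega,0+}$ with the correct BC-type root subgroup structure in $\underline{G}_\Omega$. A second subtlety is the non-split case: there one passes to $K^\mrur$, where $G_{K^\mrur}$ is quasi-split and the above analysis applies verbatim to produce $\underline{G}_{\Omega,\overline{\bs{k}}}$, and then descends via $\mathrm{Gal}(\overline{\bs{k}}/\bs{k})$-invariants; the descent is compatible with the root subsystem $\Phi_\Omega$ because the condition $\psi|_\Omega=0$ is Galois-equivariant. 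Together these yield the claimed root datum.
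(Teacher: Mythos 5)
The paper does not prove this proposition at all: it is cited directly from \cite{KP23}*{Proposition 9.4.23}, so there is no internal argument of the paper for your sketch to be compared against. That said, your outline follows the standard Bruhat--Tits strategy and is the right shape of argument, so let me comment on where the work really lies.

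Your identification of $\underline{S}$ and the lattices, and the maximality argument by smooth lifting of split tori, are fine. The step that you state as though it were immediate, namely that $U_{\alpha,\Omega,0}/U_{\alpha,\Omega,0+}$ is nontrivial if and only if $\alpha\in\Phi_\Omega$, is actually the technical core. The ``if'' direction needs one to observe that for $\psi\in\Psi$ with $D\psi=\alpha$ and $\psi\rest_\Omega=0$ one has $U_{\alpha,\Omega,0}=U_\psi$ (since $\phi_{x,\alpha}$ has jump exactly at the affine roots and $\psi(x)=0$ for all $x\in\Omega$), and then the defining condition $U_\psi\nsubseteq U_{\psi+}\cdot U_{2\alpha}$ on $\Psi$ has to be translated into nontriviality of the graded quotient \emph{modulo} the contribution of $U_{2\alpha}$ when $\alpha$ is multipliable; you gesture at axiom (4) but the actual bookkeeping of which quotient carries weight $\alpha$ versus $2\alpha$ is where the $\mathrm{BC}$-type subtleties live. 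The ``only if'' direction also needs the discreteness of the filtration (axiom (7) and the affine root axioms) to rule out partial jumps. Finally, you show that the $\alpha$-weight spaces for $\alpha\in\Phi_\Omega$ occur in $\mathrm{Lie}(\underline{G}_\Omega)$, but you do not argue that $\underline{S}$ together with these root groups actually generates $\underline{G}_\Omega$ (equivalently, that $\Phi_\Omega$ is the \emph{whole} relative root system and not a proper subsystem); this requires knowing that the reduction map $\parah{\Omega}\rightarrow\underline{G}_\Omega(\bs{k})$ is surjective onto a group generated by the reductions of $Z_G(S)^0$ and the $U_{\alpha,\Omega,0}$, which in \cite{KP23} is part of the schematic construction of $\mcg_\Omega$. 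These gaps are exactly why the paper defers to \cite{KP23} rather than reproving the statement.
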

	
	Hence for a panel $D$ lying in a hyperplane $\mfh$, the reductive quotient $\underline{G}_D$ is a reductive group over $\bs{k}$ of semi-simple rank $1$. In particular, $\underline{G}_D^\mrsc$ is a semi-simple, simply connected algebraic group over $\bs{k}$, which must be a certain Weil restriction of scalars of $\mrsl_2$ or $\mrsu_3$ depending on whether $\Phi(\underline{G}_D^\mrsc,\underline{S})$ is reduced or not. 
	
	Let $F$ be a fixed facet contained in $\mca$. For any facet $F\leq F'$, the image of the induced map $\underline{\mathtt{G}}_{F'}\rightarrow \underline{\mathtt{G}}_{F}\rightarrow {\underline{G}_{F}}$ is a parabolic subgroup $\underline{P}_{F'}$ of ${\underline{G}_{F}}$. Hence the map $F'\mapsto \underline{P}_{F'}$ establishes a bijection between the set $$\mcf_{F}(G):=\{F'\text{ is a facet in   }\mcb(G)\mid\,F\leq F'\}\index{$\mcf_{F}(G)$}$$ and the set of $\bs{k}$-parabolic subgroups of the reductive special fiber ${\underline{G}_{F}}$.
	
	Let $\mcf({\underline{G}_{F}})$ denote the set of facets of $\mcv({\underline{G}_{F}})$.  Since the set of parabolic subgroups of ${\underline{G}_{F}}$ is parameterized by the vectorial building of ${\underline{G}_{F}}$. We indeed get a bijection 
	\begin{equation}\label{bijectionfacet}
		\mcf_{F}(G)\leftrightarrow \mcf({\underline{G}_{F}}).
	\end{equation}
	
	The bijection \eqref{bijectionfacet} relates the combinatorics of the Bruhat--Tits building with adjacet a fixed facet to the combinatorics of the vectorial building over the residue field $\bs{k}$. 
	For instance, let  $\pa{}{}(\underline{G}_F)$ denote the set of panels of $\mcv(\underline{G}_F)$. We also write
	$$\ch{F}{}(G)=\{C\text{ is a chamber in   }\mcb(G)\mid\,F\leq C\},\quad\pa{F}{}(G)=\{D\text{ is a panel in   }\mcb(G)\mid\,F\leq D\}.\index{$\ch{F}{}(G)$}$$ 
	When the codimension of $F$ is at least 1, the bijection \eqref{bijectionfacet} restricts to bijections
	\begin{equation}\label{bijection}
	\ch{F}{}(G)\leftrightarrow \ch{}{}(\underline{G}_F)\quad\text{and}\quad\pa{F}{}(G)\leftrightarrow \pa{}{}(\underline{G}_F),
	\end{equation}
	so that a gallery inside $\mcb(G)$ with each chamber and panel having $F$ as a facet corresponds to a gallery in $\mcv(\underline{G}_F)$.
	
	The bijection \eqref{bijectionfacet} also induces a bijection on the related apartments. Let $\mcs$ be a maximal $\mfo_K$-split torus of $\mcg_F$ whose special fiber $\ul{S}$ is a maximal $\bs{k}$-split torus of $\ul{G}_F$ and generic fiber $S$ is  a maximal $K$-split torus of $G$. Denote by $\mcf(\mca(G,S))$ (resp. $\mcf(\mca_v(\ul{G}_F,\ul{S}))$) the set of facets of $\mca(G,S)$ (resp. $\mca_v(\ul{G}_F,\ul{S})$). Then $\eqref{bijectionfacet}$ induces a bijection
	$$	\mcf_{F}(G)\cap\mcf(\mca(G,S))\leftrightarrow \mcf({\underline{G}_{F}})\cap\mcf(\mca_v(\ul{G}_F,\ul{S})).$$
	
	\section{Involution on apartments and buildings}\label{sectioninvolution}
	
	\subsection{$\theta$-stable apartments}
	
	Let $G$ be a reductive group over $K_0$, where $K_0$ is either the finite field $\bs{k}$ or the non-archimedean local field $K$. Let $\theta$ be an involution on $G$ over $K_0$, i.e., $\theta\in \mraut_{K_0}(G)$ such that $\theta^2=\mrid$. Let $H'=G^{\theta}$ and $H=(G^{\theta})^{\circ}$. 
	
	When $K_0=\bs{k}$ (resp. $K_0=K$), we consider the related vectorial (resp. Bruhat--Tits) building of $G$. 
	
	A torus $S$ of $G$ over $K_0$ is called \emph{$\theta$-stable} if $\theta(S)=S$. A $\theta$-stable torus $S$ is called \emph{$\theta$-invariant} (resp. \emph{$\theta$-split}) if $\theta(s)=s$ (resp. $\theta(s)=s^{-1}$) for any $s\in S$. If $S$ is $\theta$-stable, then the $\theta$-action decomposes the torus into $\theta$-invariant part and $\theta$-split part. More precisely, one define subtori
	$$S^+:=\{s\in S\mid\,\theta(s)=s\}^\circ, \quad S^-:=\{s\in S\mid\,\theta(s)=s^{-1}\}^\circ,\index{$S^+,S^-$}$$
	and an induced isogeny 
	$$S^+\times  S^-\rightarrow S$$
	whose kernel is an abelian $2$-group. Since $S$ is $K_0$-split, both $S^+$ and $S^-$ are $K_0$-split.
	
	Given a maximal $K_0$-split torus $S$ of $G$, we define the related apartment $\mca=\mca_v(G,S)$ (resp. $\mca=\mca(G,S)$). We call $\mca$ \emph{$\theta$-stable} if $\mca$ is stable under the $\theta$-action, or equivalently, $S$ is $\theta$-stable. Indeed if $S$ is $\theta$-stable, we may extend $\theta$ to an affine involution on $\mca$ \footnote{This could be seen by either \cite{bruhat1984groupes}*{\S 4.2.12} or the following more direct argument. Notice that $\theta$ stabilizes the set of root $\Phi=\Phi(G,S)$ and maps $U_\alpha$ to $U_{\theta(\alpha)}$. Thus for a given $v_K$-compatible valution $(\phi_{x,\alpha})_{\alpha\in\Phi}$ of $(G,S)$ corresponding to $x\in \mca$, we may verify that $(\phi_{x,\alpha}\circ\theta)_{\alpha\in\Phi}$ is the $v_K$-compatible valuation of $(G,S)$ corresponding to $\theta(x)\in \theta(\mca)$.} Remark that the existence of a maximal $K_0$-split $\theta$-stable torus $S$ is guaranteed by \cite{helminck1993rationality}*{Proposition 2.3}, and from now on we fix such an $S$. Let $\mca^\theta$\index{$\mca^\theta$} denote the $\theta$-invariant part as an affine subspace of $\mca$.

	For any facet $F$ of $\mca$, its $\theta$-invariant part
	$F^\theta$\index{$F^\theta$} is not empty if and only if $F\cap \theta(F)$ is not empty, if and only if $F$ is $\theta$-stable. 
	Let $\Ft(\mca)$ denote the set of $\theta$-stable facets inside the apartment $\mca$. Then, we have a decomposition
	$$\mca^\theta=\bigsqcup_{F\in\Ft(\mca)}F^\theta,$$ 
	which endows $\mca^\theta$ with a poly-conical or poly-simplicial complex structure with related $F^\theta$ being its facets. In particular, $F^\theta$ is a chamber of $\mca^\theta$ if it is of  dimension  $\mrdim(\mca^\theta)$, or equivalently $F$ is a maximal $\theta$-stable facet in $\mca$ with respect to the partial order of inclusion. 

    \begin{figure}[htbp]
	\begin{center}
		\tikzstyle{every node}=[scale=1]
		\begin{tikzpicture}[line width=0.4pt,scale=0.4][>=latex]
			\pgfmathsetmacro\ax{1}
			\pgfmathsetmacro\bx{sin(60)}
			\pgfmathsetmacro\by{cos(60)}
			\pgfmathsetmacro\cy{cos(120)}
			\pgfmathsetmacro\cx{sin(120)}	
			
			\draw[blue,-] (-6*\ax,2*\bx) -- (7*\ax,2*\bx);  	
			\draw[blue,-] (-6*\ax,0) -- (7*\ax,0);
			\draw[blue,-] (-6*\ax,-2*\bx) -- (7*\ax,-2*\bx);
			\draw[blue,-] (-6*\ax,-4*\bx) -- (7*\ax,-4*\bx);

			\draw[blue,-] (-7*\by,-7*\bx) -- (5*\by,5*\bx);
			\draw[blue,-] (-2-7*\by,-7*\bx) -- (-2+5*\by,5*\bx);
			\draw[blue,-] (2-7*\by,-7*\bx) -- (2+5*\by,5*\bx);
			\draw[blue,-] (4-7*\by,-7*\bx) -- (4+5*\by,5*\bx);

			\draw[blue,-] (-7*\cy,-7*\cx) -- (5*\cy,5*\cx);
			\draw[blue,-] (-2-7*\cy,-7*\cx) -- (-2+5*\cy,5*\cx);
			\draw[blue,-] (2-7*\cy,-7*\cx) -- (2+5*\cy,5*\cx);
			\draw[blue,-] (4-7*\cy,-7*\cx) -- (4+5*\cy,5*\cx);
			\draw[blue,-] (-4-7*\cy,-7*\cx) -- (-4+5*\cy,5*\cx);
			\draw[black,<->] (8+2.5*\cy,+2.5*\cx) -- (8+3.5*\cy,3.5*\cx);
			
			\draw[red,-] (-7*\ax,-6*\bx) -- (8,4*\bx);
			\node at (0:8) [text=blue]{\(\mca\)};
			\node at (26:9) [text=red]{\(\mca^\theta\)};
			\node at (20:7.5) [text=black]{\(\theta\)};
		\end{tikzpicture}
	\end{center}
	\caption{$\mca$ and $\mca^\theta$ for $G=\mrgl_3(K)$, $S=\{\mrdiag(t_1,t_2,t_3)\mid t_i\in K^\times\}$, $\theta(g)=J_3^{-1}\,^{t}g^{-1}J_3$ }
\end{figure}
	
	Now we discuss the $\theta$-rank of a chamber. We need the following proposition.
	
	\begin{proposition}\label{propAcontainC} 
		
		Given $C\in\ch{}{}(G)$. There exists a $\theta$-stable apartment $\mca$, unique up to $H'$-conjugacy, that contains $C$.	
		
	\end{proposition}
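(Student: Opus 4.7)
The plan is to establish existence and uniqueness separately, and in both cases to reduce the problem to a cohomological vanishing statement $H^{1}(\langle\theta\rangle, U)=0$ for an appropriate pro-$p$ subgroup $U\subset G$; this vanishing holds because $U$ is pro-$p$ and $p\neq 2$, which is exactly where the standing hypothesis on the residual characteristic is used.

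For existence, I would first invoke the standard Bruhat--Tits axiom that any two chambers lie in a common apartment, applied to $C$ and $\theta(C)$, producing an apartment $\mca_{0}$ containing both. Writing $\mca_{0}=\mca(G,S_{0})$, the apartment $\theta(\mca_{0})=\mca(G,\theta(S_{0}))$ also contains both $C$ and $\theta(C)$. A standard retraction argument in Bruhat--Tits theory (using the Iwahori-style decomposition of the fixator of $\overline{C}\cup\overline{\theta(C)}$) supplies an element $u$ in a pro-$p$ subgroup $U$, where $U$ pointwise fixes the convex hull of $\overline{C}\cup\overline{\theta(C)}$, such that $\theta(\mca_{0})=u\cdot\mca_{0}$. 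Applying $\theta$ once more and using $\theta^{2}=\mathrm{id}$ together with the fact that $U$ intersects $\mathrm{Stab}_{G}(\mca_{0})$ trivially, one obtains the cocycle relation $\theta(u)u=1$. The vanishing of $H^{1}(\langle\theta\rangle, U)$ then lets me write $u=v^{-1}\theta(v)$ for some $v\in U$, and one checks that $\mca:=v^{-1}\mca_{0}$ is $\theta$-stable and still contains $C$ because $v$ fixes $\overline{C}$ pointwise.

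For uniqueness, given two $\theta$-stable apartments $\mca,\mca'$ both containing $C$, a standard result again provides an element $u$ in a pro-$p$ subgroup $U'$ fixing $\overline{C}$ pointwise with $\mca'=u\cdot\mca$. Since both apartments are $\theta$-stable, applying $\theta$ to this equality and using triviality of $U'\cap\mathrm{Stab}_{G}(\mca)$ produces a cocycle relation on $u$; the same cohomological vanishing $H^{1}(\langle\theta\rangle,U')=0$ then adjusts $u$ to an element of $H'=G^{\theta}$ conjugating $\mca$ to $\mca'$.

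The main obstacle I anticipate is a careful choice of the pro-$p$ subgroups $U$ and $U'$: one needs them to act (simply) transitively on the relevant sets of apartments so that the cocycle relations are literal rather than only holding modulo stabilizers of apartments, and one also needs them to be genuinely pro-$p$ so that the $H^{1}$ vanishing kicks in. Arranging this requires an appeal to the Iwahori-type decomposition of fixators of bounded subsets in Bruhat--Tits theory, applied to the bounded set $\overline{C}\cup\overline{\theta(C)}$ (resp.\ $\overline{C}$); the argument is standard but is the place where one must be most attentive.
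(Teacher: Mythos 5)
Your overall strategy---reducing both parts to the vanishing of $H^{1}(\langle\theta\rangle,U)$ for a pro-$p$ group $U$ when $p\neq 2$---is the right idea, and it matches what the sources the paper cites for this proposition (Helminck--Wang over $\bs{k}$; Court\`es and Broussous over $K$) actually do; the paper itself offers no argument beyond those citations. There is, however, a concrete gap at the step where you derive the literal cocycle relation $\theta(u)u=1$: you assert that the pro-$p$ group $U$ (the pro-unipotent radical of the fixator of $\Omega:=\overline{C}\cup\overline{\theta(C)}$ or its convex hull) meets $\mrstab_{G}(\mca_{0})=N_{G}(S_{0})$ trivially. This is false in general. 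Already for $G=\mrsl_{2}(K)$, the pro-$p$ Iwahori $P_{C,+}$ meets $N_{G}(S_{0})$ in $\{\mrdiag(a,a^{-1}):a\in 1+\mfp_{K}\}$, which is a nontrivial pro-$p$ group, and the same is true of $P_{\Omega,+}$ for any bounded $\Omega\supset\overline{C}$: the stabilizer of $\mca_{0}$ in $P_{\Omega,+}$ is the pro-$p$ congruence subgroup of the bounded torus $Z_{G}(S_{0})^{0}$. So $\theta(u)u$ is only known to lie in the nontrivial group $U\cap N_{G}(S_{0})$, and the cocycle computation does not close. You flag this as the delicate point, but the resolution you propose (choose $U$ so that it acts \emph{simply} transitively on the apartments containing $\Omega$) is not available for a pro-$p$ \emph{subgroup} of $G$.

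The standard repair is to drop the requirement of a literal cocycle and argue directly on the set $X$ of apartments containing the $\theta$-stable bounded set $\Omega$. The $\theta$-stable pro-$p$ group $U=P_{\Omega,+}$ acts transitively (not simply transitively) on $X$; choosing a decreasing basis of $\theta$-stable open normal subgroups $U^{(n)}\triangleleft U$, each quotient $X_{n}:=U^{(n)}\backslash X$ is a finite $\theta$-set on which $U/U^{(n)}$ acts transitively, hence has order a power of $p$. Since $p$ is odd, $|X_{n}|$ is odd, so $X_{n}^{\theta}\neq\emptyset$, and therefore $X^{\theta}=\varprojlim X_{n}^{\theta}\neq\emptyset$: any element of $X^{\theta}$ is a $\theta$-stable apartment containing $C$. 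For uniqueness, given two $\theta$-stable apartments $\mca,\mca'$ containing $C$ (hence containing $\Omega$), the set $\{u\in U: u\cdot\mca=\mca'\}$ is a nonempty $\theta$-stable coset of the $\theta$-stable pro-$p$ group $\mrstab_{U}(\mca)$; by the same counting argument it contains a $\theta$-fixed element $h$, which lies in $H'$ and satisfies $h\cdot\mca=\mca'$. With this modification your plan becomes a correct proof.
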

	
	\begin{proof}
		
		When $K_0=\bs{k}$, it is shown by Helminck--Wang \cite{helminck1993rationality}*{Lemma 2.4}. When $K_0=K$, it is shown by Court\`es \cite{courtes2017distinction}*{Proposition 4.8} in the split Galois case and Broussous \cite{broussous2024orbits}*{Section 3} in general.
		
	\end{proof}
	
	\begin{definition}\label{defthetarank}
		
		The \emph{$\theta$-rank} of a $\theta$-stable apartment $\mca$ associated to a $\theta$-stable maximal $K_0$-split torus $S$ is defined to be the $K_0$-rank of $S^+/S^+\cap A(G)$ or $S^+\cap G^{\mrder}$, or equivalently, the dimension of the affine subspace $\mca^{\theta}$. 
		For $C\in\ch{}{}(G)$, its \emph{$\theta$-rank} is defined as the $\theta$-rank of any $\theta$-stable $\mca$ containing $C$. 
		
	\end{definition}

	Because of Proposition \ref{propAcontainC}, our definition makes sense. In particular, if $K_0=\bs{k}$, then the $\theta$-rank of a Borel subgroup $B$ of $G$ is defined as the $\theta$-rank of its related chamber $C_B$. Recall that in this case, the set of chambers $\ch{}{}(G)$ is indeed in bijection with the set of Borel subgroups of $G$, and the chambers in $\mca$ relate to the Borel subgroups containing $S$.
	
	Finally, we discuss $H$-conjugacy classes (resp. $H'$-conjugacy class) of $\theta$-stable apartments. Given a $\theta$-stable apartment $\mca=\mca(G,S)$ and $g\in G$, if the apartment $\mca^g:=g^{-1}\cdot \mca$ is also $\theta$-stable, then equivalently we get $\theta(S^g)=S^g$, or $\theta(g)g^{-1}\in N_{G}(S)$, where $S^g$ denotes the torus $g^{-1}Sg$. As a result, the set of $H$-conjugacy (resp. $H'$-conjugacy) classes of $\theta$-stable apartments is in bijection with the set of double cosets
	$$\{N_{G}(S)gH\mid g\in G,\ \theta(g)g^{-1}\in N_{G}(S)\}\quad(\text{resp.}\ \{N_{G}(S)gH'\mid g\in G,\ \theta(g)g^{-1}\in N_{G}(S)\}).$$
	We remark that the above set of double cosets is finite, bounded by the cardinality of $H\backslash G/ B$ (resp. $H'\backslash G/ B$), where $B$ is a minimal parabolic subgroup of $G$ over $K_0$ (\emph{cf.} \cite{helminck1993rationality}*{Proposition 6.10, Corollary 6.16}). 
	
	\subsection{The embedding $\iota:\mcb(H)\hookrightarrow\mcb(G)$}
	
	From now on until the end of this section, we assume $K_0=K$ and we consider the Bruhat--Tits building of $G$. We emphasize that $p\neq 2$.
	
	By functoriality, the involution $\theta$ on $G$ induces an involutive action on the reduced Bruhat--Tits building $\mcb(G)$, which is also denoted by $\theta$, such that for any $g\in G,x\in \mcb(G)$, one has 
	\begin{equation}\label{equivariant}
		\theta (g\cdot x)=\theta (g)\cdot \theta(x).
	\end{equation}
	Moreover, $\theta$ on $\mcb(G)$ is simplicial and maps apartments to apartments (\emph{cf.} \cite{bruhat1984groupes}*{\S 4.2.12}).
	
	Based on the explicit realization of $\mcb (G)$, we can describe the action of $\theta$ on $\mcb (G)$ in the following way. We pick a $\theta$-stable maximal $K$-split torus $S$ in $G$. Then, $\mcb(G)$ could be defined by $G\times\mca(G,S))/\sim$, which does not depend on the choice of $S$. Since $S$ is $\theta$-stable, $\mca$ is naturally equipped with a $\theta$-action. 
	Then $\theta$ acts on $\mcb(G)$ via the diagonal action, namely,
	$$\theta((g,x)):=(\theta(g),\theta(x)),\quad g\in G,\ x\in \mca.$$
	It is easy to check that the above action of $\theta$ satisfies \eqref{equivariant} since $\theta\in \mraut_F(G)$. A similar discussion works for the extended building $\mcb^\mrext(G)$ as well.
	
	By \cite{PY02}, one has a canonical $H$-equivariant embedding of extended building
	$$\iota^\mrext:\mcb^\mrext(H)\hookrightarrow \mcb^\mrext(G),$$ such that $\mrim(\iota^\mrext)=  \mcb^\mrext(G)^{\theta}\index{$\iota^\mrext$}$ as a set. It is also known that $\iota^\mrext$ is toral (\emph{cf.} \cite[\S 1.9]{PY02} and \cite[\S 1.33]{Lan00}). It means that for any maximal $K$-split torus $S_H$ of $H$, there exists a maximal $K$-split torus $S$ of $G$ containing $S_H$, such that the restriction map $\iota^\mrext\rest_{\mca^\mrext(H,S_H)}$ has image in $\mca^\mrext(G,S)$ and induces an affine map $\iota^\mrext\rest_{\mca^\mrext(H,S_H)}:\mca^\mrext(H,S_H)\rightarrow \mca^\mrext(G,S)$. 
	
	However, this embedding of extended building does not always descend to an embedding of reduced building, namely, there does not always exist an embedding $\iota:\mcb(H)\hookrightarrow \mcb(G)$\index{$\iota$} such that the following diagram commutes
	\begin{equation}\label{eqdiagramextred}
		\xymatrix{
			\mcb^{\mathrm{ext}}(H) \ar@{^{(}->}[r]^{\iota^\mathrm{ext}} \ar@{->>}[d]_{\pi} & \mcb^{\mathrm{ext}}(G) \ar@{->>}[d]^{\pi} \\
			\mcb(H) \ar@{^{(}->}[r]^{\iota} & \mcb(G)
		},
	\end{equation}
	where $\pi:\mcb^\mrext(G)\rightarrow \mcb(G)$ (resp. $\pi:\mcb^\mrext(H)\rightarrow \mcb(H)$) denotes the canonical projection. 
	
	Here is a typical example.
	
	\begin{example}
		Let $G$ be $\mrgl_2$, $\theta$ the involution $\theta(g)=J_2{}^tg^{-1}J_2$, where $J_2$ denotes the element $\left(\begin{matrix}
			0&1\\
			1&0
		\end{matrix}\right)$. In this case, one has $H=(G^{\theta})^\circ=\mrso_2(J_2)\cong \mbg_m$ being the split orthogonal group with the embedding of $H\hookrightarrow G$ being $t\mapsto\mrdiag(t,t^{-1})$. Notice that the standard apartment of $\mcb^{\mrext}(G)$ with respect to the maximal torus $S=\mrdiag(t_1,t_2)$ is the plane $\mbr^2=\mbr e_1^{\vee}+\mbr e_2^{\vee}$, where $e_1^\vee(t)=\mrdiag(t,1)$ and $e_2^\vee(t)=\mrdiag(1,t)$. In this case, the $\theta$-action on the standard apartment is given by $\theta(e_1^{\vee})=-e_2^{\vee}, \theta(e_2^{\vee})=-e_1^{\vee}$, and one has $\mca^{\mrext}(H,S^\theta)\cong\mca^{\mrext}(G,S)^\theta=\mbr(e_1^{\vee}-e_2^{\vee})$. In this case, the projection map $\pi:\mca^{\mrext}(G,S)\rightarrow\mca(G,S)$ is the canonical quotient $\mbr e_1^{\vee}+\mbr e_2^{\vee}\rightarrow \mbr e_1^{\vee}+\mbr e_2^{\vee}/\mbr(e_1^{\vee}+e_2^{\vee})$ and the composition $\pi\circ \iota^{\mrext} :\mca^{\mrext}(H,S^\theta)\rightarrow \mca(G,S)$ is surjective. However, the fact that $\mcb(H)$ is a point means that there does not exist $\iota$ such that the diagram \eqref{eqdiagramextred} is commutative.
	\end{example}
	
	To avoid this inconvenience, we may pose the following assumption. 
	\begin{assumption}\label{assumpZHZG}
		$Z(H)\subset Z(G)\cap H$. 
	\end{assumption}
	\begin{lemma}
		If $H,G$ satisfy Assumption \ref{assumpZHZG}, then there exists an embedding $\iota:\mcb(H)\hookrightarrow\mcb(G)$ such that the diagram \eqref{eqdiagramextred} commutes.
	\end{lemma}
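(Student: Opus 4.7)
The plan is to exploit the product decomposition of extended buildings and reduce the descent question to a simple containment of split central tori. By \eqref{extord}, we have $\mcb^{\mrext}(G)\cong\mcb(G)\times V_G$ and $\mcb^{\mrext}(H)\cong\mcb(H)\times V_H$, where $V_G:=X_*(A(G))\otimes_{\mbz}\mbr$ and $V_H:=X_*(A(H))\otimes_{\mbz}\mbr$, with the projections $\pi$ corresponding to the projections onto the first factor. Thus $\iota$ exists and makes \eqref{eqdiagramextred} commute if and only if $\pi\circ\iota^{\mrext}$ is constant on the fibers of $\pi:\mcb^{\mrext}(H)\to\mcb(H)$, i.e.\ on the orbits of the translation action of $V_H$ on the second factor.

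First I would verify the apartment-level version using the toral property. For any maximal $K$-split torus $S_H$ of $H$, choose a maximal $K$-split torus $S$ of $G$ containing $S_H$, as provided by torality; then $\iota^{\mrext}\rest_{\mca^{\mrext}(H,S_H)}:\mca^{\mrext}(H,S_H)\to\mca^{\mrext}(G,S)$ is an affine embedding whose linear part is induced, under the natural identifications $\mca^{\mrext}(H,S_H)\leftrightarrow X_*(S_H)\otimes\mbr$ and $\mca^{\mrext}(G,S)\leftrightarrow X_*(S)\otimes\mbr$, by the inclusion $X_*(S_H)\hookrightarrow X_*(S)$. On this apartment, the fibers of $\pi$ are precisely the $V_H$-translates, and their image under $\iota^{\mrext}$ is their translate by the image of $V_H$ inside $X_*(S)\otimes\mbr$.

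The crucial input is then $A(H)\subset A(G)$. This follows directly from Assumption \ref{assumpZHZG}: it gives $Z(H)\subset Z(G)$, hence $A(H)$, being a $K$-split subtorus of $Z(H)$, is a $K$-split subtorus of $Z(G)$; by maximality of $A(G)$ among such tori, $A(H)\subset A(G)$. Passing to cocharacters and tensoring with $\mbr$ gives $V_H\subset V_G$ inside $X_*(S)\otimes\mbr$, so every $V_H$-translation on $\mca^{\mrext}(H,S_H)$ is transported by $\iota^{\mrext}$ to a $V_G$-translation on $\mca^{\mrext}(G,S)$. Therefore $\pi\circ\iota^{\mrext}$ is constant on each $V_H$-fiber lying in $\mca^{\mrext}(H,S_H)$.

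To globalize, I would use that $\mcb^{\mrext}(H)$ is the union of its apartments and that any two points lie in a common apartment, combined with the $H$-equivariance of $\iota^{\mrext}$, to get that $\pi\circ\iota^{\mrext}$ is constant on every $V_H$-fiber in $\mcb^{\mrext}(H)$. This yields the required $\iota:\mcb(H)\to\mcb(G)$ with $\iota\circ\pi=\pi\circ\iota^{\mrext}$. Injectivity is inherited from $\iota^{\mrext}$: if $\iota(\pi(x))=\iota(\pi(y))$ with $x,y$ in a common extended apartment, then $\iota^{\mrext}(x)-\iota^{\mrext}(y)\in V_G$; but under the toral identification the image of $V_H$ in $X_*(S)\otimes\mbr$ is exactly $V_G\cap (X_*(S_H)\otimes\mbr)$, which forces $x-y\in V_H$ and hence $\pi(x)=\pi(y)$. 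The only delicate point, which I would treat carefully, is to ensure the apartment-level identifications are coherent as $(S_H,S)$ varies; this is provided by the $H$-equivariance and the uniqueness inherent in the toral construction of $\iota^{\mrext}$ in \cite{PY02}.
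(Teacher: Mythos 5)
Your proposal is correct and follows essentially the same route as the paper: both use the toral property of $\iota^{\mrext}$ to reduce to an apartment, deduce $A(H)\subset A(G)$ (equivalently $A(H)\subset A(G)\cap H$) from Assumption \ref{assumpZHZG}, and define $\iota$ by descending $\pi\circ\iota^{\mrext}$, the well-definedness being exactly that $X_*(A(H))\otimes_{\mbz}\mbr$-translations are sent into $X_*(A(G))\otimes_{\mbz}\mbr$-translations and hence killed by $\pi$. Your extra remarks on globalization via $H$-equivariance and on injectivity (using $V_H=V_G\cap(X_*(S_H)\otimes_{\mbz}\mbr)$) only make explicit what the paper leaves implicit in asserting that $\iota_v$ is an embedding and checking the diagram on the standard apartment.
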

	\begin{proof}
		Fix a $\theta$-stable maximal $K$-split torus $S$, we only need to check the commutativity of the diagram on the standard apartment $\mca(H,S^\theta)$.  Notice that $Z(H)\subset Z(G)\cap H$ implies $A(H)\subset A(G)\cap H$. The canonical map $\iota_v:X_*(S^{\theta})\rightarrow X_*(S)/X_*(A(G))$ factors through $X_*(S^\theta)/X_*(S^\theta\cap A(G))$. The condition $A(H)\subset A(G)\cap H$ implies $A(H)\subset A(G)\cap S^\theta$, hence the map $\iota_v$ factors through $X_*(S^\theta)/X_*(A(H))\rightarrow X_*(S)/X_*(A(G))$, which extends to an embedding $\iota_v:X_*(S^\theta)/X_*(A(H))\otimes_{\mbz}\mbr\rightarrow X_*(S)/X_*(A(G))\otimes_{\mbz}\mbr$.
		
		Now we define the corresponding embedding $\iota:\mca(H,S^\theta)\hookrightarrow \mca(G,S)$ as follows. For any $x\in\mca(H,S^\theta)$ and $x'\in \mca^{\mrext}(H,S^\theta)$ such that $\pi(x')=x$, we define $\iota(x)=\pi(\iota^{\mrext}(x'))$. This definition is independent of the choice of $x'$, since for another possible choice $x''$ with $x''-x'\in X_*(A(H))\otimes_{\mbz}\mbr$, we have 
		$\iota^{\mrext}(x'')=\iota^{\mrext}(x')+\iota_v(x''-x')$ with $\iota_v(x''-x')\in X_*(A(G))\otimes_{\mbz}\mbr$. Thus $\pi(\iota^{\mrext}(x'))=\pi(\iota^{\mrext}(x''))$. So $\iota$ is well defined such that $\pi\circ\iota^{\mrext}=\iota\circ\pi$.
		
	\end{proof}
	
	Notice that $\theta$ preserves the center $Z(G)$ as well as $A(G)$. Using $\mrim(\iota^\mrext)=  \mcb^\mrext(G)^{\theta}$ and  \eqref{eqdiagramextred}, we have an identification 
	$\iota(\mcb(H))=\mcb(G)^\theta$. We remark that $\iota$ is toral as well, thus its restriction to any apartment in $\mcb(H)$ is an affine map.

	\begin{remark}
		
		We do not assume $G$ to be split over $K$. Typical examples come from the Weil restriction of scalars. More precisely, let $L/K$ be a quadratic extension of $p$-adic fields, $H$ a reductive group over $K$ with $G$ being $\mrres_{L/K}H_{L}$ and $\theta$ the non-trivial element in $\mrgal(L/K)$ inducing an $K$-involution on $G$. In this case, $G$ is not split over $K$ even if one assumes that $H$ is split over $K$. However, we have a canonical identification 
		$$\mcb(G)=\mcb(\mrres_{L/K}H_{L})= \mcb(H_{L}),$$
		where $\mcb(H_{L})$ denotes the reduced Bruhat--Tits building of $H_L$ as a reductive group over $L$.
	\end{remark}

	\subsection{The poly-simplicial structure on $\mcb(G)^\theta$}
	Fix our notations as in the previous subsection. As before, a facet $F$ of $\mcb(G)$ is $\theta$-stable if and only if its $\theta$-invariant part $F^\theta$ is not empty. Let $\Ft(G)$\index{$\Ft(G)$} denote the set of $\theta$-stable facets of $\mcb(G)$.

	Let $\mcb(G)^{\theta}$ be the $\theta$-invariant part of $\mcb(G)$. Then, we have a decomposition
	$$\mcb(G)^\theta=\bigsqcup_{F\in\Ft(G)}F^\theta,$$ 
	which endows $\mcb(G)^\theta$ with a poly-simplicial complex structure on $\mcb(G)^{\theta}$ with $F^\theta$ being its facets. We have the following bijection
	\begin{equation}\label{thetainter}
		\begin{aligned}
			\Ft(G)&\leftrightarrow \mcf(\mcb(G)^{\theta}):=\{\text{facets of }\mcb(G)^{\theta}\}\\
			F&\mapsto F^\theta=F\cap \mcb(G)^{\theta}.
		\end{aligned}
	\end{equation}
	The $G$-action on $\mcb(G)$ induces $H$-actions on $\Ft(G)$ and $\mcf(\mcb(G)^{\theta})$, which induces a simplicial $H$-action on the complex $\mcb(G)^\theta$. 
	
	Given a $\theta$-stable facet $F$, we may find a $\theta$-stable apartment that contains $F$. Then, $F^\theta$ is a facet of $\mca^\theta$. In this way, we have $$\mcb(G)^\theta=\bigcup_{\theta\text{-stable}\ \mca}\mca^\theta$$
	as poly-simplicial complexes.
	
	

	Assume Assumption \ref{assumpZHZG}. Identifying $\mcb(H)$ with $\iota(\mcb(H))$, the poly-simplicial structure $\mcb(G)^\theta$ is a refinement of that of $\mcb(H)$ \cite{prasad2020finite}*{Proposition 3.8}. 
	Here, by a refinement we mean that any $F_H\in \mcf(H)$ is a union of $F^\theta$ with $F\in\Ft(G)$. Since $\overline{F_H}$ is compact, it is the union of finitely many facets in $\mcb(G)^\theta$. 
	As a result, since there are finitely many $H$-orbits of $\mcf(H)$ (for instance, using the fact that $H$ acts transitively on the set of chambers of $\mcb(H)$), there are finitely many $H$-orbits of $\Ft(G)$. 
	
	In general, even if Assumption \ref{assumpZHZG} is not satisfied, using a similar argument to $\mcb^{\mrext}(G)^\theta$ and $\mcb^{\mrext}(H)$, we may show that there are finitely many $H$-orbits of $\theta$-stable facets in the extended building $\mcb^{\mrext}(G)$. Since every $\theta$-stable facet of $\mcb(G)$ is the projection of a $\theta$-stable facet of $\mcb^{\mrext}(G)$, there are finitely many $H$-orbits of $\Ft(G)$.
	
	It is curious to give an explicit upper bound of $\car{\Ft(G)}$. 
	
	\section{The Steinberg representation $\mrst_G$ of $G$}\label{sectionsteinberg}
	
	let $G$ be a connected reductive group over a non-archimedean local field $K$. 

	
	We fix a minimal parabolic subgroup $B$ of $G$ over $K$. By the Steinberg representation of $G$, we mean the complex irreducible representation $$\mrst_G:=\mrInd_{B}^G\mbc/\sum_{P,B\subsetneq P}\mrInd_P^G \mbc,\index{$\mrst_G$}$$ where $P$ in the sum ranges over the finite set of parabolic subgroups of $G$ strictly containing $B$ and $\mrInd_P^G$ denotes the parabolic induction functor. Also, one uses the natural embedding $\mrInd_P^G \mbc\rightarrow \mrInd_{B}^G\mbc$ corresponding to the projection $G/B\rightarrow G/P$ to identify $\mrInd_P^G \mbc$ with a subrepresentation of $\mrInd_{B}^G\mbc$. 

	Despite the above description via parabolic inductions, we need an alternative way to define the Steinberg representation, which is closely related to the geometry of the Bruhat--Tits building.

	\subsection{The sign character from the Bruhat--Tits building}
	
	In order to get a Bruhat--Tits theoretical construction of the Steinberg representation, we first need to define the ``orientation sign character" $\epsilon_G$ of $G$ related to the Bruhat--Tits building $\mcb(G)$.
	
	First assume that $G$ is $K$-simple, then $\mcb(G)$ is simplicial. Fix a labelling of $\mcb(G)$, namely, a simplicial map  $\eta:\mcb(G)\rightarrow  \Delta_l$ from $\mcb(G)$ to the standard $l$-dimensional simplex $\Delta_l$ preserving the dimension, where $l$ denotes the dimension of a chamber of $\mcb(G)$. The existence of $\eta$ follows from the non-trivial fact that the Bruhat--Tits building $\mcb(G)$ is a labellable simplicial complex.  Fix a chamber $C$ of $\mcb(G)$ and let $\{\bs{p}_0,\cdots,\bs{p}_l\}$ denote the set of vertices of $C$. 
	
	We define a sign character $\epsilon_G:G\rightarrow\{\pm1\}$\index{$\epsilon_G$}, such that $\epsilon_G(g)$ is defined  
		as the sign of the permutation 
		$$\begin{pmatrix}
			\eta(\bs{p}_0) & \eta(\bs{p}_1) & \cdots & \eta(\bs{p}_{l})\\ \eta(g\cdot \bs{p}_0)& \eta(g\cdot \bs{p}_1) & \cdots & \eta(g\cdot \bs{p}_{l})
		\end{pmatrix}$$
		in $\mfS_{l+1}$ for $g\in G$. 
		
		We need to check that this sign character is well defined, namely, it does not depend on the choice of the chamber $C$, and it is a group homomorphism. When the underlying reductive group is split, this has already been done in \cite{broussous2014distinction}*{Lemma 2.1}. As in \emph{loc. cit.},  we only need to consider the subgroup $G^0$ of $G$, and show that the $G^0$-action preserves the labelling map $\eta$. So, if we replace $C$ by $g\cdot C$ with $g\in G^0$, then the value of $\epsilon_G$ remains unchanged. Since the $G^0$-action on $\ch{}{}(G)$ is transitive, $\epsilon_G$ is independent of $C$. Also, for $g_1,g_2\in G$, the sign of the permutation 
		$$\begin{pmatrix}
			\eta(\bs{p}_0) & \eta(\bs{p}_1) & \cdots & \eta(\bs{p}_{l})\\ \eta(g_1g_2\cdot \bs{p}_0)& \eta(g_1g_2\cdot \bs{p}_1) & \cdots & \eta(g_1g_2\cdot \bs{p}_{l})
		\end{pmatrix}$$
		equals the product of the sign of the permutation  
		$$\begin{pmatrix}
			\eta(\bs{p}_0) & \eta(\bs{p}_1) & \cdots & \eta(\bs{p}_{l})\\ \eta(g_2\cdot \bs{p}_0)& \eta(g_2\cdot \bs{p}_1) & \cdots & \eta(g_2\cdot \bs{p}_{l})
		\end{pmatrix}$$
		and the sign of the permutation $$\begin{pmatrix}
			\eta(g_2\cdot \bs{p}_0)& \eta(g_2\cdot \bs{p}_1) & \cdots & \eta(g_2\cdot \bs{p}_{l})\\
			\eta(g_1g_2\cdot \bs{p}_0)& \eta(g_1g_2\cdot \bs{p}_1) & \cdots & \eta(g_1g_2\cdot \bs{p}_{l})
		\end{pmatrix}.$$
		From the definition of $\epsilon_G$ and its independence of $C$, we have $\epsilon_G(g_1g_2)=\epsilon_G(g_1)\epsilon_G(g_2)$.
		
		To prove that $G^0$ preserves the labelling of $\mcb(G)$, we only need to show that $\mcb(G)$ is the building associated to some BN-pair related to the group $G^0$. This is indeed the case in \cite[Theorem 7.5.3]{KP23}.
		More precisely, let $S$ be a maximal $K$-split torus of $G$. Let $N=N_G(S)$ be the normalizer of $S$ in $G$. For a chamber $C$ in the apartment $\mca(G,S)$, let $I=\parah{C}$ be the corresponding Iwahori subgroup. Then,  $(G^0,N^0,I,\Delta^\mraff)$ is a so-called Iwahori--Tits system, where $N^0=G^0\cap N$, and $\Delta^\mraff=\Delta^\mraff(G,S,I)$ denotes the corresponding set of affine simple roots. Moreover, $\mcb(G)$ is the building associated to the Tits system $(G^0,N^0,I,\Delta^\mraff)$. Hence $G^0$ preserves the labelling of $\mcb(G)$.
		
		As a corollary, we have

		\begin{proposition}\label{resoforient} $\epsilon_G$ is trivial on $G^0$. In particular, let $F$ be a facet and $\parah{F}$ the parahoric subgroup associated to $F$, then $\epsilon_G$ is trivial on $\parah{F}$.
			
		\end{proposition}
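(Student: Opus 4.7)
The plan is to leverage directly the identification, established just before the statement, of $\mcb(G)$ with the Bruhat--Tits building associated to the Iwahori--Tits system $(G^{0},N^{0},I,\Delta^{\mraff})$. Once this identification is in place, the labelling map $\eta:\mcb(G)\rightarrow\Delta_{l}$ can be taken to be the type function assigning to each vertex the $G^{0}$-conjugacy class of its stabilizer (equivalently, the corresponding subset of $\Delta^{\mraff}$ obtained by removing one affine simple root). By construction this type function is $G^{0}$-equivariant, which is exactly the statement that $G^{0}$ preserves $\eta$.

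With this in hand, the triviality of $\epsilon_{G}$ on $G^{0}$ is essentially immediate. Fix a chamber $C$ with vertices $\bs{p}_{0},\dots,\bs{p}_{l}$. For any $g\in G^{0}$, the chamber $g\cdot C$ has vertices $g\cdot\bs{p}_{0},\dots,g\cdot\bs{p}_{l}$, and the $G^{0}$-equivariance of $\eta$ gives $\eta(g\cdot\bs{p}_{i})=\eta(\bs{p}_{i})$ for every $i$. Therefore the permutation
$$\begin{pmatrix}\eta(\bs{p}_{0})&\eta(\bs{p}_{1})&\cdots&\eta(\bs{p}_{l})\\ \eta(g\cdot\bs{p}_{0})&\eta(g\cdot\bs{p}_{1})&\cdots&\eta(g\cdot\bs{p}_{l})\end{pmatrix}$$
in $\mfS_{l+1}$ is the identity, which has sign $+1$. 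Hence $\epsilon_{G}(g)=1$, so $\epsilon_{G}\rest_{G^{0}}$ is trivial.

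For the second assertion about parahoric subgroups, I would simply note that from the very definitions recalled in the text, $\parah{F}=G_{F}^{\sharp}\cdot Z_{G}(S)^{0}$ with $G_{F}^{\sharp}\subset G^{\sharp}$, while $G^{0}=G^{\sharp}\cdot Z_{G}(S)^{0}$. Consequently $\parah{F}\subset G^{0}$, and the triviality of $\epsilon_{G}$ on $\parah{F}$ follows from its triviality on $G^{0}$.

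The only mildly delicate point, and the one that deserves the most care in writing out, is the justification that $\eta$ can be taken to be the Tits-system type function (and not some arbitrary labelling); this is where the sentence in the excerpt asserting that $\mcb(G)$ is the building attached to $(G^{0},N^{0},I,\Delta^{\mraff})$ is essential, since in the non-split case one cannot simply recycle the labelling constructed in \cite{broussous2014distinction} for split groups. Once this is granted, no further combinatorial computation is required.
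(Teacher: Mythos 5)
Your proof is correct and follows essentially the same route as the paper: both arguments rest on the identification of $\mcb(G)$ with the building of the Iwahori--Tits system $(G^0,N^0,I,\Delta^\mraff)$ to conclude that $G^0$ preserves the labelling, whence $\epsilon_G$ is trivial on $G^0$, and then deduce the parahoric case from $\parah{F}\subset G^0$ (the paper phrases this last containment via $\parah{F}$ being the stabilizer of $F$ in $G^0$, you via $\parah{F}=G_F^{\sharp}\cdot Z_G(S)^0\subset G^{\sharp}\cdot Z_G(S)^0=G^0$, which is an immaterial difference).
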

		
		\begin{proof}
			
			Since $G^0$ preserves the labelling, $\epsilon_G$ is trivial on $G^0$. In particular, $\epsilon_G$ is trivial on the parahoric subgroup $P_{F} $ as the stabilizer of $F$ in $G^0$.
			
		\end{proof}
		
		In general, $G$ is isogenous to a product $G_1\times\dots\times G_k$ of adjoint $K$-simple groups $G_i,\forall i=1,\dots k$, i.e. there exists an surjective $K$-morphism $G\rightarrow G_1\times\dots\times G_k$ with a finite kernel. Then the tensor product  $\epsilon_{G_1}\boxtimes\dots\boxtimes\epsilon_{G_k}$, as a character of $G_1(K)\times\dots\times G_k(K)$, defines a character $\epsilon_G$ of $G(K)$ along the natural map $G(K)\rightarrow G_1(K)\times\dots\times G_k(K)$. Since $G^0$ is functorial by \cite[Remark 2.6.26]{KP23}, Proposition \ref{resoforient} is still satisfied.
		
		\begin{example}\label{exampleepsilonGL}
			
			In the case where $G=\mrgl_n(K)$, we have $G^0=\{g\in G\mid \mrdet(g)\in\mfo_K^\times\}$ and $G$ is generated by $G^0$ and the element	
			$$\Pi_K:=	\left(\begin{matrix}
				&1&&&\\
				&&1&&\\
				&&&\ddots&\\
				&&&&1\\
				\varpi_K&&&&
			\end{matrix}\right)\in \mrgl_n(K),$$
			where $\varpi_K$ is a uniformizer of $K$. Moreover, let $C$ be the chamber in $\mcb(G)$ such that $\parah{C}$ is the standard Iwahori subgroup with upper triangular elements and diagonal elements in $\mfo_K$ and lower triangular elements in $\mfp_K$. Then we have $\parah{C}^{\Pi_K}=\parah{C}$ and $\Pi_K\cdot C=C$. Moreover, let $\bs{p}$ be the vertex of $C$ such that the corresponding parahoric subgroup is $\mrgl_n(\mfo_F)$, then  $\{\bs{p},\Pi_K\cdot\bs{p},\dots,\Pi_K^{n-1}\cdot\bs{p}\}$ is the set of vertices of $C$, on which the $\Pi_K$-action corresponds to the permutation $1\mapsto 2\mapsto\dots\mapsto n\mapsto 1$. 
			Thus, $$\epsilon_G(\Pi_K)=(-1)^{n+1}\quad\text{and thus}\quad\epsilon_G(g)=(-1)^{(n+1)v_K(\det g)},$$
			where $v_K$ denotes the valuation of $K$. In particular for $H=\mro_n(K)$ or $\mrso_n(K)$, one has $\epsilon_G\rest_H=1$.
			

		\end{example}

		\subsection{Steinberg representation as harmonic cochains on $\mcb(G)$}
		
		Let $\ch{}{}(G)$ denote the set of chambers of $\mcb(G)$ and $\mbc[\mrch(G)]$ the space of $\mbc$-valued functions on $\ch{}{}(G)$. Consider the $G$-action on $\mbc[\mrch(G)]$ by
		$$g\cdot f(C)=\epsilon_G(g)f(g^{-1}\cdot C),\quad g\in G.$$ 
		
		\begin{definition}
			
			An element $f$ of $\mbc[\mrch(G)]$ is called \emph{harmonic} if $\sum _{D\subseteq \ol{C}}f(C)=0$ for any panel $D$ in $\mcb(G)$, where the sum ranges over the chambers $C$ having $D$ as a facet.
			
		\end{definition}
		
		Let $\mch(G)$\index{$\mch(G)$} be the $G$-subspace of $\mbc[\ch{}{}(G)]$ of harmonic cochains and $\mch^{\infty}(G)$\index{$\mch^{\infty}(G)$} the $G$-subspace $\mch(G)$ of smooth harmonic cochains. Here, a function $f\in\mbc[\ch{}{}(G)]$ is smooth if its stabilizer in $G$ is open.

		We have the following characterization of Steinberg representation.
		
		\begin{proposition}[Borel--Serre]\cite{broussous2014distinction}*{Proposition 3.2} The representation induced by the $G$-action on $\mch(G)$ is equivalent 
			to the algebraic dual of the Steinberg representation $\mrst_G$. The representation induced by the $G$-action on $\mch^{\infty}(G)$ is equivalent 
			to the Steinberg representation $\mrst_G$.
			
		\end{proposition}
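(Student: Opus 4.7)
The plan is to realize $\mch(G)$ as the algebraic $G$-equivariant dual of a natural smooth $G$-representation built from the top-dimensional cochains of $\mcb(G)$, and then invoke Borel--Serre's theorem to identify that representation with $\mrst_G$.

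First I would introduce the compactly supported analogue $\mbc_{c}[\ch{}{}(G)]\subseteq\mbc[\ch{}{}(G)]$, equipped with the same $\epsilon_G$-twisted $G$-action $g\cdot h(C):=\epsilon_G(g)h(g^{-1}\cdot C)$, and let $\mbc_{c}[\ch{}{}(G)]^{0}$ denote the (plainly $G$-stable) subspace spanned by the elements $g_D:=\sum_{D\subseteq\overline{C}}\mathbf{1}_C$ as $D$ runs over the panels of $\mcb(G)$. Define
$$\langle f,h\rangle:=\sum_{C\in\ch{}{}(G)}f(C)\,h(C),\quad f\in\mbc[\ch{}{}(G)],\ h\in\mbc_{c}[\ch{}{}(G)].$$
This sum converges because $h$ has finite support, is non-degenerate (since each $f$ is determined by its values $\langle f,\mathbf{1}_C\rangle$), and is $G$-invariant (both twists by $\epsilon_G$ combine as $\epsilon_G^2=1$ after the substitution $C'=g^{-1}C$). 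By definition, the harmonicity condition for $f$ is the vanishing of $\langle f,g_D\rangle$ for every panel $D$; consequently $\mch(G)$ is the annihilator of $\mbc_{c}[\ch{}{}(G)]^{0}$, and the pairing descends to a $G$-invariant perfect pairing between $\mch(G)$ and the quotient $V:=\mbc_{c}[\ch{}{}(G)]/\mbc_{c}[\ch{}{}(G)]^{0}$.

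Next I would identify $V\cong\mrst_G$ as smooth $G$-representations; this is the heart of the argument, classically due to Borel--Serre. The route I would follow is geometric. Since $\mcb(G)$ is contractible, its augmented simplicial cochain complex with coefficients twisted by $\epsilon_G$ is exact, and the top compactly supported cohomology $H_c^l(\mcb(G);\mbc)$ with $l=\mrdim\mcb(G)$ coincides with $V$ by construction (top cochains modulo coboundaries coming from panels, the sign twist being exactly the orientation character $\epsilon_G$ computed in Proposition~\ref{resoforient}). On the other hand, the Borel--Serre bordification $\overline{\mcb(G)}=\mcb(G)\sqcup\mcs(G)$ is compact and contractible, so the long exact sequence of compactly supported cohomology yields $H_c^l(\mcb(G);\mbc)\cong\widetilde{H}^{l-1}(\mcs(G);\mbc)$. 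Solomon--Tits then identifies the right-hand side with $\mrst_G$ (twisted by the orientation of the spherical building, which again matches $\epsilon_G$). Composing these isomorphisms gives $V\cong\mrst_G$ as smooth $G$-representations.

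Finally, combining the perfect pairing of the first step with the Borel--Serre identification, one obtains $\mch(G)\cong V^{*}\cong\mrst_G^{\vee}$ as algebraic $G$-modules, proving the first half of the statement. Passing to smooth vectors, $\mch^\infty(G)=\mch(G)^\infty$ coincides with the smooth contragredient $\widetilde{\mrst_G}$; since $\mrst_G$ is irreducible admissible and self-contragredient, this yields $\mch^\infty(G)\cong\mrst_G$, completing the proof. The main obstacle is the middle step: the identification $V\cong\mrst_G$ requires the topological input (contractibility of $\mcb(G)$ and its Borel--Serre bordification), the combinatorial input (Solomon--Tits for $\mcs(G)$), and the careful equivariant matching of the two orientation characters with $\epsilon_G$. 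In practice one invokes this identification from the literature rather than redoing it.
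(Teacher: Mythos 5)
Your argument is correct and is essentially the standard Borel--Serre/Solomon--Tits proof: the paper itself gives no proof of this proposition but simply cites it (Broussous--Court\`es, Proposition 3.2, going back to Borel--Serre), and your route via the pairing of $\mch(G)$ with $\mbc_c[\ch{}{}(G)]/\mbc_c[\ch{}{}(G)]^{0}$, the bordification $\overline{\mcb(G)}$, and self-duality of $\mrst_G$ is exactly the argument behind that citation. The orientation-matching and the self-contragredience of $\mrst_G$ that you invoke are standard facts, so there is no gap.
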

		
		\section{Calculation in the rank one case}\label{sectionrankone}
		
		Let $\bs{k}$ be a finite field of odd characteristic and $\ul{G}$ a connected reductive group over $\bs{k}$, which is a quasi-split group due to a well-known result of Steinberg. Let $\ul{S}$ be a maximal $\bs{k}$-split torus of $\ul{G}$. Let $\ul{T}$ be the centralizer of $\ul{S}$ in $\ul{G}$, which is known to be a maximal torus of $\ul{G}$. We refer to \cite{bruhat1984groupes}*{\S 4.1} for various basic statements that are going to be tacitly used later on.
		
		We first discuss the case where $\ul{G}$ is a rank one pinned semi-simple group, i.e. a Weil restriction of scalars of $\mrsl(2)$ or $\mrsu(3)$. Our goal is two-fold:
		\begin{itemize}
			
			\item We classify all the involutions $\theta$ of $\ul{G}$ over $\bs{k}$ such that $\ul{S}$ is $\theta$-stable.
			
			\item Let $\ul{H}=(\ul{G}^{\theta})^{\circ}$. We classify the $\ul{H}$-conjugacy classes of Borel subgroups of $\ul{G}$. In other words, we classify the $\ul{H}$-conjugacy classes of chambers $\ch{}{}(\ul{G})$.
			
		\end{itemize}
		For general $\ul{G}$, we also explain how the above rank one calculation could be used.

		
		\subsection{The $\mrsl_{2}$-case}
		
		In this subsection, let $\bs{l}$ be a finite extension over $\bs{k}$. We consider the case where $\ul{G}$ is the Weil restriction of scalars $\mrres_{\bs{l}/\bs{k}}\mrsl_{2}$, which is a semi-simple group of rank one over $\bs{k}$.
		
		Let $\ul{S}=\{\mrdiag(x,x^{-1})\mid x\in\bs{k}^{\times}\}$. Then by definition $\ul{T}=\{\mrdiag(x,x^{-1})\mid x\in\bs{l}^{\times}\}$. Let $\Phi(\ul{G},\ul{S})=\{\alpha,-\alpha\}$ be the set of roots, and $\ul{U}_{\alpha}$ and $\ul{U}_{-\alpha}$ the corresponding unipotent subgroups consisting of unipotent upper triangular and lower triangular matrices respectively. Then $\ul{T}$, $\ul{U}_{\alpha}$ and $\ul{U}_{-\alpha}$ generate the full group $\ul{G}$.
		
		Let $\theta$ be an involution of $\ul{G}$ that stabilizes both $\ul{S}$ and $\ul{T}$. Then the restriction of $\theta$ to $\ul{S}$ is either the identity or the inversion. 
		
		We need the following lemma, which is interesting in its own right.
		
		\begin{lemma}\label{lemmathetaTrest}
			
			Let $\bs{l}/\bs{k}$ be a finite cyclic extension, $\ul{S}=\mbg_{m}$ the multiplicative group over $\bs{k}$, and  $\ul{T}=\mrres_{\bs{l}/\bs{k}}\mbg_{m}$ and $\theta$ an automorphism of finite order of $\ul{T}$ over $\bs{k
			}$ whose restriction to $\ul{S}$ is trivial. Then $\theta$ is induced by an element $\sigma$ in the Galois group $\mrgal(\bs{l}/\bs{k})$.
			
		\end{lemma}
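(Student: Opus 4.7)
The plan is to translate the problem into linear algebra on the character lattice of $\ul{T}$, after which the classification reduces to Higman's classical theorem on torsion units in integral group rings. Set $n=[\bs{l}:\bs{k}]$ and $G=\mrgal(\bs{l}/\bs{k})$, which is cyclic by hypothesis. First I would use the canonical decomposition $\overline{\bs{k}}\otimes_{\bs{k}}\bs{l}\cong\prod_{g\in G}\overline{\bs{k}}$ to identify $\ul{T}_{\overline{\bs{k}}}$ with $\mbg_{m}^{G}$ and $X^{*}(\ul{T})$ with $\mbz[G]$ in a way that turns the Galois action of $G$ into the (right) regular representation. Under this identification the embedding $\ul{S}\hookrightarrow\ul{T}$ is dual to the augmentation map $\epsilon\colon\mbz[G]\to\mbz$.

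Because $G$ is abelian, any $\mbz[G]$-module automorphism of $\mbz[G]$ is multiplication by a unit, which gives a canonical isomorphism $\mraut_{\bs{k}}(\ul{T})\cong\mbz[G]^{\times}$. Writing $u_{\theta}$ for the unit attached to $\theta$, I would observe that the hypothesis $\theta\rest_{\ul{S}}=\mrid$ dualizes to $\epsilon(u_{\theta}x)=\epsilon(x)$ for every $x\in\mbz[G]$; since $\epsilon$ is a ring homomorphism, this is equivalent to the single condition $\epsilon(u_{\theta})=1$. Moreover, $\theta$ having finite order translates exactly to $u_{\theta}$ being a torsion element of $\mbz[G]^{\times}$.

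The key step is then to invoke Higman's theorem: for a finite abelian group $G$, every torsion unit of $\mbz[G]$ has the form $\pm g$ for a unique $g\in G$. Combined with $\epsilon(u_{\theta})=1$, this forces $u_{\theta}=g$ for some $g\in G$, and unwinding the identifications one checks that the $\bs{k}$-automorphism of $\ul{T}=\mrres_{\bs{l}/\bs{k}}\mbg_{m}$ attached to $g$ is precisely the one induced by the Galois element $\sigma=g\in\mrgal(\bs{l}/\bs{k})$ acting on $\bs{l}$, which is the desired conclusion.

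The only non-trivial ingredient is Higman's theorem, which is a classical result well-documented in the literature on units of integral group rings (e.g.\ in the monograph of Sehgal). If one prefers a self-contained argument in the cyclic case at hand, an alternative would be to embed $\mbz[G]\cong\mbz[x]/(x^{n}-1)$ into the product $\prod_{d\mid n}\mbz[\zeta_{d}]$ of rings of cyclotomic integers, exploit that the torsion units of $\mbz[\zeta_{d}]$ are just the roots of unity $\pm\zeta_{d}^{i}$, and match these component-wise with the image of the trivial units of $\mbz[G]$ under this embedding.
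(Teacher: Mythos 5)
Your proof is correct, and the underlying reduction is essentially the paper's: identify $X^{*}(\ul{T})$ with $\mbz[G]$ carrying the regular representation of $G=\mrgal(\bs{l}/\bs{k})$, so that $\bs{k}$-automorphisms of $\ul{T}$ become units of $\mbz[G]$, finite order becomes torsion, and triviality on $\ul{S}$ dualizes to augmentation $1$; the paper does exactly this in coordinates, writing the lattice automorphism as an integer polynomial $P(w)$ in the cyclic shift $w$. The genuine difference is the key step: where you invoke Higman's theorem (torsion units of $\mbz[G]$, $G$ finite abelian, are $\pm g$), the paper proves the cyclic case by hand — its auxiliary lemma that $X^{d}-1\mid P(X)^{N}-1$ forces $P(X)=\pm X^{kd/d'}$, via evaluation at roots of unity and an integrality/absolute-value bound on the coefficients — and then uses the augmentation condition to kill the sign, just as you do with $\epsilon(u_{\theta})=1$. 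Citing Higman buys brevity and applies verbatim to any abelian (not merely cyclic) Galois group, while the paper's computation keeps the argument self-contained. One caveat on your ``self-contained alternative'': embedding $\mbz[x]/(x^{n}-1)$ into $\prod_{d\mid n}\mbz[\zeta_{d}]$ and knowing the torsion units of each cyclotomic factor only controls the unit componentwise; showing that the tuples arising from an element of $\mbz[G]$ must be $\pm x^{k}$ is precisely the integrality argument of the paper's lemma, so avoiding Higman would in effect reproduce that computation rather than bypass it.
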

		
		\begin{proof}
			
			Let $d=[\bs{l}:\bs{k}]$. The $\bs{k}$-structure of the torus $\ul{T}$ is determined by the character lattice $\mrhom(\ul{T}_{\bs{l}},(\mbg_{m})_{\bs
				{l}})\cong\mbz^{\oplus d}$ together with the action of $\mrgal(\bs{l}/\bs{k})$. In particular, the action of the generator $\sigma_{0}\in\mrgal(\bs{l}/\bs{k})$ is given by $$(x_{1},x_{2},\dots,x_{d})\mapsto(x_{2},x_{3}\dots,x_{d},x_{1}).$$ 
			Then, to define an automophism $\theta$ of $\ul{T}$ over $\bs{k}$ is equivalent to defining an element $A\in\mrgl_{d}(\mbz)$ of finite order that is commutative with the permutation matrix $w$ related to $1\mapsto 2\mapsto\dots \mapsto d\mapsto 1$. This commutativity condition implies that $A$ is the linear combination $a_{0}I_{d}+a_{1}w+\dots+a_{d-1}w^{d-1}$ with $a_{0},\dots,a_{d-1}\in\mbz$. Write $P(X)=a_{0}+a_{1}X+\dots+a_{d-1}X^{d-1}$ as a polynomial in $\mbz[X]$. Since $X^{d}-1$ is the minimal polynomial of $w$, the condition $A^{N}=I_{d}$ implies that $X^{d}-1$ divides $P(X)^{N}-1$. We have the following characterization of $P(X)$.
			
			\begin{lemma}\label{polynomial}
				
				Let $P(X)=a_{0}+a_{1}X+\dots+a_{d-1}X^{d-1}$ be a polynomial in $\mbz[X]$, such that $X^{d}-1$ divides $P(X)^{N}-1$, then $P(X)=\pm X^{kd/d'}$ with $d'=\mrgcd(d,N)$ and $k=0,1,\dots,d'-1$.
				
			\end{lemma}
			
			\begin{proof}
				
				Let $\zeta_{d}$ be a $d$-th primitive root of unity. The condition means that there exist $N$-th roots of unity $\xi_{0},\dots,\xi_{d-1}$, such that $P(\zeta_{d}^{i})=\xi_{i}$ for $i=0,1,\dots,d-1$. By solving linear equations, this simply means that \begin{equation}\label{eqvandemonde}
					(\xi_{0}+\zeta_{d}^{i}\xi_{1}+\dots+\zeta_{d}^{i(d-1)}\xi_{d-1})/d=a_{i},\quad i=0,1,\dots,d-1.
				\end{equation}
				Taking the absolute value, the fact $a_{i}\in\mbz$ implies that $a_{i}$ equals $0$, $1$ or $-1$. Moreover, $a_{i}\neq 0$ if and only if vectors $(\xi_{0},\dots,\xi_{d-1})$ and $(1,\zeta_{d}^{-i},\dots,\zeta_{d}^{-i(d-1)})$ are in the same complex line. This could happen for at most one $i$, and only for those $i$ as a multiple of $d/d'$ with $d'=\mrgcd(d,N)$. As a result, there exists some $k\in\{0,1,\dots,d'-1\}$ such that $a_{kd/d'}=1$ or $-1$, and $a_{i}=0$ with $i\neq kd/d'$, which finishes the proof. 
				
			\end{proof}
			
			Using \Cref{polynomial} and considering the fact that the restriction of $\theta$ to $\ul{S}$ is the identity, there exists some $k\in\{0,1,\dots,d'-1\}$ such that $A=w^{kd/d'}$. It means that $\theta$ is induced by the Galois action $\sigma=\sigma_{0}^{kd/d'}$. 
			
		\end{proof}
		
		Now we classify all the possible involutions $\theta$ of $\underline{G}$.
		
		First we consider \textbf{Case (I)}, saying that $\theta\rest_{\ul{S}}=1$. Then $\theta$ stabilizes $\ul{U}_{\alpha}$ and $\ul{U}_{-\alpha}$. Using Lemma \ref{lemmathetaTrest}, $\theta\rest_{\ul{T}}$ is either trivial or the Galois action of order 2.
		
		\textbf{Case (I).(\romannumeral1):} Assume that $\theta\rest_{\ul{T}}$ is trivial on $\ul{T}$. There exists $x\in \bs{l}^{\times}$ such that $$\theta\big(\begin{pmatrix}
			1 & 1\\ 0 & 1
		\end{pmatrix}\big)=
		\begin{pmatrix}
			1 & x\\ 0 & 1
		\end{pmatrix}.$$
		Taking the $\ul{T}$-conjugation, we have that $$\theta\big(\begin{pmatrix}
			1 & u\\ 0 & 1
		\end{pmatrix}\big)=\begin{pmatrix}
			1 & xu\\ 0 & 1
		\end{pmatrix}$$ 
		for any $u\in\bs{l}$. Since $\theta$ is an involution, $x$ equals $1$ or $-1$. Also, since $\theta$ stabilizes the normalizer of $\ul{T}$, which is generated by $\ul{T}$ and $w_{0}=\begin{pmatrix}
			0 & 1\\ -1 & 0
		\end{pmatrix}$, it is easy to see that $\theta(w_{0})=\pm w_{0}$. Thus taking the $w_{0}$-conjugation, we similarly have $$\theta\big(\begin{pmatrix}
			1 & 0\\ u & 1
		\end{pmatrix}\big)=\begin{pmatrix}
			1 & 0\\ xu & 1
		\end{pmatrix}$$ 
		for any $u\in\bs{l}$. Since $\ul{T},\ul{U}_{\alpha}, \ul{U}_{-\alpha}$ generate $\ul{G}$, the involution $\theta$ is either the identity or the conjuation by $\begin{pmatrix}
			1 & 0\\ 0 & x
		\end{pmatrix}$ with $x=1$ or $-1$.  These two different cases will be referred to as \textbf{Case (I).(\romannumeral1).(1)} and \textbf{Case (I).(\romannumeral1).(2)} respectively later on.
		
		\textbf{Case (I).(\romannumeral2):} Assume that there exist a quadratic extension $\bs{l}/\bs{l_{0}}$ and an order 2 Galois action $\sigma_{0}\in\mrgal(\bs{l}/\bs{l}_{0})$, such that $\theta\rest_{\ul{T}}$ is given by the $\sigma_{0}$-action. Then similar to \textbf{Case (I).(\romannumeral1)}, we may show that there exists $x\in\bs{l}^{\times}$ satisfying $\sigma_{0}(x)x=1$, such that
		$$\theta(g)=\begin{pmatrix}
			1 & 0\\ 0 & x^{-1}
		\end{pmatrix}\sigma_{0}(g)\begin{pmatrix}
			1 & 0\\ 0 & x
		\end{pmatrix},\quad g\in\ul{G}.$$
		
		Now we consider \textbf{Case (II)}, saying that $\theta\rest_{\ul{S}}$ is the inversion. Then $\theta$ maps $\ul{U}_{\alpha}$ to $\ul{U}_{-\alpha}$. Using Lemma \ref{lemmathetaTrest}, $\theta\rest_{\ul{T}}$ is either the inversion or its composition with the Galois action of order 2.
		
		\textbf{Case (II).(\romannumeral1):} Assume that $\theta\rest_{\ul{T}}$ is the inversion on $\ul{T}$. Since $\theta$ maps $\ul{U}_{\alpha}$ to $\ul{U}_{-\alpha}$, there exists $x\in \bs{l}^{\times}$ such that $$\theta\big(\begin{pmatrix}
			1 & 1\\ 0 & 1
		\end{pmatrix}\big)=
		\begin{pmatrix}
			1 & 0\\ -x^{-1} & 1
		\end{pmatrix}.$$
		Taking the $\ul{T}$-conjugation and using the fact that $\theta$ is an involution, we have that $$\theta\big(\begin{pmatrix}
			1 & u\\ 0 & 1
		\end{pmatrix}\big)=\begin{pmatrix}
			1 & 0\\ -x^{-1}u & 1
		\end{pmatrix}\quad\text{and thus}\quad\theta\big(\begin{pmatrix}
			1 & 0\\ u & 1
		\end{pmatrix}\big)=\begin{pmatrix}
			1 & -xu\\ 0 & 1
		\end{pmatrix}$$ 
		for any $u\in\bs{l}$. Since $\ul{T},\ul{U}_{\alpha}, \ul{U}_{-\alpha}$ generate $\ul{G}$, we have 
		$$\theta(g)=\begin{pmatrix}
			1 & 0\\ 0 & x^{-1}
		\end{pmatrix}\,^{t}g^{-1}\begin{pmatrix}
			1 & 0\\ 0 & x
		\end{pmatrix},\quad g\in\ul{G}.$$
		We remark that $-x\in\bs{l}^{\times 2}$ and $-x\in\bs{l}^{\times}-\bs{l}^{\times 2}$ are essentially two different cases, which will be referred to as \textbf{Case (II).(\romannumeral1).(1)} and \textbf{Case (II).(\romannumeral1).(2)} respectively later on.
		
		\textbf{Case (II).(\romannumeral2):} Assume that there exist a quadratic extension $\bs{l}/\bs{l_{0}}$ and an order 2 Galois action $\sigma_{0}\in\mrgal(\bs{l}/\bs{l}_{0})$, such that $\theta\rest_{\ul{T}}$ is given by the the inversion on $\ul{T}$ composing with $\sigma_{0}$-action. Then similar to the above case, we may show that there exists $x\in\bs{l}_{0}^{\times}$ such that
		$$\theta(g)=\begin{pmatrix}
			1 & 0\\ 0 & x^{-1}
		\end{pmatrix}\sigma_{0}(\,^{t}g^{-1})\begin{pmatrix}
			1 & 0\\ 0 & x
		\end{pmatrix},\quad g\in\ul{G}.$$
		
		Then we study the $\ul{H}$-orbits of Borel subgroups in different cases. We let $q$ be the cardinality of $\bs{l}$, then there exist $q+1$'s different Borel subgroups of $\ul{G}$.
		
		\textbf{Case (I).(\romannumeral1).(1):}
		In this case, $\ul{H}$ equals the full group $\ul{G}$, thus $\ul{H}$ acts transitively on the set of Borel subgroups of $\ul{G}$.
		
		\textbf{Case (I).(\romannumeral1).(2):} In this case, $\ul{H}=\ul{T}$. Then $\ul{H}$ fixes the Borel group
		$$\ul{B}=\big\{\begin{pmatrix}
			\ast & \ast \\ 0 & \ast 
		\end{pmatrix}\big\}\quad\text{and}\quad \ul{B}^{-}=\big\{\begin{pmatrix}
			\ast & 0 \\ \ast & \ast 
		\end{pmatrix}\big\}.$$
		These two Borel subgroups are of $\theta$-rank 1, since the $\theta$-stable torus $\ul{S}$ is $\theta$-invariant.
		
		Now we consider $\ul{B}^{u_{x}}$ ranging over the rest Borel subgroups, where $u_{x}:=\begin{pmatrix}
			1 & 0 \\ x & 1 
		\end{pmatrix}$ with $x$ ranging over $\bs{l}^{\times}$. Also, by direct calculation, it is easy to see that  $\ul{S}^{u_{x}}$ is $\theta$-stable and $(\ul{S}^{u_{x}})^{\theta}=\{\pm I_{2}\}$. As a result, $\ul{B}^{u_{x}}$ is of $\theta$-rank 0. For $h=\begin{pmatrix}
			y & 0 \\ 0 & y^{-1} 
		\end{pmatrix}$, we have
		$$\ul{B}^{u_{x}h}=\ul{B}^{h^{-1}u_{x}h}=\ul{B}^{u_{xy^{2}}}.$$
		So the $\ul{H}$-orbit of $\ul{B}^{u_{x}}$ is $\{\ul{B}^{u_{xy^{2}}}\mid y\in\bs{l}^{\times}\}$. There are exactly two $\ul{H}$-orbits $$\{\ul{B}^{u_{y^{2}}}\mid y\in\bs{l}^{\times}\}\quad \text{and} \quad\{\ul{B}^{u_{\epsilon y^{2}}}\mid y\in\bs{l}^{\times}\}\ \text{with}\ \epsilon\in\bs{l}^{\times}-\bs{l}^{\times 2}$$ of $\theta$-rank 0, of cardinality $(q-1)/2$ and $(q-1)/2$ respectively. 
		
		To sum up, in this case there are four $\ul{H}$-orbits of Borel subgroups of $\theta$-rank $1,1,0,0$ and of cardinality $1,1,(q-1)/2,(q-1)/2$ respectively.
		
		\textbf{Case (I).(\romannumeral2):} Recall that 
		$$\theta(g)=\begin{pmatrix}
			1 & 0\\ 0 & x^{-1}
		\end{pmatrix}\sigma_{0}(g)\begin{pmatrix}
			1 & 0\\ 0 & x
		\end{pmatrix},\quad g\in\ul{G}$$
		for some $x\in\bs{l}_{0}^{\times}$. Since $x=\sigma_{0}(y)y$ for some $y\in\bs{l}^{\times}$, up to conjugate by $\begin{pmatrix}
			1 & 0 \\ 0 & y
		\end{pmatrix}$, we may essentially assume that $\theta=\sigma_{0}$. In this case, $\ul{H}=\mrsl_{2}(\bs{l}_{0})$. 
		
		The set of Borel subgroups in this case is in bijection with the projective line $\mbp^{1}(\bs{l})$, and the corresponding $\ul{G}$-action is given by the usual matrix product on the homogeneous coordinates. 
		
		\begin{lemma}
			
			The $\mbp^{1}(\bs{l})$ is stratified into two $\mrsl_{2}(\bs{l}_{0})$-orbits $\mbp^{1}(\bs{l}_{0})$ and $\mbp^{1}(\bs{l})-\mbp^{1}(\bs{l}_{0})$.
			
		\end{lemma}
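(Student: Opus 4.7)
The plan is to verify the two orbit claims separately. First, $\mbp^{1}(\bs{l}_{0})$ is clearly stabilized setwise by $\mrsl_{2}(\bs{l}_{0})$, hence so is its complement; and the standard transitivity of $\mrsl_{2}$ on the projective line over any field (via the Bruhat decomposition, say) immediately shows that $\mbp^{1}(\bs{l}_{0})$ is a single $\mrsl_{2}(\bs{l}_{0})$-orbit. So everything reduces to showing that the complement $\mbp^{1}(\bs{l})-\mbp^{1}(\bs{l}_{0})$ is also a single orbit.

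For this, my approach is an orbit-stabilizer count. Fix a representative $z\in \bs{l}-\bs{l}_{0}$, regarded as an affine coordinate on $\mbp^{1}(\bs{l})$, and let $q_{0}=|\bs{l}_{0}|$. Write the minimal polynomial of $z$ over $\bs{l}_{0}$ as $X^{2}-tX+n$ with $t=\mrTr_{\bs{l}/\bs{l}_{0}}(z)$ and $n=\mathrm{Nm}_{\bs{l}/\bs{l}_{0}}(z)$, so that $z^{2}=tz-n$. A matrix $\begin{pmatrix} a & b \\ c & d\end{pmatrix}\in \mrsl_{2}(\bs{l}_{0})$ fixes $z$ under the M\"obius action if and only if $cz^{2}+(d-a)z-b=0$. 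Substituting $z^{2}=tz-n$ and using that $\{1,z\}$ is an $\bs{l}_{0}$-basis of $\bs{l}$, both $\bs{l}_{0}$-coefficients must vanish, giving $a=d+ct$ and $b=-cn$. Feeding these back into $ad-bc=1$ reduces the stabilizer condition to $d^{2}+cdt+c^{2}n=1$, which one recognizes as $\mathrm{Nm}_{\bs{l}/\bs{l}_{0}}(d+cz)=1$.

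Thus the stabilizer is isomorphic to the norm-one kernel of $\bs{l}^{\times}\to \bs{l}_{0}^{\times}$, which has order $q_{0}+1$. Orbit-stabilizer then gives
$$|\mrsl_{2}(\bs{l}_{0})\cdot z|=\frac{q_{0}(q_{0}-1)(q_{0}+1)}{q_{0}+1}=q_{0}(q_{0}-1),$$
which exactly matches $|\mbp^{1}(\bs{l})-\mbp^{1}(\bs{l}_{0})|=(q_{0}^{2}+1)-(q_{0}+1)$. Since the orbit is contained in this $\mrsl_{2}(\bs{l}_{0})$-stable set of the same cardinality, it must equal it, finishing the proof. The only slightly delicate point is recognizing the quadratic constraint $d^{2}+cdt+c^{2}n=1$ as the norm-one condition on $d+cz\in \bs{l}^{\times}$; once this identification is made, the cardinality comparison is immediate.
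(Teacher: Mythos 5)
Your proof is correct. The paper itself omits the argument, dismissing it as a "direct calculation," and your orbit--stabilizer count is a clean way to carry that calculation out: the identification of the stabilizer of $z$ with the norm-one subgroup of $\bs{l}^{\times}$ (via $d^{2}+cdt+c^{2}n=\mathrm{Nm}_{\bs{l}/\bs{l}_{0}}(d+cz)$) is right, the degenerate case $cz+d=0$ cannot occur for $z\notin\bs{l}_{0}$ with $ad-bc=1$, and the resulting orbit size $q_{0}(q_{0}-1)$ matches the cardinality of the $\mrsl_{2}(\bs{l}_{0})$-stable complement, so the conclusion follows.
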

		
		\begin{proof}
			The proof can be obtained by direct calculation and is omitted.
		\end{proof} 
		Thus the first orbit is of cardinality $\sqrt{q}+1$ and contains the Borel subgroup consisting of upper triangular matrices. It is of $\theta$-rank 1. 
		
		The second orbit is of cardinality $q-\sqrt{q}$. Moreover, we consider $a,b\in\bs{l}$ such that $\sigma_{0}(a)b-\sigma_{0}(b)a=1$ and $g=\begin{pmatrix} a & b \\ \sigma_{0}(a) & \sigma_{0}(b) \end{pmatrix}$, then by definition $\sigma_{0}(g)g^{-1}=\begin{pmatrix} 0 & 1 \\ 1 & 0 \end{pmatrix}$. Thus it is easy to see that $\ul{S}^{g}$ is $\theta$-stable, whose $\theta$-invariant part is of rank 0.  Thus $\ul{S}^{g}$ is of $\theta$-rank 0, and the second orbit consists of Borel subgroups of $\theta$-rank 0.
		
		To sum up, there are exactly two orbits of Borel subgroups of $\theta$-rank $1$, $0$ and of cardinality $\sqrt{q}+1$ and $q-\sqrt{q}$ respectively. 
		
		\textbf{Case (II).(\romannumeral1).(1)}: Recall that $$\theta(g)=\begin{pmatrix}
			1 & 0\\ 0 & x^{-1}
		\end{pmatrix}\,^{t}g^{-1}\begin{pmatrix}
			1 & 0\\ 0 & x
		\end{pmatrix},\quad g\in\ul{G}$$
		with $-x\in\bs{l}^{\times 2}$, so $\ul{H}$ is a special orthogonal subgroup of $\ul{G}$. It is easily seen that we may pick an element $g'\in \mrgl_{2}(\bs{l})$ such that $$\,^{t}g'\begin{pmatrix}
			1 & 0 \\ 0 & x
		\end{pmatrix}g'=\begin{pmatrix}
			0 & 1 \\ 1 & 0
		\end{pmatrix},$$ thus taking the $g'$-conjugation, the involution $\theta$ becomes
		$$\theta'(g):=\begin{pmatrix}
			0 & 1 \\ 1 & 0
		\end{pmatrix}\,^{t}g^{-1}\begin{pmatrix}
			0 & 1 \\ 1 & 0
		\end{pmatrix}=\begin{pmatrix}
			1 & 0 \\ 0 & -1
		\end{pmatrix}g\begin{pmatrix}
			1 & 0 \\ 0 & -1
		\end{pmatrix},\quad g\in\ul{G},$$
		which is indeed the involution in \textbf{Case (I).(\romannumeral1).(2)}. 
		
		Using the result there, we know that there are four $\ul{H}$-orbits of Borel subgroups of $\theta$-rank $0,0,1,1$ and of cardinality $(q-1)/2,(q-1)/2,1,1$ respectively. In particular, if we write $-x=a^2$, then by direct calculation there exists some element $$w_{z}=\begin{pmatrix}
			0 & -z \\ z^{-1} & 0
		\end{pmatrix},\ z\in\bs{l}^{\times}$$
		in $\ul{H}$ if and only if $-1=a^{2}z^{-2}$ for some $z$, or if and only if $q\equiv 1\ (\text{mod}\ 4)$. Thus  the standard Borel subgroup $\ul{B}$ and its opposite $\ul{B}^{-}=\ul{B}^{w_{z}}$ are in the same $\ul{H}$-orbit if and only if $q\equiv 1\ (\text{mod}\ 4)$.
		
		\textbf{Case (II).(\romannumeral1).(2)}: Recall that $$\theta(g)=\begin{pmatrix}
			1 & 0\\ 0 & x^{-1}
		\end{pmatrix}\,^{t}g^{-1}\begin{pmatrix}
			1 & 0\\ 0 & x
		\end{pmatrix},\quad g\in\ul{G}$$
		with $-x\in\bs{l}^{\times}-\bs{l}^{\times 2}$, so $\ul{H}$ is a special orthogonal subgroup of $\ul{G}$. Since $\ul{S}$ is $\theta$-split, the Borel subgroup $\ul{B}=\big\{\begin{pmatrix}
			\ast & \ast \\ 0 & \ast 
		\end{pmatrix}\big\}$ is of $\theta$-rank $0$. 
		
		Let $u_{0}:=\begin{pmatrix}
			1 & 0 \\ x_{0} & 1 
		\end{pmatrix}$ for some $x_{0}\in\bs{l}$. We first study $u=\begin{pmatrix}
			1 & 0 \\ y_{0} & 1 
		\end{pmatrix}$, $y_{0}\in\bs{l}$ such that $\ul{B}^{u_{0}}$ and $\ul{B}^{u_{0}u}$ are in the same $\ul{H}$-orbit. It requires to find $b\in\ul{B}$ such that $h=u_{0}^{-1}bu_{0}u\in\ul{H}$, and thus $\ul{B}^{u_{0}u}=\ul{B}^{u_{0}h}$. So we have
		\begin{equation}\label{eqcaseIIi2eq1}
			\theta(u_{0}^{-1}bu_{0}u)(u_{0}^{-1}bu_{0}u)^{-1}=\varepsilon^{-1}\,^{t}u_{0}\,^{t}b^{-1}\,^{t}u_{0}^{-1}\,^{t}u^{-1}\varepsilon u^{-1}u_{0}^{-1}b^{-1}u_{0}=1,
		\end{equation}
		where $\varepsilon=\mrdiag(1,x)$. Solving \eqref{eqcaseIIi2eq1}, it requires to find $b\in\ul{B}$ such that
		\begin{equation}\label{eqcaseIIi2eq2}
			\,^{t}b^{-1}\begin{pmatrix}1+x(x_{0}+y_{0})^{2} & -x(x_{0}+y_{0})\\ -x(x_{0}+y_{0}) & x\end{pmatrix}b^{-1}=\begin{pmatrix}1+xx_{0}^{2} & -xx_{0}\\ -xx_{0} & x\end{pmatrix}.
		\end{equation}
		In particular, $1+x(x_{0}+y_{0})^{2}$ and $1+xx_{0}^{2}$ are not 0 from our assumption on $x$.
		
		If $1+x(x_{0}+y_{0})^{2}/(1+xx_{0}^{2})\in\bs{l}^{\times 2}$, then by taking elementary transformations it is clear that there exists such $b\in\ul{B}$. Otherwise if $1+x(x_{0}+y_{0})^{2}/(1+xx_{0}^{2})\in\bs{l}^{\times}-\bs{l}^{\times 2}$ such $b$ does not exist.
		
		Moreover, $\ul{B}$ and its opposite $\ul{B}^{-}$ are in the same $\ul{H}$-orbit if and only if there exists $z\in\bs{l}^{\times}$ such that $\begin{pmatrix}
			0 & z \\ -z^{-1} & 0	
		\end{pmatrix}\in \ul{H}$, and if and only if $x$ is a square in $\bs{l}^{\times}$.
		
		Now we divide our discussion into two cases. 
		
		\begin{itemize}
			\item First we assume $q\equiv 1\ (\text{mod}\ 4)$. In this case $-x$ and $x$ are not squares in $\bs{l}$. Then by direct calculation $1+xy_{0}^{2}\in\bs{l}^{\times 2}$ has $(q+1)/2$ solutions, and $1+xy_{0}^{2}\in\bs{l}^{\times}-\bs{l}^{\times2}$ has $(q-1)/2$ solutions with $y_{0}$ ranging over $\bs{l}$. Moreover, $\ul{B}$ and $\ul{B}^{-}$ are not in the same $\ul{H}$-orbit. So there are exactly two $\ul{H}$-orbits of cardinality $(q+1)/2$ and $(q+1)/2$ respectively. The one containing $\ul{B}$ is of $\theta$-rank 0, and we show that the other one is also of $\theta$-rank 0. Indeed, we want to find $b\in\ul{B}$ such that $\ul{S}^{bu_{0}}$ is a $\theta$-split torus, and thus $\ul{B}^{u_{0}}$ is of $\theta$-rank 0. This is equivalent to finding $b$ such that $\theta(bu_{0})(bu_{0})^{-1}\in\ul{T}$, or equivalently $\,^{t}b^{-1}\begin{pmatrix}1+xx_{0}^{2} & -xx_{0}\\ -xx_{0} & x\end{pmatrix}b\in \ul{T}$. Since $1+xx_{0}^{2} \neq 0$, such $b$ can always be found.
			
			\item Secondly we assume $q\equiv 3\ (\text{mod}\ 4)$. Then $-x$ is not a square whereas $x$ is a square in $\bs{l}$. Then by direct calculation $1+xy_{0}^{2}\in\bs{l}^{\times 2}$ has $(q-1)/2$ solutions, and $1+xy_{0}^{2}\in\bs{l}^{\times}-\bs{l}^{\times2}$ has $(q+1)/2$ solutions with $y_{0}$ ranging over $\bs{l}$. Moreover, $\ul{B}$ and $\ul{B}^{-}$ are in the same $\ul{H}$-orbit. So there are exactly two $\ul{H}$-orbits of cardinality $(q+1)/2$ and $(q+1)/2$ respectively, and we prove as above that they are of $\theta$-rank 0.
		\end{itemize}

		To sum up, in this case there are two $\ul{H}$-orbits of Borel subgroups. They are of $\theta$-rank 0,0 and of cardinality $(q+1)/2$ and $(q+1)/2$ respectively. 
		
		\textbf{Case (II).(\romannumeral2)}: Recall that 
		$$\theta(g)=\begin{pmatrix}
			1 & 0\\ 0 & x^{-1}
		\end{pmatrix}\sigma_{0}(\,^{t}g^{-1})\begin{pmatrix}
			1 & 0\\ 0 & x
		\end{pmatrix},\quad g\in\ul{G}$$ 
		for a certain $x\in\bs{l}_{0}^{\times}$. In this case $\ul{H}$ is a special unitary subgroup of $\ul{G}$. It is easily seen that we may pick an element $g'\in \mrgl_{2}(\bs{l})$ such that $$\sigma_{0}(\,^{t}g')\begin{pmatrix}
			1 & 0 \\ 0 & x
		\end{pmatrix}g'=\begin{pmatrix}
			0 & -1 \\ 1 & 0
		\end{pmatrix},$$
		then taking the $g'$-conjugation the involution $\theta$ becomes
		$$\theta'(g):=\begin{pmatrix}
			0 & 1 \\ -1 & 0
		\end{pmatrix}\sigma_{0}(\,^{t}g^{-1})\begin{pmatrix}
			0 & -1 \\ 1 & 0
		\end{pmatrix}=\sigma_{0}(g)\in\ul{G},$$
		which is indeed the involution in \textbf{Case (I).(\romannumeral2)}. 
		
		Using the result there, we know that there are two $\ul{H}$-orbits of Borel subgroups of $\theta$-rank $0,1$ and of cardinality $q-\sqrt{q},\sqrt{q}+1$ respectively.
		
		We sum up our result in the following proposition.
		
		\begin{proposition}\label{proprank1classSL2}
			
			Let $\ul{G}$ be the Weil restriction of scalars $\mrres_{\bs{l}/\bs{k}}\mrsl_{2}$ and let $\theta$, $\ul{H}$, $\ul{S}$, $\ul{T}$ be defined as above, then we exhaust the possible involution $\theta$ into the following cases: \footnote{In our convention, in each case the first $\ul{H}$-orbit of Borel subgroups listed below is exactly the one containing the upper triangular Borel subgroup.}
			
			\begin{itemize}
				\item \textbf{Case (I).(\romannumeral1).(1):} $\theta$ is the identity, $\ul{H}$ is the whole group $\ul{G}$, and there is exactly one $\ul{H}$-orbit.
				\item \textbf{Case (I).(\romannumeral1).(2):} $\theta$ is the conjugation given by $\mrdiag(1,-1)$, $\ul{H}$ is the diagonal torus $\ul{T}$, and there are  four $\ul{H}$-orbits of Borel subgroups,  of $\theta$-rank $1,1,0,0$ and of cardinality $1,1,(q-1)/2,(q-1)/2$ respectively.
				\item \textbf{Case (I).(\romannumeral2):} The involution $\theta$ is given by 
				$$\theta(g)=\begin{pmatrix}
					1 & 0\\ 0 & x^{-1}
				\end{pmatrix}\sigma_{0}(g)\begin{pmatrix}
					1 & 0\\ 0 & x
				\end{pmatrix},\quad g\in\ul{G}$$
				with $x\in\bs{l}_{0}^{\times}$, and $\ul{H}$ is a conjugation of $\mrsl_{2}(\bs{l}_{0})$. There are exactly two orbits of Borel subgroups of $\theta$-rank $1$, $0$ and of cardinality $\sqrt{q}+1$ and $q-\sqrt{q}$ respectively. 
				\item \textbf{Case (II).(\romannumeral1).(1)}: The involution $\theta$ is given by $$\theta(g)=\begin{pmatrix}
					1 & 0\\ 0 & x^{-1}
				\end{pmatrix}\,^{t}g^{-1}\begin{pmatrix}
					1 & 0\\ 0 & x
				\end{pmatrix},\quad g\in\ul{G}$$
				with $-x\in\bs{l}^{\times 2}$, and $\ul{H}$ is a special orthogonal subgroup of $\ul{G}$. There are four $\ul{H}$-orbits of Borel subgroups of $\theta$-rank $0,0,1,1$ and of cardinality $(q-1)/2,(q-1)/2,1,1$ respectively.
				
				\item \textbf{Case (II).(\romannumeral1).(2)}: The involution $\theta$ is given by
				$$\theta(g)=\begin{pmatrix}
					1 & 0\\ 0 & x^{-1}
				\end{pmatrix}\,^{t}g^{-1}\begin{pmatrix}
					1 & 0\\ 0 & x
				\end{pmatrix},\quad g\in\ul{G}$$
				with $-x\in\bs{l}^{\times}-\bs{l}^{\times 2}$, and $\ul{H}$ is a special orthogonal subgroup of $\ul{G}$. There are two $\ul{H}$-orbits of Borel subgroups of $\theta$-rank $0,0$ and of cardinality $(q+1)/2$ and $(q+1)/2$ respectively. 
				
				\item \textbf{Case (II).(\romannumeral2)}: The involution $\theta$ is given by
				$$\theta(g)=\begin{pmatrix}
					1 & 0\\ 0 & x^{-1}
				\end{pmatrix}\sigma_{0}(\,^{t}g^{-1})\begin{pmatrix}
					1 & 0\\ 0 & x
				\end{pmatrix},\quad g\in\ul{G}$$ 
				with $x\in\bs{l}_{0}^{\times}$, and $\ul{H}$ is a special unitary subgroup of $\ul{G}$. There are two $\ul{H}$-orbits of Borel subgroups of $\theta$-rank $0,1$ and of cardinality $q-\sqrt{q},\sqrt{q}+1$ respectively.
				
			\end{itemize}
			
		\end{proposition}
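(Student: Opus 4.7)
The plan is a direct case analysis, organized first by $\theta\rest_{\ul{S}}$ and then by $\theta\rest_{\ul{T}}$. Since $\ul{S}\cong\mbg_m$ is one-dimensional, $\theta\rest_{\ul{S}}$ is either the identity (Case I) or inversion (Case II); in Case I, $\theta$ stabilizes each root subgroup $\ul{U}_{\pm\alpha}$, and in Case II it swaps them. I would then apply Lemma \ref{lemmathetaTrest} to show that $\theta\rest_{\ul{T}}$ is either trivial (resp. inversion), or else obtained by composing with the unique order-two Galois automorphism of $\bs{l}$ over a quadratic subfield $\bs{l}_0$.

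Once the action of $\theta$ on $\ul{T}$ is pinned down, its action on $\ul{U}_\alpha$ is determined by a single constant $x\in\bs{l}^\times$. Transporting this by $\ul{T}$-conjugation and by the Weyl reflection $w_0$ determines the action on $\ul{U}_{-\alpha}$; the relation $\theta^2=\mrid$ then cuts $x$ down to finitely many possibilities, and absorbing inner conjugation by a suitable element of $\ul{T}(\bs{l})$ (using Hilbert 90 in the twisted cases) collapses the parameter space into exactly the six explicit forms listed in the statement. This gives the classification of involutions.

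For the orbit count on $\ch{}{}(\ul{G})\leftrightarrow\mbp^1(\bs{l})$, I would first pick out the Borels whose associated $\theta$-stable torus is $\theta$-invariant (contributing to $\theta$-rank $1$), and then parametrize the remainder via $u_y=\bigl(\begin{smallmatrix}1&0\\y&1\end{smallmatrix}\bigr)$. In Cases (I).(i).(2) and (II).(i).(1), conjugation by $\ul{H}=\ul{T}$ acts on the parameter by $y\mapsto z^2 y$, yielding the two orbits of size $(q-1)/2$ together with the two fixed Borels. Cases (I).(ii) and (II).(ii) reduce to the classical statement that $\mrsl_2(\bs{l}_0)$ acts on $\mbp^1(\bs{l})$ with exactly two orbits $\mbp^1(\bs{l}_0)$ and its complement, of sizes $\sqrt{q}+1$ and $q-\sqrt{q}$. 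For Case (II).(i).(1), since $-x\in\bs{l}^{\times 2}$, I would exhibit a change-of-pinning $g'$ with $\,^tg'\,\mrdiag(1,x)\,g'=\mrantidiag(1,1)$ which conjugates the involution back to the diagonal one of Case (I).(i).(2), with the residue $q\bmod 4$ controlling whether $\ul{B}$ and $\ul{B}^-$ merge into a single $\ul{H}$-orbit. Case (II).(ii) admits an analogous reduction to Case (I).(ii).

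The delicate step is Case (II).(i).(2), where $-x$ is a non-square and the above change of pinning is not defined over $\bs{l}$. Here I would solve the orbit equation directly: $\ul{B}^{u_{y_0}}$ and $\ul{B}^{u_{y_0'}}$ are $\ul{H}$-conjugate iff one can find $b\in\ul{B}$ transporting $\mrdiag(1+xy_0^2,\,x)$ to $\mrdiag(1+xy_0'^2,\,x)$ via the symmetric-form action $M\mapsto\,^tb^{-1}Mb^{-1}$, which by diagonalization reduces to the square class of $(1+xy_0^2)(1+xy_0'^2)^{-1}$ in $\bs{l}^\times/\bs{l}^{\times 2}$. Counting $y_0\in\bs{l}$ in each square class, and separately analysing whether $w_0\in\ul{H}$ (which depends on $q\bmod 4$), produces the two orbits of size $(q+1)/2$. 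The $\theta$-rank of each orbit is then read off from whether a $\ul{B}$-conjugate of $\theta(g)g^{-1}$ lies in $\ul{T}$. The main obstacle is bookkeeping in this last case and keeping the square-class and parity conditions consistent across the cases.
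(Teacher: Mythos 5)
Your proposal is correct and follows essentially the same route as the paper: classification of $\theta$ via Lemma \ref{lemmathetaTrest} and the action on the root groups, reduction of \textbf{Case (II).(\romannumeral1).(1)} and \textbf{Case (II).(\romannumeral2)} to the earlier cases by a change of pinning, the $y\mapsto z^2y$ torus action and the $\mrsl_2(\bs{l}_0)$-orbits on $\mbp^1(\bs{l})$ for the split and Galois cases, and the direct square-class/$q\bmod 4$ analysis of $1+xy_0^2$ in \textbf{Case (II).(\romannumeral1).(2)} with the $\theta$-rank read off from $\theta(g)g^{-1}\in\ul{T}$. No substantive differences from the paper's argument.
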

		
		\subsection{The $\mrsu_{3}$-case} 
		
		In this subsection, let $\bs{l}$ be a finite extension over $\bs{k}$ and $\bs{l}_{2}/\bs{l}$ a quadratic extension. Let $\sigma\in\mrgal(\bs{l}_{2}/\bs{l})$ be the order two involution. We consider the case where $\ul{G}$ is the Weil restriction of scalars $\mrres_{\bs{l}/\bs{k}}\mrsu_{3}$, more precisely $$\mrsu_{3}(\bs{l})=\{g\in\mrgl_{3}(\bs{l}_{2})\mid w_{0}\sigma(\,^{t}g^{-1})w_{0}=g\}\quad\text{with}\quad w_{0}=\begin{pmatrix}0 &0 &1\\ 0 & 1 & 0\\ 1 & 0 & 0\end{pmatrix}.$$
		Thus $\ul{G}$ is a semi-simple group of rank one over $\bs{k}$. Let $\ul{S}=\{\mrdiag(x,1,x^{-1})\mid x\in\bs{k}^{\times}\}$. Then by definition $\ul{T}=\{\mrdiag(y,\sigma(y)y^{-1},\sigma(y)^{-1})\mid y\in\bs{l}_{2}^{\times}\}$. Let $\Phi(\ul{G},\ul{S})=\{\alpha,-\alpha,2\alpha,-2\alpha\}$ be the set of roots, and $\ul{U}_{\alpha}$, $\ul{U}_{-\alpha}$, $\ul{U}_{2\alpha}$, $\ul{U}_{-2\alpha}$ the corresponding unipotent subgroups. More precisely,
		\begin{align*}
			&\ul{U}_{2\alpha}=\bigg\{\begin{pmatrix} 1 & 0 & -v_2\\ 0 & 1 & 0\\ 0 & 0 & 1 
			\end{pmatrix}\mid v_2\in\bs{l}_{2},\ \sigma(v_2)+v_2=0\bigg\}\\
			\subset\quad & \ul{U}_{\alpha}=\bigg\{\begin{pmatrix} 1 & -\sigma(v_1) & -v_2\\ 0 & 1 & v_1\\ 0 & 0 & 1 
			\end{pmatrix}\mid v_1, v_2\in\bs{l}_{2},\ \sigma(v_1)v_1=\sigma(v_2)+v_2\bigg\},
		\end{align*}
		and $\ul{U}_{-2\alpha}\subset\ul{U}_{-\alpha}$ are defined by taking the transpose. Still $\ul{T}$, $\ul{U}_{\alpha}$ and $\ul{U}_{-\alpha}$ generate the full group $\ul{G}$.
		
		Let $\theta$ be an involution of $\ul{G}$ that stabilizes both $\ul{S}$ and $\ul{T}$. Then the restriction of $\theta$ to $\ul{S}$ is either the identity of the inversion. 
		
		First we consider \textbf{Case (I)}, saying that $\theta\rest_{\ul{S}}=1$. Then $\theta$ stabilizes $\ul{U}_{\alpha}$, $\ul{U}_{-\alpha}$, $\ul{U}_{2\alpha}$, $\ul{U}_{-2\alpha}$. Using Lemma \ref{lemmathetaTrest}, $\theta\rest_{\ul{T}}$ is either trivial or the Galois action of order 2.
		
		\textbf{Case (I).(\romannumeral1):} Assume that $\theta\rest_{\ul{T}}$ is trivial on $\ul{T}$. Considering the $\ul{T}$-conjugation and the fact that $\theta$ is an involution, we may find $\epsilon,\delta\in\bs{l}_{2}^{\times}$, such that
		$$\theta\bigg(\begin{pmatrix} 1 & -\sigma(v_1) & -v_2\\ 0 & 1 & v_1\\ 0 & 0 & 1 
		\end{pmatrix}\bigg)=\begin{pmatrix} 1 & -\sigma(\epsilon v_1) & -\delta v_2\\ 0 & 1 & \epsilon v_1\\ 0 & 0 & 1 
		\end{pmatrix},\quad v_1, v_2\in\bs{l}_{2}.$$
		We omit the routine detail. Since $$\sigma(v_1)v_1=\sigma(v_2)+v_2\quad\text{and}\quad\sigma(\epsilon v_1)\epsilon v_1=\sigma(\delta v_2)+\delta v_2$$ for any $v_1,v_2$, we necessarily have
		$$\sigma(\delta)=\delta=\sigma(\epsilon)\epsilon\in\bs{l}^{\times}.$$
		Since $\theta$ is an involution, we further have $\epsilon^{2}=\delta^{2}=1$, implying that $\delta=1$ and $\epsilon=\pm1$. Also, $\theta$ stabilizes the normalizer of $\ul{T}$, which is generalized by $\ul{T}$ and $w_{0}':=-w_{0}$. Thus $\theta(w_0')=tw_0'$ for some $t\in\ul{T}$, $t^2=1$.
		Taking the $w_{0}'$-conjugation, we have
		$$\theta\bigg(\begin{pmatrix} 1 & 0 & 0 \\ -\sigma(v_1) & 1 & 0\\ -v_2 & v_1 & 1 
		\end{pmatrix}\bigg)=\begin{pmatrix} 1 & 0 & 0\\ -\sigma(\epsilon' v_1) & 1 & 0\\ -\delta' v_2 & \epsilon' v_1 & 1 
		\end{pmatrix},\quad v_1, v_2\in\bs{l}_{2}$$
		for some $\delta'=1$ and $\epsilon'=\pm 1$. Moreover, fix $v\in\bs{l}_2^{\times}$ such that $\sigma(v)+v=0$. Taking the involution $\theta$ on the equation
		$$\begin{pmatrix} 1 & 0 & 0\\ 0 & 1 & 0\\ -v^{-1} & 0 & 1 
		\end{pmatrix}\cdot\begin{pmatrix} 1 & 0 & v\\ 0 & 1 & 0\\ 0 & 0 & 1 
		\end{pmatrix}\cdot\begin{pmatrix} 1 & 0 & 0\\ 0 & 1 & 0\\ -v^{-1} & 0 & 1 
		\end{pmatrix}=\begin{pmatrix} 0 & 0 & v\\ 0 & 1 & 0\\ -v^{-1} & 0 & 0
		\end{pmatrix},$$
		we get $\theta(w_0')=w_0'$. Thus $\epsilon=\epsilon'$. 
		
		Since $\ul{T}$, $\ul{U}_{\alpha}$ and $\ul{U}_{-\alpha}$ generate $\ul{G}$, the involution $\theta$ is either the identity or the conjugation by $\mathrm{diag}(1,-1,1)$, depending on $\epsilon=1$ or $-1$. These two different cases will be referred to as \textbf{Case (I).(\romannumeral1).(1)} and \textbf{Case (I).(\romannumeral1).(2)} respectively later on.
		
		\textbf{Case (I).(\romannumeral2):} Assume that $\theta\rest_{\ul{T}}$ is given by the $\sigma$-action. Then similar to the above case, we may show that there exists $x\in\bs{l}_{2}^{\times}$, $\sigma(x)x=1$, such that
		$$\theta(g)=\mathrm{diag}(1,x^{-1},1)\sigma(g)\mathrm{diag}(1,x,1),\quad g\in \ul{G}.$$
		
		Now we consider \textbf{Case (II)}, saying that $\theta\rest_{\ul{S}}$ is the inversion. Then $\theta$ maps $\ul{U}_{\alpha}$ to $\ul{U}_{-\alpha}$ and $\ul{U}_{2\alpha}$ to $\ul{U}_{-2\alpha}$. Using Lemma \ref{lemmathetaTrest}, $\theta\rest_{\ul{T}}$ is either the inversion or its composition with the Galois action $\sigma$.
		
		\textbf{Case (II).(\romannumeral1):} Assume that $\theta\rest_{\ul{T}}$ is the inversion on $\ul{T}$ composing with the Galois action $\sigma$. Considering the $\ul{T}$-conjugation and the fact that $\theta$ is an involution, we have that
		$$\theta\bigg(\begin{pmatrix} 1 & -\sigma(v_1) & -v_2\\ 0 & 1 & v_1\\ 0 & 0 & 1 
		\end{pmatrix}\bigg)=\begin{pmatrix} 1 & 0 & 0\\ \epsilon v_1 & 1 & 0\\ -\delta v_2 & -\sigma(\epsilon v_1) & 1 
		\end{pmatrix},\quad v_1, v_2\in\bs{l}_{2},$$
		for certain $\epsilon,\delta\in\bs{l}^{\times}$ with $\delta=\epsilon^{2}$. We omit the detail which is routine. Since $\ul{T}$, $\ul{U}_{\alpha}$ and $\ul{U}_{-\alpha}$ generate $\ul{G}$, we have
		$$\theta(g)=\begin{pmatrix} 0 & 0 & \delta^{-1}\\ 0 & \epsilon\delta^{-1} & 0\\ 1 & 0 & 0
		\end{pmatrix}g\begin{pmatrix} 0 & 0 & 1\\ 0 & \epsilon^{-1}\delta & 0\\ \delta  & 0 & 0
		\end{pmatrix},\quad g\in\ul{G}.$$
		Notice that $\begin{pmatrix} 0 & 0 & \delta^{-1}\\ 0 & \epsilon\delta^{-1} & 0\\ 1 & 0 & 0 
		\end{pmatrix}\begin{pmatrix} 0 & 0 & \delta^{-1}\\ 0 & \epsilon\delta^{-1} & 0\\ 1 & 0 &  0
		\end{pmatrix}=\delta^{-1}I_{3}$, such $\theta$ is indeed an involution.
		
		\textbf{Case (II).(\romannumeral2):} Assume that $\theta\rest_{\ul{T}}$ is the inversion on $\ul{T}$. Considering the $\ul{T}$-conjugation and the fact that $\theta$ is an involution, we have that
		$$\theta\bigg(\begin{pmatrix} 1 & -\sigma(v_1) & -v_2\\ 0 & 1 & v_1\\ 0 & 0 & 1 
		\end{pmatrix}\bigg)=\begin{pmatrix} 1 & 0 & 0\\ \epsilon \sigma(v_1) & 1 & 0\\ -\delta \sigma(v_2) & -\sigma(\epsilon) v_1 & 1 
		\end{pmatrix},\quad v_1\in\bs{l}_{2}^{\times},\ v_2\in\bs{l}_{2},$$
		for certain $\epsilon,\delta\in\bs{l}_{2}^{\times}$ with $\sigma(\delta)=\delta=\sigma(\epsilon)\epsilon\in\bs{l}$. Still we omit the detail which is routine. Since $\ul{T}$, $\ul{U}_{\alpha}$ and $\ul{U}_{-\alpha}$ generate $\ul{G}$, we have
		$$\theta(g)=\begin{pmatrix} 0 & 0 & \delta^{-1}\\ 0 & \epsilon\delta^{-1} & 0\\ 1 & 0 & 0
		\end{pmatrix}\sigma(g)\begin{pmatrix} 0 & 0 & 1\\ 0 & \epsilon^{-1}\delta & 0\\ \delta  & 0 & 0
		\end{pmatrix},\quad g\in\ul{G}.$$
		Notice that $\begin{pmatrix} 0 & 0 & \delta^{-1}\\ 0 & \epsilon\delta^{-1} & 0\\ 1 & 0 & 0 
		\end{pmatrix}\sigma\bigg(\begin{pmatrix} 0 & 0 & \delta^{-1}\\ 0 & \epsilon\delta^{-1} & 0\\ 1 & 0 &  0
		\end{pmatrix}\bigg)=\delta^{-1}I_{3}$, such $\theta$ is indeed an involution.
		
		Then we study the $\ul{H}$-orbits of Borel subgroups in different cases. We let $q$ be the cardinality of $\bs{l}$, then there exist $q^{3}+1$'s different Borel subgroups of $\ul{G}$.
		
		\textbf{Case (I).(\romannumeral1).(1):} In this case, $\ul{H}$ equals the full group $\ul{G}$, thus $\ul{H}$ acts transitively on the set of Borel subgroups of $\ul{G}$.
		
		\textbf{Case (I).(\romannumeral1).(2):} In this case, $\theta$ is given by conjugation by $t:=\mrdiag(1,-1,1)$, and indeed $\ul{H}$ is isomorphic to the unitary group $\mru_{2}(\bs{l})$. Let $\ul{B}$ be the upper triangular Borel subgroup of $\ul{G}$ and $\ul{B}^{-}$ its opposite. Since $\begin{pmatrix}
			0 & 0 & 1\\ 0 & -1 & 0 \\ 1 & 0 & 0
		\end{pmatrix}\in \ul{H}$, the two Borel subgroups $\ul{B}$ and $\ul{B}^{-}$ are in the same $\ul{H}$-orbit. Write 
		$$w_{0}=\begin{pmatrix}
			0 & 0 & 1\\ 0 & 1 & 0\\ 1 & 0 & 0
		\end{pmatrix},\ u_{0}=\begin{pmatrix}1 & 0 & 0 \\ \sigma(x_{0}) & 1 & 0 \\ 
			-y_{0} & -x_{0} & 1
		\end{pmatrix}\quad\text{and}\quad u=\begin{pmatrix}1 & 0 & 0 \\ \sigma(x_{1}) & 1 & 0 \\ 
			-y_{1} & -x_{1} & 1
		\end{pmatrix},$$
		where $\sigma(x_{i})x_{i}=\sigma(y_{i})+y_{i},\ i=0,1.$
		
		We study the condition such that $\ul{B}^{u_{0}}$ and $\ul{B}^{u_{0}u}$ are in the same $\ul{H}$-orbit, equivalently for some $b\in\ul{B}$,
		$$\theta(u_{0}^{-1}bu_{0}u)=u_{0}^{-1}bu_{0}u,$$ 
		or equivalently $$b^{-1}u_{0}tu_{0}^{-1}b=u_{0}utu^{-1}u_{0}^{-1}.$$
		Using the formula
		$b^{-1}=w_{0}\sigma(\,^{t}b)w_{0}$, we further have
		$$\sigma(\,^{t}b)w_{0}u_{0}tu_{0}^{-1}b=w_{0}u_{0}utu^{-1}u_{0}^{-1},$$
		or equivalently
		\begin{equation}\label{eqIi2eq1}
			\sigma(\,^{t}b)\begin{pmatrix}
				-2\sigma(x_{0})x_{0} & 2x_{0} & 1\\
				2\sigma(x_{0}) & -1 & 0\\
				1 & 0 & 0
			\end{pmatrix}b=\begin{pmatrix}
				-2\sigma(x_{0}+x_{1})(x_{0}+x_{1}) & 2(x_{0}+x_{1}) & 1\\
				2\sigma(x_{0}+x_{1}) & -1 & 0\\
				1 & 0 & 0
			\end{pmatrix}.
		\end{equation}
		
		If $x_{0}=0$, then to find such $b$ the only possibility is $x_{1}=0$. In other words, the $\ul{H}$-orbit of $\ul{B}$ consists of $\ul{B}^{-}$ and $\ul{B}^{u}$ with $u$ such that $x_{1}=0$ and $y_{1}\in\bs{l}_{2}$ with $\sigma(y_{1})+y_{1}=0$. It is of cardinality $q+1$ and $\theta$-rank $1$.
		
		If $x_{0}\neq 0$, then to find a solution $b\in\ul{B}$ we must have $x_{0}+x_{1}\neq 0$. Using elementary row and column transformations, we may show that such $b$ always exists. Thus those $\ul{B}^{u}$ with $x_{1}\neq 0$ lie in the same $\ul{H}$-orbit, which is of cardinality $q^{3}-q$. Moreover, we have the following lemma.
		
		\begin{lemma}\label{lemmadiagunitary}
			
			For any $t'=\mrdiag(\delta,\epsilon,1)$ with  $\delta=\epsilon^{2}\in\bs{l}^{\times}$ and any $u$ as above such that $y_{1}+\epsilon\neq 0$, We may find $b\in\ul{B}$ such that
			$$t'^{-1}w_{0}buw_{0}t'(bu)^{-1}=t'^{-1}\sigma(\,^{t}b^{-1})\sigma(\,^{t}u^{-1})t'u^{-1}b^{-1}=t.$$
			
		\end{lemma}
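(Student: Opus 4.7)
The plan is to carry out a direct matrix calculation, parametrizing $b \in \ul{B}$ by its entries and solving the claimed equation entry by entry. Write $b = \begin{pmatrix} a & c & e \\ 0 & d & f \\ 0 & 0 & h \end{pmatrix}$; membership in $\mrsu_3(\bs{l})$ (via $\sigma(\,^tb)w_0 b = w_0$ and $\det b = 1$) forces $h = 1/\sigma(a)$, $d = \sigma(a)/a$, the linear relation $f = -\sigma(c)hd$, and one remaining trace-zero condition on $e$, namely $\sigma(h)e + \sigma(e)h + \sigma(f)f = 0$. We may regard $a \in \bs{l}_2^{\times}$ and $c \in \bs{l}_2$ as the free parameters.

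Next, I expand the matrix product in the stated equality using the explicit forms of $w_0$, $u$, $u^{-1}$ and the above shape of $b$, and compare the two sides entry by entry. Matching the $(3,1)$, $(3,2)$, $(3,3)$ entries will yield $e$ proportional to $hy_1$, $c$ proportional to $hx_1(y_1+\epsilon)$, and a norm condition of the shape $a\sigma(a)\,\epsilon^2 = (y_1+\epsilon)\,\sigma(y_1+\epsilon)$. The right-hand side lies in $\bs{l}^\times$ and is nonzero precisely because $y_1 + \epsilon \neq 0$; since the norm map $\bs{l}_2^{\times}\to\bs{l}^{\times}$ is surjective for a finite field extension, we can solve for $a$ (in fact, $a = (y_1+\epsilon)/\epsilon$ works). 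The $(2,1)$ and $(2,3)$ entries then determine $f = d\sigma(x_1)/(y_1+\epsilon)$, which again requires $y_1+\epsilon \neq 0$ for the division; the $(2,2)$ entry turns out to be automatic given the relations just derived, and the first row is consistent with the third by the Hermitian symmetry of the relations.

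The main obstacle I anticipate is checking that the explicit values $(a, c, d, e, f, h)$ constructed above really satisfy the residual quadratic $\mrsu_3$-constraint $\sigma(h)e + \sigma(e)h + \sigma(f)f = 0$. Substituting the formulas, the sum $\sigma(h)e + \sigma(e)h$ collapses (via $\sigma(x_1)x_1 = \sigma(y_1)+y_1$, expressing that $u\in\mrsu_3$) to a multiple of $\sigma(x_1)x_1 /(a\sigma(a)\epsilon^2)$, while $\sigma(f)f$ becomes a multiple of $\sigma(x_1)x_1 /\bigl((y_1+\epsilon)\sigma(y_1+\epsilon)\bigr)$. These two denominators coincide by the very norm identity already imposed by the $(3,3)$ entry, and the signs cancel exactly. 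This confirms that $b\in\ul{B}$, and the lemma follows.
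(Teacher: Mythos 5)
Your parametrize-and-solve plan is computationally the same route the paper takes, just differently organized: the paper's proof recasts the equality as a congruence of invertible Hermitian $3\times 3$ matrices $\sigma(\,^{t}b^{-1})\,A\,b^{-1} = \mrdiag(\delta,\epsilon,1)$ (with $A = \sigma(\,^{t}u^{-1})t'u^{-1}$) and then invokes elementary row and column transformations over $\ul{B}$, whereas you write out the entries of $b$ explicitly. The $\mrsu_3$-constraints you derive ($h = 1/\sigma(a)$, $d = \sigma(a)/a$, $f = -\sigma(c)hd$, plus the residual $\sigma(h)e + \sigma(e)h + \sigma(f)f = 0$) are correct, and your proposed value $a = (y_1+\epsilon)/\epsilon$, with the resulting $c,e,f$, genuinely solves the displayed Hermitian congruence with right-hand side $\mrdiag(\delta,\epsilon,1)$.

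The gap is that your proposal only sketches the verification — the claims that the remaining entries are automatic and that ``the signs cancel exactly'' are asserted rather than checked — and this is exactly where the statement is delicate. Taking determinants of the lemma's printed equation already gives a contradiction: the left side $(w_0 t')^{-1}(bu)(w_0 t')(bu)^{-1}$ is a twisted commutator and so has determinant $1$, while $\det(t) = \det(\mrdiag(1,-1,1)) = -1 \neq 1$ since $p \neq 2$. Concretely, the printed equation is equivalent to $\sigma(\,^{t}b)\,\mrdiag(\delta,-\epsilon,1)\,b = \sigma(\,^{t}u^{-1})t'u^{-1}$, and with your $b$ the $(2,3)$-entry comparison gives $\sigma(c)\delta e - \sigma(d)\epsilon f = \sigma(x_1)(y_1-\epsilon)/(y_1+\epsilon)$, which is not the required $\sigma(x_1)$ unless $\epsilon = 0$. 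What your $b$ actually satisfies is the congruence with $+\epsilon$ in the middle, i.e. $\sigma(\,^{t}b^{-1})\,\sigma(\,^{t}u^{-1})t'u^{-1}\,b^{-1} = \mrdiag(\delta,\epsilon,1)$, equivalently $\theta'(bu)(bu)^{-1} = 1$ rather than $= t$. That matches the congruence the paper's proof displays, but not the printed lemma. A careful treatment must flag and resolve this sign/determinant discrepancy rather than silently passing from one form to the other, as your sketch does.
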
  
		
		\begin{proof}
			
			By direct calculation, we need to find $b$ such that
			$$\sigma(\,^{t}b^{-1})\begin{pmatrix} (\sigma(y_{1})+\epsilon)(y_{1}+\epsilon) & -x_{1}(y_{1}+\epsilon) & -y_{1} \\ -\sigma(x_{1})(\sigma(y_{1})+\epsilon) & \epsilon+\sigma(x_{1})x_{1} & 0 \\ -\sigma(y_{1}) & 0 & 0 \end{pmatrix}b^{-1}=\begin{pmatrix}\delta & 0 & 0\\ 0 & \epsilon & 0 \\ 0 & 0 & 1\end{pmatrix}$$
			It follows from elementary row and column transformations.	
			
		\end{proof}
		
		Using the above lemma, the involution $\theta$ is conjugate to
		$$\theta'(g)=t'^{-1}w_{0}gw_{0}t',\quad g\in\ul{G}$$
		with $t'=\mrdiag(\delta,\epsilon,1)$. Since the diagonal split torus $\ul{S}$ is $\theta'$-split, this in particular shows that the second $\ul{H}$-orbit we discussed above must be of $\theta$-rank $0$. 
		
		To sum up, there are two $\ul{H}$-orbits of Borel subgroups. They are of $\theta$-rank $1,0$ and of cardinality $q+1,q^{3}-q$ respectively.
		
		\textbf{Case (I).(\romannumeral2):} Write $t:=\mrdiag(1,x,1)$ with $x\in\bs{l}_{2}^{\times}$, $\sigma(x)x=1$. Then in this case, we have
		$$\theta(g)=t^{-1}\sigma(g)t,\quad g\in \ul{G}.$$
		Moreover, $\ul{H}$ is a special orthogonal subgroup $\mrso_{3}(\bs{l})$ of $\ul{G}$ (There are two possible conjugacy classes). Let $\ul{B}$, $u_{0}$, $u$ be defined as in \textbf{Case (I).(\romannumeral1).(2)}. Similarly, $\ul{B}$ and $\ul{B}^{-}$ are in the same $\ul{H}$-orbit. Using a similar calculation, $\ul{B}^{u_{0}}$ and $\ul{B}^{u_{0}u}$ are in the same $\ul{H}$-orbit if and only if for some $b\in\ul{B}$
		$$\theta(u_{0}^{-1}bu_{0}u)=u_{0}^{-1}bu_{0}u,$$ 
		or equivalently $$\sigma(b^{-1})\sigma(u_{0})tu_{0}^{-1}b=\sigma(u_{0}u)tu^{-1}u_{0}^{-1}.$$
		Using the formula
		$b^{-1}=w_{0}\sigma(\,^{t}b)w_{0}$, we further have
		$$\,^{t}bw_{0}\sigma(u_{0})tu_{0}^{-1}b=w_{0}\sigma(u_{0})\sigma(u)tu^{-1}u_{0}^{-1},$$
		or equivalently
		\begin{equation}\label{eqIii2eq1}
			\begin{aligned}
				&	\,^{t}b\begin{pmatrix}
					-2\sigma(y_{0})+\sigma(x_{0})^{2}x & x_{0}-x\sigma(x_{0}) & 1\\
					x_{0}-x\sigma(x_{0}) & x & 0\\
					1 & 0 & 0
				\end{pmatrix}b\\
				=&\begin{pmatrix}
					-2\sigma(y_{0}+y_{1})+\sigma(x_{0}+x_{1})^{2}x & x_{0}+x_{1}-x\sigma(x_{0}+x_{1}) & 1\\
					x_{0}+x_{1}-x\sigma(x_{0}+x_{1}) & x & 0\\
					1 & 0 & 0
				\end{pmatrix}.
			\end{aligned}
		\end{equation}
		
		If $x\sigma(x_{0})^{2}=2\sigma(y_{0})$, to find such $b$ we must have $x\sigma(x_{0}+x_{1})^{2}=2\sigma(y_{0}+y_{1})$. Thus we have $$x_{0}\sigma(x_{0})=y_{0}+\sigma(y_{0})=x\sigma(x_{0})^{2}/2+\sigma(x)x_{0}^{2}/2$$
		Using the fact that $\sigma(x)=x^{-1}$ and solving the above equation, we have
		$x\sigma(x_{0})=x_{0}$ and $y_{0}=-x_{0}^{2}/2$, and such $x_{0}$ is of cardinality $q$. So the $\ul{H}$-orbit of $\ul{B}$ is of cardinality $q+1$ and of $\theta$-rank $1$.
		
		If $x\sigma(x_{0})^{2}\neq2\sigma(y_{0})$, we have the following lemma.
		
		\begin{lemma}
			There exists $b\in\ul{B}$ satisfying \eqref{eqIii2eq1} if and only if $x\sigma(x_{0}+x_{1})^{2}\neq2\sigma(y_{0}+y_{1})$ and 
			$$(x\sigma(x_{0})^{2}-2\sigma(y_{0}))/(x\sigma(x_{0}+x_{1})^{2}-2\sigma(y_{0}+y_{1}))\in\bs{l}_{2}^{\times 2}.$$
		\end{lemma}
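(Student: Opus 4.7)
The plan is to read equation \eqref{eqIii2eq1} as the matrix identity $\,^tbMb=M'$ and solve it step by step for the six entries of an upper-triangular $b=(b_{ij})_{i\le j}$. Write $a:=-2\sigma(y_0)+x\sigma(x_0)^2$, $\gamma:=x_0-x\sigma(x_0)$, and $a',\gamma'$ for the analogous expressions built from $(x_0+x_1,\,y_0+y_1)$; the standing assumption of the paragraph preceding the lemma is $a\neq 0$. Necessity of both stated conditions is immediate: the $(1,1)$-entry of $\,^tbMb=M'$ reads $ab_{11}^2=a'$, so $b_{11}\in\bs{l}_2^\times$ forces $a'\neq 0$ and $a'/a\in\bs{l}_2^{\times 2}$.

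For sufficiency, the plan is to construct $b$ by choosing entries from the top-left outward. First, fix $\lambda\in\bs{l}_2^\times$ with $\lambda^2=a'/a$ and set $b_{11}=\lambda$. The $(1,2)$-entry gives the linear constraint $ab_{12}+\gamma b_{22}=\gamma'/\lambda$; substituting it into the $(2,2)$-entry $ab_{12}^2+2\gamma b_{12}b_{22}+xb_{22}^2=x$ and invoking the key identity $ax-\gamma^2=-x\sigma(a)$ (a quick consequence of $\sigma(y_0)+y_0=\sigma(x_0)x_0$), the equation collapses to $b_{22}^2=a\sigma(a')/(a'\sigma(a))$. Writing $a'/a=c^2$ with $c\in\bs{l}_2^\times$, the right-hand side becomes $(\sigma(c)/c)^2$, automatically a square in $\bs{l}_2^\times$, so $b_{22}$ exists and $b_{12}$ is then determined.

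Next, for the last column, the $(1,3)$- and $(2,3)$-entries yield the linear relations
\[ ab_{13}+\gamma b_{23}+b_{33}=1/\lambda,\qquad \gamma b_{13}+xb_{23}=-b_{12}/(\lambda b_{22}), \]
which determine $b_{33}$ in terms of $b_{13},b_{23}$ and $b_{23}$ in terms of $b_{13}$ (using $x\neq 0$). Substituting these into the $(3,3)$-entry collapses it to the single quadratic
\[ (ax-\gamma^2)b_{13}^2-2\Bigl(\frac{\gamma b_{12}}{\lambda b_{22}}+\frac{x}{\lambda}\Bigr)b_{13}-\frac{b_{12}^2}{\lambda^2 b_{22}^2}=0. \]
The crux of the argument is that its discriminant simplifies to
\[ \Delta=\frac{4x}{\lambda^2 b_{22}^2}\bigl(ab_{12}^2+2\gamma b_{12}b_{22}+xb_{22}^2\bigr)=\Bigl(\frac{2x}{\lambda b_{22}}\Bigr)^2, \]
because the parenthetical expression is exactly the $(2,2)$-entry already forced to equal $x$. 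Hence the quadratic admits a solution $b_{13}\in\bs{l}_2$, completing the construction of $b$.

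The main obstacle is locating the two algebraic identities that make the argument work: the relation $ax-\gamma^2=-x\sigma(a)$, without which the $(2,2)$-condition would appear to impose a square-class constraint independent of the $(1,1)$-condition on $a'/a$; and the discriminant miracle, which at first sight introduces a fresh quadratic residuosity condition but collapses to the one already imposed by the $(2,2)$-equation. Once these identities are observed, the remaining steps are by inspection.
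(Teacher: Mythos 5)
Your algebra is sound as far as it goes: the necessity argument via the $(1,1)$-entry is correct, the identity $ax-\gamma^{2}=-x\sigma(a)$ does follow from $\sigma(y_{0})+y_{0}=\sigma(x_{0})x_{0}$ and $\sigma(x)x=1$, and the reduction of the $(2,2)$-entry to $b_{22}^{2}=a\sigma(a')/(a'\sigma(a))$ checks out. The gap is in the sufficiency direction: you solve $\,^{t}bMb=M'$ with $b$ an \emph{arbitrary} upper-triangular element of $\mrgl_{3}(\bs{l}_{2})$, with six independent entries, whereas the lemma requires $b\in\ul{B}$, the Borel subgroup of $\ul{G}=\mrres_{\bs{l}/\bs{k}}\mrsu_{3}$. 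Membership in $\mrsu_{3}$, i.e. $w_{0}\sigma(\,^{t}b)w_{0}=b^{-1}$, equivalently $b=\mrdiag(y,\sigma(y)y^{-1},\sigma(y)^{-1})u$ with $u\in\ul{U}_{\alpha}$ (so in particular $b_{22}=\sigma(b_{11})b_{11}^{-1}$, $b_{33}=\sigma(b_{11})^{-1}$, and the entries of $u$ subject to $\sigma(v_{1})v_{1}=\sigma(v_{2})+v_{2}$), is not cosmetic here: it is exactly what was used to pass from the $\ul{H}$-orbit statement to \eqref{eqIii2eq1}, and it is what makes $h=u_{0}^{-1}bu_{0}u$ land in $\ul{H}$. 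Since the six scalar equations in \eqref{eqIii2eq1} already pin down your six unknowns up to finitely many sign choices, the unitarity relations are genuinely additional constraints, and your construction never verifies any of them: $b_{33}=\sigma(b_{11})^{-1}$ is not checked, and already one of your two admissible signs, $b_{22}=-\sigma(\lambda)/\lambda$, is incompatible with $b\in\ul{B}$.

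As written, then, you have proved existence of an upper-triangular solution in $\mrgl_{3}(\bs{l}_{2})$, which is strictly weaker than the lemma. To close the gap you should either parametrize $b$ by the $\mrsu_{3}$-coordinates $(y,v_{1},v_{2})$ from the outset and redo the elimination within $\ul{B}$ (this is what the paper's terse ``elementary row and column transformations'' must mean: the congruences are performed by elements of $\ul{B}$, not of the full upper-triangular group of $\mrgl_{3}(\bs{l}_{2})$), or show a posteriori that for a suitable choice of the square roots your solution satisfies $w_{0}\sigma(\,^{t}b)w_{0}\,b=1$. Note that one can check that the set of upper-triangular solutions of \eqref{eqIii2eq1} is stable under $b\mapsto w_{0}\sigma(\,^{t}b^{-1})w_{0}$, whose fixed points are precisely the solutions lying in $\mrsu_{3}$; but an involution on a finite set need not have fixed points, so this observation alone does not rescue the argument.
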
 
		
		\begin{proof}
			
			The proof follows from elementary row and column transformations.
			
		\end{proof} 
		Thus, depending on the condition whether $$x\sigma(x_{0})^{2}-2\sigma(y_{0})\in\bs{l}_{2}^{\times2}\quad\text{or}\quad x\sigma(x_{0})^{2}-2\sigma(y_{0})\in\bs{l}_{2}^{\times}-\bs{l}_{2}^{\times2},$$
		there are two related $\ul{H}$-orbits of cardinality $(q^{3}-q)/2$ and $(q^{3}-q)/2$ respectively, whose proof is elementary and is omitted. 
		
		We claim that these two $\ul{H}$-orbits are of $\theta$-rank $0$. To that end, for $u_{0}$ such that $x\sigma(x_{0})^{2}-2\sigma(y_{0})\neq 0$ we need to find $b\in\ul{B}$ such that $\ul{S}^{bu_{0}}$ is $\theta$-split. We have the following lemma.
		
		\begin{lemma}\label{lemmadiagortho}
			
			Let $\epsilon \in\bs{l}_{2}^{\times}$ and $\delta=\sigma(\epsilon)\epsilon$ such that $\epsilon/(x\sigma(x_{0})^{2}-2\sigma(y_{0}))\in \bs{l}_{2}^{\times2}$ , then there exists $b\in\ul{B}$ such that
			$$\theta(bu_{0})(bu_{0})^{-1}=w_{0}\mrdiag(\epsilon\delta,\delta,\epsilon).$$
			
		\end{lemma}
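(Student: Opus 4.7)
The plan is to translate the desired identity into a matrix equation of the form $b' M_{0}\,^{t}b' = D'$, where $b':=\sigma(b)\in\ul{B}$ is to be found, $M_{0}$ is an explicit symmetric $3\times 3$ matrix over $\bs{l}_{2}$, and $D'$ is a diagonal matrix, and then solve it by Gaussian-elimination arguments, with the squareness hypothesis governing the final scaling.

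First, using $\theta(g)=t^{-1}\sigma(g)t$ together with the $\mrsu_{3}$-relation $b^{-1}=w_{0}\sigma(\,^{t}b)w_{0}$, I would rewrite the desired equality as
\[
\sigma(b)\bigl[\sigma(u_{0})\,t\,u_{0}^{-1}w_{0}\bigr]\sigma(\,^{t}b)=t\,w_{0}\mrdiag(\epsilon\delta,\delta,\epsilon)\,w_{0}=\mrdiag(\epsilon,x\delta,\epsilon\delta).
\]
A direct computation using $\sigma(x_{0})x_{0}=\sigma(y_{0})+y_{0}$ shows that $M_{0}:=\sigma(u_{0})\,t\,u_{0}^{-1}w_{0}$ is symmetric with vanishing $(1,1)$-entry, nonzero $(1,3)$-entry equal to $1$, and $(3,3)$-entry $v:=x\sigma(x_{0})^{2}-2\sigma(y_{0})$, which is nonzero by hypothesis. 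Setting $b':=\sigma(b)\in\ul{B}$ (an upper triangular element of $\mrsu_{3}$), we are reduced to finding $b'\in\ul{B}$ with $b'M_{0}\,^{t}b'=\mrdiag(\epsilon,x\delta,\epsilon\delta)$.

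Next I would diagonalize $M_{0}$ using upper-triangular base-change operations, proceeding from the bottom-right pivot $v$ (this direction is forced, since $M_{0}$ has vanishing $(1,1)$-entry): eliminate successively the $(1,3), (2,3)$ off-diagonal entries using the third row and column, then the $(1,2)$-entry using the second row and column. This produces a diagonal matrix whose entries involve $v$, $xv-u^{2}$, and $\det M_{0}=-x$, where $u:=x_{0}-x\sigma(x_{0})$; one checks via $\sigma(x_{0})x_{0}=\sigma(y_{0})+y_{0}$ that $xv-u^{2}=-(x_{0}^{2}-2xy_{0})$ and $\sigma(v)=-(xv-u^{2})/x$, and in particular that all these pivots are nonzero. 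It then remains to scale by a diagonal element of $\ul{T}$, which has the form $\mrdiag(y,\sigma(y)/y,\sigma(y)^{-1})$ for $y\in\bs{l}_{2}^{\times}$, to match the target $\mrdiag(\epsilon,x\delta,\epsilon\delta)$. Equating the outermost diagonal entries yields a single equation of the shape $y^{2}=\epsilon\sigma(v)$ up to manipulation using $\delta=\sigma(\epsilon)\epsilon$. Writing $\epsilon\sigma(v)=(\epsilon/v)\cdot v\sigma(v)$, the hypothesis supplies $\epsilon/v\in\bs{l}_{2}^{\times 2}$, while $v\sigma(v)=N_{\bs{l}_{2}/\bs{l}}(v)\in\bs{l}^{\times}$ is automatically a square in $\bs{l}_{2}^{\times}$ (for a quadratic extension of finite fields of odd characteristic, every element of the base field becomes a square in the extension). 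Hence $y$ exists in $\bs{l}_{2}^{\times}$.

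The main obstacle is the careful bookkeeping of the $\mrsu_{3}$-constraint on $b$—the diagonal of $b$ must take the specific form $(y,\sigma(y)/y,\sigma(y)^{-1})$ rather than being a free triple—and the verification that the three diagonal matching conditions coming from $\mrdiag(\epsilon,x\delta,\epsilon\delta)$ are all simultaneously solvable by a single such $y$. This consistency check ultimately reduces to the identities $\sigma(v)=-(xv-u^{2})/x$ and $\delta=\sigma(\epsilon)\epsilon$, which collapse the system into a single solvable equation, after which the existence of $b$ is immediate from the Gaussian-elimination step above.
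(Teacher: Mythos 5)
Your reduction is indeed the paper's: for $b\in\ul{B}$ the desired identity is equivalent to the congruence $\,^{t}b^{-1}Mb^{-1}=\mrdiag(\epsilon\delta,x\delta,\epsilon)$ with $M$ the symmetric matrix built from $v=x\sigma(x_{0})^{2}-2\sigma(y_{0})$ and $u=x_{0}-x\sigma(x_{0})$, and your $M_{0}$ is just $w_{0}Mw_{0}$; the identities $xv-u^{2}=-(x_{0}^{2}-2xy_{0})$ and $\sigma(v)=-(xv-u^{2})/x$ are correct and give nonvanishing pivots. The gap is in the last step, which you declare immediate. First, membership of $b'$ in $\ul{B}$ constrains more than its diagonal: upper-triangular solutions of a congruence onto a nondegenerate diagonal form are rigid (unique up to diagonal signs), so you cannot adjust the Gaussian-elimination matrix afterwards, and you must verify that the unipotent part it produces actually lies in $\ul{U}_{\alpha}$, i.e.\ that its $(1,2)$- and $(2,3)$-entries are $\sigma$-conjugate in the required way and that its $(1,3)$-entry satisfies the trace relation defining $\ul{U}_{\alpha}$. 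This does hold, but only because $\sigma(u)=-u/x$ and $v+\sigma(v)=u^{2}/x$ (consequences of $\sigma(x_{0})x_{0}=\sigma(y_{0})+y_{0}$ and $\sigma(x)x=1$); it is a substantive check your proposal omits.

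Second, and more seriously, the three diagonal conditions do not collapse to $y^{2}=\epsilon\sigma(v)$. With $t_{y}=\mrdiag(y,\sigma(y)/y,\sigma(y)^{-1})$ they read $y^{2}=\epsilon\sigma(v)$, $(\sigma(y)/y)^{2}=-v\delta/\sigma(v)$ and $\sigma(y)^{2}=v/(\epsilon\delta)$; the first and third are compatible only if $\delta^{2}=1$, and the middle one then forces $\epsilon^{2}=-1$. The obstruction is visible already from determinants: $\det M_{0}=-x$, while your target $\mrdiag(\epsilon,x\delta,\epsilon\delta)$ has determinant $x\epsilon^{2}\delta^{2}$ and $\det b'=1$, so no solution exists unless $(\epsilon\delta)^{2}=-1$ --- a condition that does not follow from $\delta=\sigma(\epsilon)\epsilon$ and $\epsilon/v\in\bs{l}_{2}^{\times2}$. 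So the consistency check you appeal to would fail for a general admissible $\epsilon$; the lemma evidently presupposes a further normalization of $\epsilon$ (the paper's one-line appeal to elementary row and column operations glosses over the same point), and your argument, which asserts that the stated hypotheses already make the system solvable, does not close this.
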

		
		\begin{proof}
			
			We first remark that $\delta\in\bs{l}_{2}^{\times 2}$, since $\sigma$ is given by the $q$-th power map. By a direct calculation similar to that of \eqref{eqIii2eq1}, equivalently we need to find $b$ such that
			$$\,^{t}b^{-1}\begin{pmatrix}
				-2\sigma(y_{0})+x\sigma(x_{0})^{2} & x_{0}-x\sigma(x_{0}) & 1\\
				x_{0}-x\sigma(x_{0}) & x & 0\\
				1 & 0 & 0
			\end{pmatrix}b^{-1}=\begin{pmatrix}
				\epsilon\delta & 0 & 0\\
				0 & x\delta & 0\\
				0 & 0 & \epsilon
			\end{pmatrix}.$$
			This could be realized using elementary row and column transformations.
			
		\end{proof}
		
		Using this lemma, it is direct to verify that $\ul{S}^{bu_{0}}$ is $\theta$-stable and $(\ul{S}^{bu_{0}})^{\theta}=\{I_{3},\mrdiag(-1,1,-1)\}$. Thus for any $u_{0}$ such that $x\sigma(x_{0})^{2}\neq2\sigma(y_{0})$, the related $\ul{B}^{u_{0}}$ is of $\theta$-rank $0$.
		
		To sum up, there are three $\ul{H}$-orbits of Borel subgroups of $\theta$-rank $1,0,0$ and of cardinality $q+1,(q^{3}-q)/2,(q^{3}-q)/2$ respectively.
		
		\textbf{Case (II).(\romannumeral1):} In this case we have  
		$$\theta(g)=\begin{pmatrix} 0 & 0 & \delta^{-1}\\ 0 & \epsilon\delta^{-1} & 0\\ 1 & 0 & 0
		\end{pmatrix}g\begin{pmatrix} 0 & 0 & 1\\ 0 & \epsilon^{-1}\delta & 0\\ \delta  & 0 & 0
		\end{pmatrix},\quad g\in\ul{G}$$
		for certain $\epsilon,\delta\in\bs{l}^{\times}$ with $\delta=\epsilon^{2}$. Using Lemma \ref{lemmadiagunitary}, such an involution is conjugate to the involution
		$$\theta'(g)=\mrdiag(1,-1,1)g\mrdiag(1,-1,1),\quad g\in\ul{G},$$
		which was studied in \textbf{Case (I).(\romannumeral1).(2)}.
		So $\ul{H}$ is conjugate to $\mru_{2}(\bs{l})$, and there are two $\ul{H}$-orbits of Borel subgroups. They are of $\theta$-rank $0,1$ and of cardinality $q^{3}-q,q+1$ respectively.  
		
		\textbf{Case (II).(\romannumeral2):} In this case we have
		$$\theta(g)=\begin{pmatrix} 0 & 0 & \delta^{-1}\\ 0 & \epsilon\delta^{-1} & 0\\ 1 & 0 & 0
		\end{pmatrix}\sigma(g)\begin{pmatrix} 0 & 0 & 1\\ 0 & \epsilon^{-1}\delta & 0\\ \delta  & 0 & 0
		\end{pmatrix},\quad g\in\ul{G}$$ 
		for certain $\epsilon,\delta\in\bs{l}_{2}^{\times}$ with $\sigma(\delta)=\delta=\sigma(\epsilon)\epsilon\in\bs{l}$. Using Lemma
		\ref{lemmadiagortho}, such an involution is conjugate to the involution 
		$$\theta'(g)=\sigma(g),\quad g\in\ul{G},$$
		which was studied in \textbf{Case (I).(\romannumeral2)}. So $\ul{H}$ is conjugate to $\mrso_{3}(\bs{l})$, and there are three $\ul{H}$-orbits of Borel subgroups. They are of $\theta$-rank $0,0,1$ and of cardinality $(q^{3}-q)/2,(q^{3}-q)/2,q+1$ respectively. In particular by direct calculation $\theta(-w_{0})=-w_{0}$, thus the standard Borel subgroup $\ul{B}$ and its opposite $\ul{B}^{-}=\ul{B}^{w_{0}}$ are in the same $\ul{H}$-orbit.
		
		We sum up our result in the following proposition.
		
		\begin{proposition}\label{proprank1classSU3}
			
			Let $\ul{G}$ be the Weil restriction of scalars $\mrres_{\bs{l}/\bs{k}}\mrsu_{3}$ and $\theta$, $\ul{H}$, $\ul{S}$, $\ul{T}$ as above, then we exhaust the possible involution $\theta$ into the following cases:  \footnote{Still, in each case the first $\ul{H}$-orbit of Borel subgroups listed below is exactly the one containing the upper triangular Borel subgroup.}
			
			\begin{itemize}
				\item \textbf{Case (I).(\romannumeral1).(1):} $\theta$ is the identity, $\ul{H}$ is the whole group $\ul{G}$, and there is exactly one $\ul{H}$-orbit.
				\item \textbf{Case (I).(\romannumeral1).(2):} $\theta$ is the conjugation given by $\mrdiag(1,-1,1)$, and $\ul{H}$ is the unitary group $\mru_{2}(\bs{l})$. There are two $\ul{H}$-orbits of Borel subgroups of $\theta$-rank $1,0$ and of cardinality $q+1,q^{3}-q$ respectively.
				\item \textbf{Case (I).(\romannumeral2):} 
				The involution $\theta$ is given by 
				$$\theta(g)=t^{-1}\sigma(g)t,\quad g\in \ul{G}$$
				for some
				$t:=\mrdiag(1,x,1)$ with $x\in\bs{l}^{\times}$, $\sigma(x)x=1$, and $\ul{H}$ is a special orthogonal subgroup of $\ul{G}$. There are three $\ul{H}$-orbits of Borel subgroups of $\theta$-rank $1,0,0$ and of cardinality $q+1,(q^{3}-q)/2,(q^{3}-q)/2$ respectively.
				
				\item \textbf{Case (II).(\romannumeral1)}: The involution $\theta$ is given by $$\theta(g)=\begin{pmatrix} 0 & 0 & \delta^{-1}\\ 0 & \epsilon\delta^{-1} & 0\\ 1 & 0 & 0
				\end{pmatrix}g\begin{pmatrix} 0 & 0 & 1\\ 0 & \epsilon^{-1}\delta & 0\\ \delta  & 0 & 0
				\end{pmatrix},\quad g\in\ul{G}$$
				for certain $\epsilon,\delta\in\bs{l}^{\times}$ with $\delta=\epsilon^{2}$, and $\ul{H}$ is a conjugation of $\mru_{2}(\bs{l})$ of $\ul{G}$. There are two $\ul{H}$-orbits of Borel subgroups of $\theta$-rank $0,1$ and of cardinality $q^{3}-q,q+1$ respectively.
				
				\item \textbf{Case (II).(\romannumeral2)}: The involution $\theta$ is given by $$\theta(g)=\begin{pmatrix} 0 & 0 & \delta^{-1}\\ 0 & \epsilon\delta^{-1} & 0\\ 1 & 0 & 0
				\end{pmatrix}\sigma(g)\begin{pmatrix} 0 & 0 & 1\\ 0 & \epsilon^{-1}\delta & 0\\ \delta  & 0 & 0
				\end{pmatrix},\quad g\in\ul{G}$$ 
				for certain $\epsilon,\delta\in\bs{l}_{2}^{\times}$ with $\sigma(\delta)=\delta=\sigma(\epsilon)\epsilon\in\bs{l}^{\times}$, and $\ul{H}$ is a special orthogonal subgroup of $\ul{G}$. There are three $\ul{H}$-orbits of Borel subgroups of $\theta$-rank $0,0,1$ and of cardinality $(q^{3}-q)/2,(q^{3}-q)/2,q+1$ respectively.
				
			\end{itemize}
			
		\end{proposition}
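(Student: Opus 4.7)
The plan is to follow the same two-step template as the $\mrsl_2$-case (Proposition \ref{proprank1classSL2}): first classify the involutions $\theta$ of $\ul{G}=\mrres_{\bs{l}/\bs{k}}\mrsu_3$ that stabilise both $\ul{S}$ and $\ul{T}$, and then for each $\theta$ compute the $\ul{H}$-orbits of Borel subgroups by parametrising them as $\ul{B}^{u}$ with $u\in\ul{U}_{-\alpha}$ and solving the $\ul{H}$-equivalence condition. The restriction $\theta\rest_{\ul{S}}$ is automatically $\pm\mathrm{id}$ because $\ul{S}\cong\mbg_m$, so we split into Case (I) with $\theta\rest_{\ul{S}}=\mathrm{id}$ (so $\theta$ stabilises each of $\ul{U}_{\pm\alpha},\ul{U}_{\pm 2\alpha}$) and Case (II) with $\theta\rest_{\ul{S}}=\mathrm{inv}$ (so $\theta$ swaps $\ul{U}_\alpha\leftrightarrow\ul{U}_{-\alpha}$ and $\ul{U}_{2\alpha}\leftrightarrow\ul{U}_{-2\alpha}$). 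In each subcase, Lemma~\ref{lemmathetaTrest} (applied to $\ul{T}=\mrres_{\bs{l}_2/\bs{k}}\mbg_m$) shows that $\theta\rest_{\ul{T}}$ differs from the expected $\pm\mathrm{id}$ by at most the Galois involution $\sigma\in\mrgal(\bs{l}_2/\bs{l})$, giving the two subcases (\romannumeral1),(\romannumeral2).

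To pin down $\theta$ on the unipotent radical, I would use the $\ul{T}$-equivariance: the constraint $\sigma(v_1)v_1=\sigma(v_2)+v_2$ defining $\ul{U}_\alpha$ together with ``$\theta$ is an involution'' forces the scalars $\epsilon,\delta$ appearing in $\theta(v_1,v_2)$ to satisfy $\delta=\sigma(\epsilon)\epsilon$ and $\delta^2=\epsilon^2=1$ (respectively their $\sigma$-twisted analogues), yielding the explicit forms listed in the five cases. The compatibility between $\theta\rest_{\ul{U}_\alpha}$ and $\theta\rest_{\ul{U}_{-\alpha}}$ is forced by applying $\theta$ to the $\mrsu_3$-relation
\[
\begin{pmatrix} 1 & 0 & 0 \\ 0 & 1 & 0 \\ -v^{-1} & 0 & 1\end{pmatrix}\begin{pmatrix} 1 & 0 & v \\ 0 & 1 & 0 \\ 0 & 0 & 1\end{pmatrix}\begin{pmatrix} 1 & 0 & 0 \\ 0 & 1 & 0 \\ -v^{-1} & 0 & 1\end{pmatrix}=\begin{pmatrix} 0 & 0 & v \\ 0 & 1 & 0 \\ -v^{-1} & 0 & 0\end{pmatrix}
\]
for $v\in\bs{l}_2^\times$ with $\sigma(v)+v=0$, which fixes $\theta(-w_0)$ and therefore propagates the scalar $\epsilon$ to $\ul{U}_{-\alpha}$.

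For the orbit count, Cases (I).(\romannumeral1).(1) and (II).(\romannumeral1)/(II).(\romannumeral2) will be reduced to Cases (I).(\romannumeral1).(2) and (I).(\romannumeral2) by the conjugacy Lemmas~\ref{lemmadiagunitary} and~\ref{lemmadiagortho} (which are stated in the text and proved by elementary row-column reductions of a $3\times 3$ Hermitian/bilinear form). So the substantive work is in Cases (I).(\romannumeral1).(2) and (I).(\romannumeral2). There I would parametrise $\ul{U}_{-\alpha}$ by $(x_0,y_0)$ with $\sigma(x_0)x_0=\sigma(y_0)+y_0$, write down the condition that $\ul{B}^{u_0}$ and $\ul{B}^{u_0u}$ lie in the same $\ul{H}$-orbit as the existence of $b\in\ul{B}$ with $\theta(u_0^{-1}bu_0u)=u_0^{-1}bu_0u$, and rewrite it as the equivalence of two $3\times 3$ forms (Hermitian in Case (I).(\romannumeral1).(2), symmetric bilinear over $\bs{l}_2$ in Case (I).(\romannumeral2)). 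In the unitary case the form is non-degenerate whenever $x_0\neq 0$, and elementary $\ul{B}$-reductions identify all such $u$ in a single orbit; the degenerate locus $x_0=0$ (cardinality $q+1$ together with $\ul{B}$ itself) gives the $\theta$-rank $1$ orbit. In the orthogonal case the discriminant $x\sigma(x_0)^2-2\sigma(y_0)$ stratifies the remaining Borels into two classes according to whether it is a square or a non-square in $\bs{l}_2^\times$, giving the two $\theta$-rank $0$ orbits of size $(q^3-q)/2$ each; the locus where this discriminant vanishes yields the $\theta$-rank $1$ orbit of size $q+1$.

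The main obstacle will be the bookkeeping in the orbit calculation: identifying that the relevant $\ul{B}$-action on the space of quadratic forms is exactly the orthogonal equivalence (so that invariants reduce to a discriminant and rank), and verifying the $\theta$-rank $0$ claim, which requires producing, for every $u_0$ in a prospective rank-$0$ orbit, an element $b\in\ul{B}$ such that $\ul{S}^{bu_0}$ is $\theta$-split -- this is precisely the content of Lemmas~\ref{lemmadiagunitary} and~\ref{lemmadiagortho}, which I would establish once and then reuse across all cases. The cardinality checks are a matter of counting solutions to $\sigma(x_0)x_0=\sigma(y_0)+y_0$ over $\bs{l}_2$ (namely $q^3$ points), partitioned according to whether the discriminant condition above is trivial, a square, or a non-square, and the sum $1+q+(q^3-q)/2+(q^3-q)/2=q^3+1$ gives the reassuring total.
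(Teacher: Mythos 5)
Your proposal follows essentially the same route as the paper's proof: the same case division via Lemma \ref{lemmathetaTrest}, the same use of the $w_0$-identity to propagate the scalar from $\ul{U}_{\alpha}$ to $\ul{U}_{-\alpha}$, the same reduction of the Case (II) involutions to Cases (I).(\romannumeral1).(2) and (I).(\romannumeral2) via Lemmas \ref{lemmadiagunitary} and \ref{lemmadiagortho}, and the same orbit computation by $\ul{B}$-congruence of the associated $3\times 3$ forms, with the square-class stratification, the $\theta$-rank~$0$ verification through those lemmas, and the counts $q+1$, $q^{3}-q$, $(q^{3}-q)/2$. One small correction to fold in when executing: in Case (I).(\romannumeral1).(2) the relevant $3\times 3$ Hermitian matrix is always nondegenerate (its determinant is a unit), so the orbit-separating invariant is not degeneracy but the vanishing of $x_0$, i.e.\ of the upper-left entries that are preserved under upper-triangular $\sigma$-twisted congruence, exactly as in the paper's equation-by-equation reduction.
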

		
		\subsection{The general case}
		
		Now we consider a general reductive group $\ul{G}$ over $\bs{k}$. Let $\ul{S}$ be a maximal split torus over $\bs{k}$, let $\ul{T}$ be the centralizer of $\ul{S}$ in $\ul{G}$, and let $\theta$ be an involution of $\ul{G}$ over $\bs{k}$ that stabilizes $\ul{S}$ and $\ul{T}$. Let $\Phi(\ul{G},\ul{S})$ be the corresponding set of roots.
		
		We first state a general lemma, which will be quite useful later on.
		
		\begin{lemma}\label{lemmascinvolutionext}
			
			Let $\bs{k}$ be a general field of characteristic different from 2. Let $\ul{G}^{\mathrm{sc}}$ be the simply connected cover of the derived subgroup of $\ul{G}$ endowed with the natural morphism $p:\ul{G}^{\mathrm{sc}}\rightarrow\ul{G}^{\mathrm{der}}\rightarrow\ul{G}$. Then there is a unique involution $\theta'$ of $\ul{G}^{\mathrm{sc}}$ over $\bs{k}$ such that $p\circ\theta'=\theta\circ p$, and  $(\ul{G}^{\mathrm{sc}})^{\theta'}$ is a connected reductive group over $\bs{k}$. 
			
		\end{lemma}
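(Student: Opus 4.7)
The plan is to split the argument into two parts: first, establishing the existence, uniqueness, and involutivity of the lift $\theta'$; second, applying Steinberg's theorem on fixed points of finite-order automorphisms to deduce that $(\ul{G}^{\mathrm{sc}})^{\theta'}$ is a connected reductive group.

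For existence and uniqueness, I would begin by noting that $\theta$ preserves the characteristic subgroup $\ul{G}^{\mathrm{der}}$ and hence restricts to an involution of $\ul{G}^{\mathrm{der}}$ over $\bs{k}$. The universal property of the simply connected cover then provides a unique automorphism $\theta'$ of $\ul{G}^{\mathrm{sc}}$ over $\bs{k}$ satisfying $p\circ\theta'=\theta\circ p$. Since this lifting procedure is functorial under composition (it sends $\mrid$ to $\mrid$ and commutes with composition), applying it to $\theta^2=\mrid$ yields $(\theta')^2=\mrid$, so $\theta'$ is indeed an involution. Uniqueness is built into the universal property; if one prefers a direct verification, for any other lift $\theta''$ the automorphism $\sigma:=\theta''^{-1}\circ\theta'$ satisfies $p\circ\sigma=p$, so the map $g\mapsto\sigma(g)g^{-1}$ lands in $\ker p\subset Z(\ul{G}^{\mathrm{sc}})$. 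Centrality of $\ker p$ ensures this map is a group homomorphism, and since $\ul{G}^{\mathrm{sc}}$ is semisimple (hence perfect), any homomorphism to an abelian group is trivial, giving $\sigma=\mrid$.

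For the connectedness and reductivity of $(\ul{G}^{\mathrm{sc}})^{\theta'}$, I would base-change to an algebraic closure $\overline{\bs{k}}$ of $\bs{k}$ and invoke Steinberg's classical theorem: for a semisimple simply connected algebraic group over an algebraically closed field, the fixed-point subgroup of any semisimple automorphism of finite order coprime to the characteristic is connected and reductive. In our situation $\theta'$ has order two, and the hypothesis $\mathrm{char}(\bs{k})\neq 2$ provides the required coprimality. Since $\theta'$ is defined over $\bs{k}$, the fixed-point $\bs{k}$-subgroup scheme $(\ul{G}^{\mathrm{sc}})^{\theta'}$ has geometric fibre equal to $(\ul{G}^{\mathrm{sc}}_{\overline{\bs{k}}})^{\theta'}$, and both connectedness and reductivity are geometric properties stable under faithfully flat descent.

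The formal step is, I believe, routine: everything in the first paragraph follows from the functoriality of the simply connected cover and standard perfectness arguments. The main point requiring care will be ensuring that Steinberg's theorem is applied with its precise hypotheses in mind, in particular that the order of $\theta'$ (which is $2$) is coprime to $\mathrm{char}(\bs{k})$ — this is exactly the role played by the $p\neq 2$ assumption that is in force throughout this part of the paper. Beyond this, there are no substantial obstacles.
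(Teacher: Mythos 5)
Your proof is correct and follows essentially the same route as the paper's, which simply cites Steinberg (\emph{Endomorphisms of linear algebraic groups}, \S 9.16 and Theorem 8.2) for the lift, the connectedness, and the reductivity, and derives $\bs{k}$-rationality of $\theta'$ from uniqueness. You have helpfully spelled out the intermediate steps — the universal property of the simply connected cover for the lift, the homomorphism $g\mapsto\sigma(g)g^{-1}$ into the finite central kernel for uniqueness, the observation that order two coprime to the characteristic gives semisimplicity of $\theta'$, and the descent of connectedness and reductivity from $\overline{\bs{k}}$ — but these are exactly the ingredients packaged in the cited references.
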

		
		\begin{proof}
			
			The first part and the connectness of $(\ul{G}^{\mathrm{sc}})^{\theta'}$ follows from \cite{steinberg1968endomorphisms}*{\S 9.16 and Theorem 8.2}, while the $\bs{k}$-rationality of $\theta'$ follows from the uniqueness.  
			
		\end{proof}
		
		\begin{remark}
			
			In general, it is not true that $(G^{\mathrm{sc}})^{\theta'}=H^{\mathrm{sc}}$, since the former group is not necessarily simply connected. Consider for instance the case that $\ul{G}=\mrgl(n)$ and $\ul{H}=\mrso(n)$.
			
		\end{remark}
		
		We write $p_{*}$ for the induced bijection between $\ch{}{}(\ul{G}^{\mathrm{sc}})$ and $\ch{}{}(\ul{G})$, then the set of $(\ul{G}^{\mathrm{sc}})^{\theta'}$-orbits  of $\ch{}{}(\ul{G}^{\mathrm{sc}})$ forms a partition of the set of $\ul{H}$-orbits of $\ch{}{}(\ul{G})$ via $p_{*}$. 
		Indeed, let $\Hbarstar=p((\ul{G}^{\mathrm{sc}})^{\theta'}(\bs{k}))$\index{$\Hstar$} be a subgroup of $\ul{G}(\bs{k})$. Then the set of $\Hbarstar$-orbits of $\ch{}{}(\ul{G})$ forms a partition of the set of $\ul{H}$-orbits of $\ch{}{}(\ul{G})$.
		
		Given a non-divisible root $\alpha\in\Phi(\ul{G},\ul{S})$. Depending on $\alpha$ is multipliable or not, the union $\mbr_{>0}\alpha\cap \Phi(\ul{G},\ul{S})$ is either $\{\alpha\}$ or $\{\alpha,2\alpha\}$. Let $\ul{U}_{\alpha}$ and $\ul{U}_{-\alpha}$ be the unipotent subgroups of $\ul{G}$ with respect to $\alpha$ and $-\alpha$. Let $\ul{M}_{\alpha}$ be the Levi subgroup of $\ul{G}$ related to $\alpha$, i.e. generated by $\ul{T}$, $\ul{U}_{\alpha}$ and $\ul{U}_{\alpha}$. Then the derived subgroup of $\ul{M}_{\alpha}$ is the semi-simple subgroup $\pairang{\ul{U}_{-\alpha}}{\ul{U}_{\alpha}}$ generated by $\ul{U}_{-\alpha}$ and $\ul{U}_{\alpha}$.
		
		Let $\ul{G}_{\alpha}$ be the simply connected cover of  $\pairang{\ul{U}_{-\alpha}}{\ul{U}_{\alpha}}$, which is a semi-simple simply connected rank one group over $\bs{k}$. Then we have  \begin{equation}\label{eqGalpha}\ul{G}_{\alpha}\cong\begin{cases}
				\mrres_{\bs{l}_{\alpha}/\bs{k}}\mrsl_{2},\quad&\text{if}\ 2\alpha\notin\Phi(\ul{G},\ul{S});\\
				\mrres_{\bs{l}_{\alpha}/\bs{k}}\mrsu_{3},\quad&\text{if}\ 2\alpha\in\Phi(\ul{G},\ul{S}).
		\end{cases}\end{equation} 
		Here $\bs{l}_{\alpha}/\bs{k}$ is a finite extension depending on $\alpha$, and let $q_{\alpha}$ be the cardinality of $\bs{l}_{\alpha}$. By abuse of notation we also regard $\ul{U}_{\alpha}$ and $\ul{U}_{-\alpha}$ as the corresponding unipotent subgroups of $\ul{G}_{\alpha}$. Then, the above construction gives a composition of   morphisms:
		\begin{equation}\label{eqparpha}
			p_{\alpha}:\ul{G}_{\alpha}\rightarrow\pairang{\ul{U}_{-\alpha}}{\ul{U}_{\alpha}}\rightarrow\ul{M}_{\alpha}
		\end{equation}
		whose restriction to $\ul{U}_{\alpha}$ and $\ul{U}_{-\alpha}$ is the identity map. 
		Then $(p_{\alpha})_{*}$ induces a bijection between the set of Borel subgroups of $\ul{G}_{\alpha}$ to that of $\ul{M}_{\alpha}$. 
		
		Let $\ul{S}_{\alpha}=p_{\alpha}^{-1}(\ul{S})$ and $\ul{T}_{\alpha}=p_{\alpha}^{-1}(\ul{T})$ which is the centralizer of $\ul{S}_{\alpha}$ in $\ul{G}_{\alpha}$. Up to a suitable choice of the isomorphism in $\eqref{eqGalpha}$, we may identify $\ul{U}_{\alpha}$ (resp. $\ul{U}_{-\alpha}$) with the subgroup of upper (resp. lower) triangular matrices, $\ul{S}_{\alpha}$ with the diagonal $\bs{k}$-split torus and $\ul{T}_{\alpha}$ with the maximal diagonal torus of $\mrres_{\bs{l}_{\alpha}/\bs{k}}\mrsl_{2}$ or $\mrres_{\bs{l}_{\alpha}/\bs{k}}\mrsu_{3}$.
		
		From now on we assume that $\theta$ fixes the line $\mbr\alpha$. Then by definition either $\theta(\alpha)=\alpha$ or $\theta(\alpha)=-\alpha$, and the involution $\theta$ stabilizes the subgroup $\pairang{\ul{U}_{-\alpha}}{\ul{U}_{\alpha}}$. Using Lemma \ref{lemmascinvolutionext}, we may define a unique involution $\theta_{\alpha}$ on $\ul{G}_{\alpha}$ over $\bs{k}$, such that $\theta_{\alpha}\circ p_{\alpha}=p_{\alpha}\circ \theta$. Moreover if we define $\ul{H}_{\alpha}=\ul{G}_{\alpha}^{\theta_{\alpha}}$ as a reductive subgroup of $\ul{G}_{\alpha}$, then the set of $\ul{H}_{\alpha}$-orbits of Borel subgroups of $\ul{G}_{\alpha}$ forms a partition of the set of $\ul{H}\cap \pairang{\ul{U}_{-\alpha}}{\ul{U}_{\alpha}}$-orbits (resp.  $\Hbarstar\cap \pairang{\ul{U}_{-\alpha}}{\ul{U}_{\alpha}}$-orbits) of Borel subgroups of $\pairang{\ul{U}_{-\alpha}}{\ul{U}_{\alpha}}$. 
		
		Moreover, since $\pairang{\ul{U}_{-\alpha}}{\ul{U}_{\alpha}}$ is the derived subgroup of $\ul{M}_{\alpha}$, the set of Borel subgroups of $\pairang{\ul{U}_{-\alpha}}{\ul{U}_{\alpha}}$ is in bijection with that of $\ul{M}_{\alpha}$, and moreover the set of $\ul{H}\cap \pairang{\ul{U}_{-\alpha}}{\ul{U}_{\alpha}}$-orbits (resp.  $\Hbarstar\cap \pairang{\ul{U}_{-\alpha}}{\ul{U}_{\alpha}}$-orbits) of Borel subgroups of $\pairang{\ul{U}_{-\alpha}}{\ul{U}_{\alpha}}$ forms a partition of the set of $\ul{H}\cap \ul{M}_{\alpha}$-orbits (resp.  $\Hbarstar\cap \ul{M}_{\alpha}$-orbits) of Borel subgroups of $\ul{M}_{\alpha}$ with respect to the above bijection.
		
		Combining with Proposition \ref{proprank1classSL2} and Proposition \ref{proprank1classSU3}, we have the following proposition describing the $\ul{H}\cap \ul{M}_{\alpha}$-orbits (resp.  $\Hbarstar\cap \ul{M}_{\alpha}$-orbits) of Borel subgroups of $\ul{M}_{\alpha}$. 
		
		\begin{proposition}\label{propgeneralclass}
			
			Let $\ul{G}$, $\ul{S}$, $\ul{T}$, $\theta$, $\ul{H}$ be defined as above,  $\alpha\in\Phi(\ul{G},\ul{S})$ such that $\mbr\alpha$ is $\theta$-stable, and $\ul{G}_{\alpha}$, $\ul{M}_{\alpha}$ $\theta_{\alpha}$, $\ul{S}_{\alpha}$, $\ul{T}_{\alpha}$ be defined as above. Consider Borel subgroups of $\ul{M}_{\alpha}$.
			\begin{enumerate}
				\item \textbf{Case (SL):} Assume that $\ul{G}_{\alpha}\cong \mrres_{\bs{l}_{\alpha}/\bs{k}}\mrsl_{2}$. Then
				\begin{itemize}
					\item \textbf{Case (SL).(\romannumeral1):} if the restriction of $\theta_{\alpha}$ to $\ul{T}_{\alpha}$ is either the identity or the inversion, then one of the following cases happens:
					\begin{itemize}
						\item there is one $\ul{H}\cap\ul{M}_{\alpha}$-orbit of Borel subgroups;
						\item there are two $\ul{H}\cap\ul{M}_{\alpha}$-orbits of Borel subgroups of $\theta$-rank $1,0$ and of cardinality $2,q_{\alpha}-1$;
						\item there are two $\ul{H}\cap\ul{M}_{\alpha}$-orbits of Borel subgroups of $\theta$-rank $0,0$ and of cardinality $(q_{\alpha}+1)/2,(q_{\alpha}+1)/2$;
						\item there are three $\ul{H}\cap\ul{M}_{\alpha}$-orbits of Borel subgroups of $\theta$-rank $1,0,0$ and of cardinality $2,(q_{\alpha}-1)/2,(q_{\alpha}-1)/2$;
						\item there are three $\ul{H}\cap\ul{M}_{\alpha}$-orbits of Borel subgroups of $\theta$-rank $1,1,0$ and of cardinality $1,1,q_{\alpha}-1$;
						\item there are four $\ul{H}\cap\ul{M}_{\alpha}$-orbits of Borel subgroups of $\theta$-rank $1,1,0,0$ and of cardinality $1,1,(q_{\alpha}-1)/2,(q_{\alpha}-1)/2$.
					\end{itemize}  
					\item \textbf{Case (SL).(\romannumeral2):} if the restriction of $\theta_{\alpha}$ to $\ul{T}_{\alpha}$ is either the order 2 Galois action or its composition with the inversion, then there are two $\ul{H}\cap\ul{M}_{\alpha}$-orbits of Borel subgroups of $\theta$-rank $1$,$0$ and of cardinality $\sqrt{q_{\alpha}}+1$, $q_{\alpha}-\sqrt{q_{\alpha}}$.
					
				\end{itemize}
				
				\item \textbf{Case (SU):} Assume that $\ul{G}_{\alpha}\cong \mrres_{\bs{l}_{\alpha}/\bs{k}}\mrsu_{3}$. Then 
				\begin{itemize}
					\item \textbf{Case (SU).(\romannumeral1):} if the restriction of $\theta_{\alpha}$ to $\ul{T}_{\alpha}$ is either the identity or the order 2 Galois action composing with the inversion, then one of the following cases happens:
					\begin{itemize}
						\item there is one $\ul{H}\cap\ul{M}_{\alpha}$-orbit of Borel subgroups;
						\item there are two $\ul{H}\cap\ul{M}_{\alpha}$-orbits of Borel subgroups of $\theta$-rank $1,0$ and of cardinality $q_{\alpha}+1,q_{\alpha}^{3}-q_{\alpha}$.
					\end{itemize}  
					\item \textbf{Case (SU).(\romannumeral2):} if the restriction of $\theta_{\alpha}$ to $\ul{T}_{\alpha}$ is either the inversion or the order 2 Galois action, then one of the following cases happens:
					\begin{itemize}
						\item there are two $\ul{H}\cap\ul{M}_{\alpha}$-orbits of Borel subgroups of $\theta$-rank $1,0$ and of cardinality $q_{\alpha}+1,q_{\alpha}^{3}-q_{\alpha}$;
						\item there are three $\ul{H}\cap\ul{M}_{\alpha}$-orbits of Borel subgroups of $\theta$-rank $1,0,0$ and of cardinality $q_{\alpha}+1,(q_{\alpha}^{3}-q_{\alpha})/2,(q_{\alpha}^{3}-q_{\alpha})/2$.
					\end{itemize}
					
				\end{itemize}
				
			\end{enumerate}
			
			Moreover, all the $\ul{H}\cap\ul{M}_{\alpha}$ could be changed into $\Hbarstar\cap\ul{M}_{\alpha}$ as well with the results unchanged.
			
		\end{proposition}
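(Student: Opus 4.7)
The approach is to transfer the classification from $\ul{G}_{\alpha}$, where Propositions \ref{proprank1classSL2} and \ref{proprank1classSU3} apply, to $\ul{M}_{\alpha}$ via the morphism $p_{\alpha}$ from \eqref{eqparpha}, and then to analyze how $\ul{H}_{\alpha}$-orbits can collapse into $\ul{H}\cap\ul{M}_{\alpha}$-orbits. The setup laid out just before the statement gives an $\ul{H}\cap\ul{M}_{\alpha}$-equivariant bijection $(p_{\alpha})_{*}$ between the sets of Borel subgroups of $\ul{G}_{\alpha}$ and of $\ul{M}_{\alpha}$, under which every $\ul{H}\cap\ul{M}_{\alpha}$-orbit decomposes as a disjoint union of $\ul{H}_{\alpha}$-orbits. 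So the classification on $\ul{M}_{\alpha}$ is obtained from the rank one classification by permitting certain amalgamations.

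The key observation is that any such amalgamation must preserve the $\theta$-rank, and that cardinalities add under merging. Indeed, by Proposition \ref{propAcontainC} the $\theta$-rank is a well-defined invariant on $H'$-conjugacy classes of chambers, and passage from $\ul{G}_{\alpha}$ to $\ul{M}_{\alpha}$ through $p_{\alpha}$ commutes with the involutions and preserves the dimensions of $\theta$-fixed subspaces of apartments, modulo a central torus whose contribution is irrelevant to the rank in the sense of Definition \ref{defthetarank}. Since in each sub-case of Propositions \ref{proprank1classSL2} and \ref{proprank1classSU3} there are at most two $\ul{H}_{\alpha}$-orbits of any given $\theta$-rank, the only possible amalgamations are pairwise mergings of equal-rank orbits with matching cardinalities.

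The proof then reduces to case-by-case enumeration. Case (SL).(\romannumeral1) gathers the four sub-cases (I).(\romannumeral1).(1), (I).(\romannumeral1).(2), (II).(\romannumeral1).(1) and (II).(\romannumeral1).(2) of Proposition \ref{proprank1classSL2}, where $\theta_{\alpha}|_{\ul{T}_{\alpha}}$ is identity or inversion; running through all pairwise mergings of equal-rank orbits yields exactly the six listed configurations (for example, merging the two rank $0$ orbits of cardinality $(q_{\alpha}-1)/2$ in (I).(\romannumeral1).(2) produces the three-orbit configuration of ranks $1,1,0$ and cardinalities $1,1,q_{\alpha}-1$). Case (SL).(\romannumeral2) gathers (I).(\romannumeral2) and (II).(\romannumeral2), where the two $\ul{H}_{\alpha}$-orbits have distinct $\theta$-ranks, so no merging is possible and we obtain exactly the stated two orbits of cardinalities $\sqrt{q_{\alpha}}+1$ and $q_{\alpha}-\sqrt{q_{\alpha}}$. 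The arguments for (SU).(\romannumeral1) and (SU).(\romannumeral2) proceed identically using Proposition \ref{proprank1classSU3}; the only non-trivial bookkeeping is in (SU).(\romannumeral2), where the two rank $0$ orbits of common cardinality $(q_{\alpha}^{3}-q_{\alpha})/2$ either stay distinct or merge into a single orbit of cardinality $q_{\alpha}^{3}-q_{\alpha}$, producing precisely the two listed configurations.

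The closing assertion that $\ul{H}\cap\ul{M}_{\alpha}$ may be replaced by $\Hbarstar\cap\ul{M}_{\alpha}$ follows because $\Hbarstar$ arises from the $\theta'$-fixed subgroup of $\ul{G}^{\mathrm{sc}}$, which in the simply connected rank one factor $\ul{G}_{\alpha}$ coincides with $\ul{H}_{\alpha}=\ul{G}_{\alpha}^{\theta_{\alpha}}$ itself by the connectedness part of Lemma \ref{lemmascinvolutionext}; the same partition argument and rank/cardinality bookkeeping then apply verbatim. The main obstacle, as the plan makes clear, is the rank-preservation step in the second paragraph: one must verify carefully that the reduced apartment of $\ul{M}_{\alpha}$ inherits its $\theta$-fixed dimension from that of $\ul{G}_{\alpha}$ despite the additional central torus present in $\ul{M}_{\alpha}$. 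Once this is in place, everything reduces to the tabulation in the preceding two propositions combined with the straightforward combinatorics of equal-rank mergings.
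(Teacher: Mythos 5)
Your proposal is correct and follows essentially the same route as the paper, which states this proposition without a separate proof as a direct combination of Propositions \ref{proprank1classSL2} and \ref{proprank1classSU3}, the observation that the $\ul{H}_{\alpha}$-orbits (transferred through $p_{\alpha}$) partition the $\ul{H}\cap\ul{M}_{\alpha}$- and $\Hbarstar\cap\ul{M}_{\alpha}$-orbits, and the fact that orbits of different $\theta$-rank can never merge (see the remark following the proposition). Your flagged concern about rank preservation under $p_{\alpha}$ despite the central torus of $\ul{M}_{\alpha}$ is exactly the point the paper handles (in the building setting) by Lemma \ref{lemmathetarankconst}, and it is a routine check here, so there is no gap.
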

		
		
		\begin{remark}
			
			In Proposition \ref{propgeneralclass}, we used the fact that two Borel subgroups of $\ul{M}_{\alpha}$ of different $\theta$-rank must be in different $\ul{H}\cap\ul{M}_{\alpha}$-orbits. This helps us to eliminate several possible cases. Moreover, we also notice that some cases only ``theoretically" exist \footnote{This includes the second case in \textbf{Case (SL).(\romannumeral1)} and the first case in \textbf{Case (SU).(\romannumeral2)}. However for other new cases not included in Proposition \ref{proprank1classSL2} and Proposition \ref{proprank1classSU3}, i.e. the fourth and fifth cases in \textbf{Case (SL).(\romannumeral1)}, we may consider $\ul{G}=\mrres_{\bs{l}/\bs{k}}\mrgl_{3}$ and $\theta$ being the split orthogonal involution, or  $\ul{G}=\mrres_{\bs{l}/\bs{k}}\mrgl_{2}$ and $\theta$ being the conjugation by $\mrdiag(1,-1)$ to realize them respectively.}. It means that in practice we may not find the related group $\ul{M}_{\alpha}$ and the involution $\theta$, such that the $\ul{H}\cap\ul{M}_{\alpha}$-orbits of the Borel subgroups of $\ul{M}_{\alpha}$ are as required, although currently we cannot find examples to justify them or proofs to eliminate them.
			
		\end{remark}

		\section{Combinatorial geometry of chambers}\label{sectiongeometry}
		
		In this part, we study the combinatorial geometry of chambers in a building or an apartment endowed with an involution. As a result, we give an upper bound of the distiguished dimension of the Steinberg representation.
		
		\subsection{Involution on Euclidean reflection groups}
		
		In this part, we consider Euclidean reflection groups and the related Coxeter complex equipped with an involution. 
		
		Let $W$ be an Euclidean reflection group. By definition, there exists a locally finite set $\mfH$ of hyperplanes in the affine space $\mca=\mbr^{n}$, such that \begin{itemize}
			\item $\mfH$ is stable under the reflection $s_{\mfh}$ along $\mfh$ for any $\mfh\in\mfH$;
			\item $W$ is generated by $s_{\mfh}, \mfh\in\mfH$.
		\end{itemize}

		Assume that there exists an affine involution $\theta$ of $\mca$ that stabilizes $\mfH$. Then $\mca^{\theta}$ is an affine subspace of $\mca$ of dimension $r$, called the $\theta$-rank of $\mca$. 
		
		A facet $F$ of $\mca$ is called \emph{$\theta$-stable} if $\theta(F)=F$. In particular, the set $F^{\theta}$ of $\theta$-fixed points of a $\theta$-stable facet $F$ is either a poly-cone or a poly-simplex in $\mca^{\theta}$, depending on $W$ is finite or not. Let $\Ft(\mca)$ be the set of $\theta$-stable facets in $\mca$. Then the set 
		$$\mcf(\mca^{\theta})=\{F^{\theta}\mid F\in\Ft(\mca)\}$$
		gives a partition of $\mca^{\theta}$, realizing it as a poly-conical or poly-simplicial complex. Those $F^{\theta}$ are called the facets of $\mca^{\theta}$. Indeed, the map
		$$\Ft(\mca)\rightarrow\mcf(\mca^{\theta}),\quad F\mapsto F^{\theta}$$
		is a bijection. In particular, $F^{\theta}$ is called a \emph{chamber} (resp. \emph{panel}) of $\mca^{\theta}$ if it is of dimension $r$ (resp. $r-1$). Indeed, $F^{\theta}$ is a chamber of $\mca^{\theta}$ if and only if $F$ is  a $\theta$-stable facet of maximal dimension in $\mca$. 
		
		We may define the corresponding set of walls $$\mfH_{\theta}=\{\mfh\cap\mca^{\theta}\mid \mfh\in\mfH,\ \mca^{\theta}\nsubseteq\mfh\}\index{$\mfH_{\theta}$}$$ in $\mca^{\theta}$. We say that a hyperplane $\mfh_{\theta}\in\mfH_{\theta}$\index{$\mfh_{\theta}$} is \emph{$\theta$-adapted}, if there exists a hyperplane $\mfh\in\mfH$ perpendicular to\footnote{Here, we call a hyperplane $\mfh$ perpendicular to an affine subspace $\mca'$ of $\mca$ if the normal vector of $\mfh\in\mfH$ is parallel to some non-zero vector in $\mca'$.} $\mca^{\theta}$ such that $\mfh_{\theta}=\mfh\cap\mca^{\theta}$. We notice that such $\mfh$ is unique once it exists. 
		
		For two facets $F^{\theta},F'^{\theta}$ of $\mca^{\theta}$, we define $\mfH_{\mca^{\theta}}(F^{\theta},F'^{\theta})$\index{$\mfH_{\mca^{\theta}}(F^{\theta},F'^{\theta})$} as the set of walls in $\mfH_{\theta}$ that strictly separate $F^{\theta}$ and $F'^{\theta}$.
		

		\begin{remark} 
			
			If by chance every hyperplane in $\mfH_{\theta}$ is $\theta$-adapted, 
			then the underlying complex structure of $\mca^{\theta}$ given by the set of facets $\mcf(\mca^{\theta})$ is realized by an Euclidean reflection group. But in general it is NOT true that $\mfH_{\theta}$ is stable under the reflections given by $s_{\mfh_{\theta}}$ with $\mfh_{\theta}\in\mfH_{\theta}$. In other words, in general $\mca^{\theta}$, as a poly-simplicial complex with the set of facets being $\mcf(\mca^{\theta})$, is NOT a complex realized by an Euclidean reflection group. 
			
		\end{remark}

		We define the \emph{$\theta$-distance} of a chamber $C$, denoted by $\tdist{C}$.  A chamber $C$ in $\mca$ is of \emph{$\theta$-distance 0}, if $C$ has a $\theta$-stable facet $F$ of maximal dimension, i.e. $F^{\theta}$ is a chamber of $\mca^{\theta}$. A chamber $C$ is of \emph{$\theta$-distance $d$}, if there exists a gallery $C_{0},C_{1},\dots,C_{d}$ of minimal length, such that $\tdist{C_{0}}=0$ and $C_{d}=C$. Equivalently, $\tdist{C}$ denotes the minimal possible distance between $C$ and some maximal $\theta$-stable facet $F$, or the distance between $C$ and $\mca^{\theta}$. 
		
		We have the following first description of a minimal gallery, whose proof is identical to that of \cite{courtes2017distinction}*{Corollary 5.6}:
		
		\begin{lemma}
			
			Let $C_{0},C_{1},\dots,C_{d}=C$ be a minimal gallery in $\mca$ between $\mca^{\theta}$ and $C$, then $\tdist{C_{i}}=i$ and for each panel $D_{i}$ between $C_{i-1}$ and $C_{i}$, the hyperplane $\mfh_{D_{i}}$ is not $\theta$-stable.
			
		\end{lemma}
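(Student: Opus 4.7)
The lemma has two parts: (i) $\tdist{C_i}=i$, and (ii) each wall $\mfh_{D_i}$ is not $\theta$-stable. The two parts are essentially independent, but both come from the minimality of the gallery.

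For part (i), the inequality $\tdist{C_i}\leq i$ is immediate from the truncated gallery $C_0,C_1,\dots,C_i$, which starts in $\mca^\theta$ and ends at $C_i$ with length $i$. For the reverse inequality, I would argue by contradiction: if $\tdist{C_i}=j<i$, choose a gallery $C_0',\dots,C_j'=C_i$ from $\mca^\theta$ to $C_i$ of length $j$ and concatenate it with the tail $C_i,C_{i+1},\dots,C_d$ of the original gallery. This yields a gallery from $\mca^\theta$ to $C$ of length $j+(d-i)<d$, contradicting the minimality of $C_0,\dots,C_d$.

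For part (ii), I would argue by contradiction and use a folding/reflection trick. Suppose $\mfh:=\mfh_{D_i}$ is $\theta$-stable for some $i$. Since $\theta$ stabilizes $\mfh$ as a set, the reflections $\theta$ and $s_\mfh$ commute, so $s_\mfh$ preserves $\mca^\theta$ setwise. Consequently $s_\mfh$ sends $\theta$-stable facets to $\theta$-stable facets (since $\theta(s_\mfh(F))=s_\mfh(\theta(F))=s_\mfh(F)$) and, because it preserves $\mca^\theta$, it maps maximal $\theta$-stable facets to maximal $\theta$-stable facets. Therefore $s_\mfh$ carries the set of chambers of $\theta$-distance $0$ to itself, and more generally preserves the $\theta$-distance function. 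In particular,
\[
\tdist{s_\mfh(C_d)}=\tdist{C_d}=d.
\]
On the other hand, I would construct an explicit gallery from $\mca^\theta$ to $s_\mfh(C_d)$ of length $d-1$ by applying $s_\mfh$ to the tail: since $s_\mfh(C_i)=C_{i-1}$ (because $C_{i-1}$ and $C_i$ lie on opposite sides of $\mfh$ sharing the panel $D_i\subset\mfh$), the concatenation
\[
C_0,\,C_1,\,\dots,\,C_{i-1},\,s_\mfh(C_{i+1}),\,\dots,\,s_\mfh(C_d)
\]
is a valid gallery (the pair $C_{i-1},s_\mfh(C_{i+1})$ shares the panel $s_\mfh(D_{i+1})$) of length $d-1$, starting in $\mca^\theta$. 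This gives $\tdist{s_\mfh(C_d)}\leq d-1$, contradicting the previous equality.

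The main obstacle I anticipate is purely notational: making sure that the reflection $s_\mfh$ is compatible with the simplicial structure (which is automatic from the defining property of $\mfH$ being stable under the reflections $s_\mfh$), and confirming that the commutation relation $s_\mfh\theta=\theta s_\mfh$ on an affine apartment really does imply that $s_\mfh$ preserves $\mca^\theta$ pointwise-up-to-set—this follows from $\theta$-stability of $\mfh$ by writing $s_\mfh$ in terms of the reflection across $\mfh$ and noting that the unique fixed-set decomposition is preserved. Once this is in place, the reflection-folding argument and the $\theta$-distance invariance close the proof quickly.
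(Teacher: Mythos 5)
Your argument is correct, and it is essentially the argument the paper has in mind: the paper prints no proof of this lemma, deferring to Court\`es (Corollary 5.6 of \emph{loc.\ cit.}), and the mechanism you use is exactly the one the paper develops immediately afterwards. Part (i) by truncation (for $\tdist{C_i}\leq i$) and concatenation (for the reverse inequality, contradicting minimality of $d=\tdist{C}$) is the standard exchange argument and is fine. Part (ii), folding the tail of the gallery across a $\theta$-stable wall $\mfh=\mfh_{D_i}$ to produce a gallery from $C_0$ to $s_{\mfh}(C)$ of length $d-1$, while $s_{\mfh}$ preserves the $\theta$-distance, is precisely the reflection argument underlying the paper's Corollary \ref{corthetaadapt'} and the first assertion of Proposition \ref{propdistchamber} (a $\theta$-stable wall crossing does not change $\tdist{\cdot}$), from which the lemma also follows since by part (i) the distance must increase at every step of a minimal gallery. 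The one point you rightly flag but justify only vaguely is the commutation $s_{\mfh}\theta=\theta s_{\mfh}$: this amounts to $\theta s_{\mfh}\theta^{-1}=s_{\theta(\mfh)}$, which holds because $\theta$ may be taken to be an isometry of $\mca$ (choose the inner product invariant under the finite group generated by the linear parts of $W$ and $\theta$); this is implicitly assumed throughout the paper's \S 6.1 (e.g.\ the normal-vector characterizations in Proposition \ref{propdistCAtheta} and the symmetry used in Corollary \ref{corthetaadapt'}), so it is not a genuine gap, but it is the hypothesis that makes "$\mfh$ $\theta$-stable $\Rightarrow s_{\mfh}(\mca^{\theta})=\mca^{\theta}$ and $s_{\mfh}$ preserves maximal $\theta$-stable facets" legitimate, and it covers uniformly both possible positions of a $\theta$-stable wall (perpendicular to $\mca^{\theta}$ or containing it).
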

		
		We also have the following practical way of describing a possible minimal gallery.
		
		\begin{proposition}\label{propdistCAtheta}
			
			We may find a point $\bs{p}$ in $C$ such that its perpendicular towards $\mca^{\theta}$ intersects a chamber $F^{\theta}$ on a point denoted by $\bs{p}'$, and does not pass any facet of $\mca$ of codimension greater than one other than $F$. Moreover, the gallery passing $L=[\bs{p},\bs{p}']$ is a minimal gallery from $C$ to $\mca^{\theta}$ and of distance $\tdist{C}$.
			
		\end{proposition}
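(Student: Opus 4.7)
The plan is to combine a genericity argument with a $\theta$-pairing of walls that forces the perpendicular segment to achieve the combinatorial minimum.

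\textbf{Step 1 (Genericity.)} Let $\pi\colon\mca\rightarrow\mca^{\theta}$ denote the orthogonal projection. The set of $\bs{p}\in C$ with $\pi(\bs{p})$ lying on some hyperplane in $\mfH_{\theta}$, together with the set of $\bs{p}\in C$ for which the perpendicular segment $L=[\bs{p},\pi(\bs{p})]$ meets a facet of $\mca$ of codimension $>1$ outside its endpoint on $\mca^{\theta}$, is a locally finite union of subsets of $C$ of positive codimension. Picking $\bs{p}$ in the open dense complement, $\bs{p}':=\pi(\bs{p})$ lies in the interior of a chamber $F^{\theta}$ of $\mca^{\theta}$, where $F\in\Ft(\mca)$ is the unique maximal $\theta$-stable facet with $\bs{p}'\in F^{\theta}$, and $L$ meets no facet of $\mca$ of codimension $>1$ other than $F$.

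\textbf{Step 2 (Gallery through $L$.)} Lemma \ref{lemmaFCdist} then asserts that the gallery passing through $L$ is a minimal gallery between $F$ and $C$, of length $|\mfH_{\mca}(F,C)|$.

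\textbf{Step 3 ($\theta$-pairing.)} To prove $|\mfH_{\mca}(F,C)|=\tdist{C}$, I will show $|\mfH_{\mca}(F,C)|\leq|\mfH_{\mca}(F',C)|$ for every maximal $\theta$-stable facet $F'$. By the same locally finite argument, choose a generic $\bs{q}'\in F'^{\theta}$ so that $L'=[\bs{p},\bs{q}']$ satisfies the hypothesis of Lemma \ref{lemmaFCdist} relative to $F'$; then the walls crossed by the interior of $L'$ constitute $\mfH_{\mca}(F',C)$. Place the origin at $\bs{p}'$, so that $\theta$ on the underlying vector space $V$ is the identity on the $\theta$-fixed subspace $V^{\theta}$ and $-\mrid$ on its orthogonal complement $V^{\theta,\perp}$. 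Write $\bs{p}=w\in V^{\theta,\perp}$, $\bs{q}'=v\in V^{\theta}$, and decompose $\vec{n}=\vec{n}^{+}+\vec{n}^{-}$ according to $V=V^{\theta}\oplus V^{\theta,\perp}$. For $\mfh=\{\bs{x}\mid\langle\bs{x},\vec{n}\rangle=c\}\in\mfH_{\mca}(F,C)$, a sign normalization of $\vec{n}$ gives $0<c<\langle w,\vec{n}^{-}\rangle$. The $\theta$-image $\theta(\mfh)$ has equation $\langle\bs{x},\vec{n}^{+}-\vec{n}^{-}\rangle=c$. A direct computation then yields: if $\mfh$ is $\theta$-stable and crosses $L$, necessarily $\vec{n}^{+}=0$ (otherwise $L$ is parallel to $\mfh$), and the crossing condition for $L'$ coincides with that for $L$; if $\mfh$ is not $\theta$-stable, $\theta(\mfh)$ does not cross $L$, and for generic $v$ exactly one of $\mfh,\theta(\mfh)$ crosses $L'$, according to the sign of $c-\langle v,\vec{n}^{+}\rangle$. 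Sending $\mfh$ to the unique element of $\{\mfh,\theta(\mfh)\}$ crossing $L'$ thus defines an injection $\mfH_{\mca}(F,C)\hookrightarrow\mfH_{\mca}(F',C)$ (injectivity uses that $\mfh$ crossing $L$ forces $\theta(\mfh)$ not to). Consequently $|\mfH_{\mca}(F,C)|\leq|\mfH_{\mca}(F',C)|$, and minimizing over $F'$ gives $|\mfH_{\mca}(F,C)|=\tdist{C}$.

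\textbf{Main obstacle.} The core of the argument is Step 3: setting up the $\theta$-pairing and verifying that the orthogonality of $L$ to $\mca^{\theta}$ forces each wall crossing $L$ to contribute exactly one wall crossing $L'$ via $\{\mfh,\theta(\mfh)\}$. Once the origin is placed at $\bs{p}'\in\mca^{\theta}$ the algebra is elementary, but one must keep track of the dichotomy between $\theta$-stable and non-$\theta$-stable walls (and in the former case the further split according to whether $\vec{n}$ lies in $V^{\theta}$ or in $V^{\theta,\perp}$) and confirm injectivity of the resulting pairing.
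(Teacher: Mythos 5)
Your argument is correct and follows essentially the same route as the paper: a generic choice of $\bs{p}$, Lemma \ref{lemmaFCdist}, and a comparison of the walls crossed by $L$ with those crossed by a segment from $\bs{p}$ to any other chamber of $\mca^{\theta}$, organized by pairing each wall with its $\theta$-image --- your sign computation is just a coordinatized form of the paper's trichotomy (parallel/perpendicular/oblique to $\mca^{\theta}$) together with its ``four domains'' argument. One cosmetic remark: your case of a $\theta$-stable wall crossing $L$ is in fact vacuous (a wall with $\vec{n}^{+}=0$ and $c\neq 0$ is not $\theta$-stable, and genuinely $\theta$-stable walls never meet the interior of $L$), but this is harmless, since your sign criterion in the non-$\theta$-stable case already handles those walls correctly.
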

		
		\begin{proof}
			
			The first claim is clear, since such $\bs{p}$ forms an open dense subset of $C$.
			
			Now we prove the second claim. Let $L'$ be another line segment linking $\bs{p}$ and a chamber $F'^{\theta}$ of $\mca^{\theta}$, such that $L'$ does not pass any facet of $\mca$ of codimension greater than one other than $F'$. 
			
			Let $\mfh$ be a wall in $\mca$. We consider the following three cases depending its relation with $\mca^{\theta}$. We say that $\mfh$
			\begin{itemize}
				\item is \emph{parallel to} $\mca^{\theta}$ if the normal vector of $\mfh$ is perpendicular to every vector in $\mca^{\theta}$;
				\item is \emph{perpendicular to} $\mca^{\theta}$ if the normal vector of $\mfh$ is parallel to some non-zero vector in $\mca^{\theta}$;
				\item \emph{obliquely intersects} $\mca^{\theta}$ if the above two cases does not happen, or equivalently, the normal vector of $\mfh$ is not parallel to its image under $\theta$.
			\end{itemize}
			If $\mfh$ obliquely intersects $\mca^{\theta}$, then $\mfh$ is not $\theta$-stable. In particular, every hyperplane is parallel to $\mca^{\theta}$ if it is a point.
			
			Now we consider a wall $\mfh$ that intersects $L$, such that $L\cap \mfh$ is a point. If $\mfh$ is parallel to $\mca^{\theta}$ and intersects $L$, then $\bs{p}$ and $\mca^{\theta}$ are separated by $\mfh$, meaning that $\mfh$ also intersects $L'$. If $\mfh$ is perpendicular to $\mca^{\theta}$, then $L$ cannot intersect $\mfh$. Finally if $\mfh$ obliquely intersects $\mca^{\theta}$, then the two hyperplanes $\mfh$ and $\theta(\mfh)$ divide $\mca$ into four open domains, where two of them contain points in $\mca^{\theta}$ and the other two do not. Assume $L$ intersects either $\mfh$ or $\theta(\mfh)$. Then it is clear that $\bs{p}$ lies in an open domain that do not contain points in $\mca^{\theta}$, and $L$ cannot intersect both $\mfh$ and $\theta(\mfh)$. But $F'^{\theta}$, as a chamber of $\mca^{\theta}$, must lie in another domain other than that of $\bs{p}$, so $L'$ intersects either $\mfh$ or $\theta(\mfh)$. 
			
			Combining these cases together and ranging over all the walls $\mfh$, we proved that the number of walls intersecting $L$ is no more than the number of walls intersecting $L'$. Using Lemma \ref{lemmaFCdist} we proved the second claim.
			
		\end{proof}
		
		\begin{corollary}\label{corthetaadapt}
			
			Let $F_{1}^{\theta}, F_{2}^{\theta}$ be two adjacent chambers in $\mca^{\theta}$ with $D_{\theta}$ being the panel between them. Let $\mfh$ be the unique hyperplane in $\mca$ containing $D_{\theta}$ and perpendicular to $\mca^{\theta}$. Then the followings are equivalent:
			\begin{itemize}
				
				\item $\mfh$ is not in $\mfH$;
				
				\item The hyperplane $\mfh_{\theta}$ in $\mca^{\theta}$ containing $D_{\theta}$ is not $\theta$-adapted;
				
				\item there exists a chamber $C$ in $\mca$ whose distance towards $F_{1}^{\theta}$ and $F_{2}^{\theta}$ are the same and equal to $\tdist{C}$. 
				
			\end{itemize}  
			
		\end{corollary}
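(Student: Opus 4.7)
\bigskip

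The equivalence $(1)\Leftrightarrow(2)$ is essentially tautological: by construction $\mfh$ is the unique hyperplane in $\mca$ perpendicular to $\mca^\theta$ and containing $D_\theta$, so $\mfh_\theta = \mfh \cap \mca^\theta$ is $\theta$-adapted exactly when $\mfh$ itself belongs to $\mfH$. The bulk of the work is therefore in relating $(1)$ to $(3)$, and the plan is to do this via a parity argument on the set
$$S := \{\mfh' \in \mfH \mid \mfh' \text{ strictly separates } F_1 \text{ from } F_2\}.$$
A first reduction identifies $S$ with the set of walls in $\mfH$ containing $\mfh_\theta$ but not $\mca^\theta$: any wall separating $F_1$ from $F_2$ must contain $\overline{D_\theta}$, hence its affine span $\mfh_\theta$; conversely, such a wall cuts $\mca^\theta$ exactly along $\mfh_\theta$, so it separates $F_1^\theta$ from $F_2^\theta$ and hence $F_1$ from $F_2$. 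For any chamber $C$, walls outside $S$ contribute equally to $d(C,F_1)$ and $d(C,F_2)$, so
$$d(C,F_1) - d(C,F_2) = \sum_{\mfh' \in S} \varepsilon_C(\mfh'),$$
where $\varepsilon_C(\mfh') = +1$ if $C$ lies on the $F_2$-side of $\mfh'$ and $-1$ otherwise.

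For $(3)\Rightarrow(1)$, I argue the contrapositive. Observe that $\theta$ acts on $S$, and a wall $\mfh' \in S$ is $\theta$-stable only when $\mfh' = \mfh$ (the alternative $\mfh' \supseteq \mca^\theta$ is excluded from $S$). Indeed, if $\mfh'$ is $\theta$-stable then its normal direction must lie in the $+1$- or $-1$-eigenspace of $D\theta$; since $\mfh' \supseteq \mfh_\theta$ this forces the normal to be along $\vec v$ (the direction normal to $\mfh_\theta$ inside $\mca^\theta$) giving $\mfh' = \mfh$, or along $V^\perp$ giving $\mca^\theta \subseteq \mfh'$. Therefore, when $\mfh \in \mfH$ the set $S$ consists of $\mfh$ together with $\theta$-pairs, so $|S|$ is odd; each $\varepsilon_C(\mfh')$ is $\pm 1$, so the sum is odd and in particular nonzero. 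This rules out $d(C,F_1) = d(C,F_2)$, and in particular $(3)$.

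For $(1)\Rightarrow(3)$, assume $\mfh \notin \mfH$, so $S$ is a disjoint union of $\theta$-pairs. Fix a generic $\bs p' \in D_\theta$ lying on no wall other than those containing $\mfh_\theta$. The walls through $\bs p'$ restrict, in the perpendicular plane at $\bs p'$ (spanned by $\vec v$ and $V^\perp$), to hyperplanes through the origin, none of which equals $V^\perp$; since finitely many proper subspaces cannot cover $V^\perp$, a generic point of $V^\perp$ near $\bs p'$ lies in the interior of a chamber $C$ of $\mca$, and the local slice of $C$ in the perpendicular plane straddles $\{x_r = 0\}$, i.e.\ contains points with both positive and negative $\vec v$-component. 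For each $\theta$-pair $\{\mfh', \theta(\mfh')\} \subseteq S$, the two walls are reflections of one another across the $\vec v$-axis in the perpendicular plane, and a straightforward check using the angular geometry shows that the straddling slice of $C$ lies on the $F_2$-side of one and on the $F_1$-side of the other; thus $\varepsilon_C(\mfh') + \varepsilon_C(\theta(\mfh')) = 0$ for every pair, and summing gives $d(C,F_1) = d(C,F_2)$. Applying Proposition \ref{propdistCAtheta} to a generic $\bs p \in C$ with positive $\vec v$-component produces a perpendicular whose foot lies in $F_1^\theta$, giving a minimal gallery of length $\tdist{C}$ ending at a chamber with $F_1$ in its closure, so $d(C,F_1) = \tdist{C}$; symmetrically $d(C,F_2) = \tdist{C}$.

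The main obstacle is the sign-cancellation claim for each $\theta$-pair in the last paragraph: one must verify carefully that any chamber whose perpendicular-plane slice straddles $V^\perp$ sits on opposite $F_i$-sides of the two walls in a $\theta$-pair. This is essentially a $2$-dimensional geometric lemma about a pair of lines through the origin mirrored across the horizontal axis and a connected region containing an open arc of the vertical axis in its interior; it follows by examining the four angular regions cut out by the two lines and noting that the region containing the vertical direction is the unique one meeting both half-planes $\{x_r > 0\}$ and $\{x_r < 0\}$, while the positive $\vec v$-axis sits in a different region (on the same side of both lines). All other technical points---the existence of a generic $\bs p'$, the choice of $\bs p$ ensuring Lemma \ref{lemmaFCdist} applies, and the identification of $S$---are routine.
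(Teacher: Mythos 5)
Your proof is correct, and it partly diverges from the paper's own argument. The equivalence of the first two items is handled the same way. For the implication $(1)\Rightarrow(3)$ you end up using exactly the paper's mechanism: a chamber $C$ containing a point of $\mfh$ lying over $\bs{p}'\in D_\theta$ (possible precisely because $\mfh\notin\mfH$), followed by Proposition \ref{propdistCAtheta} applied to points of $C$ slightly on either side of $\mfh$, whose perpendicular feet fall in $F_1^{\theta}$ and $F_2^{\theta}$; note that your pair-cancellation computation showing $d(C,F_1^{\theta})=d(C,F_2^{\theta})$ is then redundant, since both distances have already been identified with $\tdist{C}$. Where you genuinely differ is the converse direction: the paper argues that if $\mfh\in\mfH$ then any chamber lies strictly on one side of $\mfh$ and the distance to the nearer chamber is strictly smaller (by the wall count from Proposition \ref{propdistCAtheta}), whereas you observe that the set $S$ of walls separating $F_1^{\theta}$ from $F_2^{\theta}$ is finite (local finiteness at $\bs{p}'$), $\theta$-stable, and contains at most one $\theta$-fixed wall, namely $\mfh$; hence $\car{S}$ is odd when $\mfh\in\mfH$ and the signed sum $d(C,F_1^{\theta})-d(C,F_2^{\theta})=\sum_{\mfh'\in S}\varepsilon_C(\mfh')$ is odd, so nonzero for \emph{every} chamber --- a cleaner and slightly stronger parity argument, at the cost of first justifying the displayed identity (which is fine: a wall containing one of $F_1^{\theta},F_2^{\theta}$ contains its span $\mca^{\theta}$ and hence the other, and all remaining walls off $S$ treat the two chambers symmetrically). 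Two small remarks: the step you flag as the main obstacle has a one-line proof --- for a $\theta$-pair, the sum of the two affine functionals (normalized so that $F_2^{\theta}$ lies on the positive side of both) is a positive multiple of the coordinate along the normal of $\mfh_{\theta}$ inside $\mca^{\theta}$ measured from $\bs{p}'$, and this coordinate takes both signs on your straddling chamber, forcing the two constant signs on $C$ to be opposite; and your claim that no wall through $\bs{p}'$ restricts to $V^{\perp}$ is exactly where hypothesis $(1)$ enters (such a wall would contain $\mfh_{\theta}$ and the $(-1)$-eigenspace of $D\theta$, hence equal $\mfh$), so it deserves an explicit sentence. With these points made, your argument is complete and at the same level of rigor as the paper's.
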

		
		\begin{figure}[htbp]
			\begin{center}
				\tikzstyle{every node}=[scale=1]
				\begin{tikzpicture}[line width=0.4pt,scale=0.65][>=latex]
					\pgfmathsetmacro\ax{2}
					\pgfmathsetmacro\by{2 * sin(60)}
					\pgfmathsetmacro\cy{4 * sin(60)}
					
					\draw[ultra thick] (-2,0) -- (0,0);
					\draw[brown,ultra thick] (0,0) -- (2,0);
					\draw[red,-] (-4,0) -- (-2,0);
					\draw[red,-] (2,0) -- (4,0)node [right] {\(\mathcal{A}^{\theta}\)};
					\draw[blue,-] (0,\cy-2) -- (2,2*\cy-2);
					\draw[blue,-] (0,\cy-2) -- (-2,2*\cy-2);
					\draw[blue,-] (-2,2*\cy-2) -- (2,2*\cy-2);  
					\draw[brown,dashed] (0.7,0)node [below] {\(\bs{p}'_2\)} -- (0.7,4.5) node [right] {\(\bs{p}_2\)};  
					\draw[black,dashed] (-0.7,0)node [below] {\(\bs{p}'_1\)} -- (-0.7,4.5) node [left] {\(\bs{p}_1\)};       
					\draw[orange,-] (0,-0.5) -- (0,5.5);
					\node at (0.3,0.5) [text=orange] {\(\bs{p}'\)};
					\node at (0.3,3) [text=blue] {\(C\)};
					\node at (0:0) [text=orange] {\(\bullet\)};
					\node at (-0.7,4.5) [text=black] {\(\bullet\)};
					\node at (0.7,4.5) [text=brown] {\(\bullet\)};
					\node at (0.7,0) [text=brown] {\(\bullet\)};
					\node at (-0.7,0) [text=black] {\(\bullet\)};
					\node at (0,4) [text=orange] {\(\bullet\)};
					\node at (0.3,4.2) [text=orange] {\(\bs{p}\)};
					\node at (88:5.5) [text=orange] {\(\mfh\)};
					\node at (-1.5,-0.5)  {\(F_1^{\theta}\)};
					\node at (1.5,-0.5) [text=brown] {\(F_2^{\theta}\)};
				\end{tikzpicture}
			\end{center}
			\caption{Corollary \ref{corthetaadapt}, $\mathfrak{h}_{\theta}$ is not $\theta$-adapted}
		\end{figure}

		\begin{proof}
			
			The equivalence of the first two statements are direct. Now we prove the equivalence of the first and the third statements.
			
			First assume that $\mfh$ is not in $\mfH$. We pick a point $\bs{p}'$ in $D_{\theta}$ and another point $\bs{p}$ in $\mfh$, such that the line segment $[\bs{p},\bs{p}']$ is perpendicular to $\mca^{\theta}$. Since $\mfh$ is not in $\mfH$, by changing our choice of $\bs{p}'$ and $\bs{p}$, we may assume that $\bs{p}$ lies in a certain chamber $C$ of $\mca$. Also, $\mfh$ separates $C$ into two open parts, denoted by $C_{1}$ and $C_{2}$. Using Proposition \ref{propdistCAtheta}, we may find $\bs{p}_{1}\in C_{1}$ and $\bs{p}_{2}\in C_{2}$ that are sufficiently close to $\bs{p}$, such that the perpendicular of $\bs{p}_{i}$ towards $\mca^{\theta}$ intersects $\mca^{\theta}$ with a point $\bs{p}_{i}'$ that lies in $F_{i}^{\theta}$, and moreover the gallery passing $[\bs{p}_{i},\bs{p}_{i}']$ is a minimal gallery from $C$ to $\mca^{\theta}$ for $i=1,2$. As a result, the distance of $C$ towards $F_{1}^{\theta}$ and $F_{2}^{\theta}$ are the same.
			
			Conversely, if $\mfh$ lies in $\mfH$, then any chamber $C$ lies on the same side of $\mfh$ with either $F_{1}^{\theta}$ or $F_{2}^{\theta}$. Assuming the former, then the distance between $C$ and $F_{1}^{\theta}$ is less than the distance between $C$ and $F_{2}^{\theta}$. This could be seen by showing that the number of hyperplanes that separate $C$ and $F_{1}^{\theta}$ is less than the number of hyperplanes that separate $C$ and $F_{2}^{\theta}$ as in the proof of Proposition \ref{propdistCAtheta}. In particular, we used the fact that $\mfh$ separates $C$ and $F_{2}^{\theta}$ but does not separate $C$ and $F_{1}^{\theta}$.
			
		\end{proof}
		
		
		
		
		
		The following corollary is direct.
		
		\begin{corollary}\label{corthetaadapt'}
			
			Under the condition of Corollary \ref{corthetaadapt} and	assume that $\mfh$ is in $\mfH$. Let $D$ be a panel in $\mfh$ and $C_{1},C_{2}$ two chambers adjacent to $D$ in $\mca$ such that $C_{1}$ is on the same side of $\mfh$ as $F_{1}$. Then the reflection $s_{\mfh}$ maps $C_{1}$ to $C_{2}$ and $F_{1}$ to $F_{2}$, and we may take line segments $[\bs{p}_{i},\bs{p}_{i}']$ from $C_i$ to $F_{i}$ perpendicular to $\mca^{\theta}$ satisfying the condition of
			Proposition \ref{propdistCAtheta} for $i=1,2$, that are also symmetric with respect the reflection $s_{\mfh}$. As a result, $\tdist{C_{1}}=\tdist{C_{2}}$.
			
		\end{corollary}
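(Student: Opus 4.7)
The plan is to reduce everything to the statement that $s_{\mfh}$ commutes with $\theta$ and preserves $\mca^{\theta}$, and then transport the gallery from $C_1$ to $\mca^{\theta}$ over to $C_2$ by this reflection. First I would show that $\mfh$ is $\theta$-stable. Indeed, $\mfh$ contains the panel $D_{\theta} \subset \mca^{\theta}$, which is pointwise fixed by $\theta$; moreover, since $\mfh$ is perpendicular to $\mca^{\theta}$, its normal direction lies parallel to $\mca^{\theta}$, hence in the $+1$-eigenspace of the linear part of $\theta$ (the linear part of $\theta$ is the isometry that is $+1$ on directions parallel to $\mca^{\theta}$ and $-1$ on the orthogonal complement). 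Therefore $\theta(\mfh) = \mfh$, and consequently $\theta \circ s_{\mfh} \circ \theta = s_{\theta(\mfh)} = s_{\mfh}$. Furthermore, $s_{\mfh}$ stabilizes $\mca^{\theta}$ setwise and acts on it as the reflection along $\mfh_{\theta} = \mfh \cap \mca^{\theta}$, so it swaps the two adjacent chambers $F_1^{\theta}$ and $F_2^{\theta}$ of $\mca^{\theta}$.

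Next, since $s_{\mfh}$ commutes with $\theta$, it sends $\theta$-stable facets to $\theta$-stable facets, and $s_{\mfh}(F_1)^{\theta} = s_{\mfh}(F_1^{\theta}) = F_2^{\theta}$. By the bijection \eqref{thetainter} the maximal $\theta$-stable facet with $\theta$-fixed part $F_2^{\theta}$ is unique, forcing $s_{\mfh}(F_1) = F_2$. For the chambers, $D \subset \mfh$ is fixed pointwise by $s_{\mfh}$, so $s_{\mfh}$ permutes the two chambers adjacent to $D$; since $s_{\mfh}$ swaps the two open half-spaces cut out by $\mfh$ and $C_1$ lies on the same side as $F_1$, the image $s_{\mfh}(C_1)$ lies on the same side as $F_2$, hence $s_{\mfh}(C_1) = C_2$.

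Finally, for the symmetric line segments: choose $[\bs{p}_1, \bs{p}_1']$ from a point $\bs{p}_1 \in C_1$ to a point $\bs{p}_1' \in F_1^{\theta}$, perpendicular to $\mca^{\theta}$ and avoiding all codimension $\geq 2$ facets other than $F_1$, as in Proposition \ref{propdistCAtheta}, and define $[\bs{p}_2, \bs{p}_2'] := s_{\mfh}([\bs{p}_1, \bs{p}_1'])$. Because $s_{\mfh}$ is an isometry that preserves $\mca^{\theta}$, the image segment is still perpendicular to $\mca^{\theta}$; because $s_{\mfh}$ is simplicial, it preserves the avoidance condition; and by the previous step $\bs{p}_2 \in C_2$ and $\bs{p}_2' \in F_2^{\theta}$. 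Proposition \ref{propdistCAtheta} then computes $\tdist{C_i}$ as the length of the gallery crossed by $[\bs{p}_i, \bs{p}_i']$, and these two galleries are $s_{\mfh}$-images of one another, so they have equal length and $\tdist{C_1} = \tdist{C_2}$. The only point requiring care is the verification that $\mfh$ is $\theta$-stable, but as outlined this follows immediately from the perpendicularity assumption combined with $\theta$ being an isometry fixing $\mca^{\theta}$ pointwise; after that the argument is purely the transport of the construction of Proposition \ref{propdistCAtheta} under the isometry $s_{\mfh}$.
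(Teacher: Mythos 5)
Your argument is exactly the paper's intended one (the paper dismisses the corollary as ``direct''): show $\theta(\mfh)=\mfh$, deduce that $s_{\mfh}$ commutes with $\theta$, hence swaps $F_{1}\leftrightarrow F_{2}$ and $C_{1}\leftrightarrow C_{2}$, and transport a generic perpendicular segment by $s_{\mfh}$. The steps establishing $\theta$-stability of $\mfh$, the identity $s_{\mfh}(F_1)=F_2$ via the bijection $F\mapsto F^{\theta}$, and $s_{\mfh}(C_1)=C_2$ are all fine (modulo the harmless standing convention that $\theta$ acts isometrically on $\mca$, which the paper also uses tacitly).

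The one step you assert without justification is the existence of a segment $[\bs{p}_1,\bs{p}_1']$ with $\bs{p}_1\in C_1$ and $\bs{p}_1'\in F_1^{\theta}$: Proposition \ref{propdistCAtheta} only produces a generic perpendicular segment landing in \emph{some} chamber of $\mca^{\theta}$, not necessarily in $F_1^{\theta}$. What is needed is that the orthogonal projection of $C_1$ onto $\mca^{\theta}$ meets $F_1^{\theta}$. This holds in the configuration the paper has in mind (e.g.\ when the fibre of the projection over $D_{\theta}$ meets $\overline{D}$, as in the paper's figure, or automatically when $\mrdim\mca^{\theta}=1$), because points of $C_1$ close to such a fibre project near $D_{\theta}$ on the $F_1$-side of $\mfh$ and hence into $F_1^{\theta}$. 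But for an arbitrary panel $D\subset\mfh$ with $\mrdim\mca^{\theta}\geq 2$ the projection of $C_1$ can lie over a chamber of $\mca^{\theta}$ far from $F_1^{\theta}$, and then no such segment exists; so either record this extra geometric observation (or restriction on $D$), or note that the conclusion $\tdist{C_1}=\tdist{C_2}$ is unaffected: reflecting by $s_{\mfh}$ whatever segment Proposition \ref{propdistCAtheta} does provide for $C_1$ already yields an admissible segment for $C_2$ crossing a gallery of the same length, which is the part of the corollary used in Proposition \ref{propdistchamber}.
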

		\begin{figure}[htbp]
			\begin{center}
				\tikzstyle{every node}=[scale=1]
				\begin{tikzpicture}[line width=0.4pt,scale=1][>=latex]
					\pgfmathsetmacro\ax{cos(120)}
					\pgfmathsetmacro\ay{sin(120)}
					\pgfmathsetmacro\bx{2 * cos(60)}
					\pgfmathsetmacro\by{2 * sin(60)}
					\draw[ultra thick] (-\by,0) -- (0,0);
					\draw[brown,ultra thick] (0,0) -- (\by,0);
					\draw[orange] (0,-0.5) -- (0,1);
					\draw[orange] (0,3) -- (0,3.5);  
					\draw[red,-] (-4,0) -- (-\by,0);
					\draw[red,-] (\by,0) -- (4,0);
					\draw[orange,ultra thick] (0,1) -- (0,3);
					\draw[blue,-] (0,1) -- (0,3);
					\draw[blue,-] (0,1) -- (\by,2);
					\draw[blue,-] (0,1) -- (-\by,2);  
					\draw[blue,-] (0,3) -- (\by,2);
					\draw[blue,-] (0,3) -- (-\by,2);    
					\draw[brown,dashed] (\by-0.5,0) node {\(\bullet\)}-- (\by-0.5,2)node {\(\bullet\)};     
					\draw[black,dashed] (-\by+0.5,0) node {\(\bullet\)} -- (-\by+0.5,2) node {\(\bullet\)};  
					\node at (-10:\by-0.5) [text=brown] {\(\bs{p}'_2\)};
					\node at (-20:0.5*\by)  {\(F_2^{\theta}\)};
					\node at (0:4.5)[text=red] {\(\mca^{\theta}\)};
					\node at (-170:\by-0.5)  {\(\bs{p}'_1\)};
					\node at (-160:0.5*\by)  {\(F_1^{\theta}\)};
					\node at (\by-0.7,2.2)  [text=brown]{\(\bs{p}_2\)};
					\node at (-\by+1.1,2.2) [left] {\(\bs{p}_1\)};
					\node at (-70:\by-1)[text=orange] {\(\mathfrak{h}\)};
					\node at (110:2)[text=blue] {\(C_1\)};
					\node at (70:2)[text=blue] {\(C_2\)};
					\node at (85:2) [text=orange]{\(D\)};
				\end{tikzpicture}
			\end{center}
			\caption{Corollary \ref{corthetaadapt'}, $\mathfrak{h}_{\theta}$ is $\theta$-adapted}
		\end{figure}
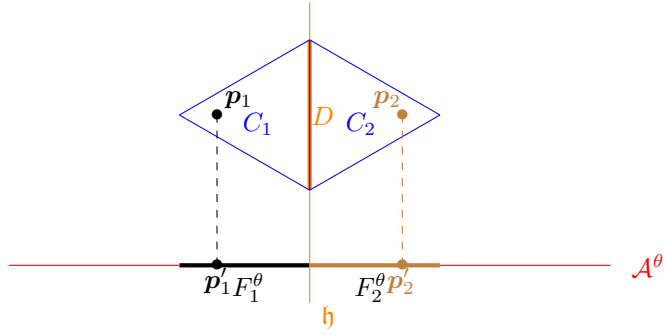
		
		\begin{remark}
			
			The discussion in this subsection could be used to the following two special cases: \begin{itemize}
				\item The vectorial apartment $\mca=\mca_v(\ul{G},\ul{S})$, where $\ul{G}$ is a reductive group over $\bs{k}$ and $\ul{S}$ is a $\theta$-stable maximal split torus of $\ul{G}$.
				\item The affine apartment  $\mca=\mca(G,S)$, where $G$ is a reductive group over $K$ and $S$ is a $\theta$-stable maximal split torus of $G$.
			\end{itemize} 
			
		\end{remark}

		\subsection{$\theta$-rank and $\theta$-distance of a chamber}
		
		Let $G$ be a reductive group over $K$, and $\theta$ an involution of $G$ over $K$, and $H=(G^{\theta})^{\circ}$. We define $\ch{}{}(G)$ the set of chambers in the Bruhat--Tits building of $G$.
		
		Let $G^{\mathrm{sc}}$ be the simply connected cover of the derived subgroup of $G$ endowed with the natural morphism $p:G^{\mathrm{sc}}\rightarrow G^{\mathrm{der}}\rightarrow G$. Using Lemma \ref{lemmascinvolutionext} we get an involution $\theta'$ of $G^{\mathrm{sc}}$ such that $\theta\circ p=p\circ\theta'$. Let $\Hstar=p((G^{\mathrm{sc}})^{\theta'}(K))$\index{$\Hstar$} be a subgroup of $G(K)$.
		
		Given $C\in\ch{}{}(G)$, there exists a unique $H'$-conjugacy class of $\theta$-stable apartment $\mca=\mca(G,S)$, such that $C$ is a chamber in $\mca$ (\emph{cf.} Proposition \ref{propAcontainC}). Here, $S$ is a maximal split torus of $G$ that is also $\theta$-stable. 
		
		Recall that we have defined the $\theta$-rank of $\mca$ and $C$ in Definition \ref{defthetarank}, which we denote by  $\trank{\mca}$\index{$\trank{\mca}$} and $\trank{C}$\index{$\trank{C}$}  respectively. 
		
		We also define the $\theta$-distance of a chamber $C$. We call a chamber $C$ of $\theta$-\emph{distance 0} if the unique maximal $\theta$-stable facet $F$ of $C$ satisfies that $F^{\theta}$ is a chamber of $\mca^{\theta}$. In general, the \emph{$\theta$-distance} of $C$, denoted by $\tdist{C}$\index{$\tdist{C}$}, is defined as the smallest number $d$ such that there exists a gallery $C_{0},C_{1},\dots,C_{d}$ in $\mca$ with $C_{0}$ of $\theta$-distance 0 and $C_{d}=C$. In other words, we define the $\theta$-distance of $C$ as its $\theta$-distance in any of its underlying $\theta$-stable apartment $\mca$. Still such definition is independent of the choice of $\mca$.
		
		We have the following basic proposition about the $\theta$-distance.
		
		\begin{proposition}\label{propdistchamber}
			
			Let $C\in\ch{}{}(G)$ such that $\tdist{C}=d$. Let $\mca$ be a $\theta$-stable apartment containing $C$. Let $C'$ be the other chamber adjacent to $C$, and $D$ the panel between $C$ and $C'$ in $\mca$, and $\mfh_{D}$ the hyperplane in $\mca$ containing $D$. 
			
			\begin{itemize}
				
				\item Then $\tdist{C}=\tdist{C'}$ if and only if $\mfh_{D}$ is $\theta$-stable.
				
				\item Moreover, if $\mfh_{D}$ is not $\theta$-stable and assume $\tdist{C}<\tdist{C'}$, then $\ch{D}{}(G)$ consists of two $\Hstar$-orbits, where the first one is the singleton $\{C\}$, and the second one consists of chambers in $\ch{D}{}(G)$ that are $\Hstar\cap \parah{C\cup\theta(C)}$-conjugate to $C'$.
				
			\end{itemize} 
			
		\end{proposition}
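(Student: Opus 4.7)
My plan addresses the two parts in sequence. For part (1), I first note $|\tdist{C} - \tdist{C'}| \leq 1$: any minimal gallery realizing $\tdist{C}$ extends by prepending $C'$ to a gallery from $C'$ to $\mca^\theta$ of length one greater, and symmetrically. When $\mfh_D$ is $\theta$-stable, the reflection $s_{\mfh_D}$ commutes with $\theta$ (since $\theta s_{\mfh_D} \theta^{-1} = s_{\theta(\mfh_D)} = s_{\mfh_D}$), swaps $C$ and $C'$, and preserves $\mca^\theta$, either setwise (when the normal to $\mfh_D$ lies in the $+1$-eigenspace of the linear part of $\theta$) or pointwise (when it lies in the $-1$-eigenspace, forcing $\mca^\theta \subset \mfh_D$). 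In both cases $s_{\mfh_D}$ is an isometry sending chambers of $\theta$-distance zero to chambers of $\theta$-distance zero, hence $\tdist{C} = \tdist{C'}$. Conversely, when $\mfh_D$ is not $\theta$-stable, I apply Proposition \ref{propdistCAtheta} with generic points $\bs{p} \in C$ and $\bs{q} \in C'$ close to $D$ on opposite sides of $\mfh_D$: the perpendicular direction to $\mca^\theta$ has a nonzero component transverse to $\mfh_D$ (otherwise $\mfh_D$ would be $\theta$-stable of one of the two types above), so a direct case analysis shows that exactly one of the perpendiculars from $\bs{p}, \bs{q}$ crosses $\mfh_D$, forcing $|\tdist{C} - \tdist{C'}| = 1$ and in particular inequality.

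For part (2), under the standing assumption $\tdist{C} < \tdist{C'}$, I treat $\{C\}$ and the rest separately. Since $\Hstar$ commutes with $\theta$, $\tdist$ is $\Hstar$-invariant, so once the transitivity claim below is established, every chamber in $\ch{D}{}(G) \setminus \{C\}$ has $\theta$-distance $\tdist{C'} = d + 1 \neq d$, and the singleton $\{C\}$ is automatically an $\Hstar$-orbit. For the transitivity of $\Hstar \cap \parah{C \cup \theta(C)} = \Hstar \cap \parah{C} \cap \parah{\theta(C)}$ on $\ch{D}{}(G) \setminus \{C\}$, I pass to the reductive quotient $\ul{G}_D$ via the bijection $\ch{D}{}(G) \leftrightarrow \ch{}{}(\ul{G}_D)$ of \eqref{bijection}. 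The group $\ul{G}_D$ is reductive of semisimple rank one, the action of $\parah{D}$ factors through the transitive action of $\ul{G}_D(\bs{k})$ on its Borel subgroups, and the image of $\parah{C} \cap \parah{\theta(C)}$ lies in the Borel $\ul{B}_C$ stabilizing the chamber for $C$. I then show that the image of $\Hstar \cap \parah{C \cup \theta(C)}$ inside $\ul{B}_C(\bs{k})$ contains enough unipotent elements, extracted from root subgroups in $\parah{C}$ transverse to $\mfh_D$, to act transitively on the affine complement $\ch{}{}(\ul{G}_D) \setminus \{\ul{B}_C\}$.

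The main obstacle is the transitivity step. Since $\mfh_D$ is not $\theta$-stable, the panel $D$ itself is not $\theta$-stable and $\ul{G}_D$ carries no induced involution, so the rank-one classifications of Section \ref{sectionrankone} cannot be invoked directly on $\ul{G}_D$. The resolution is to realize $\ul{G}_D$ inside a larger $\theta$-equivariant reductive quotient $\ul{G}_F$ attached to a suitable $\theta$-stable facet $F \subseteq \overline{C}$ (for instance, the maximal $\theta$-stable facet of $C$, whose $\theta$-fixed part lies in $\mca^\theta$), where the unipotent elements in $\ul{G}_D$ required for transitivity arise as $\theta$-fixed combinations of root subgroups inside $\ul{G}_F$, accessible through the detailed case-by-case analysis of Section \ref{sectionrankone}.
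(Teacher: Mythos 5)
Your treatment of part (1) in the $\theta$-stable direction is sound and essentially the paper's argument (the paper packages it as Corollary \ref{corthetaadapt'}): the reflection $s_{\mfh_D}$ commutes with $\theta$, preserves $\mca^\theta$ (setwise or pointwise depending on the eigenspace of the normal), swaps $C$ and $C'$, and therefore preserves $\theta$-distance. However, your treatment of the non-$\theta$-stable direction has a gap: you reason only with the single hyperplane $\mfh_D$, arguing that "exactly one of the perpendiculars from $\bs{p}, \bs{q}$ crosses $\mfh_D$." This fails when $\mfh_D$ obliquely intersects $\mca^\theta$: then $\mca^\theta$ has points on \emph{both} sides of $\mfh_D$, so neither $\bs{p}$ nor $\bs{q}$ is separated from $\mca^\theta$ by $\mfh_D$ alone, and the claim does not follow. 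The paper's proof uses the \emph{pair} of hyperplanes $\mfh_D$ and $\theta(\mfh_D)$: together they carve $\mca$ into three or four domains, and exactly one of $C,C'$ lies in a domain containing no $\mca^\theta$-points. That chamber's perpendicular to $\mca^\theta$ must then cross $D$ and pass through the other chamber, giving the strict inequality via Proposition \ref{propdistCAtheta}. Without bringing $\theta(\mfh_D)$ into the picture, the argument is incomplete.

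Part (2) has a more serious gap. You correctly identify the obstruction — $D$ is not $\theta$-stable, so $\ul G_D$ carries no involution and the rank-one classification of Section \ref{sectionrankone} is inapplicable — but the proposed workaround (embedding $\ul G_D$ in $\ul G_F$ for a $\theta$-stable $F \subseteq \ol C$ and extracting "$\theta$-fixed combinations of root subgroups") does not resolve it. The parabolic of $\ul G_F$ cut out by $D$ is not $\theta$-stable ($\theta$ sends it to the one cut out by $\theta(D)$), so $\theta$-fixed elements of $\ul G_F$ have no reason to land in $\ul U_C \subset \ul G_D$. The entire difficulty is that the required transitive action cannot be read off from a finite reductive quotient; it is produced by an inherently $p$-adic cohomological device. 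The paper (following Courtès) first constructs a unipotent element $u \in p(G^{\mrsc}(K))$ fixing $C$, $\theta(C)$ and carrying $C' \mapsto C''$, and then corrects the failure of $\theta(u)u$ to be $\theta$-fixed by noting that its defect is the commutator $[\theta(u),u] \in \parah{C'\cup\theta(C'),+}$, and solving $\theta(h')h'^{-1} = [\theta(u),u]$ in the pro-$p$-group $\parah{C'\cup\theta(C'),+}$; this uses the vanishing of $H^1(\pairangone{\theta}, \parah{C'\cup\theta(C'),+})$, which is precisely where $p\neq 2$ enters. The element $h = \theta(h'^{-1})\theta(u)u$ then lies in $\Hstar\cap \parah{C\cup\theta(C)}$ and sends $C'$ to $C''$. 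This cohomological correction step is absent from your proposal and cannot be replaced by a calculation in a finite reductive group, so the transitivity claim is not established.
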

  
       \begin{figure}[htbp]
		\begin{center}
			\tikzstyle{every node}=[scale=1]
			\begin{tikzpicture}[line width=0.4pt,scale=0.8][>=latex]
				\pgfmathsetmacro\ax{cos(120)}
				\pgfmathsetmacro\ay{sin(60)}
				\pgfmathsetmacro\by{2*sin(60)}
				
				\draw[red,-] (-4,0) -- (4,0);
				\draw[blue,-] (0,2) -- (0,4);
				\draw[blue,dashed] (0.5,0) -- (0.5,3); 
				\draw[blue,-] (0,2) -- (\by,1);
				\draw[orange,ultra thick] (0,2) -- (\by,3);
				\draw[orange,-] (\by,3) -- (\by*2,4);
				\draw[orange,-] (-\by,1) -- (0,2);
				\draw[blue,-] (\by,1) -- (\by,3);
				\draw[blue,-] (0,2) -- (\by,3);
				\draw[blue,-] (0,4) -- (\by,3);  
				\node at (0:4.5)[text=red] {\(\mca^{\theta}\)};
				\node at (75:3) {\(\bs{p}\)};
				\node at (75:3.4) [text=blue]{\(C'\)};
				\node at (70:2.3)[text=orange] {\(D\)};
				\node at (60:2.3)[text=blue] {\(C\)};
				\node at (45:5)[text=orange] {\(\mfh_D\)};
				\node at (0.5,3) {\(\bullet\)};
			\end{tikzpicture}
		\end{center}
    \caption{Proposition \ref{propdistchamber}}
       \end{figure}
  
		\begin{proof}

			If $\mfh_{D}$ is $\theta$-stable, meaning that $\mfh_{D}\cap \mca^{\theta}$ is a $\theta$-adapted wall of $\mca^{\theta}$, then using Corollary \ref{corthetaadapt'} $\tdist{C}=\tdist{C'}$.
			
			If $\mfh_{D}$ is not $\theta$-stable, we claim that 
			
			\begin{itemize}
				\item $\tdist{C}\neq\tdist{C'}$;
				\item Assume without loss of generality that $\tdist{C}<\tdist{C'}$, then $C$ and $\theta(C)$ are on the same side of both $\mfh_{D}$ and $\theta(\mfh_{D})$, and $C'$ and $\theta(C')$ are on the different sides of both $\mfh_{D}$ and $\theta(\mfh_{D})$.
			\end{itemize} 
			Indeed, if $\mfh_{D}$ is not $\theta$-stable, the two hyperplanes $\mfh_{D}$ and $\theta(\mfh_{D})$ separate the full $\mca$ into three or four open domains, depending on $\mfh_{D}$ and $\theta(\mfh_{D})$ intersect or not. Also, either $C$ or $C'$ is contained in a domain that does not contain points in $\mca^{\theta}$. Assume without loss of generality that this is so for $C'$, then $C'$ and $\theta(C')$ are on the different sides of $\mfh_{D}$ and $\theta(\mfh_{D})$; on the other hand, $C$ and $\theta(C)$ are on the same side of $\mfh_{D}$ and $\theta(\mfh_{D})$. Then, we may pick a point $\bs{p}$ in $C'$ and its perpendicular $L$ towards $\mca^{\theta}$, such that $L$ satisfies the condition of Proposition \ref{propdistCAtheta}, and $L$ also passes $D$ and $C$. Using Proposition \ref{propdistCAtheta}, this perpendicular $L$ gives a minimal gallery $C_{0},C_{1},\dots,C_{d},C_{d+1}=C'$ between $\mca^{\theta}$ and $C'$, and from our construction $C_{d}=C$. Thus $\tdist{C}=\tdist{C'}-1$. 
			
			The rest of the proposition follows from \cite{courtes2017distinction}*{Proposition 5.5}, whose proof could be modified here directly. We need to show that given two chambers $C',C''$ in $\ch{D}{}(G)$ different from $C$ with $\tdist{C}<\tdist{C'}$, there exists an element $h\in \Hstar\cap \parah{C\cup\theta(C)}$ mapping $C'$ to $C''$. Indeed, in \emph{loc. cit.} the author first constructed an element $u$ in the integral model of some rank one unipotent group that fixes $C$, $\theta(C)$ and maps $C'$ to $C''$. Moreover, the commutator $[\theta(u),u]$ is a pro-unipotent element that fixes $C'$ and $\theta(C')$. Since the first cohomology group $H^1(\pairangone{\theta},\parah{C'\cup\theta(C'),+})$ is trivial, there exists $h'$ in $\parah{C'\cup\theta(C'),+}$ such that $\theta(h')h'^{-1}=[\theta(u),u]$. Thus the element $h=\theta(h'^{-1})\theta(u)u$ is $\theta$-stable, fixing $C$ and $\theta(C)$ and mapping $C'$ to $C''$. Moreover, we notice that $u$ and $\theta(u)$ lie in $p(G^{\mathrm{sc}}(K))$, and thus $h'$ lies in $ p(G^{\mathrm{sc}}(K))\cap\parah{C'\cup\theta(C'),+}$ as well. So the element $h$ lies in $\Hstar\cap \parah{C\cup\theta(C)}$.
			
		\end{proof}
		
		\begin{remark}
			
			The above definitions and results can be in parallel generalized to the vectorial building of a finite reductive group. More precisely, let $\ul{G}$ be a reductive group over a finite field $\bs{k}$, and $\theta$ an involution of $\ul{G}$ over $\bs{k}$, and $\ul{H}=(\ul{G}^{\theta})^{\circ}$. Let $\ch{}{}(\ul{G})$ be the set of chambers in the vectorial building of $\ul{G}$. For $C\in\ch{}{}(\ul{G})$, by taking a $\theta$-stable apartment containing $C$, we may define $\trank{C}$ and $\tdist{C}$ as above, and Proposition \ref{propdistchamber} remains valid with a similar proof.
			
		\end{remark}
		
		\subsection{Classification of panel pairs}\label{subsectionpanelpair}
		
		We keep the notation as above. Let $\mca=\mca(G,S)$ be a $\theta$-stable apartment and $D$ a panel contained in $\mca$. Then we call $(\mca,D)$ a \emph{panel pair}. Our goal here is to classify such panel pairs.
		
		We define the group schemes $\mcg_{D}$ and $\mcg_{\theta(D)}$ over $\mfo_{K}$ and reductive groups $\ul{G}_{D}$ and $\ul{G}_{\theta(D)}$ over $\bs{k}$. Let $\mcs$ be the maximal split subtorus of $\mcg_{D}$, whose generic fiber is $S$ and whose special fiber, denoted by $\ul{S}$, is a maximal split torus of $\ul{G}_{D}$. In particular, since $D$ is of codimension one, $\ul{G}_{D}$ is of semi-simple rank one over $\bs{k}$.
		
		Let $\Psi_D(G,S)$ denote the subset of affine roots $\Psi(G,S)$ that vanishes on $D$ and $\Phi_D(G,S)$ the subset of $\Phi(G,S)$ as derivatives of affine roots in $\Psi_D(G,S)$.  Then, the set of roots $\Phi(\ul{G}_{D},\ul{S})$ of $\ul{G}_{D}$ could be identified with $\Phi_D(G,S)$ (\emph{cf.} Proposition \ref{proppanelDroot}). 
		
		The simply connected cover of the derived subgroup of $\ul{G}_{D}$ for each $D$ in $\mfh_{D}$ is 
		\begin{equation}\label{eqGalpha}\ul{G}_{D}^{\mathrm{sc}}\cong\begin{cases}
				\mrres_{\bs{l}_{D}/\bs{k}}\mrsl_{2},\quad&\text{if}\ \Phi_D(G,S)\ \text{is reduced};\\
				\mrres_{\bs{l}_{D}/\bs{k}}\mrsu_{3},\quad&\text{if}\  \Phi_D(G,S)\ \text{is not reduced},
		\end{cases}\end{equation} 
		where $\bs{l}_{D}/\bs{k}$ is a finite extension. 
		Let $q_{D}$\index{$q_{D},q_{\mfh}$} be the cardinality of $\bs{l}_{D}$\index{$\bs{l}_{D},\bs{l}_{\mfh}$}. Let $Q_{D}=q_{D}$\index{$Q_{D},Q_{\mfh}$} if $\Phi_D(G,S)$ is reduced, and $Q_{D}=q_{D}^{3}$ otherwise. Since $\ch{D}{}(G)$ is in bijection with $\ch{}{}(\ul{G}_{D})$, it is of cardinality $Q_{D}+1$. 
		
		Let $\mfh=\mfh_{D}$\index{$\mfh_{D}$} be the hyperplane in $\mca$ that contains $D$. By definition, $\Psi_D(G,S)$, $\Phi_D(G,S)$, $\ul{G}_D$ and $\ul{G}_D^{\mathrm{sc}}$ depend only on $\mfh$.
		In particular, $\bs{l}_D$, $q_D$ and $Q_{D}$ depend only on $\mfh$, which we also write as $\bs{l}_{\mfh}$, $q_{\mfh}$ and $Q_{\mfh}$ respectively.  
		
		We first discuss the case where $\mfh_{D}$ is not $\theta$-stable. The following proposition follows exactly from Proposition \ref{propdistchamber}.
		
		\begin{proposition}\label{propskewpair}
			
			Assume that $\mfh_{D}$ is not $\theta$-stable. Then there are exactly two $\Hstar$-orbits of $\ch{D}{}(G)$ of $\theta$-distance $d$ and $d+1$ and of cardinality $1$ and $Q_{D}$ respectively for some $d\geq 0$. Moreover, the chambers in the second orbit are $\Hstar\cap P_{C}$-conjugate to each other, where $C$ is the unique chamber in the first $\Hstar$-orbit.
			
		\end{proposition}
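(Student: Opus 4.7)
The proof is essentially a direct application of Proposition \ref{propdistchamber} together with the cardinality count $|\ch{D}{}(G)| = Q_D + 1$, so my plan is to simply trace through the consequences.

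First I would observe that since any two distinct chambers in $\ch{D}{}(G)$ are adjacent along $D$ (with $\mfh_D$ being the hyperplane containing the shared panel $D$), Proposition \ref{propdistchamber} applies to any pair $(C_1, C_2)$ of chambers in $\ch{D}{}(G)$. Because $\mfh_D$ is by hypothesis not $\theta$-stable, the first bullet of Proposition \ref{propdistchamber} gives $\tdist{C_1} \neq \tdist{C_2}$. The argument in the proof of Proposition \ref{propdistchamber} (using $\mfh_D$ and $\theta(\mfh_D)$) in fact shows that exactly one of the two sides of $\mfh_D$ (the one containing $\mca^\theta$) accommodates the chamber of smaller $\theta$-distance. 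Hence among all chambers in $\ch{D}{}(G)$, there is a unique one — call it $C$ — with minimal $\theta$-distance $d := \tdist{C}$, and every other $C' \in \ch{D}{}(G)$ satisfies $\tdist{C'} = d+1$.

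Next I would apply the second bullet of Proposition \ref{propdistchamber} to any such $C'$: the set $\ch{D}{}(G)$ consists of the singleton $\Hstar$-orbit $\{C\}$ together with a second $\Hstar$-orbit, and within this second orbit all chambers are $\Hstar \cap \parah{C \cup \theta(C)}$-conjugate to $C'$. Since $\parah{C \cup \theta(C)} \subset \parah{C}$, this yields the weaker statement that they are $\Hstar \cap \parah{C}$-conjugate, which is the form required by the proposition. This also confirms that there are exactly two $\Hstar$-orbits, of $\theta$-distances $d$ and $d+1$ respectively.

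Finally, to obtain the cardinalities, I would invoke the bijection $\ch{D}{}(G) \leftrightarrow \ch{}{}(\ul{G}_D)$ from \eqref{bijection} together with the rank-one description \eqref{eqGalpha} of $\ul{G}_D^{\mathrm{sc}}$ recalled in \S \ref{subsectionpanelpair}, which gives $|\ch{D}{}(G)| = Q_D + 1$. Subtracting the singleton orbit leaves the second orbit with cardinality $Q_D$, completing the proof. There is no essential obstacle here — every ingredient (the dichotomy, the conjugation, the cardinality) has already been isolated in Proposition \ref{propdistchamber} and in the preliminary discussion of $\ul{G}_D$ — so the task is really just to assemble these pieces.
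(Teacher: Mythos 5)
Your proof is correct and follows the same route as the paper, which simply observes that the statement follows from Proposition \ref{propdistchamber} together with the count $\car{\ch{D}{}(G)}=Q_D+1$ established in \S\ref{subsectionpanelpair}. One small caution: the dichotomy of Proposition \ref{propdistchamber} should be applied to the two chambers adjacent to $D$ inside the given $\theta$-stable apartment $\mca$ (an arbitrary pair in $\ch{D}{}(G)$ need not lie in a common $\theta$-stable apartment), but this costs nothing since the second bullet already gives the full two-orbit decomposition of $\ch{D}{}(G)$, and $\Hstar$ preserves $\theta$-distance.
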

		
		Now we assume that $\mfh_{D}$ is $\theta$-stable, which in particular means that both $D$ and $\theta(D)$ are panels in $\mca$. We consider the $\theta$-stable bounded set $D\cup \theta(D)$ in $\mca$, which is in particular contained in $\mfh_{D}$. We also define the group scheme $\mcg_{D\cup \theta(D)}$  over $\mfo_{K}$, and the corresponding reductive group $\ul{G}_{D\cup\theta(D)}$ over the finite field $\bs{k}$, whose semi-simple rank is 1. Since $D\cup\theta(D)$ is $\theta$-stable, $\theta$ induces an involution on $\ul{G}_{D\cup\theta(D)}$ over $\bs{k}$, which is still denoted by $\theta$ by abuse of notation.
		
		Recall that the group $\ul{G}_{D\cup\theta(D)}$ is independent of $D$, but only depends on $\mfh_{D}$. In particular, we have two homomorphisms $\rho_{D}$ and $\rho_{\theta(D)}$
		$$\xymatrix{ \mcg_{D}  & \mcg_{D\cup\theta(D)} \ar[l]_-{\rho_{D}} \ar[r]^-{\rho_{\theta(D)}} & \mcg_{\theta(D)} }$$ induced from the inclusions $D\subset D\cup\theta(D)$ and $\theta(D)\subset D\cup\theta(D)$, which induces isomorphisms:
		$$\xymatrix{ \ul{G}_{D}  & \ul{G}_{D\cup\theta(D)} \ar[l]_-{\rho_{D}} \ar[r]^-{\rho_{\theta(D)}} & \ul{G}_{\theta(D)} }.$$
		We use these maps to regard $\mcs$ as a maximal split torus of $\mcg_{D}$, $\mcg_{\theta(D)}$ and $\mcg_{D\cup \theta(D)}$  over $\mfo_{K}$, and $\ul{S}$ as a maximal split torus of $\ul{G}_{D}$, $\ul{G}_{\theta(D)}$ and $\ul{G}_{D\cup\theta(D)}$ over $\bs{k}$.
		
		We denote by $\ch{D}{}(G)$ (resp. $\ch{\theta(D)}{}(G)$) the set of chambers in $\mcb(G)$ having $D$ (resp. $\theta(D)$) as a panel, and by $\ch{}{}(\ul{G}_{D})$ (resp. $\ch{}{}(\ul{G}_{\theta(D)})$) the set of Borel subgroups of $\ul{G}_{D}$ (resp. $\ul{G}_{\theta(D)}$). Then we have the induced diagram of bijections:
		\begin{equation}\label{eqChGDthetaD}
			\begin{aligned}
				\xymatrix{ \ch{}{}(\ul{G}_{D}) \ar[d]_-{}  & \ar[l]_-{} \ar[ld]^-{} \ch{}{}(\ul{G}_{D\cup\theta(D)}) \ar[r]^-{} \ar[rd]_-{} & \ch{}{}(\ul{G}_{\theta(D)}) \ar[d]^-{} \\
					\ch{D}{}(G)& & \ch{\theta(D)}{}(G)}
			\end{aligned},
		\end{equation}
		where the two horizontal isomorphisms are induced by $\rho_{D}$ and $\rho_{\theta(D)}$, and the two vertical isomorphisms are induced by taking the generic fiber and the maximal reductive quotient of the special fiber of the group scheme $\mcg_{D}$ and $\mcg_{\theta(D)}$ as in \eqref{bijection}.
		

		The simply connected covers of the derived subgroups of $\ul{G}_{D}$, $\ul{G}_{\theta(D)}$ and $\ul{G}_{D\cup\theta(D)}$ are isomorphic to the group in \eqref{eqGalpha}. Let $p_{D}:\ul{G}_{D}^{\mathrm{sc}}\rightarrow\ul{G}_{D}$ be the natural morphism. So $\ul{G}_{D}^{\mathrm{sc}}\cong \ul{G}_{D\cup\theta(D)}^{\mathrm{sc}}$ is endowed with an involution $\theta_{D}$ from the involution $\theta$ on $\ul{G}_{D\cup\theta(D)}$ via Lemma \ref{lemmascinvolutionext}. Let $\ul{S}_{D}=p_{D}^{-1}(\ul{S})$ be defined as a rank one $\theta_{D}$-stable split torus in $\ul{G}_{D}^{\mathrm{sc}}$.
		
		Let $\ul{H}_{D\cup \theta(D)}=(\ul{G}_{D\cup \theta(D)}^{\theta})^{\circ}$. Then the $\ul{H}_{D\cup \theta(D)}$-orbits in $\ch{}{}(\ul{G}_{D\cup \theta(D)})$ are classified in Proposition \ref{propgeneralclass}. Using \eqref{eqChGDthetaD}, the $\parah{D\cup\theta(D)}{}\cap H$-orbits of $\ch{D}{}(G)$ and $\ch{\theta(D)}{}(G)$ can be classified accordingly, saying that each $\parah{D\cup\theta(D)}{}\cap H$-orbit of $\ch{D}{}(G)$ or $\ch{\theta(D)}{}(G)$ corresponds bijectively to a $\ul{H}_{D\cup \theta(D)}$-orbit of $\ch{}{}(\ul{G}_{D\cup \theta(D)})$ . 
		
		In general, the $H$-orbits in $\ch{D}{}(G)$ and $\ch{\theta(D)}{}(G)$ are also classified, where each $H$-orbit in $\ch{D}{}(G)$ (resp. $\ch{\theta(D)}{}(G)$) corresponds to the union of several $\ul{H}_{D\cup \theta(D)}$-orbits in $\ch{}{}(\ul{G}_{D\cup \theta(D)})$. 
		
		This correspondence in \eqref{eqChGDthetaD} maintains the $\theta$-rank. More precisely, we have the following lemma.
		
		\begin{lemma}\label{lemmathetarankconst}
			
			The difference between the $\theta$-rank of any chamber $C\in\ch{D}{}(G)$ and the $\theta$-rank of the corresponding chamber $\ul{C}$ in $\ch{}{}(\ul{G}_{D\cup \theta(D)})$ is a constant independent of $C$.
			
		\end{lemma}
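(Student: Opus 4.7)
The plan is to compute both $\theta$-ranks in terms of $\theta$-invariants of cocharacter lattices and to see that all the $C$-dependent contributions match, leaving only a constant depending on $G$, $\theta$, and the hyperplane $\mfh_D$.

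First, given $C\in\ch{D}{}(G)$, I would invoke Proposition \ref{propAcontainC} to pick a $\theta$-stable apartment $\mca=\mca(G,S)$ containing $C$; since $\mca$ is $\theta$-stable, it contains $D\cup\theta(D)$, and the associated maximal $K$-split torus $S$ is $\theta$-stable. The canonical $\mfo_K$-split integral torus $\mcs\subset\mcg_{D\cup\theta(D)}$ extending $S$ is then $\theta$-stable by functoriality (both $S$ and the bounded subset $D\cup\theta(D)$ are $\theta$-stable), and its special fiber $\underline{S}$ is a $\theta$-stable maximal $\bs{k}$-split torus of $\underline{G}_{D\cup\theta(D)}$. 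The smooth $\mfo_K$-torus $\mcs$ provides a canonical $\theta$-equivariant identification $X_*(S)\cong X_*(\underline{S})$. Under the bijection in \eqref{eqChGDthetaD}, the chamber $\underline{C}$ corresponding to $C$ sits in the vectorial apartment $\mca_v(\underline{G}_{D\cup\theta(D)},\underline{S})$, because the Borel subgroup $\underline{B}$ obtained from $\parah{C}$ contains the reduction of $\mcs(\mfo_K)$, namely $\underline{S}$.

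Next I would compute both $\theta$-ranks via Definition \ref{defthetarank}:
\[r_\theta(C)=\dim_\mbr\bigl(X_*(S/A(G))\otimes_\mbz\mbr\bigr)^\theta,\qquad r_\theta(\underline{C})=\dim_\mbr\bigl(X_*(\underline{S}/A(\underline{G}_{D\cup\theta(D)}))\otimes_\mbz\mbr\bigr)^\theta.\]
Tensoring the short exact sequences $0\to X_*(A(G))\to X_*(S)\to X_*(S/A(G))\to 0$ and its analog for $\underline{G}_{D\cup\theta(D)}$ with $\mbr$ and taking $\pairangone{\theta}$-invariants (exact because $|\pairangone{\theta}|=2$ is invertible in $\mbr$) gives
\[r_\theta(C)=\dim(X_*(S)\otimes\mbr)^\theta-\dim(X_*(A(G))\otimes\mbr)^\theta,\]
\[r_\theta(\underline{C})=\dim(X_*(\underline{S})\otimes\mbr)^\theta-\dim(X_*(A(\underline{G}_{D\cup\theta(D)}))\otimes\mbr)^\theta.\]
By the $\theta$-equivariant identification $X_*(S)\cong X_*(\underline{S})$, the first terms in the two displays coincide, so
\[r_\theta(C)-r_\theta(\underline{C})=\dim(X_*(A(\underline{G}_{D\cup\theta(D)}))\otimes\mbr)^\theta-\dim(X_*(A(G))\otimes\mbr)^\theta.\]
The split central tori $A(G)$ and $A(\underline{G}_{D\cup\theta(D)})$ are $\theta$-stable canonical subgroups, intrinsic to $G$ and to $\underline{G}_{D\cup\theta(D)}$ (which itself depends only on $\mfh=\mfh_D$); the dimensions of their $\theta$-invariant cocharacter spaces do not depend on the choice of $S$ nor on $C$. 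This yields the desired constancy.

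The main subtlety I would need to pin down carefully is the $\theta$-equivariance of $X_*(S)\cong X_*(\underline{S})$; that is, that the involution $\theta$ on $G$ extends to an involution of $\mcg_{D\cup\theta(D)}$ over $\mfo_K$ which stabilizes the chosen integral torus $\mcs$. This is a direct consequence of the functoriality of the Bruhat--Tits integral model with respect to $\theta$-stable bounded subsets, together with the uniqueness of the maximal $\mfo_K$-split subtorus of $\mcg_{D\cup\theta(D)}$ with generic fiber $S$. Once this equivariance is checked, the rest of the argument is a straightforward dimension count.
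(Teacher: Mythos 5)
Your proof is correct and follows essentially the same route as the paper: both pass through a $\theta$-stable maximal split integral torus of $\mcg_{D\cup\theta(D)}$ whose generic and special fibers give $\theta$-stable apartments containing $C$ and $\ul{C}$, identify their $\theta$-ranks, and reduce the difference to the (constant) central contributions of $G$ and $\ul{G}_{D\cup\theta(D)}$. Your version merely spells out the cocharacter-lattice bookkeeping (exactness of taking $\theta$-invariants) that the paper leaves implicit.
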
 
		
		\begin{proof}
			
			Let $\mcs'$ be a maximal split torus in $\mcg_{D\cup\theta(D)}$ that is $\theta$-stable, and $S'$ its generic fiber in $G$ and $\ul{S}'$ its special fiber in $\ul{G}_{D\cup\theta(D)}$, such that the apartments $\mca(G,S')$ and $\mca_v(\ul{G}_{D\cup\theta(D)},\ul{S}')$ contain $C$ and $\ul{C}$ respectively. Notice that the $\theta$-rank of $S'$ equals the $\theta$-rank of $\ul{S}'$. Then the difference between the $\theta$-rank of $C$ and $\ul{C}$ equals the difference between the $\theta$-rank of the center $Z(\ul{G}_{D\cup\theta(D)})$ and the $\theta$-rank of the center $Z(G)$. This difference is a constant independent of the choice of $C$ and $S'$.
			
			
		\end{proof}
		
		Now we are able to propose the following important definition.
		
		\begin{definition}\label{defclasspanelpair}
			
			A \emph{panel pair} $(\mca,D)$ consists of a panel $D$ and a $\theta$-stable apartment $\mca=\mca(G,S)$ containing $D$. Let $r$ be the $\theta$-rank of $\mca$. We call $(\mca,D)$
			\begin{itemize}
				\item a \emph{skew} panel pair if $\mfh_{D}$ is not $\theta$-stable;
			\end{itemize}
			And if $\mfh_{D}$ is $\theta$-stable, we call $(\mca,D)$
			\begin{itemize}
				\item a \emph{trivial} panel pair if  there exists only one $H$-orbit in $\ch{D}{}(G)$, which is of $\theta$-rank r;
				\item an \emph{upper} panel pair if $\ul{S}_{D}$ is $\theta_{D}$-split (i.e. of $\theta_{D}$-rank 0) and $\ch{D}{}(G)$ consists of $H$-orbits of either $\theta$-rank $r$ or $r+1$. 
				\item an \emph{even} panel pair if $\ul{S}_{D}$ is $\theta_{D}$-split (i.e. of $\theta_{D}$-rank 0) and $\ch{D}{}(G)$ consists of two $H$-orbits of $\theta$-rank $r$.
				\item a \emph{lower} panel pair if $\ul{S}_{D}$ is $\theta_{D}$-invariant (i.e. of $\theta_{D}$-rank 1) and $\ch{D}{}(G)$ consists of $H$-orbits of either $\theta$-rank $r$ or $r-1$. 
				
			\end{itemize}

			
		\end{definition}

		The following proposition follows directly from Proposition \ref{propgeneralclass}, Proposition \ref{propskewpair}, Lemma \ref{lemmathetarankconst} and the above discussion.
		
		\begin{proposition}\label{proppanelpair}
			Fix a panel $D$.
			\begin{enumerate}
				\item Let $\mca$ be a $\theta$-stable apartment containing $D$. Then the panel pair $(\mca,D)$ is of one of the types in Definition \ref{defclasspanelpair}.
				\item Consider all the panel pairs $(\mca,D)$ with fixed $D$, then they are 
				\begin{itemize}
					\item either skew panel pairs;
					\item or trivial panel pairs;
					\item or even panel pairs;
					\item or upper or lower pairs.
				\end{itemize}
				\item There are at most two $\Hstar$-orbits of $\ch{D}{}(G)$ of $\theta$-rank $r$ for any non-skew panel pair $(\mca,D)$, where $r=\dim\mca^\theta$. In the upper (resp. lower) case, there are at most two $\Hstar$-orbits of $\ch{D}{}(G)$ of $\theta$-rank $r+1$ (resp. $r-1$).
				\item If two different $H$-orbits (resp. $\Hstar$-orbits) are of the same $\theta$-rank, then they are of the same cardinality.
			\end{enumerate}
		\end{proposition}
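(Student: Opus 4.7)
The plan is to reduce everything to the rank-one classification in Proposition \ref{propgeneralclass} via the isomorphism diagram \eqref{eqChGDthetaD}, and then translate between $\theta_D$-ranks on the rank-one side and $\theta$-ranks on the $G$-side using Lemma \ref{lemmathetarankconst}.

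\textbf{Part (1).} Fix a panel pair $(\mca,D)$. If $\mfh_D$ is not $\theta$-stable, then $(\mca,D)$ is skew and we invoke Proposition \ref{propskewpair} directly. Otherwise both $D$ and $\theta(D)$ lie in $\mfh_D$, and the diagram \eqref{eqChGDthetaD} identifies $\ch{D}{}(G)$ with $\ch{}{}(\ul{G}_{D\cup\theta(D)})$ equivariantly with respect to the $\parah{D\cup\theta(D)}\cap H$-action (resp.\ $\parah{D\cup\theta(D)}\cap \Hstar$-action) and the $\ul{H}_{D\cup\theta(D)}$-action (resp.\ $\ul{\Hbarstar}_{D\cup\theta(D)}$-action). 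Lifting to $\ul{G}_D^{\mathrm{sc}}$ with the induced involution $\theta_D$, one applies Proposition \ref{propgeneralclass} to $\ul{M}_\alpha = \ul{G}_{D\cup\theta(D)}$. By Lemma \ref{lemmathetarankconst}, there is a constant $c$ such that every $C\in\ch{D}{}(G)$ has $\theta$-rank equal to $c$ plus the $\theta_D$-rank of its corresponding Borel. The chamber $C$ itself lies in $\mca$, whose $\theta$-rank is $r$; this pins down $c = r - (\theta_D\text{-rank of }\ul{S}_D)$, so that chambers with $\theta_D$-rank $0$ (resp.\ $1$) Borel correspond to $G$-chambers of $\theta$-rank $r$ (resp.\ $r{+}1$) when $\ul{S}_D$ is $\theta_D$-split, and of $\theta$-rank $r{-}1$ (resp.\ $r$) when $\ul{S}_D$ is $\theta_D$-invariant. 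Running through the finitely many sub-cases of Proposition \ref{propgeneralclass} then shows that each realized configuration matches exactly one of trivial, upper, even, or lower in Definition \ref{defclasspanelpair}.

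\textbf{Part (2).} Since $\mfh_D$ depends only on $D$, being skew is an intrinsic property of $D$. Assume $\mfh_D$ is $\theta$-stable; then both $\ul{G}_{D\cup\theta(D)}$ and the involution $\theta$ on it depend only on $D$, hence so does the case in Proposition \ref{propgeneralclass} governing which Borel-orbit pattern can occur. Varying the apartment $\mca\supset D$ amounts to varying the $\theta$-stable maximal $K$-split torus $S$ of $G$ meeting $\mcg_D$, and in turn varying the rank-one torus $\ul{S}_D\subset \ul{G}_D^{\mathrm{sc}}$; the latter can only switch between being $\theta_D$-invariant and $\theta_D$-split, which is exactly the switch between lower and upper/even. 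This forces the list in part (2): either trivial for all $\mca$, or even for all $\mca$, or the pair appears as (upper, lower) according to the choice of $\mca$.

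\textbf{Parts (3) and (4).} These are now read off directly from Proposition \ref{propgeneralclass}: in every one of its sub-cases, for each value of $\theta_D$-rank at most two $\ul{H}_{D\cup\theta(D)}$-orbits of Borels appear, and whenever two orbits share the same $\theta_D$-rank they share the same cardinality. The diagram \eqref{eqChGDthetaD} transports these bounds to $\parah{D\cup\theta(D)}\cap H$-orbits and $\parah{D\cup\theta(D)}\cap \Hstar$-orbits on $\ch{D}{}(G)$, and since every $H$- (resp.\ $\Hstar$-)orbit on $\ch{D}{}(G)$ is a union of such finer orbits of a fixed $\theta$-rank, the count and cardinality statements survive. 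The mild step to check is the parenthetical remark in Proposition \ref{propgeneralclass} that allows replacing $\ul{H}\cap\ul{M}_\alpha$ by $\Hbarstar\cap\ul{M}_\alpha$ without altering the rank-one classification, which is how the $\Hstar$-version of (3) and (4) is obtained.

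The main obstacle is purely bookkeeping: matching each of the six sub-cases of \textbf{Case (SL).(I)} (and the analogous branches in \textbf{Case (SU)}) against the four types of Definition \ref{defclasspanelpair}, and verifying that the only configurations compatible with the geometry of $\mcb(G)$ are those listed. Everything else is a transparent application of Lemma \ref{lemmathetarankconst} and the bijection \eqref{eqChGDthetaD}.
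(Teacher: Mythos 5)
Your proposal follows the same route as the paper: the skew case is handled by Proposition \ref{propskewpair}, and the non-skew case is reduced via the bijections \eqref{eqChGDthetaD} and the rank-shift Lemma \ref{lemmathetarankconst} to the rank-one classification of Proposition \ref{propgeneralclass} (including its $\Hbarstar\cap\ul{M}_\alpha$ variant for the $\Hstar$-statements), with the remaining work being the case-by-case matching against Definition \ref{defclasspanelpair}. This is exactly how the paper argues, so the proposal is correct and essentially identical in approach.
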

		
		Thus, we also have the following definition.
		
		\begin{definition}\label{defclasspanel}
			
			A panel $D$ is called \emph{skew} (resp. \emph{non-skew}) if any panel pair $(\mca,D)$ is skew (resp. non-skew). For a non-skew panel $D$, we call it \emph{trivial/even/upper-lower} if any panel pair $(\mca,D)$ is trivial/even/upper-or-lower.
			
		\end{definition}

		\begin{remark}
			
			However, it makes no sense to say a panel $D$ is upper or lower, since if for some $\mca$ the panel pair $(\mca,D)$ is upper/lower, then we may always find another $\mca'$ such that $(\mca',D)$ is lower/upper.
			
		\end{remark}
		
		
		
		

		\subsection{Passage to the distance 0 case}
		
		Let $\mcf(G)$\index{$\mcf(G)$} be the set of facets in $\mcb(G)$ and $\Ft(G)$\index{$\Ft(G)$} its subset of $\theta$-stable facets. We define the set of \emph{maximal $\theta$-stable facets}
		$$\Ftmax(G)=\{F\in\Ft(G)\mid F^{\theta}\ \text{is a chamber of}\ \mca^{\theta}\ \text{for some}\ \theta\text{-stable apartment}\ \mca \}.\index{$\Ftmax(G)$}$$  
		
		\begin{remark}
			
			Although $\theta$-stable facets of maximal dimension are indeed maximal $\theta$-stable facets, in general a maximal $\theta$-stable facet need not be of maximal dimension.
			
		\end{remark}
		
		For any $F\in\Ftmax(G)$, we denote by $\ch{F}{}(G)$ the set of chambers that admit $F$ as a facet and by $\ch{F}{0}(G)$\index{$\ch{F}{0}(G)$} its subset of chambers of $\theta$-distance $0$. It means that for $C\in\ch{F}{0}(G)$, $F$ is the unique maximal $\theta$-stable facet of $C$. We denote by $\ch{}{0}(G)$ the set of chambers of $\theta$-distance 0. Then by definition we have
		$$\ch{}{0}(G)=\bigsqcup_{F\in\Ftmax(G)}\ch{F}{0}(G).\index{$\ch{}{0}(G)$}$$
		For any non-negative integer $d_{0}$, we denote by $\ch{F}{d_{0}}(G)$ the set of chambers $C$, such that there exist a $\theta$-stable apartment $\mca$ containing $F$ and $C$, and a minimal gallery $C_{0},C_{1},\dots,C_{d_{0}}$ in $\mca$ with $C_{0}\in\ch{F}{0}(G)$ and $\tdist{C_{d_{0}}}=d_{0}$. Still the definition here is independent of $\mca$.
		
		\begin{remark}
			
			Here we warn the readers that the chambers in $\ch{F}{d_{0}}(G)$ do not necessarily admit $F$ as a facet. Indeed, when $d_{0}$ is large enough the intersection $\ol{C}\cap F$ is empty.
		\end{remark} 
		
		Finally, we define $\ch{F}{\infty}(G)=\bigsqcup_{d_{0}\geq 0}\ch{F}{d_{0}}(G)$. Then by definition we have
		\begin{equation}\label{eqCHGunion}
			\ch{}{}(G)=\bigcup_{F\in\Ftmax(G)}\ch{F}{\infty}(G).
		\end{equation}
		This is because for any $C$ in $\ch{}{}(G)$ with $\tdist{C}=d_{0}$, we may find a $\theta$-stable apartment $\mca$ containing $C$ and a minimal gallery $C_{0},C_{1},\dots,C_{d_{0}}$ in $\mca$, such that $\tdist{C_{0}}=0$ and $C_{d_{0}}=C$. For the maximal $\theta$-stable facet $F$ of $C_{0}$, we have $C\in\ch{F}{d_{0}}(G)$.
		
		\begin{remark}
			The union on the right-hand side of \eqref{eqCHGunion} is not necessarily disjoint. 
			For instance, assume that $F_1,F_2\in\Ftmax(G)$ are contained in some $\theta$-stable apartment $\mca$, such that $F_1^\theta$ and $F_2^\theta$ are adjacent chambers of $\mca^\theta$ sharing a panel $D_\theta$. Moreover, assume that the hyperplane $\mfh_\theta$ passing $D_\theta$ in $\mca^\theta$ is not $\theta$-adapted. Then using Corollary \ref{corthetaadapt}, we may construct a chamber $C$ lying in $\ch{F_{1}}{d_{\theta}(C)}(G)\cap\ch{F_{2}}{d_{\theta}(C)}(G)$. A concrete example could be found when $G=\mrres_{L/K}\mrgl_{3}$ with $L/K$ a ramified quadratic extension of non-archimedean local fields, and $\theta$ is the usual Galois involution, and $\mca$ is a $\theta$-stable apartment of $\theta$-rank one (\emph{cf.} \cite{courtes2017distinction}).
			
		\end{remark}
		
		Before moving on, we digress to discuss the following assumption on the character $\chi$ of $H$.
		
		\begin{assumption}\label{assumpchi}
			
			$\chi$ is trivial on the subgroup $\Hstar$ of $H$.
			
		\end{assumption}
		
		We argue that the assumption is reasonable in the following sense. 
		
		First, during our proof below it is necessary to have the $\Hstar$-invariance, otherwise the dimension of $\mch(G)^{(H,\chi)}\cong \mrhom_{H}(\mrst_{G},\chi)$ becomes uncontrollable. This makes the problem of distinction ``bad enough'', since we can easily forge an example (\emph{cf.} Example \ref{examplechi}) without this assumption such that the dimension is out of control. 
		
		Secondly, for the existing case the character $\chi$ is usually a priori given, which indeed satisfies this assumption. For many cases the character $\chi$ is simply trivial; In the Galois case, since we have $(G^{\mathrm{sc}})^{\theta'}=H^{\mathrm{sc}}$, an element in $\Hstar$ equals a commutator element in $H$. So the character $\chi$ is trivial on $\Hstar$.
		
		Now we are able to prove the following claim. 
		
		\begin{proposition}\label{propreddist0}
			
			Let $\chi$ be a character of $H$ satisfying Assumption \ref{assumpchi} and $f\in\mch(G)^{(H,\chi)}$, i.e. $f$ is a harmonic cochain defined on $\ch{}{}(G)$ that is $(H,\chi)$-equivariant. Let $\mca$ be a $\theta$-stable apartment and $C_{0},C_{1},\dots,C_{d}$ a gallery in $\mca$ such that $\tdist{C_{i}}=i$. Then the value of $f$ on $C_{d}$ is determined by that on $C_{0}$. As a result, $f$ is determined by its value on $\ch{}{0}(G)$.
			
		\end{proposition}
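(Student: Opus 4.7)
My plan is to prove the first assertion by induction on $d$, with the base case $d=0$ being tautological. For the inductive step, I would let $D$ be the panel shared by $C_{d-1}$ and $C_d$, and $\mfh_D$ the hyperplane in $\mca$ containing $D$. Since $\tdist{C_{d-1}}=d-1<d=\tdist{C_d}$, Proposition \ref{propdistchamber} forces $\mfh_D$ to be non-$\theta$-stable and decomposes $\ch{D}{}(G)$ into the singleton $\{C_{d-1}\}$ and a single $\Hstar$-orbit $\mco$ of cardinality $Q_D$, each chamber of which is $\Hstar\cap\parah{C_{d-1}\cup\theta(C_{d-1})}$-conjugate to $C_d$.

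The key observation would then be that $f$ is $\Hstar$-invariant as a function on chambers. Indeed, for any $h\in\Hstar$, Proposition \ref{resoforient} (together with $\Hstar\subset G^\sharp\subset G^0$) gives $\epsilon_G(h)=1$, and Assumption \ref{assumpchi} gives $\chi(h)=1$, so the $(H,\chi)$-equivariance condition $\epsilon_G(h)f(h^{-1}\cdot C)=\chi(h)f(C)$ reduces to $f(h^{-1}\cdot C)=f(C)$. Therefore $f$ is constant on $\mco$ with common value $f(C_d)$, and harmonicity at $D$ yields
\[
f(C_{d-1}) + Q_D\cdot f(C_d) = 0,
\]
so $f(C_d)=-Q_D^{-1}f(C_{d-1})$. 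By the induction hypothesis, this is determined by $f(C_0)$.

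For the final sentence, I would argue as follows: given any $C\in\ch{}{}(G)$ with $\tdist{C}=d$, Proposition \ref{propAcontainC} provides a $\theta$-stable apartment $\mca$ containing $C$, and Proposition \ref{propdistCAtheta} supplies a minimal gallery $C_0,\dots,C_d=C$ in $\mca$ with $\tdist{C_i}=i$, so in particular $C_0\in\ch{}{0}(G)$. Applying the first part then expresses $f(C)$ in terms of $f(C_0)$.

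No serious obstacle is expected here: the induction is forced by harmonicity once one has the $\Hstar$-invariance and the two-orbit decomposition of $\ch{D}{}(G)$, both of which are already in place. The only subtle point worth highlighting is that the conversion from $(H,\chi)$-equivariance to plain $\Hstar$-invariance uses Assumption \ref{assumpchi} together with Proposition \ref{resoforient}, which is precisely the reason Assumption \ref{assumpchi} is imposed in this form.
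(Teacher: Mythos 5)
Your proposal is correct and follows essentially the same route as the paper: the paper likewise identifies each panel $D_i$ between $C_{i-1}$ and $C_i$ as belonging to a skew panel pair (its Proposition \ref{propskewpair}, which is just Proposition \ref{propdistchamber} repackaged), invokes $\Hstar$-invariance of $f$ together with harmonicity to get $f(C_{i-1})+Q_{i}f(C_{i})=0$, and concludes $f(C_d)=\big(\prod_{i=1}^{d}(-Q_i)^{-1}\big)f(C_0)$, which is exactly your induction written as a product. Your only addition is spelling out explicitly why $(H,\chi)$-equivariance reduces to plain $\Hstar$-invariance (Assumption \ref{assumpchi} plus $\epsilon_G$ trivial on $G^0$ via Proposition \ref{resoforient}), a point the paper uses tacitly.
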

		
		\begin{proof}
			
			Let $D_{i}$ be the panel between $C_{i-1}$ and $C_{i}$ for $i=1,\dots,d$. Then each $(\mca,D_{i})$ is a skew panel pair. By Proposition \ref{propskewpair}, there are two $\Hstar\cap \parah{C}$-orbits of $\ch{D_{i}}{}(G)$ of $\theta$-distance $i-1$ and $i$ and of cardinality $1$ and $Q_i:=Q_{D_i}$ respectively. 
			Using the harmonic condition and the $\Hstar$-invariance, we have that 
			$$f(C_{i-1})+Q_{i}f(C_{i})=0.$$
			Thus we have
			$$f(C)=\big(\prod_{i=1}^{d}(-Q_{i})^{-1}\big)f(C_{0}),$$
			which finishes the proof.
			
			
		\end{proof}
		
		\subsection{Interlude on finite reductive groups}
		
		In this part, let $\ul{G}$ be a reductive group over $\bs{k}$, and $\theta$ an involution of $\ul{G}$ over $\bs{k}$, and $\ul{H}=(\ul{G}^{\theta})^{\circ}$. We identify  $\ch{}{}(\ul{G})$ with the set of Borel subgroups of $\ul{G}$. A Borel subgroup $\ul{B}$ of $\ul{G}$ is called $\theta$-split if $\theta(\ul{B})\cap \ul{B}$ is a maximal torus of $\ul{G}$. Assume that $\ul{G}$ has a $\theta$-split Borel subgroup, or in other words, the set $\ch{-}{\theta}(\ul{G})$ of $\theta$-split Borel subgroups of $\ul{G}$ is not empty. 

		We define the graph $\gamma(\ul{G},\theta)$, whose vertices are elements in the quotient $\ch{-}{\theta}(\ul{G})/\ul{H}$. We add an edge between two $\ul{H}$-conjugacy classes $[\ul{B}_1]_{\ul{H}}$ and $[\ul{B}_2]_{\ul{H}}$, if there exist two related representatives $\ul{B}_1$ and $\ul{B}_2$ in each class that share a common panel $D$. In other words, it means that there exists a parabolic subgroup $\ul{P}=\ul{P}_D$ containing $\ul{B}_1$ and $\ul{B}_2$, whose Levi subgroup is of semi-simple rank 1. We add a loop to an $\ul{H}$-conjugacy class $[\ul{B}]_{\ul{H}}$ if there is a panel $D$ of $\ul{B}$, such that the chambers in $\ch{-}{\theta}(\ul{G})$ having $D$ as a panel lie in the same $\ul{H}$-conjugacy class $[\ul{B}]_{\ul{H}}$. Using Proposition \ref{proppanelpair}.(3), the set of edges are in bijection with $\ul{H}$-conjugacy classes of panels contained in some chamber corresponding to a $\theta$-split Borel subgroup.  
		
		Our next goal is to show that such a graph must be connected. 
		
		\begin{proposition}\label{propBGHconnnected}
			
			The graph $\gamma(\ul{G},\theta)$\index{$\gamma(\ul{G},\theta)$} is connected, or in other words any two $\ul{H}$-conjugacy classes of $\ch{-}{\theta}(\ul{G})$\index{$\ch{-}{\theta}(\ul{G})$} are connected by a gallery. More generally, for any two $\theta$-split Borel subgroups $\ul{B}$, $\ul{B}'$ of $\ul{G}$, we may find a gallery $\ul{B}_{0}, \ul{B}_{1},\dots,\ul{B}_{d}$ with 
			$\ul{B}_{0}=\ul{B}$, and $\ul{B}_{d}$ and $\ul{B}'$ are in the same $\Hbarstar$-orbit, and each $\ul{B}_{i}$ is $\theta$-split.
			
		\end{proposition}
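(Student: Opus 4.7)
The strategy is to reduce, via an $\Hbarstar$-conjugacy, to the situation where both $\ul{B}$ and $\ul{B}'$ lie in a common vectorial apartment, and then to construct the desired gallery by modifying an arbitrary gallery in that apartment one panel at a time using the rank-one classifications of Section~\ref{sectionrankone}.

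First I would establish the existence of such a common apartment. For any $\theta$-split Borel $\ul{B}$, the intersection $\ul{T}_{\ul{B}}:=\ul{B}\cap\theta(\ul{B})$ is a $\theta$-stable maximal torus with respect to which $\ul{B}$ and $\theta(\ul{B})$ are opposite. A short root-theoretic argument (using that every positive root $\alpha$ of $\ul{B}$ satisfies that $\theta(\alpha)$ is negative, hence no root can vanish on $\ul{T}_{\ul{B}}^{-}$) forces $Z_{\ul{G}}(\ul{T}_{\ul{B}}^{-})=\ul{T}_{\ul{B}}$ and shows that $\ul{T}_{\ul{B}}^{-}$ is a maximal $\theta$-split $\bs{k}$-subtorus of $\ul{G}$. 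The Helminck--Wang conjugacy theorem for maximal $\theta$-split tori over $\overline{\bs{k}}$ \cite{helminck1993rationality}, combined with a Lang-theorem descent to $\bs{k}$ using the connectedness of $(\ul{G}^{\mathrm{sc}})^{\theta'}$ from Lemma~\ref{lemmascinvolutionext}, produces an element $h\in\Hbarstar$ with $h\ul{T}_{\ul{B}'}^{-}h^{-1}=\ul{T}_{\ul{B}}^{-}$; the same $h$ then conjugates $\ul{T}_{\ul{B}'}$ to $\ul{T}_{\ul{B}}$, since each torus is recovered as the centralizer of its $\theta$-split part. After replacing $\ul{B}'$ by $h\ul{B}'h^{-1}$, both $\ul{B}$ and $\ul{B}'$ contain the common maximal torus $\ul{T}:=\ul{T}_{\ul{B}}$, hence are chambers of $\mca_v(\ul{G},\ul{S})$ for any maximal $\bs{k}$-split subtorus $\ul{S}$ of $\ul{T}$.

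I would then argue by induction on the gallery distance $d(\ul{B},\ul{B}')$ within this apartment. The base $d=0$ is trivial. For $d\geq 1$, fix a minimal gallery $\ul{B}=\ul{B}_{0},\ul{B}_{1},\ldots,\ul{B}_{d}=\ul{B}'$ and let $D$ be the panel between $\ul{B}_{0}$ and $\ul{B}_{1}$. The rank-one group $\ul{G}_{D}^{\mathrm{sc}}$ inherits an involution $\theta_{D}$ from $\theta$ (Lemma~\ref{lemmascinvolutionext}), and the image of $\ul{B}_{0}$ in $\ul{G}_{D}^{\mathrm{sc}}$ is $\theta_{D}$-split. Inspecting the items of Propositions~\ref{proprank1classSL2} and~\ref{proprank1classSU3} in which a $\theta_{D}$-split Borel exists, one finds that a second $\theta_{D}$-split Borel of $\ul{G}_{D}^{\mathrm{sc}}$ is always adjacent to this image; pulling back via~\eqref{bijection} produces a $\theta$-split Borel $\ul{B}_{1}'\in\ch{D}{}(\ul{G})$ distinct from $\ul{B}_{0}$. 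Moreover, this $\ul{B}_{1}'$ may be chosen in the $\Hbarstar\cap\parah{D\cup\theta(D)}$-orbit of $\ul{B}_{1}$; in the few cases where the rank-one data yields two inequivalent $\Hbarstar\cap\parah{D\cup\theta(D)}$-orbits of $\theta_{D}$-split Borels adjacent to $D$, the necessary $\Hbarstar$-shift is absorbed into the tail of the gallery. Replacing the step $\ul{B}_{0}\to\ul{B}_{1}$ by $\ul{B}_{0}\to\ul{B}_{1}'$ thus produces a gallery whose first edge lies in $\ch{-}{\theta}(\ul{G})$ and whose apartment-distance to an $\Hbarstar$-translate of $\ul{B}'$ has strictly decreased; the inductive hypothesis finishes the proof.

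The main technical obstacle is the rank-one bookkeeping, namely matching the various $\ul{H}_{D}$- and $\Hbarstar\cap\parah{D\cup\theta(D)}$-orbit structures listed in Proposition~\ref{propgeneralclass} to the global $\Hbarstar$-orbit structure required for the induction step. The rationality issue in the initial apartment reduction, by contrast, is handled uniformly by the connectedness in Lemma~\ref{lemmascinvolutionext} combined with Lang's theorem.
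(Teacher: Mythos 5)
Your first reduction is where the argument breaks. You claim that Helminck--Wang conjugacy over $\overline{\bs{k}}$ plus Lang's theorem (via connectedness of $(\ul{G}^{\mathrm{sc}})^{\theta'}$) yields $h\in\Hbarstar$ with $h\ul{T}_{\ul{B}'}^{-}h^{-1}=\ul{T}_{\ul{B}}^{-}$. Lang's theorem does not give this: the transporter from $\ul{T}_{\ul{B}'}^{-}$ to $\ul{T}_{\ul{B}}^{-}$ inside $\ul{H}$ is a torsor under the \emph{normalizer} $N_{\ul{H}}(\ul{T}_{\ul{B}'}^{-})$, which is disconnected, so the obstruction lies in $H^{1}$ of that normalizer and need not vanish; connectedness of $\ul{H}$ only kills $H^{1}(\bs{k},\ul{H})$. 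In fact the conclusion itself is false. Take $\ul{G}=\mrsl_{2}$ over $\bs{k}$ with $q\equiv 1\pmod 4$ and $\theta$ the conjugation by $\mrdiag(1,-1)$ (Case (I).(\romannumeral1).(2) of Proposition \ref{proprank1classSL2}, equivalently the split orthogonal involution of Case (II).(\romannumeral1).(1)). The $\theta$-split Borels are the $\ul{B}^{u_{x}}$, $u_{x}=\left(\begin{smallmatrix}1&0\\ x&1\end{smallmatrix}\right)$, $x\in\bs{k}^{\times}$, and they form two $\ul{H}$-orbits according to the square class of $x$. For $\ul{B}=\ul{B}^{u_{1}}$ the only two Borels containing $\ul{T}_{\ul{B}}=\ul{B}\cap\theta(\ul{B})$ are $\ul{B}^{u_{1}}$ and $\theta(\ul{B}^{u_{1}})=\ul{B}^{u_{-1}}$, and since $-1$ is a square these lie in the \emph{same} orbit. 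Hence if $\ul{B}'=\ul{B}^{u_{\epsilon}}$ with $\epsilon$ a non-square, no $\Hbarstar$-conjugate of $\ul{B}'$ can contain $\ul{T}_{\ul{B}}$ (it would have to equal $\ul{B}^{u_{\pm 1}}$), so your common-apartment reduction is unattainable; maximal $\theta$-split tori over a finite field simply fall into several $\Hbarstar$-classes. In rank one this does not hurt the statement (all chambers share the unique panel), but in higher rank your induction never gets off the ground. Your induction step is also soft at exactly the delicate point: the assertion that $\ul{B}_{1}'$ can be chosen in the right $\Hbarstar$-orbit and that the ``necessary $\Hbarstar$-shift is absorbed into the tail'' while the distance to a moving $\Hbarstar$-translate of $\ul{B}'$ strictly decreases is unjustified, since $\ul{B}_{1}'$ need not lie in the chosen apartment at all.

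The paper avoids the single-apartment strategy altogether. After reducing to $\ul{G}^{\mathrm{sc}}$ via Lemma \ref{lemmascinvolutionext}, it notes that every Borel containing a $\theta$-split maximal split torus is automatically $\theta$-split, so connectivity \emph{inside} one such apartment is free; the real content is Proposition \ref{propapartmentchain}, which produces a chain of apartments $\mca_{0},\dots,\mca_{r}$ with $\mca_{i}^{\theta}=\{0\}$, consecutive ones containing adjacent chambers, joining $\ul{B}$ to an $\ul{H}$-translate of $\ul{B}'$. The engine there is a twisted-cocycle factorization: writing $\theta(g)g^{-1}=t=\prod_{\alpha\in\Delta}t_{\alpha}$ in the (simply connected) torus and solving $\theta_{\alpha}'(g_{\alpha})g_{\alpha}^{-1}=t_{\alpha}$ in each rank-one subgroup (Lemma \ref{lemmagalphatalpha}, proved by the explicit symmetric/hermitian and $\mrsu_{3}$ computations), then twisting the involution step by step. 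So the rank-one classification enters through solving these equations, not through panel-by-panel surgery on a gallery in a fixed apartment; if you want to salvage your approach, you would need to replace your step 1 by such a chain-of-apartments (or cocycle-splitting) argument.
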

		
		\begin{proof}
			
			We consider the derived subgroup and the simply connected cover \begin{equation}\label{equniversalcover}
				p:	\ul{G}^{\mathrm{sc}}\rightarrow\ul{G}^{\mathrm{der}}\rightarrow\ul{G}
			\end{equation}
			The restriction of $\theta$ to $\ul{G}^{\mathrm{der}}$ is also an involution, and using Lemma \ref{lemmascinvolutionext} we get a unique involution $\theta'$ of $\ul{G}^{\mathrm{sc}}$ that is compatible with $\theta$. We have bijections of Borel subgroups
			$$\ch{}{}(\ul{G}^{\mathrm{sc}})\leftrightarrow\ch{}{}(\ul{G}^{\mathrm{der}})\leftrightarrow\ch{}{}(\ul{G})$$
			induced by \eqref{equniversalcover}. Then once we get a legitimate gallery in $\ch{-}{\theta'}(\ul{G}^{\mathrm{sc}})$, we may use these two bijections to transfer it to a legitimate gallery in $\ch{-}{\theta}(\ul{G})$. So we may assume without loss of generality that $\ul{G}$ is semi-simple and simply connected. 
			
			We consider a $\theta$-stable apartment $\mca=\mca_v(\ul{G},\ul{S})$ such that $\mca^{\theta}=\{0\}$, or in other words, $\ul{S}$ is $\theta$-split. Then by definition any Borel subgroup containing $\ul{S}$ is necessarily $\theta$-split. Thus for any two Borel subgroups containing $\ul{S}$, we may find a gallery in $\mca$ connecting them, such that each Borel subgroup in this gallery is $\theta$-split. 
			
			Thus, the proposition reduces to the following claim:
			
			\begin{proposition}\label{propapartmentchain}
				
				Let $r$ be the split rank of $\ul{G}$. Then we may find a sequence of $\theta$-stable apartments $\mca_{0},\mca_{1},\dots,\mca_{r}$, such that
				\begin{itemize}
					\item $\mca_{i}^{\theta}=\{0\}$ for each $i=0,1,\dots,r$;
					\item $\ul{B}$ is a chamber in $\mca_{0}$, and $\ul{B}'$ is $\ul{H}$-conjugate to a chamber in $\mca_{r}$;
					\item $\mca_{i-1}$ and $\mca_{i}$ are connected for each $i=1,2,\dots,r$, saying that there exist two chambers in $\mca_{i-1}$ and $\mca_{i}$ respectively that are adjacent.
				\end{itemize} 
				
			\end{proposition} 
			
			Let $\ul{S}$ be the maximal split $\theta$-stable torus contained in $\ul{B}$. Indeed, $\ul{S}$ is the $\bs{k}$-split part of the maximal torus $\ul{T}=\ul{B}\cap\theta(\ul{B})$. Moreover, $\ul{S}$ is $\theta$-split. Let $g\in\ul{G}$ such that both $\ul{B}'=\ul{B}^{g}$ and $\ul{S}^{g}$ are also $\theta$-split, thus $t:=\theta(g)g^{-1}\in\ul{T}$. 
			
			For any non-divisible root $\alpha\in\Phi(\ul{G},\ul{S})$, we may define a related Levi subgroup $\ul{M}_{\alpha}$ of semi-simple rank one of $\ul{G}$, that is generated by $\ul{T}$, $\ul{U}_{\alpha}$ and $\ul{U}_{-\alpha}$. Let $\ul{T}_{\alpha}=\pairang{\ul{U}_{-\alpha}}{\ul{U}_{\alpha}}\cap \ul{T}$. Since $\ul{G}$ is semi-simple and simply connected, $\ul{T}$ is generated by $\ul{T}_{\alpha}$ with $\alpha$ ranging over simple roots $\Delta=\Delta(\ul{B},\ul{S})$ of $\Phi(\ul{G},\ul{S})$. Then we may write $$t=\prod_{\alpha\in\Delta}t_{\alpha}$$ 
			with $t_{\alpha}\in \ul{T}_{\alpha}$. Since $\theta(t)=t^{-1}$, we also have $\theta(t_{\alpha})=t_{\alpha}^{-1}$ for each $\alpha$. 
			
			Let $\ul{G}_{\alpha}=\pairang{\ul{U}_{-\alpha}}{\ul{U}_{\alpha}}$, which by definition is either $\mrres_{\bs{l}_{\alpha}/\bs{k}}\mrsl_{2}$ or $\mrres_{\bs{l}_{\alpha}/\bs{k}}\mrsu_{3}$ for some finite extension $\bs{l}_{\alpha}/\bs{k}$. Let $q_\alpha$ be the cardinality of $\bs{l}_{\alpha}$. Let $\theta_{\alpha}$ be the restriction of $\theta$ to $\ul{G}_{\alpha}$ and $\ul{S}_{\alpha}$ the maximal $\bs{k}$-split subtorus of $\ul{T}_{\alpha}$. Since $\ul{S}$ is $\theta$-split, the restriction of $\theta_{\alpha}$ to $\ul{S}_{\alpha}$ is the inversion map.
			
			\begin{lemma}\label{lemmagalphatalpha}
				
				Let $\theta_{\alpha}'$ be an involution on $\ul{G}_{\alpha}$ such that $\ul{S}_{\alpha}$ is $\theta_{\alpha}'$-split, and $t_{\alpha}\in\ul{T}_{\alpha}$ such that $\theta_{\alpha}'(t_{\alpha})=t_{\alpha}^{-1}$. Then there exists $g_{\alpha}\in\ul{G}_{\alpha}$ such that $\theta_{\alpha}'(g_{\alpha})g_{\alpha}^{-1}=t_{\alpha}$.
				
			\end{lemma}
			
			\begin{proof}
				
				When $\ul{G}_{\alpha}=\mrres_{\bs{l}_{\alpha}/\bs{k}}\mrsl_{2}$, we are in \textbf{Case (II)} of Proposition \ref{proprank1classSL2}. In \textbf{Case (II).(\romannumeral1)} it follows from the fact that two symmetric matrices of $\mrgl_{2}(\bs{l}_{\alpha})$ of the same determinant are in the same $\mrsl_{2}(\bs{l}_{\alpha})$-orbit with respect to the action $ x\cdot g=\,^{t}gxg$. In \textbf{Case (II).(\romannumeral2)} similarly it follows from the fact that two hermitian matrices of $\mrgl_{2}(\bs{l}_{\alpha})$ of the same determinant are in the same $\mrsl_{2}(\bs{l}_{\alpha})$-orbit with respect to the action $ x\cdot g=\sigma(\,^{t}g)xg$.
				
				When $\ul{G}_{\alpha}=\mrres_{\bs{l}_{\alpha}/\bs{k}}\mrsu_{3}$, we are in \textbf{Case (II)} of Proposition \ref{proprank1classSU3}. Let $t_{\alpha}=\mrdiag(y,y^{-1}\sigma_{\alpha}(y),\sigma_{\alpha}(y)^{-1})$ such that $\theta_{\alpha}'(t_{\alpha})=t_{\alpha}^{-1}$, we claim that we may find 
				$$g_{\alpha}=\begin{pmatrix}a &0 &b\\ 0 & (ad-bc)^{-1} & 0\\ c & 0 & d\end{pmatrix}\in \ul{G}_{\alpha}=\mrsu_{3}(\bs{l}_{\alpha,2}/\bs{l}_{\alpha}),\ a,b,c,d\in\bs{l}_{\alpha,2}$$
				such that $\theta_{\alpha}'(g_{\alpha})g_{\alpha}^{-1}=t_{\alpha}$. Here, $\bs{l}_{\alpha,2}/\bs{l}_{\alpha}$ is the quadratic extension, and we denote by $\sigma_{\alpha}$ the corresponding order 2 automorphism given by the $q_{\alpha}$-th power map. By direct calculation, the condition to guarantee $g_{\alpha}\in\mrsu_{3}(\bs{l}_{\alpha,2}/\bs{l}_{\alpha})$ is $ad-bc\neq 0$ and $\sigma_{\alpha}(a)=ax,\sigma_{\alpha}(b)=bx,\sigma_{\alpha}(c)=cx,\sigma_{\alpha}(d)=dx$ with $x=\sigma_{\alpha}(ad-bc)$.
				
				In \textbf{Case (II).(\romannumeral1)}, we are required to solve the equation
				$$\begin{pmatrix} 0 & 0 & \delta^{-1}\\ 0 & \epsilon\delta^{-1} & 0\\ 1 & 0 & 0
				\end{pmatrix}g_{\alpha}\begin{pmatrix} 0 & 0 & 1\\ 0 & \epsilon^{-1}\delta & 0\\ \delta  & 0 & 0
				\end{pmatrix}=t_{\alpha}g_{\alpha}.$$
				In this case, the condition $\theta_{\alpha}'(t_{\alpha})=t_{\alpha}^{-1}$ shows that $y\in\bs{l}_{\alpha}^{\times}$, and we have $\epsilon\in\bs{l}_{\alpha}^{\times}, \delta=\epsilon^{2}\in\bs{l}_{\alpha}^{\times}$. By direct calculation, the solution is given by $d=ay$ and $c=by\delta$, and we choose $a,b\in\bs{l}_{\alpha}$ such that $a^{2}y-b^{2}\delta y=1$, then we have $a,b,c,d\in\bs{l}_{\alpha}$,
				$g_{\alpha}\in\mrsu_{3}(\bs{l}_{\alpha,2}/\bs{l}_{\alpha})$ and $\theta_{\alpha}'(g_{\alpha})g_{\alpha}^{-1}=t_{\alpha}$. 
				
				In \textbf{Case II.(\romannumeral2)}, we are required to solve the equation
				$$\begin{pmatrix} 0 & 0 & \delta^{-1}\\ 0 & \epsilon\delta^{-1} & 0\\ 1 & 0 & 0
				\end{pmatrix}
				\sigma_{\alpha}(g_{\alpha})\begin{pmatrix} 0 & 0 & 1\\ 0 & \epsilon^{-1}\delta & 0\\ \delta  & 0 & 0
				\end{pmatrix}=t_{\alpha}g_{\alpha}.$$
				In this case, we have $\epsilon\in \bs{l}_{\alpha,2}^{\times}$ and  $\delta=\sigma_{\alpha}(\epsilon)\epsilon\in\bs{l}_{\alpha}^{\times}$. By direct calculation, the solution is given by $d=\sigma_{\alpha}(ay)$ and $c=\sigma_{\alpha}(by)\delta$, and we choose $a,b\in\bs{l}_{2,\alpha}$ such that $\sigma_{\alpha}(a)=y^{(1-q_{\alpha})/2}a$, $\sigma_{\alpha}(b)=y^{(1-q_{\alpha})/2}b$ and $\sigma_{\alpha}(a)ay-\sigma_{\alpha}(b)b\delta y=y^{(1-q_{\alpha})/2}$. Then by direct calculation, we have $\sigma_{\alpha}(c)=y^{(1-q_{\alpha})/2}c$ and $\sigma_{\alpha}(d)=y^{(1-q_{\alpha})/2}d$,  $g_{\alpha}\in\mrsu_{3}(\bs{l}_{\alpha,2}/\bs{l}_{\alpha})$ and $\theta_{\alpha}'(g_{\alpha})g_{\alpha}^{-1}=t_{\alpha}$. 
			\end{proof}
			
			We finish the proof of Proposition \ref{propapartmentchain}. We write $\Delta=\{\alpha_{1},\alpha_{2},\dots,\alpha_{r}\}$ and $t_{i}:=t_{\alpha_{i}}$. Let $\theta_{\alpha_{1}}'=\theta_{\alpha_{1}}$ be an involution on $\ul{G}_{\alpha_{1}}$ and for each $k=2,3,\dots ,r$ let  $\theta_{\alpha_{k}}'$ be an involution on $\ul{G}_{\alpha_{k}}$ given by $$\theta_{\alpha_{k}}'(g)=(\prod_{i=1}^{k-1}t_{i})^{-1}\theta_{\alpha_{k}}(g)\prod_{i=1}^{k-1}t_{i}\quad, g\in\ul{G}_{\alpha_{k}}.$$ Then using Lemma \ref{lemmagalphatalpha}, for each $\alpha_{k}$ we choose $g_{k}:=g_{\alpha_{k}}\in\ul{G}_{\alpha_{k}}$ such that $\theta_{\alpha_{k}}'(g_{k})g_{k}^{-1}=t_{k}$. Let $\mca_{0}=\mca_v(\ul{G},\ul{S})$ and $\mca_{k}=\mca_v(\ul{G},\ul{S}^{\prod_{i=1}^{k}g_{k-i+1}})$ for $k=1,\dots,r$. Then from our construction, since \begin{equation}\label{eqprodtithetagg-1}
				\theta(\prod_{i=1}^{k}g_{k-i+1})(\prod_{i=1}^{k}g_{k-i+1})^{-1}=\prod_{i=1}^{k}t_{i}\in\ul{T},
			\end{equation}
			we deduce that $\ul{S}^{\prod_{i=1}^{k}g_{k-i+1}}$ is $\theta$-split, thus $\mca_{k}^{\theta}=\{0\}$. Also $\mca_{k-1}$ and $\mca_{k}$ are connected for $k=1,\dots,r$. This is because $\mca_v(\ul{G},\ul{S})$ and $\mca_v(\ul{G},\ul{S}^{g_{k}})$ are connected, and then we may take the $\prod_{i=1}^{k-1}g_{k-i}$-conjugation. Finally, let $h=g^{-1}\prod_{i=1}^{r}g_{r-i+1}$, then by definition $\theta(h)=h$ and thus $h\in\ul{H}$. So $\ul{B}'^{h}$ is in $\mca_{r}$. Thus we proved the Proposition \ref{propapartmentchain} as well as the Proposition \ref{propBGHconnnected}.
			
		\end{proof}
		
		Under some conditions, we show that the graph is indeed \emph{bipartite}, meaning that we may divide its set of vertices into two disjoint subsets, such that each edge links vertices in different subsets. 
		
		\begin{proposition}\label{propbipartite}
			
			Assume further that
			
			\begin{itemize}
				
				\item For each panel $D$, the set of $\theta$-split Borel subgroups adjacent to $D$ is either empty or has exactly two $\ul{H}$-orbits.
				
				\item The natural morphism $p:\ul{G}^{\mathrm{sc}}\rightarrow\ul{G}$ induces a bijection
				$$\ch{-}{\theta'}(\ul{G}^{\mathrm{sc}})/(\ul{G}^{\mathrm{sc}})^{\theta'}\rightarrow\ch{-}{\theta}(\ul{G})/\ul{H}$$
				
			\end{itemize}
			
			Then the graph $\gamma(\ul{G},\theta)$ is bipartite.

		\end{proposition}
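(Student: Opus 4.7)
The plan is to construct a proper $2$-coloring of the vertex set of $\gamma(\ul{G},\theta)$, i.e., a function $\mathrm{par}: \ch{-}{\theta}(\ul{G})/\ul{H} \to \{\pm1\}$ that flips value along every edge. The strategy has two main ingredients: a reduction step using condition (ii), and a parity argument based on gallery length.

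First I would use condition (ii) together with Lemma~\ref{lemmascinvolutionext} to pass to the simply connected cover. The map $p:\ul{G}^{\mathrm{sc}}\to\ul{G}$ and the induced involution $\theta'$ identify $\gamma(\ul{G},\theta)$ with $\gamma(\ul{G}^{\mathrm{sc}},\theta')$ as graphs, since $p$ preserves panel adjacency and, by assumption, descends to a bijection between the vertex sets. So we may assume $\ul{G}$ is semi-simple simply connected.

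For the coloring, I would fix a base chamber $C_0\in\ch{-}{\theta}(\ul{G})$ and set $\mathrm{par}(C)=(-1)^{d(C_0,C)}$, where $d$ denotes the combinatorial gallery distance in $\mcv(\ul{G})$; within any apartment this coincides with the sign $(-1)^{\ell(w)}$ of the Weyl group element $w$ with $C=wC_0$, and the parity is additive in the sense that $d(C,C')\equiv d(C,C'')+d(C'',C')\pmod 2$ for any triple of chambers. By construction, if $[\ul{B}_1]_{\ul{H}}$ and $[\ul{B}_2]_{\ul{H}}$ are joined by an edge then representatives can be chosen sharing a panel, hence at gallery distance $1$, so $\mathrm{par}$ would take opposite values on them; moreover, condition (i) ensures the two orbits adjacent to each panel are distinct, so the graph has no loops. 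The key step is then to show that $\mathrm{par}$ descends to $\ch{-}{\theta}(\ul{G})/\ul{H}$, i.e.\ that $d(C,hC)$ is even for every $h\in\ul{H}$ and every $\theta$-split $C$.

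To verify this $\ul{H}$-invariance, I would use Proposition~\ref{propapartmentchain} applied to $C$ and $hC$: there is a chain $\mca_0,\dots,\mca_r$ of $\theta$-stable apartments with each $\mca_i^\theta=\{0\}$, such that $C\in\mca_0$, $hC$ is $\ul{H}$-conjugate to a chamber in $\mca_r$, and consecutive apartments are linked by a pair of adjacent chambers. By additivity of parity, the problem reduces to the base case where the two chambers lie in the same $\theta$-stable apartment $\mca$ of $\theta$-rank zero. Inside such an $\mca$ all chambers are $\theta$-split, and condition~(i) (via the rank-one classification in Proposition~\ref{propgeneralclass}, restricted to the ``even'' subcases of Case (II) of Propositions~\ref{proprank1classSL2} and \ref{proprank1classSU3}) forces the two chambers of $\mca$ adjacent to any panel to lie in distinct $\ul{H}$-orbits; consequently the $\ul{H}$-orbit alternates at every step of any gallery within $\mca$, so two chambers in the same $\ul{H}$-orbit must sit at even gallery distance in $\mca$. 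Propagating this through the apartment chain yields the required evenness in general, and thus $\gamma(\ul{G},\theta)$ is bipartite.

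The main technical obstacle is precisely this within-apartment step: one must verify, through the rank-one analysis, that the two $\ul{H}$-orbits of $\theta$-split chambers adjacent to a given panel of $\mca$ are realized by the two chambers of $\mca$ on either side of that panel, as opposed to being ``absorbed'' into orbits populated by chambers outside $\mca$. Assumption~(ii) enters here to ensure the orbit structure under $\ul{H}$ coincides with that under the \emph{a priori} larger $\Hbarstar$, which is where the rank-one classification is genuinely applicable; without (ii) one could only conclude bipartiteness of the finer graph $\gamma(\ul{G}^{\mathrm{sc}},\theta')$ and would be unable to descend to $\gamma(\ul{G},\theta)$.
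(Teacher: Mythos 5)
Your construction breaks at its very first step: in a thick building the parity of gallery distance is not additive, so $\mathrm{par}(C)=(-1)^{d(C_0,C)}$ cannot serve as a $2$-coloring. Every panel of $\mcv(\ul{G})$ has at least $q+1\geq 3$ chambers adjacent to it, so the chamber graph contains triangles and $d(C,C')\equiv d(C,C'')+d(C'',C')\pmod 2$ fails; the identity $d=\ell(w)$ you invoke only holds inside a single apartment. Already for $\ul{G}$ of semisimple rank one all Borels share the unique panel, so all $\theta$-split chambers are pairwise at distance $1$: then $\mathrm{par}$ is not constant on $\ul{H}$-orbits (take $hC\neq C$ in the same orbit) and does not change across edges (two chambers in different orbits are also at distance $1$ from $C_0$), even though the hypotheses of the proposition hold and the graph (two vertices, one edge, no loop) is bipartite. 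Worse, the ``within-apartment alternation'' you rely on is itself false: condition (i) does not force the two chambers of a $\theta$-rank-zero apartment adjacent along a panel to lie in distinct $\ul{H}$-orbits. The paper's own rank-one computations give counterexamples satisfying both hypotheses: in \textbf{Case (II).(\romannumeral1).(1)} of Proposition \ref{proprank1classSL2} with $q\equiv 1\pmod 4$ (and \textbf{Case (II).(\romannumeral1).(2)} with $q\equiv 3\pmod 4$), and always in \textbf{Case (II).(\romannumeral2)} of Proposition \ref{proprank1classSU3}, the opposite Borels $\ul{B}$ and $\ul{B}^-$ of the $\theta$-split apartment lie in the \emph{same} $\ul{H}$-orbit although they are adjacent. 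So no argument based on gallery-length parity, propagated through Proposition \ref{propapartmentchain} or otherwise, can produce the required coloring.

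The invariant that actually yields bipartiteness is not metric but cohomological. After the same reduction to $\ul{G}^{\mathrm{sc}}$ via condition (ii), the paper uses condition (i) (through the rank-one classification) to show that the centralizer $\ul{T}$ of a maximal $\theta$-split $\bs{k}$-split torus is itself $\theta$-split, and then parametrizes $\ch{-}{\theta}(\ul{G})/\ul{H}$ by $\ul{T}/\ul{T}^2\cong\prod_{\alpha\in\Delta}\{\pm 1\}$ via $\ul{B}^g\mapsto \theta(g)g^{-1}\bmod \ul{T}^2$ (using simple connectedness to write $\ul{T}=\prod_\alpha\ul{T}_\alpha$ and Lemma \ref{lemmagalphatalpha} for surjectivity). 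One then checks that two classes are joined by an edge exactly when they differ in a single coordinate, so $\gamma(\ul{G},\theta)$ is the $r$-regular hypercube graph and the proper $2$-coloring is the parity of the number of nontrivial coordinates of $\theta(g)g^{-1}$ — an invariant your distance-based parity does not compute. To repair your proof you would essentially have to establish this parametrization, i.e.\ reproduce the paper's argument.
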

		
		\begin{remark}
			
			It is possible that the second condition is redundant. Indeed, the second condition may follow from the first condition. For instance, if $\ul{G}$ is of semi-simple rank one, then clearly it is so.	
			
		\end{remark}
		
		\begin{remark}\label{remH=Hscbipart}
			
			The morphism $p$ induces a bijection $\ch{}{}(\ul{G}^{\mathrm{sc}})\rightarrow\ch{}{}(\ul{G})$. In particular if $\ul{H}=(\ul{G}^{\mathrm{sc}})^{\theta'}$, we have bijections
			$$\ch{}{}(\ul{G}^{\mathrm{sc}})/(\ul{G}^{\mathrm{sc}})^{\theta'}\rightarrow\ch{}{}(\ul{G})/\ul{H}\quad\text{and}\quad\ch{-}{\theta'}(\ul{G}^{\mathrm{sc}})/(\ul{G}^{\mathrm{sc}})^{\theta'}\rightarrow\ch{-}{\theta}(\ul{G})/\ul{H},$$
			thus the second condition in Proposition \ref{propbipartite} is satisfied.
			
		\end{remark}
		
		\begin{proof}[Proof of Proposition \ref{propbipartite}]
			
			Using the second condition, we may replace $\ul{G}$ with $\ul{G}^{\mathrm{sc}}$ and assume without loss of generality that $\ul{G}$ is semi-simple and simply connected. In this case, the graph $\gamma(\ul{G},\theta)$ can be fully characterized.  
			
			Let $\ul{S}$ be a maximal $\theta$-split $\bs{k}$-split torus of $\ul{G}$, let $\ul{T}$ be the centralizer of $\ul{S}$ in $\ul{G}$, let $\ul{B}$ be a Borel subgroup containing $\ul{T}$, and let $\Delta=\Delta(\ul{B},\ul{S})$ be the corresponding set of simple roots. Let $r=\car{\Delta}$, which is also the split rank of $\ul{G}$. For each $\alpha\in\Delta$, we define the corresponding subgroups $\ul{G}_{\alpha}$, $\ul{S}_{\alpha}$, $\ul{T}_{\alpha}$ of $\ul{G}$.  In particular, the first condition implies that $\ch{-}{\theta}(\ul{G}_{\alpha})$ has exactly two $\ul{H}_{\alpha}$-orbits, where $\ul{H}_{\alpha}=(\ul{G}_{\alpha}^\theta)^\circ$.
			
			We remark that $\ul{T}$ is $\theta$-split, meaning that for every $t$ in $\ul{T}$, we have $\theta(t)=t^{-1}$. Indeed, we have the decomposition 
			$$\ul{T}=\prod_{\alpha\in\Delta}\ul{T}_{\alpha}\quad\text{and}\quad t=\prod_{\alpha\in\Delta}t_{\alpha}.$$
			Using the classification result, the fact that $\ul{S}$ is $\theta$-split and $\ch{-}{\theta}(\ul{G}_{\alpha})$ has exactly two $\ul{H}_{\alpha}$-orbits, for each $\alpha$ we are confined in either \textbf{Case (II).(\romannumeral1)} of Proposition \ref{proprank1classSL2} or \textbf{Case (II).(\romannumeral2)} of Propsition \ref{proprank1classSU3}. Thus we have that $\theta(t_{\alpha})=t_{\alpha}^{-1}$ for each $\alpha$, and $\theta(t)=t^{-1}$ and $\ul{T}$ is  $\theta$-split.
			
			By definition, the map
			\begin{equation}\label{eqCHG/H}
				\ul{B}\backslash\ul{G}/\ul{H}\rightarrow \ch{}{}(\ul{G})/\ul{H},\quad g\mapsto\ul{B}^{g}
			\end{equation}
			is a bijection. 
			
			\begin{lemma}
				
				Let $g\in\ul{G}$.
				
				\begin{itemize}
					\item We may change $g$ with an element in the coset $\ul{B} g$, such that $\theta(g)g^{-1}$ is in the normalizer $N_{\ul{G}}(\ul{S})$.
					
					\item Moreover, if the corresponding Borel subgroup $\ul{B}^{g}$ is $\theta$-split, then we may change $g$ with an element in the coset $\ul{B} g$, such that $\theta(g)g^{-1}$ is in $\ul{T}$.

				\end{itemize}
				
			\end{lemma}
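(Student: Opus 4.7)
For part (1), the plan is to produce inside the Borel $\ul{B}^g$ a $\theta$-stable maximal $\bs{k}$-split torus $\ul{S}''$ and then conjugate $\ul{S}^g = g^{-1}\ul{S}g$ to $\ul{S}''$ via an element of the unipotent radical of $\ul{B}^g$. The torus $\ul{S}''$ is furnished by Proposition \ref{propAcontainC} applied in the finite vectorial setting: the chamber $\ul{B}^g$ lies in some $\theta$-stable apartment $\mca_v(\ul{G},\ul{S}'')$, and the defining torus $\ul{S}''$ of that apartment is $\theta$-stable and sits inside $\ul{B}^g$. Both $\ul{S}''$ and $\ul{S}^g$ are maximal $\bs{k}$-split tori of the Borel $\ul{B}^g$, and in a solvable group any two such tori are conjugate by the unipotent radical $\ul{U}^g = g^{-1}\ul{U}g$. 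Writing the conjugator as $g^{-1}vg$ with $v\in \ul{U} \subset \ul{B}$ and setting $b = v^{-1}$, a direct computation gives $\ul{S}^{bg} = (v^{-1}g)^{-1}\ul{S}(v^{-1}g) = \ul{S}''$, which is $\theta$-stable. Therefore $\theta(bg)(bg)^{-1} \in N_{\ul{G}}(\ul{S})$, proving part (1).

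For part (2), I claim that the same $b$ works, and the argument proceeds by intersecting two normalizer conditions. The geometric translation of ``$\ul{B}^g$ is $\theta$-split'' is that in the apartment $\mca_v(\ul{G},\ul{S}'')$ the chambers $\ul{B}^g$ and $\theta(\ul{B}^g)$ are opposite (since $\theta(\ul{S}'') = \ul{S}''$ forces both to live in the same apartment, and their intersection is a maximal torus), so $\theta(\ul{B}^g) = (\ul{B}^g)^-$. On the $\ul{B}$-side, the hypothesis that $\ul{S}$ is $\theta$-split (so $\theta$ acts by $-1$ on $X^*(\ul{S})$, sending positive to negative roots) gives $\theta(\ul{B}) = \ul{B}^-$ in $\mca_v(\ul{G},\ul{S})$. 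Since conjugation by $bg$ is a building isomorphism sending the latter apartment to the former and commuting with the opposition relation, I get $(\ul{B}^{bg})^- = (\ul{B}^-)^{bg}$, so
\begin{equation*}
    (\ul{B}^-)^{bg} \;=\; (\ul{B}^{bg})^- \;=\; \theta(\ul{B}^{bg}) \;=\; \theta(bg)^{-1}\,\theta(\ul{B})\,\theta(bg) \;=\; (\ul{B}^-)^{\theta(bg)}.
\end{equation*}
This forces $bg\,\theta(bg)^{-1} \in N_{\ul{G}}(\ul{B}^-) = \ul{B}^-$, and hence $\theta(bg)(bg)^{-1} \in \ul{B}^-$.

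Combining this with the conclusion of part (1) yields $\theta(bg)(bg)^{-1} \in N_{\ul{G}}(\ul{S}) \cap \ul{B}^-$. The proof then reduces to the standard identity $N_{\ul{G}}(\ul{S}) \cap \ul{B}^- = \ul{T}$: writing an element $x\in N_{\ul{G}}(\ul{S})\cap \ul{B}^-$ as $x=tu$ with $t\in\ul{T}$ and $u\in\ul{U}^-$, the commutator trick $[u,s] = usu^{-1}s^{-1} \in \ul{U}^- \cap \ul{S} = \{1\}$ forces $u$ to centralize $\ul{S}$, so $u \in \ul{U}^- \cap \ul{T} = \{1\}$ and $x=t \in \ul{T}$. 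Hence $\theta(bg)(bg)^{-1} \in \ul{T}$, completing part (2).

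The main conceptual obstacle is the correct use of Proposition \ref{propAcontainC}: one must recognize that $\ul{S}''$ extracted from the $\theta$-stable apartment is precisely the maximal $\bs{k}$-split subtorus of the $\theta$-stable maximal torus $\ul{B}^g \cap \theta(\ul{B}^g)$, which is what licenses the identification $\theta(\ul{B}^g) = (\ul{B}^g)^-$ inside $\mca_v(\ul{G},\ul{S}'')$. Once this geometric translation is in place, the rest of the proof is a chain of functorial conjugation identities together with the elementary intersection $N_{\ul{G}}(\ul{S}) \cap \ul{B}^- = \ul{T}$.
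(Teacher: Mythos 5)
Your proof is correct, and the two halves compare differently with the paper's argument. For the first bullet you do essentially what the paper does: the paper invokes \cite{helminck1993rationality}*{Lemma 2.3} to place a $\theta$-stable maximal $\bs{k}$-split torus $\ul{S}'$ inside $\ul{B}^g$ and then conjugates it to $\ul{S}^g$ by an element of $\ul{B}$ (conjugacy of maximal $\bs{k}$-split tori in the solvable group $\ul{B}^g$); your appeal to Proposition \ref{propAcontainC} is the same input in geometric clothing, and whether the conjugator is taken in the unipotent radical or merely in $\ul{B}$ is immaterial (only watch the trivial bookkeeping: with $u=g^{-1}vg$ satisfying $(\ul{S}^g)^u=\ul{S}''$ you should take $b=v$, not $v^{-1}$, under the paper's convention $\ul{S}^h=h^{-1}\ul{S}h$). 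For the second bullet your route is genuinely different. The paper simply asserts that the torus $\ul{S}'$ is $\theta$-split once $\ul{B}^g$ is $\theta$-split, and then the cocycle $\theta(bg)(bg)^{-1}$, already known to normalize $\ul{S}$, must centralize it, landing in $Z_{\ul{G}}(\ul{S})=\ul{T}$. You instead exploit the ambient hypothesis (in force where the lemma sits, inside the proof of Proposition \ref{propbipartite}) that $\ul{S}$ is $\theta$-split and $\ul{T}=Z_{\ul{G}}(\ul{S})\subset\ul{B}$, which gives $\theta(\ul{B})=\ul{B}^-$; combining $\ul{B}^{bg}\cap\theta(\ul{B}^{bg})=Z_{\ul{G}}(\ul{S}^{bg})$ with equivariance of opposition you get $\theta(bg)(bg)^{-1}\in N_{\ul{G}}(\ul{B}^-)=\ul{B}^-$, and then the elementary identity $N_{\ul{G}}(\ul{S})\cap\ul{B}^-=\ul{T}$ finishes. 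What your version buys is that you never need the assertion that a $\theta$-stable maximal split torus inside a $\theta$-split Borel is automatically $\theta$-split (which in full generality requires care and is where the paper's one-line justification is thinnest); what it costs is an explicit reliance on $\theta(\ul{B})=\ul{B}^-$, i.e.\ on the standing assumption that $\ul{S}$ is $\theta$-split, which you correctly flag and which is indeed part of the setting in which the lemma is stated and used.
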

			
			\begin{proof}
				
				Using \cite{helminck1993rationality}*{Lemma 2.3}, there exists a maximal $\theta$-stable $\bs{k}$-split torus $\ul{S}'$ in $\ul{B}^{g}$. Since both $\ul{S}$ and $\ul{S}'^{g^{-1}}$ are maximal $\bs{k}$-split torus in $\ul{B}$, we may find $b\in\ul{B}$ such that $\ul{S}^{b}=\ul{S}'^{g^{-1}}$. As a result, $\ul{S}^{bg}=\ul{S}'$ is $\theta$-stable, meaning that $\theta(bg)(bg)^{-1}\in N_{\ul{G}}(\ul{S})$. If moreover $\ul{B}^{g}$ is $\theta$-split, $\ul{S}'$ is also $\theta$-split. Then
				$\theta(bg)(bg)^{-1}\in Z_{\ul{G}}(\ul{S})=\ul{T}$.
				
			\end{proof}
			
			Using the lemma, we may further define the map
			$$\ul{B}\backslash\ul{G}/\ul{H}\rightarrow ( N_{\ul{G}}(\ul{S})\cap\{\theta(g)g^{-1}\mid g\in\ul{G}\})/\sim_{\theta,\ul{B}},\quad g\mapsto\theta(g)g^{-1} $$
			which is indeed a bijection. Here, $\sim_{\theta,\ul{B}}$ denotes the equivalence relation that for $g_1,g_2\in\ul{G}$, we have $g_1\sim_{\theta,\ul{B}}g_2$ if and only if $g_2=\theta(b)g_1b^{-1}$ for some $b\in \ul{B}$. Composing it with \eqref{eqCHG/H}, we get a bijection
			$$\ch{}{}(\ul{G})/\ul{H}\leftrightarrow
			( N_{\ul{G}}(\ul{S})\cap\{\theta(g)g^{-1}\mid g\in\ul{G}\})/\sim_{\theta,\ul{B}}.$$
			Restricting to $\ch{-}{\theta}(\ul{G})/\ul{H}$ and using again the above lemma, we have a bijection
			$$\ch{-}{\theta}(\ul{G})/\ul{H}\leftrightarrow(\ul{T}\cap\{\theta(g)g^{-1}\mid g\in\ul{G}\})/\sim_{\theta,\ul{B}}.$$
			We need the following lemma.
			
			\begin{lemma}
				
				We have $ \ul{T}\cap\{\theta(g)g^{-1}\mid g\in\ul{G}\}= \ul{T}$ and $(\ul{T}\cap\{\theta(g)g^{-1}\mid g\in\ul{G}\})/\sim_{\theta,\ul{B}}=\ul{T}/\ul{T}^2$.
				
			\end{lemma}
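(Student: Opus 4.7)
The plan is to handle the two equalities separately, using the two pieces of structure already in place: the surjectivity of $g \mapsto \theta(g)g^{-1}$ onto $\ul{T}$ will be built up from the rank-one calculation of Lemma \ref{lemmagalphatalpha}, and the computation of the equivalence relation $\sim_{\theta,\ul{B}}$ on $\ul{T}$ will be reduced to a short Bruhat-decomposition argument.

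First, for the equality $\ul{T}\cap\{\theta(g)g^{-1}\mid g\in\ul{G}\}=\ul{T}$, the inclusion $\subseteq$ is tautological. For the reverse, fix $t\in\ul{T}$. Because $\ul{G}$ is semi-simple and simply connected and $\ul{T}$ is $\theta$-split, the argument just above the statement gives the decomposition $\ul{T}=\prod_{\alpha\in\Delta}\ul{T}_\alpha$, and writing $t=\prod_{\alpha\in\Delta}t_\alpha$ one has $\theta(t_\alpha)=t_\alpha^{-1}$ for every $\alpha$. The plan is then to literally repeat the construction carried out in the proof of Proposition \ref{propapartmentchain}: enumerate $\Delta=\{\alpha_1,\dots,\alpha_r\}$, define inductively the twisted involutions $\theta_{\alpha_k}'$ on $\ul{G}_{\alpha_k}$, invoke Lemma \ref{lemmagalphatalpha} to produce $g_k\in\ul{G}_{\alpha_k}$ with $\theta_{\alpha_k}'(g_k)g_k^{-1}=t_k$, and set $g=\prod_{i=1}^{r}g_{r-i+1}$. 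The identity \eqref{eqprodtithetagg-1} from that proof yields $\theta(g)g^{-1}=\prod_{i=1}^{r}t_i=t$, as required.

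Next, for the equality $(\ul{T})/\sim_{\theta,\ul{B}}=\ul{T}/\ul{T}^2$, I will show that the natural map $\ul{T}\to\ul{T}/\ul{T}^2$ factors through $\sim_{\theta,\ul{B}}$ and induces a bijection. Write $\ul{B}=\ul{T}\ul{U}$ with $\ul{U}$ the unipotent radical. Since $\ul{B}$ is $\theta$-split (because $\theta$ acts by inversion on $\ul{T}$ and hence sends $\ul{B}$ to the opposite Borel), $\theta(\ul{U})=\ul{U}^{-}$. Suppose $t_1,t_2\in\ul{T}$ and $t_2=\theta(b)t_1b^{-1}$ with $b=t'u\in\ul{T}\ul{U}$. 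Then $\theta(b)t_1b^{-1}=t'^{-1}\theta(u)t_1u^{-1}t'^{-1}$, so membership in $\ul{T}$ is equivalent to $\theta(u)t_1u^{-1}\in\ul{T}$. Rewriting $\theta(u)\,t_1u^{-1}=\theta(u)\,(t_1u^{-1}t_1^{-1})\,t_1\in\ul{U}^{-}\ul{U}\,t_1$, this forces $\theta(u)u'\in\ul{T}$ for some $u'\in\ul{U}$. The key observation is $\ul{U}^{-}\ul{U}\cap\ul{T}=\{1\}$, which is immediate from $\ul{U}^{-}\cap\ul{B}=\{1\}$; hence $u=1$ and $b=t'\in\ul{T}$. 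This gives $t_2=t'^{-1}t_1t'^{-1}=t_1t'^{-2}$, so $t_1\equiv t_2\pmod{\ul{T}^{2}}$. Conversely, for any $s\in\ul{T}$ and $t_1\in\ul{T}$ the choice $b=s\in\ul{T}$ yields $\theta(b)t_1b^{-1}=s^{-1}t_1s^{-1}=t_1s^{-2}$, so every class in $\ul{T}/\ul{T}^2$ is realized.

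The only real obstacle is the step $u=1$ in the second part; once one is willing to invoke the standard fact $\ul{U}^{-}\cap\ul{B}=\{1\}$ (equivalently, the openness and uniqueness of the big Bruhat cell), the argument is routine. The first part is essentially bookkeeping on top of Lemma \ref{lemmagalphatalpha}, and no new ingredient is needed beyond what has already been assembled in the proof of Proposition \ref{propapartmentchain}.
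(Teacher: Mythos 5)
Your proposal is correct and follows essentially the same route as the paper: the surjectivity onto $\ul{T}$ is obtained by rerunning the construction of Proposition \ref{propapartmentchain} via Lemma \ref{lemmagalphatalpha}, and the identification of the equivalence classes with $\ul{T}/\ul{T}^2$ is the paper's one-line Bruhat-decomposition remark, which you merely spell out (writing $b=t'u$, using $\theta(\ul{U})=\ul{U}^{-}$ and $\ul{U}^{-}\ul{U}\cap\ul{T}=\{1\}$ to force $b\in\ul{T}$).
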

			
			\begin{proof}
				
				Since $\ul{T}$ is  $\theta$-split,  using the same argument as that of Proposition \ref{propapartmentchain}, we may find $g\in\ul{G}$ such that $\theta(g)g^{-1}=t$. More precisely, the element $g$ could be chosen as the product $\prod_{i=1}^{r}g_{r-i+1}$ in \eqref{eqprodtithetagg-1}. This finishes the first statement. 
				
				Since both $\ul{B}$ and $\ul{T}$ are $\theta$-split, if $t_2=\theta(b)t_1b^{-1}$ for some $t_1,t_2\in\ul{T}$, $b\in\ul{B}$, using the Bruhat decomposition we must have $b\in\ul{T}$ and $t_1=t_2b^2$, finishing the proof of the second statement.
				
			\end{proof}
			
			Using the lemma, we get a bijection
			$$\ch{-}{\theta}(\ul{G})/\ul{H}\leftrightarrow  \ul{T}/\ul{T}^2$$
			given as follows: for each $\ul{B}'\in\ch{-}{\theta}(\ul{G})$, we may find $g\in\ul{G}$ with $\ul{B}'=\ul{B}^{g}$ and $\theta(g)g^{-1}\in \ul{T}$. The class $\theta(g)g^{-1}$ in $\ul{T}/\ul{T}^2$ does not depend on the choice of $g$, which leads to the above bijection.
			
			Since
			$$\ul{T}/\ul{T}^2=\prod_{\alpha\in\Delta}\ul{T}_{\alpha}/\ul{T}_{\alpha}^2=\prod_{\alpha\in\Delta}\{\pm 1\},$$
			the graph $\gamma(\ul{G},\theta)$ has $2^{r}$'s vertices. We claim that two vertices corresponding to $(a_{\alpha})_{\alpha\in\Delta}$ and $(b_{\alpha})_{\alpha\in\Delta}$ in $\prod_{\alpha\in\Delta}\{\pm 1\}$ have a common edge if and only if there exists exactly one $\alpha\in\Delta$ such that $a_{\alpha}\neq b_{\alpha}$. 
			
			On the one hand, two vertices satisfying the related condition must be connected by an edge, since the two corresponding $\theta$-split Borel subgroups representing them are included in a common parabolic subgroup related to the root $\alpha$ whose Levi subgroup is of semi-simple rank 1. This in particular means that each vertex has degree at least $r$. On the other hand, since each chamber in the vectorial building $\mcv(\ul{G})$ has exactly $r$'s panels, so each vertex has degree at most $r$. 
			
			Thus $\gamma(\ul{G},\theta)$ is exactly the graph we described as above, which is in particular connected, $r$–regular and bipartite.
			
		\end{proof}
		
		To make the theory more complete, we also introduce the following twisted version of the graph. Let $\chi$ be a quadratic character of $\ul{H}$ that is trivial on $\Hbarstar$. Let $\ul{H}_{\chi}$\index{$\ul{H}_{\chi}$} be the kernel of $\chi$, thus $\ul{H}_{\chi}$ contains $\Hbarstar$. We similarly define a graph $\gamma(\ul{G},\theta,\chi)$\index{$\gamma(\ul{G},\theta,\chi)$}, whose vertices are $\ch{-}{\theta}(\ul{G})/\ul{H}_{\chi}$, and whose panels are $\ul{H}_{\chi}$-conjugacy classes of panels contained in some chamber corresponding to a $\theta$-split Borel subgroup.  Using Proposition \ref{propgeneralclass} and Proposition \ref{propBGHconnnected}, we have
		
		\begin{proposition}\label{propBGHchiconnnected}
			
			Under Assumption \ref{assumpchi}, $\gamma(\ul{G},\theta,\chi)$ is well-defined and also connected. 
			
		\end{proposition}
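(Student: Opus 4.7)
The plan is to bootstrap Proposition~\ref{propBGHconnnected} together with the orbit classification in Proposition~\ref{propgeneralclass} (via Proposition~\ref{proppanelpair}) using the sandwich inclusion $\Hbarstar\subseteq\ul{H}_{\chi}\subseteq\ul{H}$, which holds by Assumption~\ref{assumpchi} on $\chi$.

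First I would check that $\gamma(\ul{G},\theta,\chi)$ genuinely is a graph, i.e. that any edge links at most two vertices. For a panel $D$ contained in some $\theta$-split Borel subgroup, let $\ch{D}{-\theta}(\ul{G})$ denote the set of $\theta$-split Borel subgroups adjacent to $D$. One needs the number of $\ul{H}_{\chi}$-orbits on $\ch{D}{-\theta}(\ul{G})$ to be at most two. The last sentence of Proposition~\ref{propgeneralclass} asserts that the $\Hbarstar\cap\ul{M}_{D}$-orbits on the Borel subgroups of the rank-one Levi $\ul{M}_{D}$ coincide with the $\ul{H}\cap\ul{M}_{D}$-orbits; consequently so do the $\ul{H}_{\chi}\cap\ul{M}_{D}$-orbits, since $\Hbarstar\subseteq\ul{H}_{\chi}\subseteq\ul{H}$. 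Combining with the rank-one restriction of the bound in Proposition~\ref{proppanelpair}(3), we conclude that $\ch{D}{-\theta}(\ul{G})$ has at most two $\ul{H}_{\chi}$-orbits, making $\gamma(\ul{G},\theta,\chi)$ well defined (with a loop when there is only one orbit).

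For connectedness, fix two vertices $[\ul{B}]_{\ul{H}_{\chi}}$ and $[\ul{B}']_{\ul{H}_{\chi}}$. Applying Proposition~\ref{propBGHconnnected} to the $\theta$-split Borel subgroups $\ul{B}$ and $\ul{B}'$, one obtains a gallery $\ul{B}_{0}=\ul{B},\ul{B}_{1},\dots,\ul{B}_{d}$ of $\theta$-split Borel subgroups with $\ul{B}_{d}$ in the same $\Hbarstar$-orbit as $\ul{B}'$. For each $i=1,\dots,d$ the chambers $\ul{B}_{i-1}$ and $\ul{B}_{i}$ share a panel $D_{i}$, and this panel supplies an edge in $\gamma(\ul{G},\theta,\chi)$ joining $[\ul{B}_{i-1}]_{\ul{H}_{\chi}}$ with $[\ul{B}_{i}]_{\ul{H}_{\chi}}$. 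Since $\Hbarstar\subseteq\ul{H}_{\chi}$, the last vertex $[\ul{B}_{d}]_{\ul{H}_{\chi}}$ equals $[\ul{B}']_{\ul{H}_{\chi}}$, producing the required path.

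There is no real obstacle here: both parts reduce almost formally to results already established. The only point that requires a moment of care is verifying that the orbit count on $\ch{D}{-\theta}(\ul{G})$ does not strictly refine as one passes from $\Hbarstar$ to $\ul{H}_{\chi}$; this is handled by the final assertion of Proposition~\ref{propgeneralclass} which gives equality of $\Hbarstar$- and $\ul{H}$-orbits, squeezing $\ul{H}_{\chi}$ between them.
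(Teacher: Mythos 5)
Your argument is correct and is essentially the paper's own proof, which simply cites Proposition \ref{propgeneralclass} (for the at-most-two-orbit bound that makes the graph well defined) and the $\Hbarstar$-strengthened form of Proposition \ref{propBGHconnnected} (for the connecting gallery), exactly the two ingredients you combine via the sandwich $\Hbarstar\subseteq\ul{H}_{\chi}\subseteq\ul{H}$. Note only that you do not actually need the orbits of $\Hbarstar\cap\ul{M}_D$ and $\ul{H}\cap\ul{M}_D$ to coincide: the rank-one classification already gives at most two orbits of $\theta$-split Borels adjacent to $D$ for the smaller group $\Hbarstar$, and this bound passes to $\ul{H}_{\chi}$-orbits directly.
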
 
		
		Moreover, a necessary condition for this graph to be bipartite is the following:
		
		\begin{assumption}\label{assumpHchibipartite}
			
			For each panel $D$, the set of $\theta$-split Borel subgroups adjacent to $D$ is either empty or has exactly two $\ul{H}_{\chi}$-orbits.
			
		\end{assumption}  
		
		Otherwise, the set of $\theta$-split Borel subgroups adjacent to $D$ has exactly one $\ul{H}_{\chi}$-orbit, meaning that $[D]_{\ul{H}_{\chi}}$ is a loop in $\gamma(\ul{G},\theta,\chi)$. Thus $\gamma(\ul{G},\theta,\chi)$ cannot be bipartite. 
		
		We conjecture that Assumption \ref{assumpHchibipartite} also gives a sufficient condition of the graph being bipartite.
		
		In particular, if the projection map  $\ch{-}{\theta}(\ul{G})/\ul{H}_{\chi}\rightarrow\ch{-}{\theta}(\ul{G})/\ul{H}$ is a bijection, which happens if the character $\chi$ is trivial, then we have $\gamma(\ul{G},\theta)=\gamma(\ul{G},\theta,\chi)$ and we return to the non-twisted case.
		
		Finally, the following example illustrates the necessity of considering the twisted version.
		
		\begin{example}
			
			Let $L/K$ be a quadratic ramified extension, $G=\mrres_{L/K}\mrgl_2$, $\sigma$ the Galois involution, $\theta(g)=w_0\sigma(g)w_0$ with $w_0=\begin{pmatrix}
				0 & 1\\ -1 & 0
			\end{pmatrix}$, and $x$ the point in $\mcb(G)$ corresponding to the maximal compact group $\mrgl_2(\mfo_L)$. Since $x$ is $\theta$-stable, we may take $\ul{G}=\ul{G}_x$ and $\ul{H}=(\ul{G}^\theta)^\circ$. Then one may verify directly that $\ch{-}{\theta}(\ul{G})$ has one $\ul{H}$-orbit. However, if we consider the Prasad character $\chi$ of $H$ and restrict it to $\ul{H}$, then $\ul{H}\neq\ul{H}_\chi$ and $\ch{-}{\theta}(\ul{G})$ has two $\ul{H}_\chi$-orbits. So to study the distinction of $\mrst_G$ by the Prasad character $\chi$, the graph $\gamma(\ul{G},\theta,\chi)$ is the correct object while $\gamma(\ul{G},\theta)$ is not. See \cite{courtes2017distinction} for the missing details.
			
		\end{example}
		
		\subsection{Existence of a gallery of zero $\theta$-distance chambers}
		
		In this part, we fix a facet $F\in\Ftmax(G)$. We consider the set $\ch{F}{0}(G)$. Our main proposition here is as follows:
		
		\begin{proposition}\label{propJWBgallerylocal}
			
			For any two chambers $C,C'\in\ch{F}{0}(G)$, there exists a gallery $C_{0},C_{1},\dots,C_{d}$ in $\ch{F}{0}(G)$ such that 
			$C_{0}=C$, and $C_{d}$ and $C'$ are in the same $\Hstar$-orbit.
			
		\end{proposition}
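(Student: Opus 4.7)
The plan is to transfer the problem to the reductive quotient $\ul{G}_F$ and invoke Proposition \ref{propBGHconnnected}. First I would establish the key bijection $\ch{F}{0}(G) \leftrightarrow \ch{-}{\theta}(\ul{G}_F)$. Under \eqref{bijection}, a chamber $C \in \ch{F}{}(G)$ corresponds to a Borel subgroup $\ul{B}$ of $\ul{G}_F$, and the order-preserving correspondence $\mcf_F(G) \leftrightarrow \mcf(\ul{G}_F)$ is $\theta$-equivariant (since $F$ is $\theta$-stable, so is $\mcg_F$). Hence the $\theta$-stable facets $F'$ with $F \leq F' \leq C$ correspond to $\theta$-stable facets of $\mcv(\ul{G}_F)$ lying between $\{0\}$ and $\ul{C}$, which are in bijection with $\theta$-stable parabolic subgroups of $\ul{G}_F$ containing $\ul{B}$. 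The condition $\tdist{C}=0$---that $F$ is the unique maximal $\theta$-stable facet of $\overline{C}$---then translates to: the only $\theta$-stable parabolic of $\ul{G}_F$ containing $\ul{B}$ is $\ul{G}_F$ itself, i.e.\ $\ul{B}$ is $\theta$-split.

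Next, write $\ul{B}, \ul{B}' \in \ch{-}{\theta}(\ul{G}_F)$ for the Borels corresponding to $C, C'$. Applying Proposition \ref{propBGHconnnected} to $\ul{G}_F$ with its induced involution, I obtain a gallery $\ul{B}_0, \ul{B}_1, \dots, \ul{B}_d$ of $\theta$-split Borel subgroups with $\ul{B}_0 = \ul{B}$ and $\ul{B}_d$ in the same $\Hbarstar$-orbit as $\ul{B}'$, where $\Hbarstar = p_F((\ul{G}_F^{\mathrm{sc}})^{\theta'}(\bs{k}))$ and $\theta'$ denotes the induced involution on $\ul{G}_F^{\mathrm{sc}}$. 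Consecutive Borels share a panel, which under \eqref{bijection} corresponds to a panel in $\pa{F}{}(G)$, so the gallery lifts to a gallery $C_0, C_1, \dots, C_d$ in $\mcb(G)$ with every $C_i \in \ch{F}{0}(G)$, $C_0 = C$, and $C_d$ corresponding to $\bar{h} \cdot \ul{B}'$ for some $\bar{h} \in \Hbarstar$.

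The main obstacle is the final step: promoting the $\Hbarstar$-action on $\ch{}{}(\ul{G}_F)$ to an action of $\Hstar$ on $\ch{F}{}(G)$. I need to find $h \in \Hstar \cap \parah{F}$ whose action on $\ch{F}{}(G)$ corresponds, under \eqref{bijection} together with the reduction $\parah{F} \twoheadrightarrow \ul{G}_F(\bs{k})$, to that of $\bar{h}$, so that $C_d = h \cdot C'$. Writing $\bar{h} = p_F(\bar{h}_{\mathrm{sc}})$ with $\bar{h}_{\mathrm{sc}} \in (\ul{G}_F^{\mathrm{sc}})^{\theta'}(\bs{k})$ and letting $\tilde{F}$ be the facet of $\mcb(G^{\mathrm{sc}})$ lifting $F$, I pick any preimage $\tilde{h} \in \para{\tilde{F}}{G^{\mathrm{sc}}}$ of $\bar{h}_{\mathrm{sc}}$ under reduction. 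The element $\theta'(\tilde{h})\tilde{h}^{-1}$ then lies in the pro-$p$ pro-unipotent radical $\para{\tilde{F},+}{G^{\mathrm{sc}}}$, since its reduction to $\ul{G}_F^{\mathrm{sc}}(\bs{k})$ equals $\theta'(\bar{h}_{\mathrm{sc}})\bar{h}_{\mathrm{sc}}^{-1} = 1$, and it defines a continuous $1$-cocycle for the order-two $\theta'$-action. Since $p \neq 2$, the continuous cohomology $H^1(\langle\theta'\rangle, \para{\tilde{F},+}{G^{\mathrm{sc}}})$ of a pro-$p$ group under an involution vanishes, so there exists $u \in \para{\tilde{F},+}{G^{\mathrm{sc}}}$ with $\theta'(u)u^{-1} = \theta'(\tilde{h})\tilde{h}^{-1}$. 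Setting $h_{\mathrm{sc}} := u^{-1}\tilde{h}$ yields a $\theta'$-invariant element that still reduces to $\bar{h}_{\mathrm{sc}}$, and $h := p(h_{\mathrm{sc}}) \in \Hstar \cap \parah{F}$ is the required element, completing the proof.
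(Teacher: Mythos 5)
Your route is the paper's: localize at $F$ via \eqref{bijection}, identify $\ch{F}{0}(G)$ with $\ch{-}{\theta}(\ul{G}_F)$, and invoke Proposition \ref{propBGHconnnected}; and you are right to flag that the statement asks for an element of $\Hstar\subset G(K)$ while Proposition \ref{propBGHconnnected} only produces an element of $\Hbarstar\subset\ul{G}_F(\bs{k})$ --- a passage the paper leaves implicit. Two caveats on your first part: the equivalence you dispatch with an ``i.e.'' (no proper $\theta$-stable parabolic of $\ul{G}_F$ contains $\ul{B}$ if and only if $\ul{B}$ is $\theta$-split) is not formal in the direction ``not $\theta$-split $\Rightarrow$ some proper $\theta$-stable parabolic contains $\ul{B}$''; it genuinely uses that $F$ is a \emph{maximal} $\theta$-stable facet, i.e. that after localizing, $\ul{G}_F$ admits a $\theta$-split maximal split torus modulo centre. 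The paper asserts the bijection without proof, so this is a looseness rather than an error.

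The genuine gap is in your lifting step. You pick ``any preimage $\tilde{h}\in\para{\tilde{F}}{G^{\mathrm{sc}}}$ of $\bar{h}_{\mathrm{sc}}$ under reduction'', which tacitly identifies the maximal reductive quotient of the special fibre of the parahoric of $G^{\mathrm{sc}}$ at $F$ with $\ul{G}_F^{\mathrm{sc}}$. That identification is false in general: the reductive quotient of a parahoric of a simply connected group need not be simply connected, let alone equal to the simply connected cover of $\ul{G}_F^{\mathrm{der}}$. For instance, for a ramified $\mrsu_3$ (so $G=G^{\mathrm{sc}}$) one special maximal parahoric has reductive quotient $\mrso_3\cong\mathrm{PGL}_2$, whereas $\ul{G}_F^{\mathrm{sc}}\cong\mrsl_2$; an element $\bar{h}_{\mathrm{sc}}\in\ul{G}_F^{\mathrm{sc}}(\bs{k})$ then has no preimage in $\para{\tilde{F}}{G^{\mathrm{sc}}}$ at all, and your cocycle correction is applied to an element that need not exist. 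What your $H^1$-vanishing argument does prove (correctly, since $p\neq 2$) is that the $\theta'$-fixed points of $\para{\tilde{F}}{G^{\mathrm{sc}}}$ surject onto the $\theta'$-fixed points of its \emph{own} reductive quotient; what is missing is the comparison of that quotient with $\ul{G}_F^{\mathrm{sc}}$, i.e. an argument that the image of $(\para{\tilde{F}}{G^{\mathrm{sc}}})^{\theta'}$ in $\ul{G}_F(\bs{k})$ contains $\Hbarstar$. This can be repaired, either by factoring $\ul{G}_F^{\mathrm{sc}}\rightarrow\ul{G}_F^{\mathrm{der}}$ through the map on special fibres induced by $\mcg^{\mathrm{sc}}_{F}\rightarrow\mcg_F$ (and checking $\theta$-equivariance of the images), or, closer to the paper's own treatment in Proposition \ref{propdistchamber}: the conjugating element in the proof of Proposition \ref{propBGHconnnected} is an explicit product of root-group elements, each of which lifts to an affine root subgroup of $G$, hence into $p(G^{\mathrm{sc}}(K))\cap\parah{F}$, and one then makes the lift $\theta$-fixed by the same $H^{1}$-trick carried out inside $G^{\mathrm{sc}}$. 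As written, however, the step you present as the crux of your argument does not go through.
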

		
		\begin{proof}
			
			We consider the group scheme $\mcg_{F}$ over $\mfo_{K}$, and the related reductive group $\ul{G}_{F}$ over $\bs{k}$. Since $F$ is $\theta$-stable, we may also realize $\theta$ as an involution on $\mcg_{F}$ and $\ul{G}_{F}$. Then then there is a bijection between $\ch{F}{}(G)$ and $\ch{}{}(\ul{G}_{F})$ given by \eqref{bijection}. Under this bijection, the subset $\ch{F}{0}(G)$ corresponds bijectively to $\ch{-}{\theta}(\ul{G}_{F})$. Then proposition follows from Proposition \ref{propBGHconnnected} with $\ul{G}=\ul{G}_F$.

			

		\end{proof}
		
		\subsection{Conclusion}
		
		Combining with all the discussions above, we may give an upper bound of the distinguished dimension of the Steinberg representation.
		
		\begin{theorem}\label{thmupperbound}
			
			Let $\chi$ be a character of $H$ satisfying Assumption \ref{assumpchi}. Then the dimension of
			$$\mrhom_{H}(\mrst_{G},\chi)\cong \mch(G)^{(H,\chi)}$$ is bounded by $$\car{\Ftmax(G)/H}$$
			
		\end{theorem}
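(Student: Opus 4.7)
The plan is to prove injectivity of the evaluation map
\[
\Phi\colon\mch(G)^{(H,\chi)}\longrightarrow\bigoplus_{[F]\in\Ftmax(G)/H}\mbc,\qquad f\longmapsto\bigl(f(C_{F})\bigr)_{[F]},
\]
where for each $H$-orbit $[F]\in\Ftmax(G)/H$ I fix a representative $F$ and a chamber $C_{F}\in\ch{F}{0}(G)$. Injectivity of $\Phi$ gives exactly the asserted bound $\dim_{\mbc}\mch(G)^{(H,\chi)}\leq\car{\Ftmax(G)/H}$. By Proposition \ref{propreddist0} (which exploits the skew-panel analysis of Proposition \ref{propskewpair} together with Assumption \ref{assumpchi}), every $f\in\mch(G)^{(H,\chi)}$ is entirely determined by its restriction to $\ch{}{0}(G)=\bigsqcup_{F\in\Ftmax(G)}\ch{F}{0}(G)$; and by $(H,\chi)$-equivariance this restriction is further determined by the values on $H$-orbit representatives. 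Thus it suffices to show that if $f(C_{F})=0$ for every $[F]$, then $f|_{\ch{F}{0}(G)}\equiv 0$ for every $F\in\Ftmax(G)$.

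The core argument is organized as a double induction: an outer induction on $r=\mrdim F^{\theta}$ and an inner induction along a gallery inside $\ch{F}{0}(G)$. Fix $F$ and a target chamber $C\in\ch{F}{0}(G)$. By Proposition \ref{propJWBgallerylocal} choose a gallery $C_{F}=C_{0},C_{1},\ldots,C_{d}$ inside $\ch{F}{0}(G)$ with $C_{d}$ in the $\Hstar$-orbit of $C$, and prove $f(C_{i})=0$ by induction on $i$. Let $D_{i}$ be the panel between $C_{i-1}$ and $C_{i}$; since both lie in $\ch{F}{0}(G)$, Proposition \ref{propdistchamber} forces $\mfh_{D_{i}}$ to be $\theta$-stable, so $(\mca,D_{i})$ is a non-skew panel pair (one of the trivial, even, upper, or lower types of Definition \ref{defclasspanelpair}). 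Writing the harmonic identity
\[
\sum_{C''\in\ch{D_{i}}{}(G)}f(C'')=0,
\]
grouping $\ch{D_{i}}{}(G)$ into $\Hstar$-orbits, using $(\Hstar,\chi)$-equivariance to collapse each orbit to a single weighted scalar, and using Proposition \ref{propreddist0} to rewrite values at chambers of positive $\theta$-distance in terms of values at chambers of $\theta$-distance $0$, the identity reduces to a finite linear relation among values $f(C'')$ with $C''\in\ch{F'}{0}(G)$ for various $F'\in\Ftmax(G)$. By the outer induction on $r$ the contributions with $\mrdim F'^{\theta}\neq r$ are already known to vanish; by the inner induction $f(C_{0})=\cdots=f(C_{i-1})=0$; and by the classification of Proposition \ref{proppanelpair} together with the cardinalities $Q_{D}$ appearing there, the coefficient of $f(C_{i})$ is nonzero, so $f(C_{i})=0$.

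The main obstacle is twofold. First, one must verify, case by case, that the coefficient multiplying $f(C_{i})$ in the collapsed harmonic identity is nonzero in each of the trivial, even, upper, and lower panel-pair subcases; this requires the rank-one classifications of Section \ref{sectionrankone} (Propositions \ref{proprank1classSL2} and \ref{proprank1classSU3}) and the global classification of Proposition \ref{propgeneralclass}, and is conceptually the analytic incarnation of the combinatorial statement that the residue graph $\gamma(\ul{G}_{F},\theta)$ is connected (Proposition \ref{propBGHconnnected}). Second, the outer induction on $\mrdim F^{\theta}$ must simultaneously absorb upper panel-pair contributions (which introduce chambers of strictly larger $\mrdim F'^{\theta}$) and lower panel-pair contributions (strictly smaller $\mrdim F'^{\theta}$), so the induction ordering and the mode of applying Proposition \ref{propreddist0} must be arranged so that at each inductive step the relation at $D_{i}$ involves only already-vanishing terms together with the single unknown $f(C_{i})$. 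Once both points are carried out, injectivity of $\Phi$ is established and the theorem follows.
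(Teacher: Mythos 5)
Your overall strategy is the same as the paper's: reduce to $\theta$-distance $0$ via Proposition \ref{propreddist0}, connect chambers of $\ch{F}{0}(G)$ by a gallery via Proposition \ref{propJWBgallerylocal}, and propagate vanishing along the gallery using harmonicity, equivariance and the panel-pair classification, with an outer induction over $\mrdim F^{\theta}$. However, as written your induction is not well-founded, and this is a genuine gap. You assert that at a panel $D_{i}$ of the gallery both ``upper'' contributions (chambers of $\theta$-rank $r+1$) and ``lower'' contributions (chambers of $\theta$-rank $r-1$) may occur, and you explicitly defer the choice of an ordering that ``absorbs'' both; but if both really occurred, no ordering of the outer induction could work, since you would need values of strictly larger and strictly smaller $\theta$-rank to be known simultaneously. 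The missing observation, which is precisely what makes the paper's proof close, is that lower contributions never occur here: since $C_{i-1},C_{i}\in\ch{F}{}(G)$ are distinct adjacent chambers, their common panel satisfies $F\leq D_{i}$, hence every chamber $C''\in\ch{D_{i}}{}(G)$ admits $F$ as a facet; any $\theta$-stable apartment containing $C''$ then contains $F^{\theta}$, so $\trank{C''}\geq \mrdim F^{\theta}=r$. Thus only the trivial/even/upper cases of Definition \ref{defclasspanelpair} arise, the chambers adjacent to $D_{i}$ have $\theta$-rank $r$ or $r+1$, and the outer induction must run over representatives ordered by decreasing $\mrdim F^{\theta}$, exactly as in the paper: the rank-$(r+1)$ terms vanish by the outer hypothesis combined with Proposition \ref{propreddist0}, and by Proposition \ref{proppanelpair}.(3) the remaining rank-$r$ terms lie in at most two $\Hstar$-orbits, those of $C_{i-1}$ and $C_{i}$, so $f(C_{i})=0$ follows.

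A further minor point: no case-by-case verification of the nonvanishing of the coefficient of $f(C_{i})$ is needed. After collapsing each $\Hstar$-orbit using Assumption \ref{assumpchi} and the triviality of $\epsilon_{G}$ on parahoric subgroups (Proposition \ref{resoforient}), that coefficient is the cardinality of the $\Hstar$-orbit of $C_{i}$ inside $\ch{D_{i}}{}(G)$, a positive integer; the rank-one computations of Section \ref{sectionrankone} enter only through the bound on the number of rank-$r$ orbits and through Proposition \ref{propJWBgallerylocal} (connectedness of $\gamma(\ul{G}_{F},\theta)$), which you already invoke. Once the ordering is fixed as above, the rest of your argument reproduces the paper's proof.
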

		
		\begin{proof}
			
			Let $f\in\mch(G)^{(H,\chi)}$ and $F_{1},\dots,F_{k}$ representatives in $\Ftmax(G)/H$. We also assume without loss of generality that $\mrdim F_{1}^{\theta}\geq\mrdim F_{2}^{\theta}\geq\dots\geq\mrdim F_{k}^{\theta}$. We claim that for any $n=1,2,\dots,k$ if the value of $f$ on $\ch{F_{i}}{\infty}(G)$ with $i=1,\dots,n-1$ is determined, then the value of $f$ on $\ch{F_{n}}{\infty}(G)$ is determined by its value on any fixed chamber in $\ch{F_{n}}{0}(G)$. 
			
			Let $C$ be a fixed chamber in $\ch{F_{n}}{0}(G)$. We need to show that the value of $f$ on any other chamber $C'$ in $\ch{F_{n}}{\infty}(G)$ is determined. Using Proposition \ref{propreddist0}, the value of $f$ on each $\ch{F_{i}}{\infty}(G)$ is determined by its value on $\ch{F_{i}}{0}(G)$. So essentially we may assume that $C'$ is of $\theta$-distance 0. Using Proposition \ref{propJWBgallerylocal} we may find $h\in \Hstar$ and a gallery in $\ch{F_{n}}{0}(G)$ connecting $C$ and $h\cdot C'$. Moreover, for any panel $D$ in this gallery, $\ch{D}{}(G)$ consists of at most two $\Hstar$-orbits of $\theta$-rank $\mrdim F_{n}^{\theta}$, and other $\Hstar$-orbits of $\theta$-rank $\mrdim F_{n}^{\theta}+1$ (\emph{cf.} Proposition \ref{proppanelpair}). 
			
			For $n=1$, we notice that $r:=\mrdim F_{1}^{\theta}$ is indeed the maximal possible $\theta$-rank of a $\theta$-stable apartment. Thus for any panel $D$ in the gallery connecting $C$ and $h\cdot C'$, the set $\ch{D}{}(G)$ has at most two $\Hstar$-orbits. Then using the $(H,\chi)$-equivariance and the harmonic condition of $f$, the value of $f$ on $C'$ is determined by its value on $C$. 
			
			For general $n$, the value of $f$ on $H$-conjugacy classes of chambers in $\ch{F_{i}}{\infty}(G)$ with $i<n$ are determined. In particular, the value of $f$ on any chamber of $\theta$-rank greater than $\mrdim F_{n}^{\theta}$ is determined, which follows from our ranking of $F_{i}$. Still using the $(H,\chi)$-equivariance of $f$ and the harmonic condition, the value of $f$ on $C'$ is determined by its value on $C$.
			
			As a result, if we fix $C_{i}\in\ch{F_{i}}{0}(G)$ for each $i=1,2,\dots,k$, then any $f$ is determined by its value on $C_{i}$ for each $i$. Thus $\mrdim_{\mbc}\mch(G)^{(H,\chi)}$ is bounded by $\car{\Ftmax(G)/H}$.
			
		\end{proof}
		
		We end this section with an example, showing that our Assumption \ref{assumpchi} on $\chi$ is somehow necessary.
		
		\begin{example}\label{examplechi}
			
			Let $G=\mrsl_2(K)$, $\theta$ the conjugation by $\mrdiag(1,-1)$ and $H$ the diagonal torus of $G$. Let $F$ be the vertex in $\mcb(G)$ such that $\parah{F}=\mrsl_2(\mfo_K)$. Then $\parah{F}/\parah{F,+}\cong \mrsl_2(\bs{k})$ and the set $\ch{F}{}(G)$ is in bijection with $\ch{}{}(\mrsl_2(\bs{k}))$, which corresponds to $\mbp^1(\bs{k})$. Let $\chi$ be a character of $H(K)$ trivial on $H(\mfo_K)_+$, the maximal open compact pro-$p$-subgroup of $H(\mfo_K)$, such that it induces a \textbf{faithful} character of $H(\bs{k})\cong H(\mfo_K)/H(\mfo_K)_+$. Then, the $(H,\chi)$-equivariance on harmonic cochains defined on $\ch{F}{}(G)$ imposes no restriction. Thus locally we have $q$'s linearly independant harmonic cochains. Since in this case $\mcb(G)$ is a tree, it is easily seen that each harmonic cochain defined on $\ch{F}{}(G)$ could be extended to an $(H,\chi)$-equivariant harmonic cochain in $\mch(G)$. This is definitely not what we want.
			
		\end{example}
		
		\section{$H$-orbits of maximal $\theta$-stable facets}\label{sectionHorbitofFmax}
		
		In this section, we further characterize the set $\Ftmax(G)/H$. Let $\chi$ be a character of $H$ satisfying the Assumption \ref{assumpchi}. We define a graph structure $\Gamma(G,\theta)$ on $\Ftmax(G)/H$, with emphasis on some of its connected components called $\chi$-effective connected components. Then we show that  the dimension of $\mch(G)^{(H,\chi)}$ is bounded by the number of effective connected components of $\Gamma(G,\theta)$.
		
		From now on, we admit Assumption \ref{assumpZHZG}. Then $\mcb(G)^{\theta}$ is identified with $\mcb(H)$ via the embedding $\iota$.
		
		\subsection{Equivalence relation on $H$-conjugacy classes of $\theta$-stable apartments}
		
		We consider $H$-orbits of $\theta$-stable apartments of $G$. For a $\theta$-stable apartment, we denote by $[\mca]_{H}$\index{$[\mca]_{H},[F]_{H},[F^{\theta}]_{H}$} its $H$-orbit. We have shown that the set of $H$-orbits $\{[\mca]_{H}\}$ is finite. Similarly, given a $\theta$-stable facet $F\in\Ft(G)$, we denote by $[F]_{H}$ the $H$-conjugacy class of $F$, and by $[F^{\theta}]_{H}$ that of $F^{\theta}$.
		
		We call two $\theta$-stable apartments $\mca$ and $\mca'$ \emph{$\theta$-equivalent}, if their $\theta$-invariant affine subspaces $\mca^{\theta}$ and $\mca'^{\theta}$ are $H$-conjugate. In particular, any two representatives in a certain $H$-orbit $[\mca]_{H}$ are $\theta$-equivalent. Let $[\mca]_{H,\sim}$\index{$[\mca]_{H,\sim}$} be the corresponding $\theta$-equivalence class of a $\theta$-stable apartment $\mca$.
		
		We define a map
		\begin{equation}\label{eqthetamaxfacetbij}
			\Ftmax(G)/H\rightarrow \bigsqcup_{[\mca]_{H,\sim}}\ch{}{}(\mca^{\theta})/H,\quad [F]_{H}\mapsto [F^{\theta}]_{H}
		\end{equation}
		via the following lemma. 
		
		\begin{lemma}\label{lemmauniquethetaequiv}
			
			Given $F\in\Ftmax(G)$, there exists a $\theta$-stable apartment $\mca$ containing $F$ unique up to $\theta$-equivalence, such that $F^{\theta}$ is a chamber of $\mca^{\theta}$.
			
		\end{lemma}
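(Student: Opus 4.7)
For existence, by the very definition of $\Ftmax(G)$ there exists a $\theta$-stable apartment $\mca$ such that $F^{\theta}$ is a chamber of $\mca^{\theta}$. Since $F^{\theta}$ is a non-empty subset of $\mca$ lying in the facet $F$ of $\mcb(G)$, the standard fact that any facet of a Bruhat--Tits building meeting an apartment is entirely contained in it yields $F\subset\mca$.

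For uniqueness, let $\mca_{1}$ and $\mca_{2}$ be two $\theta$-stable apartments containing $F$ with $F^{\theta}$ a chamber of each $\mca_{i}^{\theta}$. I will in fact prove the stronger conclusion $\mca_{1}^{\theta}=\mca_{2}^{\theta}$ as subsets of $\mcb(G)$; the desired $\theta$-equivalence then follows with $h=1$. Set $r:=\mrdim F^{\theta}=\mrdim \mca_{1}^{\theta}=\mrdim \mca_{2}^{\theta}$ and fix an interior point $x\in F^{\theta}$. Because $F^{\theta}$ is a chamber of each $\mca_{i}^{\theta}$, i.e., an open subset of full dimension, it contains a neighborhood of $x$ in $\mca_{i}^{\theta}$ for both $i=1,2$. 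Consequently the tangent subspaces $T_{x}\mca_{1}^{\theta}$ and $T_{x}\mca_{2}^{\theta}$, viewed as $r$-dimensional subspaces of the tangent cone $T_{x}\mcb(G)^{\theta}$, both coincide with the affine span of $F^{\theta}-x$.

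Now invoke the CAT(0) structure of $\mcb(G)$. Since $\theta$ is an isometric involution, its fixed-point set $\mcb(G)^{\theta}$ is closed and convex, hence CAT(0) with the induced metric. In any CAT(0) space a geodesic is uniquely determined by its initial point and initial velocity, and so a flat subspace through a given point is uniquely determined by its tangent space there. Applied to the $r$-dimensional flats $\mca_{1}^{\theta}$ and $\mca_{2}^{\theta}$ through $x$ in $\mcb(G)^{\theta}$, which share the same tangent space at $x$, this yields $\mca_{1}^{\theta}=\mca_{2}^{\theta}$. The main technical input is the CAT(0) rigidity of flats through a prescribed point with a prescribed tangent space; the rest is a straightforward unpacking of the definitions of chamber, facet, and tangent cone. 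A purely building-theoretic alternative would be to take $g\in \mrstab_{G}(F)$ with $g\cdot\mca_{1}=\mca_{2}$, note that $g\cdot\mca_{1}^{\theta}$ and $\mca_{2}^{\theta}$ are both $r$-dimensional affine subspaces of $\mca_{2}$ containing the $r$-dimensional set $F^{\theta}$, and deduce equality from uniqueness of affine hulls inside $\mca_{2}$; this gives the weaker statement of $\mrstab_G(F)$-conjugacy, but combined with CAT(0) rigidity it again forces $\mca_{1}^{\theta}=\mca_{2}^{\theta}$.
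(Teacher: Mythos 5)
Your existence step is fine (it is essentially the definition of $\Ftmax(G)$ together with the fact that apartments are subcomplexes), but the uniqueness argument has a genuine gap at its central step. The claim that ``in any CAT(0) space a geodesic is uniquely determined by its initial point and initial velocity, and so a flat through a given point is uniquely determined by its tangent space there'' is false precisely in the situation at hand: in buildings (already in trees) geodesics branch, so a flat is not determined by a germ, and $\mcb(G)^{\theta}$ is exactly such a branching space. In fact the stronger conclusion you assert, $\mca_{1}^{\theta}=\mca_{2}^{\theta}$ on the nose, is false in general. Take $G=G_{0}\times G_{0}$ with $\theta$ the factor swap and $H\cong G_{0}$ embedded diagonally (Assumption \ref{assumpZHZG} holds); then $\mcb(G)^{\theta}$ is the diagonal copy of $\mcb(G_{0})$, a maximal $\theta$-stable facet is $F=C\times C$ with $C$ a chamber of $\mcb(G_{0})$, and any two distinct apartments $\mca\neq\mca'$ of $\mcb(G_{0})$ containing $C$ yield $\theta$-stable apartments $\mca\times\mca$ and $\mca'\times\mca'$ containing $F$, both having $F^{\theta}$ as a chamber of their fixed parts, yet with distinct fixed parts. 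Only the assertion of the lemma survives: the fixed parts are conjugate by an element of $H$ fixing $F^{\theta}$. Your ``purely building-theoretic alternative'' does not repair this, because it produces an element of $\mrstab_{G}(F)$, not of $H$; $\theta$-equivalence requires the conjugating element to lie in $H$, and the appeal back to CAT(0) rigidity to force actual equality runs into the same false statement.

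The paper's proof goes exactly through the point your argument bypasses: Assumption \ref{assumpZHZG} identifies $\mcb(G)^{\theta}$ with the building $\mcb(H)$ (the paper notes this lemma is the only place in the section where that assumption is used), so $\mca_{1}^{\theta}$ and $\mca_{2}^{\theta}$ become affine subspaces of apartments of $\mcb(H)$ containing the common chamber $F^{\theta}$. By \cite{bruhat1972groupes}*{Corollaire 7.4.9.(1)} one can conjugate by an element of $\para{F^{\theta}}{H}\subset H$ so that both lie in a common apartment $\mca_{H}$ of $\mcb(H)$, and then, since each is the affine span of the $r$-dimensional set $F^{\theta}$ inside $\mca_{H}$, they coincide after this $H$-conjugation. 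The input you need is thus the building structure (apartment system plus the $H$-action) on the fixed-point set, not merely its CAT(0) metric geometry; to salvage your write-up you should drop the exact-equality claim and replace the CAT(0) rigidity step by this Bruhat--Tits argument producing an element of $H$.
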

		
		\begin{proof}
			
			Given two $\theta$-stable apartments $\mca_{1},\mca_{2}$ of $G$, such that $F^{\theta}$ is a chamber of both $\mca_{1}^{\theta}$ and $\mca_{2}^{\theta}$. Since both $\mca_{1}^{\theta}$ and $\mca_{2}^{\theta}$ are affine subspaces of some apartments of $\mcb(H)$, using \cite{bruhat1972groupes}*{Corollaire 7.4.9.(1)}, up to $\para{F^{\theta}}{H}$-conjugacy we may assume that $\mca_{1}^{\theta}$ and $\mca_{2}^{\theta}$ are included in a common apartment $\mca_{H}$ of $\mcb(H)$. Since $F^{\theta}$ generates the affine subspaces $\mca_{1}^{\theta}$ and $\mca_{2}^{\theta}$ in $\mca_{H}$, we must have $\mca_{1}^{\theta}=\mca_{2}^{\theta}$.
			
		\end{proof}

		Then, it is clear that the map \eqref{eqthetamaxfacetbij} is well-defined and bijective.
		
		\begin{remark}\label{remassumZHZG}
			
			Indeed, Lemma \ref{lemmauniquethetaequiv} is the only place where Assumption   \ref{assumpZHZG} is used in Section \ref{sectionHorbitofFmax}. On the other hand, if we already have a classification result of $H$-conjugacy classes of $\theta$-stable apartments of $\mcb(G)$, it is possible to prove Lemma \ref{lemmauniquethetaequiv} directly without consulting Assumption   \ref{assumpZHZG}.
			
		\end{remark}
		
		We also record the following lemma as a special case of \cite{bruhat1972groupes}*{Corollaire 7.4.9.(1)}.
		
		\begin{lemma}\label{lemmathetaequigconj}
			
			Let $\mca_{1},\mca_{2}$ be two $\theta$-stable apartments of $G$ such that $\mca_{1}^{\theta}=\mca_{2}^{
				\theta}$. Then we may find $g\in G$ fixing each point in $\mca_{1}^{\theta}=\mca_{2}^{
				\theta}$, such that $\mca_{2}=\mca_{1}^{g}$.
			
		\end{lemma}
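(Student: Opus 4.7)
The plan is to deduce this directly from the Bruhat--Tits apartment conjugation theorem, using only the obvious fact that the common fixed-point set lies in the intersection $\mca_1 \cap \mca_2$.

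First I would observe that by hypothesis
\[
\mca_1^{\theta} \;=\; \mca_2^{\theta} \;\subset\; \mca_1 \cap \mca_2,
\]
so in particular $\mca_1 \cap \mca_2$ is non-empty (and even contains a non-empty open subset of $\mca_1^{\theta}$, namely any chamber $F^{\theta}$ of $\mca_1^{\theta}$). The cited Corollaire 7.4.9.(1) of \cite{bruhat1972groupes} then applies: given any two apartments $\mca_1, \mca_2$ of $\mcb(G)$ whose intersection is non-empty, there exists $g \in G$ such that $\mca_2 = g \cdot \mca_1$ and $g$ fixes $\mca_1 \cap \mca_2$ pointwise.

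Combining these two statements, any such $g$ in particular fixes $\mca_1^{\theta}$ pointwise, since $\mca_1^{\theta} \subset \mca_1 \cap \mca_2$. This is exactly the conclusion of the lemma.

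There is essentially no obstacle beyond verifying that the Bruhat--Tits result applies in our (possibly non-split, non-simply-connected) setting; this is standard once one works, as in Section \ref{affineapartment}, with the reduced Bruhat--Tits building and the group $G^0$ (or equivalently invokes the version in \cite{KP23}*{\S 4.4}), and it costs nothing that the subset $\mca_1 \cap \mca_2$ happens to be unbounded — only non-emptiness is required. No further input from the $\theta$-theory is needed.
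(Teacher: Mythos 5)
Your proof is correct and coincides with the paper's own treatment: the paper simply records this lemma as a special case of Bruhat--Tits Corollaire 7.4.9.(1), and your argument supplies exactly the observation one needs to make this precise, namely that $\mca_1^{\theta}=\mca_2^{\theta}$ is a nonempty subset of $\mca_1\cap\mca_2$, so the element produced by Corollaire 7.4.9.(1) fixes it pointwise.
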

		
		\subsection{Effective maximal $\theta$-stable facet}
		
		A maximal $\theta$-stable facet $F$ is called $\chi$-\emph{effective} (resp. \emph{weakly $\chi$-effective}) if there exists a non-zero ($ H,\chi$)-equivarient (resp. $(\parah{F}\cap H,\chi)$-equivariant) harmonic cochain $\phi_{F}$ on $\ch{F}{}(G)$ that is supported on $\ch{F}{0}(G)$. By Proposition \ref{propJWBgallerylocal} such $\phi_{F}$ is unique up to a scalar. Since $\chi$ is usually fixed, we will simply say \emph{effective} or \emph{weakly effective} instead by omitting $\chi$. 
		
		It is clear that effectiveness implies weakly effectiveness. We also have the following obvious sufficient condition.
		
		\begin{proposition}\label{propweakeffective}
			
			A weakly effective maximal $\theta$-stable facet $F$ is effective if the $H$-orbits and the $\parah{F}\cap H$-orbits of $\ch{F}{0}(G)$ coincide.
			
		\end{proposition}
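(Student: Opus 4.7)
The strategy is to promote the weakly effective cochain $\phi_F$ to a fully $(H,\chi)$-equivariant harmonic cochain on $\ch{}{}(G)$ by extending via $H$-translates. Concretely, define
\[
\tilde{\phi}_F(C) := \chi(h)\,\phi_F(C_0)\quad\text{when}\quad C=h\cdot C_0\ \text{with}\ h\in H,\ C_0\in\ch{F}{0}(G),
\]
and $\tilde{\phi}_F(C):=0$ for $C\notin H\cdot\ch{F}{0}(G)$. We will check in order that $\tilde{\phi}_F$ is well-defined, $(H,\chi)$-equivariant, and harmonic on $\ch{}{}(G)$; its restriction to $\ch{F}{}(G)$ coincides with $\phi_F$ and witnesses effectiveness of $F$.

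For well-definedness, suppose $h_1\cdot C_1=h_2\cdot C_2$ with $C_i\in\ch{F}{0}(G)$ and $h_i\in H$. Then $h_2^{-1}h_1$ sends $C_1$ to $C_2$ within $\ch{F}{0}(G)$, so that $C_1$ and $C_2$ lie in a common $H$-orbit of $\ch{F}{0}(G)$. Invoking the orbit hypothesis, there exists $h'\in\parah{F}\cap H$ with $h'\cdot C_1=C_2$; alternatively, the uniqueness of the maximal $\theta$-stable facet for $\theta$-distance zero chambers, combined with the isometric character of the $G$-action on $\mcb(G)$, already forces $h_2^{-1}h_1\cdot F=F$, hence $h_2^{-1}h_1\in\parah{F}\cap H$. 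Either way, weak equivariance of $\phi_F$ yields $\phi_F(C_2)=\chi(h_2^{-1}h_1)\phi_F(C_1)$, which rearranges to $\chi(h_1)\phi_F(C_1)=\chi(h_2)\phi_F(C_2)$, as needed. The $(H,\chi)$-equivariance of $\tilde{\phi}_F$ is immediate from the definition.

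The substantive step is harmonicity at every panel $D$ of $\mcb(G)$. If no chamber adjacent to $D$ lies in $H\cdot\ch{F}{0}(G)$, the sum is trivially zero; otherwise, by $H$-translation we may assume $D$ is a panel of some $C_0\in\ch{F}{0}(G)$. When $F\subseteq\overline{D}$, every chamber adjacent to $D$ lies in $\ch{F}{}(G)$, and since $\phi_F$ vanishes on $\ch{F}{}(G)\setminus\ch{F}{0}(G)$, the harmonic sum reduces to $\sum_{C\ni D}\phi_F(C)=0$, which is guaranteed by the harmonicity of $\phi_F$ on $\ch{F}{}(G)$. For panels $D$ with $F\not\subseteq\overline{D}$, the orbit hypothesis is used to identify the chambers $C\ni D$ that actually contribute (those in $H\cdot\ch{F}{0}(G)$) with $\parah{F}\cap H$-translates of chambers in $\ch{F}{}(G)$ adjacent to a suitable panel on which local harmonicity of $\phi_F$ applies; the classification of panel pairs in Proposition \ref{proppanelpair} then lets us conclude in each sub-case.

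The main obstacle will be this last verification in the case $F\not\subseteq\overline{D}$: one must carry out a case-by-case analysis over the type of panel pair $(\mca,D)$ (skew, trivial, even, upper, or lower), and show that the orbit hypothesis rules out the configurations in which the naive cancellation could fail and reduces the remaining ones to harmonic sums already present in the weak effectiveness data.
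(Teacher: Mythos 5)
You have misread what ``effective'' asks for, and as a result the bulk of your argument aims at a much stronger (and in general false) statement. Both effectiveness and weak effectiveness concern the \emph{same} object: a harmonic cochain $\phi_F$ defined only on $\ch{F}{}(G)$ and supported on $\ch{F}{0}(G)$, where harmonicity is imposed only at panels $D$ with $F\leq D$ (all chambers adjacent to such a $D$ again lie in $\ch{F}{}(G)$); the two notions differ solely in the group under which $\phi_F$ is required to be equivariant. You instead try to extend $\phi_F$ by zero to an $(H,\chi)$-equivariant cochain $\tilde\phi_F$ on all of $\ch{}{}(G)$ supported on $H\cdot\ch{F}{0}(G)$ and to prove it harmonic at \emph{every} panel of $\mcb(G)$. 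Such an object does not exist in general: at a panel $D$ of a chamber $C_0\in\ch{F}{0}(G)$ with $F\nleq D$ whose hyperplane is not $\theta$-stable, Proposition \ref{propskewpair} says the remaining $Q_D$ chambers adjacent to $D$ have $\theta$-distance $1$, hence lie outside the support and the harmonic sum at $D$ equals $\phi_F(C_0)\neq 0$; equivalently, the relation $f(C_{i-1})+Q_if(C_i)=0$ in the proof of Proposition \ref{propreddist0} shows that an $(H,\chi)$-equivariant harmonic cochain on $\ch{}{}(G)$ vanishing on chambers of positive $\theta$-distance must vanish identically (and if your $\tilde\phi_F$ did exist it would already be a nonzero element of $\mch(G)^{(H,\chi)}$, rendering the Poincar\'e-series construction of Section \ref{sectionPoincare} unnecessary). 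So the ``main obstacle'' you leave open at the end is not a deferred verification; it is where the construction actually breaks, and no case analysis of panel pairs can repair it.

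The statement itself is the short local upgrade which the paper records without proof, and the correct argument is essentially your well-definedness paragraph, applied to $\phi_F$ itself rather than to an extension: since $\phi_F$ is supported on $\ch{F}{0}(G)$, and any $h\in H$ carrying one chamber of $\ch{F}{0}(G)$ to another must stabilize $F$ (uniqueness of the maximal $\theta$-stable facet), the $(H,\chi)$-equivariance condition only constrains pairs $C$, $h\cdot C$ lying in a single $H$-orbit of $\ch{F}{0}(G)$; by hypothesis this is a single $\parah{F}\cap H$-orbit, where weak effectiveness together with $\epsilon_G\rest_{\parah{F}}=1$ (Proposition \ref{resoforient}) gives the required identity. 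Two cautions about how you wrote that step: your ``alternative'' claim that $h_2^{-1}h_1\cdot F=F$ already forces $h_2^{-1}h_1\in\parah{F}\cap H$ is false in general ($\parah{F}$ is only the stabilizer of $F$ in $G^0$), and if it were true the orbit hypothesis would be redundant; and passing from the weak-equivariance identity $\phi_F(C_2)=\chi(h')\phi_F(C_1)$, $h'\in\parah{F}\cap H$, to the same identity with $\chi(h_2^{-1}h_1)$ uses that $\chi\cdot\epsilon_G$ is trivial on the $H$-stabilizer of the chamber, a point that should be made explicit rather than absorbed into ``weak equivariance yields''.
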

		
		We denote by $\Fteff(G)$\index{$\Fteff(G)$} the subset of $\Ftmax(G)$ of effective maximal $\theta$-stable facets. It is clear that $\Fteff(G)$ is stable under $H$-conjugacy, thus it makes sense to say that a certain $H$-conjugacy classes $[F]_{H}$ is effective or not.

		In order to give a criterion of weakly effectiveness, we temporally assume that the character $\chi$ is trivial on $\parah{F,+}\cap H$. For instance, this happens if $\chi$ is a quadratic character, since $\parah{F,+}$ is a pro-$p$-group with $p\neq 2$. 
		In this case, $\chi$ induces a character of the subgroup $\ul{H}_{F}:=(\ul{G}_F^{\theta})^\circ\cong P_{F}\cap H/P_{F,+}\cap H$\index{$\ul{H}_{F}$}, which we still denote by $\chi$.
		
		Then, the following criterion of weakly effectiveness is clear.
		
		\begin{proposition}
			
			Let $\chi$ be a character of $H$ satisfying Assumption \ref{assumpchi} and trivial on $\parah{F,+}\cap H$ with $F$ a maximal $\theta$-stable facet. Then $F$ is weakly effective if and only if the associated connected graph $\gamma(\ul{G}_{F},\theta,\chi)$ is bipartite.
			
		\end{proposition}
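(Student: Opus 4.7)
The plan is to translate the question entirely into a combinatorial statement about the connected graph $\gamma := \gamma(\ul{G}_F, \theta, \chi)$ via the Bruhat--Tits dictionary, and then verify both directions on $\gamma$. First I would apply the canonical bijection $\ch{F}{}(G) \leftrightarrow \ch{}{}(\ul{G}_F)$ of \eqref{bijection}, under which $\ch{F}{0}(G)$ corresponds to the set $\ch{-}{\theta}(\ul{G}_F)$ of $\theta$-split Borel subgroups of $\ul{G}_F$. Combined with the fact that $\parah{F,+}$ acts trivially on $\ch{F}{}(G)$ and the hypothesis that $\chi$ is trivial on $\parah{F,+} \cap H$, so that $\chi$ descends to a character of $\ul{H}_F \cong (\parah{F} \cap H)/(\parah{F,+} \cap H)$ (still denoted $\chi$), this reduces the weak effectiveness of $F$ to the existence of a non-zero $(\ul{H}_F, \chi)$-equivariant function $\bar\phi : \ch{}{}(\ul{G}_F) \to \mbc$ supported on $\ch{-}{\theta}(\ul{G}_F)$ and harmonic at every panel of $\mcv(\ul{G}_F)$.

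Next, since such $\bar\phi$ is $\ul{H}_{F,\chi}$-invariant, where $\ul{H}_{F,\chi} := \ker(\chi|_{\ul{H}_F})$, it descends to a function $\tilde\phi$ on $V(\gamma) = \ch{-}{\theta}(\ul{G}_F)/\ul{H}_{F,\chi}$. I would then unpack the harmonic condition at each panel $D$ of $\mcv(\ul{G}_F)$ using Proposition \ref{proppanelpair}(3), which applies to $\ul{H}_{F,\chi}$ because $\ul{H}_{F,\chi} \supset \Hbarstar$: the set $\ch{D}{}(\ul{G}_F) \cap \ch{-}{\theta}(\ul{G}_F)$ either is empty (no constraint), forms a single $\ul{H}_{F,\chi}$-orbit corresponding to a loop in $\gamma$ at some vertex $v$ (forcing $\tilde\phi(v) = 0$), or splits into two $\ul{H}_{F,\chi}$-orbits of equal cardinality yielding a non-loop edge $(v_1, v_2)$ together with the relation $\tilde\phi(v_1) + \tilde\phi(v_2) = 0$. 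The equal cardinality in the third case, which underpins the sign-flip nature of the edge relations, follows from the rank-one classification (Propositions \ref{proprank1classSL2} and \ref{proprank1classSU3}) applied to the rank-one Levi $\ul{M}_D$.

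The forward direction ($F$ weakly effective $\Rightarrow$ $\gamma$ bipartite) then follows quickly: if $\tilde\phi \neq 0$, then by connectivity of $\gamma$ (Proposition \ref{propBGHchiconnnected}) a loop at any vertex $v$ would propagate $\tilde\phi(v)=0$ through the sign-flipping edge relations to yield $\tilde\phi \equiv 0$, so $\gamma$ has no loops; similarly, composing the sign-flip around an odd cycle $v_0 \to v_1 \to \cdots \to v_k = v_0$ forces $\tilde\phi(v_0) = -\tilde\phi(v_0)$, so $\gamma$ has no odd cycle and is bipartite. For the reverse direction, given a bipartition $V(\gamma) = V_+ \sqcup V_-$, I would define $\tilde\phi := \pm 1$ on $V_\pm$, propagating from a chosen base vertex; bipartiteness guarantees consistency and yields a nowhere-vanishing $\tilde\phi$ satisfying every edge relation.

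The hard part is the final step of lifting $\tilde\phi$ back to an honest $(\ul{H}_F, \chi)$-equivariant $\bar\phi$: one must verify that the bipartite coloring is compatible with the residual $\ul{H}_F/\ul{H}_{F,\chi}$-action on $V(\gamma)$ via $\chi$, i.e.\ that whenever $h \in \ul{H}_F$ carries a representative of one vertex to a representative of an adjacent vertex, we have $\chi(h) = -1$. This compatibility is not formal; it hinges on the explicit rank-one classification of Proposition \ref{propgeneralclass}, according to which the swap between the two $\ul{H}_{F,\chi}$-orbits of $\theta$-split Borels within each rank-one Levi $\ul{M}_D$ is realized precisely by elements of $\ul{H}_F \cap \ul{M}_D$ lying outside $\ul{H}_{F,\chi}$.
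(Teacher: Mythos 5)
Your dictionary step, the descent to the graph, and the forward direction are correct and are essentially the (unwritten) argument the paper intends — the paper states this proposition without proof, calling it clear. In particular you correctly isolate the two inputs that make each harmonicity relation a pure sign flip: a panel meets at most two $\ul{H}_{F,\chi}$-orbits of $\theta$-split chambers, and when there are two they have equal cardinality (rank-one classification); combined with connectivity (Proposition \ref{propBGHchiconnnected}) this yields ``weakly effective $\Rightarrow$ no loops and no odd cycles'', and the same relations give harmonicity of the $\pm1$ colouring in the converse.

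The genuine gap is in the converse, exactly at the step you flag as hard. What must be shown is that the $\pm1$ colouring can be made $(\ul{H}_F,\chi)$-equivariant, i.e.\ that every $h\in\ul{H}_F$ with $\chi(h)=-1$ interchanges the two colour classes of the connected bipartite graph; equivalently, that $\chi$ is trivial on the stabilizer $\ul{B}\cap\ul{H}_F=\ul{T}^{\theta}\cap\ul{H}_F$ of every $\theta$-split Borel $\ul{B}$ (with $\ul{T}=\ul{B}\cap\theta(\ul{B})$), since an element with $\chi(h)=-1$ fixing a vertex forces every $(\ul{H}_F,\chi)$-equivariant cochain supported on $\ch{-}{\theta}(\ul{G}_F)$ to vanish identically, no matter how the graph looks. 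Your justification — that Proposition \ref{propgeneralclass} says the swap between the two $\ul{H}_{F,\chi}$-orbits inside each rank-one Levi is realized precisely by elements of $\ul{H}_F\cap\ul{M}_D$ outside the kernel of $\chi$ — is not something that proposition contains: it only records the number, $\theta$-rank and cardinality of the $\ul{H}\cap\ul{M}_{\alpha}$- and $\Hbarstar\cap\ul{M}_{\alpha}$-orbits, and says nothing about which elements of $\ul{H}_F\setminus\ul{H}_{F,\chi}$ move which $\ul{H}_{F,\chi}$-orbit (when $\chi$ is trivial, no element at all swaps two distinct orbits, so the ``precisely'' cannot be a general fact). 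Moreover, even a correct per-panel statement would not suffice: the compatibility needed is global, about the action of $\ul{H}_F/\ul{H}_{F,\chi}$ on the whole vertex set and about chamber stabilizers, and this is precisely where Assumption \ref{assumpchi} (triviality of $\chi$ on $\Hstar$, hence on the image of $(\ul{G}_F^{\mrsc})^{\theta'}(\bs{k})$ in $\ul{G}_F(\bs{k})$) must enter; your proposal never uses it at this point. Note that when the induced character is trivial on $\ul{H}_F$ (the situation of Corollary \ref{coreffectiveness} and of the orthogonal application, where the condition is vacuous) your argument is complete; for a general $\chi$ as in the statement, the ``if'' direction is not established in your write-up.
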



		The following corollary follows from Proposition \ref{propbipartite} and Remark \ref{remH=Hscbipart}.
		
		\begin{corollary}\label{coreffectiveness}
			
			Assume further that the restriction of $\chi$ to  $\ul{H}_{F}$ is trivial. Then a maximal $\theta$-stable facet $F$ is weakly effective if 
			\begin{itemize}
				\item for any panel $D$ admitting $F$ as its maximal $\theta$-stable facet, the set $\ch{D}{0}(G)$ has two $\parah{F}\cap H$-orbits, and
				\item the natural morphism $p:\ul{G}_{F}^{\mathrm{sc}}\rightarrow\ul{G}_{F}$ induces a bijection
				$$\ch{-}{\theta'}(\ul{G}_{F}^{\mathrm{sc}})/(\ul{G}_{F}^{\mathrm{sc}})^{\theta'}\rightarrow\ch{-}{\theta}(\ul{G}_{F})/\ul{H}_{F}.$$ 
				In particular, this is true if $H=H^{\mathrm{sc}}$.
			\end{itemize}
		\end{corollary}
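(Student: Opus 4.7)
The plan is to reduce the weak effectiveness of $F$ to the bipartiteness of the finite graph $\gamma(\ul{G}_F,\theta)$, and then to check the two hypotheses of Proposition \ref{propbipartite}. First I would invoke the immediately preceding proposition: since $\chi\rest_{\ul{H}_F}$ is trivial by the additional assumption, the twisted graph $\gamma(\ul{G}_F,\theta,\chi)$ coincides with the untwisted $\gamma(\ul{G}_F,\theta)$, and so weak effectiveness of $F$ is equivalent to the bipartiteness of $\gamma(\ul{G}_F,\theta)$.

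Next I would transport the combinatorial data from $\mcb(G)$ down to $\ul{G}_F$ through the canonical bijections of \eqref{bijection}, namely $\ch{F}{}(G)\leftrightarrow\ch{}{}(\ul{G}_F)$ and $\pa{F}{}(G)\leftrightarrow\pa{}{}(\ul{G}_F)$, which are equivariant for the reduction $\parah{F}\cap H\twoheadrightarrow\ul{H}_F$. Under these bijections the subset $\ch{F}{0}(G)$ of zero $\theta$-distance chambers adjacent to $F$ corresponds to the set $\ch{-}{\theta}(\ul{G}_F)$ of $\theta$-split Borel subgroups of $\ul{G}_F$: indeed, a chamber $C\in\ch{F}{}(G)$ has $\theta$-distance zero exactly when its unique maximal $\theta$-stable facet is $F$ itself, which via the bijection \eqref{bijectionfacet} applied to $\ul{G}_F$ translates precisely into the Borel subgroup being $\theta$-split. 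Similarly, a panel $D\geq F$ carries chambers in $\ch{F}{0}(G)$ iff the associated panel in $\mcv(\ul{G}_F)$ lies in the closure of some $\theta$-split Borel of $\ul{G}_F$, and the number of $\parah{F}\cap H$-orbits on $\ch{D}{0}(G)$ equals the number of $\ul{H}_F$-orbits on the $\theta$-split Borels adjacent to that panel. Therefore the first hypothesis of the corollary is exactly the first condition of Proposition \ref{propbipartite} applied to $(\ul{G}_F,\theta)$.

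The second hypothesis of the corollary is verbatim the second condition of Proposition \ref{propbipartite} applied to $(\ul{G}_F,\theta)$, so both conditions hold and Proposition \ref{propbipartite} delivers the bipartiteness of $\gamma(\ul{G}_F,\theta)$. This concludes the weak effectiveness of $F$. For the ``in particular'' clause, when $H=H^{\mathrm{sc}}$ one uses that the connected parahoric integral model of a simply connected group has simply connected reductive quotient; combined with Lemma \ref{lemmascinvolutionext} this identifies $\ul{H}_F$ with $(\ul{G}_F^{\mathrm{sc}})^{\theta'}$, and Remark \ref{remH=Hscbipart} then gives the second condition of Proposition \ref{propbipartite} automatically, so only the first condition on panels needs to be checked by hand.

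The main (and really the only) obstacle is verifying that the bijection $\ch{F}{}(G)\leftrightarrow\ch{}{}(\ul{G}_F)$ is $\theta$-equivariant and that it restricts to $\ch{F}{0}(G)\leftrightarrow\ch{-}{\theta}(\ul{G}_F)$; both follow from the $\theta$-stability of $F$, which induces the involution on $\mcg_F$ and hence on $\ul{G}_F$, together with the construction of the maximal $\theta$-stable facet of a chamber via its associated $\theta$-stable apartment. Once this compatibility is set up, the rest of the argument is a direct translation of Proposition \ref{propbipartite} from the finite to the $p$-adic setting.
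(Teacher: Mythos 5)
Your argument is exactly the paper's: the paper derives this corollary by combining the preceding criterion (weak effectiveness $\Leftrightarrow$ bipartiteness of $\gamma(\ul{G}_F,\theta,\chi)$, which equals $\gamma(\ul{G}_F,\theta)$ since $\chi\rest_{\ul{H}_F}=1$) with Proposition \ref{propbipartite} and Remark \ref{remH=Hscbipart}, after translating the two hypotheses through the bijections $\ch{F}{}(G)\leftrightarrow\ch{}{}(\ul{G}_F)$, $\ch{F}{0}(G)\leftrightarrow\ch{-}{\theta}(\ul{G}_F)$. Your write-up supplies the same translation (including the "empty panel" case and the $H=H^{\mathrm{sc}}$ clause via the remark) in more detail than the paper records, and I see no gap.
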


		\subsection{A graph structure on $\Ftmax(G)/H$}\label{subsectiongraphGammaGAtheta}
		
		In this part, we define a graph structure on $\Ftmax(G)/H$. We use the bijection \eqref{eqthetamaxfacetbij} to identify it with $\bigsqcup_{[\mca]_{H,\sim}}\ch{}{}(\mca^{\theta})/H$. We realize each element in  $\bigsqcup_{[\mca]_{H,\sim}}\ch{}{}(\mca^{\theta})/H$ as a vertex. Two vertices $[F_{1}^{\theta}]_{H}$ and $[F_{2}^{\theta}]_{H}$ are linked by an edge if 
		\begin{enumerate}
			
			\item there are two related representatives $F_{1}^{\theta}$ and $F_{2}^{\theta}$ that are adjacent as two chambers of $\mca^{\theta}$ for some $\theta$-stable apartment $\mca$, and;
			
			\item Denote by $D_{\theta}$ the panel between $F_{1}^{\theta}$ and $F_{2}^{\theta}$ in $\mca^{\theta}$, by $\mfh_{\theta}$ the wall of $\mca^{\theta}$ containing $D_{\theta}$ and by $\mfh$ the unique hyperplane of $\mca$ containing $D_{\theta}$ and perpendicular to $\mca^{\theta}$. Then, at least one of the following cases happens:
			
			\begin{enumerate}
				
				\item $F_{1}^{\theta}$ and $F_{2}^{\theta}$ are in the same $H$-orbit, in which case $[F_{1}^{\theta}]_{H}$ and $[F_{2}^{\theta}]_{H}$ are the same and the corresponding edge is a loop;
				
				\item The hyperplane $\mfh_{\theta}$ is not $\theta$-adapted, or in other words, $\mfh$ is not a wall of $\mca$;
				
				\item The hyperplane $\mfh_{\theta}$ is $\theta$-adapted, and for some panel $D$ in $\mfh$ the panel pair $(\mca,D)$ is  trivial/even/upper (\emph{cf.} Definition \ref{defclasspanelpair}, Proposition \ref{proppanelpair}). If moreover the set of chambers adjacent to $D$ and of $\theta$-rank $\mrdim(\mca^{\theta})$ has only one $H$-orbit (instead of two $H$-orbits), then we mark the corresponding edge as a double edge.
				
			\end{enumerate}
			
		\end{enumerate}
		We denote by $\Gamma(G,\theta)$\index{$\Gamma(G,\theta)$} the associated graph. Finally, we call a vertex $[F^{\theta}]_{H}$ \emph{effective} if the corresponding $[F]_{H}$ is effective. We call a connected component of $\Gamma(G,\theta)$ \emph{effective} if every vertex within is effective.
		
		We notice that if two vertices $[F_{1}^{\theta}]_{H}$ and $[F_{2}^{\theta}]_{H}$ are in the same connected component, then they lie in $\ch{}{}(\mca^{\theta})/H$ for some  $\mca\in[\mca]_{H,\sim}$. In particular, we have $\mrdim(F_{1}^{\theta})=\mrdim(F_{2}^{\theta})$. 
		
		Thus, if we denote by $\Gamma(G,\theta,[\mca]_{H,\sim})$\index{$\Gamma(G,\theta,[\mca]_{H,\sim})$} the subgraph of $\Gamma(G,\theta)$ with vertices in $\ch{}{}(\mca^{\theta})/H$, then $\Gamma(G,\theta)=\bigsqcup_{[\mca]_{H,\sim}}\Gamma(G,\theta,[\mca]_{H,\sim})$.

		\begin{remark}
			
			We notice that condition (2b) and (2c) may depend on the choice of $\mca$. So indeed we add a corresponding edge if one of the conditions is satisfied for at least one such $\mca$. It is curious to know if these conditions are independent of the choice of $\mca$ or not. 
			
		\end{remark}
		
		\subsection{The refined upper bound}
		
		In this part, we give a refined upper bound of the distinguished dimension.
		
		\begin{theorem}\label{thmupperboundrefine}
			
			Under the Assumption \ref{assumpchi} and Assumption \ref{assumpZHZG}, the dimension of
			$$\mrhom_{H}(\mrst_{G},\chi)\cong \mch(G)^{(H,\chi)}$$ is bounded by the number of effective connected components of $\Gamma(G,\theta)$ that do not have any double edge.
			
		\end{theorem}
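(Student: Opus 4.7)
The plan is to refine the argument of Theorem~\ref{thmupperbound} by extracting additional linear relations from the edges of $\Gamma(G,\theta)$ and from the effectiveness hypothesis. Given $f\in\mch(G)^{(H,\chi)}$, Theorem~\ref{thmupperbound} already shows that $f$ is determined by the values $f(C_v)$ for $v$ ranging over the vertices of $\Gamma(G,\theta)$, where $C_v\in\ch{F_v}{0}(G)$ for a chosen representative $F_v\in v$. The goal is to show that these values satisfy enough further linear relations that at most one free parameter survives per effective connected component of $\Gamma(G,\theta)$ without double edges.

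First I would observe that for $F\in\Ftmax(G)$ one actually has $\ch{F}{}(G)=\ch{F}{0}(G)$: any chamber $C$ with $F\leq C$ sits in some $\theta$-stable apartment $\mca$ containing $F$, so that $F^{\theta}\subseteq\overline{C}\cap\mca^{\theta}$ and by maximality $F^{\theta}$ is already a chamber of $\mca^{\theta}$; hence $\tdist{C}=0$ and $F$ is the unique maximal $\theta$-stable facet of $C$. Consequently the restriction $f|_{\ch{F}{}(G)}$ is a $(\parah{F}\cap H,\chi)$-equivariant harmonic cochain on $\ch{F}{}(G)$ supported (trivially) on $\ch{F}{0}(G)$, harmonicity at every panel $D$ with $F\leq D$ being automatic since $\ch{D}{}(G)\subseteq\ch{F}{}(G)$; the compatibility with the global $H$-action required for it to lie in the effectiveness space follows from the global $(H,\chi)$-equivariance of $f$. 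If the vertex $v=[F]_H$ is not effective, then this restriction must vanish, whence $f(C_v)=0$.

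Next I would extract a linear relation from each edge of $\Gamma(G,\theta)$ between (possibly equal) vertices $v_1=[F_1^{\theta}]_H$ and $v_2=[F_2^{\theta}]_H$, proceeding by decreasing $\mrdim\mca^{\theta}$ so that the values on higher-rank components are already under control. A loop of type~(2a) imposes no new constraint beyond the $(H,\chi)$-equivariance. For an edge of type~(2b), Corollary~\ref{corthetaadapt} produces a chamber $C$ of $\theta$-distance $d$ equidistant from $F_1^{\theta}$ and $F_2^{\theta}$; applying Proposition~\ref{propreddist0} along two minimal galleries from $C$ down to $\ch{F_i}{0}(G)$ gives two expressions of $f(C)$ as explicit nonzero scalar multiples of $f(C_{v_i})$, yielding a nonzero proportionality $f(C_{v_1})=c_e f(C_{v_2})$. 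For a single edge of type~(2c), the two $H$-orbits of rank-$r$ chambers adjacent to a chosen panel $D$ in $\mathfrak{h}$ correspond to $v_1$ and $v_2$, and harmonicity at $D$, combined with the inductively known contributions from rank-$(r+1)$ chambers in the upper case, again gives $f(C_{v_1})=c_e f(C_{v_2})+(\text{known contribution from higher-rank components})$ with $c_e\neq 0$. A double edge, corresponding by definition to only one $H$-orbit of rank-$r$ chambers at $D$, produces two independent such equations, which are compatible only when $f(C_{v_1})=f(C_{v_2})=0$.

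Combining these, I would conclude that within each connected component $\mathcal{C}$ of $\Gamma(G,\theta)$ all values $\{f(C_v)\}_{v\in\mathcal{C}}$ are determined, up to a known inhomogeneous term coming from already-processed higher-rank components, by a single scalar $f(C_{v_\mathcal{C}})$ attached to a chosen representative; this scalar is forced to vanish whenever $\mathcal{C}$ contains a non-effective vertex or a double edge, and is otherwise an independent free parameter. Summing over the components would then bound $\mrdim_{\mbc}\mch(G)^{(H,\chi)}$ by the number of effective connected components of $\Gamma(G,\theta)$ without double edges. The main obstacle lies in case~(2c): I will need to verify that the one-$H$-orbit condition at $D$ genuinely produces two compatible but independent linear equations, and that the induction by decreasing $\mrdim\mca^{\theta}$ remains consistent when upper pairs couple the current component to an already-processed one of higher $\theta$-rank.
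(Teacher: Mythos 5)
Your overall strategy---process the vertices of $\Gamma(G,\theta)$ by decreasing $\mrdim\mca^{\theta}$, propagate values along edges of type (2a)/(2b)/(2c), and kill components containing a non-effective vertex or a double edge---is the same as the paper's. But your opening claim that $\ch{F}{}(G)=\ch{F}{0}(G)$ for $F\in\Ftmax(G)$ is false, and the justification conflates two different statements: membership in $\Ftmax(G)$ means that $F^{\theta}$ is a chamber of $\mca'^{\theta}$ for \emph{some} $\theta$-stable apartment $\mca'$, not that $F^{\theta}$ is a chamber of $\mca^{\theta}$ for every $\theta$-stable apartment $\mca$ containing a given chamber $C$ with $F\leq C$; the $\theta$-rank of such an $\mca$ can strictly exceed $\mrdim F^{\theta}$, since a maximal $\theta$-stable facet need not be a $\theta$-stable facet of maximal dimension (see the remark following the definition of $\Ftmax(G)$ and Remark \ref{remFtheta>F'theta}). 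Concretely, under the bijection $\ch{F}{}(G)\leftrightarrow\ch{}{}(\ul{G}_F)$ the subset $\ch{F}{0}(G)$ corresponds only to the $\theta$-split Borel subgroups $\ch{-}{\theta}(\ul{G}_F)$, and the rank-one classification (Propositions \ref{proprank1classSL2} and \ref{proprank1classSU3}) produces orbits of Borel subgroups of positive $\theta$-rank; accordingly a panel $D$ with $F\leq D$ giving an upper or lower panel pair has chambers in $\ch{D}{}(G)\subseteq\ch{F}{}(G)$ of $\theta$-rank strictly larger than $\mrdim F^{\theta}$, and these lie outside $\ch{F}{0}(G)$. If your equality held, upper and lower panel pairs could not occur and the induction on $\theta$-rank running through Sections \ref{sectiongeometry}--\ref{sectionPoincare} would be vacuous.

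This error is load-bearing exactly where you use it: at a non-effective vertex you conclude that the restriction of $f$ to $\ch{F}{}(G)$ is ``trivially'' supported on $\ch{F}{0}(G)$, hence zero, with no hypothesis on the values of $f$ on chambers of larger $\theta$-rank. That support statement is not available outright; it only holds after one has arranged, inside the induction on decreasing $\mrdim F^{\theta}$, that $f$ vanishes on every $\ch{F''}{\infty}(G)$ with $\mrdim F''^{\theta}>\mrdim F^{\theta}$ (by subtracting a suitable element of $\mch(G)^{(H,\chi)}$, exactly as the paper does at the start of its proof, following the proof of Theorem \ref{thmupperbound}). So your vertex step must be folded into the same induction you already invoke for the edges, rather than asserted absolutely. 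With that repair your edge analysis is essentially the paper's: case (2b) via Corollary \ref{corthetaadapt} and Proposition \ref{propreddist0}, case (2c) via Proposition \ref{propdistCAtheta} and Corollary \ref{corthetaadapt'} together with Proposition \ref{proppanelpair} and the inductive vanishing in higher rank. Finally, the point you yourself flag as an obstacle---that a double edge forces vanishing---is settled in the paper not by exhibiting two independent equations but by observing that when $\ch{D}{}(G)$ has a single $H$-orbit of $\theta$-rank $\mrdim F^{\theta}$, harmonicity at $D$, the $(H,\chi)$-equivariance and the inductive vanishing in higher rank force $f$ to vanish on the two chambers of $\mca$ adjacent to $D$, hence on $\ch{F}{\infty}(G)$; as written, your proposal leaves this step unproved.
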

		
		\begin{proof}
			
			As in Theorem \ref{thmupperbound}, 	let $f\in\mch(G)^{(H,\chi)}$ and $F\in\Ftmax(G)$. Then in \emph{loc. cit.} we have shown that the value of $f$ on $\ch{F}{\infty}(G)/H$ are determined by its value on any fixed $C\in\ch{F}{0}(G)$ and those $\ch{F''}{\infty}(G)$ with $\mrdim(F''^{\theta})>\mrdim(F^{\theta})$, which is sufficient to finish its proof. 
			
			Here instead, we only need to prove that
			\begin{enumerate}
				\item Let $F'$ be a maximal $\theta$-stable facet, such that $[F^{\theta}]_{H}$ and $[F'^{\theta}]_{H}$ are in the same connected component of $\Gamma(G,\theta)$. Then the value of $f$ on $\ch{F'}{\infty}(G)$ are determined by its value on any fixed chamber $C\in\ch{F}{0}(G)$ and those $\ch{F''}{\infty}(G)$ with $\mrdim(F''^{\theta})>\mrdim(F^{\theta})$.
				\item If moreover the connected component of $[F^{\theta}]_{H}$ in $\Gamma(G,\theta)$ is either not effective or has a double edge, then the value of $f$ on $\ch{F}{\infty}(G)$ is determined by its value on those $\ch{F''}{\infty}(G)$ with $\mrdim(F''^{\theta})>\mrdim(F^{\theta})$. 
			\end{enumerate}
			To prove the above two claims, by subtracting $f$ with another element in $\mch(G)^{(H,\chi)}$, we may without loss of generality assume that $f$ is zero on any $\ch{F''}{\infty}(G)$ with $\mrdim(F''^{\theta})>\mrdim(F^{\theta})$. 
			
			We may also without loss of generality assume that $F^{\theta}$ and $F'^{\theta}$ are adjacent in $\mca^{\theta}$ for some $\theta$-stable apartment $\mca$. Denote by $D_{\theta}$ the panel between $F^{\theta}$ and $F'^{\theta}$ in $\mca^{\theta}$, by $\mfh_{\theta}$ the hyperplane of $\mca^{\theta}$ containing $D_{\theta}$ and by $\mfh$ the unique hyperplane of $\mca$ containing $D_{\theta}$ and perpendicular to $\mca^{\theta}$. 
			
			If $F^{\theta}$ and $F'^{\theta}$ are in the same $H$-orbit, the claim (1) follows from the $(H,\chi)$-equivariance of $f$. 
			
			If $\mfh_{\theta}$ is not $\theta$-adapted, then using Corollary \ref{corthetaadapt} we may find a chamber $C'$ and two minimal galleries from $C'$ towards $C_{1}\in\ch{F}{0}(G)$ and $C_{2}\in\ch{F'}{0}(G)$ respectively. Using Proposition \ref{propreddist0} and Proposition \ref{propJWBgallerylocal}, the value of $f$ on $\ch{F'}{\infty}(G)$ is determined by that on $C_{2}$, and then by that on $C'$, and then by that on $C_{1}$ and finally by that on $C$.
			
			If $\mfh_{\theta}$ is $\theta$-adapted, let $D$ be a panel in $\mfh$ such that the panel pair $(\mca,D)$ is  identity/even/upper. Then we may find two chambers $C_{3},C_{4}$ in $\mca$ adjacent to $D$. Using Proposition \ref{propdistCAtheta} and Corollary \ref{corthetaadapt'}, we may further find two minimal galleries of the same length in $\mca$ from $C_{3}$ (resp. $C_{4}$) towards some chamber $C_{1}\in\ch{F}{0}(G)$ (resp. $C_{2}\in\ch{F'}{0}(G)$) respectively. As above, the value of $f$ on $\ch{F'}{\infty}(G)$ is determined by that on $C_{2}$, and then by that on $C_{4}$, and then by that on $C_{3}$ from our assumption on $(\mca,D)$ and $f$, and then by that on $C_{1}$ and finally by that on $C$. If moreover the edge between $[F]_{H}$ and $[F']_{H}$ is a double edge, then $\ch{D}{}(G)$ has only one $H$-orbit of $\theta$-rank $\mrdim(F^\theta)$, meaning that the value of $f$ on $C_{3}$ and $C_{4}$ is zero, and thus the value of $f$ on $\ch{F}{\infty}(G)$ is zero.
			
			Finally assume that $[F']_{H}$ is not effective with $[F'^{\theta}]_{H}$ in the same connected component of $[F^{\theta}]_{H}$. Then the restriction of $f$ to $\ch{F'}{\infty}(G)$ is zero, and thus its restriction to $\ch{F}{\infty}(G)$ is also zero.
			
			Thus claim (1) and (2) are proved, so the proof of the theorem can be finished by induction as in Theorem \ref{thmupperbound}.
			
		\end{proof}
		
		\begin{remark}
			
			We believe that the upper bound in Theorem \ref{thmupperboundrefine} is sharp.
			
		\end{remark}
		
		\begin{corollary}
			
			If each subgraph $\Gamma(G,\theta,[\mca]_{H,\sim})$ is connected, then the dimension of $\mch(G)^{(H,\chi)}$ is bounded by the cardinality of $\{[\mca]_{H,\sim}\}$.
			
		\end{corollary}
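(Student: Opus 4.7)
The plan is to deduce this corollary directly from Theorem \ref{thmupperboundrefine} by a purely combinatorial observation about the structure of the graph $\Gamma(G,\theta)$. Recall from \S\ref{subsectiongraphGammaGAtheta} that the edges of $\Gamma(G,\theta)$ only ever connect vertices $[F_1^\theta]_H$ and $[F_2^\theta]_H$ that lie in a common $\ch{}{}(\mca^\theta)/H$, so that there are no edges between different subgraphs $\Gamma(G,\theta,[\mca]_{H,\sim})$ and we have the disjoint-union decomposition $\Gamma(G,\theta)=\bigsqcup_{[\mca]_{H,\sim}}\Gamma(G,\theta,[\mca]_{H,\sim})$ as sets of vertices and edges.

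Under the hypothesis of the corollary, each subgraph $\Gamma(G,\theta,[\mca]_{H,\sim})$ is by itself a connected component of $\Gamma(G,\theta)$. Hence the set of connected components of $\Gamma(G,\theta)$ is in canonical bijection with the set $\{[\mca]_{H,\sim}\}$ of $\theta$-equivalence classes of $\theta$-stable apartments. In particular, the total number of connected components equals $\car{\{[\mca]_{H,\sim}\}}$.

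To finish, I would invoke Theorem \ref{thmupperboundrefine}, which under Assumption \ref{assumpchi} and Assumption \ref{assumpZHZG} bounds the dimension of $\mch(G)^{(H,\chi)}$ by the number of effective connected components of $\Gamma(G,\theta)$ containing no double edge. Since this quantity is trivially at most the total number of connected components, it is bounded by $\car{\{[\mca]_{H,\sim}\}}$, which is the desired conclusion.

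The argument has no real obstacle: it is a one-step reduction, the entire content lying in Theorem \ref{thmupperboundrefine} itself. The only thing worth double-checking is the decomposition of $\Gamma(G,\theta)$ into the subgraphs $\Gamma(G,\theta,[\mca]_{H,\sim})$, which was already recorded at the end of \S\ref{subsectiongraphGammaGAtheta}; beyond this the corollary is a formal consequence.
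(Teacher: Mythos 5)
Your argument is correct and is exactly the formal deduction from Theorem \ref{thmupperboundrefine} that the paper intends (the corollary is stated without proof precisely because it is this one-step reduction): since $\Gamma(G,\theta)=\bigsqcup_{[\mca]_{H,\sim}}\Gamma(G,\theta,[\mca]_{H,\sim})$ as a disjoint union and each subgraph is assumed connected, the connected components are in bijection with $\{[\mca]_{H,\sim}\}$, and the number of effective components without double edges is at most that count.
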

		
		\begin{example}\label{exGL3galois}
			
		Let $G=\mrgl_{3}(L)$, $L/K$ a quadratic ramified extension, $\theta$ the Galois involution and $\chi$ the Prasad character. In this case, there are exactly two $\theta$-equivalent classes $[\mca_1]_{H,\sim}$ and $[\mca_2]_{H,\sim}$ of $\theta$-rank 1 and 2 respectively. The subgraph $\Gamma(G,\theta,[\mca_1]_{H,\sim})$ has six non-effective vertices and six edges (of case (2b)) forming a cycle of length 6.  The subgraph $\Gamma(G,\theta,[\mca_2]_{H,\sim})$ has four effective vertices with no edges among them. While one point is isolated, the other three points have a double loop. See \cite{courtes2017distinction}*{Proposition 4.5, Corollary 5.3, Proposition 5.10, Proposition 5.11.(4)} for more detail. This explains the multiplicity one in this case.

        \end{example}
   
		\begin{figure}[htbp]
		\begin{minipage}{0.3\textwidth}
		\centering
				\tikzstyle{every node}=[scale=1]
				\begin{tikzpicture}[line width=0.4pt,scale=0.5][>=latex]
				\pgfmathsetmacro\ax{1}
				\pgfmathsetmacro\cy{2*sin(60)}
						
				\draw[black,-] (\ax,\cy)node[black]  {\(\bullet\)} -- (-\ax, \cy)node[black]  {\(\bullet\)} ;
				
                \draw[black,-] (-\ax,\cy) -- (-2*\ax, 0)node[black]{\(\bullet\)} ;
						\draw[black,-] (\ax,\cy) -- (2*\ax, 0)node[black]{\(\bullet\)} ;
						\draw[black,-] (-2*\ax, 0)--(-\ax,-\cy)node[black]{\(\bullet\)} ;
						\draw[black,-] (-\ax,-\cy)--(\ax,-\cy)node[black]{\(\bullet\)} ;
						\draw[black,-] (\ax,-\cy)--(2*\ax,0)node[black]{\(\bullet\)} ;
				\end{tikzpicture}
        \end{minipage}
        \begin{minipage}{0.3\textwidth}
		\centering
		\tikzstyle{every node}=[scale=1]
		\begin{tikzpicture}[line width=0.4pt,scale=0.5][>=latex]
					\pgfmathsetmacro\ax{1}
					\pgfmathsetmacro\cy{2*sin(60)}
					
					\foreach \ang\lab\anch in {90/ /north, -30/ /{south east}, 210/ /{south west}}{
						
						\draw ($(0,0)+(\ang:2*\ax)$) .. controls +(\ang+40:2*\ax) and +(\ang-40:2*\ax) .. ($(0,0)+(\ang:2*\ax)$);
						\draw  ($(0,0)+(\ang:2*\ax)$) .. controls +(\ang+40:2.5*\ax) and +(\ang-40:2.5*\ax) .. ($(0,0)+(\ang:2*\ax)$);}
					\node[red] at (0,0)  {\(\bullet\)};
					\node[red]at (-\cy,-\ax) {\(\bullet\)};
					\node[red] at (\cy,-\ax) [] {\(\bullet\)};
					\node[red] at (0,2*\ax) [] {\(\bullet\)} ;
				\end{tikzpicture}
		
        \end{minipage}
        \caption{$\Gamma(G,\theta,[\mca_1]_{H,\sim})\quad \text{and}\quad \Gamma(G,\theta,[\mca_2]_{H,\sim}),\quad \textcolor{black}{\bullet}: \textit{non-effective}, \quad \textcolor{red}{\bullet} : \textit{effective}$}
        \end{figure}

		\begin{example}\label{exGL3unitary}
			
			Let $G=\mrgl_{n}(K)$, $\theta$ an orthogonal involution and $\chi=1$. In Section \ref{sectionorthogonalgroup} we will show that the each subgraph $\Gamma(G,\theta,[\mca]_{H,\sim})$ consists of a single effective vertex and has no double loop. We will see that the distinguished dimension is the cardinality of $\{[\mca]_{H,\sim}\}$.
			
		\end{example}
		
		\begin{example}\label{exSL2orthogonal}
			
			Let $G=\mrgl_{2}(K)$, and $\theta$ the split orthogonal involution and $\chi=1$. In this case, there are three $\theta$-equivalent classes $[\mca_{0,1}]_{H,\sim}$, $[\mca_{0,2}]_{H,\sim}$ and $[\mca_1]_{H,\sim}$ of $\theta$-rank 0,0,1 respectively. In Section \ref{sectionorthogonalgroup} we will show that the subgraph $\Gamma(G,\theta,[\mca_{0,i}]_{H,\sim})$ consists of an isolated effective vertex for $i=1,2$, while $\Gamma(G,\theta,[\mca_{1}]_{H,\sim})$ consists of two isolated effective vertices.
			
		\end{example}
		
		\begin{remark}
			
			These examples show the necessity of our condition (2a), (2b), (2c) in the definition of the graph $\Gamma(G,\theta)$. In Example \ref{exGL3galois}, in a  $\theta$-stable apartment of $\theta$-rank 2 (resp. $\theta$-rank 1), case (2a) and (2c) (resp. case (2b)) may happen. Indeed, case (2a) and (2c) happen simultaneously. Also we see a double loop in the above cases. \footnote{It is possible that all the double edges are indeed double loops, but we are unable to prove it or find a counter example.} Example \ref{exSL2orthogonal} shows that $\ch{}{}(\mca^{\theta})/H$ may contain more than one effective vertices for some $\theta$-stable apartment $\mca$ (but in this case the assumption $Z(H)\subset Z(G)\cap H$ is not obeyed).
			
		\end{remark}
		
		\section{Constructing $(H,\chi)$-equivariant linear forms}\label{sectionPoincare}
		
		In this section, under good conditions we construct non-zero linear forms in $\mrhom_{H}(\mrst_{G},\chi)$ using the technique of Poincar\'e series. Our argument largely follows \cite{courtes2017distinction}*{Section 7}. 
		
		Still, we assume $Z(H)\subset Z(G)\cap H$. For a chamber $C$, we denote by $\mco_C$ the  $H$-orbit of $C$.
		
		\subsection{Convergence of infinite sum over $H$-orbits}
		
		Let $\mco$ be an $H$-orbit in $\ch{}{}(G)$, and $f\in\mch^{\infty}(G)$ a smooth harmonic cochain. We would like to study the infinite sum
		\begin{equation}\label{eqsumfCO}
			\sum_{C\in\mco}\abs{f(C)}.
		\end{equation}
		
		\begin{proposition}\label{propconvabs}
			
			If any matrix coefficient of the Steinberg representation $\mrst_{G}$ is absolutely integrable on $H/Z(G)\cap H$, then the sum \eqref{eqsumfCO} is finite. In particular, this is true if the symmetric pair $(G,H)$ is strongly discrete in the sense of \cite{gurevich2016criterion}*{Definition 5.1}.
			
		\end{proposition}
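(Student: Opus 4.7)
The plan is to convert the sum over the orbit $\mco$ into a Haar integral over $H/(Z(G)\cap H)$, then recognise the integrand as the absolute value of a matrix coefficient of $\mrst_G$ and apply the hypothesis. Fix a base chamber $C_0\in\mco$ and set $K_0:=\mathrm{Stab}_H(C_0)$. Because $Z(G)\cap H$ acts trivially on $\mcb(G)$, it sits inside $K_0$, and since $\mathrm{Stab}_G(C_0)$ is compact modulo the maximal split central torus $A(G)$, while $(Z(G)\cap H)/(A(G)\cap H)$ is anisotropic, the quotient $K_0/(Z(G)\cap H)$ is a compact open subset of $H/(Z(G)\cap H)$. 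Parametrising $\mco$ by $H/K_0$, and using that $|f(h\cdot C_0)|$ is right $K_0$-invariant (as $K_0$ fixes $C_0$) and $Z(G)\cap H$-invariant on the left, unfolding the Haar integral gives
\[
\sum_{C\in\mco}|f(C)|\;=\;\frac{1}{\mathrm{vol}(K_0/(Z(G)\cap H))}\int_{H/(Z(G)\cap H)}|f(h\cdot C_0)|\,dh.
\]

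The next step is to identify the integrand with a matrix coefficient of $\mrst_G$. From the twisted action $(g\cdot f)(C)=\epsilon_G(g)f(g^{-1}C)$ on $\mch^\infty(G)$ one gets $|f(h\cdot C_0)|=|(h^{-1}\cdot f)(C_0)|=|\phi(h^{-1})|$, where $\phi(g):=\langle\pi(g)f,\delta_{C_0}\rangle=(g\cdot f)(C_0)$. The evaluation functional $\delta_{C_0}$ lies in the smooth contragredient of $\mrst_G\cong\mch^\infty(G)$ because it is fixed by the open subgroup $\parah{C_0}\cap\ker\epsilon_G\subset\mathrm{Stab}_G(C_0)$; recall that $\epsilon_G$ is trivial on $\parah{C_0}$ by Proposition \ref{resoforient}. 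Thus $\phi$ is a genuine matrix coefficient of $\mrst_G$. By unimodularity of $H$ and of $Z(G)\cap H$ the substitution $h\mapsto h^{-1}$ preserves the quotient Haar measure, so the integral above equals $\int_{H/(Z(G)\cap H)}|\phi(h)|\,dh$, which is finite by hypothesis. This proves the first assertion.

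For the second assertion, recall that the Steinberg representation is square-integrable modulo the centre of $G$ (a classical theorem of Casselman). The definition of strong discreteness of the pair $(G,H)$ in \cite{gurevich2016criterion}*{Definition 5.1} is tailored precisely to guarantee that matrix coefficients of any irreducible square-integrable-modulo-centre representation of $G$ are absolutely integrable on $H/(Z(G)\cap H)$, so the hypothesis of the first assertion holds automatically. The whole argument is largely formal; the only point that requires care is the compactness of $K_0/(Z(G)\cap H)$, which I do not anticipate as a genuine obstacle but which should be flagged as the place where the intersection with the centre (rather than with the split centre $A(G)$) has to be handled via the standard anisotropic-modulo-split-centre dichotomy.
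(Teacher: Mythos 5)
Your argument is essentially the paper's own proof: evaluation at a fixed chamber is an Iwahori-fixed (hence smooth) linear functional on $\mrst_G$, so $h\mapsto f(h\cdot C)$ is, up to the sign $\epsilon_G$, a matrix coefficient, and unfolding the integral over $H/Z(G)\cap H$ identifies it with the orbit sum times a positive constant. The only difference is that you unfold along the full stabilizer $\mathrm{Stab}_H(C_0)$ and must then justify compactness of $\mathrm{Stab}_H(C_0)/(Z(G)\cap H)$, a point the paper sidesteps by unfolding along the compact open subgroup $\parah{C}\cap H$ instead, which makes the volume factor trivially finite and positive.
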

		
		\begin{proof}
			
			Let $C$ be a fixed chamber. We consider a linear form $\lambda_{C}$ on $\mch^{\infty}(G)$ given by
			$$\lambda_{C}(f)=f(C).$$
			Since the corresponding Iwahori subgroup $\parah{C}$ fixes $\lambda_{C}$, we have that $\lambda_{C}$ lies in the smooth dual of $\mrst_{G}$. Thus $g\mapsto\lambda_{C}(g\cdot f)$ is a matrix coefficient for fixed $C$ and $f\in\mch^{\infty}(G)$. Fix suitable Haar measures on $G,H,H/Z(G)\cap H$,$\parah{C}\cap H/Z(G)\cap\parah{C}\cap H$, we have
			\begin{equation*}
				\begin{aligned}
					\infty&>\int_{H/Z(G)\cap H}\abs{f(h\cdot C)}dh\\
					&=\bigg(\int_{\parah{C}\cap H/Z(G)\cap\parah{C}\cap H}dh\bigg)\cdot\sum_{H/(\parah{C}\cap H)(Z(G)\cap H)}\abs{f(h\cdot C)}\\
					&=c\cdot\sum_{C'\in\mco_{C}}\abs{f(C')}.
				\end{aligned}
			\end{equation*}
			Here, $c$ is a positive real number whose value is immaterial for our purpose. So we proved that the related sum is finite.
		\end{proof}
		
		From now on, we assume that the condition in Proposition \ref{propconvabs} is satisfied. We officially note it down here:
		
		\begin{assumption}\label{assumpabsconverg}
			
			Any matrix coefficient of the Steinberg representation $\mrst_{G}$ is absolutely integrable on $H/Z(G)\cap H$.
			
		\end{assumption}
		
		\subsection{Condition on the character $\chi$}
		
		As before, let $\chi$ be a character of $H$ satisfying Assumption \ref{assumpchi}. We further impose the following condition on $\chi$.
		
		\begin{assumption}\label{assumpchi'}
			
			The character $\chi$ is quadratic. 
			
		\end{assumption}
		
		As a direct corollary, $\chi$ induces a quadratic character of $\ul{H}_{F}=(\ul{G}_F^{\theta'})^\circ$ for any $\theta$-stable facet $F$.
		
		From now on until the end of this section, we fix such a character $\chi$.  We write $\chi$ as the product $\chi_0\cdot\epsilon_G\rest_H$, where $\chi_0$ is a quadratic character of $H$.  We remark that the character $\chi_0$ is trivial on the stabilizer of $C$ in $H$, where $C$ is any chamber in $\ch{F}{0}(G)$ with $F\in\Fteff(G)$. Indeed, let $\phi_F$ be a related non-zero $(H,\chi)$-equivariant harmonic cochain on $\ch{F}{}(G)$, then for $h\in H$ stabilizing $C$,  $$\chi(h)\phi_F(C)=(h\cdot\phi_F)(C)=\epsilon_G(h)\phi_F(h^{-1}\cdot C)=\epsilon_G(h)\phi_F(C),$$
		so $\chi_0(h)=1$.
		
		\begin{remark}
			
			It is curious to know in general if there is a ``canonical'' quadratic character $\chi$ associated to $G/H$, such that the problem of distinction is meaningful in the sense of relative local Langlands program. For instance, if $L/K$ is quadratic and $G=\mrres_{L/K}(H_L)$, then $\chi$ is expected to be the Prasad character \cite{prasad2015arelative}. In the case where $G=\mrgl_n(K)$ and $H=\mrso_n(\varepsilon)$ that we are going to discuss later, $\chi$ is expected to be trivial.
			
		\end{remark}
		
		\subsection{$(H,\chi)$-equivariant linear forms and test functions} 
		
		We consider a maximal $\theta$-stable facet $F$. We assume that $F$ is effective, saying that there exists a non-zero $(H,\chi)$-equivarient harmonic cochain $\phi_{F}$\index{$\phi_F$} on $\ch{F}{}(G)$ supported on $\ch{F}{0}(G)$. We will always normalize $\phi_{F}$, such that $\phi_{F}=\pm 1$ on chambers in $\ch{F}{0}(G)$.
		
		We define a corresponding linear form $\lambda_{F}\in\mrhom_{H}(\mrst_{G},\chi)$ as follows:
		$$\lambda_{F}(f)=\sum_{C\in\ch{F}{0}(G)}\phi_{F}(C)\sum_{C'\in\mco_{C}}f(C')\chi_0(C'/C),\quad f\in\mch^\infty(G),\index{$\lambda_{F}$}$$
		where $\chi_0(C'/C)$ denotes the value $\chi_0(h)$ for any $h\in H$ such that $C'=h\cdot C$, which is well-defined from our assumption on $\chi_0$. 
		
		Since the first sum over $\ch{F}{0}(G)$ is finite, using Proposition \ref{propconvabs} $\lambda_{F}(f)$ is absolutely convergent and thus well-defined. Moreover, since
		\begin{align*}
			\lambda_{F}(h\cdot f)&=\sum_{C\in\ch{F}{0}(G)}\phi_{F}(C)\sum_{C'\in\mco_{C}}\epsilon_G(h)f(h^{-1}\cdot C')\chi_0(C'/C)\\
			&=\sum_{C\in\ch{F}{0}(G)}\phi_{F}(C)\sum_{C'\in\mco_{C}}\epsilon_G(h)f(C')\chi_0((h\cdot C')/C)\\
			&=\epsilon_G(h)\chi_0(h)\sum_{C\in\ch{F}{0}(G)}\phi_{F}(C)\sum_{C'\in\mco_{C}}f(C')\chi_0(C'/C)=\chi(h)\lambda_F(h),
		\end{align*} 
		the form $\lambda_F$ is $(H,\chi)$-equivariant.
		
		We also construct a related test function $f_{F}\in\mch^{\infty}(G)$. We have the following proposition due to Court\`es, whose proof can be adapted here directly.
		
		\begin{proposition}[\cite{courtes2017distinction}*{Proposition 7.23, 7.24}]\label{propfFCdef}
			
			Let $C'\in\ch{}{}(G)$, then there exists a unique chamber $C\in\ch{F}{}(G)$ that is in the closure of $C'\cup F$. Indeed, $C$ is the unique chamber in $\ch{F}{}(G)$ such that $d(C,C')=d(F,C')$. Moreover, let $\mca'$ be an apartment containing $C'$ and $F$, and $\mfH_{\mca'}(C',F)$ the set of walls in $\mca'$ that separate $C'$ and $F$, and define $$f_{F}(C'):=\phi_{F}(C)\cdot\prod_{\mfh\in\mfH_{\mca'}(C',F)}(-Q_{\mfh})^{-1},\index{$f_{F}$}$$ 
			then $f_{F}\in\mch^{\infty}(G)$. Here, $Q_{\mfh}$ is a power of $p$ defined as in \S \ref{subsectionpanelpair}.
			
		\end{proposition}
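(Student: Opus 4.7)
The plan is to prove parts (1)--(2) by a short combinatorial argument inside a single apartment, to then establish independence of the defining formula on the choice of apartment, and finally to tackle harmonicity of $f_F$, which I expect to be the main obstacle.

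For (1)--(2), I would first invoke the standard Bruhat--Tits axiom giving an apartment $\mca'$ containing both $C'$ and $F$. The set of chambers of $\mca'$ admitting $F$ as a facet is finite and is parametrized by the choices, for each wall of $\mca'$ whose closure contains $F$, of a half-space. Taking for each such wall the half-space containing $C'$ (well-defined because $C'$, being a chamber of $\mca'$, meets no wall of $\mca'$), I obtain a unique chamber $C$ with $F\leq C$ lying in $\ol{C'\cup F}$. Applying Lemma~\ref{lemmaFCdist} to a generic segment from a point of $C'$ to a point of $F$, the walls of $\mca'$ separating $C$ from $C'$ are precisely those separating $F$ from $C'$, whence $d(C,C')=d(F,C')$; any other chamber with $F$ as a facet lies on the wrong side of at least one wall through $F$, so its distance to $C'$ is strictly larger.

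Well-definedness of $f_F(C')$ requires that the right-hand side of the defining formula not depend on $\mca'$. If $\mca''$ is another apartment containing $C'$ and $F$, the standard fact that two apartments sharing a convex subset are $G$-conjugate via an element fixing that subset pointwise (applied to $\ol{C'\cup F}$) provides a type-preserving isomorphism $\mca'\to\mca''$ fixing $C'$, $F$, and hence $C$. Under this isomorphism, walls separating $C'$ from $F$ match bijectively, and the integers $Q_\mfh$, being determined by the reductive quotient $\ul{G}_D$ attached to any panel $D\subset\mfh$ as in \S\ref{subsectionpanelpair}, are preserved. Smoothness of $f_F$ is then automatic: the stabilizer of $f_F$ in $G$ contains the Iwahori of each chamber (because the formula is invariant under any element of $G$ fixing $C'$ and stabilizing $F$ and a chosen apartment near $C'$), hence is open.

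The main technical step is harmonicity. Fix a panel $D\subset\mcb(G)$; I must show $\sum_{C''\in\ch{D}{}(G)}f_F(C'')=0$. If $F\leq D$, every chamber in $\ch{D}{}(G)$ has $F$ as a face, so $f_F$ coincides with $\phi_F$ on $\ch{D}{}(G)$ and the sum vanishes by harmonicity of $\phi_F$. Otherwise, I would pick an apartment $\mca''$ containing $D$ and $F$, let $\mfh_D$ denote the wall of $\mca''$ through $D$, and let $C_0,C_1$ be the two apartment chambers adjacent to $D$, with $C_0$ on the same side of $\mfh_D$ as $F$ (the boundary case $F\subseteq\mfh_D$ is reduced to harmonicity of $\phi_F$ on a panel of $\mca''$ containing $F$, so I focus on the generic case). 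A direct combinatorial inspection shows that $C_0$ and $C_1$ project under the map of~(1) to the same chamber $C\in\ch{F}{}(G)$ and that $\mfH_{\mca''}(C_1,F)=\mfH_{\mca''}(C_0,F)\sqcup\{\mfh_D\}$, giving $f_F(C_1)=(-Q_{\mfh_D})^{-1}f_F(C_0)$. For each of the remaining $Q_{\mfh_D}-1$ chambers $C''\in\ch{D}{}(G)\setminus\{C_0,C_1\}$, I would pick an apartment $\mca'''$ containing $C''$ and $F$ and use the Bruhat--Tits retraction onto $\mca''$ centered at a chamber close to $F$: since $\mca'''$ agrees with $\mca''$ on the half-apartment containing $F$, the projection of $C''$ is the same $C$ and $|\mfH_{\mca'''}(C'',F)|=|\mfH_{\mca''}(C_1,F)|$ with matching $Q$-values, so $f_F(C'')=f_F(C_1)$. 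Summing,
\[ f_F(C_0)+Q_{\mfh_D}\,f_F(C_1)=f_F(C_0)+Q_{\mfh_D}\cdot(-Q_{\mfh_D})^{-1}f_F(C_0)=0, \]
as required. The most delicate piece will be this last retraction argument, which I would carry out following closely the analogous computation in \cite{courtes2017distinction}*{Proposition 7.24}.
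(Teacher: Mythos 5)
Your outline is correct and follows the same route as Court\`es's Propositions 7.23 and 7.24, which the paper itself cites without reproducing the argument, so you are filling in what the paper defers. The half-space parametrization of $\ch{F}{}(\mca')$, the type-preserving isomorphism fixing $\ol{C'\cup F}$ for well-definedness, and the retraction argument for harmonicity are all the right tools. Three points should be tightened. First, your uniqueness argument ("any other chamber with $F$ as a facet lies on the wrong side of a wall through $F$") only rules out chambers of $\mca'$; to exclude a chamber $C'''\in\ch{F}{}(G)$ lying outside $\mca'$ with $d(C''',C')=d(F,C')$, invoke the gate property of residues in a building, or retract onto $\mca'$ based at $C$ and observe that the retraction fixes $F$ and does not increase distance to $C'$. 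Second, the smoothness argument is not quite right as phrased; the clean statement is that the pro-unipotent radical $\parah{F,+}$ is open, acts trivially on $\ch{F}{}(G)$, and satisfies $\epsilon_G\rest_{\parah{F,+}}=1$, and that for $g\in\parah{F,+}$ one evaluates $f_F(g^{-1}C')$ in the apartment $g^{-1}\mca'$ to conclude $g\cdot f_F=f_F$. Third, the boundary case $F\subset\mfh_D$ with $F\not\leq D$ is qualitatively different from the generic case and deserves more than a parenthetical: there $C_0$ and $C_1$ project to two distinct adjacent chambers of $\ch{F}{}(G)$ sharing the panel $\mathrm{proj}_F(D)$, and the vanishing of $\sum_{C''\in\ch{D}{}(G)}f_F(C'')$ follows from harmonicity of $\phi_F$ at that panel once one checks that the retraction identifies $\ch{D}{}(G)$ with $\ch{\mathrm{proj}_F(D)}{}(G)$ with a common factor $\prod(-Q_\mfh)^{-1}$. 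None of this changes the strategy, which is sound.
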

		
		Let $\mfh_{\theta}$ be a wall in $\mca^{\theta}$ for some $\theta$-stable apartment $\mca$. We define $\mfH_{\mca}(\mfh_{\theta})$\index{$\mfH_{\mca}(\mfh_{\theta})$} the set of walls $\mfh$ in $\mca$ such that $\mfh_{\theta}=\mfh\cap\mca^{\theta}$. We further define an integer $$n_{\mfh_{\theta}}=\prod_{\mfh\in\mfH_{\mca}(\mfh_{\theta})}(-Q_{\mfh}).\index{$n_{\mfh_{\theta}}$}$$
		We remark that $n_{\mfh_{\theta}}$ depends on $\mca^{\theta}$ but is independent of the choice of $\mca$. This is because by Lemma \ref{lemmathetaequigconj} for different choices of $\mca$, the corresponding sets $\mfH_{\mca}(\mfh^{\theta})$ are $G$-conjugate. 
		
		\begin{proposition}\label{propfFC'neq0}
			
			Let $F,F'$ be two maximal $\theta$-stable facets and $C\in\ch{F}{0}(G)$. Assume that $F'$ is effective, $f_{F'}(C)\neq 0$ and $\mrdim F^\theta\geq \mrdim F'^\theta$, then 
			\begin{enumerate}
				
				\item Let $C'\in\ch{F'}{}(G)$ be the unique chamber in the closure of $C\cup F'$, then $C'$ is of $\theta$-distance $0$, and $C$ is also in the closure of $C'\cup F$.	
				
				\item There exists a $\theta$-stable apartment $\mca$ containing $C,F,F'$, such that $F^{\theta}$ and $F'^{\theta}$ are chambers of $\mca^{\theta}$.
				
				\item We have $\mfH_{\mca}(F^{\theta},F'^{\theta})=\mfH_{\mca}(F,F')=\mfH_{\mca}(F,C')=\mfH_{\mca}(C,F')=\mfH_{\mca}(C,C')$.
				
				\item 
				We have
				$$[\parah{C}:\parah{C}\cap\parah{C'}]=\prod_{\mfh\in\mfH_{\mca}(C,C')}Q_{\mfh}=\prod_{\mfh\in\mfH_{\mca}(F,F')}Q_{\mfh}=[\parah{F}:\parah{F}\cap \parah{F'}]=[\parah{F^{\theta}}:\parah{F^{\theta}}\cap \parah{F'^{\theta}}].$$
				
				\item We have 
				$$\phi_{F'}(C')=f_{F'}(C)\cdot\prod_{\mfh_{\theta}\in\mfH_{\mca^{\theta}}(F^{\theta},F'^{\theta})}n_{\mfh_{\theta}}.$$

			\end{enumerate} 
			
		\end{proposition}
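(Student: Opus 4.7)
My plan is to prove the five assertions in order, noting that (1) can be established by triangle inequalities alone, before the common apartment of (2) is constructed. For the first claim of (1), Proposition \ref{propfFCdef} applied in any apartment $\mca_0$ containing $C$ and $F'$ gives $f_{F'}(C)=\phi_{F'}(C')\prod_{\mfh\in\mfH_{\mca_0}(C,F')}(-Q_\mfh)^{-1}$; the hypothesis $f_{F'}(C)\neq 0$ forces $\phi_{F'}(C')\neq 0$, and since $\phi_{F'}$ is supported on $\ch{F'}{0}(G)$ we conclude $C'\in\ch{F'}{0}(G)$. For the second claim, I use the triangle inequality for the combinatorial distance: from $F\subseteq\overline{C}$ and $F'\subseteq\overline{C'}$ (so $d(F,C)=d(F',C')=0$), together with the defining property $d(C,C')=d(C,F')$ of $C'$, the chain $d(F,C')\leq d(C,C')=d(C,F')\leq d(F,F')\leq d(F,C')$ forces all four distances to agree; in particular $d(F,C')=d(C,C')$, which characterizes $C$ as the unique chamber of $\ch{F}{}(G)$ lying in $\overline{C'\cup F}$.

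For (2), I construct a $\theta$-stable apartment $\mca$ containing $C$ and $F'$; such an $\mca$ then automatically contains $F\subseteq\overline{C}$ and $C'\subseteq\overline{C\cup F'}$. Starting from a $\theta$-stable apartment $\mca_1$ containing $C$ with $F^\theta$ a chamber of $\mca_1^\theta$ (which exists by $C\in\ch{F}{0}(G)$ and Proposition \ref{propAcontainC}), if $F'\not\subseteq\mca_1$ I transport $F'$ into $\mca_1$ by an element of $\parah{C\cup F'}$, and modify this element using the vanishing of $H^1(\langle\theta\rangle,\parah{C\cup F',+})$ in odd residue characteristic (a standard cohomological averaging in the spirit of \cite{courtes2017distinction}) so that the adjustment lies in $H$, thereby preserving $\theta$-stability of the resulting apartment. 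The dimension hypothesis $\mrdim F^\theta\geq\mrdim F'^\theta$, combined with $\mrdim\mca^\theta=\mrdim F^\theta$ (since $F^\theta$ is a chamber of $\mca^\theta$) and $F'^\theta\subseteq\mca^\theta$, forces $\mrdim F'^\theta=\mrdim\mca^\theta$, so that $F'^\theta$ is also a chamber of $\mca^\theta$.

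For (3), working in the apartment $\mca$ from (2), I combine the easy inclusions $\mfH_\mca(F,F')\subseteq\mfH_\mca(F,C')\subseteq\mfH_\mca(C,C')$ and $\mfH_\mca(F,F')\subseteq\mfH_\mca(C,F')\subseteq\mfH_\mca(C,C')$ (from $F\subseteq\overline{C}$ and $F'\subseteq\overline{C'}$) with the following observation: for any $\mfh\in\mfH_\mca(C,C')$, if $F'\subseteq\mfh$, then $C\cup F'$ lies in the closed half-space containing $C$, forcing $C'\in\overline{C\cup F'}$ into that same half-space and contradicting the strict separation; symmetrically, if $F\subseteq\mfh$, then using $C\in\overline{C'\cup F}$ from (1) one derives the same contradiction. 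Hence $F$ and $F'$ lie strictly on opposite sides of $\mfh$, giving $\mfh\in\mfH_\mca(F,F')$ and so equality of all four sets. The identity $\mfH_\mca(F^\theta,F'^\theta)=\mfH_\mca(F,F')$ follows because facets are either contained in or disjoint from any wall: if $F\subseteq\mfh$, then the affine span $\mca^\theta$ of the chamber $F^\theta$ lies in $\mfh$, forcing $F'^\theta\subseteq\mfh$ and contradicting $\mfh\in\mfH_\mca(F^\theta,F'^\theta)$; the reverse inclusion uses $F^\theta\subseteq F$ and $F'^\theta\subseteq F'$.

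For (4) and (5), the identities $[\parah{C}:\parah{C}\cap\parah{C'}]=\prod_{\mfh\in\mfH_\mca(C,C')}Q_\mfh$ and $[\parah{F}:\parah{F}\cap\parah{F'}]=\prod_{\mfh\in\mfH_\mca(F,F')}Q_\mfh$ are classical consequences of the affine root group decomposition (\cite{bruhat1972groupes}), and (3) yields the equality of the two products. For $[\parah{F^\theta}:\parah{F^\theta}\cap\parah{F'^\theta}]$ I use the embedding $\iota:\mcb(H)\hookrightarrow\mcb(G)$ to view $\mca^\theta$ as an affine subspace inside an apartment of $\mcb(H)$, and match $\theta$-stable walls of $\mca$ with walls of $\mca^\theta$ together with the corresponding affine root group factors in $H$. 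For (5), substituting $\mfH_\mca(C,F')=\mfH_\mca(F,F')$ into Proposition \ref{propfFCdef} and regrouping the product $\prod_{\mfh\in\mfH_\mca(F,F')}(-Q_\mfh)^{-1}$ according to the partition $\mfH_\mca(F,F')=\bigsqcup_{\mfh_\theta\in\mfH_{\mca^\theta}(F^\theta,F'^\theta)}\mfH_\mca(\mfh_\theta)$ yields $f_{F'}(C)=\phi_{F'}(C')\prod_{\mfh_\theta}n_{\mfh_\theta}^{-1}$, which rearranges to the stated formula. The main technical obstacle is the $\theta$-stable apartment construction of (2); a secondary subtlety is the parahoric-index comparison in $\mcb(H)$ in (4), which requires a careful matching of the affine root group data of $H$ with the $\theta$-stable affine root data of $G$ along the walls through $\mca^\theta$.
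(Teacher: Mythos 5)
The proposal has a genuine gap in part (1), which propagates to the rest of the argument.

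For the second claim of (1), you propose the chain $d(F,C')\leq d(C,C')=d(C,F')\leq d(F,F')\leq d(F,C')$. The middle inequality $d(C,F')\leq d(F,F')$ is false in general: the correct direction is $d(F,F')\leq d(C,F')$, because every wall strictly separating $F$ and $F'$ also strictly separates $C$ and $F'$ (as $F\subseteq\overline C$), but a wall strictly separating $C$ and $F'$ may well \emph{contain} $F$ and hence contribute nothing to $d(F,F')$. With the corrected direction, the chain becomes $d(F,C')\leq d(C,C')=d(C,F')\geq d(F,F')$, which does not close. The assertion that $C\in\overline{C'\cup F}$ is exactly equivalent to saying that no wall in $\mfH_\mca(C,C')$ contains $F$, and this is where the hypothesis $\mrdim F^\theta\geq\mrdim F'^\theta$ has to do real work — Remark \ref{remFtheta>F'theta} in the paper gives a counterexample when that hypothesis fails. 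The paper's actual argument identifies the $\theta$-invariant affine subspace $\mca'$ spanned by $F'^\theta$ and shows, by a convexity argument using $f_{F'}(C)\neq 0$ and the fact that $\tdist{C'}=0$, that $F^\theta\subseteq\mca'$; the dimension hypothesis then forces $\mca'$ to be the common affine span of $F^\theta$ and $F'^\theta$, from which $\mfH_\mca(C,C')=\mfH_\mca(F,C')$ follows because any wall containing $F$ would contain $\mca'\supseteq F'$ and could not separate $C$ from $C'$. Your triangle-inequality route cannot substitute for this.

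A related issue appears in (2): you claim that $F'^\theta\subseteq\mca^\theta$ together with $\mrdim\mca^\theta=\mrdim F^\theta\geq\mrdim F'^\theta$ \emph{forces} $\mrdim F'^\theta=\mrdim\mca^\theta$, but these inequalities are consistent with strict inequality, and $F'$ being a maximal $\theta$-stable facet only guarantees $F'^\theta$ is a chamber of $\mca''^\theta$ for \emph{some} $\theta$-stable $\mca''$, not for your constructed $\mca$. Again, what is really needed is the paper's argument that the affine span of $F'^\theta$ already contains $F^\theta$. Finally, for (4) you should observe that $F$ and $F'$ are \emph{strongly associated} (they span the same affine subspace of $\mca$, namely the intersection of all walls containing $\mca^\theta$) before invoking the index formula — this is precisely the hypothesis of the paper's Lemma \ref{lemmaPCPCC'index}, and the equality $[\parah{F^\theta}:\parah{F^\theta}\cap\parah{F'^\theta}]=[\parah{F}:\parah{F}\cap\parah{F'}]$ requires the two explicit group-theoretic identities in the paper rather than a verbal "matching of affine root data." The rest of your outline for (3) and (5) is sound in spirit once (1) and (2) are repaired.
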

		
		\begin{proof}
			
			Statement (1) and (2) follow from the argument of \cite{courtes2017distinction}*{Lemma 7.27}. We only explain the last part of statement (1), whose proof was omitted in \emph{loc. cit.}. Let $C'\in\ch{F'}{}(G)$ be the unique chamber in the closure of $C\cup F'$, then we have $d(C,C')=d(C,F')$, or equivalently $\mfH_{\mca}(C,C')=\mfH_{\mca}(C,F')$ for any fixed apartment $\mca$ in $\mcb(G)$ containing $C$, $C'$. We identify $\mcb(H)$ with $\mcb(G)^\theta$ via the map $\iota$. We denote by $\mca'$ the affine subspace spanned by $F'^\theta$ in $\mca$, which is in particular $\theta$-invariant since the map $\iota$ is affine. 
			
			We claim that $\mca'$ contains $F^\theta$. Thus it is also the affine subspace generated by $F^\theta$ because $\mrdim F^\theta\geq \mrdim F'^\theta$. If the claim is not true, we may find points $\bs{p}$ in $F^\theta$ and $\bs{p}'$ in $F'^\theta$, such that the line segment $[\bs{p},\bs{p}']$ is not in $\mca'$. The closure of $C\cup F'$ in $\mca$ consists of unions of finitely many closed chambers $\overline{C_i}$, $i=0,1,2,\dots$ with $C_0=C'$, and also contains $[\bs{p},\bs{p}']$ by the convexity. If $\bs{p}'\in\overline{C_i}$ for some $i$, then $C_i$ admits $F'$ as a facet, implying that that $i=0$ from our uniqueness of $C'$. Thus if we pick a point $\bs{p}''$ in $[\bs{p},\bs{p}']$ close enough to $\bs{p'}$, we have $\bs{p}''\in \overline{C'}$. It means that the $\theta$-invariant part of $\overline{C'}$, containing $F'^\theta$ and $\bs{p}''$, is of dimension greater than $\mrdim(F'^\theta)$, contradictory! 
			
			Now we explain why $C$ is also in the closure of $C'\cup F$. We only need to show that $\mfH_{\mca}(C,C')=\mfH_{\mca}(F,C')$. Otherwise, there exists a wall $\mfh$ in $\mca$ separating $C,C'$ and containing $F$. Thus $\mfh$ contains $F^\theta$, $\mca'$, $F'^\theta$ and $F'$ as well, contradicting to the fact that $\mfH_{\mca}(C,C')=\mfH_{\mca}(C,F')$.  Thus we may use the argument in \emph{loc. cit.} to finish the proof of statement (1) and (2), and from now on our apartment $\mca$ is chosen to be $\theta$-stable as in the statement (2).
			
			Statement (3) is also clear from the argument above.
			
			To prove statement (4), we first need the following lemma.
			
			\begin{lemma}\label{lemmaPCPCC'index}
				
				Let $C,C'$ be two facets of an apartment $\mca$ that are strongly associated, meaning that they span the same affine subspace $\mca'$ in $\mca$. Then $$[\parah{C}:\parah{C}\cap\parah{C'}]=\prod_{\mfh\in\mfH_{\mca}(C,C')}Q_{\mfh}.$$
				
			\end{lemma}
			
			\begin{proof}
				
				When both $C$ and $C'$ are chambers in $\mca$, it follows from \cite{broussous2014distinction}*{Lemma 4.2} and induction on $d(C,C')$. 
				
				In general, $C$ and $C'$ are chambers of some affine subspace $\mca'$ of $\mca$. By induction on $d(C,C')$ we may still assume that $C$ and $C'$ are adjacent. More precisely, assume that $C''$ is a chamber of $\mca'$ in the closure of $C\cup C'$. By \cite{bruhat1972groupes}*{Corollaire 4.3.13, Remarque 4.3.14} we have 
				$$\parah{C}\parah{C''}\cap\parah{C'}\parah{C''}=\parah{C''}\quad\text{and}\quad(\parah{C''}\cap\parah{C})(\parah{C''}\cap\parah{C'})=\parah{C''}\cap\parah{C}\parah{C'}.$$ 
				Moreover, we have
				\begin{lemma}
					$\parah{C''}\cap\parah{C}\parah{C'}=\parah{C''}$
				\end{lemma}
				
				\begin{proof}
					
					We may without loss of generality assume that $C$ and $C''$ are adjacent in $\mca'$ with a common panel $D$. Let $\mfh_{D}$ be the hyperplane in $\mca'$ containing $D$. Let $\mfh_{D,C'}^{+}$ be the open half space in $\mca'$ containing $C'$ with wall being $\mfh_{D}$, then $C''$ and $C'$ lie in $\mfh_{D,C'}^{+}$. 
					
					We denote by $\parah{D,+}$ (resp. $\parah{C'',+}$) the pro-unipotent radical of $\parah{D}$ (resp. $\parah{C''}$).  Then we have 
					$$\parah{\mfh_{D,C'}^{+}}\parah{D,+}\supset\parah{C'',+}\quad\text{and thus}\quad\parah{C'}\parah{D,+}\supset\parah{C'',+}$$
					This could be shown by taking the quotient $\ul{G}_{D}\cong\parah{D}/\parah{D,+}$, then by definition $\parah{C'',+}/\parah{D,+}$ is the unipotent radical of the Borel subgroup $\parah{\mfh_{D,C'}^{+}}\parah{D,+}/\parah{D,+}$ of $\ul{G}_{D}$.
					Moreover, using the argument in \cite{courtes2017distinction}*{Lemma 7.28} for strongly associated facets $C$ and $C''$, we have
					$$\parah{C}\parah{C'',+}\supset\parah{C''}$$
					Noting that $\parah{D,+}\subset\parah{C,+}$, we get $\parah{C''}\subset\parah{C}\parah{C'}$, which finishes the proof.
					
				\end{proof}
				
				Taking the intersection with $\parah{C}\cap \parah{C'}$ we have
				$$\parah{C}\cap \parah{C'}\cap\parah{C''}=\parah{C}\cap \parah{C'}\cap\parah{C}\parah{C''}\cap\parah{C'}\parah{C''}=\parah{C}\cap\parah{C'}.$$
				As a result, we have 
				\begin{align*}
					[\parah{C}:\parah{C}\cap\parah{C'}]&=[\parah{C}:\parah{C}\cap\parah{C''}][\parah{C}\cap\parah{C''}:\parah{C}\cap \parah{C'}\cap\parah{C''}]\\
					&=[\parah{C}:\parah{C}\cap\parah{C''}][\parah{C''}:\parah{C'}\cap\parah{C''}].
				\end{align*}
				Thus we proved the transitivity of the index $[\parah{C}:\parah{C}\cap\parah{C'}]$, meaning that by induction we may reduce to the adjacent case.
				
				In this case, let $D$ be the panel in $\mca'$ between $C$ and $C'$. Then by considering the finite reductive group $\ul{G}_{D}$ as in \cite{broussous2014distinction}*{Lemma 4.2}, the same argument holds. More precisely, the image of $\parah{C}$ and $\parah{C'}$ in $\ul{G}_{D}$, denoted by $\ul{P}_{C}$ and $\ul{P}_{C'}$, are two maximal opposite parabolic subgroups of $\ul{G}_{D}$ with the intersection $\ul{P}_{C}\cap\ul{P}_{C'}$ the related Levi subgroup of $\ul{G}_{D}$ whose semi-simple $\bs{k}_D$-rank is $1$. 
				
				For the above claim, we used the following lemma which is direct.
				
				\begin{lemma}\label{lemmaopposite}
					
					Let $\ul{G}$ be a reductive group over $\bs{k}$ and $F$, $F'$ two facets that are central symmetric with regard to the origin in an apartment $\mca$ of $\mcv(\ul{G})$, then the corresponding parabolic subgroups $\ul{P}_F$ and $\ul{P}_{F'}$ are opposite with $\ul{P}_F\cap\ul{P}_{F'}$ being the corresponding Levi subgroup of $\ul{G}$.
					
				\end{lemma}
								
				Thus we have $$[\parah{C}:\parah{C}\cap\parah{C'}]=[\ul{P}_{C}:\ul{P}_{C}\cap\ul{P}_{C'}]=\prod_{\mfh\in\mfH_{\mca}(C,C')}Q_{\mfh},$$ where the first equation follows from $ \parah{C}\cap\parah{C'}/\parah{D,+}\cong \ul{P}_{C}\cap\ul{P}_{C'}$ and $\parah{C}/\parah{D,+}\cong\ul{P}_{C}$, and the right-hand side of the second equation is clearly the cardinality of the unipotent radical of $\ul{P}_{C}$.
				
			\end{proof}
			
			Using the lemma and statement (3), we have
			$$[\parah{C}:\parah{C}\cap\parah{C'}]=\prod_{\mfh\in\mfH_{\mca}(C,C')}Q_{\mfh}=\prod_{\mfh\in\mfH_{\mca}(F,F')}Q_{\mfh}=[\parah{F}:\parah{F}\cap \parah{F'}].$$
			We notice that indeed $F$ and $F'$ are strongly associated in $\mca$, since the common affine subspace they span is exactly the minimal affine subspace of $\mca$ as an intersection of walls that contains $\mca^{\theta}$. 
			
			It remains to show that \begin{equation}\label{eqPFF'PFtheta'}
				[\parah{F}:\parah{F}\cap \parah{F'}]=[\parah{F^{\theta}}:\parah{F^{\theta}}\cap \parah{F'^{\theta}}],
			\end{equation}
			which follows from the following two claims
			$$\parah{F^{\theta}}=\parah{F}\cdot(\parah{F^{\theta}}\cap \parah{F'^{\theta}})\quad\text{and}\quad \parah{F}\cap \parah{F'^{\theta}}=\parah{F}\cap \parah{F'}.$$
			Indeed, let $\parah{F^{\theta},+}$ be the pro-unipotent radical of $\parah{F^{\theta}}$, then as in \cite{courtes2017distinction}*{Lemma 7.28} we have $$\parah{F^{\theta}}=\parah{F^{\theta},+}\cdot(\parah{F^{\theta}}\cap \parah{F'^{\theta}})\quad\text{and}\quad\parah{F^{\theta},+}\subset P_{F}.$$
			So we have $\parah{F^{\theta}}=\parah{F}\cdot(\parah{F^{\theta}}\cap \parah{F'^{\theta}})$. On the other hand, we may pick points $\bs{p}'$ in $F'$ of general position\footnote{It means that $\bs{p}'$ does not lie in any affine subspace of $\mca$ whose related reflection stabilizes $F'$. As a result, an element $g$ stabilizing $F'$ and fixing $\bs{p}'$ must fix every point in $F'$.}, $\bs{p}$ in $F$ and $\bs{p}''$ in $F'^{\theta}$, such that $\bs{p}'$ lies in the line segment $[\bs{p},\bs{p}'']$. Since $\parah{F}\cap \parah{F'^{\theta}}$ fixes both $\bs{p}$ and $\bs{p}''$, it also fixes $\bs{p}'$, meaning that $\parah{F}\cap \parah{F'^{\theta}}$ is contained in $\parah{F'}$, showing that $\parah{F}\cap \parah{F'^{\theta}}=\parah{F}\cap \parah{F'}$. So \eqref{eqPFF'PFtheta'} as well as statement (4) are proved.

			Finally, statement (5) follows from statement (3) and (4), Proposition \ref{propfFCdef} and the following obvious equation
			$$\prod_{\mfh\in\mfH_{\mca}(F^{\theta},F'^{\theta})}(-Q_{\mfh})=\prod_{\mfh_{\theta}\in\mfH_{\mca^{\theta}}(F^{\theta},F'^{\theta})}\prod_{\mfh\in\mfH_{\mca}(\mfh_{\theta})}(-Q_{\mfh})=\prod_{\mfh_{\theta}\in\mfH_{\mca^{\theta}}(F^{\theta},F'^{\theta})}n_{\mfh_{\theta}}.$$
			
		\end{proof}
		
		\begin{remark}\label{remFtheta>F'theta}
			
			The condition $\mrdim F^\theta\geq\mrdim F'^\theta$ in the statement is important, which could be illustrated in the following example. Consider $G=\mrgl_3(K)$ and $H$ the split special orthogonal group of $G$. Let $F'$ be a $\theta$-stable facet of $\mcb(G)$ of maximal dimension, which is indeed a chamber of $\mcb(G)$.  Then by definition $f_{F'}(C)\neq 0$ for any chamber $C$ in $\ch{}{}(G)$. However, let $F$ be a $\theta$-stable facet of $F'$ such that $0=\mrdim F^\theta<\mrdim F'^\theta=1$, then $F^\theta$ and $F'^\theta$ cannot be chambers of some $\mca^\theta$. See Section \ref{sectionorthogonalgroup} for more detail.

			\begin{figure}[htbp]
				\begin{center}
					\tikzstyle{every node}=[scale=1]
					\begin{tikzpicture}[line width=0.4pt,scale=0.8][>=latex]
						\pgfmathsetmacro\ax{2}
						\pgfmathsetmacro\ay{0}
						\pgfmathsetmacro\bx{2*cos(120)}
						\pgfmathsetmacro\by{2*sin(120)}
						\pgfmathsetmacro\cx{2*cos(60)}
						\pgfmathsetmacro\cy{2*sin(60)}
						\pgfmathsetmacro\lcy{sin(60)}
						
						\draw[red,-] (0,0) -- (\cy, \cx) ;
      
						\draw[red,-] (-1.5,-\lcy) -- (-2*\cy, -2*\cx) ;
      
						\draw[ultra thick] (0,0) node[text=brown]{\(\bullet\)} node [right][text=brown] {$F=F^{\theta}$}-- (-1.5,-\lcy) ;
						\draw[ultra thick,blue,-] (0,0) -- (-\ax, 0) ;
						\draw[blue,-] (0,0) -- (\bx,\by) ;
						\draw[blue,-] (0,0) -- (-\bx,-\by) ;
						\draw[ultra thick,blue,-] (0,0) -- (-\cx,-\cy) ; 
						\draw[ultra thick,blue,-] (-\cx,-\cy) -- (-\ax,\ay) ;
						\draw[blue,-] (-\cx+\ax,-\cy) -- (-\cx-\ax,-\cy)
						; 
						\draw[blue,-] (\bx,\by) -- (-\cx-\ax,-\cy) ;  
						\node at (30:2.5) {\(\color{red}\mca^{\theta}\)};  
						\node at (-140:1.3) {\({F'}^{\theta}\)};
						\node at (-1, -0.2) [text=blue] {\(F'\)} ;
					\end{tikzpicture}
				\end{center}
            \caption{Remark \ref{remFtheta>F'theta}}
			\end{figure}

		\end{remark}
		
		As a direct corollary, we have:
		
		\begin{corollary}\label{corlambdaFfF'}
			
			Let $F,F'$ be two effective maximal $\theta$-stable facets. We further assume that $\mrdim F^\theta\geq \mrdim F'^\theta$. Then $\lambda_{F}(f_{F'})\neq 0$ implies that $F^{\theta}$ and $F'^{\theta}$ are chambers of $\mca^{\theta}$ for some $\theta$-stable apartment $\mca$. 
			
		\end{corollary}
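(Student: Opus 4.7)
The plan is to deduce this corollary directly from Proposition \ref{propfFC'neq0} by unpacking the definition of $\lambda_F$ and translating by a suitable element of $H$. Since $\phi_F(C)=\pm 1$ for every $C\in\ch{F}{0}(G)$ and each scalar $\chi_0(C'/C)$ is nonzero, the hypothesis $\lambda_F(f_{F'})\neq 0$ forces the existence of some $C\in\ch{F}{0}(G)$ and some $h\in H$ such that $f_{F'}(h\cdot C)\neq 0$.

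Because $h\in H\subset G^\theta$ acts simplicially on $\mcb(G)$ and commutes with $\theta$, the chamber $hC$ still has $\theta$-distance $0$, and its unique maximal $\theta$-stable facet is $hF$. Hence $hC\in\ch{hF}{0}(G)$, and $hF$ is itself an effective maximal $\theta$-stable facet of dimension $\mrdim(hF)^\theta=\mrdim F^\theta\geq \mrdim F'^\theta$. We are therefore in the setting of Proposition \ref{propfFC'neq0}.(2), applied with $hF$ in place of $F$ and with the chamber $hC$ in place of $C$. This produces a $\theta$-stable apartment $\mca'$ containing $hC$, $hF$, and $F'$, in which both $(hF)^\theta$ and $F'^\theta$ are chambers of $\mca'^\theta$.

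It remains to transport this apartment back to $F$. Setting $\mca:=h^{-1}\mca'$, the fact that $h^{-1}\in H$ acts on $\mcb(G)$ commuting with $\theta$ ensures that $\mca$ is again a $\theta$-stable apartment; moreover $\mca$ contains $C$, $F$, and $h^{-1}F'$, with $F^\theta$ and $(h^{-1}F')^\theta$ being chambers of $\mca^\theta$. Since $h^{-1}F'$ lies in the $H$-orbit of $F'$, this establishes the desired relative position between the $H$-orbits $[F]_H$ and $[F']_H$, which is the sense in which the corollary will be used later (e.g.\ in the proof of Theorem \ref{thmmainthree}, where representatives of effective equivalence classes may be freely adjusted by $H$). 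The real geometric work---that non-vanishing of $f_{F'}$ on $hC$ forces $hF$, $F'$, and a common apartment to be compatibly arranged---was already carried out in Proposition \ref{propfFC'neq0}, so the remaining step is essentially bookkeeping; the only mild subtlety to watch out for is that one must conjugate $F'$ by $h^{-1}$ (rather than $F$ by $h$) to preserve the asymmetry in the dimension hypothesis.
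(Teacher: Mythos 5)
Your proof is correct and follows the same route the paper intends: translate the hypothesis $\lambda_F(f_{F'})\neq 0$ into the existence of some $C\in\ch{F}{0}(G)$ and $h\in H$ with $f_{F'}(hC)\neq 0$, apply Proposition \ref{propfFC'neq0} with $hF$ and $hC$ in place of $F$ and $C$ (valid since $hF$ is still an effective maximal $\theta$-stable facet of the same $\theta$-rank), and then conjugate back by $h^{-1}$. You are also right to flag the mild imprecision: the argument literally produces a $\theta$-stable apartment $\mca$ in which $F^\theta$ and $(h^{-1}F')^\theta$, rather than $F'^\theta$ itself, are chambers of $\mca^\theta$; but since Lemma \ref{lemmauniquethetaequiv} is insensitive to replacing $F'$ by an $H$-conjugate, this version suffices (and is what is actually invoked) in the proof of Theorem \ref{thmdistdimcal}.
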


		\subsection{Condition on the $\theta$-stable apartment $\mca$}\label{subsectionconditionAtheta}
		
		Let $\mca$ be a $\theta$-stable apartment. Recall that $\mca^{\theta}$ has a polysimplicial complex structure with the set of facets being
		$$\mcf(\mca^{\theta})=\{F^{\theta}\mid F\in\Ft(\mca)\}.$$
		From now on, we impose the following assumption on $\mca$:
		
		\begin{assumption}\label{assumpaffinecoxeter}
			
			\begin{enumerate}
				
				\item For every wall $\mfh_{\theta}$ of $\mca^{\theta}$, there exists $h\in H$ that stabilizes $\mca^{\theta}$, whose action on $\mca^{\theta}$ is the reflection $s_{\mfh_{\theta}}$.
				
				\item For any $\theta$-stable apartment  $\mca'$  in $[\mca]_{H,\sim}$ and any non-skew panel $D$ in $\mca'$ such that $\ch{D}{}(G)$ consists of $H$-orbits of $\theta$-rank $r$ and possibly $H$-orbits of $\theta$-rank $r+1$ with $r=\mrdim\mca^\theta$, there are exactly two $H$-orbits of $\ch{D}{}(G)$ of $\theta$-rank $r$. 
				
			\end{enumerate}

		\end{assumption} 
		
		Of course, this assumption concerns only the $\theta$-equivalence class $[\mca]_{H,\sim}$ of $\mca$.
		
		Important consequences of Assumption \ref{assumpaffinecoxeter}.(1) are as follows: first $\ch{}{}(\mca^{\theta})$ has only one $H$-orbit; secondly the set of walls $\mfH_{\theta}$ of $\mca^{\theta}$ is stable under every reflection $s_{\mfh_{\theta}}$ with $\mfh_{\theta}\in\mfH_{\theta}$, then the group $W(\mca^{\theta})$\index{$W(\mca^{\theta})$} generated by $s_{\mfh_{\theta}},\mfh_{\theta}\in\mfH_{\theta}$ is an affine Coxeter group; finally $\mca^\theta$ is the Coxeter complex related to $(\mca^\theta,\mfH_\theta)$. Thus, the graph $\Gamma(G,\theta,[\mca]_{H,\sim})$ consists of a single vertex. In particular, Assumption \ref{assumpaffinecoxeter}.(1) is satisfied if $\mca^{\theta}$ consists of a single point.
		
		Assumption \ref{assumpaffinecoxeter}.(2) indeed means that the graph $\Gamma(G,\theta,[\mca]_{H,\sim})$ doesn't have a double edge (loop).
		
		Under our assumption, we say an equivalence class $[\mca]_{H,\sim}$ is \emph{effective}, if any maximal $\theta$-stable facet $F$ with $F^{\theta}\in\ch{}{}(\mca^{\theta})$ is effective. It means that the only vertex in the graph $\Gamma(G,\theta,[\mca]_{H,\sim})$ is effective. 
		
		\begin{remark}
			
			In the proof of	Theorem \ref{thmupperboundrefine}, we have seen that Assumption \ref{assumpaffinecoxeter}.(2) is necessary for the existence of a $(H,\chi)$-equivariant harmonic cochain that is ``essentially non-vanishing'' on the chambers in $[\mca]_{H,\sim}$, thus it has to be assumed if we want to constructed a non-zero $\lambda_F$. Later on we will see this assumption guarantees the non-triviality of the involution $\theta$ when restricting to some finite reductive group $\ul{G}_{D}^{\mrsc}$, where $D$ is a certain $\theta$-stable facet in $\mca$ (\emph{cf.} Proof of Lemma \ref{lemmaAthetaFtheta}.(3)).
			
		\end{remark}
		
		\subsection{More technical preparations}
		
		Fix a $\theta$-stable apartment $\mca$ as in \S \ref{subsectionconditionAtheta}. Let $F$ be a maximal $\theta$-stable facet, such that $F^{\theta}$ is a chamber of $\mca^{\theta}$. Let $D_{\theta}$ be a panel of $F^{\theta}$ and $\mfh_{\theta}$ the hyperplane in $\mca^{\theta}$ containing $D_{\theta}$. 
		
		Let $\mca_H$ be an apartment of $\mcb(H)$ containing $\mca^\theta$. First we assume that $\mfh_{\theta}$ is the intersection of some walls of $\mca_{H}$. We denote by $\mfH_{\mca_{H}}(\mfh_{\theta})$ the set of walls in $\mca_{H}$ containing $\mfh_{\theta}$. 
		We define $Q_{\mfh_{H},H}$ as the number $Q_{\mfh}$ in \S \ref{subsectionpanelpair}, but with respect to wall $\mfh_{H}$ and the group $H$. We further define $$Q_{\mfh_{\theta},H}=\prod_{\mfh_{H}\in\mfH_{\mca_{H}}(\mfh_{\theta})}Q_{\mfh_{H},H}.\index{$Q_{\mfh_{\theta},H}$}$$
		If on the other hand $\mfh_{\theta}$ is not the intersection of some walls of $\mca_H$, then we simply define $Q_{\mfh_{\theta},H}=1$.
		
		The definition here is independent of the choice of $\mca_{H}$, since all such $\mca_H$ are $\para{\mca^\theta}{H}$-conjugate to each other.
		
		
		
		
		\begin{lemma}\label{lemmaAthetaFtheta}
			\begin{enumerate}
				
				\item Let $F''$ be another $\theta$-stable facet, such that $F^{\theta}$ and $F''^{\theta}$ are chambers of $\mca'^{\theta}$ for some $\theta$-stable apartment $\mca'$. Then $F''^{\theta}$ is $\para{F^{\theta}}{H}$-conjugate to some chamber $F'^{\theta}$ in $\mca^{\theta}$. 
				
				\item If we also fix $F'^{\theta}$, then the number of $F''^{\theta}$  that are $\para{F^{\theta}}{H}$-conjugate to $F'^{\theta}$ is 
				\begin{equation}\label{eqPFthetaF'thetaHindex}
					[\para{F^{\theta}}{H}:\para{F^{\theta}}{H}\cap\para{F'^{\theta}}{H}]=\prod_{\mfh_{\theta}\in\mfH_{\mca^{\theta}}(F^{\theta},F'^{\theta})}Q_{\mfh_{\theta},H}.
				\end{equation}
				
				\item Let $\mfh_{\theta}$ be a wall of $\mca^{\theta}$ and define $m_{\mfh_{\theta}}:=Q_{\mfh_{\theta,H}}/n_{\mfh_{\theta}}$\index{$m_{\mfh_{\theta}}$}, then $\abs{m_{\mfh_{\theta}}}<1$. More precisely, if $F^{\theta}$ and $F'^{\theta}$ are adjacent and  separated by $\mfh_{\theta}$, then we have $$\abs{m_{\mfh_{\theta}}}=\frac{[\para{F^{\theta}}{H}:\para{F^{\theta}}{H}\cap\para{F'^{\theta}}{H}]}{[P_{F^{\theta}}:P_{F^{\theta}}\cap P_{F'^{\theta}}]}<1.$$ 
				
				\item $m_{\mfh_{\theta}}$ depends only on the $W(\mca^{\theta})$-conjugacy class of $\mfh_{\theta}\in\mfH_{\theta}$.
				
			\end{enumerate} 
			
		\end{lemma}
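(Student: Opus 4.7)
The four parts build on one another, with part (3) carrying the essential technical content.

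For part (1), the hypothesis forces $\mca^{\theta}=\mca'^{\theta}$. Indeed, $F^{\theta}$, being a chamber of $\mca^{\theta}$, spans $\mca^{\theta}$ as an affine subspace; similarly it spans $\mca'^{\theta}$; hence the two affine subspaces coincide. Consequently $F''^{\theta}$ is already a chamber of $\mca^{\theta}$, and it suffices to take $h=1$ and $F'^{\theta}=F''^{\theta}$. (Should one prefer to read the statement with an unrelated apartment $\mca''$ in place of $\mca'$, the same conclusion would follow by using the building axioms for $\mcb(H)$ to transport, via an element of $\para{F^{\theta}}{H}$, any apartment of $\mcb(H)$ containing $\mca''^{\theta}$ onto one containing $\mca^{\theta}$, and then arguing as above.)

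For part (2), I would use orbit-stabilizer applied to the $\para{F^{\theta}}{H}$-orbit of $F'^{\theta}$. Since the stabilizer of a facet in $\mcb(H)$ by a parahoric is precisely the intersection with the parahoric of that facet, the stabilizer is $\para{F^{\theta}}{H}\cap\para{F'^{\theta}}{H}$. To evaluate the resulting index, I would invoke Lemma \ref{lemmaPCPCC'index} inside an apartment $\mca_{H}$ of $\mcb(H)$ containing $\mca^{\theta}$: the two facets $F^{\theta}$ and $F'^{\theta}$ are strongly associated in $\mca_{H}$ because both span $\mca^{\theta}$. This yields
\[
[\para{F^{\theta}}{H}:\para{F^{\theta}}{H}\cap\para{F'^{\theta}}{H}]
=\prod_{\mfh_{H}\in\mfH_{\mca_{H}}(F^{\theta},F'^{\theta})}Q_{\mfh_{H},H}.
\]
Each wall of $\mca_{H}$ separating $F^{\theta}$ and $F'^{\theta}$ meets $\mca^{\theta}$ in a wall of $\mca^{\theta}$ separating the two facets, so regrouping the product according to the wall of $\mca^{\theta}$ containing each $\mfh_{H}$ gives $\prod_{\mfh_{\theta}\in\mfH_{\mca^{\theta}}(F^{\theta},F'^{\theta})}Q_{\mfh_{\theta},H}$, as claimed.

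For part (3), the explicit identification of $|m_{\mfh_{\theta}}|$ follows by combining (2) applied to two adjacent chambers $F^{\theta},F'^{\theta}$ of $\mca^{\theta}$ (so that $\mfH_{\mca^{\theta}}(F^{\theta},F'^{\theta})=\{\mfh_{\theta}\}$ and the numerator equals $Q_{\mfh_{\theta},H}$) with Lemma \ref{lemmaPCPCC'index} applied in $\mcb(G)$ to the same pair, which are strongly associated in $\mca$; this computes the denominator as $\prod_{\mfh\in\mfH_{\mca}(\mfh_{\theta})}Q_{\mfh}=|n_{\mfh_{\theta}}|$. The main obstacle is the strict inequality $|m_{\mfh_{\theta}}|<1$. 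The plan is to realize this ratio as the failure of surjectivity of the natural map
\[
\para{F^{\theta}}{H}/(\para{F^{\theta}}{H}\cap\para{F'^{\theta}}{H})\hookrightarrow P_{F^{\theta}}/(P_{F^{\theta}}\cap P_{F'^{\theta}}),
\]
whose injectivity follows from the identification of the stabilizer of $F'^{\theta}$ in $H$ with $H\cap P_{F'^{\theta}}=\para{F'^{\theta}}{H}$. Passing to the reductive quotient at a panel $D$ of $\mca$ contained in some $\mfh\in\mfH_{\mca}(\mfh_{\theta})$ reduces the comparison to a statement about $\ul{H}_{D}$-orbits versus $\ul{G}_{D}$-orbits of chambers in the rank-one finite reductive group $\ul{G}_{D}$ equipped with the induced involution. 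Here Assumption \ref{assumpaffinecoxeter}.(2) guarantees two $\ul{H}_{D}$-orbits of $\theta_{D}$-split chambers adjacent to $D$ (ruling out the double-edge scenario in which the above inclusion would become an equality), and the classification of Section \ref{sectionrankone} quantifies the defect, yielding strictness after bookkeeping over all $\mfh\in\mfH_{\mca}(\mfh_{\theta})$.

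For part (4), Assumption \ref{assumpaffinecoxeter}.(1) produces, for any wall $\mfh_{\theta}$ of $\mca^{\theta}$, an element of $H$ stabilizing $\mca^{\theta}$ and inducing the reflection $s_{\mfh_{\theta}}$. These generate a lift of $W(\mca^{\theta})$ to the stabilizer of $\mca^{\theta}$ in $H$. Given $w\in W(\mca^{\theta})$ with $w(\mfh_{\theta})=\mfh'_{\theta}$ and a lift $h\in H$, the $h$-conjugation sends the data $(F^{\theta},F'^{\theta})$ realizing $m_{\mfh_{\theta}}$ in (3) to a corresponding pair realizing $m_{\mfh'_{\theta}}$; since both the numerator (an index of $H$-parahorics) and the denominator (an index of $G$-parahorics) are invariant under conjugation by $h$, one deduces $m_{\mfh_{\theta}}=m_{\mfh'_{\theta}}$.
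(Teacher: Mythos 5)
There is a genuine gap in part (1). You assert that the hypothesis \emph{forces} $\mca^{\theta}=\mca'^{\theta}$ because both are spanned by $F^{\theta}$; this is false. The affine span of $F^{\theta}$ inside $\mca$ is $\mca^{\theta}$, and the affine span of $F^{\theta}$ inside $\mca'$ is $\mca'^{\theta}$, but these spans are taken in two different ambient apartments, and apartments of $\mcb(G)^{\theta}$ that share the chamber $F^{\theta}$ need not coincide (in the rank-one case, $\mcb(G)^{\theta}$ is essentially a tree, and many ``lines'' pass through a common edge). Indeed, if $\mca^{\theta}=\mca'^{\theta}$ were automatic, Lemma~\ref{lemmauniquethetaequiv} would be trivial, yet the paper proves it precisely by the building-theoretic move you relegate to your parenthetical: use Bruhat--Tits, Corollaire~7.4.9.(1), to replace $\mca'^{\theta}$ by a $\para{F^{\theta}}{H}$-conjugate lying in a common apartment $\mca_H$ of $\mcb(H)$ with $\mca^{\theta}$, and only then conclude equality of affine spans inside $\mca_H$. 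So the parenthetical is not an ``alternative reading'' but the entire content of part (1); your main argument as stated does not close it.

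Parts (2)--(4) are in the spirit of the paper's proof, with some imprecision. In (2) you invoke Lemma~\ref{lemmaPCPCC'index} directly for $F^{\theta},F'^{\theta}$ in $\mca_H$, but these are generally not facets of $\mca_H$ (the poly-simplicial structure of $\mcb(G)^{\theta}$ refines that of $\mcb(H)$); one first replaces them by the facets $F_H,F'_H$ of $\mca_H$ that contain them, uses $\para{F^{\theta}}{H}=\para{F_H}{H}$, and separately observes that when a wall $\mfh_{\theta}$ of $\mca^{\theta}$ is not cut out by walls of $\mca_H$ then $\para{F^{\theta}}{H}=\para{F'^{\theta}}{H}$ and $Q_{\mfh_{\theta},H}=1$, so the factor is trivial. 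Your regrouping is then correct, but the case distinction is needed to make it rigorous. For (3) your strategy (inject the quotient of $H$-parahorics into the quotient of $G$-parahorics and detect a strict defect after passing to the reductive quotient $\ul{G}_D$) is the right one; the paper phrases the defect more uniformly via a lemma asserting $\ul{U}^{\theta}\neq\ul{U}$ for the unipotent radical of the relevant $\theta$-stable parabolic, using Assumption~\ref{assumpaffinecoxeter}.(2) only to guarantee that the induced involution on $\ul{G}_D^{\mrsc}$ is non-trivial, rather than extracting the defect from the Section~\ref{sectionrankone} classification case by case. Part (4) is exactly the paper's argument.
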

		
		\begin{proof}
			
			Statement (1) follows from \cite{bruhat1972groupes}*{Corollaire 7.4.9.(1)}. 
			
			For statement (2),  $[\para{F^{\theta}}{H}:\para{F^{\theta}}{H}\cap\para{F'^{\theta}}{H}]$ is exactly the number of $F''^{\theta}$'s that are $\para{F^{\theta}}{H}$-conjugate to $F'^{\theta}$ by definition and statement (1). So it remains to show \eqref{eqPFthetaF'thetaHindex}. Still by induction on the distance of $F^{\theta}$ and $F'^{\theta}$ in $\mca^{\theta}$, we may assume that $F^{\theta}$ and $F'^{\theta}$ are adjacent, where the transitivity of $[\para{F^{\theta}}{H}:\para{F^{\theta}}{H}\cap\para{F'^{\theta}}{H}]$ in a minimal gallery is shown as in Lemma \ref{lemmaPCPCC'index}. In this case, let $D_{\theta}$ be the panel between $F^{\theta}$ and $F'^{\theta}$, and $\mfh_{\theta}$ the hyperplane in $\mca^{\theta}$ containing $D_{\theta}$. 
			
			When $\mfh_{\theta}$ is the intersection of some walls of an apartment $\mca_{H}$ of $\mcb(H)$, let $F_{H}$ of $F_{H}'$ be the facets in $\mcb(H)$ that contain $F^{\theta}$ and $F'^{\theta}$ respectively. Then $F_{H}$ and $F_{H}'$ are adjacent, separated by the wall $\mfh_{\theta}$ in $\mca^{\theta}$. Thus we have $$\para{F^{\theta}}{H}=\para{F_{H}}{H},\quad\para{F'^{\theta}}{H}=\para{F_{H}'}{H}\quad\text{and}\quad \mfH_{\mca_{H}}(F^{\theta},F'^{\theta})=\mfH_{\mca_{H}}(F_{H},F_{H}')=\mfH_{\mca_{H}}(\mfh_{\theta}),$$ 
			where the first and the second equations follow from the fact that $F^\theta$ (resp. $F'^\theta$) is an open subset of $F_H$ (resp. $F_H'$) and thus contains a point of general position, hence $\para{F^{\theta}}{H}$ (resp. $\para{F'^{\theta}}{H}$) stabilizes $F_H$ (resp. $F_H'$). Then using Lemma \ref{lemmaPCPCC'index}, we get \eqref{eqPFthetaF'thetaHindex}. When $\mfh_{\theta}$ is not the intersection of some walls of an apartment of $\mcb(H)$, then $F^\theta$ and $F'^\theta$ are contained in a common facet $F_H$ of $\mca_H$. So we have $\para{F^{\theta}}{H}=\para{F_H}{H}=\para{F'^{\theta}}{H}$ and $Q_{\mfh_{\theta},H}=1$, both sides of \eqref{eqPFthetaF'thetaHindex} equal 1.
			
			We prove statement (3). By definition, we already have $\abs{m_{\mfh_{\theta}}}\leq 1$. We need to show that the equality could not be obtained. 
			
			If $\mfh_{\theta}$ is not the intersection of some walls of an apartment of $\mcb(H)$, we have $[\para{F^{\theta}}{H}:\para{F^{\theta}}{H}\cap\para{F'^{\theta}}{H}]=1$ and $[P_{F^{\theta}}:P_{F^{\theta}}\cap P_{F'^{\theta}}]>1$, so $\abs{m_{\mfh_{\theta}}}<1$. 
			
			Now we assume that $\mfh_{\theta}$ is the intersection of some walls of an apartment $\mca_{H}$ in $\mcb(H)$. Let $D$ be the maximal $\theta$-stable facet of $\mcb(G)$ contained in both $\ol{F}$ and $\ol{F'}$, or equivalently the $\theta$-stable facet containing $D_{\theta}$.   Let $D_H$ be the facet of $\mcb(H)$ that contains $D_\theta$, and $F_{H}$ and $F_{H}'$ the facets in $\mcb(H)$ that contain $F^{\theta}$ and $F'^{\theta}$ respectively. From our assumption, $\mfh_{\theta}$ is the intersection of the walls in $\mca_H$ passing $D_H$. We consider the related finite reductive group $\ul{G}_{D}$ of the integral model $\mcg_D$ of $G$ and finite reductive group $\ul{H}_{D_{H}}$ of the integral model $\mch_{D_{H}}$ of $H$. Then in our settings we have $\mch_{D_{H}}\cong(\mcg_{D}^\theta)^\circ$ and $\ul{H}_{D_{H}}\cong(\ul{G}_{D}^{\theta})^{\circ}$ (\emph{cf.} \cite{prasad2020finite}*{\S 3.11}). 
			
			Using the bijection $\mcf_D(G)\leftrightarrow\mcf(\ul{G}_{D})$, we may regard $F$, $F'$ as one-dimensional facets in $\mcv(\ul{G}_{D})$, and consider the corresponding parabolic subgroups $\ul{P}_{F}$ and $\ul{P}_{F'}$ of $\ul{G}_{D}$. Then, $\ul{P}_{F}$ and $\ul{P}_{F'}$ are opposite parabolic subgroups of $\ul{G}_{D}$ with $\ul{P}_{F}\cap \ul{P}_{F'}$ the related Levi subgroup (\emph{cf.} Lemma \ref{lemmaopposite}).
			As in the proof of Lemma \ref{lemmaPCPCC'index} taking the quotient by $\parah{D,+}$ we have
			$$[\parah{F^{\theta}}:\parah{F^{\theta}}\cap \parah{F'^{\theta}}]=[\parah{F}:\parah{F}\cap \parah{F'}]=[\ul{P}_{F}:\ul{P}_{F}\cap \ul{P}_{F'}]$$
			Similarly, using the bijection $\mcf_{D_H}(H)\leftrightarrow\mcf(\ul{H}_{D_H})$, we regard $F_H$ and $F'_H$ as one-dimensional facets in $\mcv(\ul{H}_{D_H})$. We consider the corresponding parabolic subgroups $\ul{P}_{F_H,\ul{H}}$ and $\ul{P}_{F'_H,\ul{H}}$ of $\ul{H}=\ul{H}_{D_H}$. 
			Since $\para{F_H}{H}=\para{F^{\theta}}{H}$, $\para{F_H'}{H}=\para{F'^{\theta}}{H}$, as in the proof of Lemma \ref{lemmaPCPCC'index}, taking the quotient by $\para{D_H,+}{H}$ we have
			$$[\para{F^{\theta}}{H}:\para{F^{\theta}}{H}\cap\para{F'^{\theta}}{H}]=[\para{F_H}{H}:\para{F_H}{H}\cap\para{F'_H}{H}]=[\ul{P}_{F_H,\ul{H}}:\ul{P}_{F_H,\ul{H}}\cap\ul{P}_{F'_H,\ul{H}}].$$
			
			We need the following lemma:
			
			\begin{lemma}
				
				Let $\ul{G}$ be a non-toral reductive group, and $\theta$ a non-trivial involution of $\ul{G}$ over $\bs{k}$ with the related involution $\theta'$ on $\ul{G}^\mrsc$ being non-trivial (\emph{cf.} Lemma \ref{lemmascinvolutionext}). Let $\ul{P}=\ul{M}\ul{U}$ be a $\theta$-stable parabolic subgroup of $\ul{G}$ with $\ul{M}$ being the Levi subgroup and $\ul{U}$ the unipotent radical, and $\ul{S}$ a maximal $\bs{k}$-split $\theta$-stable torus contained in $\ul{P}$. Assume that the $\theta$-invariant part $\ul{S}^{+}$ of $\ul{S}$ is of semi-simple rank $1$. Then $\ul{U}^{\theta}\neq\ul{U}$.
				
			\end{lemma}
			
			\begin{proof}
				
				Without loss of generality, we may assume that $\ul{G}$ is semi-simple and simply connected. If $\ul{G}$ is of $\bs{k}$-rank greater than $1$, then $\ul{U}^{\theta}=\ul{U}$ implies that there are at least two $\theta$-invariant roots in $\Phi(\ul{U},\ul{S})\subset\Phi(\ul{G},\ul{S})$ that are not proportional. Then $\ul{S}^{+}$ is of rank at least $2$, which provides a contradiction. If $\ul{G}$ is of $\bs{k}$-rank $1$, then $\ul{G}$ is the Weil restriction of either $\mrsl(2)$ or $\mrsu(3)$. Then one could verify case-by-case from our classification results (i.e. Proposition \ref{proprank1classSL2} and Proposition \ref{proprank1classSU3}).
				
			\end{proof}
			
			Come back to the proof of statement (3). We first claim that the involution $\theta'$ on $\ul{G}_{D}^{\mrsc}$ is non-trivial. Otherwise, for a panel $D'$ having $D$ as a facet, the set of chambers $\ch{}{}(\ul{G}_{D'})$ has only one $\ul{H}_{D'}=(\ul{G}_{D'}^\theta)^\circ$-orbit, or the corresponding set of chambers $\ch{D'}{}(G)$ has only one $H$-orbit, contradicting to the Assumption \ref{assumpaffinecoxeter}.(2). 
			
			\begin{remark}
				
				If Assumption \ref{assumpaffinecoxeter}.(2) is not satisfied, then the involution $\theta'$ could be trivial on $\ul{G}_{D}^{\mrsc}$. A typical example is that $G=\mrgl_2(K)$ and $\theta$ the symplectic involution. Then we have $H=\mrsp_2(K)=\mrsl_2(K)$ and the related involution $\theta'$ on $H$ is trivial. 
				
			\end{remark}
			
			Using the above lemma for $\ul{G}=\ul{G}_{D}$, $\ul{P}=\ul{P}_{F}$ and $\ul{M}=\ul{P}_{F}\cap\ul{P}_{F'}$, we have 
			$$\ul{P}_{F}=(\ul{P}_{F}\cap\ul{P}_{F'})\ul{U}\supsetneq(\ul{P}_{F}\cap\ul{P}_{F'})\ul{U}^{\theta}\supset(\ul{P}_{F}\cap \ul{P}_{F'})(\ul{P}_{F}\cap\ul{H}),$$
			since $\ul{P}_{F_H,\ul{H}}=\ul{P}_{F}\cap\ul{H}$ and $\ul{P}_{F_H',\ul{H}}=\ul{P}_{F'}\cap\ul{H}$, we have
			$$[\ul{P}_{F}:\ul{P}_{F}\cap \ul{P}_{F'}]>[\ul{P}_{F}\cap\ul{H}:\ul{P}_{F}\cap\ul{P}_{F'}\cap\ul{H}]=[\ul{P}_{F_H,\ul{H}}:\ul{P}_{F_H,\ul{H}}\cap \ul{P}_{F_H',\ul{H}}],$$ finishing the proof of statement (3).
			
			Statement (4) is clear, since $m_{\mfh_{\theta}}$ is invariant under $H$-conjugation, and each element in $W(\mca^{\theta})$ is realized by an $H$-action.
			
		\end{proof}
		
		Under the above setting, we further assume that $F$ is effective. Let $F'$ be another $\theta$-stable facet, such that $F'^\theta$ is also a chamber of $\mca^\theta$.
		
		\begin{lemma}\label{lemmathetaapartmentCF'}
			
			For any $C\in\ch{F}{0}(G)$, we may find a $\theta$-stable apartment $\mca'$ that contains $C$ and $\mca^\theta$.
			
		\end{lemma}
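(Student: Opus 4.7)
Plan.

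The strategy is to start with any $\theta$-stable apartment containing $C$ and then modify it by an element of $H$ that fixes $C$, so as to align its $\theta$-invariant affine subspace with the given $\mca^\theta$.

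By Proposition~\ref{propAcontainC}, pick a $\theta$-stable apartment $\mca_0$ of $\mcb(G)$ containing $C$. Since $C\in\ch{F}{0}(G)$ and the $\theta$-rank of a chamber is invariant along $H'$-orbits, the facet $F$ lies in $\mca_0$ and $F^\theta$ is a chamber of $\mca_0^\theta$ of the same dimension as $\mca^\theta$. Then by the argument in the proof of Lemma~\ref{lemmauniquethetaequiv}, which applies \cite{bruhat1972groupes}*{Corollaire~7.4.9.(1)} inside $\mcb(H)$ to the two affine subspaces $\mca_0^\theta$ and $\mca^\theta$ that share the chamber $F^\theta$, there exists $h_1\in\para{F^\theta}{H}$ with $h_1\cdot\mca_0^\theta=\mca^\theta$. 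The apartment $h_1\cdot\mca_0$ is then $\theta$-stable with $\theta$-invariant part $\mca^\theta$, but contains $h_1\cdot C$ rather than $C$ in general, so we are not yet done.

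The crux is to upgrade $h_1$ to an element $h\in\para{F^\theta}{H}\cap\parah{C}$ still satisfying $h\cdot\mca_0^\theta=\mca^\theta$; granted such $h$, the apartment $\mca':=h\cdot\mca_0$ is $\theta$-stable (as $h\in H$), contains $C=h\cdot C$, and has $\mca'^\theta=h\cdot\mca_0^\theta=\mca^\theta$, so a fortiori contains $\mca^\theta$, as required. To produce $h$, I would descend via the connected parahoric integral model $\mcg_F$ to the reductive quotient $\ul{G}_F$: by \eqref{bijection} the chamber $C$ corresponds to a $\theta$-split Borel $\ul{B}_C$ of $\ul{G}_F$ with Levi $\ul{T}_C:=\ul{B}_C\cap\theta(\ul{B}_C)$, and the apartment $\mca_0$ corresponds to a $\theta$-stable maximal $\bs{k}$-split torus $\ul{S}_0\subset\ul{B}_C$, which by $\theta$-stability sits inside $\ul{T}_C$. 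The problem then becomes that of finding an element of $\ul{H}_F\cap\ul{B}_C$ that conjugates $\ul{S}_0^+$ to the maximal $\bs{k}$-split $\theta$-invariant torus $\ul{S}^+$ determined by $\mca^\theta$. Once such an element is located, it lifts back to $h\in H\cap\parah{C}$ because the pro-$p$-group $\parah{C,+}\cap H$ has trivial $\pairangone{\theta}$-cohomology when $p\neq 2$, allowing a standard smearing argument from the residual to the integral level.

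The main obstacle is the above finite-group statement inside $\ul{G}_F$: any two maximal $\bs{k}$-split $\theta$-invariant tori of $\ul{G}_F$ sitting in the Levi of some $\theta$-split Borel containing $F$-compatible data are conjugate by an element of $\ul{H}_F\cap\ul{B}_C$. Assumption~\ref{assumpaffinecoxeter}.(1) is essential here, since it guarantees that reflections across walls of $\mca^\theta$ are implemented by $H$, endowing $\mca^\theta$ with sufficient $H$-symmetry to make the required conjugation available within the parabolic subgroup $\ul{H}_F\cap\ul{B}_C$. The verification would proceed via the Bruhat decomposition of $\ul{H}_F$ relative to $\ul{H}_F\cap\ul{B}_C$, reducing (as in many arguments in this paper) to the rank-one classification carried out in Section~\ref{sectionrankone}, where the relative positions of $\theta$-stable maximal tori within a $\theta$-split Borel are explicit.
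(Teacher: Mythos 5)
Your first two steps (choosing a $\theta$-stable apartment $\mca_0$ through $C$ and using \cite{bruhat1972groupes}*{Corollaire 7.4.9.(1)} to find $h_1\in\para{F^\theta}{H}$ with $h_1\cdot\mca_0^\theta=\mca^\theta$) coincide with the paper's proof, but your ``crux'' step is where the argument breaks down. First, the finite-group statement you reduce to is not proved, and as formulated it is problematic: a conjugator taken in $\ul{H}_F\cap\ul{B}_C$ keeps subgroups of $\ul{B}_C$ inside $\ul{B}_C$, so conjugating $\ul{S}_0^+$ to the torus $\ul{S}^+$ coming from $\mca$ presupposes $\ul{S}^+\subset\ul{B}_C$; but $C$ need not lie in $\mca$, so this containment is not available and is essentially equivalent to what you are trying to prove. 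Second, even granting a residual conjugation and a lift $h\in H\cap\parah{C}$ via vanishing of $H^1(\pairangone{\theta},\parah{C,+}\cap H)$, matching the images of the tori in the reductive quotient $\ul{G}_F$ only controls the situation modulo $\parah{F,+}$, i.e.\ locally at $F$; it does not yield the global equality $h\cdot\mca_0^\theta=\mca^\theta$ of unbounded affine subspaces of the building. Third, your claim that Assumption \ref{assumpaffinecoxeter}.(1) is essential is unfounded: the lemma is proved in the paper without it.

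The paper completes the argument much more cheaply, entirely at the level of parahoric decompositions: since $h_1\in\para{F^\theta}{H}$ fixes $F^\theta$, it fixes $F$, so $C^{h_1}\in\ch{F}{0}(G)$; writing $h_1=h_2u$ with $h_2\in\para{\mca^\theta}{H}$ and $u\in\para{F^\theta,+}{H}$ via $\para{F^\theta}{H}=\para{\mca^\theta}{H}\para{F^\theta,+}{H}$, the pro-unipotent part $u$ lies in $\parah{F,+}$ (by the compatibility $\ul{H}_{F_H}\cong(\ul{G}_F^\theta)^\circ$) and hence acts trivially on $\ch{F}{}(G)$, so $h_2$ already carries $C^{h_1}$ back to $C$ while fixing $\mca^\theta$ pointwise; then $\mca'=(\mca_0)^{h_1h_2}$ contains $C$ and $\mca^\theta$. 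In other words, instead of forcing a single element of $H\cap\parah{C}$ to move $\mca_0^\theta$ onto $\mca^\theta$ (a strictly stronger statement than the lemma, which your sketch does not establish), one corrects $h_1$ by an element of the pointwise fixator of $\mca^\theta$ in $H$. You would need to supply a complete proof of your finite-group conjugacy claim and of the passage from residual to global data before your route could be considered a proof.
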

		
		\begin{proof}
			
			We first find a $\theta$-stable apartment $\mca''$ that contains $C$. Since both $\mca^\theta$ and $\mca''^{\theta}$ admit $F^\theta$ as a chamber, using \cite{bruhat1972groupes}*{Corollaire 7.4.9.(1)}, we may find $h_1\in\para{F^\theta}{H}$ such that $(\mca''^{\theta })^{h_1}=\mca^\theta$. In this case, we have $C^{h_1}\in \ch{F}{0}(G)$. Since $\para{F^\theta}{H}=\para{\mca^\theta}{H}\para{F^\theta,+}{H}$, we may find $h_2\in\para{\mca^\theta}{H}$ that maps $C^{h_1}$ to $C$. Then $\mca'=(\mca'')^{h_1h_2}$ is the apartment we want.
			
		\end{proof}
		
		Now we define a bijection $t_{F,F'}:\ch{F}{}(G)\rightarrow \ch{F'}{}(G)$\index{$t_{F,F'}$} by mapping $C\in\ch{F}{}(G)$ to the unique chamber $C'\in\ch{F'}{}(G)$ in the closure of $C\cup F'$. Using the above lemma, it induces a bijection $t_{F,F'}:\ch{F}{0}(G)\rightarrow \ch{F'}{0}(G)$.
		
		\begin{lemma}\label{lemmatFF'color}
			
			$t_{F,F'}$ further induces a bijection $\gamma(\ul{G}_F,\theta,\chi)\rightarrow\gamma(\ul{G}_{F'},\theta,\chi)$ between two bipartite graphs, which maintains or reverses the color.
			
		\end{lemma}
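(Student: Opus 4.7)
The plan is to show that $t_{F,F'}$ descends to a graph isomorphism between $\gamma(\ul{G}_F, \theta, \chi)$ and $\gamma(\ul{G}_{F'}, \theta, \chi)$, then to invoke the elementary fact that any isomorphism between two connected bipartite graphs either preserves or reverses the 2-coloring. By symmetry ($t_{F',F} \circ t_{F,F'} = \mrid$, using Lemma \ref{lemmathetaapartmentCF'} and Proposition \ref{propfFC'neq0}.(1)), the map $t_{F,F'}$ is already a bijection $\ch{F}{0}(G) \to \ch{F'}{0}(G)$, so the real content is that it respects the orbit structures cutting out vertices and the adjacency cutting out edges.

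First I would identify the vertex set of $\gamma(\ul{G}_F, \theta, \chi)$ with the set of $(\parah{F} \cap H)_\chi$-orbits on $\ch{F}{0}(G)$ via the bijection \eqref{bijection} and the isomorphism $\parah{F}/\parah{F,+} \cong \ul{G}_F(\bs{k})$, and analogously for $F'$. The key equivariance of $t_{F,F'}$ is: for any $h\in H$ stabilizing both $F$ and $F'$ setwise, one has $t_{F,F'}(hC) = h\cdot t_{F,F'}(C)$. In particular, $\para{\mca^\theta}{H}$ (which fixes $\mca^\theta$ pointwise and therefore stabilizes both $F$ and $F'$, since a maximal $\theta$-stable facet is determined by its $\theta$-invariant part) acts equivariantly on both sides. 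Combined with Assumption \ref{assumpaffinecoxeter}.(1), which supplies enough reflection elements in $H$ stabilizing $\mca^\theta$, one analyzes the images of $\para{\mca^\theta}{H}$ in the reductive quotients $\ul{H}_F \subset \ul{G}_F$ and $\ul{H}_{F'} \subset \ul{G}_{F'}$ and verifies that $t_{F,F'}$ descends to a bijection of vertex sets.

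For the edge condition, given $C_1, C_2 \in \ch{F}{0}(G)$ sharing a panel $D$, one uses Proposition \ref{propfFC'neq0}.(2) and the convexity of buildings to find a $\theta$-stable apartment $\mca'$ containing $C_1, C_2, F, F'$; inside this apartment the chambers $t_{F,F'}(C_1)$ and $t_{F,F'}(C_2)$ are seen to share a panel by tracking the walls of $\mca'$ as in Proposition \ref{propfFC'neq0}.(3). Once $t_{F,F'}$ is established as a graph isomorphism, the bipartite structure supplied by the $\{\pm 1\}$-valued colorings $\phi_F, \phi_{F'}$ (from effectiveness) together with connectedness (Proposition \ref{propBGHchiconnnected}) forces $t_{F,F'}$ to either maintain or reverse the coloring. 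Equivalently, it suffices to show that the sign ratio $\phi_{F'}(t_{F,F'}(C))/\phi_F(C)\in\{\pm 1\}$ is constant on $\ch{F}{0}(G)$, which may alternatively be derived from the formula in Proposition \ref{propfFC'neq0}.(5) combined with a harmonicity and support analysis of $f_{F'}\rest_{\ch{F}{}(G)}$ and the uniqueness of $\phi_F$ up to scalar.

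The main obstacle will be the descent to orbit spaces: verifying that two chambers of $\ch{F}{0}(G)$ lying in the same $(\parah{F}\cap H)_\chi$-orbit have their images under $t_{F,F'}$ in the same $(\parah{F'}\cap H)_\chi$-orbit, given that the natural equivariance of $t_{F,F'}$ is only with respect to the smaller subgroup $\para{\mca^\theta}{H}$. Bridging this gap will require a careful combinatorial analysis of how the reductive quotients $\ul{H}_F$ and $\ul{H}_{F'}$ are generated by the images of $\para{\mca^\theta}{H}$ together with the reflection elements supplied by Assumption \ref{assumpaffinecoxeter}.(1), and a compatibility check between the two generation patterns under the bijection induced by $t_{F,F'}$.
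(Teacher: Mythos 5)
Your high-level skeleton is fine as far as it goes: once $t_{F,F'}$ is known to induce an isomorphism of graphs, connectedness (Proposition \ref{propBGHchiconnnected}) together with the fact that $\phi_F$ and $\phi_{F'}$ are proper $2$-colorings does force the coloring to be maintained or reversed, which even streamlines the paper's explicit sign bookkeeping slightly. The problem is that neither of the two steps carrying the actual content is established, and the edge step rests on a claim that fails in general. You propose to place $C_1$, $C_2$, $F$, $F'$ in a single $\theta$-stable apartment $\mca'$ via Proposition \ref{propfFC'neq0}.(2) and convexity; but that proposition treats one chamber at a time (and under the hypothesis $f_{F'}(C)\neq 0$, which you cannot assume), and convexity gives no $\theta$-stability. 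In fact two chambers of $\ch{F}{0}(G)$ sharing a panel $D$ typically represent the two different orbits at $D$, and in the rank-one quotient $\ul{G}_{D}$ they correspond to two $\theta$-split Borel subgroups which in general contain no common $\theta$-stable maximal torus; hence no $\theta$-stable apartment contains both $C_1$ and $C_2$. This is precisely the difficulty the paper's proof confronts: it works with two different $\theta$-stable apartments $\mca_1\supset C_1\cup\mca^{\theta}$ and $\mca_2\supset C_2\cup\mca^{\theta}$ furnished by Lemma \ref{lemmathetaapartmentCF'}, proves by a poly-cone decomposition along the walls containing $\mca^{\theta}$ and a segment argument that $\mca_2$ still contains the panel $D'$ of $C_1'=t_{F,F'}(C_1)$ lying in the wall $\mfh_D$, and then moves $C_1\mapsto C_2$ and $C_1'\mapsto C_2'$ simultaneously by a single element $g\in\parah{D\cup D'}$, reading off the color change on both sides from whether $g\in H$ and from $\chi(g)$. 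None of this is recovered by your argument, so adjacency preservation is not proved.

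Second, the descent to vertices, which you yourself flag as the main obstacle, is left open, and the route you sketch is misdirected: the reflections supplied by Assumption \ref{assumpaffinecoxeter}.(1) stabilize $\mca^{\theta}$ only setwise and move $F^{\theta}$ to adjacent chambers of $\mca^{\theta}$, so they contribute nothing to the group whose orbits on $\ch{F}{0}(G)$ are the vertices of $\gamma(\ul{G}_F,\theta,\chi)$. The ingredient that actually bridges the gap between the natural $\para{\mca^{\theta}}{H}$-equivariance of $t_{F,F'}$ and the orbit structure is the parahoric factorization $\para{F^{\theta}}{H}=\para{\mca^{\theta}}{H}\para{F^{\theta},+}{H}$ (the corollary to Lemma \ref{reductivespecial}, applied to $H$): an element of $\para{F^{\theta}}{H}$ relating two chambers in the same vertex can be replaced, modulo the pro-unipotent factor, by one in $\para{\mca^{\theta}}{H}$, which fixes $\mca^{\theta}$ pointwise and hence stabilizes both $F$ and $F'$, after which the distance-minimality characterization of $t_{F,F'}$ from Proposition \ref{propfFCdef} yields the required equivariance. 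Without this factorization (or an equivalent substitute) the descent, and with it the graph isomorphism and the lemma, remains unproven.
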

		
		\begin{proof}
			
			Let $C_1,C_2\in\ch{F}{0}(G)$, and $C_1'=t_{F,F'}(C_1),C_2'=t_{F,F'}(C_2)\in\ch{F'}{0}(G)$. 
			
			If $C_2=C_1^{h}$ for some $h\in\para{F^\theta}{H}$, using  $\para{F^\theta}{H}=\para{\mca^\theta}{H}\para{F^\theta,+}{H}$ we may assume that $h\in\para{\mca^\theta}{H}$. Then $h$ fixes $F$ and $F'$. Taking the $h$-conjugation we have $d(C_1,C_1')=d(C_2,C_1'^{h})$. Using the minimality of the distance (\emph{cf.} Proposition \ref{propfFCdef}), we have $C_2'=C_1'^{h}$ as well. Since $C_1,C_2$ are arbitrary, it implies that $t_{F,F'}$ induces a bijection $\gamma(\ul{G}_F,\theta,\chi)\rightarrow\gamma(\ul{G}_{F'},\theta,\chi)$.
			
			Now we consider the basic case, assuming that $C_1,C_2$ share a common panel $D$ admitting $F$ as a facet. First we show that $C_1', C_2'$ share some common panel $D'$ as well. Using Lemma \ref{lemmathetaapartmentCF'}, we may choose a $\theta$-stable apartment $\mca_1$ that contains $C_1$ and $\mca^{\theta}$. By definition $C_1'$ is also in $\mca_1$.
			
			\begin{lemma}\label{lemmapolycone}
				
				Let $\mfH_{\mca_1}(\mca^\theta)$ be the set of walls in $\mca_1$ that contain $\mca^{\theta}$, which separate $\mca_1$ into different poly-cones. Then, each poly-cone contains exactly one chamber in $\ch{F}{0}(G)$ with the walls of the poly-cone being walls of that chamber as well, where $F$ is any $\theta$-stable facet such that $F^\theta$ is a chamber of $\mca^\theta$. Moreover, for two such $\theta$-stable facets $F,F'$ with $F^\theta,F'^\theta$ chambers of $\mca^\theta$, the corresponding chambers in the same poly-cone are related by $t_{F,F'}$. 
				
			\end{lemma}
			
			\begin{proof}
				
				Indeed, for each $C\in\ch{F}{0}(G)\cap\ch{}{}(\mca_1)$ we consider the walls of $C$ that contain $F$ (and thus contain $F^\theta$ and $\mca^\theta$), which spans a poly-cone that contains  only one chamber (i.e. $C$) in $\ch{F}{0}(G)$. This proves the first part. For the second part, if $C\in\ch{F}{}(G)$, $C'\in\ch{F'}{}(G)$ lie in the same poly-cone in $\mca_1$ and $C'\notin t_{F,F'}(C)$, then there exists a wall $\mfh$ separating $C,C'$ and containing $F'$. Thus $\mfh\in\mfH_{\mca_1}(\mca^\theta)$, contradicting to the first part.
				
			\end{proof}
			
			Let $\mfh_D$ be the wall in $\mca_1$ that is generated by $D$. Then $\mfh_D$ contains $F,\mca^{\theta}$ and $F'$. Using the above lemma, $\mfh_D$ is also a wall of $C_1'$. Let $D'$ be the unique panel of $C_1'$ in $\mfh_D$. Now using Lemma \ref{lemmathetaapartmentCF'}, we may find a $\theta$-stable apartment $\mca_2$ that contains $C_2$ and $\mca^{\theta}$.
			
			\begin{lemma}
				The apartment $\mca_2$ contains $D'$ as well. 
			\end{lemma} 
			
			\begin{proof}
				
				To show the lemma, we may without loss of generality assume that $F^\theta$ and $F'^\theta$ are adjacent in $\mca^\theta$. Otherwise, we find a minimal gallery $F^\theta=F_0^\theta,F_1^\theta,\dots,F_k^\theta=F'^\theta$ in $\mca^\theta$, and for each $i=1,2,\dots,k$ pick the corresponding chamber $t_{F,F_i}(C_1)$ and panel $D_i$ of $t_{F,F_i}(C_1)$ in $\mfh_D$. Then by induction on $i$, we may subsequently show that $D_i$ is in $\mca_2$, and reduce to the adjacent case.
				
				In this case, let $\mfh_\theta$ be the hyperplane in $\mca^\theta$ separating $F^\theta$ and $F'^\theta$. We pick $\bs{p}$ in $F^\theta$ and $\bs{p}'$ its reflection alone $\mfh_\theta$ in $F'^\theta$. Then the line passing $[\bs{p},\bs{p}']$ is parallel to $\mca^\theta$. Now we pick a point $\bs{p}_1$ in $C_1$ and a point $\bs{p}_2$ in $D$ that are close to $\bs{p}$. Let $\bs{p}_{1}'$, $\bs{p}_{2}'$ be points close to $\bs{p}'$ and of general position, such that both $[\bs{p}_1,\bs{p}_1']$
				and $[\bs{p}_2,\bs{p}_2']$ are parallel to $[\bs{p},\bs{p}']$.
				Let $C'$ be the chamber containing $\bs{p}_1'$, which must admit $F'$ as a facet. Moreover, we have $\mfH_\mca(F,F')=\mfH_\mca(C_1,C')$, since any wall strictly separating $\bs{p}_1$ and $\bs{p}_1'$ cannot contain neither $F$ or $F'$. Thus we have $C'=t_{F,F'}(C_1)=C_1'$. Since $\bs{p}_2'$ is close to $\bs{p}_1'$, it is contained in a panel of $C_1'$ in $\mfh_D$, which is $D'$.
				
				Since $[\bs{p}_2,\bs{p}_2']$ is parallel to $[\bs{p},\bs{p}']$, we may perturb $\bs{p}_2'$ a little bit to get another point $\bs{p}_2''$ in $D'$, such that the ray in $\mca_1$ starting with $\bs{p}_2$ and containing $[\bs{p}_2,\bs{p}_2'']$ intersects the line in $\mca_1$ containing $[\bs{p},\bs{p}']$. It means that $\bs{p}_2''$ is in the line segment of a point in $D$ and a point in $\mca^\theta$. By convexity, $\bs{p}_2''$ as well as $D'$ are contained in $\mca_2$.
				
			\end{proof}
			
			Let $\mfh_D'$ be the wall in $\mca_2$ that contains $D$ and $D'$. We claim that $C_2'$ is the unique chamber in $\mca_2$ having $D'$ as a panel and lie on the same side of $\mfh_{D'}$ of $C_2$. Since $\parah{D}=\parah{D\cup D'}\parah{D,+}$, we may find an element $g\in \parah{D\cup D'}$ such that $C_2=C_1^g$. Also, taking $g$-conjugation we have $d(C_1,C_1')=d(C_2,C_1'^g)$, so by the minimality of the distance, we have $C_2'=C_1'^g$ and $C_2$ and $C_2'$ lie on the same side of $\mfh_{D'}$. Also $D'$ is a panel of $C_2'$ since $g$ fixes $D'$.  
			
			Moreover, the color change from $C_1$ to $C_2$ in $\gamma(\ul{G}_F,\theta,\chi)$ is the same as that of $C_1'$ and $C_2'$ in $\gamma(\ul{G}_{F'},\theta,\chi)$. We have found an element $g\in\parah{D\cup D'}$ such that $C_2=C_1^{g}$, $C_2'=C_1'^{g}$. If $g\notin H$, then $C_1$ and $C_2$ (resp. $C_1'$ and $C_2'$) are of different color in $\gamma(\ul{G}_F,\theta,\chi)$ (resp. $\gamma(\ul{G}_{F'},\theta,\chi) $). If $g\in H$, then $C_1$ and $C_2$ (resp. $C_1'$ and $C_2'$) are of the same or different color in $\gamma(\ul{G}_F,\theta,\chi)$ (resp. $\gamma(\ul{G}_{F'},\theta,\chi) $), depending on $\chi(g)$ is $1$ or $-1$. 
			
			Finally, since $\gamma(\ul{G}_F,\theta,\chi)$ is connected (\emph{cf.} Proposition \ref{propBGHconnnected}), the general case could be reduced to the above adjacent case.
			
		\end{proof}
		
		Assume that $F,F'\in\Fteff(G)$, such that $F^\theta$ and $F'^\theta$ are chambers of $\mca^\theta$ for some $\theta$-stable apartment $\mca$. We may find an element $h\in H$ such that $F'=F^h$. We define $\epsilon_{F,F'}$\index{$\epsilon_{F,F'},\epsilon_{\mfh_\theta}$} as the sign 
		$$\phi_F(C)\cdot\phi_F(t_{F',F}(h\cdot C))\cdot\chi_0(h)$$
		for any $C\in\ch{F}{0}(G)$. 
		
		\begin{proposition}\label{propepsilonFF'}
			
			\begin{enumerate}
				\item $\epsilon_{F,F'}$ is well-defined, i.e., it does not depend on the choice of $\phi_F$, $h$ and $C$.
				
				\item For $F''^\theta$ another chamber of $\mca^\theta$ lying in the closure of $F^\theta\cup F'^\theta$, we have
				$$\epsilon_{F,F'}=\epsilon_{F,F''}\epsilon_{F'',F'}.$$
				
				\item For $h\in H$, we have $\epsilon_{F^h,F'^h}=\epsilon_{F,F'}$. Notice that here $(F^\theta)^h$ and $(F'^\theta)^h$ are chambers of $(\mca^h)^\theta$ instead.
				
				\item If furthermore $F^\theta$ and $F'^\theta$ are adjacent and separated by a wall $\mfh_\theta$ in $\mca^\theta$, then the value $\epsilon_{F,F'}$ depends only on the wall $\mfh_\theta$, which we denote by $\epsilon_{\mfh_\theta}$.
				
			\end{enumerate}
			
		\end{proposition}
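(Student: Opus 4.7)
The plan is to verify the four claims in order. Lemma \ref{lemmatFF'color}, which identifies $t_{F',F}$ as a color-preserving or color-reversing isomorphism between the bipartite graphs $\gamma(\ul{G}_{F'},\theta,\chi)$ and $\gamma(\ul{G}_F,\theta,\chi)$, is the central geometric tool, and the $(H,\chi)$-equivariance of $\phi_F$ is the main algebraic tool.

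For (1), replacing $\phi_F$ by $-\phi_F$ leaves the product $\phi_F(C)\phi_F(t_{F',F}(h\cdot C))$ unchanged, so the construction is independent of the choice of $\phi_F$. For independence from $C$, I introduce $\sigma_h:\ch{F}{0}(G)\to\ch{F}{0}(G)$, $C\mapsto t_{F',F}(h\cdot C)$, which is the composition of the $h$-translation $\ch{F}{0}(G)\to\ch{F'}{0}(G)$ (color-preserving by the $(H,\chi)$-equivariance of $\phi_F$ and $\phi_{F'}$ and the uniqueness up to sign of each) with $t_{F',F}$ (color-preserving or color-reversing by Lemma \ref{lemmatFF'color}). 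Hence $\sigma_h$ acts on the bipartite coloring of $\gamma(\ul{G}_F,\theta,\chi)$ with constant parity, which says exactly that $\phi_F(C)\phi_F(\sigma_h(C))$ is independent of $C$. For independence from $h$, replacing $h$ by $hk$ with $k\in H\cap P_F$ introduces a factor $\chi_0(k)$ via $\sigma_{hk}(C)=\sigma_h(k\cdot C)$ and $\phi_F(k\cdot C)=\chi_0(k)\phi_F(C)$ (using $\epsilon_G|_{P_F}=1$), which is cancelled by the $\chi_0(k)$ introduced in $\chi_0(hk)$.

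For (2), fix $C\in\ch{F}{0}(G)$, pick $h,h'\in H$ with $F''=h\cdot F$ and $F'=h'\cdot F''$, and set $C_1:=t_{F'',F}(h\cdot C)$, $C_2:=t_{F',F''}(h'h\cdot C)$, $C_3:=t_{F',F}(h'h\cdot C)$. The geometric core is the identity $C_3=t_{F'',F}(C_2)$, i.e., $t_{F'',F}\circ t_{F',F''}=t_{F',F}$ at $h'h\cdot C$, which follows from the additivity of separating walls $\mfH_\mca(F,F')=\mfH_\mca(F,F'')\sqcup\mfH_\mca(F'',F')$ in a common $\theta$-stable apartment (valid because $F''^\theta$ lies in the closure of $F^\theta\cup F'^\theta$, via Proposition \ref{propfFC'neq0}(2)--(3)). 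Combined with (a) the parity statement for $t_{F'',F}$ from Lemma \ref{lemmatFF'color} giving a sign $\delta$ with $\phi_F(t_{F'',F}(X))=\delta\phi_{F''}(X)$ for all $X\in\ch{F''}{0}(G)$, and (b) the transport relation $\phi_{F''}(h\cdot C_0)=\eta_h\phi_F(C_0)$ for $C_0\in\ch{F}{}(G)$, obtained from applying uniqueness up to sign to the pullback $\phi_{F''}\circ h$ as an $(H\cap P_F,\chi)$-equivariant harmonic cochain supported on $\ch{F}{0}(G)$, the product $\epsilon_{F,F''}\epsilon_{F'',F'}$ reduces to $\epsilon_{F,F'}$ after the cancellations $\delta^2=\eta_h^2=1$. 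For (3), I take $\phi_{F^{h_0}}(C):=\phi_F(h_0\cdot C)$ as a valid normalized cochain at $F^{h_0}$; then with realizing element $h_0^{-1}hh_0$ and base chamber $h_0^{-1}\cdot C$, the definition of $\epsilon_{F^{h_0},F'^{h_0}}$ unwinds to $\epsilon_{F,F'}$ using $\chi_0(h_0^{-1}hh_0)=\chi_0(h)$.

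For (4), by (3) the sign depends only on the $H$-orbit of the ordered adjacent pair $(F^\theta,F'^\theta)$, so it suffices to show that all adjacent ordered pairs across the same wall $\mfh_\theta$ lie in the same $H$-orbit. Working in the common apartment $\mca$, Assumption \ref{assumpaffinecoxeter}(1) provides that every element of $W(\mca^\theta)$ is realized by $H$; the stabilizer of $\mfh_\theta$ in $W(\mca^\theta)$, generated by $s_{\mfh_\theta}$ and the reflections fixing $\mfh_\theta$ pointwise, acts transitively on ordered adjacent pairs across $\mfh_\theta$. For pairs in a different $\theta$-stable apartment in $[\mca]_{H,\sim}$, Lemma \ref{lemmathetaequigconj} reduces to the same-apartment case. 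The main obstacle is the independence-from-$C$ part of (1), where one must match the $\chi_0$-twists on the two graph isomorphisms carefully so that they assemble into a well-defined parity statement on the bipartite graph $\gamma(\ul{G}_F,\theta,\chi)$; once (1) is established, (2)--(4) follow from the geometric identities already available.
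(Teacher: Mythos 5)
Your proofs of parts (1)--(3) are essentially correct and follow the same route as the paper: independence of the sign from the chamber $C$ via the parity statement in Lemma \ref{lemmatFF'color}, independence from $h$ via the $(H,\chi)$-equivariance and quadraticity of $\chi_0$, multiplicativity via $t_{F',F}=t_{F'',F}\circ t_{F',F''}$, and $H$-equivariance of the sign via pulling back $\phi_F$ along $h_0$. (Minor caveat on (1): the element $k=h^{-1}h'$ lies in $\mrstab_H(F)$, which can a priori be larger than $H\cap\parah F$; the $(H,\chi)$-equivariance of $\phi_F$ together with $\chi=\chi_0\cdot\epsilon_G\rest_H$ handles all of $\mrstab_H(F)$, so this is easily repaired.)

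The genuine gap is in (4). You reduce (4) to the claim that the stabilizer of the wall $\mfh_\theta$ in $W(\mca^\theta)$, ``generated by $s_{\mfh_\theta}$ and the reflections fixing $\mfh_\theta$ pointwise,'' acts transitively on ordered adjacent pairs across $\mfh_\theta$. Two problems. First, as literally written the generating set is trivial: a reflection $s_{\mfh'}$ in $W(\mca^\theta)$ has fixed-point set exactly $\mfh'$, so it fixes $\mfh_\theta$ pointwise only when $\mfh'=\mfh_\theta$; the group you describe is just $\{1,s_{\mfh_\theta}\}$, which acts trivially on the panels of $\mfh_\theta$. Second, and more fundamentally, even the full stabilizer of $\mfh_\theta$ in $W(\mca^\theta)$ need \emph{not} act transitively on the panels lying in $\mfh_\theta$: in the affine Coxeter complex of type $\widetilde{A}_2$, for example, the stabilizer of a wall is generated by $s_{\mfh}$ together with translations along $\mfh$, and these translations hit only every third panel of $\mfh$, giving three orbits of panels. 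So the stronger statement you are trying to use --- that all ordered adjacent pairs across $\mfh_\theta$ are $H$-conjugate --- is false in general, and (4) cannot be reduced to (3) alone.

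The paper avoids this by combining (2) and (3) instead. Let $D_\theta,D'_\theta$ be two panels in $\mfh_\theta$, and choose $F_1^\theta$ adjacent to $D_\theta$ on one side of $\mfh_\theta$ and $F_2^\theta$ adjacent to $D'_\theta$ on the other side. Set $F_1'^\theta=s_{\mfh_\theta}(F_1^\theta)$ and $F_2'^\theta=s_{\mfh_\theta}(F_2^\theta)$. Both $F_1'^\theta$ and $F_2'^\theta$ lie in the closure of $F_1^\theta\cup F_2^\theta$ (each is separated from $F_1^\theta$ only by $\mfh_\theta$, which also separates $F_1^\theta$ from $F_2^\theta$), so (2) gives two decompositions
$\epsilon_{F_1,F_2}=\epsilon_{F_1,F_1'}\cdot\epsilon_{F_1',F_2}=\epsilon_{F_1,F_2'}\cdot\epsilon_{F_2',F_2}$.
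Applying (3) with $h\in H$ realizing $s_{\mfh_\theta}$ (Assumption \ref{assumpaffinecoxeter}) yields $\epsilon_{F_1',F_2}=\epsilon_{s(F_1'),s(F_2)}=\epsilon_{F_1,F_2'}$. Canceling gives $\epsilon_{F_1,F_1'}=\epsilon_{F_2',F_2}$, i.e.\ the sign across $D_\theta$ equals the sign across $D'_\theta$. This is the content of (4), obtained without any transitivity claim; you should replace your orbit argument with this gallery/reflection argument.
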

		
		\begin{proof}
			
			It is clear that the sign doesn't depend on $\phi_F$ (recall that we have already normalized $\phi_F$). We show the independence of $C$. It is clear that the $h$-conjugation also induces a bijection $\gamma(\ul{G}_F,\theta,\chi)\rightarrow\gamma(\ul{G}_{F'},\theta,\chi)$ between two bipartite graphs, which maintains or reverses the color. Then when $C$ varies, the color of $C$ and $t_{F',F}(h\cdot C)$ vary simultaneously. So the sign remains unchanged. Now given another element $h'\in H$ such that $F'=F^{h'}$, then we have
			\begin{align*}\phi_F(C)\cdot\phi_F(t_{F',F}(h'\cdot C))\cdot\chi_0(h')&=\phi_F(h'^{-1}h\cdot C)\cdot\phi_F(t_{F',F}(h\cdot C))\cdot\chi_0(h')=\\
				\phi_F( C)\cdot\phi_F(t_{F',F}(h\cdot C))\cdot\chi_0(h^{-1}h')\chi_0(h')&=\phi_F(C)\cdot\phi_F(t_{F',F}(h\cdot C))\cdot\chi_0(h).
			\end{align*}
			Here, we used the fact that
			$$\chi(h^{-1}h')\phi_F(C)=((h^{-1}h')\cdot\phi_F)(C)=\epsilon_G(h^{-1}h')\phi_F(h'^{-1}h\cdot C).$$ 
			So the definition is also independent of $h$. We proved statement (1).
			
			Statement (2) follows from statement (1), Lemma \ref{lemmatFF'color} and the fact that $t_{F',F}=t_{F',F''}\circ t_{F'',F}$, which follows from the definition of $t_{F',F}$ and  $$\mfH_{\mca^{\theta}}(F^{\theta},F'^{\theta})=\mfH_{\mca^{\theta}}(F^{\theta},F''^{\theta})\sqcup \mfH_{\mca^{\theta}}(F''^{\theta},F'^{\theta})\quad\text{and}\quad \mfH_{\mca}(F,F')=\bigsqcup_{\mfh_\theta\in\mfH_{\mca^{\theta}}(F^{\theta},F'^{\theta})}\mfH_\mca(\mfh_\theta).$$
			
			Statement (3) follows from  $h\cdot t_{F,F'}(C)=t_{h\cdot F,h\cdot F'}(h\cdot C)$ and the fact that $\phi_{h\cdot F}(h\cdot C)/\phi_F(C)$ is independent of $C\in\ch{F}{0}(G)$.
			
			Statement (4) follows from statements (2)(3) and the fact that $\mca^\theta$ is a Coxeter complex. More precisely, assume $D_\theta$ and $D_\theta'$ to be two chambers of $\mfh_\theta$. We pick a chamber $F_1^\theta$ (resp. $F_2^\theta$) of $\mca^\theta$ having $D_\theta$ (resp. $D_\theta'$) as a panel, such that $F_1^\theta$ and $F_2^\theta$ are separated by $\mfh_\theta$. Taking the reflection $s_{\mfh_\theta}$, we get $\epsilon_{F_1,F_2}=\epsilon_{s_{\mfh_\theta}(F_1),s_{\mfh_\theta}(F_2)}$. Consider a minimal gallery linking $F^\theta$ and $F'^\theta$, such that $D_\theta$ is the first panel and $D_\theta'$ is the last panel in that gallery, and then take the reflection $s_{\mfh_\theta}$ to get another minimal gallery linking $s_{\mfh_\theta}(F_1^\theta)$ and $s_{\mfh_\theta}(F_2^\theta)$. Since the panels in the two galleries except $D_\theta$ and $D_{\theta'}$ reflect pairwise to each other, thus the related signs cancel out. So by $\epsilon_{F_1,F_2}=\epsilon_{s_{\mfh_\theta}(F_1),s_{\mfh_\theta}(F_2)}$ we have that the two signs related to $D_\theta$ and $D'_{\theta}$ are equal, which finishes the proof.
			
		\end{proof}
		
		\begin{remark}
			
			It is curious to see if $\epsilon_{F,F'}$ is always one or not.
			
		\end{remark}
		
		\subsection{Poincar\'e series}
		
		Let $F\in\Fteff(G)$ and $\mca$ a $\theta$-stable apartment satisfying Assumption \ref{assumpaffinecoxeter} such that $F^\theta$ is a chamber of $\mca^\theta$. We calculate $\lambda_{F}(f_{F})$. 

		Consider some $C\in\ch{F}{0}(G)$ and $C'\in\mco_C$. Let $F'$ be the maximal $\theta$-stable facet of $C'$. Assume that $f_F(C')\neq 0$. Then using Proposition \ref{propfFC'neq0}, $C'$, $F$ and $F'$ are included in a $\theta$-stable apartment $\mca'$, such that $F^\theta$ and $F'^\theta$ are chambers of $\mca'^\theta$. So $F^\theta$ and $F'^\theta$, as well as $F$ and $F'$ are $H$-conjugate. Let $C'=h\cdot C$ for some $h\in H$. We have
		\begin{equation*}
			\begin{aligned}
				\phi_F(C)f_F(C')\chi_0(C'/C)&=\phi_F(C)\phi_F(t_{F',F}(h\cdot C))\chi_0(h)\prod_{\mfh_{\theta}\in\mfH_{\mca'^{\theta}}(F^{\theta},F'^{\theta})}n_{\mfh_{\theta}}^{-1}\\
				&=\epsilon_{F,F'}\prod_{\mfh_{\theta}\in\mfH_{\mca'^{\theta}}(F^{\theta},F'^{\theta})}n_{\mfh_{\theta}}^{-1}
			\end{aligned}
		\end{equation*}	
		Let $o_C$\index{$o_C$} be the cardinality of $\ch{F'}{0}(G)\cap \mco_C$, where $F'$ is any facet that is $H$-conjugate to $F$. It is a number independent of $F'$. Then we further have
		\begin{equation*}
			\begin{aligned}
				\phi_F(C)\sum_{C'\in\ch{F'}{0}(G)\cap \mco_C}f_F(C')\chi_0(C'/C)=o_C\epsilon_{F,F'}\prod_{\mfh_{\theta}\in\mfH_{\mca'^{\theta}}(F^{\theta},F'^{\theta})}n_{\mfh_{\theta}}^{-1}
			\end{aligned}
		\end{equation*}	
		Let $\Ft_{F}(G)$\index{$\Ft_{F}(G)$} be the set of $\theta$-stable facets $F'$, such that $F^{\theta}$ and $F'^{\theta}$ are chambers of $\mca'^{\theta}$ for some $\theta$-stable apartment $\mca'$. Then we have
		\begin{equation*}
			\begin{aligned}
				\phi_{F}(C)\sum_{C'\in\mco_{C}}f_{F}(C')\chi_0(C'/C)&=o_C\sum_{F'\in\Ft_{F}(G)}\epsilon_{F,F'}\prod_{\mfh_{\theta}\in\mfH_{\mca'^{\theta}}(F^{\theta},F'^{\theta})}n_{\mfh_{\theta}}^{-1}\\
				&=o_C\sum_{F', F'^{\theta}\in\ch{}{}(\mca^{\theta})}\epsilon_{F,F'}\prod_{\mfh_{\theta}\in\mfH_{\mca^{\theta}}(F^{\theta},F'^{\theta})}(Q_{\mfh_{\theta},H}/n_{\mfh_{\theta}})\\
				&=o_C\sum_{F', F'^{\theta}\in\ch{}{}(\mca^{\theta})}\prod_{\mfh_{\theta}\in\mfH_{\mca^{\theta}}(F^{\theta},F'^{\theta})}(\epsilon_{\mfh_\theta}Q_{\mfh_{\theta},H}/n_{\mfh_{\theta}})\\
				&=o_C\sum_{F', F'^{\theta}\in\ch{}{}(\mca^{\theta})}\prod_{\mfh_{\theta}\in\mfH_{\mca^{\theta}}(F^{\theta},F'^{\theta})}\epsilon_{\mfh_\theta}m_{\mfh_{\theta}}\\
				&=o_C\sum_{w=s_{1}\dots s_{k}\in W(\mca^{\theta})}\prod_{i=1}^{k}\epsilon_{s_{\mfh_{s_{i}}}}m_{\mfh_{s_{i}}}.
			\end{aligned}
		\end{equation*}	
		Here, we fix a set of generators $S(\mca^{\theta})$ for the affine Coxeter group $W(\mca^{\theta})$, and for $s_i\in S(\mca^{\theta})$ we denote by $\mfh_{s_{i}}$ the corresponding wall in $\mca^{\theta}$. We used Lemma \ref{lemmaAthetaFtheta}.(1)(2)(4) and Proposition \ref{propepsilonFF'}.(4) for the second and third row, and Proposition \ref{propepsilonFF'}.(3) and Lemma \ref{lemmaAthetaFtheta}.(4) for the fifth row.
		
		\begin{remark}
			
			When $F^\theta$ is a single point, we notice that $\Ft_{F}(G)=\{F\}$. So the above sum degenerates whose value is $o_C$.
			
		\end{remark}
		
		Summing over $C\in\ch{F}{0}(G)$, we have
		\begin{equation}\label{eqcalfFphiF}
			\begin{aligned}
				\lambda_F(f_F)&=\sum_{C\in\ch{F}{0}(G)}	\phi_{F}(C)\sum_{C'\in\mco_{C}}f_{F}(C')\chi_0(C'/C)
				\\&=\bigg(\sum_{C\in\ch{F}{0}(G)}o_C\bigg)\sum_{w=s_{1}\dots s_{k}\in W(\mca^{\theta})}\prod_{i=1}^{k}\epsilon_{\mfh_{s_{i}}}m_{\mfh_{s_{i}}}.
			\end{aligned}
		\end{equation}	
		
		\begin{remark}
			
			From the argument we notice that to define $\lambda_F$, it is not necessary to sum over $C\in\ch{F}{0}(G)$. We could have fix any $C$ and only sum over $\mco_C$ instead. 
			
		\end{remark}
		
		The term 
		$$\sum_{w=s_{1}\dots s_{k}\in W(\mca^{\theta})}\prod_{i=1}^{k}\epsilon_{\mfh_{s_{i}}}m_{\mfh_{s_{i}}}$$
		in \eqref{eqcalfFphiF} is a Poincar\'e series in the sense of \cite{macdonald1972poincare} with respect to the affine Weyl group $W(\mca^{\theta})$ and variables $\epsilon_{\mfh_{s}}m_{\mfh_{s}},$  $s\in S(\mca^{\theta})$. Using \emph{ibid.}, Theorem 3.3, it is positive since $0<\abs{\epsilon_{\mfh_{s}}m_{\mfh_{s}}}<1$ for each $s$. 
		
		Thus we have proved the following proposition:
		
		\begin{proposition}\label{proplambdaFfF>0}
			
			We have $\lambda_{F}(f_{F})>0$.
			
		\end{proposition}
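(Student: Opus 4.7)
\smallskip

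The plan is to read off the result directly from the identity \eqref{eqcalfFphiF} that has already been derived just before the statement of the proposition, so the work amounts to (a) checking that the prefactor is strictly positive, and (b) recognizing the remaining sum as a Macdonald--Poincar\'e series to which the known positivity result applies.

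First I would remark that the leading coefficient $\sum_{C\in\ch{F}{0}(G)}o_{C}$ is a finite sum of strictly positive integers: $\ch{F}{0}(G)$ is non-empty because $F$ is effective (one may take any chamber in the corresponding bipartite orbit), and each $o_{C}$ is the number of chambers of a fixed $H$-orbit lying in $\ch{F'}{0}(G)$ for $F'=F$, which is at least one. Hence this prefactor is $>0$ and can be ignored for the sign.

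Next I would identify the inner sum
\[
\sum_{w=s_{1}\cdots s_{k}\in W(\mca^{\theta})}\prod_{i=1}^{k}\epsilon_{\mfh_{s_{i}}}m_{\mfh_{s_{i}}}
\]
as the formal Poincar\'e series of the affine Coxeter group $W(\mca^{\theta})$ associated to the Coxeter complex structure on $\mca^{\theta}$ (guaranteed by Assumption~\ref{assumpaffinecoxeter}.(1)), with one formal variable $x_{s}:=\epsilon_{\mfh_{s}}m_{\mfh_{s}}$ attached to each simple reflection $s\in S(\mca^{\theta})$. By Proposition~\ref{propepsilonFF'}.(4) together with Lemma~\ref{lemmaAthetaFtheta}.(4) the variables $x_{s}$ are constant on $W(\mca^\theta)$-conjugacy classes of walls, so the series is of the form required in Macdonald's setting, and by Lemma~\ref{lemmaAthetaFtheta}.(3) each $|x_{s}|=|m_{\mfh_{s}}|<1$ (this is where Assumption~\ref{assumpaffinecoxeter}.(2) enters, via the proof of Lemma~\ref{lemmaAthetaFtheta}.(3), to force the involution on $\ul{G}_{D}^{\mrsc}$ to be non-trivial).

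Finally I would invoke \cite{macdonald1972poincare}*{Theorem 3.3}: the Poincar\'e series of an affine Coxeter group evaluated on multi-parameters of absolute value strictly less than $1$ converges absolutely and has a closed form expression (a product of the corresponding finite Poincar\'e polynomial times an explicit factor of the type $\prod_{s}(1-x_{s}\cdots)^{-1}$) that is manifestly a strictly positive real number. Combining this with the strict positivity of the prefactor yields $\lambda_{F}(f_{F})>0$. The only conceivable obstacle is the convergence of the series at the relevant specialization, but this is precisely handled by Macdonald's theorem under the bound $|m_{\mfh_{s}}|<1$ already established; everything else is bookkeeping on signs that has been absorbed into the definition of $\epsilon_{\mfh_{\theta}}$ via Proposition~\ref{propepsilonFF'}.
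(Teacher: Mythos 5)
Your argument is correct and is essentially the paper's own proof: the identity \eqref{eqcalfFphiF} reduces everything to the positivity of the prefactor $\sum_{C\in\ch{F}{0}(G)}o_C$ and of the Poincar\'e series for $W(\mca^{\theta})$ in the variables $\epsilon_{\mfh_s}m_{\mfh_s}$, which follows from \cite{macdonald1972poincare}*{Theorem 3.3} since $0<\abs{\epsilon_{\mfh_s}m_{\mfh_s}}<1$ by Lemma \ref{lemmaAthetaFtheta}.(3). Your additional remarks on where Assumption \ref{assumpaffinecoxeter} and Proposition \ref{propepsilonFF'}.(4) enter are accurate bookkeeping, not a different route.
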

		
		As a result, we have the following theorem:
		
		\begin{theorem}\label{thmdistdimcal}
			
			Assume Assumption \ref{assumpZHZG}, \ref{assumpabsconverg}, \ref{assumpchi}, \ref{assumpchi'}. Assume Assumption  \ref{assumpaffinecoxeter} for each $\theta$-stable apartment $\mca$ with the related subgraph $\Gamma(G,\theta,[\mca]_{H,\sim})$ having an effective connected component, which guarantees $\Gamma(G,\theta,[\mca]_{H,\sim})$ has a single effective vertex and no double edge. Then the dimension of
			$$\mrhom_{H}(\mrst_{G},\chi)\cong \mch(G)^{(H,\chi)}$$
			equals the number of $\theta$-equivalence classes $[\mca]_{H,\sim}$ that are effective.
			
		\end{theorem}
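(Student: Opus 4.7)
The plan is to combine the upper bound from Theorem \ref{thmupperboundrefine} with an explicit construction of linearly independent $(H,\chi)$-equivariant linear forms, one for each effective $\theta$-equivalence class. Assumption \ref{assumpaffinecoxeter} was built so that each relevant subgraph $\Gamma(G,\theta,[\mca]_{H,\sim})$ consists of a single effective vertex with no double edge; hence the upper bound from Theorem \ref{thmupperboundrefine} is exactly the number of effective $\theta$-equivalence classes. Thus the entire task reduces to exhibiting a matching lower bound.

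To produce the lower bound, first fix representatives $F_1,\dots,F_k$ for the effective $\theta$-equivalence classes (one $F_i$ per class, with $F_i^\theta$ a chamber of $\mca_i^\theta$), ordered so that $\mrdim\mca_1^\theta\geq\mrdim\mca_2^\theta\geq\dots\geq\mrdim\mca_k^\theta$. For each $i$, attach the normalized harmonic cochain $\phi_{F_i}$ on $\ch{F_i}{}(G)$ supported on $\ch{F_i}{0}(G)$, the linear form $\lambda_{F_i}\in\mrhom_H(\mrst_G,\chi)$ (well-defined and absolutely convergent by Assumption \ref{assumpabsconverg}), and the test function $f_{F_i}\in\mch^\infty(G)$ of Proposition \ref{propfFCdef}. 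I will then show that the $k\times k$ matrix $M=(\lambda_{F_i}(f_{F_j}))_{1\leq i,j\leq k}$ is lower triangular with nonzero diagonal entries, which immediately forces $\lambda_{F_1},\dots,\lambda_{F_k}$ to be linearly independent and concludes the proof.

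The two halves of this matrix claim draw from two different sources already in place. For the strict upper-triangular vanishing, given $i<j$ one has $\mrdim\mca_i^\theta\geq \mrdim\mca_j^\theta$, so Corollary \ref{corlambdaFfF'} applies: if $\lambda_{F_i}(f_{F_j})\neq 0$ then $F_i^\theta$ and $F_j^\theta$ would both be chambers of a common $\theta$-stable apartment, forcing $[\mca_i]_{H,\sim}=[\mca_j]_{H,\sim}$, contradicting the fact that $F_i$ and $F_j$ represent distinct $\theta$-equivalence classes. For the diagonal, Proposition \ref{proplambdaFfF>0} gives $\lambda_{F_i}(f_{F_i})>0$; this is precisely where the two parts of Assumption \ref{assumpaffinecoxeter} are used in the background — part (1) gives $\mca_i^\theta$ the structure of an affine Coxeter complex so that the sum in \eqref{eqcalfFphiF} is literally a Macdonald--Poincar\'e series in the variables $\epsilon_{\mfh_\theta}m_{\mfh_\theta}$, and part (2) guarantees via Lemma \ref{lemmaAthetaFtheta}(3) that $|m_{\mfh_\theta}|<1$ for every wall, allowing Macdonald's positivity theorem \cite{macdonald1972poincare} to ensure strict positivity.

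The heart of the argument, and the step that has required the most preparation, is the diagonal positivity $\lambda_{F_i}(f_{F_i})>0$: the triangularity is essentially bookkeeping once Corollary \ref{corlambdaFfF'} is in hand. The Poincar\'e-series identification itself is delicate, resting on Proposition \ref{propfFC'neq0}, the compatibility of $t_{F,F'}$ with the bipartite graph colorings (Lemma \ref{lemmatFF'color}), the well-definedness of the signs $\epsilon_{\mfh_\theta}$ (Proposition \ref{propepsilonFF'}), and the index identity of Lemma \ref{lemmaAthetaFtheta}. With those tools already established earlier in Section \ref{sectionPoincare}, the proof of the theorem itself reduces to invoking Theorem \ref{thmupperboundrefine} for the upper bound, constructing the ordered family $\{F_i\}_{i=1}^k$ and the matrix $M$, verifying triangularity from Corollary \ref{corlambdaFfF'}, and reading off the nonvanishing of the diagonal from Proposition \ref{proplambdaFfF>0}.
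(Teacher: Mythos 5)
Your proposal is correct and follows essentially the same route as the paper: the upper bound comes from Theorem \ref{thmupperboundrefine} (which under Assumption \ref{assumpaffinecoxeter} equals the number of effective $\theta$-equivalence classes), and the lower bound is obtained by ordering representatives $F_1,\dots,F_k$ by decreasing $\mrdim\mca_i^\theta$ and showing the matrix $(\lambda_{F_i}(f_{F_j}))$ is lower triangular via Corollary \ref{corlambdaFfF'} and invertible via the Poincar\'e-series positivity of Proposition \ref{proplambdaFfF>0}. This is precisely the paper's argument.
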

		
		\begin{proof}
			
			Let $[\mca_{1}]_{H,\sim},\dots,[\mca_{k}]_{H,\sim}$ be all the effective $\theta$-equivalence classes of $\theta$-stable apartments, such that $\mrdim\mca_1^\theta\geq\mrdim\mca_2^\theta\geq\dots\geq\mrdim\mca_k^\theta$. Assumption  \ref{assumpaffinecoxeter} means that each $\Gamma(G,\theta,[\mca_{i}]_{H,\sim})$ consists of a single effective vertex, and for any $[\mca']_{H,\sim}$ different from $[\mca_{i}]_{H,\sim},i=1,\dots,k$, the graph $\Gamma(G,\theta,[\mca']_{H,\sim})$ has no effective connected component. For each $i$, let $F_{i}$ be a maximal $\theta$-stable facet in $\mca_{i}$. We construct $\phi_{F_{i}}$, $f_{F_{i}}$ and $\lambda_{F_{i}}$ as before. Using Corollary \ref{corlambdaFfF'} and Proposition \ref{proplambdaFfF>0}, the matrix $(\lambda_{F_{i}}(f_{F_{j}}))_{1\leq i,j\leq k}$ is invertible and lower triangular. On the other hand, using Theorem \ref{thmupperboundrefine} the dimension of $\mrhom_{H}(\mrst_{G},\chi)$ is bounded by $k$. So $\lambda_{1},\dots,\lambda_{k}$ is a basis of $\mrhom_{H}(\mrst_{G},\chi)$, which finishes the proof.
			
		\end{proof}
		
		\begin{example}\label{exampleGL3O3}
			
			Let $G=\mrgl_{3}(K)$, $\theta(g)=J_{3}\,^{t}g^{-1}J_{3}$, $H=\mrso_3(J_3)$ and $\mca=\mca(G,S)$ with $S$ the diagonal torus. Then $W(\mca^{\theta})=\pairangone{s,t\mid s^2=t^2=1}$ is an affine Weyl group with generators $s,t$ such that $Q_{\mfh_s,H}=q$, $Q_{\mfh_t,H}=1$ (since $\mfh_t$ is not a wall of an apartment of $H$), $n_{\mfh_s}=-1/q^3$ and $n_{\mfh_t}=-1/q$, $m_{\mfh_s}=-q/q^3=-1/q^2$ and $m_{\mfh_t}=-1/q$, $\epsilon_{\mfh_s}=\epsilon_{\mfh_t}=1$. Let $F$ be a $\theta$-stable facet such that $F^{\theta}$ is a chamber of $\mca^\theta$. Then $F$ is effective and $\car{\ch{F}{0}(G)}=1$. Moreover,
			\begin{equation*}
				\begin{aligned}
					\lambda_{F}(f_{F})=\sum_{w=s_{1}\dots s_{k}\in W(\mca^{\theta})}\prod_{i=1}^{k}m_{\mfh_{s_{i}}}&=\sum_{i=0}^{\infty}(m_{\mfh_s}^im_{\mfh_t}^i+m_{\mfh_s}^im_{\mfh_t}^{i+1}+m_{\mfh_s}^{i+1}m_{\mfh_t}^{i}+m_{\mfh_s}^{i+1}m_{\mfh_t}^{i+1})	\\
					&=(1-q^{-1})(1-q^{-2})/(1-q^{-3})>0.
				\end{aligned}
			\end{equation*}	
			
			\begin{figure}[htbp]
				\begin{center}
					\tikzstyle{every node}=[scale=1]
					\begin{tikzpicture}[line width=0.4pt,scale=0.8][>=latex]
						\pgfmathsetmacro\ax{1}
						\pgfmathsetmacro\ay{1.5}
						\pgfmathsetmacro\by{sin(60)}
						\pgfmathsetmacro\cy{2*sin(60)}
						\pgfmathsetmacro\dy{3*sin(60)}
						\foreach \k in {1,...,6} {
							\draw[blue,-] (0,0) -- +(\k * 60 + 60:2); 
						}
						\draw[blue,-] (\ax,\cy) -- (-\ax, \cy) ;
						\draw[blue,-] (-\ax,\cy) -- (-2*\ax, 0) ;
						\draw[blue,-] (\ax,\cy) -- (2*\ax, 0);
						\draw[blue,-] (2*\ax, 0) -- (\ax,-\cy) ;
						\draw[blue,-] (\ax,-\cy) -- (-\ax,-\cy) ;
						\draw[blue,-] (-\ax,-\cy) -- (-2*\ax, 0) ;
						\draw[red,-] (2.5*\ay,2.5*\by) -- (-2*\ay,-2*\by)  ;
						\draw[blue,-] (3,\cy) -- (\ax,\cy);
						\draw[blue,-] (3,\cy) -- (2*\ax, 0);
						\node at (\ay,\by) [blue] {\(\bullet\)};
						\node at (0,0) [thick] {\(\bullet\)};
                       \node at (0,0) [thick] {\(\bullet\)};
						\node at (30:5)[text=red] {\(\mca^{\theta}\)};
						\node at (25:2.2)[text=blue] {\(\mfh_t\)};
						\node at (-18:0.5) {\(\mfh_s\)};
					\end{tikzpicture}
				\end{center}
            \caption{Example \ref{exampleGL3O3}}
			\end{figure}
		\end{example}
		
		\section{Steinberg representation of $\mrgl_{n}(K)$ distinguished by an orthogonal group}\label{sectionorthogonalgroup}
		
		In this section, we study the example where $G=\mrgl_{n}(K)$ and $H'$ (resp. $H$) is an orthogonal (resp. special orthogonal) subgroup of $G$. We consider the distinction of the Steinberg representation $\mrst_G$ by the trivial character of $H'$ (resp. $H$).
		
		\subsection{Classification of symmetric matrices and orthogonal groups}\label{subsectionsymmetricmatrices}
		
		We recall the known classification result for orthogonal groups and we refer to \cite{zou2022supercuspidal}*{Section 3} for more details. First we consider the space of invertible symmetric $n\times n$ matrices
		$$X=\{\varepsilon\in\mrgl_n(K)\mid\,^{t}\varepsilon=\varepsilon\}$$
		endowed with an $G$-action given by
		$$\varepsilon\cdot g=\,^{t}g\varepsilon g.$$
		Then the $G$-orbits $X/G$ are finite and classified exactly by the related discriminant and Hasse invariant. Here we recall that for $\varepsilon\in X$, its discriminant is the class of the determinant $\mrdet(\varepsilon)$ in $K^{\times}/K^{\times2}$, and its Hasse invariant $\mathrm{Hasse}(\varepsilon)$ is the sign 
		$$\prod_{1\leq i<j\leq n}(a_{i},a_{j})_{\mathrm{Hilb}}\in\{\pm1\},$$
		where $(\cdot,\cdot)_{\mathrm{Hilb}}$ denotes the Hilbert symbol, and $\mrdiag(a_{1},\dots,a_{n})$ is any diagonal matrix that is in the same $G$-orbit of $\varepsilon$. Thus depending on $n$ is whether one, two or greater than two, there are respectively four, seven or eight $G$-orbits of $X$. More precisely, 
		\begin{itemize}
			\item When $n=1$, there are four $G$-orbits corresponding to $K^{\times}/K^{\times2}$;
			\item When $n\geq 3$, there are eight combinations of discriminant and Hasse invariant;
			\item When $n=2$, there are seven possible combinations of discriminant and Hasse invariant, since in this case if the discriminant is $-1$, the related Hasse invariant must be $1$.
		\end{itemize}  
		
		Now we discuss the isomorphism classes of orthogonal groups. Given a symmetric matrix $\varepsilon$, we may define the corresponding orthogonal involution
		$$\theta_{\varepsilon}(x)=\varepsilon^{-1}\,^{t}x^{-1}\varepsilon,\quad x\in G\index{$\theta_{\varepsilon}$}$$
		and the related orthogonal subgroup
		$$\mro_{n}(\varepsilon)=G^{\theta_{\varepsilon}}:=\{x\in G\mid \theta_{\varepsilon}(x)=x\}.\index{$\mro_{n}(\varepsilon)$}$$
		We also consider the related special orthogonal group
		$$\mrso_{n}(\varepsilon)=\{x\in G\mid \theta_{\varepsilon}(x)=x,\ \mrdet(x)=1\}\index{$\mrso_{n}(\varepsilon)$}$$
		as an index $2$ subgroup of $\mro_{n}(\varepsilon)$. 
		
		Indeed the related $G$-orbits of the quotient $X/K^{\times}$ correspond bijectively to the isomorphism classes of orthogonal subgroups of $G$. More precisely, 
		\begin{itemize}
			\item When $n=1$ there is only one orthogonal group, which is $\{\pm 1\}$;
			\item When $n\geq 3$ is an odd integer, there are two classes of orthogonal groups that are split and non-quasi-split respectively;
			\item When $n\geq 4$ is an even integer, there are five classes of orthogonal groups, where one class is split, three classes are quasi-split and one class is non-quasi-split;
			\item Finally when $n=2$, there are four classes of orthogonal groups, where one class is split and three classes are quasi-split.
		\end{itemize}   In particular, the orthogonal subgroup corresponding to the matrix
		$$J_{n}=\begin{pmatrix}0 & 0 & \ldots & 0 & 1\\ 0 & \iddots & \iddots & 1 & 0 \\ \vdots & \iddots & \iddots & \iddots & \vdots \\ 0 & 1 & \iddots & \iddots & 0 \\ 1 & 0 & \ldots & 0 & 0 \end{pmatrix}\in\mrgl_{n}(K)\index{$J_{n}$}$$
		is split. It has discriminant $(-1)^{(n-1)n/2}$ and Hasse invariant $1$.

		\subsection{Classification of $\theta$-stable apartments}
		
		From now on, we fix a symmetric matrix $\varepsilon$ and the corresponding orthogonal involution $\theta=\theta_{\varepsilon}$. Up to $G$-conjugacy, without loss of generality we may and will assume $\varepsilon\in N_{G}(S)$. Let $H'=\mro_{n}(\varepsilon)$ and $H=\mrso_{n}(\varepsilon)$. In particular, we have $\epsilon_{G}\rest_{H'}=1$. 
		
		In this subsection, we study $H$-conjugacy classes of $\theta$-stable apartments of $G$. 
		
		Write $\mca=\mca(G,S)$. We first classify all the $H'$-conjugacy classes of $\theta$-stable apartments $\mca^{g}=\mca(G,S^{g})$, or maximal split tori $S^{g}$, $g\in G$, such that $S^{g}$ is $\theta$-stable. Equivalently, we need to classify all the double cosets 
		\begin{equation}\label{eqorthothetaap}
			\{N_{G}(S)gH'\mid g\in G,\ \theta(g)g^{-1}\in N_{G}(S)\}.
		\end{equation}
		We identify the symmetric group $\mfS_{n}$ with the subgroup of permutation matrices of $G$. Then it is clear that $N_{G}(S)=\mfS_{n}\ltimes S$.
		
		By definition $\theta(g)g^{-1}=\varepsilon^{-1}\,^{t}g^{-1}\varepsilon g^{-1}$. We fix a set of representatives $\{1,\epsilon_{0},\varpi_{K},\epsilon_{0}\varpi_{K}\}$ of $K^{\times}/K^{\times2}$. Here $\epsilon_{0}\in\mfo_{K}^{\times}-\mfo_{K}^{\times 2}$ and $\varpi_{K}$ is a uniformizer of $K$. An elementary calculation shows that, up to replacing $g$ with another element in $N_{G}(S)$ we may assume that
		\begin{equation}\label{eqorthothetaapcond}
			\begin{aligned}
				g^{-1}\varepsilon g=\delta(\bs{n},r):=
				\mrdiag(J_{2r},\underbrace{1,\dots,1}_{n_{11}\text{-copies}},\underbrace{\epsilon_{0},\dots,\epsilon_{0}}_{n_{12}\text{-copies}},\underbrace{\varpi_{K},\dots,\varpi_{K}}_{n_{21}\text{-copies}},\underbrace{\epsilon_{0}\varpi_{K},\dots,\epsilon_{0}\varpi_{K}}_{n_{22}\text{-copies}}).
			\end{aligned}
		\end{equation}
		Here $n_{11},n_{12},n_{21},n_{22},r$ are non-negative integers such that $n=n_{11}+n_{12}+n_{21}+n_{22}+2r$ and $\bs{n}=(n_{11},n_{12},n_{21},n_{22})$. We also write $n_{1}=n_{11}+n_{12}$ and $n_{2}=n_{21}+n_{22}$. On the other hand, it is clear that $(\bs{n},r)$ are invariants that are independent of the choice of $g\in N_{G}(S)$, since for $g_{1},g_{2}\in N_{G}(S)$ corresponding to the same $(\bs{n},r)$, by definition we have $g_{1}g_{2}^{-1}\in H'$. Finally, to find such $g$, the necessary and sufficient condition is that $\varepsilon$ and $\delta(\bs{n},r)$ have the same discriminant and Hasse invariant. 
		
		We also study $H$-conjugacy classes of $\theta$-stable apartments. It suffices to prove the following lemma.
		
		\begin{lemma}\label{lemmaH'-Hconj}
			
			Let $S^{g}$ be a $\theta$-stable maximal split torus, then $H'\cap N_{G}(S^{g})\neq H\cap N_{G}(S^{g})$ and thus we may choose $h\in H'-H$ that normalizes $S^{g}$. As a result, the $H$-conjugacy class of $\mca^{g}$ coincides with the $H'$-conjugacy class of $\mca^{g}$.
			
		\end{lemma}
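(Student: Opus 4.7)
The plan is to reduce the statement to the construction of a monomial matrix of determinant $-1$ inside the orthogonal group of a normal form for $\varepsilon$. Concretely, after conjugating by $g$ and replacing $\varepsilon$ with $\delta = g^{-1}\varepsilon g$, we have $S^g = S$ and, by \eqref{eqorthothetaapcond}, may assume $\delta = \delta(\bs{n},r)$. The claim then becomes the existence of an element $h \in \mro_n(\delta) \cap N_G(S)$ with $\det(h) = -1$, and conjugating by $g^{-1}$ produces the desired element of $(H' - H) \cap N_G(S^g)$.

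First I would split into two cases depending on whether the diagonal tail of $\delta(\bs{n},r)$ is trivial. In the generic case $n_1 + n_2 \geq 1$, pick any index $i > 2r$ and take for $h$ the diagonal matrix with $-1$ in the $i$-th slot and $1$ elsewhere: this lies in $S$, preserves the diagonal entry $\delta_{ii} \in \{1,\epsilon_0,\varpi_K,\epsilon_0\varpi_K\}$ since $(-1)^2 = 1$, hence belongs to $\mro_n(\delta) \cap N_G(S)$ with determinant $-1$. In the remaining case $n_1 + n_2 = 0$, we have $\delta = J_{2r}$; here I would take $h$ to be the permutation matrix of the transposition $(1\,,\,2r)$. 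This is in $N_G(S)$, has determinant $-1$, and preserves $J_{2r}$ because the basis vectors $e_1$ and $e_{2r}$ are paired under this split form while the remaining pairings $\langle e_i,e_{2r+1-i}\rangle$ for $2 \leq i \leq 2r-1$ are left intact.

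For the second assertion, fix an element $h \in H' - H$ normalizing $S^g$; since $[H':H] = 2$, we have $H' = H \sqcup hH$, and because $h$ stabilizes $S^g$ it stabilizes $\mca^g$. Therefore
\[
H'\cdot\mca^g \;=\; H\cdot\mca^g \,\cup\, H\cdot(h\cdot\mca^g) \;=\; H\cdot\mca^g,
\]
so the $H'$- and $H$-orbits of $\mca^g$ coincide.

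The only minor point that requires care is the case $\delta = J_{2r}$, since $J_{2r}$ itself has determinant $(-1)^r$ and is therefore inadequate when $r$ is even; the transposition $(1,\,2r)$ is the clean uniform choice that avoids any parity issue. Otherwise everything is elementary once the normal form \eqref{eqorthothetaapcond} is in hand.
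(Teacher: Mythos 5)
Your proposal is correct and follows essentially the same route as the paper: reduce to the normal form $\delta(\bs{n},r)$, exhibit an element of $N_G(S)$ preserving the form $\delta(\bs{n},r)$ with determinant $-1$, transport it by $g$ into $H'\cap N_G(S^g)\setminus H$, and then use $[H':H]=2$ to identify the $H$- and $H'$-orbits of $\mca^g$. The only difference is that you make the paper's ``it is clear that we may pick $x$'' step explicit, via the diagonal sign change when $n_1+n_2\geq 1$ and the transposition $(1,2r)$ when $\delta=J_{2r}$, which is a harmless (and welcome) elaboration.
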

		
		\begin{proof}
			
			We may replace $g$ by another representative in $N_{G}(S)gH'$, thus without loss of generality we assume that \eqref{eqorthothetaapcond} is satisfied for some $(\bs{n},r)$. It is clear that we may pick $x\in N_{G}(S)$ such that $$\delta(\bs{n},r)^{-1}\,^{t}x^{-1}\delta(\bs{n},r)=x\quad\text{and}\quad\mrdet(x)=-1.$$
			Thus $x^{g}\in H'\cap N_{G}(S^{g})-H\cap N_{G}(S^{g})$, which finishes the proof.
			
		\end{proof}
		
		To sum up, we have the following proposition.
		
		\begin{proposition}\label{propclassthetaap}
			
			The set of $H$-conjugacy (or $H'$-conjugacy) classes of $\theta$-stable apartments of $\mcb(G)$, or equivalently the set of double cosets \eqref{eqorthothetaap} are in bijection with   
			\begin{equation}\label{eqn'rcondition}
				\bigg\{
				(n_{11},n_{12},n_{21},n_{22},r)\ \bigg|\begin{aligned}& \ n=n_{11}+n_{12}+n_{21}+n_{22}+2r,\ \delta(\bs{n},r)\ \text{and}\ \varepsilon\\ & \ \text{have the same discriminant and Hasse invariant.}
				\end{aligned}	\bigg\}
			\end{equation}
			This bijection is determined by the relation \eqref{eqorthothetaapcond}.
			
		\end{proposition}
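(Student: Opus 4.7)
The plan is to assemble the pieces already laid out in the discussion preceding the statement. First I would observe that by Lemma \ref{lemmaH'-Hconj}, the $H$-conjugacy and $H'$-conjugacy classes of $\theta$-stable apartments coincide, so it suffices to classify the double cosets in \eqref{eqorthothetaap}. The proposed bijection sends a double coset $N_{G}(S)gH'$ to the tuple $(\bs{n},r)$ determined by the normal form \eqref{eqorthothetaapcond}.

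The key steps would be: (i) construction of the map, (ii) well-definedness, (iii) injectivity, and (iv) description of the image. For (i), given $g\in G$ with $\theta(g)g^{-1}\in N_G(S)$, the element $g^{-1}\varepsilon g$ is an invertible symmetric matrix, and using that $N_G(S)=\mfS_n\ltimes S$, a standard Gram--Schmidt type reduction over $K$ (together with the diagonalization of symmetric matrices via the classification of symmetric $K$-bilinear forms) lets us replace $g$ by $gn$ with $n\in N_G(S)\cap H'$ so that $g^{-1}\varepsilon g$ is of the form $\delta(\bs{n},r)$. For (ii), if $g_1,g_2$ both lie in the same double coset and both satisfy \eqref{eqorthothetaapcond} for tuples $(\bs{n}_1,r_1), (\bs{n}_2,r_2)$ respectively, then writing $g_2 = ng_1 h'$ with $n\in N_G(S), h'\in H'$ gives $\delta(\bs{n}_2,r_2)=g_2^{-1}\varepsilon g_2 = h'^{-1} g_1^{-1}\,^{t}n\varepsilon n\, g_1 h'$, which after using $h'\in H'$ means $\delta(\bs{n}_1,r_1)$ and $\delta(\bs{n}_2,r_2)$ are in the same $N_G(S)$-orbit for the action $x\cdot n = \,^{t}nxn$; but the tuple $(\bs{n},r)$ is a complete $N_G(S)$-orbit invariant of the diagonal-type matrices $\delta(\bs{n},r)$, so the two tuples coincide. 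For (iii), injectivity is precisely the statement that if two double cosets give rise to the same normal form, then a representative of each satisfies $g_1^{-1}\varepsilon g_1 = g_2^{-1}\varepsilon g_2 = \delta(\bs{n},r)$, whence $g_1 g_2^{-1}\in H'$ and the cosets agree.

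For (iv), the image is characterized by the existence of some $g\in G$ with $g^{-1}\varepsilon g = \delta(\bs{n},r)$, i.e., by $\varepsilon$ and $\delta(\bs{n},r)$ lying in the same $G$-orbit of invertible symmetric matrices. Invoking the classification of such orbits recalled in \S\ref{subsectionsymmetricmatrices}, this is equivalent to the pair having the same discriminant and Hasse invariant, which gives precisely the condition in \eqref{eqn'rcondition}.

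The main (only) nontrivial point is step (i), the reduction to normal form: one must verify that after permuting the diagonal entries and rescaling by squares within each coordinate (operations realized by conjugation by elements of $N_G(S)\cap H'$, which is possible because of the freedom to act by $\mfS_n\ltimes S$ on the left of $g$), any invertible symmetric matrix of the form $g^{-1}\varepsilon g$ can be brought to $\delta(\bs{n},r)$. This uses the standard facts that a rank-2 hyperbolic block $J_2$ absorbs all pairs of entries of the form $\mathrm{diag}(a,-a)$ modulo squares, and that among non-hyperbolic diagonal entries one can normalize representatives to $\{1,\epsilon_0,\varpi_K,\epsilon_0\varpi_K\}$. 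Once (i) is established the remaining steps are formal and the proposition follows.
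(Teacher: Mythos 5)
Your four-step plan and the appeal to Lemma \ref{lemmaH'-Hconj} mirror the paper's own treatment, which is carried out in the running text before the proposition. The substance — classify $\theta$-stable tori by the Gram matrix of $\varepsilon$ in an adapted basis and invoke the discriminant/Hasse classification of \S\ref{subsectionsymmetricmatrices} — is right. But step (i), as you state it, is vacuous. The quantity to normalize is $\eta_g := \,^tg^{-1}\varepsilon g^{-1}$, the Gram matrix of $\varepsilon$ on the basis $\{g^{-1}e_i\}$ cutting out $S^g$ (both you and the paper's display \eqref{eqorthothetaapcond} write $g^{-1}\varepsilon g$, which is not even symmetric in general; compare the subsequent computation $\delta(\bs{n},r)^{-1}x^{-1}\delta(\bs{n},r)=x$ behind \eqref{eqdiagH'}, which uses the corrected formula). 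A short calculation gives $\eta_{gh'}=\,^tg^{-1}\big(\,^th'^{-1}\varepsilon h'^{-1}\big)g^{-1}=\eta_g$ for $h'\in H'$, while $\eta_{ng}=\,^tn^{-1}\eta_g\,n^{-1}$ for $n\in N_G(S)$. So right-multiplication by $n\in N_G(S)\cap H'$, as you propose, leaves $\eta_g$ untouched and achieves nothing. The normalization must instead use the left $N_G(S)$-action on $g$, which realizes the symmetric-matrix action on $\eta_g$; combined with the observation that a symmetric element of $N_G(S)=\mfS_n\ltimes S$ has the form $\sigma\cdot\mrdiag(a_1,\dots,a_n)$ with $\sigma$ an involutive permutation and $a_{\sigma(i)}=a_i$, this does produce $\delta(\bs{n},r)$ after permuting and rescaling by squares.

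There is also a smaller directional slip in (iii): from $\eta_{g_1}=\eta_{g_2}=\delta(\bs{n},r)$ one computes $\,^t(g_2^{-1}g_1)\varepsilon(g_2^{-1}g_1)=\,^tg_1\big(\,^tg_2^{-1}\varepsilon g_2^{-1}\big)g_1=\,^tg_1\delta(\bs{n},r)g_1=\varepsilon$, so the element lying in $H'$ is $g_2^{-1}g_1$, not $g_1g_2^{-1}$, giving $g_1H'=g_2H'$ as required. Once these two directional issues are repaired, steps (ii) and (iv) close as you describe and agree with the paper.
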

		
		This condition could be further explained as follows:
		
		\begin{lemma}\label{lemmadischasse}
			
			\begin{enumerate}
				\item For a fixed $r$, the pair of discriminant and Hasse invariant of $\delta(\bs{n},r)$  is determined by the parity of $n_{11},n_{12},n_{21},n_{22}$.
				
				\item If the symmetric matrices $\varepsilon$ and
				$\delta(\bs{n},r)$
				have the same discriminant and Hasse invariant, then $n_{2}$ and $v_{K}(\mrdet(\varepsilon))$ has the same parity, and moreover 
				$$ \begin{cases} 0\leq r\leq k,\ 2\mid n_{2},\ 0\leq n_{2}\leq 2(k-r) &\text{if}\ n=2k\ \text{and}\ H\ \text{is split};\\
					0\leq r\leq k-1,\ \text{and}&\\ 
					2\mid n_{2},\ 0\leq n_{2}\leq 2(k-1-r)&	\\
					\text{or}\ 2\mid n_{2},\ 2\leq n_{2}\leq 2(k-r)&\\
					\text{or}\ 2\nmid n_{2},\ 1\leq n_{2}\leq 2(k-r)-1& \text{if}\ n=2k\ \text{and}\ H\ \text{is quasi-split but not split};\\
					0\leq r\leq k-2,\ 2\mid n_{2},\ 2\leq n_{2}\leq 2(k-1-r)  & \text{if}\ n=2k\ \text{and}\ H\ \text{is not quasi-split};\\
					0\leq r\leq k,\ 0\leq n_{2}\leq 2(k-r)+1 & \text{if}\ n=2k+1\ \text{and}\ H\ \text{is split};\\
					0\leq r\leq k-1,\ 1\leq n_{2}\leq 2(k-r) & \text{if}\ n=2k+1\ \text{and}\ H\ \text{is not quasi-split}.
				\end{cases}$$
				
				\item For $n_{1},n_{2},r$ satisfying the condition listed in (2), we may find a pair $(\bs{n},r)$ and the related $\delta(\bs{n},r)$, such that $\varepsilon$ and
				$\delta(\bs{n},r)$
				have the same discriminant and Hasse invariant. 
				
			\end{enumerate}
			
		\end{lemma}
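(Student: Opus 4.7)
The starting point is to put $\delta(\bs{n},r)$ into a more useful diagonal form. An elementary base change shows that $J_{2}$ is $\mrgl_{2}(K)$-equivalent (under $x\mapsto\,^{t}PxP$) to $\mrdiag(2,-2)$, hence to $\mrdiag(1,-1)$ modulo squares; iterating this block-diagonally replaces $J_{2r}$ by $r$ copies of $\mrdiag(1,-1)$. Consequently $\delta(\bs{n},r)$ is, modulo $K^{\times 2}$, equivalent to a diagonal matrix whose entries consist of $r$ copies each of $1$ and $-1$, together with $n_{11}$ copies of $1$, $n_{12}$ copies of $\epsilon_{0}$, $n_{21}$ copies of $\varpi_{K}$, and $n_{22}$ copies of $\epsilon_{0}\varpi_{K}$. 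From this description one computes $\mrdisc(\delta(\bs{n},r))=(-1)^{r}\epsilon_{0}^{n_{12}+n_{22}}\varpi_{K}^{n_{21}+n_{22}}\pmod{K^{\times 2}}$, which for fixed $r$ is a class depending only on the parities of $n_{12},n_{21},n_{22}$. For the Hasse invariant, the product of Hilbert symbols over the diagonal entries is bimultiplicative modulo squares in each slot, so replacing any entry by its square-class representative does not change the symbol; grouping identical entries then collapses each contribution to one of $(a,a)_{\mathrm{Hilb}}^{\binom{m}{2}}$ or $(a,b)_{\mathrm{Hilb}}^{m_{a}m_{b}}$, all of which depend only on parities of the $n_{ij}$. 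This yields statement (1).

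For (2), the first observation is $v_{K}(\det\delta(\bs{n},r))=n_{21}+n_{22}=n_{2}$, which immediately forces $n_{2}\equiv v_{K}(\det\varepsilon)\pmod 2$; the range $0\leq r\leq\lfloor n/2\rfloor$ and $n_{1}+n_{2}=n-2r\geq 0$ are automatic from the definition. The remaining finer bounds on $n_{2}$ are obtained by matching the target (discriminant, Hasse)-pair in each of the five (resp.\ four, two) isomorphism classes classified in \S\ref{subsectionsymmetricmatrices}. The strategy is to fix $r$, then run over the finitely many parity patterns $(n_{11},n_{12},n_{21},n_{22})\bmod 2$ with the correct $n_{2}\bmod 2$, and use the explicit formulas for discriminant and Hasse from the diagonal form to see which patterns are realisable. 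The exclusion of $n_{2}=0$ in the non-quasi-split even case (and of $n_{2}\in\{0,2(k-r)\}$ in the special quasi-split but not split sub-cases) comes precisely from the fact that the required Hasse symbol $(-1,-1)_{\mathrm{Hilb}}=-1$ over a non-split factor cannot be produced by a diagonal made exclusively of units-modulo-squares of the required class.

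For (3), the task is the converse construction: given $(n_{11}+n_{12},n_{21}+n_{22},r)=(n_{1},n_{2},r)$ satisfying the constraints of (2), exhibit a legitimate $(\bs{n},r)$. One chooses the individual parities of $n_{21}$ and $n_{22}$ to hit the correct valuation-class contribution $\varpi_{K}^{n_{21}+n_{22}}$ and the Hilbert symbols involving $\varpi_{K}$; the parities of $n_{11},n_{12}$ are then used to adjust the unit part of the discriminant and the remaining Hilbert symbols of type $(\epsilon_{0},\varpi_{K})$ and $(-1,\ast)$. The feasibility of this adjustment under each bound in the list is straightforward once one tabulates the four unit classes in $K^{\times}/K^{\times 2}$ against the four Hasse contributions. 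Non-negativity of the resulting $n_{ij}$ is guaranteed by the explicit ranges on $n_{2}$.

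The main obstacle, as is usually the case with local quadratic forms, is purely bookkeeping: in the even-dimensional quasi-split but not split category, three distinct $(H'\text{-})$isomorphism classes of orthogonal groups coexist, and one must verify separately for each that the allowed $n_{2}$ covers exactly the union of three arithmetic-progression-like ranges written in the statement, without gaps or overlaps. A useful organising device is to stratify first by $n_{2}\bmod 2$ (forced by $v_{K}\det\varepsilon$), then by the Hilbert symbol $(-1,\mrdisc\varepsilon)_{\mathrm{Hilb}}$ (which detects whether the anisotropic kernel of $\varepsilon$ has dimension $0,1,2,3$ or $4$), and to match the Witt index $r$ of $\delta(\bs{n},r)$ against that of $\varepsilon$.
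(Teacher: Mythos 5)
Your overall route matches the paper's: diagonalize $J_{2r}$ into $r$ hyperbolic planes, read off the discriminant and Hasse invariant from the diagonal form, and then do a case-by-case matching against the target $(\mathrm{disc},\mathrm{Hasse})$-classes from \S\ref{subsectionsymmetricmatrices}. So the method is not genuinely different. However, there are two concrete problems.

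First, your key obstruction for the boundary exclusions in (2) — ``the required Hasse symbol $(-1,-1)_{\mathrm{Hilb}}=-1$ over a non-split factor cannot be produced'' — rests on a false fact. Over a non-archimedean local field $K$ of odd residual characteristic one has $(-1,-1)_{\mathrm{Hilb}}=+1$, since both arguments are units (the paper itself explicitly notes that the Hilbert symbol of two elements of $\mfo_K^\times$ is $+1$). The correct reason the non-quasi-split even case forces $n_2>0$ (and, by a symmetric argument, $n_1>0$) is the observation you should instead make: when $n_2=0$ all diagonal entries of the diagonalized $\delta(\bs{n},r)$ are units, so every pairwise Hilbert symbol is $+1$ and $\mathrm{Hasse}(\delta(\bs{n},r))=+1$, whereas the non-quasi-split class requires Hasse $=-1$. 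The finer exclusions in the quasi-split-not-split sub-cases are obtained by evaluating the explicit Hasse formula (which involves $(\epsilon_0,\varpi_K)_{\mathrm{Hilb}}=-1$) at the boundary values of $n_2$, not by any appeal to $(-1,-1)$.

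Second, for statement (3) your sketch does not isolate the one observation that actually makes the converse construction go through uniformly: when both $n_1>0$ and $n_2>0$, one can flip the Hasse invariant without changing the discriminant by simultaneously changing the parities of $n_{12}$ and $n_{22}$. This single degree of freedom is what lets one first match the discriminant and then, independently, match the Hasse invariant whenever $n_1,n_2>0$; the remaining cases $n_1=0$ or $n_2=0$ are exactly the ones needing the boundary bookkeeping above. Without this lever your ``tabulate the four unit classes'' step is a claim rather than an argument. You should also double-check the assertion that contributions of the form $(a,a)_{\mathrm{Hilb}}^{\binom{m}{2}}$ depend only on the parity of $m$: the exponent $\binom{m}{2}\bmod 2$ depends on $m\bmod 4$, not on $m\bmod 2$, so this part of your justification of (1) needs to be rephrased in terms of the explicit exponent rather than left as a parity slogan.
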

		
		\begin{proof}
			
			By direct calculation, we have
			$$\mrdet(\delta(\bs{n},r))=(-1)^{2r(2r-1)/2}\epsilon_{0}^{n_{12}+n_{22}}\varpi_{F}^{n_{2}}$$
			and
			$$\mathrm{Hasse}(\delta(\bs{n},r))=(-1,\varpi_{K})_{\mathrm{Hilb}}^{2r(2r-1)n_{2}/2}\cdot(\epsilon_{0},\varpi_{K})_{\mathrm{Hilb}}^{(n_{12}+n_{22})n_{2}-n_{22}}\cdot(\varpi_{K},\varpi_{K})_{\mathrm{Hilb}}^{n_{2}(n_{2}-1)/2}.$$
			Here we use the fact that for two elements in $\mfo_{K}^{\times}$, their Hilbert symbol is $+1$.
			So the discriminant of $\delta(\bs{n},r)$ is determined by the parity of $n_{12}+n_{22}$ and $n_{2}$. Once the parity of $n_{12}+n_{22}$ and $n_{2}$ are fixed, the parity of $n_{22}$ determines the Hasse invariant of $\delta(\bs{n},r)$ using the fact that $(\epsilon_{0},\varpi_{K})_{\mathrm{Hilb}}=-1$. Thus we have proved (1).
			
			Now we prove (2) and (3). First we remark that if both $n_{1}$ and $n_{2}$ are non-zero, then by changing the parity of $n_{12}$ and $n_{22}$ we may change the Hasse invariant of $\delta(\bs{n},r)$  without changing its discriminant. Thus in this case if the correct discriminant is obtained, the correct Hasse invariant can always be obtained.
			
			Now we may discuss different cases.
			
			\begin{itemize}
				\item Assume $n=2k$ and $H$ is either split or non-quasi-split. Then $v_{K}(\mrdet(\varepsilon))$ is even, meaning that $n_{2}$ is even. If $H$ is split, it is possible that $n_{1}=0$ or $n_{2}=0$, since in both cases by direct calculation the related Hasse invariant is $1$. Conversely, if $H$ is non-quasi-split, it is impossible that $n_{1}=0$ and $n_{2}=0$. In particular, if $H$ is non-quasi-split we must have $r\leq k-2$, otherwise either $n_{1}$ or $n_{2}$ is zero. 
				
				\item Assume $n=2k$ and $H$ is quasi-split but not split. Then $r$ cannot be $k$, otherwise $\varepsilon=J_{2r}$ and $H$ is split. So $r\leq k-1$. If $v_{K}(\mrdet(\varepsilon))$ is even, then $n_{2}$ is even. We consider two cases $n_{1}=0$ and $n_{2}=0$ and we adjust the parity of $n_{12}$ and $n_{22}$ such that the related discriminant of $\delta(\bs{n},r)$ is $(-1)^{2r(2r-1)/2}\epsilon_{0}$, then the Hasse invariants of $\delta(\bs{n},r)$ in these two cases are different. We leave the details to the readers. It means that exactly one of the cases of $n_{1}=0$ or $n_{2}=0$ can be fulfilled. Thus we have either $0\leq n_{2}\leq 2(k-1-r)$ or $2\leq n_{2}\leq 2(k-r)$, and in each case all values of $n_{2}$ can be fulfilled. If $v_{K}(\mrdet(\varepsilon))$ is odd, then $n_{2}$ is also odd. In this case, we have $1\leq n_{2}\leq 2(k-1-r)+1$ and each of them can be fulfilled since $n_{1}$, $n_{2}$ are non-zero.
				
				\item Assume $n=2k+1$. If $n_{1}=0$ or $n_{2}=0$, by calculating the discriminant and Hasse invariant, we must have that $H$ is split. We leave the detail to the readers. Thus if $H$ is split each case of $ 0\leq r\leq k$, $0\leq n_{2}\leq 2(k-r)+1$ can be fulfilled. If $H$ is non-quasi-split, both $n_{1}$ and $n_{2}$ cannot be $0$, and in particular $r$ cannot be $k$. Meanwhile each case in $0\leq r\leq k-1$, $1\leq n_{2}\leq 2(k-r)$ can be fulfilled.
				
			\end{itemize}

			
		\end{proof}
		
		
		We further study the $\theta$-rank of $\mca^{g}=\mca(G,S^{g})$. For $x\in S$, the equation $\theta(x^{g})=x^{g}$ is equivalent to $$\delta(\bs{n},r)^{-1}x^{-1}\delta(\bs{n},r)=x.$$ If we write $x=\mrdiag(b_{1},b_{2},\dots,b_{n})$ with $b_{i}\in K^{\times}$, then it is clear that the solutions are exactly 
		\begin{equation}\label{eqdiagH'}
			b_{i}^{2}=1\ \text{for}\ i=2r+1,2r+2,\dots ,n\quad \text{and}\quad b_{j}=b_{2r+1-j}^{-1}\ \text{for}\ j=1,2,\dots r.
		\end{equation}
		As a result, $((S^{g})^{\theta})^{\circ}$ is a subtorus of $G$ of semi-simple rank $r$, meaning that $\mca^{g}$ is of $\theta$-rank $r$. 
		
		Finally we construct a $\theta$-stable facet $F_{0}^{g}$ of $\mca^{g}$, such that $(F_{0}^{g})^{\theta}$ is a chamber of $(\mca^{g})^{\theta}$. We consider the $\mfo_{K}$-order $\mfa_{F_{0}}=\{(a_{ij})_{1\leq i,j\leq n}\}$ in $\mrm_{n}(K)$ such that
		
		\begin{itemize}
			\item $a_{ij}\in\mfo_{K}$ if
			\begin{itemize}
				\item $1\leq j\leq i\leq 2r$;
				\item or $1\leq j\leq r,\ 2r+1\leq i\leq n$;
				\item or $r+1\leq j\leq 2r$, $2r+n_{1}+1\leq i\leq n$; 
				\item or $2r+1\leq j\leq 2r+n_{1}$, $r+1\leq i\leq n$;
				\item or $2r+n_{1}+1\leq j \leq n$, $2r+n_{1}+1\leq i \leq n$.
			\end{itemize}
			\item $a_{ij}\in\mfp_{K}$ if 
			\begin{itemize}
				\item $1\leq i<j\leq 2r$;
				\item or $r+1\leq j\leq 2r$, $2r+1\leq i\leq 2r+n_{1}$;
				\item or $2r+1\leq j\leq 2r+n_{1}, \ 1\leq i\leq r$;
				\item or $2r+n_{1}+1\leq j\leq n, \ 1\leq i\leq 2r+n_{1}$.
			\end{itemize}
		\end{itemize}
		Then the invertible elements in $\mfa_{F_{0}}$ form a parahoric subgroup of $G$, which is denoted by $\parah{F_{0}}$. Indeed,
		we let $F_{0}$ be the unique facet in $\mca$ such that $\mcg_{F_{0}}(K)=\parah{F_{0}}$, which justifies our notation. From our construction, it is clear that
		$$\delta(\bs{n},r)^{-1}\,^{t}\parah{F_{0}}^{-1}\delta(\bs{n},r)=\parah{F_{0}}.$$
		Thus $\parah{F_{0}}^{g}$ is $\theta$-stable, meaning that $F_{0}^{g}$ is a $\theta$-stable facet in $\mca^{g}$. 
		
		For $i=0,1,\dots,r$, let $$g_{i}=\mrdiag(\underbrace{1,\dots,1}_{i\text{-copies}},\underbrace{\varpi_{K},\dots,\varpi_{K}}_{(n-i)\text{-copies}})$$
		and
		$$g_{i}'=\mrdiag(\underbrace{1,1,\dots,1}_{(2r-i)\text{-copies}},\underbrace{\varpi_{K},\dots,\varpi_{K}}_{i\text{-copies}},\underbrace{1,1,\dots,1}_{n_{1}\text{-copies}},\underbrace{\varpi_{K},\dots,\varpi_{K}}_{n_{2}\text{-copies}}).$$
		For each $i$, consider the maximal $\mfo_{K}$-order $$\mfa_{v_{i}}:=g_{i}^{-1}\mrm_{n}(\mfo_{K})g_{i}\quad\mathrm{and}\quad\mfa_{v_{i}'}:=g_{i}'^{-1}\mrm_{n}(\mfo_{K})g_{i}'$$ 
		in $\mrm_{n}(K)$. Then from our construction, we have
		$$\mfa_{v_{i}'}=\delta(\bs{n},r)^{-1}\,^{t}\mfa_{v_{i}}^{-1}\delta(\bs{n},r),\quad i=0,1,\dots,r.$$
		The invertible elements in $\mfa_{v_{i}}$ (resp. $\mfa_{v_{i}'}$) form a maximal parahoric subgroup of $G$, which is denoted by $\parah{v_{i}}$ (resp. $\parah{v_{i}'}$). Let $v_{i}$ (resp. $v_{i}'$) be the unique vertex in $\mca$ such that $\mcg_{v_{i}}(K)=\parah{v_{i}}$ (resp. $\mcg_{v_{i}'}(K)=\parah{v_{i}'}$). Let $v_{i}^{\flat}$ be the midpoint of $v_{i}$ and $v_{i}'$ for $i=0,1,\dots,r$, 
		
		In particular, if $i=n_{2}=0$ we have $v_{0}=v_{0}'$, and if $r-i=n_{1}=0$ we have $v_{r}=v_{r}'$. In other cases we have $v_{i}\neq v_{i}'$, by direct calculation there does not exist $g\in G$ such that $gv_{i}=v_{i}'$ (equiv. $\parah{v_{i}}^{g}=\parah{v_{i}'}$) and $gv_{i}'=v_{i}$ (equiv. $\parah{v_{i}'}^{g}=\parah{v_{i}}$). 
		
		We consider the parahoric subgroup $\parah{\{v_{i},v_{i}'\}}$ as the intersection of $\parah{v_{i}}$ and $\parah{v_{i}'}$. The following lemma follows from direct calculation.
		
		\begin{lemma}\label{lemmacalculationPvivi'}
		Let
		\begin{equation}\label{eq}
			\varsigma_i=\begin{pmatrix}
				\varpi_K I_i & & & & \\ &  & & & I_{2r-2i}\\
				& & I_i & & \\
				&  & & I_{n_1} &\\
				& I_{n_2} & &  &  
			\end{pmatrix}\in \mrgl_n(K).
		\end{equation}
		Then the conjugation $\parah{\{v_{i},v_{i}'\}}^{\varsigma_i}$ of $\parah{\{v_{i},v_{i}'\}}$ by $\varsigma_i$ is exactly the stardard parahoric subgroup of $G$ corresponding to the composition $(2i+n_{2},2r-2i+n_{1})$ denoted by $P_{(2i+n_{2},2r-2i+n_{1})}$. 
		
		\end{lemma}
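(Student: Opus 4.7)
The plan is to carry out an explicit block-matrix computation. First I would partition the index set $\{1,\dots,n\}$ into five consecutive blocks $A,B,C,D,E$ of sizes $i$, $2r-2i$, $i$, $n_1$, $n_2$ respectively, so that $A\cup B$ are the positions where $g_i$ has diagonal $1$, $C\cup D\cup E$ are the positions where $g_i$ has diagonal $\varpi_K$, $A\cup B\cup D$ are the positions where $g_i'$ has diagonal $1$, and $C\cup E$ are the positions where $g_i'$ has diagonal $\varpi_K$. Since $\parah{v_i}=g_i^{-1}\mrgl_n(\mfo_K)g_i$ (and similarly for $v_i'$), the entry in the $(j,k)$-position of an element of $\parah{v_i}$ lies in $\mfo_K$, $\mfp_K$, or $\varpi_K^{-1}\mfo_K$ according to whether the corresponding diagonal entries of $g_i$ are equal, $(1,\varpi_K)$, or $(\varpi_K,1)$ respectively (and similarly for $\parah{v_i'}$).

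Next I would intersect these two integral constraints block-by-block. Taking the more restrictive condition in each of the twenty-five sub-blocks, the resulting $\mfo_K$-order $\mfa_{\{v_i,v_i'\}}$ underlying $\parah{\{v_i,v_i'\}}$ has the entry pattern
\[
\begin{pmatrix}
\mfo & \mfp & \mfp & \mfp & \mfp \\
\mfo & \mfo & \mfp & \mfo & \mfp \\
\varpi_K^{-1}\mfo & \mfo & \mfo & \mfo & \mfo \\
\mfo & \mfo & \mfp & \mfo & \mfo \\
\varpi_K^{-1}\mfo & \mfo & \mfo & \mfo & \mfo
\end{pmatrix}
\]
with rows and columns indexed in the order $A,B,C,D,E$.

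Then I would decompose $\varsigma_i$ as a product $D_i P_i$, where $D_i=\mrdiag(\varpi_K I_i, I_{n-i})$ is the diagonal scaling affecting only the $A$-block, and $P_i$ is the permutation matrix that relabels the block order from $(A,B,C,D,E)$ to $(\widetilde A,\widetilde B,\widetilde C,\widetilde D,\widetilde E)=(A,E,C,D,B)$ of sizes $i,n_2,i,n_1,2r-2i$. Conjugation by $P_i$ simply reshuffles the twenty-five blocks according to this permutation, while conjugation by $D_i$ multiplies every entry whose new row lies in $\widetilde A$ by $\varpi_K^{-1}$ and every entry whose new column lies in $\widetilde A$ by $\varpi_K$. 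Applying these two operations in turn to the above matrix (where $\mfp\cdot\varpi_K^{-1}=\mfo$, $\mfo\cdot\varpi_K=\mfp$, $\varpi_K^{-1}\mfo\cdot\varpi_K=\mfo$, and the $(\widetilde A,\widetilde A)$-block scales trivially) produces the block pattern
\[
\begin{pmatrix}
\mfo & \mfo & \mfo & \mfo & \mfo \\
\mfo & \mfo & \mfo & \mfo & \mfo \\
\mfo & \mfo & \mfo & \mfo & \mfo \\
\mfp & \mfp & \mfp & \mfo & \mfo \\
\mfp & \mfp & \mfp & \mfo & \mfo
\end{pmatrix}
\]
in the new block ordering. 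Grouping the first three new blocks of total size $i+n_2+i=2i+n_2$ and the last two of total size $n_1+(2r-2i)=2r-2i+n_1$ shows that this is precisely the standard parahoric $P_{(2i+n_2,2r-2i+n_1)}$, which finishes the proof.

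The only real obstacle is keeping track of the block bookkeeping under the permutation and the single diagonal scaling; once the entry-type tables for $\parah{v_i}$ and $\parah{v_i'}$ are written down, every subsequent step reduces to verifying how $\mfo$, $\mfp$ and $\varpi_K^{-1}\mfo$ behave under multiplication by $\varpi_K^{\pm 1}$. The edge cases $i=0$ (no scaling, $\varsigma_i$ purely a permutation) and $i=r$ (block $B$ empty) go through by the same argument, since each of them simply removes one of the row/column block types without altering the remaining pattern.
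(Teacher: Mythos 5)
Your route is exactly the ``direct calculation'' the paper alludes to without writing out: describe $\mfa_{v_i}=g_i^{-1}\mrm_n(\mfo_K)g_i$ and $\mfa_{v_i'}=g_i'^{-1}\mrm_n(\mfo_K)g_i'$ entrywise, intersect block by block, and conjugate by $\varsigma_i=D_iP_i$; and your final block pattern, with blocks of sizes $(i,n_2,i,n_1,2r-2i)$ regrouped as $(2i+n_2,\,2r-2i+n_1)$, is the correct standard parahoric, so the conclusion is right. (Passing between orders and their unit groups is harmless, since $(\mfa_{v_i}\cap\mfa_{v_i'})^{\times}=\parah{v_i}\cap\parah{v_i'}$.)

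Two local slips should be repaired. First, your description of $g_i$ is wrong: $g_i=\mrdiag(1,\dots,1,\varpi_K,\dots,\varpi_K)$ has exactly $i$ diagonal entries equal to $1$, i.e.\ it is $1$ on block $A$ only and $\varpi_K$ on $B\cup C\cup D\cup E$ (your ``$A\cup B$'' has size $2r-i$, which does not even match the count $i$); it is $g_i'$ whose $1$'s sit on $A\cup B\cup D$. Your intersection table was evidently computed with the correct $g_i$, so this is a misstatement rather than a propagated error, but it must be fixed. Second, the $(D,E)$ block of your displayed intersection pattern should be $\mfp$, not $\mfo$: the constraint from $\mfa_{v_i}$ there is $\mfo$ (both diagonal entries of $g_i$ on $D$ and $E$ are $\varpi_K$), but the constraint from $\mfa_{v_i'}$ is $\mfp$ (row $D$ carries $1$, column $E$ carries $\varpi_K$), and the intersection is $\mfp$. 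Note that with $\mfo$ in that spot your displayed lattice is not even multiplicatively closed (its $(D,E)$ block times its $(E,A)$ block lands outside the $(D,A)$ block), and transporting it through $\varsigma_i$ would leave an $\mfo$ in the $(\widetilde D,\widetilde B)$ position, contradicting your own (correct) final matrix, which has $\mfp$ there. With $(D,E)=\mfp$ the two displays are consistent, and the rest of your argument --- the decomposition $\varsigma_i=D_iP_i$, the block permutation $(A,E,C,D,B)$, and the effect of the single scaling $\varpi_K^{\pm1}$ attached to block $\widetilde A$ --- goes through as written, including the degenerate cases $i=0$ and $i=r$.
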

		 Let $\parah{v_{i}^{\flat}}$ be the parahoric subgroup of $G$ that fixes $v_{i}^{\flat}$. Then we have $\parah{v_{i}^{\flat}}=\parah{\{v_{i},v_{i}'\}}$.
		
		Since by definition $\mfa_{F_{0}}=\bigcap_{i=0}^{r}(\mfa_{v_{i}}\cap\mfa_{v_{i}'})$, the set 
		$$\mcv_{F_{0}}:=\{v_{0},v_{1},\dots,v_{r},v_{0}',v_{1}',\dots,v_{r}'\}$$
		is exactly the set of vertices of $F_{0}$. We have,
		$$\car{\mcv_{F_{0}}}=\begin{cases} 2r+2 &\text{if}\ n_{1},n_{2}\neq 0;\\
			2r+1 &\text{if either}\ n_{1}=0\ \text{or}\ n_{2}=0;\\
			2r &\text{if}\ n_{1}=n_{2}=0.
		\end{cases}$$
		Indeed, if $n_{2}=0$ we have $v_{0}=v_{0}'$, and if $n_{1}=0$, we have $v_{r}=v_{r}'$. Except that, the other vertices are pairwise different.
		
		From our construction we have $\theta(v_{i}^{g})=v_{i}'^{g}$ and $\theta(v_{i}'^{g})=v_{i}^{g}$ for $i=0,1,\dots,r$. Thus in different cases $(F_{0}^{g})^{\theta}$ is a simplex of $r+1$ vertices, meaning that it is of dimension $r$. Then
		$$\mcv_{(F_{0}^{g})^{\theta}}=\{(v_{0}^{\flat})^{g},(v_{1}^{\flat})^{g},\dots,(v_{r}^{\flat})^{g}\}$$
		is the set of vertices of $(F_{0}^{g})^{\theta}$. Since the $\theta$-rank of $\mca^{g}$ is $r$, we deduce that $(F_{0}^{g})^{\theta}$ is indeed a chamber of $(\mca^{g})^{\theta}$.

		\subsection{Classification of $\Ft(G)/H$ and $\Ftmax(G)/H$}
		
		For convenience, up to $G$-conjugacy we may and will assume $\varepsilon=\delta(\bs{n}^{0},r_{0})$ for some $\bs{n}^{0}=(n_{11}^{0},n_{12}^{0},n_{21}^{0},n_{22}^{0})$ (\emph{cf.} \eqref{eqorthothetaapcond} and Lemma \ref{lemmadischasse}), where in different cases we have
		$$\begin{cases}  n=2r_{0}\ \text{and}\ H\ \text{is split};\\
			n=2r_{0}+2\ \text{and}\ H\ \text{is quasi-split but not split};\\
			n=2r_{0}+4\ \text{and}\ H\ \text{is not quasi-split};\\
			n=2r_{0}+1\ \text{and}\ H\ \text{is split};\\
			n=2r_{0}+3\ \text{and}\ H\ \text{is not quasi-split}.
		\end{cases}$$
		Write $n_{1}^{0}=n_{11}^{0}+n_{12}^{0}$ and $n_{2}^{0}=n_{21}^{0}+n_{22}^{0}$.
		
		Fix $\mca=\mca(G,S)$ with $S$ the diagonal torus. Then in this case, $\mca$ is of maximal $\theta$-rank $r_{0}$.
		Let $F_{0}$ be the $\theta$-stable in $\mca$ defined as before having the set of vertices $$\mcv_{F_{0}}:=\{v_{0},v_{1},\dots,v_{r_{0}},v_{0}',v_{1}',\dots,v_{r_{0}}'\},$$ then $F_{0}^{\theta}$ is a chamber of $\mca^{\theta}$ whose set of vertices is $$\mcv_{F_{0}^{\theta}}=\{v_{0}^{\flat},v_{1}^{\flat},\dots,v_{r_{0}}^{\flat}\}.$$
		
		From now on until \S \ref{subsectionexceptionalcase}, we exclude the case where $n=2$ to make sure that the condition $Z(H)\subset Z(G)\cap H$ is satisfied. 
		Then we have the following commutative diagram
		$$\xymatrix{
			\mcb^{\mathrm{ext}}(H) \ar@{^{(}->}[r]^{\iota^\mathrm{ext}} \ar@{->>}[d]_{\pi} & \mcb^{\mathrm{ext}}(G) \ar@{->>}[d]^{\pi} \\
			\mcb(H) \ar@{^{(}->}[r]^{\iota} & \mcb(G)
		}$$
		where $\iota$ is affine and $H$-equivariant. Moreover, we have $\mcb^{\mathrm{ext}}(G)^{\theta}=\iota(\mcb^{\mathrm{ext}}(H))$ and $\mcb(G)^{\theta}=\iota(\mcb(H))$. Then $\mca^{\theta}$ is an apartment of $\mcb(G)^{\theta}$, and $\mca_{H}:=\iota^{-1}(\mca^{\theta})$ is an apartment of $\mcb(H)$. Let $F_{H}$ be the chamber in $\mca_{H}$ that contains $\iota^{-1}(F_{0}^{\theta})$.
		
		The following known result characterizes the simplicial complex structure of $\mcb(H)$ and $\mcb(G)^{\theta}$.
		
		\begin{proposition}[\cite{tits1979reductive}*{\S 4.4}, \cite{abramenko2002lattice}*{Section 6, Section 8}]\label{propchamberBGthetaBH}
			
			\begin{enumerate}
				\item $\mca^{\theta}$ is an affine Coxeter complex of type $CB_{r_{0}}$.
				
				\item $\mca_{H}$ is an affine Coxeter complex of type
				$$\begin{cases}
					D_{r_{0}} &\text{if}\ n=2r_{0},\ H\ \text{is split};\\
					B_{r_{0}}	& \text{if}\ n=2r_{0}+2,\  H\ \text{is quasi-split but not split}\ \text{and}\ n_{2}^{0}\ \text{is even};\\
					CB_{r_{0}}	& \text{if}\ n=2r_{0}+2,\  H\ \text{is quasi-split but not split}\ \text{and}\ n_{2}^{0}\ \text{is odd};\\
					CB_{r_{0}}	& \text{if}\ n=2r_{0}+4,\  H\ \text{is non-quasi-split};\\
					B_{r_{0}}	& \text{if}\ n=2r_{0}+1,\  H\ \text{is split};\\
					CB_{r_{0}}  & \text{if}\ n=2r_{0}+3,\  H\ \text{is non-quasi-split},
				\end{cases}$$
				
				\item The facet $\iota^{-1}(F_{0}^{\theta})$ equals $F_{H}$ if $\mca_{H}$ is of type $CB_{r_{0}}$, or a half of $F_{H}$ if $\mca_{H}$ is of type $B_{r_{0}}$, or a quarter of $F_{H}$ if $\mca_{H}$ is of type $D_{r_{0}}$. More precisely, let $$\mcv_{F_{H}}=\{w_{0},w_{1},\dots,w_{r_{0}}\}$$ be the set of vertices of the numbering in accordance with the numbering of affine Dynkin diagram \footnote{But we are allowed to interchange $w_{k-1}$ with $w_{k}$ (resp. $w_{k-1}$ with $w_{k}$ and $w_{0}$ with $w_{1}$) in type $B_{r_{0}}$ (resp. type $D_{r_{0}}$) if necessary.}. Let 
				$$\mcv_{F_{H}}^{\flat}=\{w_{0}^{\flat},w_{1}^{\flat},\dots,w_{r_{0}}^{\flat}\}$$
				with $w_{i}^{\flat}=w_{i}$ for $i=0,2,3\dots,r_{0}-2,r_0$, and
				$w_{1}^{\flat}$ (resp. $w_{r_{0}-1}^{\flat}$) denotes the midpoint of $w_{0}$ and $w_{1}$ (resp. $w_{r_{0}-1}$ and $w_{r_{0}}$) if $\mca_{H}$ is of type $B_{r_{0}}$ (resp. of type $B_{r_{0}}$ and $D_{r_{0}}$), or $w_{1}^{\flat}=w_{1}$ and $w_{r_{0}-1}^{\flat}=w_{r_{0}-1}$ otherwise.
				Then $v_{i}^{\flat}=\iota(w_{i}^{\flat})$ for $i=0,1\dots,r_0$ and thus $\mcv_{F_{0}^{\theta}}=\iota(\mcv_{F_{H}}^{\flat})$.
				
				\item In particular, 
				$\mcv_{F_{0}^{\theta}}=\{v_{0}^{\flat},v_{1}^{\flat},\dots,v_{r_{0}}^{\flat}\}$
				forms a representative of an affine Dynkin diagram of type $CB_{r_{0}}$ of the right numbering.
				
				\item $H$ acts transitively on both $\ch{}{}(\mcb(G)^{\theta})$ and $\ch{}{}(\mcb(H))$.
				
			\end{enumerate}
			
		\end{proposition}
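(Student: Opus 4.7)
The plan is to verify each part by combining the explicit construction of vertices $v_i,v_i',v_i^{\flat}$ from the preceding subsection with the general theory of restricted affine root systems and Bruhat--Tits theory for orthogonal groups, as already developed in \cite{tits1979reductive} and \cite{abramenko2002lattice}.

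For parts (1) and (4), I would compute the restricted affine root system on $(\mca^{\theta})^*$ inherited from $\Psi(G,S)$. Writing affine roots of $(G,S)$ as $\alpha_{ij}+k$ with $\alpha_{ij}:\mrdiag(t_1,\dots,t_n)\mapsto t_i/t_j$ and using the description \eqref{eqdiagH'} of $(S^{\theta})^{\circ}$ as the rank $r_{0}$ torus cut out by $b_j=b_{2r_0+1-j}^{-1}$, I would check that the nonzero restrictions to $\mca^{\theta}$ form an affine root system of type $\widetilde{CB}_{r_0}$, with both multipliable and non-multipliable roots present (the multipliable ones arise from pairs of roots $\alpha_{ij},\alpha_{i',j'}$ that are identified by $\theta$ after restriction, while the non-multipliable ones correspond to roots already fixed up to sign by $\theta$). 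The resulting walls realize $\mca^{\theta}$ as the claimed affine Coxeter complex, and picking simple affine roots bounding the chamber with vertex set $\mcv_{F_0^{\theta}}=\{v_0^{\flat},\dots,v_{r_0}^{\flat}\}$ produces the affine Dynkin diagram of type $CB_{r_0}$ with the stated numbering.

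For part (2), the relevant maximal $K$-split torus $S^{+}$ of $H$ is the identity component of $S^{\theta}$, of split rank $r_{0}$, and the affine root system of $H$ relative to $S^{+}$ is determined by the anisotropic kernel of $H$ over $K$, which is read off from $\bs{n}^{0}$. The standard classification of affine Dynkin diagrams for orthogonal groups over non-archimedean local fields yields five possibilities: type $\widetilde{D}_{r_0}$ for the split even case, $\widetilde{B}_{r_0}$ for the split odd and the quasi-split even case with $n_2^0$ even, and $\widetilde{CB}_{r_0}$ in the remaining cases; matching the discriminant and Hasse data recorded in Lemma \ref{lemmadischasse} against these gives the stated answer case by case. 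For part (3), both $\mca^{\theta}$ and $\iota(\mca_H)$ are affine spaces of dimension $r_0$, with $\iota$ affine and toral so that every wall of $\mca_H$ restricts to a wall of $\mca^{\theta}$; the reverse inclusion fails precisely when the two types differ. When $\mca_H$ has type $CB_{r_0}$ no extra walls appear and $\iota^{-1}(F_0^{\theta})=F_H$; when $\mca_H$ has type $B_{r_0}$ a single extra wall of $\mca^{\theta}$ bisects $F_H$ at one extremal end; and when $\mca_H$ has type $D_{r_0}$ two extra walls quarter $F_H$ at both ends. The identification $v_i^{\flat}=\iota(w_i^{\flat})$ is then verified through stabilizers: $\parah{v_i^{\flat}}\cap H=\parah{\{v_i,v_i'\}}\cap H$, and via the explicit integral model in Lemma \ref{lemmacalculationPvivi'} this agrees with a parahoric subgroup of $H$ associated to $w_i^{\flat}$.

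For part (5), transitivity of $H$ on $\ch{}{}(\mcb(H))$ is the standard consequence of the Iwahori--Tits system of $H$, since $H=H^{0}$ for $\mrso_n(\varepsilon)$ in odd residual characteristic. Transitivity on $\ch{}{}(\mcb(G)^{\theta})$ then follows by combining (a) that the affine Weyl group of $\mca^{\theta}$ of type $CB_{r_0}$ acts simply transitively on $\ch{}{}(\mca^{\theta})$, (b) that each reflection along a wall of $\mca^{\theta}$ is realized by an element of $H$ (an element of $N_{G}(S)\cap H$ whose image in the affine Weyl group is the prescribed reflection), and (c) the fact that any two $\theta$-stable apartments of $\mcb(G)$ are $H$-conjugate, shown by Proposition \ref{propAcontainC} and Lemma \ref{lemmaH'-Hconj}. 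The main obstacle will be the bookkeeping in part (3): each subtype arising in part (2) demands its own matching of simple affine roots and vertex stabilizers, though because the list of affine Dynkin diagrams is short and the explicit parahorics are produced by Lemma \ref{lemmacalculationPvivi'}, the case-by-case verification is reduced to comparing orders and Levi structures of concrete parahoric subgroups.
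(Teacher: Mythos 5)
First, for context: the paper does not prove this proposition at all — it is imported from the literature (Tits's Corvallis article \S 4.4 and Abramenko's work on lattice chain models), and the rest of Section \ref{sectionorthogonalgroup} is built on top of it. So you are supplying a proof where the paper supplies a citation. Your treatment of (1)--(4) — restricting the affine roots of $(G,S)$ to $\mca^\theta$ via \eqref{eqdiagH'}, reading off the type of $\mca_H$ from the anisotropic kernel/Tits tables, and matching the vertices $v_i^\flat$ with the $w_i^\flat$ through the explicit parahorics of Lemma \ref{lemmacalculationPvivi'} — is the expected computation and is consistent with what the cited references establish, even if the wall-counting in (3) is only loosely phrased (the "extra" hyperplanes come in whole parallel families; only one per family meets $\overline{F_H}$).

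The genuine gap is in part (5). Your step (c), that "any two $\theta$-stable apartments of $\mcb(G)$ are $H$-conjugate, shown by Proposition \ref{propAcontainC} and Lemma \ref{lemmaH'-Hconj}", is false: Proposition \ref{propclassthetaap} shows the $H$-classes of $\theta$-stable apartments are indexed by the quintuples $(n_{11},n_{12},n_{21},n_{22},r)$, and Proposition \ref{propAcontainC} only gives uniqueness of a $\theta$-stable apartment through a fixed chamber up to $H'$-conjugacy. What your argument actually needs is that the fixed parts $\mca'^\theta$ of all $\theta$-stable apartments of maximal $\theta$-rank $r_0$ form a single $H$-orbit; this is true, but it requires the discriminant/Hasse parity analysis of Lemma \ref{lemmadischasse} (which forces a unique $(n_1,n_2)$ once $r=r_0$) together with an argument that different $\bs{n}$ with the same $(n_1,n_2,r_0)$ yield $H$-conjugate fixed apartments — none of which appears in your sketch. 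Your step (b), that every reflection along a wall of $\mca^\theta$ is realized by an element of $N_G(S)\cap H$, is precisely the nontrivial content of the whole statement: it is Assumption \ref{assumpaffinecoxeter}.(1) for this pair, which the paper later deduces from this very proposition via Proposition \ref{propA'chamberHconj}, so asserting it parenthetically proves nothing; for the non-$\theta$-adapted walls (the midpoint walls such as $\mfh_t$ in Example \ref{exampleGL3O3}) one must actually exhibit the signed-permutation elements with entries in $\varpi_K^{\mbz}$ realizing these reflections. A smaller slip: $H\neq H^0$ for $\mrso_n(\varepsilon)$ (the Kottwitz homomorphism/spinor norm is nontrivial); this is harmless since transitivity on $\ch{}{}(\mcb(H))$ only needs $H^0$, but the claim as written is wrong. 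A cleaner route to (5), compatible with your part (3), is to use transitivity on $\ch{}{}(\mcb(H))$ to reduce to chambers of $\mcb(G)^\theta$ inside a single closed chamber $\overline{\iota(F_H)}$ and then exhibit elements of the stabilizer of $F_H$ in $H$ permuting the one, two or four pieces transitively — but that, too, requires explicit elements rather than the diagram count alone.
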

	
	    \begin{figure}[htbp]
		\centering
		\begin{minipage}{0.3\textwidth}
			\centering
			\begin{tikzpicture}[thick,scale=3.3]
				\coordinate (A1) at (0, 0);
				\coordinate (A2) at (0, 1);
				\coordinate (A3) at (1, 1);
				\coordinate (A4) at (1, 0);
				\coordinate (B1) at (0.3, 0.3);
				\coordinate (B2) at (0.3, 1.3);
				\coordinate (B3) at (1.3, 1.3);
				\coordinate (B4) at (1.3, 0.3);
				\coordinate (C) at (0.65, 0.15);
				\coordinate (A) at (0.5, 0);
				\coordinate (B) at (0.65, 0.7);
				
								\draw[fill=gray,opacity=0.6] (A1) -- (A) -- (C) -- (B);
				
				\draw[very thick] (A1) -- (A2);
				\draw[very thick] (A2) -- (A3);
				\draw[very thick] (A3) -- (A4);
				\draw[very thick] (A4) -- (A1);
				\draw[very thick] (A1) -- (B1);
				\draw[very thick] (B1) -- (B2);
				\draw[very thick] (A2) -- (B2);
				\draw[very thick] (B2) -- (B3);
				\draw[very thick] (A3) -- (B3);
				\draw[very thick] (A4) -- (B4);
				\draw[very thick] (B4) -- (B3);
				\draw[very thick] (B1) -- (B4);
				
				\draw[blue] [ultra thick] (A1) -- (C);
				\draw[blue] [ultra thick] (A) -- (C);
				\draw[blue] [ultra thick] (A) -- (A1);
				\draw[blue] [ultra thick](B) -- (A1);
				\draw[blue] [ultra thick] (B) -- (A);
				\draw[blue] [ultra thick] (B) -- (C);

				\node at (A1) [text=blue] {$\bullet$};
				\node at (A) [text=blue] {$\bullet$};
				\node at (B) [text=blue] {$\bullet$};
				\node at (C) [text=blue] {$\bullet$};
				\node at (A1) [text=blue] [below] {\tiny$w_0=\color{red}{w_0^\flat}$};
				\node at (A) [text=blue] [below] {\tiny$w_1=\color{red}{w_1^\flat}$};
				\node at (B) [text=blue] [above] {\tiny$w_3=\color{red}{w_3^\flat}$};
				\node at (C) [text=blue] [right] {\tiny$w_2=\color{red}{w_2^\flat}$};
				
			\end{tikzpicture}
		\end{minipage}%
		\begin{minipage}{0.3\textwidth}
			\centering

			\begin{tikzpicture}[thick,scale=3.3]
				\coordinate (A1) at (0, 0);
				\coordinate (A2) at (0, 1);
				\coordinate (A3) at (1, 1);
				\coordinate (A4) at (1, 0);
				\coordinate (A5) at (0.5, 0);
				\coordinate (B1) at (0.3, 0.3);
				\coordinate (B2) at (0.3, 1.3);
				\coordinate (B3) at (1.3, 1.3);
				\coordinate (B4) at (1.3, 0.3);
				\coordinate (C) at (0.65, 0.15);
				\coordinate (B) at (0.65, 0.7);
				
				\draw[fill=gray,opacity=0.6] (A1) -- (A5) -- (C)--(B);
				
				\draw[very thick] (A1) -- (A2);
				\draw[very thick] (A2) -- (A3);
				\draw[very thick] (A3) -- (A4);
				\draw[very thick] (A4) -- (A1);
				\draw[very thick] (A1) -- (B1);
				\draw[very thick] (B1) -- (B2);
				\draw[very thick] (A2) -- (B2);
				\draw[very thick] (B2) -- (B3);
				\draw[very thick] (A3) -- (B3);
				\draw[very thick] (A4) -- (B4);
				\draw[very thick] (B4) -- (B3);
				\draw[very thick] (B1) -- (B4);
				
				\draw[blue] [ultra thick] (A1) -- (C);
				\draw[blue] [ultra thick] (A4) -- (C);
				\draw[blue] [ultra thick] (A4) -- (A1);
				\draw[blue] [ultra thick](B) -- (A1);
				\draw[blue] [ultra thick] (B) -- (A4);
				\draw[red] [ultra thick] (B) -- (C);
				\draw[blue] [ultra thick] (B) -- (C);
				\draw[red] [ultra thick] (A5) -- (C);
				\draw[red] [ultra thick] (A5) -- (B);
				\draw[red] [ultra thick] (A5) -- (A1);
			
				\node at (A1) [text=blue] {$\bullet$};
				\node at (A4) [text=blue] {$\bullet$};
				\node at (B) [text=blue] {$\bullet$};
				\node at (A5) [text=red] {$\bullet$};
				\node at (C) [text=blue] {$\bullet$};
				\node at (A1) [text=blue] [below] {\tiny$w_0=\color{red}{w_0^\flat}$};
				\node at (A4) [text=blue] [below] {\tiny$w_1$};
				\node at (A5) [text=red] [below] {\tiny$w_1^\flat$};
				\node at (B) [text=blue] [right] {\tiny$w_3=\color{red}{w_3^\flat}$};
				\node at (C) [text=blue] [right] {\tiny$w_2=\color{red}{w_2^\flat}$};
				
			\end{tikzpicture}
		\end{minipage}
		\begin{minipage}{0.3\textwidth}
			\centering
			\begin{tikzpicture}[thick,scale=3.3]
				\coordinate (A1) at (0, 0);
				\coordinate (A2) at (0, 1);
				\coordinate (A3) at (1, 1);
				\coordinate (A4) at (1, 0);
				\coordinate (B1) at (0.3, 0.3);
				\coordinate (B2) at (0.3, 1.3);
				\coordinate (B3) at (1.3, 1.3);
				\coordinate (B4) at (1.3, 0.3);
				\coordinate (A) at (0,0.5);
				\coordinate (B) at (0.65, 1.15);
				\coordinate (C) at (0.65, 0.15);
				\coordinate (D) at (1, 0.5);
				\coordinate (A5) at (0.65, 0.65);
				\coordinate (A6) at (0.5, 0.5);
				
				\draw[fill=gray,opacity=0.6] (A) -- (A6) -- (A5) -- (B);
				
				\draw[very thick] (A1) -- (A2);
				\draw[very thick] (A2) -- (A3);
				\draw[very thick] (A3) -- (A4);
				\draw[very thick] (A4) -- (A1);
				\draw[very thick] (A1) -- (B1);
				\draw[very thick] (B1) -- (B2);
				\draw[very thick] (A2) -- (B2);
				\draw[very thick] (B2) -- (B3);
				\draw[very thick] (A3) -- (B3);
				\draw[very thick] (A4) -- (B4);
				\draw[very thick] (B4) -- (B3);
				\draw[very thick] (B1) -- (B4);
				
				\draw[blue] [ultra thick] (A) -- (C);
				\draw[blue] [ultra thick] (A) -- (B);
				\draw[blue] [ultra thick](B) -- (D);
				\draw[blue] [ultra thick] (D) -- (C);
				\draw[blue] [ultra thick] (A) -- (D);
				\draw[blue] [ultra thick] (B) -- (C);
				\draw[red] [ultra thick] (A) -- (A5);
				\draw[red] [ultra thick] (B) -- (A6);
				\draw[red] [ultra thick] (A5) -- (A6);
			
				\node at (D) [text=blue] {$\bullet$};
				\node at (A) [text=blue] {$\bullet$};
				\node at (B) [text=blue] {$\bullet$};
				\node at (C) [text=blue] {$\bullet$};
				\node at (A5) [text=red] {$\bullet$};
				\node at (A6) [text=red] {$\bullet$};
				\node at (A) [text=blue] [below] {\tiny$w_0=\color{red}{w_0^\flat}$};
				\node at (D) [text=blue] [right] {\tiny$w_1$};
				\node at (B) [text=blue] [right] {\tiny$w_3=\color{red}{w_3^\flat}$};
				\node at (C) [text=blue] [right] {\tiny$w_2$};
				\node at (A6) [text=red] [below] {\tiny$w_1^\flat$};
				\node at (A5) [text=red] [right] {\tiny$w_2^\flat$};
				\node at (A1) [text=white] [below] {\tiny$w_1=w_1^\flat$};
				
			\end{tikzpicture}
			
		\end{minipage}
    \caption{Proposition \ref{propchamberBGthetaBH}.(3), type $CB_3$, $B_3$ and $D_3$}

	\end{figure}
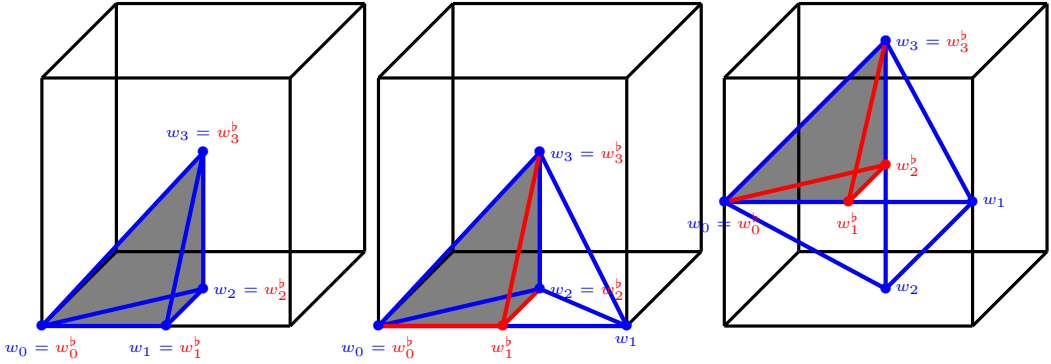
		
	\begin{remark}
			
	In Example \ref{exampleGL3O3}, the Coxeter complex $\mca^\theta$ is of type $CB_1$, while the Coxeter complex $\mca_H$ is of type $B_1$, where  $\mca_H$ denotes the apartment of $\mcb(H)$ having the underlying space $\mca^\theta$. That indeed explains why in \emph{loc. cit.} $\mfh_s$ is a wall, but $\mfh_t$ is not a wall of $\mca_H$.
			
	\end{remark}
		
		
		Now we study $\Ft(G)/H$.
		
		\begin{proposition}\label{propFthetaHchamber}
			
			Let $F$ be a $\theta$-stable facet of $\mcb(G)$ of maximal dimension, then 
			the $H$-conjugacy classes of the vertices of $F^{\theta}$ are pairwise different.
			
		\end{proposition}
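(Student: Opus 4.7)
The plan is to distinguish the $H$-orbits of the vertices $v_0^\flat, \dots, v_{r_0}^\flat$ of $F^\theta$ by invariants of their parahoric stabilizers, using first a coarse invariant (the isomorphism class of the reductive quotient in $G$) and, when needed, a finer one taken in $H$. By transitivity of the $H$-action on maximal $\theta$-stable facets of maximal dimension (via $\iota$ and Proposition \ref{propchamberBGthetaBH}(5)), we may without loss of generality assume $F = F_0$, so that $\mcv_{F^\theta} = \{v_0^\flat, \dots, v_{r_0}^\flat\}$ with $v_i^\flat$ the midpoint of $v_i$ and $v_i'$.

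First, by Lemma \ref{lemmacalculationPvivi'}, the parahoric subgroup $\parah{v_i^\flat}$ is $G$-conjugate to the standard parahoric of $\mrgl_n(K)$ of type $(2i+n_2^0,\, 2r_0-2i+n_1^0)$, whose reductive quotient over $\bs{k}$ is the Levi $\mrgl_{2i+n_2^0} \times \mrgl_{2r_0-2i+n_1^0}$. Since an $H$-conjugation induces a $G$-conjugation of parahoric subgroups, the identity $H \cdot v_i^\flat = H \cdot v_j^\flat$ forces $\{2i+n_2^0,\, 2r_0-2i+n_1^0\} = \{2j+n_2^0,\, 2r_0-2j+n_1^0\}$ as unordered pairs. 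This yields either $i = j$, or the exceptional ``mirror'' condition $i + j = r_0 + (n_1^0-n_2^0)/2$, which requires $n_1^0 \equiv n_2^0 \pmod 2$ and places $i, j$ symmetrically.

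To rule out the mirror case with $i \neq j$, we work with the reductive quotient $\ul{H}_{w_i^\flat}$ in $H$ at the point $w_i^\flat = \iota^{-1}(v_i^\flat)$. By the compatibility of Bruhat--Tits integral models with $\theta$ (\emph{cf.} \cite{prasad2020finite}*{\S 3.11}), one has $\ul{H}_{w_i^\flat} \cong (\ul{G}_{v_i^\flat}^{\theta})^{\circ}$; tracing $\theta$ through the conjugation by $\varsigma_i$ of Lemma \ref{lemmacalculationPvivi'}, this is a product $\mrso_{2i+n_2^0}(\tilde\varepsilon_1^{(i)}) \times \mrso_{2r_0-2i+n_1^0}(\tilde\varepsilon_2^{(i)})$ for explicit quadratic forms $\tilde\varepsilon_1^{(i)}, \tilde\varepsilon_2^{(i)}$ over $\bs{k}$ determined by the blocks of $\delta(\bs{n}^0,r_0)$. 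In the mirror case, an $H$-conjugation $v_i^\flat \mapsto v_j^\flat$ would necessarily swap the two Levi factors, in particular forcing $\mrso_{2i+n_2^0}(\tilde\varepsilon_1^{(i)}) \cong \mrso_{2i+n_2^0}(\tilde\varepsilon_2^{(j)})$ over $\bs{k}$. Since orthogonal groups over a finite field of odd characteristic are classified by dimension and the class of their discriminant in $\bs{k}^\times/\bs{k}^{\times 2}$, it suffices to compare discriminants.

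The main obstacle is the explicit discriminant calculation in the mirror case: one must identify which portions of $\delta(\bs{n}^0,r_0)$ (the $J_{2r_0}$-block versus the diagonal tail involving $1, \epsilon_0, \varpi_K, \epsilon_0 \varpi_K$) contribute to each factor after conjugation by $\varsigma_i$, and verify that the parities of the exponents of $\epsilon_0$ and $\varpi_K$ force $\tilde\varepsilon_1^{(i)}$ and $\tilde\varepsilon_2^{(j)}$ to have inequivalent discriminants when $i \neq j$ satisfy $i + j = r_0 + (n_1^0-n_2^0)/2$. Alternatively, one may reason via types of vertices and midpoints in the affine Coxeter complex description of $\mcb(H)$ provided by Proposition \ref{propchamberBGthetaBH}(2)--(4), combined with the image of $H/H^0$ in the diagram automorphism group $\Omega_H$; but the parahoric-invariant route seems the most direct and uniform.
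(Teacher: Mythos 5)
Your reduction is fine and coincides with the paper's: after conjugating to $F=F_0$, Lemma \ref{lemmacalculationPvivi'} plus the classification of standard parahorics of $\mrgl_n(K)$ up to $G$-conjugacy (Lemma \ref{lemmasn-sparahoric}) forces either $i=j$ or the mirror condition $2i+n_2^0=2r_0-2j+n_1^0$. The gap is in how you rule out the mirror case. The invariant you propose --- the isomorphism class (equivalently the discriminants over $\bs{k}$) of the orthogonal factors of $\ul{H}_{w_i^\flat}\cong(\ul{G}_{v_i^\flat}^\theta)^\circ$ --- simply does not separate mirror pairs in general, so the computation you defer as ``the main obstacle'' cannot be completed. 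Concretely, take $\varepsilon=J_n$ with $n=2r_0$ and $n_1^0=n_2^0=0$ (split $H$). Then $(i,j)=(0,r_0)$ is a mirror pair of hyperspecial vertices: $v_0^\flat=v_0$ is the class of the self-dual lattice $\mfo_K^n$, while the lattice at $v_{r_0}^\flat=v_{r_0}$ is $\varpi_K$-modular, and after rescaling the form by $\varpi_K$ its reduction is again hyperbolic. Hence $\ul{H}_{w_0^\flat}$ and $\ul{H}_{w_{r_0}^\flat}$ are both the split $\mrso_n$ over $\bs{k}$, with identical discriminants, and no contradiction can be extracted. Likewise, whenever $n_2^0$ is odd the two blocks are odd-dimensional, and all odd-dimensional special orthogonal groups over $\bs{k}$ of a given dimension are isomorphic (scaling the form by a non-square changes the discriminant but not $\mrso$), so your invariant is again silent. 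So the vertices of $F_0^\theta$ can have literally isomorphic $\theta$-data at the residue-field level and still fail to be $H$-conjugate; a residue-field invariant cannot prove the proposition.

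What the paper uses instead is a global (valuation-of-determinant) invariant, and this is how you should repair the argument. If $h\in H$ satisfies $\parah{v_i^\flat}^h=\parah{v_j^\flat}$ with $i\neq j$, then with $\varsigma_i,\varsigma_j$ as in Lemma \ref{lemmacalculationPvivi'} and the explicit block-swapping element
$g'=\begin{pmatrix} 0 & I_{2r_0-2i+n_1^0}\\ \varpi_K I_{2i+n_2^0} & 0\end{pmatrix}$,
the element $\varsigma_i^{-1}h\varsigma_j g'^{-1}$ normalizes the standard parahoric $P_{(2i+n_2^0,\,2r_0-2i+n_1^0)}$; in the mirror case with $i\neq j$ the two block sizes are distinct, so this normalizer is $K^\times P_{(2i+n_2^0,\,2r_0-2i+n_1^0)}$, whose elements have determinant of valuation in $n\mbz$. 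But since $\mrdet(h)=1$, the determinant of the product has valuation $n/2\notin n\mbz$ --- contradiction. This exploits exactly the datum your finite-group comparison forgets (the image of $H$ under $v_K\circ\mrdet$, i.e.\ a Kottwitz-type invariant); your closing remark about types and the image of $H$ in the diagram automorphisms of $\mcb(H)$ gestures at the same phenomenon, but as stated it is only a suggestion, not a proof.
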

		
		\begin{proof}
			
			Since the $H$-action on $\ch{}{}(\mcb(G)^{\theta})$ is transitive, we may without loss of generality assume that $F=F_{0}$.
			
			We need to prove that vertices in $\mcv_{F_{0}^{\theta}}=\{v_{0}^{\flat},v_{1}^{\flat},\dots,v_{r_0}^{\flat}\}$ have different $H$-conjugacy classes. Let $\parah{v_{0}^{\flat}},\parah{v_{1}^{\flat}},\dots,\parah{v_{r_{0}}^{\flat}}$ be the corresponding parahoric subgroups.

			We need the following elementary lemma, which is a special case of \cite{bushnell1987hereditary}*{(1.7)} in the language of hereditary orders (noting that in our settings, a parahoric subgroup is the subset of an hereditary order  consisting of invertible elements).
			
			\begin{lemma}\label{lemmasn-sparahoric}
				
	Two pairs of $(s,n-s)$ and $(s',n-s')$ correspond to the same $G$-conjugacy class of standard parahoric subgroup of $G$ if and only if $s$ equals $s'$ or $n-s'$.
				
		

		\end{lemma}

		 Assume that there exists $h\in H$ such that  $\parah{v_{i}^{\flat}}^h=\parah{v_{j}^{\flat}}$ for $i\neq j$. Using Lemma \ref{lemmacalculationPvivi'}, we have $P_{(2i+n_{2}^0,2r_0-2i+n_{1}^0)}^{\varsigma_i^{-1}h\varsigma_j}=P_{(2j+n_{2}^0,2r_0-2j+n_{1}^0)}$. Using the above lemma and since $i\neq j$, we have $2i+n_{2}^0=2r_0-2j+n_{1}^0$ and $P_{(2j+n_{2}^0,2r_0-2j+n_{1}^0)}=P_{(2r_0-2i+n_{1}^0,2i+n_{2}^0)}$. 
		Take $$g'=\begin{pmatrix}
			0 & I_{2r_0-2i+n_{1}^0}  \\ \varpi_K I_{2i+n_{2}^0} & 0
		\end{pmatrix}\in G,$$ then by direct calculation  $P_{(2i+n_{2}^0,2r_0-2i+n_{1}^0)}^{g'}=P_{(2r_0-2i+n_{1}^0,2i+n_{2}^0)}$. Thus $\varsigma_i^{-1}h\varsigma_jg'^{-1}$ normalizes $P_{(2i+n_{2}^0,2r_0-2i+n_{1}^0)}$. 
	   By direct calculation, we have $$\mrdet(\varsigma_i^{-1}h\varsigma_jg'^{-1})=\varpi_K^{-i+j+2i+n_2}=\varpi_K^{n/2}.$$ 
	   Since we have assumed that $i\neq j$, then we have $$2i+n_{2}^0\neq 2j+n_2^0=2r_0-2i+n_{1}^0.$$
	   In this case, we may also calculate the normalizer of $P_{(2i+n_{2}^0,2r_0-2i+n_{1}^0)}$ in $G$ (\emph{cf.}  \cite{bushnell1987hereditary}), which is $K^\times P_{(2i+n_{2}^0,2r_0-2i+n_{1}^0)}$. Taking the determinant, we have 
	   $$\varpi_K^{n/2}\in \mrdet(K^\times P_{(2i+n_{2}^0,2r_0-2i+n_{1}^0)})=\varpi_K^{n\mbz}\mfo_K^\times,$$
	   which is impossible. Thus there does not exist such $h$, meaning that the vertices in $\mcv_{F_{0}^{\theta}}$ have different $H$-conjugacy classes.

		\end{proof}
		
		
		\begin{corollary}\label{corFthetaGHconj}
			
			Let $F$ be a $\theta$-stable facet of $\mcb(G)$ of maximal dimension. Then the set of $H$-conjugacy classes of $\theta$-stable facets of $F$, that are pairwise different for different facets, corresponds bijectively to $\Ft(G)/H$.
			
		\end{corollary}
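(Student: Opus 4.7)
The plan is to interpret the statement as a bijection
\[\{F' \in \Ft(G) \mid F' \subset \overline{F}\}/\!\sim\; \longleftrightarrow\; \Ft(G)/H,\]
where on the left the only identification is equality (so I must show distinct sub-facets give distinct $H$-orbits, and that every $H$-orbit is hit). Fix a maximal-dimensional $\theta$-stable facet $F$, so that $F^\theta$ is a chamber of $\mcb(G)^\theta$. By Proposition \ref{propchamberBGthetaBH}.(1) the apartment $\mca^\theta$ is of type $CB_{r_0}$, so $F^\theta$ is an honest $r_0$-simplex with vertex set $\mcv_{F^\theta} = \{v_0^\flat,\dots,v_{r_0}^\flat\}$, and its sub-facets correspond to non-empty vertex subsets of $\mcv_{F^\theta}$.

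For surjectivity, take any $F'\in\Ft(G)$ and pick a $\theta$-stable apartment $\mca'$ containing $F'$. Then $F'^\theta$ is a facet of the polysimplicial complex $\mca'^\theta$, hence is contained in the closure of some chamber $F''^\theta$ of $\mca'^\theta$; by the bijection \eqref{thetainter}, $F''\in \Ftmax(G)$ and $F'\subset\overline{F''}$. Proposition \ref{propchamberBGthetaBH}.(5) gives $h\in H$ with $h\cdot F''^\theta = F^\theta$, and applying \eqref{thetainter} once more yields $h\cdot F'' = F$. Hence $h\cdot F'$ is a $\theta$-stable sub-facet of $F$.

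For injectivity, suppose $F_1,F_2$ are $\theta$-stable sub-facets of $F$ with $F_2 = h\cdot F_1$ for some $h\in H$. Then $F_1^\theta$ and $F_2^\theta$ are sub-simplices of the simplex $F^\theta$, and $h$ restricts to a bijection $\mcv_{F_1^\theta}\to\mcv_{F_2^\theta}$ of subsets of $\mcv_{F^\theta}$. Each vertex of $F_1^\theta$ is therefore $H$-conjugate to a vertex of $F^\theta$, but by Proposition \ref{propFthetaHchamber} the vertices of $F^\theta$ lie in pairwise distinct $H$-conjugacy classes; thus $h$ must send every vertex of $F_1^\theta$ to itself (as a point, not just as an $H$-orbit). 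This forces $\mcv_{F_1^\theta} = \mcv_{F_2^\theta}$, hence $F_1^\theta = F_2^\theta$, and finally $F_1 = F_2$ by \eqref{thetainter}.

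Nothing here looks hard once Propositions \ref{propchamberBGthetaBH} and \ref{propFthetaHchamber} are in place; the main point to be careful about is that I am genuinely using the simplex structure of $F^\theta$ (so that specifying a sub-simplex is the same as specifying its vertex subset), which is precisely what type $CB_{r_0}$ provides. The injectivity step is where Proposition \ref{propFthetaHchamber} is essential: without the pairwise non-conjugacy of the $v_i^\flat$, an element of $H$ could permute the vertices of $F^\theta$ nontrivially and produce two distinct sub-facets in the same $H$-orbit.
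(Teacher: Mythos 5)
Your injectivity argument, using Proposition \ref{propFthetaHchamber} to force $h$ to fix every relevant vertex of $F^\theta$, is correct and fills in a step the paper leaves implicit. The surjectivity step, however, has a genuine gap. You pick an \emph{arbitrary} $\theta$-stable apartment $\mca'$ containing $F'$ and produce $F''$ with $F''^\theta$ a chamber of $\mca'^\theta$. That makes $F''\in\Ftmax(G)$, but it does \emph{not} make $F''$ of maximal dimension: $\dim F''^\theta$ equals the $\theta$-rank of $\mca'$, which by Proposition \ref{propclassthetaap} can be anything from $0$ up to $r_0$. When this is less than $r_0$, $F''^\theta$ is a chamber of $\mca'^\theta$ but only a lower-dimensional facet of $\mcb(G)^\theta$; in particular it is \emph{not} an element of $\ch{}{}(\mcb(G)^\theta)$, so Proposition \ref{propchamberBGthetaBH}.(5) does not supply the $h$ with $h\cdot F''^\theta=F^\theta$ that you invoke. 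Concretely, if $F'^\theta$ is a vertex of $\mcb(G)^\theta$ and you happen to pick $\mca'$ of $\theta$-rank $0$, then $F''^\theta=F'^\theta$ is a point while $F^\theta$ has dimension $r_0\geq 1$, and these are certainly not $H$-conjugate.

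To repair this you must ensure that $F''^\theta$ is a chamber of $\mcb(G)^\theta$, not merely of $\mca'^\theta$. The cleanest route is to use that $\mcb(G)^\theta$ is a refinement of the building $\mcb(H)$ (cf.\ the discussion following Assumption \ref{assumpZHZG}) and hence a pure polysimplicial complex of dimension $r_0$, so $F'^\theta$ lies in the closure of some chamber $F''^\theta$ of $\mcb(G)^\theta$; equivalently, choose $\mca'$ to have maximal $\theta$-rank $r_0$ from the start. Then $F''$ is of maximal dimension, Proposition \ref{propchamberBGthetaBH}.(5) applies, and the rest of your argument goes through unchanged. This is precisely the reduction the paper performs, tersely, when it passes to ``$F'\in\Ft(G)$ of maximal dimension.''
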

		
		\begin{proof}
			
			It is clear that each $H$-conjugacy class of a $\theta$-stable facet of $F$ contributes to $\Ft(G)/H$. 
			What remains to show is that given a $F'\in \Ft(G)$ of maximal dimension, it is $H$-conjugate to $F$. This is because both $F^{\theta}$ and $F'^{\theta}$ are chambers of $\mcb(G)^{\theta}$, thus $F^{\theta}$ and $F'^{\theta}$ are conjugate by $H$, and so are $F$ and $F'$.
			
		\end{proof}
		
		
		
		
		
		Now we study $\Ftmax(G)/H$. 
		
		\begin{proposition}\label{propA'chamberHconj}
			
			For a $\theta$-stable apartment $\mca'$, the chambers of $\mca'^{\theta}$ are $H$-conjugate. 
			
		\end{proposition}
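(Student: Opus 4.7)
The plan is to reduce via Proposition~\ref{propclassthetaap} to a specific representative of the $H$-orbit of $\mca'$, identify the affine Coxeter structure on $\mca'^\theta$, and then realize each reflection by an explicit element of $N_H(\mca')$. Simple transitivity of the affine Weyl group on chambers of its Coxeter complex will then yield the claim.

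By Lemma~\ref{lemmaH'-Hconj} and Proposition~\ref{propclassthetaap}, after an $H$-conjugation I may assume $\mca' = \mca^g$ where $g \in G$ satisfies $g^{-1}\varepsilon g = \delta(\bs{n},r)$ for some tuple $(\bs{n},r)$ as in \eqref{eqn'rcondition}. Then $(S^g)^+$ is a $K$-split torus of $H$ of rank $r$, and by the description \eqref{eqdiagH'} the affine space $(\mca^g)^\theta$ is naturally modeled on $X_*((S^g)^+) \otimes_{\mbz} \mbr$ modulo the image of $X_*(A(G)) \otimes_{\mbz} \mbr$. When $r = 0$ there is only one chamber, so I may assume $r \geq 1$.

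Next, by intersecting the walls of $\mca^g$ (of the form $\{x_i = x_j + \mathrm{const}\}$ in the diagonal coordinates) with $(\mca^g)^\theta$, I would endow $(\mca^g)^\theta$ with the structure of a Coxeter complex. A direct analysis parallel to the one leading to Proposition~\ref{propchamberBGthetaBH}.(1) shows that this is an affine Coxeter complex of type $\widetilde{BC}_r$ generically, degenerating to $\widetilde{B}_r$, $\widetilde{C}_r$, or $\widetilde{D}_r$ in the boundary cases such as $n_1^0 = 0$ or $n_2^0 = 0$. For each simple reflection of the associated affine Weyl group I would then exhibit a representative in $N_H(S^g)$: the vectorial reflections come from sign-change and permutation matrices in $\mfS_n \ltimes S$ preserving $\delta(\bs{n},r)$ (then conjugated by $g$), and the affine generator comes from a suitable cocharacter of $(S^g)^+$ evaluated at $\varpi_K$, all of which lie in $H$ by construction.

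Simple transitivity of the affine Weyl group on the chambers of its Coxeter complex then implies that any two chambers of $(\mca^g)^\theta$ are conjugate by an element of $N_H(\mca^g) \subset H$. The main obstacle will be the last step: the explicit construction of reflecting elements in $H$ for each wall, particularly in the boundary cases where the Coxeter type degenerates and the discriminant-Hasse-invariant constraints of Lemma~\ref{lemmadischasse} become restrictive. However, this parallels the explicit construction of $F_0^g$ carried out in the previous subsection and requires only careful bookkeeping rather than any new ideas.
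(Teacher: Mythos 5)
Your route is genuinely different from the paper's, and it has a gap that is fatal in precisely one of the boundary cases you flag. A preliminary point first: intersecting the walls of $\mca^g$ with $(\mca^g)^\theta$ always produces the full arrangement $x_i\pm x_j\in\mbz$, $x_i\in\tfrac12\mbz$ (the paper's type $CB_r$), whatever $n_1,n_2$ are, because the walls $x_i-x_{2r+1-i}=m$ always meet $(\mca^g)^\theta$ in $x_i=m/2$; it is the apartment of $\mcb(H)$, not $(\mca^g)^\theta$, whose type degenerates to $B_r$ or $D_r$ (Proposition \ref{propchamberBGthetaBH}), and the chambers in the statement are the alcoves of the finer $CB_r$ complex. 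Now the gap: your plan needs, for every wall of this $CB_r$ complex, a reflection realized by an element of $H$ normalizing $S^g$ (or at least stabilizing $(\mca^g)^\theta$). For the walls $x_i=m/2$ the linear part is a single sign change. When $n_1+n_2\geq 1$ this is fine: the transposition $(i,\,2r+1-i)$ preserves $\delta(\bs{n},r)$ and its determinant can be corrected by a diagonal $-1$ placed in a coordinate beyond the first $2r$, which acts trivially on the apartment. But when $n_1=n_2=0$, i.e. $n=2r$ and $\varepsilon\sim J_{2r}$ with $H$ the split even special orthogonal group, no such correction exists: any monomial matrix inducing an odd sign change has determinant $-1$, and more robustly the image of $\mrstab_{H}((\mca^g)^\theta)=N_H((S^g)^+)$ in the affine transformations of $(\mca^g)^\theta$ is $X_*((S^g)^+)\rtimes W(D_r)$, which has index $2$ in the affine reflection group of the $CB_r$ arrangement and hence has two orbits on its alcoves. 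So simple transitivity is not available to you there; the missing reflections lie only in $\mro_n$, so your argument yields at best $H'$-conjugacy in that case, not the asserted $H$-conjugacy, and no bookkeeping repairs this because the required elements do not exist in $H$. This is exactly the case where the distinction between $H$ and $H'$ is most delicate.

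The paper's proof avoids constructing reflections altogether. It fixes one $\theta$-stable facet $F$ of maximal dimension, classifies $\Ft(G)/H$ by the $\theta$-stable facets of $F$ (Corollary \ref{corFthetaGHconj}, which rests on the transitivity of $H$ on $\ch{}{}(\mcb(G)^{\theta})$ from Proposition \ref{propchamberBGthetaBH}.(5)), and then pins down the class of an arbitrary chamber $F'^{\theta}$ of $\mca'^{\theta}$ through the $G$-conjugacy types of the parahoric subgroups attached to its vertices (Lemma \ref{lemmasn-sparahoric}, Proposition \ref{propFthetaHchamber}); since these types are the same, namely $(2i+n_2,\,2r-2i+n_1)$ for $i=0,\dots,r$, for every chamber of $\mca'^{\theta}$, all chambers are $H$-conjugate to the same facet of $F$. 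Note also a structural weakness of your method even where it works: an element of $H$ carrying one chamber of $\mca'^{\theta}$ to another need not stabilize $\mca'^{\theta}$, so restricting to $N_H(\mca')$ proves a stronger statement than required, and that restriction is precisely what makes the split even case unreachable. To salvage your approach you would have to handle that case by a different mechanism, for instance the paper's vertex-type argument, or by first establishing the $H'$-statement and then analyzing the index-two discrepancy between $H$ and $H'$ directly.
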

		
		\begin{proof}
			
			Let $F'$ be a maximal $\theta$-stable facets of $\mca'$, and $r=\mrdim\mca'^{\theta}$, and $$\mcv_{F'^{\theta}}=\{v_{0}'^{\flat},v_{1}'^{\flat},\dots,v_{r}'^{\flat}\}$$ the set of vertices of $F'^{\theta}$. Indeed, $F'^{\theta}$ is a chamber of $\mca'^{\theta}$. 
			
			Let $(n_{11},n_{12},n_{21},n_{12},r)$ be the quintuple of integers corresponding to $\mca'$ via Proposition \ref{propclassthetaap}. From our construction, the parahoric subgroup $P_{v_{i}'^{\flat}}$ is conjugate to the standard parahoric subgroup of $G$ with respect to the composition $(2i+n_{2},2r-2i+n_{1})$ for $i=0,1,\dots,r$. Using Lemma \ref{lemmasn-sparahoric} and Corollary \ref{corFthetaGHconj}, $F'^{\theta}$ is $H$-conjugate to a facet of $\mcb(G)^{\theta}$ of set of vertices $$\mcv_{n_{2},r}:=\{v_{n_{2}}^{\flat},v_{n_{2}+1}^{\flat},\dots,v_{n_{2}+r}^{\flat}\}.$$
			It remains to show that $\mcv_{n_{2},r}$ forms a simplex of an affine Coxeter complex of type $CB_{r}$. This follows from Proposition \ref{propchamberBGthetaBH}.(4) and repetition of the following elementary lemma:
			
			\begin{lemma}
				
				Let $\{v_{0}^{\flat},v_{1}^{\flat},\dots,v_{k}^{\flat}\}$
				be a representative of an affine Dynkin diagram of type $CB_{k}$ of the right numbering. Then both 
				$\{v_{0}^{\flat},v_{1}^{\flat},\dots,v_{k-1}^{\flat}\}$ and $\{v_{1}^{\flat},v_{2}^{\flat},\dots,v_{k}^{\flat}\}$ are representatives of an affine Dynkin diagram of type $CB_{k-1}$.
				
			\end{lemma}

		\end{proof}
		
		Thus we have a surjection
		\begin{equation}\label{eqFthetamax}
			\{\theta\text{-stable apartments}\ \mca'\}/H\twoheadrightarrow\Ftmax(G)/H,\quad [\mca']_{H}\mapsto [F]_{H},
		\end{equation}
		where $F$ is a $\theta$-stable facet such that $F^{\theta}$ is a chamber of $\mca'^{\theta}$. Composing the bijection between $\{\theta\text{-stable apartments}\ \mca'\}/H$ and non-negative integers $(n_{11},n_{12},n_{21},n_{22},r)$ in Proposition \ref{propclassthetaap} with \eqref{eqFthetamax}, we get a surjection
		\begin{equation}\label{eqn'rFthetamax}
			\bigg\{
			(n_{11},n_{12},n_{21},n_{22},r)\ \bigg|\begin{aligned}&\  n=n_{11}+n_{12}+n_{21}+n_{22}+2r,\ \delta(\bs{n},r)\ \text{and}\ \varepsilon\\ & \ \text{have the same discriminant and Hasse invariant.}
			\end{aligned}	\bigg\}\twoheadrightarrow\Ftmax(G)/H.
		\end{equation}

		\begin{proposition}\label{propFmaxortho}
			
			Each fiber of the map \eqref{eqn'rFthetamax} consists exactly of $(n_{11},n_{12},n_{21},n_{12},r)$ with fixed $(n_{1},n_{2},r)$, where $(n_{1},n_{2},r)$ satisfies Lemma \ref{lemmadischasse}.(2). In other words, we have a bijection
			$$\{(n_{1},n_{2},r)\ \text{satisfying Lemma \ref{lemmadischasse}.(2)}\}\leftrightarrow\{[\mca]_{H,\sim}\}.$$ 
			As a result, we obtain $$\car{\{[\mca]_{H,\sim}\}}=\car{\Ftmax(G)/H}=\begin{cases} (k+1)(k+2)/2 &\text{if}\ n=2k\ \text{and}\ H\ \text{is split};\\k(k+1)/2	& \text{if}\ n=2k\ \text{and}\ H\ \text{is quasi-split but not split};\\(k-1)k/2  & \text{if}\ n=2k\ \text{and}\ H\ \text{is not quasi-split};\\(k+1)(k+2)/2 & \text{if}\ n=2k+1\ \text{and}\ H\ \text{is split};\\k(k+1)/2 & \text{if}\ n=2k+1\ \text{and}\ H\ \text{is not quasi-split}.\end{cases}$$
		\end{proposition}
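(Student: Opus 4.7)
The plan is to use the identification $\Ftmax(G)/H \leftrightarrow \{[\mca]_{H,\sim}\}$ provided by \eqref{eqFthetamaxHbij}, together with the surjection \eqref{eqn'rFthetamax}, and prove that this composite factors through the projection $(n_{11},n_{12},n_{21},n_{22},r)\mapsto(n_1,n_2,r)$ as a bijection. For the factoring, given a quintuple, pick $g$ satisfying \eqref{eqorthothetaapcond} and consider $F_0^g\in\Ftmax(G)$ whose $\theta$-fixed chamber $(F_0^g)^\theta$ has vertices $(v_j^\flat)^g$, $j=0,\ldots,r$. By Lemma \ref{lemmacalculationPvivi'}, each parahoric $\parah{v_j^\flat}$ is $G$-conjugate to the standard parahoric associated to the composition $(2j+n_2,\,2r-2j+n_1)$, an invariant that only sees $(n_1,n_2,r,j)$. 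Combining this with the proof of Proposition \ref{propA'chamberHconj}, the chamber $(F_0^g)^\theta$ is $H$-conjugate to the explicit subchamber $\mcv_{n_2,r}=\{v_{n_2}^\flat,\ldots,v_{n_2+r}^\flat\}$ sitting inside the reference chamber $F_0^\theta\subset\mca^\theta$, so the class $[F_0^g]_H\in\Ftmax(G)/H$ depends only on $(n_1,n_2,r)$.

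For injectivity — that distinct $(n_1,n_2,r)$ produce distinct $H$-classes — I would argue as follows. If $\mcv_{n_2,r}$ and $\mcv_{n_2',r'}$ were $H$-conjugate, any witnessing $h\in H$ would induce a bijection of vertex sets preserving the $H$-conjugacy class of each vertex. By Proposition \ref{propFthetaHchamber}, the $r_0+1$ vertices of $F_0^\theta$ lie in pairwise distinct $H$-conjugacy classes, so the index multisets $\{n_2,n_2+1,\ldots,n_2+r\}$ and $\{n_2',n_2'+1,\ldots,n_2'+r'\}$ must be equal, forcing $r=r'$, $n_2=n_2'$, and hence $n_1=n-n_2-2r=n_1'$. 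This is the key obstacle: the multiset of $G$-conjugacy classes of the parahorics (equivalently, the unordered pairs $\{2j+n_2,\,2r-2j+n_1\}$ from Lemma \ref{lemmasn-sparahoric}) is symmetric under $(n_1,n_2)\leftrightarrow(n_2,n_1)$, so one genuinely needs the finer $H$-class information of Proposition \ref{propFthetaHchamber} to recover the ordered triple.

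Finally, for the counting, one sums the number of admissible $(n_1,n_2,r)$ in each case of Lemma \ref{lemmadischasse}.(2), noting that once $\varepsilon$ is fixed, the parity of $n_2$ is pinned by $v_K(\det\varepsilon)$. For $n=2k$, $H$ split, $n_2\in\{0,2,\ldots,2(k-r)\}$ yields $k-r+1$ choices per $r\in\{0,\ldots,k\}$, summing to $\sum_{r=0}^k(k-r+1)=(k+1)(k+2)/2$. For $n=2k$ quasi-split but not split, the three subcases in Lemma \ref{lemmadischasse}.(2) each give $k-r$ choices per $r\in\{0,\ldots,k-1\}$, but only one of them occurs for each fixed $\varepsilon$, producing $\sum_{r=0}^{k-1}(k-r)=k(k+1)/2$. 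For $n=2k$ non-quasi-split, $n_2\in\{2,4,\ldots,2(k-1-r)\}$ for $r\in\{0,\ldots,k-2\}$ gives $\sum_{r=0}^{k-2}(k-1-r)=(k-1)k/2$. For $n=2k+1$ split the fixed-parity $n_2$ has $k-r+1$ values for each $r\in\{0,\ldots,k\}$, giving $(k+1)(k+2)/2$; and for $n=2k+1$ non-quasi-split, $k-r$ values for each $r\in\{0,\ldots,k-1\}$, giving $k(k+1)/2$.
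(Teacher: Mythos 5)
Your proposal is correct and follows essentially the same route as the paper: identify the class of $(F_0^g)^\theta$ with the explicit subfacet $\mcv_{n_2,r}$ of the reference chamber $F_0^\theta$ (using Lemma \ref{lemmacalculationPvivi'} and the argument of Proposition \ref{propA'chamberHconj}), then use the pairwise distinctness of vertex $H$-classes (Proposition \ref{propFthetaHchamber}, packaged in the paper as Corollary \ref{corFthetaGHconj}) to get injectivity, then count. The paper's proof is a two-line invocation of Proposition \ref{propA'chamberHconj} and Corollary \ref{corFthetaGHconj}; you unpack Corollary \ref{corFthetaGHconj} directly from Proposition \ref{propFthetaHchamber} and supply the counting details that the paper leaves implicit, and your remark that the $G$-conjugacy data alone (the unordered pairs $\{2j+n_2,\,2r-2j+n_1\}$) cannot separate $(n_1,n_2)$ from $(n_2,n_1)$ — so the $\det$-valuation input in Proposition \ref{propFthetaHchamber} is genuinely needed — is a worthwhile clarification, though it amounts to a gloss on the same argument rather than a different one.
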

		
		\begin{proof}
			
			As in the argument of Proposition \ref{propA'chamberHconj}, the $H$-conjugacy class of a chamber of $\mca'^{\theta}$ is represented by the set of vertices 
			$$\mcv_{n_{2},r}:=\{v_{n_{2}}^{\flat},v_{n_{2}+1}^{\flat},\dots,v_{n_{2}+r}^{\flat}\}.$$
			Using Corollary \ref{corFthetaGHconj}, the proof is finished.

		\end{proof}
		
		\subsection{Effectiveness}\label{subsectionpanels}

		\begin{proposition}\label{proporthgeff}
			
			Let $D$ be a panel of $\mcb(G)$. Assume that $D$ is not a skew panel and $\ch{D}{}(G)$ consists of $H$-orbits of $\theta$-rank $r$ and possibly $H$-orbits of $\theta$-rank $r+1$. Then there are exactly two $H$-orbits of $\ch{D}{}(G)$ of $\theta$-rank $r$. 
			
		\end{proposition}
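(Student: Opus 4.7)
The proof reduces the count of $H$-orbits on $\ch{D}{}(G)$ to a count of $\ul{H}_D$-orbits of Borels in the rank-one reductive quotient $\ul{G}_D$, and then applies the classification Proposition~\ref{proprank1classSL2}. Since $D$ is non-skew, one may pick a $\theta$-stable apartment $\mca\supset D$ with $\mfh_D$ being $\theta$-stable, so $\ul{G}_D$ has semi-simple rank one. Because $G=\mrgl_n(K)$ is $K$-split with reduced root system of type $A_{n-1}$, the simply-connected cover $\ul{G}_D^{\mathrm{sc}}$ is just $\mrsl_2$ over $\bs{k}$ (no Weil restriction, no $\mrsu_3$). In particular, only Proposition~\ref{proprank1classSL2} is relevant, and the Galois-type sub-cases (I).(ii) and (II).(ii), which require a quadratic residue extension, are excluded from the outset.

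Next, I would pinpoint the case. Because chambers of $\theta$-rank $r+1$ may exist, $\ul{S}_D$ must be $\theta_D$-split, placing us in Case~(II). Case (I).(i).(1), where $\theta_D$ is trivial on $\mrsl_2$, is also excluded: an orthogonal involution $g\mapsto \varepsilon^{-1}\,^tg^{-1}\varepsilon$ on $\mrgl_n$ never fixes a root subgroup $\ul{U}_\alpha$ pointwise, since it sends $\ul{U}_\alpha$ to $\ul{U}_{-\sigma(\alpha)}$ (where $\sigma$ is the permutation underlying $\varepsilon$) up to a non-trivial torus twist. We are therefore in Case~(II).(i), in one of the two sub-cases (II).(i).(1) (upper pair, with two $\ul{H}_D$-orbits of rank $0$ and two of rank $1$) or (II).(i).(2) (even pair, with two $\ul{H}_D$-orbits of rank $0$). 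In either sub-case, exactly two $\ul{H}_D$-orbits correspond to chambers of $\theta$-rank $r$ in $\ch{D}{}(G)$ under the bijection $\ch{D}{}(G)\leftrightarrow\ch{}{}(\ul{G}_D)$.

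Finally, I would lift the count from $\ul{H}_D$-orbits to $H$-orbits. The upper bound of two $H$-orbits of $\theta$-rank $r$ in $\ch{D}{}(G)$ follows immediately from Proposition~\ref{proppanelpair}.(3) applied with $\Hstar\subset H$. What remains is to rule out coalescence, i.e.\ to show that the two candidate orbits do not merge into one under the $H$-action. The two $\ul{H}_D$-orbits are distinguished in $\mrsl_2$ by the square class of the element $-x\in\bs{l}_D^\times/\bs{l}_D^{\times 2}$ appearing in the defining equation of $\theta_D$, and this invariant is preserved by the $\mrstab_H(D)$-action (which surjects onto $\ul{H}_D$ via the reduction map). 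The main obstacle is handling elements $h\in H$ that do not stabilize $D$: such an $h$ would send $D$ to another panel $D'$ of $h\cdot C$, and one must verify that no such $h$ can interchange the two orbit classes. I would settle this by combining the explicit classification of $H$-conjugacy classes of $\theta$-stable apartments from Proposition~\ref{propclassthetaap} with the integral-model machinery of Section~\ref{sectioninvolution}: the parameters $(n_{11},n_{12},n_{21},n_{22},r)$ attached to the $\theta$-stable apartment containing $C$ encode exactly the relevant discriminant/Hasse-invariant data, and an $H$-action that permuted the two rank-$r$ orbits would contradict the invariance of these parameters under $H$-conjugation.
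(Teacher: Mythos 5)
Your local analysis is essentially in the right spirit, though note one imprecision: $D$ itself need not be $\theta$-stable (only the wall $\mfh_D$ is), so the involution does not act on $\ul{G}_D$ directly; the paper works with $\ul{G}_{D\cup\theta(D)}$ and observes that it contains a $\theta$-stable copy of $\mrgl_2(\bs{k})$ on which $\theta$ restricts to an orthogonal involution, so that the two rank-$r$ classes under $\parah{D}\cap H$ come out of the rank-one classification exactly as you describe. Your exclusion of the trivial case and of the Galois-type cases is fine in spirit for split $\mrgl_n$.

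The genuine gap is in your last step, the passage from $\parah{D}\cap H$-orbits (equivalently $\ul{H}_{D\cup\theta(D)}$-orbits) to $H$-orbits. Your proposed mechanism --- invariance under $H$-conjugation of the parameters $(n_{11},n_{12},n_{21},n_{22},r)$ of the $\theta$-stable apartment, i.e.\ of discriminant/Hasse-type data --- cannot rule out coalescence: the two candidate rank-$r$ classes adjacent to $D$ lie in $\theta$-stable apartments of the \emph{same} $H$-conjugacy class (in the even case they are the two chambers of one and the same apartment $\mca$ adjacent to $D$), so these invariants take identical values on both classes and distinguish nothing. What is actually needed, and what the paper proves, is that two chambers of $\ch{D}{}(G)$ which are $H$-conjugate are already $\parah{D}\cap H$-conjugate. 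The argument is short but specific to this pair $(G,H)$: if $C'=C^{h}$ with $h\in H$, pick $x\in\parah{D}$ with $C'=C^{x}$ (possible since $\parah{D}$ acts transitively on $\ch{D}{}(G)$); then $xh^{-1}$ normalizes the Iwahori subgroup $\parah{C}$, and since $h$ is orthogonal and $x$ parahoric we have $\mrdet(xh^{-1})\in\mfo_K^{\times}$, which inside the normalizer of an Iwahori of $\mrgl_n(K)$ forces $xh^{-1}\in\parah{C}\subset\parah{D}$, hence $h\in\parah{D}\cap H$. Without this determinant/normalizer-of-Iwahori step (or some substitute controlling elements of $H$ that move $D$ to another panel of $h\cdot C$), your proof does not close; the square-class invariant you invoke is only visible to the stabilizer of $D$ and you have not shown that a general $h\in H$ preserves it.
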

		
		\begin{proof}
			
			Consider the $\theta$-stable set $D\cup\theta(D)$. Let $\mca$ be a $\theta$-stable apartment containing $D$. 
			Then $\ul{G}_{D\cup\theta(D)}$ is a $\theta$-stable reductive group. Indeed, in our special case there is a $\theta$-stable reductive subgroup $\ul{G}_{D\cup\theta(D)}^{\flat}$ of $\ul{G}_{D\cup\theta(D)}$ that is isomorphic to $\mrgl_{2}(\bs{k})$, such that $\ul{G}_{D\cup\theta(D)}=Z(\ul{G}_{D\cup\theta(D)})\ul{G}_{D\cup\theta(D)}^{\flat}$
			Also, the restriction of $\theta$ to $\ul{G}_{D\cup\theta(D)}^{\flat}$ is an orthogonal involution. We write  $\ul{H}_{D\cup\theta(D)}=(\ul{G}_{D\cup\theta(D)}^{\theta})^{\circ}$ and $\ul{H}_{D\cup\theta(D)}^{\flat}=((\ul{G}_{D\cup\theta(D)}^{\flat})^{\theta})^{\circ}$, where the latter one is a special orthogonal group. In particular we have $\ul{H}_{D\cup\theta(D)}\subset Z(\ul{G}_{D\cup\theta(D)})\ul{H}_{D\cup\theta(D)}^{\flat}$.
			
			Since there are two $\ul{H}_{D\cup\theta(D)}^{\flat}$-orbits of $\ch{}{}(\ul{G}_{D\cup\theta(D)}^{\flat})=\ch{}{}(\ul{G}_{D\cup\theta(D)})$ of $\theta$-rank 0, there are two $\ul{H}_{D\cup\theta(D)}$-orbits of $\ch{}{}(\ul{G}_{D\cup\theta(D)})$ of $\theta$-rank $0$. Also the set of $\parah{D}\cap H$-orbits of $\ch{D}{}(G)$ is in bijection with the set of $\ul{H}_{D\cup\theta(D)}$-orbits of $\ch{}{}(\ul{G}_{D\cup\theta(D)})$. Thus there are two $\parah{D}\cap H$-orbits of $\ch{D}{}(G)$ of $\theta$-rank $r$. 
			
			Finally, we only need to show that two chambers $C,C'$ in $\ch{D}{}(G)$ are $H$-conjugate if and only if they are $\parah{D}\cap H$-conjugate. Assume $C'=C^{h}$ for some $h\in H$ and choose $x\in \parah{D}$ such that $C'=C^{x}$. Then $xh^{-1}$ normalizes $C$, thus it lies in the normalizer of the Iwahori subgroup $\parah{C}$ in $G$. However, since $H$ is an orthogonal subgroup, we have $\mrdet(xh^{-1})\in\mfo_{K}^{\times}$. So we must have $xh^{-1}\in \parah{C}\subset \parah{D}$, meaning that $h\in \parah{D}\cap H$.
			
		\end{proof}
		
		The following lemma follows from the same proof of the last part of Proposition \ref{proporthgeff} with $D$ replaced by $F$.
		
		\begin{lemma}\label{lemmaHPFcapHcoin}
			
			Let $F$ be a $\theta$-stable facet of $\mcb(G)$. Then the $H$-orbits and the $\parah{F}\cap H$-orbits of $\ch{F}{}(G)$ coincide.	
			
		\end{lemma}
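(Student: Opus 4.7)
The plan is to mimic the argument used at the end of Proposition \ref{proporthgeff}, replacing the panel $D$ with the $\theta$-stable facet $F$. The direction $\parah{F}\cap H$-conjugacy $\Rightarrow$ $H$-conjugacy is trivial, so I would focus on the converse.

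Suppose $C,C'\in\ch{F}{}(G)$ satisfy $C'=h\cdot C$ for some $h\in H$. First I would use the fact that $\parah{F}$ acts transitively on $\ch{F}{}(G)$ (this follows from the bijection $\ch{F}{}(G)\leftrightarrow\ch{}{}(\ul{G}_F)$ combined with the transitivity of $\ul{G}_F(\bs{k})$ on its Borel subgroups, and the surjection $\parah{F}\twoheadrightarrow\ul{G}_F(\bs{k})$) to produce some $x\in\parah{F}$ with $C'=x\cdot C$. Then $g:=x^{-1}h$ stabilizes $C$ pointwise as a simplex, so $g$ normalizes the Iwahori subgroup $\parah{C}$.

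The key input is now the classical computation (in the spirit of Bushnell--Fr\"ohlich for $\mrgl_n$) that the normalizer of $\parah{C}$ in $G=\mrgl_n(K)$ is $K^\times\parah{C}$. Since $H$ is (special) orthogonal we have $\mrdet(h)\in\{\pm 1\}\subset\mfo_K^\times$, and since $x\in\parah{F}\subset\mcg_F(\mfo_K)$ we have $\mrdet(x)\in\mfo_K^\times$, hence $\mrdet(g)\in\mfo_K^\times$. Writing $g=zp$ with $z\in K^\times$ and $p\in\parah{C}$, and taking the $v_K$ of determinants, one sees $nv_K(z)=0$, so $z\in\mfo_K^\times$ and consequently $g\in\parah{C}\subset\parah{F}$. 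Therefore $h=xg\in\parah{F}$, and combined with $h\in H$ we obtain $h\in\parah{F}\cap H$, as desired.

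The only step that requires slight care (compared to the panel case) is the transitivity of $\parah{F}$ on $\ch{F}{}(G)$; this is not an obstacle but just needs the standard identification of the local combinatorics with the vectorial building of $\ul{G}_F$. Nothing else in the argument used that $D$ was a panel rather than a general $\theta$-stable facet, so the passage $D\leadsto F$ is formal. There is no genuine obstacle here; the lemma is essentially a corollary of the determinantal argument already carried out.
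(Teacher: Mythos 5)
Your approach is exactly the paper's: the paper proves this lemma by repeating the last paragraph of the proof of Proposition \ref{proporthgeff} with $D$ replaced by $F$, and your steps (transitivity of $\parah{F}$ on $\ch{F}{}(G)$ via the identification $\ch{F}{}(G)\leftrightarrow\ch{}{}(\ul{G}_F)$, then a determinant-valuation argument for the element normalizing the Iwahori) are the same. One correction, though: the normalizer of an Iwahori subgroup $\parah{C}$ in $\mrgl_n(K)$ is not $K^\times\parah{C}$ but $\Pi_K^{\mbz}\parah{C}$, where $\Pi_K$ is the rotation element of Example \ref{exampleepsilonGL} (only the maximal parahoric $\mrgl_n(\mfo_K)$ has normalizer $K^\times\mrgl_n(\mfo_K)$), so your decomposition $g=zp$ with $z\in K^\times$, $p\in\parah{C}$ is not available. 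The conclusion nevertheless survives, because $v_K(\det\Pi_K)=1$: writing $g=\Pi_K^k p$ with $p\in\parah{C}$ gives $v_K(\det g)=k$, and $\det g\in\mfo_K^\times$ forces $k=0$, i.e. $g\in\parah{C}\subset\parah{F}$, which is all you need. A smaller slip: $g=x^{-1}h$ only stabilizes $C$ as a chamber, not pointwise (e.g. $\Pi_K$ permutes the vertices of its chamber cyclically); but stabilizing the chamber already implies normalizing $\parah{C}$, so the argument is unaffected.
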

		
		As a corollary of Proposition \ref{propweakeffective}, Corollary \ref{coreffectiveness}, Proposition \ref{proporthgeff} and Lemma \ref{lemmaHPFcapHcoin}, we have
		
		\begin{corollary}\label{cormax=eff}
			
			In this case, every maximal $\theta$-stable facet is ($1_H$-)effective. 
			
		\end{corollary}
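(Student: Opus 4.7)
The plan is to chain the four preceding results---Lemma \ref{lemmaHPFcapHcoin}, Proposition \ref{propweakeffective}, Corollary \ref{coreffectiveness}, and Proposition \ref{proporthgeff}---in a short sequence, as indicated just before the statement. No new constructions are required. First, by Lemma \ref{lemmaHPFcapHcoin}, the $H$-orbits and the $\parah{F}\cap H$-orbits of $\ch{F}{}(G)$ agree, in particular on the subset $\ch{F}{0}(G)$; so by Proposition \ref{propweakeffective}, showing that a maximal $\theta$-stable facet $F$ is effective reduces to showing that it is weakly effective.

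Second, I would verify weak effectiveness by invoking Corollary \ref{coreffectiveness}. Its first hypothesis demands that for any panel $D$ admitting $F$ as its maximal $\theta$-stable facet, $\ch{D}{0}(G)$ has exactly two $\parah{F}\cap H$-orbits. Such a $D$ is non-skew, and $\ch{D}{0}(G)$ coincides with the subset of $\ch{D}{}(G)$ of $\theta$-rank equal to $\mrdim F^{\theta}$. Proposition \ref{proporthgeff} provides exactly two $H$-orbits and (from its proof) two $\parah{D}\cap H$-orbits among these; the inclusion chain $\parah{D}\cap H\subseteq\parah{F}\cap H\subseteq H$ then sandwiches the $\parah{F}\cap H$-orbit count to be two as well.

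Finally, the second hypothesis of Corollary \ref{coreffectiveness}---the bijection $\ch{-}{\theta'}(\ul{G}_F^{\mrsc})/(\ul{G}_F^{\mrsc})^{\theta'}\to\ch{-}{\theta}(\ul{G}_F)/\ul{H}_F$ induced by the simply connected cover---is where the main obstacle lies, since $H=\mrso_n(\varepsilon)$ is not simply connected in general. In our setting, however, Lemma \ref{lemmacalculationPvivi'} identifies $\ul{G}_F$ (after $\varsigma_i$-conjugation) with a standard Levi of $\mrgl_n(\bs{k})$, i.e.\ a product of $\mrgl$-blocks, on which the induced involution $\theta$ decomposes as a product of standard orthogonal (or trivial) involutions on each block. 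On each $\mrgl$-block the center acts trivially on the set of Borel subgroups, so the $\theta$-split Borel orbits are unaffected when passing from $\mrgl$ to $\mrsl$; assembling these factor-wise verifications yields the required bijection and completes the verification of Corollary \ref{coreffectiveness}.
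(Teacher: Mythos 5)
Your proposal follows exactly the chain the paper intends: the paper's entire proof of this corollary is the citation of Proposition \ref{propweakeffective}, Corollary \ref{coreffectiveness}, Proposition \ref{proporthgeff} and Lemma \ref{lemmaHPFcapHcoin}, assembled precisely as you assemble them, and your sandwich argument $\parah{D}\cap H\subseteq\parah{F}\cap H\subseteq H$ for the first bullet of Corollary \ref{coreffectiveness} is fine (your unproved assertion that such a panel $D$ is non-skew and not of lower type is likewise left implicit by the paper, since Proposition \ref{proporthgeff} takes it as a hypothesis).

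The one place you go beyond the paper, the verification of the second bullet of Corollary \ref{coreffectiveness} (the bijection induced by $p:\ul{G}_F^{\mrsc}\rightarrow\ul{G}_F$), is indeed the point that needs an argument, but your justification has two inaccuracies. First, Lemma \ref{lemmacalculationPvivi'} computes the parahoric $\parah{\{v_i,v_i'\}}=\parah{v_i^\flat}$ attached to a \emph{vertex} of $F^\theta$, whose reductive quotient is the two-block Levi $\mrgl_{2i+n_2}\times\mrgl_{2r-2i+n_1}$; it does not describe $\ul{G}_F$. The reductive quotient at the maximal $\theta$-stable facet itself is the finer Levi $\mrgl_1^{2r}\times\mrgl_{n_1}\times\mrgl_{n_2}$, which one reads off from the order $\mfa_{F_0}$ or from Proposition \ref{proppanelDroot}. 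Second, $\theta$ does \emph{not} decompose as a product of involutions of the blocks: on the $\mrgl_1^{2r}$ factor it pairs the $i$-th and $(2r+1-i)$-th factors and composes with inversion; it is a block-preserving orthogonal involution only on the $\mrgl_{n_1}$ and $\mrgl_{n_2}$ blocks. The repaired argument runs: $\ul{G}_F^{\mrsc}\cong\mrsl_{n_1}\times\mrsl_{n_2}$ with $(\ul{G}_F^{\mrsc})^{\theta'}\cong\mrso_{n_1}\times\mrso_{n_2}$, whereas $\ul{H}_F\cong T\times\mrso_{n_1}\times\mrso_{n_2}$ with $T$ a rank-$r$ torus inside the central direct factor $\mrgl_1^{2r}$; that factor lies in $Z(\ul{G}_F)$ and in every Borel subgroup, so the discrepancy between $\ul{H}_F$ and $\Hbarstar$ acts trivially on $\ch{}{}(\ul{G}_F)$, and a Borel $\ul{B}=\mrgl_1^{2r}\times\ul{B}_1\times\ul{B}_2$ is $\theta$-split if and only if each $\theta(\ul{B}_i)\cap\ul{B}_i$ is a torus, so $\theta$-splitness transfers along the Borel correspondence. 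With these corrections your argument is sound and delivers the second bullet, hence the corollary, in the way the paper intends.
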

		
		Let $F$ be a maximal $\theta$-stable facet, which is also effective. We have constructed $\phi_{F}$ and $$\lambda_{F}(f)=\sum_{C\in\ch{F}{0}(G)}\phi_{F}(C)\sum_{C'\in\mco_{C}}f(C'),\quad f\in\mch^\infty(G).$$ 
		In particular, Assumption \ref{assumpabsconverg} is satisfied since $H\backslash G$ is strongly discrete (\emph{cf.} \cite{gurevich2016criterion}*{\S 5.3}). 
		
		\begin{lemma}\label{lemmalambdaFH'inv}
			
			$\lambda_{F}$ is also $H'$-invariant.
			
		\end{lemma}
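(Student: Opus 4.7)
The form $\lambda_F$ is $H$-invariant by construction, so since $[H':H]=2$ it is enough to exhibit one $h\in H'-H$ for which $\lambda_F(h\cdot f)=\lambda_F(f)$. When $n$ is odd I would take $h=-I_n$: this element has determinant $-1$, lies in $H'$ and is central in $G$, so acts trivially on $\mcb(G)$; by Example \ref{exampleepsilonGL} one has $\epsilon_G(-I_n)=(-1)^{(n+1)v_K(-1)}=1$, so $(-I_n)\cdot f=f$ for every $f\in\mch^\infty(G)$, and the invariance is automatic.

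When $n$ is even, I pick any $h\in H'-H$ and unwind the definition. Using $\epsilon_G|_{H'}=1$ and reindexing the inner sum via $C'\mapsto hC'$, together with the fact that $H$ is normal in $H'$ so that $h^{-1}\mco_C=\mco_{h^{-1}C}$, one obtains
\begin{equation*}
\lambda_F(h\cdot f)=\sum_{C\in\ch{F}{0}(G)}\phi_F(C)\sum_{C''\in\mco_{h^{-1}C}}f(C'').
\end{equation*}
Hence $h$-invariance is equivalent to the statement that $\mco_{h^{-1}C}=\mco_C$ for every $C\in\ch{F}{0}(G)$, i.e.\ that the $H$-orbits and the $H'$-orbits on $\ch{F}{0}(G)$ coincide.

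To establish this I would repeat verbatim the normalizer argument from the last paragraph of Proposition \ref{proporthgeff} / Lemma \ref{lemmaHPFcapHcoin}, now with $H$ replaced by $H'$: if $C'=hC$ with $h\in H'$ and $C,C'\in\ch{F}{}(G)$, pick $x\in\parah F$ with $C'=xC$ (possible since $\parah F$ acts transitively on $\ch{F}{}(G)$ through the transitive action of $\ul G_F$ on its Borel subgroups); then $xh^{-1}$ lies in the normalizer $N_G(\parah C)=K^\times\parah C$, and from $\det(x)\in\mfo_K^\times$, $\det(h)=\pm1$, writing $xh^{-1}=zp$ forces $z^n\in\mfo_K^\times$, so $z\in\mfo_K^\times$ and $xh^{-1}\in\parah C\subset\parah F$. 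Thus $h\in\parah F\cap H'$, and the question is reduced to comparing $(\parah F\cap H)$-orbits with $(\parah F\cap H')$-orbits on $\ch{F}{0}(G)$, equivalently $\ul H_F$-orbits with $\ul{H'}_F$-orbits on $\ch{-}{\theta}(\ul G_F)$.

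The core of the proof — and the main obstacle — is therefore the final verification that the component group $\ul{H'}_F/\ul H_F$ acts trivially on the set of $\ul H_F$-orbits of $\ch{-}{\theta}(\ul G_F)$. I would handle this by decomposing $\ul G_F$ according to the block-diagonal description coming from $\mfa_{F_0}$: each rank-one simply connected factor of $\ul G_F^{\mathrm{sc}}$ is of the form $\mrres_{\bs l/\bs k}\mrsl_2$ carrying an orthogonal involution, and by inspecting the relevant cases of Proposition \ref{proprank1classSL2} one sees that any element in $\ul{H'}_F-\ul H_F$ can be written, modulo $\ul H_F$, as a product of "sign" elements in each block whose action on $\ch{-}{\theta}$ either factors through the centre or exchanges two $\ul H_F$-orbits of equal cardinality inside the same orbit already under $\ul{H}_F$ (the crucial input being that the orthogonal, rather than special orthogonal, rank-one calculation produces the same orbit partition, cf.\ \textbf{Case (II).(\romannumeral1)} of Proposition \ref{proprank1classSL2}). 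Consequently $h^{-1}C\in\mco_C$ for every $C\in\ch{F}{0}(G)$, and the computation above gives $\lambda_F(h\cdot f)=\lambda_F(f)$.
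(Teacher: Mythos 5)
Your first case ($n$ odd, $h=-I_n$) is correct and is essentially a special case of the argument the paper uses. Your reduction for $n$ even (reindexing, normality of $H$ in $H'$, the normalizer argument transferring the question to $\parah{F}\cap H'$ vs.\ $\parah{F}\cap H$) is also sound. However, the step you yourself flag as ``the core of the proof --- and the main obstacle'' is not actually carried out, and the sketch you give does not hold up as stated: the sentence about an element that ``exchanges two $\ul H_F$-orbits of equal cardinality inside the same orbit already under $\ul H_F$'' is internally inconsistent --- if it exchanges two distinct $\ul H_F$-orbits, then $\ul{H'}_F/\ul H_F$ does \emph{not} act trivially on the orbit set, which is the opposite of what you need. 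Appealing to \textbf{Case (II).(\romannumeral1)} of Proposition \ref{proprank1classSL2} does not by itself tell you how the full orthogonal group (as opposed to the special orthogonal group) acts on the set of $\theta$-split Borel subgroups; that is a separate computation you would have to do. So the $n$ even case is not proved.

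The paper sidesteps this entirely, and its case split is also different from yours (it is $n=2r$ vs.\ $n>2r$, not $n$ odd vs.\ $n$ even). For $n>2r$ --- which covers all odd $n$ and most even $n$ --- one reads off from \eqref{eqdiagH'} that $S^g\cap H'$ contains an element of determinant $-1$ all of whose entries lie in $\{\pm1\}\subset\mfo_K^\times$; this element is in $S^g\cap H'-S^g\cap H$ and fixes $\mca^g$ pointwise, in particular it fixes $C$, so $H'\cdot C=H\cdot C\cup Ht^g\cdot C=H\cdot C$ immediately. This makes the comparison of $\ul H_F$- and $\ul{H'}_F$-orbits on $\ch{-}{\theta}(\ul G_F)$ unnecessary. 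The remaining case $n=2r$ (which forces $H$ split and $F$ a chamber, so $\ch{F}{0}(G)=\{F\}$) is handled in the paper by Lemma \ref{lemmaH'-Hconj} together with Proposition \ref{propA'chamberHconj}, a case you do not isolate. If you want to salvage your approach, the cleanest fix is to replace the unfinished finite-group verification by the explicit torus element from \eqref{eqdiagH'} whenever $n>2r$, and then deal with $n=2r$ separately.
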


		\begin{proof}
			
			We would like to show that for each $C\in\ch{F}{0}(G)$, the $H$-orbit $\mco_{C}$ is also an $H'$-orbit. 
			
			Let $\mca^g=\mca(G,S^g)$ be a $\theta$-stable apartment containing $C$, such that the equation \eqref{eqorthothetaapcond} is satisfied for some $(\bs{n},r)$.
			
			If $n=2r$, then $F$ is a chamber of $\mcb(G)$ such that $F^\theta$ is a chamber of $\mcb(G)^\theta$. Moreover, $\ch{F}{0}(G)=\{F\}$. Using Lemma \ref{lemmaH'-Hconj} and Proposition \ref{propA'chamberHconj}, the $H$-orbit and the $H'$-orbit of $F$ are the same. Thus in this case $\mco_{C}=\mco_{F}$ is an $H'$-orbit as well.
			
			If $n>2r$, using \eqref{eqdiagH'} we may find an element $t^{g}\in S^{g}\cap H'-S^{g}\cap H$ that fixes each point of $\mca^{g}$. Thus $H'\cdot C=H\cdot C\cup Ht^{g}\cdot C=H\cdot C$, meaning that $\mco_{C}$ is also an $H'$-orbit.
			
		\end{proof}
		
		Finally, by Proposition \ref{propA'chamberHconj} and Proposition \ref{proporthgeff}, Assumption \ref{assumpaffinecoxeter} is also satisfied for each $\theta$-stable apartment $\mca$, implying that each graph $\Gamma(G,\theta,[\mca]_{H,\sim})$ has a single vertex which is effective and has no double edge.

		\subsection{The exceptional case}\label{subsectionexceptionalcase}
		
		In this part, we discuss the exceptional case, where $G=\mrgl_{2}(K)$ and $H$ is an orthogonal subgroup of $G$.
		
		First we assume that $H$ is split. Taking the $G$-conjugation, we may assume that $$\theta(g)=J_{2}\,^{t}g^{-1}J_{2},\quad g\in G.$$ 
		Then $H'=\mfS_{2}\ltimes S$ and $H=S$ with $\mfS_{2}$ being identified with permutation matrices and $S$ being the diagonal torus. In particular, $Z(H)\nsubseteq Z(G)\cap H$, which is the reason that we discuss this case separately.
		
		By direct calculation, there are five $H$-conjugacy classes of apartments $\mca_{i}=\mca(G,S^{g_{i}})$, $i=0,1,2,3,4$, such that
		$$g_{0}=1,\quad \,^{t}g_{1}^{-1}J_{2}g_{1}^{-1}=\mrdiag(1,-1),\quad\,^{t}g_{2}^{-1}J_{2}g_{2}^{-1}=\mrdiag(\varpi_{K},-\varpi_{K}), $$
		$$ \,^{t}g_{3}^{-1}J_{2}g_{3}^{-1}=\mrdiag(\epsilon_{0},-\epsilon_{0}),\quad\,^{t}g_{4}^{-1}J_{2}g_{4}^{-1}=\mrdiag(\epsilon_{0}\varpi_{K},-\epsilon_{0}\varpi_{K}).$$
		In particular, $\mca_{0}$ is of $\theta$-rank $1$, and $\mca_{1},\mca_{2},\mca_{3},\mca_{4}$ are of $\theta$-rank $0$. 
		
		\begin{lemma}
			
			$H'$ acts transitively on $\ch{}{}(\mca_{0}^{\theta})$. On the other hand, $\ch{}{}(\mca_{0}^{\theta})$ has two $H$-orbits.
			
		\end{lemma}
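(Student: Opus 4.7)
My plan is to directly analyze the action of $H$ and $H'$ on the $1$-dimensional affine space $\mca_0^\theta$. Since $G=\mrgl_2(K)$, the reduced building $\mcb(G)$ is the Bruhat--Tits tree and $\mca_0=\mca(G,S)$ is a line, with vertices $v_i$ ($i\in\mbz$) and chambers (edges) $e_i=(v_i,v_{i+1})$. The first step is to show that $\theta$ acts trivially on $\mca_0$, so that $\mca_0^\theta=\mca_0$ as poly-simplicial complexes, and hence $\ch{}{}(\mca_0^\theta)=\{e_i\}_{i\in\mbz}$. To see this, note that $\theta$ fixes the vertex $v_0$ corresponding to $\mrgl_2(\mfo_K)$ (because $J_2\in\mrgl_2(\mfo_K)$), so $\theta$ is affine with $v_0$ as a fixed point; and the induced map on $X_*(S/A)\otimes\mbr$ arising from $\theta(\mrdiag(a,b))=\mrdiag(b^{-1},a^{-1})$ sends the class of $(m,n)$ to the class of $(-n,-m)$, i.e.\ $m-n\mapsto m-n$, so the linear part is the identity.

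Next I would compute the two group actions on $\mca_0\cong\mbr$ explicitly. The group $H=\mrso_2(J_2)=\{\mrdiag(a,a^{-1})\mid a\in K^\times\}$ acts by translation: a short computation using the valuation pairing $\langle\alpha,\nu_Z(\mrdiag(a,a^{-1}))\rangle=-v_K(a^2)=-2v_K(a)$ shows that $\mrdiag(a,a^{-1})$ translates by $2v_K(a)\in 2\mbz$. Hence $H$ acts on the edges $\{e_i\}$ through the subgroup $2\mbz$ of translations, and the chambers split into exactly two $H$-orbits distinguished by the parity of $i$.

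For $H'=\mro_2(J_2)=H\sqcup\sigma H$ with $\sigma=\bigl(\begin{smallmatrix}0&1\\1&0\end{smallmatrix}\bigr)$, the extra coset representative $\sigma$ normalizes $S$ via the nontrivial element of the Weyl group, and thus acts on $\mca_0$ by the affine reflection $x\mapsto -x$ fixing $v_0$. In particular $\sigma(e_i)=e_{-i-1}$, so $\sigma(e_0)=e_{-1}$, which sits in the opposite $H$-orbit from $e_0$; composing $\sigma$ with the even-integer translations from $H$ therefore reaches every $e_i$, and $H'$ acts transitively on $\ch{}{}(\mca_0^\theta)$.

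There is no real obstacle here: the lemma is essentially a bookkeeping exercise once one recognizes that $\theta$ acts trivially on the reduced apartment (the potential pitfall is to confuse this with its nontrivial action on the extended apartment $\mca^\mrext(G,S)$, where the $\theta$-fixed subspace is only a line in the plane). The proof is then just the explicit translation-and-reflection calculation above.
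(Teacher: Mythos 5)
Your proof is correct and takes essentially the same route as the paper: both are explicit computations in the standard apartment of the tree, with the chambers of $\mca_0^\theta$ being all chambers of $\mca_0$ (the paper indexes them by the hereditary orders $\mfa_k$, you by the edges $e_i$), $H=\mrso_2(J_2)$ acting by even translations so that parity gives the two orbits, and the extra component of $H'$ acting by the reflection at $v_0$ to merge them. Your preliminary verification that $\theta$ acts trivially on the reduced apartment $\mca_0$ is left implicit in the paper but is the same argument in substance.
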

		
		\begin{proof}
			
			Using the lattice model (\emph{cf.} \cite{KP23}*{Chapter 15}), each chamber in $\mca_{0}^{\theta}$ is represented by the hereditary order of the form
			$$\mfa_{k}:=\begin{pmatrix} \mfo_{K} &\mfp_{K}^{k}\\
				\mfp_{K}^{1-k} & \mfo_{K}
			\end{pmatrix},$$
			where $k\in\mbz$. Then, it is clear that $H'$ acts transitively on $\{\mfa_{k}\}_{k\in\mbz}$, and there are two $H$-orbits $\{\mfa_{2k}\}_{k\in\mbz}$ and $\{\mfa_{2k+1}\}_{k\in\mbz}$.
			
		\end{proof}
		
		Now we consider $F_{0}, F_{1}, F_{2}\in\Ftmax(G)$, such that $F_{i}^{\theta}$ is a chamber of $\mca_{i}^{\theta}$.  Indeed, $F_{0}=F_{0}^{\theta}$ is a chamber as a line segment of $\mca_{0}=\mca_{0}^{\theta}$, and $F_{1}=F_{1}^{\theta}$ is the unique point in both $\mca_{1}^{\theta}$ and $\mca_{3}^{\theta}$, and  $F_{2}=F_{2}^{\theta}$ is the unique point in both $\mca_{2}^{\theta}$ and $\mca_{4}^{\theta}$. Let $F_{0}'$ be another chamber of $\mca_{0}$ that is adjacent to $F_{0}$. As in \S \ref{subsectionpanels}, $F_{0},F_{0}',F_{1},F_{2}$ are effective. Thus we have
		$$\abs{\Ftmax(G)/H'}=\abs{\Fteff(G)/H'}=3\quad\text{and}\quad\abs{\Ftmax(G)/H}=\abs{\Fteff(G)/H}=4$$
		Indeed, Assumption \ref{assumpaffinecoxeter} for $\mca_{0}$, $\mca_{1}$ and $\mca_{2}$ are satisfied for the group $H'$, and we have three different equivalence classes $[\mca_{0}]_{H',\sim}$, $[\mca_{1}]_{H',\sim}$ and $[\mca_{2}]_{H',\sim}$. All of them are effective.
		Using Theorem \ref{thmdistdimcal} (or more precisely its proof),
		$$\mrhom_{H'}(\mrst_{G},\mbc)\cong \mch(G)^{H'}$$
		is of dimension 3. 
		
		On the other hand, the subgraph $\Gamma(G,\theta,[\mca_{0}]_{H,\sim})$ has two isolated effective vertices represented by $F_{0}^{\theta}$ and $F_{0}'^{\theta}$ and has no double edge, while the subgraph $\Gamma(G,\theta,[\mca_{1}]_{H,\sim})$ (resp. $\Gamma(G,\theta,[\mca_{2}]_{H,\sim})$) consists of a single effective vertex represented by $F_{1}^{\theta}$ (resp. $F_{2}^{\theta}$). On the other hand, it is easy to calculate directly that $\lambda_{F_{0}}$, $\lambda_{F_{0}'}$, $\lambda_{F_{1}}$, $\lambda_{F_{2}}$ are linearly independent \footnote{We only need to show that $\lambda_{F_{0}}$ and $\lambda_{F_{0}'}$ are linearly independent by showing that $\begin{pmatrix}
				\lambda_{F_{0}}(f_{F_{0}}) & \lambda_{F_{0}'}(f_{F_{0}})\\
				\lambda_{F_{0}}(f_{F_{0}'}) & \lambda_{F_{0}'}(f_{F_{0}'})
			\end{pmatrix}$ is invertible.}. Thus
		$$\mrhom_{H}(\mrst_{G},\mbc)\cong \mch(G)^{H}$$
		is of dimension 4. 
		
		Now we assume that $H$ is quasi-split but not split.  By Proposition \ref{propclassthetaap} and direct calculation, it is easy to see that all the $\theta$-stable apartments are of $\theta$-rank $0$ and $\theta$-equivalent to each other. So there exists a single $H$-conjugacy (resp. $H'$-conjugacy) class of maximal $\theta$-stable facets. Moreover, by Proposition \ref{proporthgeff} this class is effective. Let $F$ be a maximal $\theta$-stable facet. We may construct $\phi_F$, $f_F$ and $\lambda_F$ as before and show that $\lambda_F(f_F)=\sum_{C\in\ch{F}{0}(G)}o_C\neq 0$ (indeed $F^\theta$ is a single point). Thus in this case
		$$\mrhom_{H}(\mrst_{G},\mbc)\cong \mch(G)^{H}$$
		is of dimension 1. 
		
		\subsection{Summary}
		
		Using Theorem \ref{thmdistdimcal}, whose condition is guaranteed by Proposition \ref{propA'chamberHconj}, Proposition \ref{propFmaxortho},  Corollary \ref{cormax=eff}, Lemma \ref{lemmalambdaFH'inv}, and also discussion in \S \ref{subsectionexceptionalcase}, Theorem \ref{thmmainfour} is proved.
		
		\newpage
		\printindex
		\newpage
		
		\thispagestyle{plain}
		\bibliographystyle{alpha}
		\bibliography{reference.bib}
		\thispagestyle{plain}
		
	\end{document}